\newmdenv[backgroundcolor=blue!2!white]{raw1}
\newmdenv[backgroundcolor=yellow!3!white]{raw2}
\newmdenv[backgroundcolor=orange!3!white]{raw3}
\newmdenv[backgroundcolor=red!20!white]{raw4}
\newmdenv[backgroundcolor=blue!6!white,innertopmargin=0pt]{bgassert1}
\def\subsection{\@startsection{subsection}{2}%
  \z@{.5\linespacing\@plus.7\linespacing}{-.4em}%
  {\normalfont\itshape}}
\def\subsubsection{\@startsection{subsubsection}{3}%
  \z@{.5\linespacing\@plus.7\linespacing}{-.4em}%
  {\normalfont\itshape\color{blue}}}
\newtheorem{theorem}{Theorem}
\newtheorem*{maincorollary*}{Corollary}
\newtheorem{lemma}[subsection]{Lemma}
\newtheorem{proposition}[subsection]{Proposition}
\newtheorem{corollary}[subsection]{Corollary}
\newtheorem*{corollary*}{Corollary}
\newtheorem*{proposition-wn}{Proposition \theorem-ref}
\newenvironment{proposition-cited}[1]{\def\theorem-ref{#1}\begin{proposition-wn}}{\end{proposition-wn}}
\newtheorem*{corollary-wn}{Corollary \theorem-ref}
\newenvironment{corollary-cited}[1]{\def\theorem-ref{#1}\begin{corollary-wn}}{\end{corollary-wn}}
\theoremstyle{definition}
\newtheorem{definition}[subsection]{Definition}
\theoremstyle{remark}
\newtheorem{remark}[subsection]{Remark}
\newtheorem*{remark*}{Remark}
\def\theenumi{\roman{enumi}}  
\newcounter{claim}[subsection]
\newenvironment{claim}%
{\refstepcounter{claim}%
   \trivlist
   \item[\hskip\labelsep
   {\it Claim \theclaim:}]\ignorespaces\it}
{\endtrivlist}
\newenvironment{claim*}%
{%
   \trivlist
   \item[\hskip\labelsep
   {\it Claim:}]\ignorespaces\it}
{\endtrivlist}
\newenvironment{enum-par}%
{\refstepcounter{enumi}%
   \trivlist
   \item[\hskip\labelsep
   {\theenumi}]\ignorespaces}
{\endtrivlist}
\newenvironment{labeled-par}[1]%
{%
   \trivlist
   \item[\hskip\labelsep
   {#1}]\ignorespaces}
{\endtrivlist}
\let\al\alpha
\let\be\beta
\let\ga\gamma
\let\de\delta
\let\ep\varepsilon
\let\ze\zeta
\let\th\theta 
\let\io\iota
\let\ka\kappa
\let\la\lambda
\let\om\omega
\let\Ga\Gamma
\let\De\Delta
\let\Th\Theta
\let\La\Lambda
\let\Si\Sigma
\let\Om\Omega
\let\seq\subseteq
\let\sm\setminus
\let\ti\tilde
\let\bd\partial
\let\lequiv\Leftrightarrow
\let\greq\eqcirc
\let\isom\simeq
\def\cA{\mathcal A}
\def\cE{\mathcal E}
\def\cF{\mathcal F}
\def\cH{\mathcal H}
\def\cM{\mathcal M}
\def\cP{\mathcal P}
\def\cR{\mathcal R}
\def\cS{\mathcal S}
\def\cT{\mathcal T}
\def\cX{\mathcal X}
\def\Z{\mathbb Z}
\def\R{\mathbb R}
\newcommand\setof[2]{\{ #1 \;|\; #2 \}}
\newcommand\set[1]{ \{ #1 \} }
\newcommand\bbset[1]{ \left\{ #1 \right\} }
\newcommand\spresn[2]{\langle #1 \, | \, #2 \rangle}
\newcommand\bpresn[2]{\bigl\langle #1 \;\bigm|\; #2 \bigr\rangle}
\newcommand\sgp[1]{\langle #1 \rangle}
\newcommand\ceil[1]{\lceil #1 \rceil}
\newcommand\bbceil[1]{\left\lceil #1 \right\rceil}
\newcommand\floor[1]{\lfloor #1 \rfloor}
\newcommand\bbfloor[1]{\left\lfloor #1 \right\rfloor}
\DeclareMathOperator{\per}{Per}
\DeclareMathOperator{\lab}{\mathit{label}}
\DeclareMathOperator{\Area}{Area}
\newcommand\muf{\mu_{\mathrm f}}
\begin{document}

%
\def\theenumi{(\roman{enumi})}
\makeatletter
\def\p@enumi{\thelemma}
\def\labelenumi{\theenumi}
\let\@savedlabel\label
\def\label#1{\@savedlabel{#1}\ifnum\@listdepth=1%
\protected@edef\@currentlabel{\theenumi}\@savedlabel{#1-it@m}\fi}
\def\itemref#1{\ref{#1-it@m}}
\makeatother

\title{A sample iterated small cancellation theory for groups of Burnside type}
\author{Igor Lysenok}
\date{}
\thanks{This research was supported by the Russian Science Foundation (project No.~21-11-00318)}

\begin{abstract}
We develop yet another technique to 
present the free Burnside group $B(m,n)$ of odd exponent $n$ with $m\ge2$
generators as a group satisfying a certain iterated small cancellation condition.
Using the approach, 
we provide a reasonably accessible proof that $B(m,n)$ is infinite with a moderate bound $n > 2000$
on the odd exponent $n$.
\end{abstract}

\maketitle

\section{Introduction}

The free $m$-generated Burnside group $B(m,n)$ of exponent $n$ is, by definition, the relatively free 
group in the variety of groups satisfying the identity $x^n =1$, i.e.\
$
  B(m,n) \isom F_m / F_m^n
$
where $F_m$ is the free group of rank $m$ and $F_m^n$ is the subgroup of $F_m$ generated by all $n$-th powers.
Obtaining a structural information about groups $B(m,n)$ is known to be a difficult problem.
The primary question of this sort is whether $B(m,n)$ is finite for given $m, n \ge 2$.
The question is known as the {\em Burnside problem} \cite{Bur02} and it is still not completely answered. 
The group is shown to be finite for exponents $n=2$,~$3$ \cite{Bur02}, $n=4$ \cite{San40} and $n=6$ \cite{Hal58}.
A negative solution to the Burnside problem is given
by the Novikov--Adian theorem \cite{NovAdi68,Adi75} stating that the Burnside group $B(m,n)$ 
of odd exponent $n \ge  665$ with $m \ge 2$ generators  is infinite. 
As for now, infiniteness 
of $B(m,n)$ is established for exponents 
of the form $n=665r$ or $n \ge 8000$ and any number $m\ge 2$ of generators.
Note that $B(m,r)$ is a homomorphic image of $B(m,n)$ if $n$ is a multiple of $r$,
so in this case infiniteness of $B(m,r)$ implies infiniteness of $B(m,n)$.
The case when the exponent $n$ does not have a large odd divisor was treated in \cite{Iva94,Lys96}. 
Although it is believable that free Burnside groups $B(m,n)$ are infinite for considerably lower values of $n$
(and there are several announcements of results of this sort) the lowest published and carefully checked 
bound is still 665, obtained by Adian \cite{Adi75} for the case of odd exponent ~$n$.

A principal step in understanding the structure of the group $B(m,n)$ in the infinite case
was made in the fundamental work by Novikov and Adian \cite{NovAdi68} and its
improved version ~\cite{Adi75}. 
One of the ingredients of the proof was a tightly interweaved 
version of the small cancellation theory similar to one developed by Tartakovski\u\i\ \cite{Tar49}.
It was also shown in \cite{Adi75} that for $m \ge 2$ and odd $n \ge 665$ the group $B(m,n)$ has several 
properties similar to key properties of small cancellation groups. A basic one is 
{\em layered Dehn's property:} a freely reduced nonempty word representing the identity 
in the group contains 
a large part of a defining relator modulo relations of the previous layer.
This easily implies that any such word should contain a subword of the form $X^t$ 
for sufficiently large $t$ which in turn implies that $B(m,n)$ is infinite.

Unfortunately, the approach due to Novikov--Adian, even in its polished and improved form in ~\cite{Adi75},
is extremely technical and has a complicated logical structure.
Several later works \cite{Ols82,Ols91,DelGro08,Cou14} pursued 
the goal to find a more conceptually explicit and 
technically simpler approach to infinite Burnside groups, and more generally, to 
``infinite quotient of bounded exponent'' phenomena in wider classes of groups
as in \cite{IvaOls96,DelGro08,Cou14}.
As an underlying basic idea, all these approaches utilize a small cancellation
theory in a more or less explicit form though based on different implementation techniques.
It was eventually realized that iterated small cancellation theory
is indeed a relevant framework to present Burnside groups of large exponents as well as many other 
examples of infinitely presented groups of a ``monster'' nature.
In an explicit form, a relevant version of the theory 
was formulated by Gromov and Delzant \cite{DelGro08} and Coulon \cite{Cou14}. 
However, both approaches need extremely large exponents to be applied to Burnside groups.
(In fact, the both incorporate ``non-constructive'' tools 
so that the proof does not provide any explicit lower bound on the exponent $n$.)

Two questions naturally arise. 
What is the lower bound on the exponent $n$ for which the iterated small cancellation 
approach can be applied to Burnside groups $B(m,n)$?
Do we need a sophisticated technical framework to use the approach for reasonably small values of the exponent;
for example, for values which are about several hundreds or less? 

The main goal of the present paper is to develop a sample version of the iterated small cancellation theory
specially designed for free Burnside groups $B(m,n)$ with a ``moderate'' lower bound on the exponent $n$.
More precisely, our technique works for odd exponents $n > 2000$.

We consider our approach as a first approximation and an introduction to 
a considerably more technical result on infiniteness of Burnside groups with substantially smaller
bounds on the exponent.

\section{The iterated small cancellation condition} \label{s:condition}

\subsection{} \label{ss:relators}
We fix a group $G$ given by a graded presentation 
\begin{equation} \label{eq:G-presn}
  \bpresn{\cA}{R= 1 \ (R \in \bigcup_{\al\ge 1} \cX_\al)}.
\end{equation}
Here we assume that the set of defining relators is partitioned into the union of subsets $\cX_\al$
indexed by a positive integer ~$\al$. 
We call cyclic shifts of words $R \in \cX_\al^{\pm1}$ {\em relators of rank ~$\al$}.
Thus, the set of all relators of rank ~$\al$ is symmetrized, i.e.\ closed under cyclic shifts and taking inverses.

With the presentation of $G$, there are naturally associated 
{\em level groups} $G_\al$ defined by all relations of rank up to $\al$, i.e.\
\begin{equation} \label{eq:G-al-presn}
  G_\al = \bpresn{\cA}{R= 1 \ (R \in \bigcup_{\be\le \al} \cX_\be)}
\end{equation}

\subsection{} \label{ss:parameter-roles}
Our small cancellation condition depends on two positive real-valued parameters $\la$ and ~$\Om$
satisfying 
\begin{equation} \label{eq:ISC-main}
  \la \le \frac{1}{24}, 
  \quad \la \Om \ge 20.
\end{equation}
We introduce also two other parameters with fixed value:
$$
  \rho = 1 - 9\la, \quad \ze = \frac{1}{20}.
$$ 
The role of $\la$, $\Om$, $\rho$ and $\ze$ can be described as follows:
\begin{itemize}
\item
$\la$ is an analog of the small cancellation parameter in the classical condition $C'(\la)$;
\item
$\Om$ is the lower bound on the size of a relator $R$ of rank $\al$ in terms of the 
length function $|\cdot|_{\al-1}$ associated with $G_{\al-1}$ (defined below in \ref{ss:alpha-length});
see condition (S1) in ~\ref{ss:condition}.
\item 
$\rho$ is the reduction threshold
used in the definition of a reduced in $G_\al$ word.
Informally, a reduced in $G_\al$ word cannot contain more that $\rho$-th part of a relator of rank ~$\al$
up to closeness in $G_{\al-1}$.
\item
$\ze$ is the rank scaling factor; it determines how the function $|\cdot|_\al$ rescales when incrementing the rank.
\end{itemize}

\subsection{} \label{ss:bridge-word}
For any $\al \ge 0$, 
we introduce the set $\cH_\al$ of {\em bridge words of rank $\al$} recursively by setting
$$
  \cH_0 = \set{\text{the empty word}},
$$ 
$$
  \cH_{\al} = \setof{u S v}{u,v \in \cH_{\al-1}, \ S \text{  is a subword of a relator of rank } \al}.
$$
The definition immediately implies that $\cH_{\al-1} \seq \cH_{\al}$. 
Note also that all sets $\cH_\al$ are closed under taking inverses.

\subsection{} \label{ss:close-words}

We call two elements $x,y \in G_\al$ {\em close} if $x = uyv$ for some $u,v \in \cH_\al$. 
This relation will be often used in the case when $x$ and $y$ are represented by words in the generators $\cA$.
In that case we say that words $X$ and $Y$ are {\em close in rank $\al$} if 
they represent close elements of ~$G_\al$, or, equivalently,
$X = uYv$ in $G_\al$ for some $u,v \in \cH_\al$.

\subsection{} \label{ss:reduced-word} 
For $\al\ge0$, the set $\cR_\al$ of words {\em reduced in $G_\al$}, 
the set of {\em fragments of rank $\al$} 
and the length function $|\cdot|_\al$ are defined by joint recursion.

A word $X$ in the generators $\cA$ is {\em reduced in $G_0$} if $X$ is freely reduced.
A word $X$ is {\em reduced in $G_\al$} for $\al\ge 1$ if it is reduced in $G_{\al-1}$ and the following is true:
if a subword $S$ of a relator ~$R$ of rank $\al$ is close in rank $\al-1$
to a subword of $X$ then 
$$
  |S|_{\al-1}  \le \rho |R|_{\al-1}.
$$
A word $X$ is {\em cyclically reduced in $G_\al$}
if any cyclic shift of $X$ is reduced in $G_\al$.

\subsection{} \label{ss:fragment}
A nonempty word $F$ is a {\em fragment of rank $\al\ge 1$} if $F$ 
is reduced in $G_{\al-1}$
and is close in rank $\al-1$ to a subword $P$ of a word of the form $R^k$ where $R$ is a relator of rank $\al$. 
(In almost all situations $P$ will be a subword of a cyclic shift of $R$.)
A {\em fragment of rank 0} is a word of length 1, i.e.\ a single letter of the alphabet $\cA^{\pm1}$.

It is convenient to assume that each fragment $F$ of rank $\al\ge 1$ is considered with
fixed associated words $P$, $u$, $v$ and a relator $R$ of rank $\al$
such that $F = uPv$ in $G_{\al-1}$, $u,v \in \cH_{\al-1}$ and $P$ is a subword of $R^k$ for some $k > 0$, i.e.\ a fragment
is formally a quintuple $(F,P,u,v,R)$.

\subsection{} \label{ss:alpha-length}
A {\em fragmentation of rank $\al$} of a (linear or cyclic) word $X$ is a partition of $X$ into 
nonempty subwords of fragments of ranks $\be \le \al$.
If $\cF$ is a fragmentation of rank $\al$ of $X$ then by definition, 
the {\em weight of $\cF$ in rank $\al$}
is defined by
$$
  \text{weight}_\al(\cF) = m_\al + \ze m_{\al-1} + \ze^2 m_{\al-2} + \dots + \ze^\al m_0 
$$
where $m_\be$ is the number of subwords of fragments of rank $\be$ in $\cF$.
Here we assume that each subword in $\cF$ is assigned a unique rank $\be$.

We now define a semi-additive length function $|\cdot|_\al$ on words in the generators $\cA$:
$$
  |X|_\al = \min\setof{\text{weight}_\al(\cF)}{\text{$\cF$ is a fragmentation of rank $\al$ of $X$}}.
$$
Note that $|X|_0$ is the usual length $|X|$ of $X$.

\subsection{} \label{ss:condition}
The iterated small cancellation condition consists of the following three conditions (S0)--(S3)
where the quantifier `for all $\al\ge1$' is assumed.

\begin{itemize}
\item[(S0)] (``Relators are reduced'')
Any relator of rank $\al$ is cyclically reduced in $G_{\al-1}$.
\item[(S1)] (``Relators are large'')
Any relator $R$ of rank $\al$ satisfies
$$
  |R|_{\al-1} \ge \Om.
$$
\item[(S2)] (``Small overlapping'')
For $i=1,2$, 
let $S_i$ be a starting segment of a relator $R_i$ of rank ~$\al$.
Assume that $S_1 = u S_2 v$ in $G_{\al-1}$ 
for some $u,v \in \cH_{\al-1}$ and $|S_1|_{\al-1} \ge \la |R_1|_{\al-1}$.
Then $R_1 = u R_2 u^{-1}$  in $G_{\al-1}$.
\end{itemize}

\subsection{} \label{ss:no-inverse-conjugation}
It can be proved that a group $G$ satisfying conditions (S0)--(S2)
possesses core properties of small cancellation groups, in particular, a version of Dehn's property.
We will impose, however, an extra condition on the graded presentation of $G$ which 
implies cyclicity of all finite subgroups of groups $G_\al$ and avoids difficulties
caused by existence of non-cyclic finite subgroups
in the case of Burnside groups $B(m,n)$ of even exponent ~$n$.

\begin{itemize}
\item[(S3)] (``No inverse conjugate relators'')
No relator of rank $\al$ is conjugate in $G_{\al-1}$ to its inverse.
\end{itemize}

As we see below, this condition is satisfied if each relator $R$ of rank $\al$ has the form $R_0^n$
where the exponent $n$ (which can vary for different $R$) is odd and $R_0$ is a non-power in $G_{\al-1}$.
See Corollary \ref{co:no-involutions-no-inverse-conjugates}.

Starting from Section \ref{s:fragments}, we will use a mild extra assumption on 
the graded presentation \eqref{eq:G-presn} by requiring it to be normalized in the following sense.
The assumption is not essential and just makes 
arguments simpler (mainly due to Lemma ~\ref{lm:compatible-lines}) slightly improving
bounds on the constants.

\begin{definition} \label{df:normalized-presentation}
We call a graded presentation \eqref{eq:G-presn}
{\em normalized} if the following assertions hold:
\begin{enumerate}
\item
Every relator $R \in \cX_\al$ has the form $R \eqcirc R_0^t$ where $R_0$ represents
a non-power element of $G_{\al-1}$ (i.e.\ $R_0$ does not represent in $G_{\al-1}$ an element
of the form $g^k$ for $k \ge 2$); we call $R_0$ the {\em root} of a relator $R$.
\item
If $R, S \in \cX_\al$ and $R \ne S$ then $R$ and $S$ are not conjugate in $G_{\al-1}$.
\end{enumerate}
\end{definition}
Note that the condition to be normalized is not restrictive: 
every graded presentation can be replaced with a normalized one
(although formally speaking, this replacement could affect the iterated small cancellation condition; however,
in real applications this would hardly be the case).

\begin{remark*}
Checking conditions (S0)--(S3) requires knowledge about groups $G_{\al-1}$. 
Thus presenting a group by relations satisfying the iterated small cancellation condition already
requires a proof of properties of groups $G_\al$ by induction on the rank.
\end{remark*}

\section{Main results} \label{s:main-results}

As in the case of classical small cancellation, the iterated 
small cancellation condition has strong consequences on the presented group $G$. 
A basic one is an analog of the Dehn property: every non-empty freely reduced
word representing the trivial element of the group ``contains a large part'' of a relator. 

In what follows,  
we assume that a group $G$ is given by a normalized 
graded presentation satisfying conditions (S0)--(S3) above 
and for any $\al\ge0$, $G_\al$ denotes the group defined by all 
relations of ranks up to $\al$.
We say that a word $X$ is {\em reduced in $G$} if it is reduced in $G_\al$ for all
$\al \ge 0$. The following theorem is an immediate consequence of Proposition \ref{pr:reduced-nontrivial}.

\begin{theorem} \label{th:reduced-nontrivial}
Let $X$ be a non-empty word in the generators $\cA$. 
If $X$ reduced in $G_\al$ then $X \ne 1$ in $G_\al$.
If $X$ is reduced in $G$ then $X \ne 1$ in $G$.
\end{theorem}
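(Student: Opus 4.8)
The plan is to deduce Theorem~\ref{th:reduced-nontrivial} directly from Proposition~\ref{pr:reduced-nontrivial}, so the real content is understanding what that proposition must say and how the two statements of the theorem follow from it. The natural shape of Proposition~\ref{pr:reduced-nontrivial} is a Dehn-type dichotomy: if $X$ is a non-empty word that \emph{does} represent $1$ in $G_\al$, then $X$ cannot be reduced in $G_\al$ --- indeed it must contain a subword close in rank $\al-1$ to more than $\rho$ of some relator of rank $\le\al$. Granting such a statement, the first sentence of the theorem is its immediate contrapositive: a non-empty $X$ that is reduced in $G_\al$ cannot be trivial in $G_\al$.

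For the second sentence, suppose $X$ is non-empty and reduced in $G$, i.e.\ reduced in $G_\al$ for every $\al\ge0$, and suppose for contradiction that $X=1$ in $G$. Since $G=\varinjlim G_\al$ (the presentation \eqref{eq:G-presn} is the union of the graded pieces, so every relation of $G$ already holds in some $G_\al$), the equality $X=1$ in $G$ is witnessed by finitely many relators, all of bounded rank; hence $X=1$ in $G_\al$ for some sufficiently large $\al$. But $X$ is reduced in $G_\al$ and non-empty, so the first part of the theorem gives $X\ne1$ in $G_\al$, a contradiction. Therefore $X\ne1$ in $G$.

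The only genuinely non-trivial input is Proposition~\ref{pr:reduced-nontrivial} itself, which is proved elsewhere in the paper by the iterated small cancellation machinery (van Kampen diagrams over the graded presentation, induction on rank, using (S0)--(S2) to control how relators of rank $\al$ can overlap modulo $G_{\al-1}$, and the length functions $|\cdot|_\al$ to run the induction). For the present theorem I expect no obstacle beyond bookkeeping: the first statement is a formal contrapositive, and the second is the standard reduction from $G$ to a finite-rank quotient $G_\al$ using that $G$ is the direct limit of the $G_\al$. The one point to state carefully is exactly this direct-limit fact --- that any consequence of the relations of $G$ is a consequence of relations of some bounded rank --- which is immediate from the way \eqref{eq:G-presn} and \eqref{eq:G-al-presn} are set up.
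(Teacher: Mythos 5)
Your proposal is correct and takes the same route as the paper: both statements are deduced from Proposition~\ref{pr:reduced-nontrivial}, and the passage from $G$ to $G_\al$ uses exactly the direct-limit observation you describe. One small inaccuracy in your reconstruction: Proposition~\ref{pr:reduced-nontrivial} is stated in the paper \emph{verbatim} as the first sentence of the theorem (``if a non-empty word $X$ is reduced in $G_\al$ then $X\ne 1$ in $G_\al$''), not in the Dehn-dichotomy form you anticipate; that reformulation appears separately as the Corollary. This changes nothing in the logic --- the first sentence is then immediate rather than a contrapositive --- but it is worth noting that the heavy lifting (the induction on rank via Lemma~\ref{lm:monogon-regularity}) lives entirely inside the proof of that proposition, as you surmised.
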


By expanding the definition of a reduced word in $G$ we get an equivalent
formulation which is more in the spirit of the small cancellation theory.

\begin{maincorollary*}
Let $X$ be a freely reduced non-empty word. If $X = 1$ in $G$ then for some 
$\al\ge1$, $X$ has a subword close in $G_{\al-1}$ to a subword $P$ of a relator $R$
of rank $\al$ with $ |P|_{\al-1} \ge \rho |R|_{\al-1}$.
\end{maincorollary*}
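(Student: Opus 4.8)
The plan is to deduce the Corollary from Theorem~\ref{th:reduced-nontrivial} by contraposition, simply unwinding the definition of ``reduced in $G$''. Suppose $X$ is freely reduced, non-empty, and equal to $1$ in $G$. By Theorem~\ref{th:reduced-nontrivial}, $X$ cannot be reduced in $G$, so there is some $\al \ge 0$ for which $X$ fails to be reduced in $G_\al$; take the least such $\al$. Since $X$ is freely reduced it is reduced in $G_0$, so $\al \ge 1$, and by minimality $X$ is reduced in $G_{\al-1}$. Now apply the definition in~\ref{ss:reduced-word}: the only way $X$ can fail to be reduced in $G_\al$ while being reduced in $G_{\al-1}$ is that there is a subword $S$ of a relator $R$ of rank $\al$ that is close in rank $\al-1$ to a subword of $X$, yet $|S|_{\al-1} > \rho |R|_{\al-1}$. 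Setting $P = S$ gives exactly the conclusion: $X$ has a subword close in $G_{\al-1}$ to a subword $P$ of a relator $R$ of rank $\al$ with $|P|_{\al-1} \ge \rho |R|_{\al-1}$ (in fact with strict inequality, but the statement only claims $\ge$).

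The one subtlety worth spelling out is the direction of the ``closeness'' relation. The definition of reduced word says that \emph{if a subword $S$ of $R$ is close in rank $\al-1$ to a subword of $X$} then $|S|_{\al-1} \le \rho|R|_{\al-1}$; so the failure of reducedness hands us a subword $S$ of $R$ close to a subword $Y$ of $X$. The Corollary phrases this as ``$X$ has a subword close to a subword $P$ of $R$,'' i.e.\ $Y$ close to $P = S$. Since closeness (defined in~\ref{ss:close-words} via $x = uyv$ with $u,v \in \cH_{\al-1}$) is symmetric — the sets $\cH_\al$ are closed under inverses, as noted in~\ref{ss:bridge-word}, so $x = uyv$ in $G_{\al-1}$ is equivalent to $y = u^{-1}xv^{-1}$ in $G_{\al-1}$ — the two formulations agree, and no information is lost.

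There is essentially no hard step here: the Corollary is a pure restatement of Theorem~\ref{th:reduced-nontrivial} obtained by negating ``reduced in $G$'' and reading off the first rank at which reducedness fails. The only thing to be careful about is invoking the \emph{least} bad rank $\al$, which is what guarantees $X$ is still reduced in $G_{\al-1}$ — this is needed because the notions ``fragment of rank $\al$'', ``$|\cdot|_{\al-1}$'', and ``close in rank $\al-1$'' in the definition of reducedness in $G_\al$ all presuppose that $X$ behaves well in rank $\al-1$. The real mathematical content lives entirely in Proposition~\ref{pr:reduced-nontrivial} (equivalently Theorem~\ref{th:reduced-nontrivial}), which this Corollary merely repackages.
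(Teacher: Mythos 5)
Your proof is correct and is essentially the paper's own approach: the paper explicitly says the Corollary follows ``by expanding the definition of a reduced word in $G$,'' and you carry out exactly that unwinding via Theorem~\ref{th:reduced-nontrivial}, taking the minimal failing rank $\al$ and reading off the definition in~\ref{ss:reduced-word}. Your remark about symmetry of closeness is a small but legitimate point of care that the paper leaves implicit.
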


In the classical small cancellation theory, existence of a Dehn reduced 
representatives for group elements is a simple consequence of the fact that
a word containing more than a half of a relator can be shortened by applying 
the corresponding relation. This approach does not work in our version of 
the iterated small cancellation and existence of reduced representatives
is a nontrivial fact proved below and formulated in 
Proposition \ref{pr:reduction} and Corollary \ref{co:absolute-reduction}.

\begin{theorem} \label{th:absolute-reduction}
Every element of $G_\al$ can be represented by a word reduced in $G_\al$. 
Every element of $G$ can be represented by a word reduced in $G$. 
\end{theorem}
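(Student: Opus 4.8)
The plan is to prove Theorem~\ref{th:absolute-reduction} by induction on the rank $\al$, reducing everything to the corresponding statement for $G_{\al-1}$ together with a single ``one-step reduction'' lemma. The base case $\al = 0$ is trivial: every element of $G_0 = F(\cA)$ is represented by its freely reduced form, which is by definition reduced in $G_0$. For the inductive step, fix an element $g \in G_\al$ and, using the inductive hypothesis applied to the natural surjection $G_{\al-1} \to G_\al$, start from some word $X$ representing $g$ that is reduced in $G_{\al-1}$. If $X$ is already reduced in $G_\al$ we are done, so assume not: by the definition in~\ref{ss:reduced-word}, $X$ contains a subword close in rank $\al-1$ to a subword $S$ of a relator $R$ of rank $\al$ with $|S|_{\al-1} > \rho|R|_{\al-1}$. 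Since $\rho = 1 - 9\la > \tfrac12$, this means $X$ ``contains more than half of $R$'' in the appropriate iterated sense. The key step is then to invoke the reduction machinery promised in the text — Proposition~\ref{pr:reduction} and Corollary~\ref{co:absolute-reduction} — to replace the long part $S$ by the complementary short part of $R^{\pm1}$, obtaining a word $X'$ with $X' = g$ in $G_\al$, still reduced in $G_{\al-1}$, and strictly smaller than $X$ in the length $|\cdot|_{\al-1}$ (or in some well-founded complexity measure refining it). Iterating, the process terminates in a word reduced in $G_\al$.

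To make the termination argument work one cannot simply use ordinary word length: applying a relator of rank $\al$ does not decrease $|\cdot|_0$ in general. Instead I would track the pair $(|X|_{\al-1}, \text{(secondary measure)})$ under the lexicographic order, where the secondary measure controls the finitely many ties. The point of the small-cancellation hypotheses is exactly to guarantee that each one-step reduction does strictly decrease $|\cdot|_{\al-1}$: if $X = A u S v B$ with $|S|_{\al-1} > \rho|R|_{\al-1}$, then writing $R = S S'$ (cyclically) with $|S'|_{\al-1} < (1-\rho)|R|_{\al-1} = 9\la|R|_{\al-1}$, we replace $uSv$ by $u (S')^{-1} v$ (conjugating bridge words appropriately), and since $9\la < 1$ and $S'$ is shorter than $S$, the total $|\cdot|_{\al-1}$-length drops. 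The subtlety is that $|\cdot|_{\al-1}$ is only \emph{semi}-additive, so $|A u (S')^{-1} v B|_{\al-1}$ need not equal $|A|_{\al-1} + |u(S')^{-1}v|_{\al-1} + |B|_{\al-1}$; one must check it cannot \emph{increase}, which follows because any fragmentation of the new word restricts to a fragmentation of the pieces. This is the kind of bookkeeping that Proposition~\ref{pr:reduction} is designed to absorb, so at this level I would simply cite it.

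For the second assertion — every element of $G$ is represented by a word reduced in $G$ — the naive hope is to take a limit of the rank-by-rank reductions, but this requires that reducing at rank $\al+1$ does not destroy reducedness at rank $\al$. That is precisely what Corollary~\ref{co:absolute-reduction} must provide: a single word that is simultaneously reduced in all $G_\al$. Concretely, I would argue that for a fixed element $g$ of $G = \varinjlim G_\al$ (equivalently $g \in G_\al$ for all large $\al$ since the presentation adds relators of ever-higher rank), the reductions stabilize — once a word is reduced in $G_\al$, applying rank-$(\al+1)$ reductions keeps it reduced in $G_{\le \al}$ by monotonicity of the complexity measures across ranks (the $\ze$-rescaling in~\ref{ss:alpha-length} is built so that lower-rank structure is preserved) — and the stable word is reduced in $G$. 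The main obstacle, and the place where I expect all the real work to sit, is establishing that a single reduction step strictly decreases complexity while preserving reducedness in all lower ranks: this is a genuine interaction between the semi-additivity of $|\cdot|_\al$, the bridge-word bookkeeping in $\cH_\al$, and the small-overlapping hypothesis (S2), and it is exactly the content the paper isolates as Proposition~\ref{pr:reduction}. Given that proposition, Theorem~\ref{th:absolute-reduction} is a short induction as sketched above.
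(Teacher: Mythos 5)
There is a genuine gap in the sketch, and the gap is precisely the point where you write ``I would simply cite it.'' The theorem does follow formally from Proposition~\ref{pr:reduction} and Corollary~\ref{co:absolute-reduction}, but the mechanism you describe for Proposition~\ref{pr:reduction} — iterate a single surgery (replace a long piece $S$ of a relator $R$ by the complementary short piece, then argue termination via strict decrease of $|\cdot|_{\al-1}$) — is not the paper's argument and does not close as stated. After the replacement $AuSvB \rightsquigarrow Au(S')^{-1}vB$ the resulting word is not reduced in $G_{\al-1}$, so to iterate you must first pass to a reduced-in-$G_{\al-1}$ representative, and nothing controls how that reduction changes $|\cdot|_{\al-1}$. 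Two words representing the same element of $G_{\al-1}$ need not have comparable $|\cdot|_{\al-1}$; the comparison the paper does prove (Proposition~\ref{pr:length-compare-close}) is only for \emph{close} words, a much stronger hypothesis than mere equality in $G_{\al-1}$. So your proposed monotone quantity is not monotone, and the termination argument is unsupported.

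The deeper structural issue is that the paper's one-step lemma, Lemma~\ref{lm:reduction-step}, needs an \emph{a priori upper bound} $\muf(K)\le 1-3\la-5\om$ on rank-$\al$ fragments in the input word, and a word that is merely reduced in $G_{\al-1}$ may well have rank-$\al$ fragments of size close to or even exceeding $1$ (fragments are modeled on subwords of $R^k$, not just of $R$). Starting ``from some word $X$ representing $g$ that is reduced in $G_{\al-1}$'' gives you no such bound, so the surgery cannot be guaranteed to land inside the regime where the post-surgery estimate $\muf(M)<\frac12+2\la+15\om$ holds. The paper avoids this entirely: Proposition~\ref{pr:reduction} is proved by a letter-by-letter bootstrap, not by any termination argument. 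It builds the representative of $g$ by appending generators one at a time; Lemma~\ref{lm:reduction-adding-letter} shows that appending one letter to a reduced-in-$G_\al$ word and then reducing in $G_{\al-1}$ yields rank-$\al$ fragments bounded by $\rho+6.2\om$, which sits comfortably below $1-3\la-5\om$, after which \emph{one} application of Lemma~\ref{lm:reduction-step} (which does all the surgeries on maximal large fragments simultaneously, not one at a time) restores the strengthened reducedness with fragments $<\frac12+2\la+15\om$. That bootstrap, which never produces a word with large uncontrolled fragments, is the idea missing from your proposal.
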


Many other properties of groups $G_\al$ and $G$ are established in 
Sections \ref{s:diagrams}--\ref{s:compare-close}.
Our principal result shows that
our version of the iterated small cancellation theory can be applied to 
free Burnside groups of odd exponent $n$ with a moderate lower bound on $n$.
The following theorem is a consequence of 
Propositions \ref{pr:G-small-cancellation} and Corollary \ref{co:G-Burnside} 
(see also Remark \ref{rm:G-normalized}).

\begin{theorem} \label{th:main}
For odd $n > 2000$ and $m \ge 2$, 
the free Burnside group $B(n,m)$ has a normalized graded presentation 
$$
    \bpresn{\cA}{C^n= 1 \ (C \in \bigcup_{\al\ge 1} \cE_\al)}
$$
satisfying conditions (S0)--(S3) with $\la = \frac{80}{n}$, $\Om = 0.25 n$. 
\end{theorem}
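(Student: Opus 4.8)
The plan is to construct the graded presentation of $B(m,n)$ by the standard induction on rank, at each stage choosing the relators of rank $\al$ to be $n$-th powers $C^n$ of carefully selected ``periodic'' words, and then verify that conditions (S0)--(S3) hold with $\la = 80/n$, $\Om = n/4$. First I would set up the induction: assume $G_{\al-1}$ has been defined and that the presentation up through rank $\al-1$ is normalized and satisfies (S0)--(S3); then by the machinery of Sections \ref{s:diagrams}--\ref{s:compare-close} (which I may invoke), $G_{\al-1}$ enjoys the Dehn-type property of Theorem \ref{th:reduced-nontrivial}, existence of reduced representatives (Theorem \ref{th:absolute-reduction}), and the structural consequences mentioned in \ref{ss:no-inverse-conjugation}, in particular that finite subgroups of $G_{\al-1}$ are cyclic. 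The key definition is then: let $\cE_\al$ be a maximal set of words $C$ that are cyclically reduced in $G_{\al-1}$, represent non-powers in $G_{\al-1}$, have $|C|_{\al-1}$ lying in a window forcing $|C^n|_{\al-1} \ge \Om = n/4$ while not being ``too long'' (so that shorter relations get used first), are pairwise non-conjugate in $G_{\al-1}$, and such that no $C$ is already conjugate in $G_{\al-1}$ to a word built from lower-rank relators — i.e. we only introduce a relator when the corresponding element still has infinite order in $G_{\al-1}$. This is exactly the Novikov--Adian/Ol'shanskii style choice of period words; normalization (Definition \ref{df:normalized-presentation}) is built in by construction, and (S3) follows from Corollary \ref{co:no-involutions-no-inverse-conjugates} since $n$ is odd and $C$ is a non-power.

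The main verification is (S2), the small-overlapping condition, and this is where I expect the real work to be. Suppose $S_1, S_2$ are starting segments of relators $R_i = C_i^n$ of rank $\al$ with $S_1 = u S_2 v$ in $G_{\al-1}$, $u,v \in \cH_{\al-1}$, and $|S_1|_{\al-1} \ge \la |R_1|_{\al-1} = \la n |C_1|_{\al-1}$. Since $\la n = 80$, the segment $S_1$ contains (up to closeness in rank $\al-1$) at least a chunk of length $\ge 80 |C_1|_{\al-1}$, hence many full periods $C_1$; similarly for $C_2$. The plan is to run the standard ``long common periodic subword forces equal periods'' argument, but carried out inside $G_{\al-1}$ rather than the free group: using the length function $|\cdot|_{\al-1}$, the bridge words $\cH_{\al-1}$, and the inductively available small-cancellation geometry (van Kampen diagrams over $G_{\al-1}$ with the estimates from Sections \ref{s:diagrams}--\ref{s:compare-close}), one shows that a word close in rank $\al-1$ to a long power of $C_1$ and simultaneously to a long power of $C_2$ forces $C_1$ and $C_2$ to have a common power in $G_{\al-1}$, whence — both being non-powers — they are conjugate in $G_{\al-1}$ by some $w$, and one can arrange $w \in \cH_{\al-1}$; by the normalization (distinct relators in $\cE_\al$ are non-conjugate) this means $C_1$ and $C_2$ are the same chosen period up to conjugacy, giving $R_1 = u R_2 u^{-1}$ in $G_{\al-1}$ with the correct conjugator. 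Pinning down that the overlap length threshold $\la = 80/n$ is compatible with the window chosen for $|C|_{\al-1}$ and the period-comparison constants is the delicate numerical point, and it is what drives the bound $n > 2000$.

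Conditions (S0) and (S1) are comparatively routine: (S0) holds because we explicitly require each $C \in \cE_\al$ to be cyclically reduced in $G_{\al-1}$ and then need the easy fact that a power of a cyclically reduced (in $G_{\al-1}$) non-power is again cyclically reduced in $G_{\al-1}$ — this uses the period-comparison lemma again in a mild form. For (S1), with $|C|_{\al-1}$ chosen in the prescribed window one gets $|C^n|_{\al-1} \ge \Om = 0.25 n$ by semi-additivity of $|\cdot|_{\al-1}$ together with a lower bound of the shape $|C^n|_{\al-1} \ge \tfrac{n}{4}|C|_{\al-1} \ge \tfrac{n}{4}$ coming from the fact that a long power cannot be fragmented too cheaply (again a consequence of the rank $\al-1$ theory). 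Finally, after all ranks are defined, I would argue that the resulting group is exactly $B(m,n)$: every freely reduced nonempty word $w$ with $w^n \ne 1$ trivially in lower rank either already dies in some $G_{\al-1}$ or a conjugate of its cyclically reduced core gets thrown in as a root at some finite rank, so $w^n = 1$ in $G$; conversely $G$ satisfies $x^n = 1$ by construction, and $G$ is nontrivial/infinite because reduced words are nontrivial (Theorem \ref{th:reduced-nontrivial}). That last surjectivity/exhaustion step is clean; the crux remains the rank-$\al$ instance of (S2), i.e. transporting the classical periodic-word combinatorics across the inductive quotient $G_{\al-1}$.
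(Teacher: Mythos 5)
Your proposed construction — selecting $\cE_\al$ as a maximal conjugacy-class transversal of cyclically reduced non-powers whose $|C|_{\al-1}$ lies in a fixed \emph{length window} — is not what the paper does, and the difference is not merely cosmetic. The paper explicitly contrasts the two strategies in the remarks in Section~\ref{s:main-results}: the length-window selection is attributed to the line of \cite{Ols82,Ols91,Iva94,IvaOls96,DelGro08,Cou14}, whereas the present paper uses a ``classification of periodic words by depth of periodicity'' (Definitions~\ref{df:start-suspended}--\ref{df:elementary-periods}): one takes \emph{all} simple periods over $G_\al$, with no length restriction, and excludes the hierarchy of \emph{suspended} periods of every level $m \ge 0$. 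Your outline of (S2) is broadly in the right spirit (a long common coarsely periodic subword forces parallel periodic lines), but the way the paper makes this go through is by baking the conclusion into the definition: a period that is close in $G_\al$ to a long periodic word with a non-conjugate period is declared suspended of level $0$ and simply never enters $\cE_{\al+1}$, so (P2) — and hence (S2)$_{\al+1}$ via Proposition~\ref{pr:length-in-periods} — is nearly immediate from Propositions~\ref{pr:close-parallel-one-period} and~\ref{pr:G-small-cancellation}.

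The genuine gap is in your treatment of (S1). You describe it as ``comparatively routine,'' asserting $|C^n|_{\al-1} \ge \tfrac{n}{4}|C|_{\al-1}$ as if it followed by semi-additivity and a generic ``long powers are not cheaply fragmentable'' principle. This is precisely the hard part of the construction, and it does \emph{not} hold for an arbitrary simple period over $G_{\al-1}$: a period $A$ that is close to a long $B$-periodic word, where $B$ is in turn close to a long $C$-periodic word, and so on down, can have powers that are fragmented far more cheaply than a factor of $\tfrac14$. Guaranteeing the lower bound $[A]_{\al} \ge 0.25$ (condition (P1), equivalently Proposition~\ref{pr:period-is-large}) is exactly what requires the recursive suspension of all levels $m \ge 1$ (Definition~\ref{df:next-suspended}) together with the entire Section~\ref{s:hierarchical-containment} — hierarchical containment of coarsely periodic words (Proposition~\ref{pr:hierarchical-containment}), the counting Lemma~\ref{lm:hierarchical-length-step}, the exclusion Lemma~\ref{lm:containment-coarsely-periodic}, and the coarse-period comparison Lemma~\ref{lm:coarsely-periodic-closeness-step}. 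Your plan has no substitute for this machinery; the length window you posit cannot by itself prevent a chosen $C$ from having nested near-periodicities that collapse $|C^n|_{\al-1}$. As a result, the proposal neither matches the paper's argument nor supplies an independent route to (S1) at the bound $n>2000$.
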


The following theorem is a 
well known property of Burnside groups of sufficiently large odd exponent.
It is direct consequence of Propositions \ref{pr:nonidentity-active-piece} and \ref{pr:length-in-periods} 
(the definition of $\om$ is given in \ref{ss:extra-parameters}).

\begin{theorem} \label{th:trivial-has-periodic}
Let $n > 2000$ be odd. 
Let $X$ be a non-empty freely reduced word that is 
equal~1 in $B(m,n)$.
Then $X$ has a subword of the form $C^{480}$ where 
$C \in \bigcup_{\al\ge 1} \cE_\al$.
\end{theorem}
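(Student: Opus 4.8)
The plan is to combine Theorem \ref{th:main}, which gives a normalized graded presentation of $B(m,n)$ satisfying (S0)--(S3) with $\la = 80/n$ and $\Om = 0.25n$, with the Dehn-type corollary stated after Theorem \ref{th:reduced-nontrivial}. First I would invoke that corollary: since $X$ is freely reduced, non-empty, and $X = 1$ in $B(m,n) = G$, there is some rank $\al \ge 1$ such that $X$ has a subword that is close in $G_{\al-1}$ to a subword $P$ of a relator $R$ of rank $\al$ with $|P|_{\al-1} \ge \rho |R|_{\al-1}$. By the shape of the presentation in Theorem \ref{th:main}, $R = C^n$ for some $C \in \cE_\al$, and since the presentation is normalized, $C$ is (a cyclic shift of) the root $R_0$ and is a non-power in $G_{\al-1}$. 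Using $\la = 80/n$ we get $\rho = 1 - 9\la = 1 - 720/n$, so $|P|_{\al-1} \ge (1 - 720/n)\,|R|_{\al-1}$.

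The next step is to translate this large-fraction-of-a-relator statement, which is phrased in terms of the rescaled length $|\cdot|_{\al-1}$ and closeness via bridge words, into a statement about the \emph{literal} word $X$ containing a long periodic subword $C^{480}$. This is exactly the content that Theorem \ref{th:trivial-has-periodic} attributes to Propositions \ref{pr:nonidentity-active-piece} and \ref{pr:length-in-periods}. Concretely, I expect the argument to run as follows. First, because $P$ occupies at least a $\rho$-fraction of $R = C^n$ in the $|\cdot|_{\al-1}$-metric, and $|R|_{\al-1} \ge \Om = 0.25n$ by (S1), the fragment $P$ must wrap around $C$ many times: the number of whole periods it covers is at least roughly $\rho n$ minus a bounded error coming from the two ``bridge'' ends $u, v \in \cH_{\al-1}$ and from the passage between $|\cdot|_{\al-1}$-length and the period count. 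Second, the subword of $X$ that is close in rank $\al-1$ to $P$ must itself contain a literal power $C^t$ with $t$ large: this is where one uses that the root $C$ represents a non-power in $G_{\al-1}$ together with whatever ``periodicity is preserved under closeness'' lemma the paper proves (the role of Proposition \ref{pr:length-in-periods}, and the parameter $\om$ introduced in \ref{ss:extra-parameters}, presumably quantifies exactly how periodicity in rank $\al-1$ forces literal periodicity). Tracking the constants with $n > 2000$, $\la = 80/n$, $\rho = 1 - 720/n$, $\ze = 1/20$ should comfortably yield an exponent of at least $480$; the choice of $480$ is presumably a safe round number well below the true bound.

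The main obstacle, and the step I would expect to consume most of the work, is the second part above: passing from ``$X$ has a subword close in $G_{\al-1}$ to a long power of $C$'' to ``$X$ literally contains $C^{480}$''. Closeness is only up to multiplication by bridge words and up to equality in the lower-rank group $G_{\al-1}$, so a priori the subword of $X$ need not be literally periodic at all — one needs the structural results of Sections \ref{s:fragments}--\ref{s:compare-close} (in particular something like Proposition \ref{pr:nonidentity-active-piece}, which I read as saying that a fragment contains a genuinely ``active'' periodic core that survives to rank $\al-1$ and ultimately to rank $0$) to extract a literal period. Controlling the bridge-word overhead — bounding $|u|_{\al-1}, |v|_{\al-1}$ and hence the number of periods they can absorb — is the quantitative heart of this, and it is where the hypothesis $n > 2000$ (equivalently $\la\Om \ge 20$ with room to spare) gets used to guarantee that after subtracting all losses at least $480$ clean periods of $C$ remain inside $X$ itself.
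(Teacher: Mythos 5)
Your plan matches the paper's in structure: Proposition~\ref{pr:nonidentity-active-piece} produces a literal periodic piece inside $X$, and Proposition~\ref{pr:length-in-periods} converts its $\mu$-measure into a period count. But the quantitative picture you sketch is off by roughly a factor of three, and the error traces to a misreading of the extraction mechanism. You expect the $\rho$-fraction from the Dehn corollary to survive to a literal piece with roughly $\rho n \approx n-720 > 1280$ periods. It does not. Proposition~\ref{pr:nonidentity-active-piece} takes the \emph{minimal} rank $\beta$ for which $X$ has a fragment of rank $\beta$ of size at least $\xi_0 = 7\lambda - 1.5\omega$; the $\rho$-sized fragment from the corollary merely guarantees that such a $\beta$ exists, while the fragment at the minimal rank need have size only $\ge\xi_0 = 554/n$ (taking $\lambda=80/n$, $\omega=4/n$), a vanishing fraction of $\rho\approx 1$. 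At that minimal rank there are no lower-rank fragments of size $\ge\xi_0$, so Corollary~\ref{co:no-active-fragments-iterated}, iterated down to rank $0$ where closeness is equality (Remark~\ref{rm:close-in-rank-0}), turns the base of that fragment into a literal subword $P$ of $X$ with $\mu(P) > \xi_0 - 2.06\omega > 136\omega = 544/n$. This extraction has nothing to do with $C$ being a non-power, and Proposition~\ref{pr:length-in-periods} does not perform it --- it only converts the resulting $\mu(P)$ into a period count afterwards.

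Consequently $480$ is not a ``safe round number well below the true bound.'' Plugging $\mu(P) > 544/n$ into the bound $\mu(A^t) < 1.12\,t/n$ of Proposition~\ref{pr:length-in-periods} forces $|P| \ge 485\,|C|$, hence $X$ literally contains $C^{484}$, and $480$ is just rounding down. There is no slack of order $\rho n$ to be had: the $\rho$-mass is lost wholesale in the descent to the minimal rank, not merely eroded by bounded bridge-word overhead at the two ends as you suggest.
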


Note that, with  existence of infinite aperiodic words in the 2-letter alphabet
(see for example \cite[\S I.3]{Adi75})
this implies infiniteness of $B(n,m)$ for odd $n > 2000$ and $m\ge 2$.

\subsection*{Some remarks}
The present approach has much in common with paper \cite{Lys96}. However, the approach in \cite{Lys96} 
was based on  the assumption that defining relations of the group under consideration are of the  form $x^n=1$ for sufficiently large $n$. 
Although the general scheme of a large portion of our proofs is the same as in \cite{Lys96}, 
our arguments are in different technical environment.

We tried to make the iterated small cancellation condition as simple possible. 
In particular, we use a simple version of closeness in groups $G_\al$ 
(see \ref{ss:bridge-word} and \ref{ss:close-words}).
However, when presenting the free Burnside
group as an iterated small cancellation group, this version is not optimal for the bound on the exponent. 
A more refined version would significantly lower the bound. 
Nevertheless, we consider the bound $n > 2000$ on the exponent as a 
reasonable balance between its optimality and the complexity of definitions and proofs. 

The whole approach relies essentially on the simultaneous induction on the rank $\al$. 
Since the proof of required statements about groups $G_\al$ needs a comprehensive
analysis of certain types of relations in groups of previous ranks, the number of 
inductive hypotheses in quite large (several tens). We think that a large number of inductive hypotheses
is an unavoidable feature of any ``small cancellation'' approach to infinite Burnside groups 
with a reasonably small lower bound on the exponent. 
Note that in the ``basic'' small cancellation theory in Sections 
\ref{s:diagrams}--\ref{s:diagram-bounds} we use 
Proposition \ref{pr:principal-bound-sides}
(with its immediate consequence Proposition \ref{pr:principal-bound})
as the only inductive hypothesis.

We briefly mention essential ingredients of our approach.

Sections \ref{s:diagrams}--\ref{s:diagram-bounds} are devoted to analysis of van Kampen diagrams over the presentation \eqref{eq:G-al-presn} of the group $G_\al$.
In \ref{df:boundary-marking} we introduce diagrams with a special marking of the boundary 
so that the boundary loops
of a diagram are divided into sides and bridges. 
The label of a side is a word reduced in $G_\al$ and bridges
are ``small'' sections between sides labeled by bridge words of rank $\al$.
According to the marking, there are diagrams of bigon, trigon, etc.\ type.
We then analyze a global structure of a diagram with marked boundary using the notion of
contiguity subdiagram (see \ref{df:contiguity-subdiagram}). 
For the quantitative analysis, we use a version of discrete connection in the spirit of 
\cite{Lys18} and the corresponding discrete analog 
of the Gauss-Bonnet formula (Proposition \ref{pr:Gauss-Bonnet}).
The main outcomes are the bound on the total size of sides of a diagram with no 
bonds (Propositions \ref{pr:principal-bound} and ~\ref{pr:small-trigons-tetragons})
and the ``single layered'' structure of diagrams of small complexity 
(Propositions ~\ref{pr:single-layer} and ~\ref{pr:small-complexity-cell}).

The results of Sections \ref{s:diagrams}--\ref{s:diagram-bounds}
serve as a background for further analysis of relations in ~$G_\al$.
The most important type of relations under consideration are ``closeness'' relations in $G_\al$ of 
the form $X = uYv$ where $X,Y \in \cR_\al$ and $u,v \in \cH_\al$.
The structural description of diagrams over the presentation of $G_\al$ 
transfers naturally to the language of the Cayley graph $\Ga_\al$ of ~$G_\al$, 
see \ref{ss:active-fragments}.
In $\Ga_\al$, words in the generators of the group are represented by paths and 
relations in ~$G_\al$ are represented by loops.
The relation $X = uYv$ becomes a loop $!X^{-1} !u !Y !v$ in ~$\Ga_\al$ which 
can be viewed as a coarse bigon; we say also that paths $!X$ and $!Y$ are close.
The single layered structure of the filling diagram implies 
one-to-one correspondence between 
fragments of rank ~$\al$ in $!X$ and in $!Y$ that
come from the 2-cells of the diagram, called {\em active} fragments
of rank ~$\al$ with respect to the coarse bigon $!X^{-1} !u !Y !v$.
To express the correspondence,
we use the {\em compatibility} relation, defined in \ref{df:fragment-compatibility},
on the set of fragments of rank $\al$ in $\Ga_\al$
(i.e.\ paths in $\Ga_\al$ labeled by fragments of rank ~$\al$): 
if $!K$ and $!M$
are the corresponding active fragments of rank ~$\al$ in $!X$ and $!Y$, respectively, then
$!K$ and  $!M^{-1}$ are compatible (Proposition \ref{pr:active-large}).

In Section \ref{s:relations} we perform this passage from diagrams over the presentation 
of $G_\al$ to the Cayley graph $\Ga_\al$. We establish several properties of coarse bigons, trigons
and more generally, coarse polygons in $\Ga_\al$. 
We consider also conjugacy relations in $G_\al$ which are represented by
parallel infinite lines in $\Ga_\al$ (see \ref{ss:relations}).

A fundamental property of close paths $!X$ and $!Y$ in $\Ga_\al$ 
with $\lab(!X), \lab(!Y) \in \cR_\al$ is that 
the correspondence between fragments of rank $\al$ in $!X$ and $!Y$
extends to non-active ones. If $!K$ is a fragment in $!X$
of sufficiently large size then there exists a fragment of $!M$ of rank $\al$ in $!Y$
such that $!K$ is compatible with either $!M$ or $!M^{-1}$, with possible exceptions 
of extreme positions of $!K$ in $!X$
(Proposition ~\ref{pr:fragment-stability-bigon}).
Speaking informally, fragments of rank $\al$ play the role of letters when coincidence of words
is replaced by closeness in $G_\al$. This property of close paths $!X$ and $!Y$ in $\Ga_\al$
and its analogs for coarse trigons in $G_\al$ (Proposition \ref{pr:fragment-stability-trigon})
and for conjugacy relations in $G_\al$ (Propositions \ref{pr:fragment-stability-cyclic} 
and \ref{pr:fragment-stability-cyclic1}) 
provide a technical base to analyze further properties of groups $G_\al$ and $G$. 
In particular, the correspondence between fragments of rank $\al$ in coarse bigons,
under an appropriate adaptation, is crucial 
when we consider in Section \ref{s:overlapping-periodicity} close in $G_\al$ periodic words.

In Section \ref{s:reduction} we prove that any element of $G_\al$ can be represented by a reduced
word (Proposition \ref{pr:reduction}) and is conjugate to an element represented by a cyclically reduced
word and, moreover, by a strongly cyclically reduced word if it has infinite order 
(definition \ref{ss:reduced-words}, Proposition ~\ref{pr:cyclic-reduction}).

Sections \ref{s:coarsely-periodic} and \ref{s:overlapping-periodicity} are preparatory for 
analysis of periodic relations over $G_\al$. In Section  ~\ref{s:coarsely-periodic} we introduce
the set of {\em coarsely periodic words} over $G_\al$ which are close (in a stronger sense
then defined in \ref{ss:close-words}) to periodic words with a strongly reduced in $G_\al$ period
(Definition ~\ref{df:coarsely-periodic-segment}).
The main result of Section \ref{s:overlapping-periodicity}, 
Proposition \ref{pr:coarsely-periodic-overlapping-general}, is an analog
of a well known property of periodic words stating that if two periodic words
have a sufficiently large overlapping (for example, 
if they overlap for at least two of each of the periods) then they have 
a common period. 

In the last two Sections \ref{s:elementary-periods} and \ref{s:hierarchical-containment}
we define a set of defining relations of the form $C^n = 1$ $(C \in \bigcup_{\al\ge 1} \cE_\al)$
for the Burnside group $B(m,n)$
and prove that this set satisfies the iterated small cancellation condition (S0)--(S3).
More precisely, in Definitions \ref{df:start-suspended}--\ref{df:elementary-periods} we describe the recursive step to define $\cE_{\al+1}$ given $\cE_\be$ for $\be\le\al$, i.e.\ given the presentation 
of $G_\al$. The principal idea to build sets $\cE_\al$ can be roughly described as ``classification of 
periodic words by depth of periodicity'' and is similar to one used in \cite{NovAdi68,Adi75}.
Note that other approaches \cite{Ols82,Ols91,Iva94,IvaOls96,DelGro08,Cou14} 
to groups of ``Burnside type'' use 
construction of periodic relations $C^n = 1$ where for the next rank, $C$ are chosen to be ``short in size''
with respect to the current group. 
We believe that the ``depth of periodicity'' approach, allthough more technical in several aspects, gives
a more optimal lower bound on the exponent $n$.

\section{Preliminaries}

Starting from Section ~\ref{s:diagrams} we assume fixed a value of rank $\al \ge 0$
and  a presentation 
\eqref{eq:G-al-presn} of a group $G_\al$ with relators $R \in \cX_\be$ 
defined for all ranks $\be \le \al$.
We assume that the presentation of $G_\al$ is normalized and satisfies conditions (S0)--(S3)
and inequalities \eqref{eq:ISC-main} for all ranks up to the fixed value $\al$.
In the proofs we will use forward references to statements
for smaller values of rank, as already established. 
We will use references like ``Proposition 2.3$_{\al-1}$'' or ``Lemma 3.4$_{<\al}$'' etc.\ which mean
``statement of Proposition 2.3 for rank $\al-1$'' or 
``statement of Lemma 3.4 for all ranks $\be < \al$'' respectively.
With a few exceptions, statements whose formulation includes the case $\al=0$, are trivial or follow directly
from definitions in that case. 

\subsection{Words} \label{ss:words}
We fix a set $\cA$ of generators for a group $G$.
By a word we always mean a group word over the alphabet $\cA^{\pm1} = \cA \cup \setof{a^{-1}}{a \in \cA}$.
We use notation $X \eqcirc Y$ for identical equality of words $X$ and $Y$. By
$X^\circ$ we denote the cyclic word represented by a plain word ~$X$.

A {\em subword} $Y$ of a word $X$ is always considered with an associated occurrence of ~$Y$ in $X$
that is clear from the context. To make it formal,
we associate with a subword $Y$ of $X$ a pair of words $(U, V)$ such that $UYV \eqcirc X$.
If $Y$ is a subword of $X$ with an associated pair $(U, V)$ then
writing $Y \eqcirc WZ$ we mean that $W$ and $Z$ are viewed as subwords of $X$ with associated 
pairs $(U, ZV)$ and $(UW, V)$ respectively.
Note that `subword $Y$ of $X_1$' and `subword $Y$ of $X_2$' are formally two distinct objects if 
$X_1\ne X_2$. It will be always clear from the context which ambient word is assumed for $Y$.

A {\em periodic word with period $A$}, or an {\em $A$-periodic word} for short,
is any subword of  $A^t$ for $t > 0$.
According to the convention about subwords,  an $A$-periodic word $P$ is always considered with an associated occurrence of $P$ in a word $A^t$.

A {\em partition} of a word $X$ is a representation of $X$ as concatenation $X = X_1 \cdot X_2 \cdot \ldots \cdot X_k$ of some subwords $X_i$.
A word $X$ is {\em covered} by a collection of words $(Y_i)_i$ if $X$ admits a partition $X = X_1 \cdot X_2 \cdot \ldots \cdot X_k$
such that $X_i$ is a subword of some $Y_{t_i}$ and $t_i \ne t_j$ for $i \ne j$.

\subsection{Graphs} \label{ss:Cayley}

We use the term `graph' as a synonym for `combinatorial 1-complex'. Edges of a graph are considered
as having one of the two possible directions, so formally all our graphs are directed.
By $\io(!e)$ and $\tau(!e)$ we denote the starting and the ending vertices
of an edge $!e$, respectively, and $!e^{-1}$ denotes the inverse edge.
An {\em $\cA$-labeling} on a graph $\Ga$ is a function from the set of edges of $\Ga$ with values in 
$\cA^{\pm1} \cup \set{1}$ such that $\lab(!e^{-1}) = \lab(!e)^{-1}$ for any $!e$; here $1$ denotes the empty word.
An $\cA$-labeling naturally transfers to paths in $\Ga$, so the label of a path $!P$ is a word in $\cA^{\pm1}$.
If $!P$ is a path in ~$\Ga$ then $\io(!P)$ and $\tau(!P)$ denote the starting and the ending vertices
of $!P$, respectively. For any vertex ~$!a$ of $\Ga$, there is the unique {\em empty path at $!a$}.
We identify this empty path with vertex ~$!a$ itself, so 
$\io(!a) = \tau(!a) = !a$ and $\lab(!a) = 1$.
A path is {\em simple} if it visits no vertex twice.
Two paths are {\em disjoint} if they have no common and no mutually inverse edges. 
A {\em line} in $\Ga$ is a bi-infinite path (we do not assume that lines have no loops).

If $!X$ and $!Y$ are subpaths of a simple path $!Z$ then we write $!X \ll !Y$ if $!Z = !Z_1 !X !Z_2 !Y !Z_3$ 
for some $!Z_i$
and $!X < !Y$ if $!Z = !Z_1 !X !u !Z_2 = !Z_1 !v !Y !Z_2$ for some $!Z_i$ and non-empty $!u$ and $!v$. 
Although both relations depend on $!Z$, it will be always clear from the context which $!Z$ is assumed.
Clearly, if neither $!X$ and $!Y$ is contained in the other then  either $!X < !Y$ or $!Y < !X$.
The {\em union} $!X \cup !Y$ of subpaths $!X$ and $!Y$ of $!Z$ is the shortest subpath of $!Z$ containing 
both $!X$ and ~$!Y$.

The Cayley graph $\Ga(G,\cA)$ of a group $G$ with a generating set $\cA$ 
is naturally viewed as an $\cA$-labeled graph.
We identify vertices of $\Ga(G,\cA)$ with elements of $G$, so if $\io(!P) = !a$ and $\tau(!P) = !b$
then $\lab(!P)$ is a word representing $!a^{-1} !b$.

The group $G$ acts on $\Ga(G,\cA)$ by left multiplication.

A path $!P$ in $\Ga(G,\cA)$ labeled by an $A$-periodic word is an {\em $A$-periodic segment}.
An {\em $A$-periodic line} is a bi-infinite path labeled by $A^\infty$.
Since an $A$-periodic word is assumed to have an associated occurrence in some $A^t$, 
an $A$-periodic segment ~$!P$ can be uniquely extended to an $A$-periodic line called the 
{\em infinite periodic extension} of $!P$. If $!P$ and $!Q$ are $A$-periodic segments, $!P$ is
a subpath of $!Q$ and the both have the same infinite periodic extension then $!Q$ is a 
{\em periodic extension} of $!P$.

We define also the {\em translation element} $s_{A,!P} \in G$ that shifts 
the infinite periodic extension ~$!L$ of $!P$ forward by a period $A$. 
By definition, $s_{A,!P}$ can be computed as follows. 
Take any vertex ~$!a$ on $!L$ such that the label of $!L$ at $!a$ starts with $A$.
Then $s_{A,!P} = !a A !a^{-1}$.

If $!L_1$ and $!L_2$ are two periodic lines with periods $A_1$ and $A_2$ respectively
then $!L_1$ and $!L_2$ are {\em parallel} if 
$s_{A_1,!L_1} = s_{A_2,!L_2}$. 

\subsection{Mapping relations in the Cayley graph} \label{ss:relations}
It follows from the definition of the Cayley graph that a word $X$ 
in the generators $\cA$ represents the identity of $G$ if and only if some (and therefore, any)
path ~$!X$ in $\Ga(G,\cA)$ 
with $\lab(!X) \greq X$ is a loop. Thus relations in $G$ are represented by loops in $\Ga(G,\cA)$.
This representation will be our basic tool to analyze relations in a group using geometric
properties of its Cayley graph.

We will often use the following notational convention. 
If $X_1 X_2 \dots X_n = 1$ is a relation in a group $G$ then we represent it by a loop $!X_1 !X_2 \dots !X_n$ 
in the Cayley graph of $G$ typed with the same letters in sans serif where, by default, $\lab(!X_i) \greq X_i$ for all $i$.

We represent also conjugacy relations in $G$ by parallel periodic lines in $\Ga(G,\cA)$ as follows.
Let $X = Z^{-1} Y Z$ in ~$G$. Consider a loop $!X^{-1} !Z^{-1} !Y !Z'$ in $\Ga(G,\cA)$ with $\lab(!X) \greq X$, $\lab(!Y) \greq Y$ 
and $\lab(!Z) \greq \lab(!Z') \greq Z$. We extend $!X$ to an $X$-periodic line $!L_1 = \dots !X_{-1} !X_0 !X_1 \dots$
with $\lab(!X_i) \greq X$ and $!X_0 = !X$ and, in a similar way, extend $Y$ to a $Y$-periodic line 
$!L_2 = \dots !Y_{-1} !Y_0 !Y_1 \dots$ with $\lab(!Y_i) \greq Y$ and $!Y_0 = !Y$.
Then we get a pair of parallel lines $!L_1$ and $!L_2$
that represents conjugacy of $X$ and $Y$ in $G$.

We will be freely switch between the language of paths in Cayley graphs and 
word relations. 

\subsection{Van Kampen diagrams} \label{ss:diagrams}
Let $G$ be a group with a presentation $\cP = \spresn{\cA}{\cR}$.
A {\em diagram ~$\De$ over ~$\cP$} is a finite 2-complex $\De$ embedded in $\R^2$ with a given $\cA$-labeling of
the 1-skeleton $\De^{(1)}$ such that the label of the boundary loop of every 2-cell of $\De$ is either empty, has the
form $a^{\pm1} a^{\mp1}$ for $a \in \cA$ or is a relator in $\cR^{\pm1}$.
Note that here we use an extended version of the widely used definition by allowing boundary loops of 2-cells labeled with empty word or freely cancellable pair of letters. 
This allows us to avoid technical issues related to singularities 
(see \cite[\S 11.5]{Ols91} or \cite[\S4]{Lys96}).

By default, all diagrams are assumed to be connected.

We refer to 2-cells of a diagram $\De$ simply as to {\em cells};
1-cells and 0-cells are {\em edges} and {\em vertices} as usual.
By $\de !D$ we denote the boundary loop of a cell ~$!D$ and by
$\de \De$ we denote the unique boundary loop of $\De$ in case when $\De$ is simply connected.
We fix an orientation of ~$\R^2$ and assume that boundary loops of cells of $\De$
and boundary loops of $\De$ are positively oriented with respect to the cell or to the diagram, respectively.
This means, for example, that $(\de !D)^{-1}$ is a boundary loop of the diagram $\De - !D$ obtained 
by removal of a cell $!D$ from ~$\De$. 
Note that boundary loops of $\De$ and of its cells are defined up to cyclic shift.

According to van Kampen lemma (\cite[Theorem V.1.1]{LynSch01} and \cite[Theorem 11.1]{Ols91}) 
a word $X$ in the generators $\cA$
represents the identity in $G$ if and only if there exists a simply connected diagram $\De$
over $\cP$ with $\lab(\de\De) \greq X$. Words $X$ and $Y$ represent conjugate elements of $G$
if and only if there exists an annular (i.e.\ homotopy equivalent to an annulus) diagram over ~$\cP$
with boundary loops $!X$ and $!Z$ such that $\lab(!X) \greq X$ and $\lab(!Z) \greq Y^{-1}$
 (\cite[Lemma V.5.2]{LynSch01} and \cite[Theorem 11.2]{Ols91}).
 
If $\Si$ is a subdiagram of $\De$ then $\De-\Si$ denotes the subdiagram of $\De$ obtained as the topological closure of the complement $\De \sm \Si$.

Let $\De$ and $\De'$ be diagrams over $\cP$ such that $\De'$ 
is obtained from $\De$
by either 
\begin{itemize}
\item 
contracting an edge $!e$ with $\lab(!e) \greq 1$ to a vertex,
\item
contracting a cell $!D$ with $\lab(\de !D) \greq 1$ to a vertex, or
\item
contracting a cell $!D$ with $\lab(\de !D) \greq a^{\pm1} a^{\mp1}$, $a \in \cA$, to an edge labeled $a^{\pm1}$.
\end{itemize}
We call the inverse transition from $\De'$ to $\De$ an {\em elementary refinement}. 
A sequence of elementary refinements is a {\em refinement}.

There are several common use cases for refinement:
\begin{itemize}
\item 
Any diagram can be made by refinement {\em non-singular}, i.e.\ homeomorphic to a punctured disk.
In particular, any simply connected diagram can be refined to a non-singular disk.
\item
If $!C$ is a boundary loop of $\De$ represented as a product $!C = !X_1 \dots !X_k$ of paths $!X_i$ then, after
refinement, the corresponding boundary loop of a new diagram $\De'$ becomes $!X_1' \dots !X_k'$ 
where each $!X_i$ refines to a nonempty path $!X_i'$ 
(see the definition in \ref{ss:combinatorially-continuous}).
\end{itemize}

\subsection{Combinatorially continuous maps of graphs}   \label{ss:combinatorially-continuous}
We consider the class of maps between $\cA$-labeled graphs
which are label preserving and can be realized as continuous maps 
of topological spaces. More precisely, a map $\phi: \La \to \La'$ between 
$\cA$-labeled graphs $\La$ and $\La'$ is {\em combinatorially continuous} if
\begin{itemize}
\item 
$\phi$ sends vertices to vertices and edges to edges or vertices;
for any edge $!e$ of $\La$, $\phi(!e)$ is a vertex only if $e$ has the empty label;
if $\phi(!e)$ is an edge then $\lab(\phi(!e)) = \lab(!e)$.
\item
if $\phi(!e)$ is an edge then $\phi$ preserves the starting and the ending vertices of $!e$; if $\phi(!e)$ is a vertex 
then $\phi(!e) = \phi(\io(!e)) = \phi(\tau(!e))$.
\end{itemize}

A combinatorially continuous map $\phi: \La \to \La'$ extends in a natural way to the map denoted also by
$\phi$, from the set of paths in $\La$ to the set of paths in $\La'$. Clearly, $\phi$ preserves 
path labels.

If a diagram $\De'$ is obtained from a diagram $\De$ by refinement then we have 
a combinatorially continuous map $\phi: \De'^{(1)} \to \De^{(1)}$ induced by 
the sequence of contractions $\De' \to \De$. If $!P$ is a path in $\De$ and $!P' = \phi(!P)$ then $!P$ {\em refines} to $!P'$.

\subsection{Mapping diagrams in Cayley graphs}
It is well known that simply connected diagrams can be viewed as combinatorial surfaces in the Cayley complex of a group.
Since we do not make use of two-dimensional structure, we adapt this view to the 
case of Cayley graphs.

If $\De$ is a simply connected diagram over $\cP$ then there exists a combinatorially continuous 
map $\phi: \De^{(1)} \to \Ga(G,\cA)$.
Any two such maps $\phi_1, \phi_2: \De^{(1)} \to \Ga(G,\cA)$ differ by translation by some element
$g \in G$, i.e.\ $\phi_1 = t_g \phi_2$ where $t_g : x \mapsto gx$ is the translation.

In particular, 
if $!X$ is a loop in $\Ga(G,\cA)$ and for the boundary 
loop $\bar{!X}$ of $\De$ we have $\lab(\bar{!X}) = \lab(!X)$ then there is a map 
$\phi: \De^{(1)} \to \Ga(G,\cA)$ such that $\phi(\bar{!X}) \greq !X$.
In this case we say that $\De$ {\em fills} $!X$ via $\phi$.

If $\De$ is not simply connected then we can consider a combinatorially continuous 
map $\phi: \ti\De^{(1)} \to \Ga(G,\cA)$ where $\ti\De$ is the universal cover of $\De$.
Again, any two such maps $\phi_1, \phi_2: \ti\De^{(1)} \to \Ga(G,\cA)$ differ by translation by an element of $G$.
The set $\set{!L_i}_i$ of boundary loops of $\De$ lifts to a (possibly infinite) 
set of bi-infinite boundary 
lines $\set{\ti{!L}_i^j}_{i,j}$ of $\ti\De$ and thus produces a set of lines $\set{\phi(\ti{!L}_i^j)}_{i,j}$
in $\Ga(G,\cA)$. Each $\phi(\ti{!L}_i^j)$ can be viewed as an $P_i$-periodic line with period $P_i =\lab(!L_i)$.
We will be interested mainly in the case when $\De$ is an {\em annular} diagram, i.e.
homotopy equivalent to a circle. In this case, boundary loops $!L_1$ and $!L_2$ of $\De$ produce
two $P_i$-periodic lines $\phi(\ti{!L}_i)$ $(i=1,2)$ in $\Ga(G,\cA)$ such that $\phi(\ti{!L}_1)$
and $\phi(\ti{!L}_2)^{-1}$ are parallel.

\begin{definition} \label{df:frame-type}
Let $\De$ and $\De'$ be diagrams 
of the same homotopy type over a presentation of a group $G$.
We assume that a label preserving 
bijection $!L_i \mapsto !L_i'$ is given between boundary loops of $\De$ and $\De'$
(which is usually clear from the context). 
We say that $\De$ and $\De'$ have the same {\em frame type} if there exist 
combinatorially continuous maps $\phi: \ti\De^{(1)} \to \Ga(G,\cA)$
and $\psi: \ti{\De}'^{(1)} \to \Ga(G,\cA)$ such that for each $i$ we have the same sets of lines
(or loops if $\De$ and $\De'$ are simply connected)
$\set{\phi(\ti{!L}_i^j)}_{j} = \set{\psi(\ti{!L}'^j_i)}_{j}$.
\end{definition}

The following two observations follow easily from the definition.

\begin{lemma} \label{lm:disk-annular-frame-type}
Two simply connected diagrams $\De$ and $\De'$ have the same frame type if and only if the labels of
their boundary loops are equal words.
 
Let $\De$ and $\De'$ be annular diagrams with boundary loops $\set{!L_1,!L_2}$ and $\set{!L_1',!L_2'}$.
Then $\De$ and ~$\De'$ have the same frame type if and only if the following is true.
Take any vertices $!a_i$ on $!L_i$ $(i=1,2)$ and let $!p$ be a path from $!a_1$ to $!a_2$ in $\De$.
Then there exist vertices $!a_i'$ on $!L_i'$ $(i=1,2)$ and a path $!p'$ from $!a_1'$ to $!a_2'$ in $\De'$
such that the label of $!L_i$ read at ~$!a_i$ and the label of $!L_i'$ read at $!a_i'$ are equal words
and $\lab(!p) = \lab(!p')$ in $G$.
\end{lemma}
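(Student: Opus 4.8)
The plan is to unwind Definition~\ref{df:frame-type} in the two cases separately, using the uniqueness-up-to-translation of combinatorially continuous maps into $\Ga(G,\cA)$ that was recorded just before the lemma. For the simply connected case, the key point is that a simply connected diagram has a single boundary loop $!L$, and any combinatorially continuous map $\phi : \ti\De^{(1)} \to \Ga(G,\cA)$ (here $\ti\De = \De$) sends $!L$ to a loop whose label equals $\lab(!L)$; conversely, given any loop $!X$ in $\Ga(G,\cA)$ with $\lab(!X) = \lab(!L)$ there is such a $\phi$ with $\phi(!L) = !X$ (this is exactly the ``fills'' construction). So the set $\{\phi(!L)\}$ is, up to the choice of base vertex, precisely the collection of loops in $\Ga(G,\cA)$ spelling the word $\lab(!L)$; since two maps differ by a translation $t_g$ and the lemma asks for the existence of maps $\phi,\psi$ with $\{\phi(!L)\} = \{\psi(!L')\}$, this matches iff $\lab(!L)$ and $\lab(!L')$ are literally equal words. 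I would write this direction out in two or three lines.

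For the annular case I would argue as follows. Fix $\phi : \ti\De^{(1)} \to \Ga(G,\cA)$; the two boundary loops $!L_1,!L_2$ of $\De$ lift to periodic lines, and $\phi$ carries them to $P_i$-periodic lines $!M_i := \phi(\ti{!L}_i)$ with $!M_1$ parallel to $!M_2^{-1}$. Choosing base vertices $!a_i$ on $!L_i$ and a path $!p$ from $!a_1$ to $!a_2$ in $\De$, lift $!p$ to $\ti\De$ and push forward: $\phi(\ti{!p})$ is a path in $\Ga(G,\cA)$ running from a vertex of $!M_1$ to a vertex of $!M_2$, its label being a word representing $\lab(!p)$ in $G$, and the labels of $!M_i$ read at the endpoints are the bi-infinite periodic words $\lab(!L_i)^\infty$ read at $!a_i$. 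Now ``same frame type'' means there is a $\psi$ for $\De'$ producing the \emph{same} pair of lines $!M_1,!M_2$ (as subsets of $\Ga(G,\cA)$). Given such a $\psi$, the boundary loops $!L_i'$ map onto $!M_i$, so there are base vertices $!a_i'$ on $!L_i'$ with $\psi(!a_i')$ on $!M_i$ and the local labels matching the periodic words; and the image of a connecting path $!p'$ in $\De'$ gives $\lab(!p') = \lab(!p)$ in $G$ after possibly translating $\psi$ so that $\psi(\ti{!p}')$ starts at the same vertex as $\phi(\ti{!p})$ — here one uses that parallel periodic lines are determined by their shared translation element, so a translation aligning one endpoint aligns both lines. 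Conversely, from the base-point / connecting-path data on $\De'$ one reconstructs a $\psi$ whose boundary images coincide with those of a suitably translated $\phi$: concretely, build $\psi$ by first mapping $!a_1'\mapsto \phi(!a_1)$, extending along $!L_1'$ by periodicity (labels agree, so this is forced and consistent), then along $!p'$ (whose label equals $\lab(!p)$ in $G$, so the endpoint lands on $\phi(!a_2)$'s orbit correctly), then along $!L_2'$; extend to all of $\ti{\De}'^{(1)}$ using that $\ti{\De}'$ deformation retracts onto the union of these lifted loops and path, which is where connectedness and the annular homotopy type are used.

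The main obstacle I expect is the bookkeeping in the annular ``if'' direction: verifying that the map $\psi$ assembled from the boundary labels and the single connecting path $!p'$ is (a) well-defined on all of $\ti{\De}'^{(1)}$ — not just on the skeleton carried by $!L_1',!p',!L_2'$ — and (b) combinatorially continuous, i.e.\ respects the labeling on \emph{every} edge, including interior edges and the other lifts $\ti{!L}_i'^j$. This is really the statement that a combinatorially continuous map out of (the universal cover of) an annular diagram is determined by its restriction to one lifted boundary loop together with the image of one lift of one connecting path, which in turn follows because $\ti{\De}'$ is connected and simply connected and every edge lies on a loop that can be pushed to the chosen generating data; I would cite the uniqueness-up-to-translation statement from the paragraph preceding the lemma to make this rigorous rather than re-deriving it. Everything else — the equality of labels of $!L_i$ and $!L_i'$, and $\lab(!p)=\lab(!p')$ in $G$ — then drops out by reading off labels along the identified lines, since labels are preserved by combinatorially continuous maps and two loops/lines in a Cayley graph through a common vertex with the same label coincide.
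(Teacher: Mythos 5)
Your unwinding of Definition~\ref{df:frame-type} is correct and is exactly the argument the paper leaves implicit when it says the lemma ``follows easily from the definition'': match boundary lines (resp.\ loops) via a choice of base lifts, read off labels, and for the converse translate an existing combinatorially continuous map $\psi_0:\ti{\De}'^{(1)}\to\Ga(G,\cA)$ so that $\psi_0(\ti{!a}_1')$ lands on $\phi(\ti{!a}_1)$. One small remark: your worry about well-definedness of a hand-built $\psi$ in the annular ``if'' direction is best dissolved, as you note at the end, by taking any $\psi_0$ and translating it --- once $\psi(\ti{!a}_1')=\phi(\ti{!a}_1)$ the equal boundary labels force $\psi(\ti{!L}_1')=\phi(\ti{!L}_1)$, and $\lab(!p)=\lab(!p')$ in $G$ then forces $\psi(\ti{!a}_2')=\phi(\ti{!a}_2)$ and hence $\psi(\ti{!L}_2')=\phi(\ti{!L}_2)$, so no reconstruction from scratch is needed.
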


\begin{lemma} \label{lm:preserving-frame-type}
Diagrams $\De$ and $\De'$ have the same frame type in the following two cases:
\begin{itemize}
\item 
$\De'$ is obtained from $\De$ by refinement;
\item
$\De'$ is obtained from $\De$ by cutting off a simply connected subdiagram 
and replacing it with another simply connected subdiagram.
\end{itemize}
\end{lemma}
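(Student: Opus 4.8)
The plan is to verify the two cases separately by producing, in each case, the required pair of combinatorially continuous maps to the Cayley graph out of a single map given for one of the two diagrams. Recall (from the discussion preceding Definition~\ref{df:frame-type}) that for any diagram $\De$ over $\cP$ there is a combinatorially continuous map $\phi:\ti\De^{(1)}\to\Ga(G,\cA)$ of the $1$-skeleton of the universal cover, unique up to post-composition with a translation $t_g$, and that the boundary loops $!L_i$ of $\De$ lift to the families of boundary lines $\ti{!L}_i^j$ of $\ti\De$. Sameness of frame type amounts to exhibiting $\phi$ on $\ti\De$ and $\psi$ on $\ti{\De}'$ whose images of these lifted boundary lines coincide set-wise for each $i$. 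So in both cases I start by fixing an arbitrary $\phi:\ti\De^{(1)}\to\Ga(G,\cA)$ and then construct a matching $\psi$.

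For the first case, suppose $\De'$ is obtained from $\De$ by an elementary refinement; by induction on the length of the refinement sequence it suffices to treat a single elementary refinement. An elementary refinement is (the inverse of) contracting an edge with empty label, a cell with empty boundary label, or a cell with boundary label $a^{\pm1}a^{\mp1}$; in each case there is an induced combinatorially continuous map $\pi:\De'^{(1)}\to\De^{(1)}$ (the contraction), which lifts to a combinatorially continuous map $\ti\pi:\ti{\De}'^{(1)}\to\ti\De^{(1)}$ of universal covers compatible with the boundary-line bijection. Then I set $\psi=\phi\circ\ti\pi$. Since $\ti\pi$ is label-preserving and sends each lifted boundary line $\ti{!L}'^j_i$ of $\ti{\De}'$ onto the corresponding lifted boundary line $\ti{!L}_i^j$ of $\ti\De$ (a refinement only subdivides a boundary loop, it does not change the word it reads off, by the use case recalled in~\ref{ss:diagrams}), we get $\psi(\ti{!L}'^j_i)\greq\phi(\ti{!L}_i^j)$ as paths, hence the two families of lines agree and $\De,\De'$ have the same frame type.

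For the second case, suppose $\De'$ is obtained from $\De$ by cutting off a simply connected subdiagram $\Si$ and gluing in another simply connected subdiagram $\Si'$ along the same boundary loop, so $\De-\Si=\De'-\Si'=:\Theta$ and $\de\Si$, $\de\Si'$ have equal labels (this is the meaning of "replacing"; in particular $\lab(\de\Si)=\lab(\de\Si')$ as words, since $\Si,\Si'$ fill the same boundary loop). I use the existing $\phi$ on $\ti\De$, restrict it to the preimage of $\ti\Theta$, and extend it over the copies of $\ti{\Si}'$: each copy of $\Si'$ in $\De'$ is simply connected and glued to $\Theta$ along a loop whose $\phi$-image is a fixed loop in $\Ga(G,\cA)$, so by the existence-and-uniqueness-up-to-translation statement for simply connected diagrams there is a combinatorially continuous map of that copy of $\ti{\Si}'$ (here $\ti{\Si}'=\Si'$ since it is simply connected) into $\Ga(G,\cA)$ agreeing with $\phi$ on the attaching loop; assembling these gives $\psi:\ti{\De}'^{(1)}\to\Ga(G,\cA)$. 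The boundary loops of $\De'$ lying in $\Theta$ are untouched, so their lifted lines have the same $\psi$-images as their $\phi$-images; any boundary loop meeting $\Si$ or $\Si'$ is, by the hypothesis of equal boundary labels, carried to the same line because $\psi$ agrees with $\phi$ on the shared attaching loop and the line is determined by the word it reads and a basepoint image. Hence the line families coincide for every $i$.

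The main obstacle is the bookkeeping in the second case: making precise, at the level of universal covers, that "$\phi$ restricted to $\ti\Theta$ plus chosen lifts over the $\Si'$-copies" is well defined and combinatorially continuous, and that the lifted boundary lines of $\De'$ are obtained from those of $\De$ by an operation that leaves $\phi$-images unchanged on the portions inside $\Theta$ while only rerouting through $\Si'$ along the same endpoints. Once one grants the uniqueness-up-to-translation of fillings of simply connected diagrams (recalled in the paragraph before Definition~\ref{df:frame-type}) and the characterization of frame type in the annular case from Lemma~\ref{lm:disk-annular-frame-type}, this is routine; the only genuinely delicate point is checking that a boundary loop of $\De'$ that passes partly through $\Si'$ still reads the same word and can be based so that its $\psi$-image literally equals the corresponding $\phi$-image of the boundary loop of $\De$ — and that follows from $\lab(\de\Si)=\lab(\de\Si')$ together with the agreement of $\psi$ and $\phi$ on $\Theta$.
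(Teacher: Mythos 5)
Your proof is correct and is the natural expansion of what the paper states without proof (it only remarks that the two lemmas ``follow easily from the definition''). In both cases you take the canonical approach: fix one combinatorially continuous map $\phi:\ti\De^{(1)}\to\Ga(G,\cA)$ and explicitly build the matching $\psi$ on $\ti{\De}'^{(1)}$ --- by post-composing with the lifted contraction $\ti\pi$ in the refinement case, and by restricting $\phi$ to the common cover of $\Theta=\De-\Si=\De'-\Si'$ and extending over the lifted copies of $\Si'$ using uniqueness-up-to-translation of fillings of simply connected diagrams in the replacement case; the frame-type condition then follows because the resulting images of lifted boundary lines agree.
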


\subsection{Groups $G_\al$}
Throughout the paper we will study a fixed family of groups $G_\al$ given by a presentation \eqref{eq:G-al-presn}.
Consequently, most of the related terminology will involve rank $\al$ as a parameter
(though in some cases, it is not mentioned explicitly; for example, the already 
introduced measure $\muf(F)$ of fragments of rank $\al$ formally depends 
on $\al$).

Diagrams over the presentation of $G_\al$ are referred simply as diagrams over $G_\al$.
For $1 \le \be \le \al$, 
a cell of a diagram $!D$ over $G_\al$ with $\lab(\de !D) \in \cX_\be$ is a {\em cell of rank $\be$}. 
Cells with trivial boundary labels (i.e.\ empty or of the form $aa^{-1}$) are {\em cells of rank $0$}.

The Cayley graph of $G_\al$ is denoted $\Ga_\al$.
Note that if $\be > \al$ then we have a natural covering map $\Ga_\be \to \Ga_\al$ of labeled graphs.
A loop $!L$ in $\Ga_\al$ lifts to $\Ga_\be$ as a loop if and only if $\lab(!L) = 1$ in $G_\be$.

\subsection{Pieces} \label{ss:piece} \label{ss:mu-def}

By a {\em piece of rank $\al$} we call any (including empty) subword of a relator of rank ~$\al$.
If $S$ is a subword of a cyclic shift of a relator $R$ then we say also that $S$ is a {\em piece of $R$}.
We admit that a piece of rank $\al$ be the empty word.
Note that our definition differs from the traditional view on a piece in the
small cancellation theory as a common
starting segment of two distinct relators.

We assume that a piece $S$ of rank $\al$ always has an 
associated relator $R$ of rank ~$\al$ such that $S$ is a start of $R$;
so formally a piece of rank $\al$ should be 
viewed as a pair of the form $(S,R)$.
Associated relators are naturally inherited under taking subwords and inversion:
if $S$ is a piece of rank $\al$ with associated relator $R = ST$ and 
$S = S_1 S_2$ then $S_1$ and $S_2$ are viewed as pieces of rank $\al$ with associated relators $R$ and $S_2 T S_1$
respectively and $S^{-1}$ is viewed as a piece of rank $\al$ with associated relator $S^{-1} T^{-1}$.

For pieces of rank $\al$ we use a ``measure'' $\mu(S) \in [0,1]$
defined by $\mu(S) = \frac{|S|_{\al-1}}{|R^\circ|_{\al-1}}$ as 
in \eqref{eq:mu-def} where $R$ is the associated relator.
(Recall that $R^\circ$ denotes the cyclic word represented by ~$R$.)
If for some $\be$, $!S$ is a path in $\Ga_\be$ or in a diagram over the 
presentation of $G_\be$ 
and $!S$ is labeled by a piece of a relator of rank $\al$ 
(or by an $R$-periodic word where $R$ is a relator 
of rank $\al$) then we abbreviate $\mu(\lab(!S))$ simply as $\mu(!S)$.

\subsection{Reformulation of conditions (S2) and (S3) in terms of Cayley graph}
\label{ss:S2-S3-Cayley}

The following conditions on the presentation \eqref{eq:G-presn} are equivalent to (S2) and (S3), respectively.
\begin{labeled-par}{(S2-Cayley)}
Let $!L_i$ $(i=1,2)$ be an $R_i$-periodic line in $\Ga_{\al-1}$ where $R_i$ is a relator of rank $\al$.
If $!L_1$ and $!L_2$ have close subpaths $!P_1$ and $!P_2$ with $|!P_i| \le |R_i|$
and $\mu(!P) \ge \ga$ then $!L_1$ and $!L_2$ are parallel. 
\end{labeled-par}

\begin{labeled-par}{(S3-Cayley)}
There are no parallel $R$-periodic and $R^{-1}$-periodic lines in 
$\Ga_{\al-1}$
where $R$ is a relator of rank $\al$.
\end{labeled-par}

\subsection{Bridge partition}  \label{ss:bridge-partition}

We define also a 
{\em bridge partition of rank $\al$} of a word $w \in \cH_\al$ as follows.
A bridge partition of rank $0$ is empty.
A bridge partition of rank $\al \ge 1$ either 
\begin{itemize}
\item
 has the form
$w_1 \cdot S \cdot w_2$ where $w_i \in \cH_{\al-1}$ 
and $S$ is a piece of rank $\al$ called the {\em central piece} of $w$; or
\item
is a single factor $w$ itself in the case $w \in \cH_{\al-1}$.
\end{itemize}

If $w$ is a bridge word of rank $\al$ endowed with a bridge partition 
$u \cdot S \cdot v$
and $ST$ is the relator of rank $\al$ associated with $S$ then $w' = u T^{-1} v$
is a bridge word of rank $\al$ equal to $w$ in ~$G_\al$.
We say that $w'$ is obtained from ~$w$ by {\em switching}. 
In this case we assume also that $w'$ is endowed with the 
bridge partition $u \cdot T^{-1} \cdot v$.
Thus, applying the switching operation twice results in the initial word ~$w$.

We will be considering paths in Cayley graphs $\Ga_\be$ labeled by bridge words of rank $\al$.
We call them {\em bridges of rank $\al$} (with a slight abuse of terminology,
we will also use this term in Section \ref{s:diagrams}
for boundary paths with appropriate label in diagrams over the presentation of $G_\al$). 
If $!w$ is bridge of rank ~$\al$ in $\Ga_\be$ then 
a {\em bridge partition of rank $\al$ of $!w$} is either a factorization 
$!w = !u \cdot !S \cdot !v$ where $!u$ and $!v$ are bridges of rank $\al-1$
and $\lab(!S)$ is a piece of rank $\al$ or a trivial factorization with
the single factor $!w$ if $!w$ is bridge of rank $\al-1$.
In the former case, if also $\be \ge \al$,
we define the {\em switching operation} on $!w$ in a similar way as in the case 
of words; namely, we take the word $w'$ obtained from $w \greq \lab(!w)$ by switching and consider the path 
$!w'$ with $\lab(!w') \greq w'$ starting at the same vertex as $!w$. Since $w = w'$ in ~$\Ga_\be$, 
bridges $!w$ and $!w'$ have the same endpoints.

\subsection{}
\label{ss:alpha-length-properties}
The following properties of the function $|\cdot|_\al$ follow from the definition:
\begin{enumerate}
\item 
$|X|_\al + |Y|_\al -1 \le |XY|_\al \le |X|_\al + |Y|_\al$;
in particular, if $Y$ is a subword of ~$X$ then $|Y|_\al \le |X|_\al$.
\item
More generally, if a collection of words $(X_i)_i$ covers a (plain or cyclic) word $X$ then 
$$
  |X|_\al \le \sum_i |X_i|_\al.
$$
If $(X_i)_{1\le i \le k}$ is a collection $k$ of disjoint subwords of $X$ then 
$$
  \sum_i |X_i|_\al \le |X|_\al + k.
$$
\item \label{cl:om-rank-increment}
$|X|_\al \le \ze |X|_{\al-1}$.
\item
$
  |X^\circ|_\al = \min\setof{|Y|_\al}{\text{$Y$ is a cyclic shift of $X$}}.
$
\end{enumerate}

If $!X$ is a path in $\Ga_\be$ or in a diagram over the presentation of $G_\be$ then we use abbreviation 
$|!X|_\al = |\!\lab(!X)|_\al$.

\subsection{Reduced words} 
\label{ss:reduced-words}
The set of words reduced in $G_\al$ is denoted $\cR_\al$. 
The definition immediately implies that $\cR_\al$ is closed under taking subwords.

A word $X$ is {\em strongly cyclically reduced in $G_\al$} if any power $X^t$ is reduced in $G_\al$.

\subsection{Coarse polygon relations} \label{ss:polygon-relations}
%
A relation in $G_\al$ of the form $X_1 u_1 \dots X_m u_m = 1$ where words ~$X_i$ 
are reduced in $G_\al$ and $u_i$ are bridge words of rank $\al$, is called a {\em coarse $m$-gon relation}
in $G_\al$. We can write coarse polygon relations in different forms. 
For example, a coarse bigon relation can be written as $X = u Y v$
where $X$ and $Y$ are reduced in $G_\al$ and $u,v \in \cH_\al$.
In this form, the relation represents closeness of words $X$ and $Y$ in $G_\al$.

\subsection{}
We transfer some terminology
from words to paths in $\Ga_\al$. 

We call paths in $\Ga_\al$ with label reduced in $G_\al$ simply {\em reduced}.
Note that, according to Proposition \ref{pr:reduced-nontrivial}, a reduced path $!X$ in $\Ga_\al$ 
is simple. This implies that we can correctly treat the ordering of subpaths of $!X$, intersections of subpaths, unions etc.

Two vertices of $\Ga_\al$ are {\em close} if they can be joined by a bridge of 
rank $\al$ (see \ref{ss:bridge-partition}). 
Two paths $!X$ and $!Y$ in $\Ga_\al$ are {\em close} if their starting vertices
and their ending vertices are close.

We say that a loop $!P = !X_1 !u_1 !X_2 !u_2 ,\dots,!X_r !u_r$ in $\Ga_\al$ is a {\em coarse $r$-gon} if 
each $!X_i$ is reduced and each $!u_i$ is a bridge of rank $\al$. Paths $!X_i$ are {\em sides} of $!P$.

Note that paths $!X$ and $!Y$ in $\Ga_\al$ are close if and only if $!X^{-1} !u !Y !v$
is a coarse bigon for some ~$!u$ and $!v$.

\subsection{Symmetry}
All concepts (i.e.\ relations, functions etc.) and statements 
involving paths in the Cayley graphs $\Ga_\al$ 
are invariant under the action of $G_\al$ in a natural way. For example, if paths $!X$ and $!Y$ in $\Ga_\al$
are close then paths $g!X$ and $g!Y$ are also close for any $g \in G_\al$. 
We adopt a convention (which is essential for the invariance) that
the action of $G_\al$ is extended onto extra data associated with paths in $\Ga_\al$:
for example, if $!F$ is a fragment of rank $\be$ with base ~$!P$ then then $g !F$ is considered
as a fragment of rank $\be$ with base $g !P$ and so on. 
This implies, for example, that $\muf(!F) = \muf(g!F)$ for any  $g \in G_\al$. 

We will implicitly use symmetry with respect to inversion. For example, 
if $!F$ is a fragment of rank $\be$ with base ~$!P$ then $!F^{-1}$ is a fragment 
of rank $\be$ with base ~$!P^{-1}$ and $\muf(!F^{-1}) = \muf(!F)$. If a statement admits
two symmetric forms then only one of them is formulated 
(as in case of Lemma \ref{lm:closeness-bigon-1side}, for instance).

\subsection{Numerical parameters} \label{ss:extra-parameters}
In many cases, it will be notationally more convenient to use instead of $\Om$ its inverse: 
$$
  \om = \frac{1}{\Om}.
$$
Note that by \eqref{eq:ISC-main}, 
\begin{equation} \label{eq:om-bounds}
  \om \le \frac{1}{480} \quad\text{and}\quad \la \ge 20\om.
\end{equation}
We will extensively use $\om$ as a unit to measure pieces and fragments of rank $\al$. 

Condition (S1) in \ref{ss:condition} will be often used in the following form: {\em if $P$ is a piece of a relator ~$R$
of rank $\al$ then}
\begin{equation} \label{eq:S2-piece-form}
  \mu(P) \le \om |P|_{\al-1}.
\end{equation}

For reader's convenience, we list our other global numerical parameters indicating the places
where they first appeared.

$$
  \nu = \frac{\ze}{1 - 2\ze} = \frac{1}{18}, \quad
  \th = \frac16 (5 - 22\nu) = \frac{17}{27} \quad \text{(Proposition \ref{pr:principal-bound-cells})},
$$
$$
  \eta = \frac{1+2\nu}{\th} = \frac{30}{17} \quad \text{(Proposition \ref{pr:principal-bound})},
$$
$$
  \xi_0 = 7\la - 1.5\om \quad \text{(Proposition \ref{pr:active-large})},
$$
$$
  \xi_1 = \xi_0 - 2.6\om \quad \text{(Definition \ref{df:activity-rank})},
$$
$$
  \xi_2 = \xi_1 - 2\la - 3.4\om \quad \text{(Definition \ref{df:coarsely-periodic-segment})}.
$$

\section{Diagrams with marked boundary} \label{s:diagrams}


\subsection{Boundary marking of rank $\al$} \label{df:boundary-marking}
We start with introducing a class of diagrams over the presentation  \eqref{eq:G-al-presn}
of $G_\al$ with extra data which,
in particular, represent coarse polygon relations in $G_\al$.

Let $\De$ be a non-singular diagram over the presentation \eqref{eq:G-al-presn}. 
We say that $\De$ has a 
{\em boundary marking of rank ~$\al$} 
if for each boundary loop $!L$ of $\De$, there is fixed a representation as a product $!L = !X_1 !u_1 \dots !X_m !u_m$
of nonempty paths ~$!X_i$ and $!u_i$ 
where labels of $!X_i$ are reduced in ~$G_\al$ and the label of each $!u_i$ belongs to ~$\cH_\al$.
Paths ~$!X_i$ are called {\em sides} and paths $!u_i$ are called {\em bridges} of $\De$.
We allow also that the whole boundary loop $!L$ of $\De$ is viewed a side 
called a {\em cyclic side}. 
In this case we require that the label of $!L$ is cyclically reduced in $G_\al$.

If $X_1 u_1 \dots X_m u_m = 1$ is a coarse polygon relation in $G_\al$ then there exists
a disk diagram with boundary label $!X_1 !u_1 \dots !X_m !u_m$ such that 
$\lab(!X_i) \greq X_i$ and $\lab(!u_i) \greq u_i$ for all $i$.
Refining $\De$ if necessary (see \ref{ss:diagrams}) we can assume that $\De$ is non-singular 
and all paths $!X_i$
and ~$!u_i$ are nonempty, i.e.\ $\De$ satisfies the definition above.
In a similar way, we can associate with a conjugacy relation in $G_\al$ an annular diagram over the presentation
of $G_\al$ with an appropriate boundary marking.

Unless otherwise stated, ``a diagram of rank $\al$'' will always mean ``a non-singular diagram over 
the presentation \eqref{eq:G-al-presn} with
a fixed boundary marking of rank $\al$''. 
We use terms ``diagrams of monogon, bigon, trigon type etc.'' to name disk diagrams of rank $\al$ 
with the appropriate number of sides.
%

\subsection{Complexity} \label{ss:diagram-complexity}
If $\De$ is a diagram of rank $\al$ then
by $b(\De)$ we denote the number of bridges of $\De$. 
We define the {\em complexity $c(\De)$} of $\De$ by
$$
  c(\De) = b(\De) - 2 \chi(\De).
$$

\subsection{Decrementing the rank} \label{ss:diagram-previous}
Let $\De$ be a diagram of rank $\al\ge 1$. By $\De_{\al-1}$ we denote the diagram over the presentation
of $G_{\al-1}$
obtained by removal from $\De$ of all cells of rank $\al$.
Up to refinement of $\De$, we assume that $\De_{\al-1}$ is non-singular.


We assume that every bridge $!w$ of $\De$ is given a bridge partition of rank $\al$
as defined in ~\ref{ss:bridge-partition}, i.e.\ for some bridges $!w$ 
a factorization $!w = !u \cdot !S \cdot !v$ is fixed where $\lab(!u), \lab(!v) \in \cH_{\al-1}$
and $\lab(!S)$ is a piece of rank $\al$, and for all other $!w$ we have 
$\lab(!w) \in \cH_{\al-1}$. 
In the case when $!w$ has a nontrivial bridge partition $!u \cdot !S \cdot !v$
we say that $!w$ has {\em native rank $\al$} and call $!S$ the {\em central arc} 
of ~$!u$.

We will be always assuming that all factors $!u$, $!v$ and $!S$
are nonempty paths (this can be achieved by refinement).

We then define a naturally induced
boundary marking of rank $\al-1$ of $\De_{\al-1}$
(see Figure ~\ref{fig:diagram-previous}):
\begin{itemize}
\item 
Sides of $\De$ become sides of $\De_{\al-1}$; we have also extra
sides of $\De_{\al-1}$ defined as follows.
\item
If $!D$ is a cell of rank $\al$ of $\De$ then boundary loop $(\de !D)^{-1}$ of $\De_{\al-1}$
becomes a cyclic side of $\De_{\al-1}$.
\item
For each bridge $!w$ of rank $\al$ of $\De$ we do the following.
If the bridge partition of $!w$ is of the form $!u = !v \cdot !S \cdot !w$
then we take $!v$ and $!w$ as bridges of $\De_{\al-1}$ and the central arc ~$!S$ as a side of 
$\De_{\al-1}$. Otherwise we have $\lab(!w) \in \cH_{\al-1}$ and we take $!w$ as a bridge of $\De_{\al-1}$.
\end{itemize}
\begin{figure}[h]
\begingroup%
  \makeatletter%
  \providecommand\color[2][]{%
    \errmessage{(Inkscape) Color is used for the text in Inkscape, but the package 'color.sty' is not loaded}%
    \renewcommand\color[2][]{}%
  }%
  \providecommand\transparent[1]{%
    \errmessage{(Inkscape) Transparency is used (non-zero) for the text in Inkscape, but the package 'transparent.sty' is not loaded}%
    \renewcommand\transparent[1]{}%
  }%
  \providecommand\rotatebox[2]{#2}%
  \newcommand*\fsize{\dimexpr\f@size pt\relax}%
  \newcommand*\lineheight[1]{\fontsize{\fsize}{#1\fsize}\selectfont}%
  \ifx\svgwidth\undefined%
    \setlength{\unitlength}{343.34646798bp}%
    \ifx\svgscale\undefined%
      \relax%
    \else%
      \setlength{\unitlength}{\unitlength * \real{\svgscale}}%
    \fi%
  \else%
    \setlength{\unitlength}{\svgwidth}%
  \fi%
  \global\let\svgwidth\undefined%
  \global\let\svgscale\undefined%
  \makeatother%
  \begin{picture}(1,0.35254884)%
    \lineheight{1}%
    \setlength\tabcolsep{0pt}%
    \put(0,0){\includegraphics[width=\unitlength]{}}%
    \put(0.34780908,0.2992368){\color[rgb]{0,0,0}\makebox(0,0)[lt]{\lineheight{1.25}\smash{\begin{tabular}[t]{l}cells of rank $\al$\end{tabular}}}}%
    \put(0.15312879,0.00028526){\color[rgb]{0,0,0}\makebox(0,0)[lt]{\lineheight{1.25}\smash{\begin{tabular}[t]{l}$\De$\end{tabular}}}}%
    \put(0.76211401,0.00028526){\color[rgb]{0,0,0}\makebox(0,0)[lt]{\lineheight{1.25}\smash{\begin{tabular}[t]{l}$\De_{\al-1}$\end{tabular}}}}%
  \end{picture}%
\endgroup%

\caption{Producing $\De_{\al-1}$ from $\De$.
Sides of ~$\De$ and $\De_{\al-1}$ are drawn by thicker lines}  
\label{fig:diagram-previous}
\end{figure}

\subsection{Cell cancellation} \label{ss:diagram-reduction}
We introduce two types of elementary reductions of a diagram $\De$ of rank $\al \ge 1$.
In both cases, we reduce the number of cells of rank $\al$.
As in \ref{ss:diagram-previous}, we assume that a bridge partition
is fixed for each bridge $\De$.

Let $!C$ and $!D$ be two cells of rank $\al$ of $\De$. We say that $!C$ and $!D$
form a {\em cell-cell cancellable pair} if there exists a simple path $!p$ joining two vertices $!a$ and $!b$
in the boundaries of $!C$ and ~$!D$ respectively, 
so that the label of the path $!Q !p !R !p^{-1}$ is equal 1 in $G_{\al-1}$
where $!Q$ and $!R$ are boundary loops of $!C$ and $!D$ starting at $!a$ and $!b$ respectively 
see Figure~\ref{fig:diagram-cell-cell}a).  
\begin{figure}[h]
\input 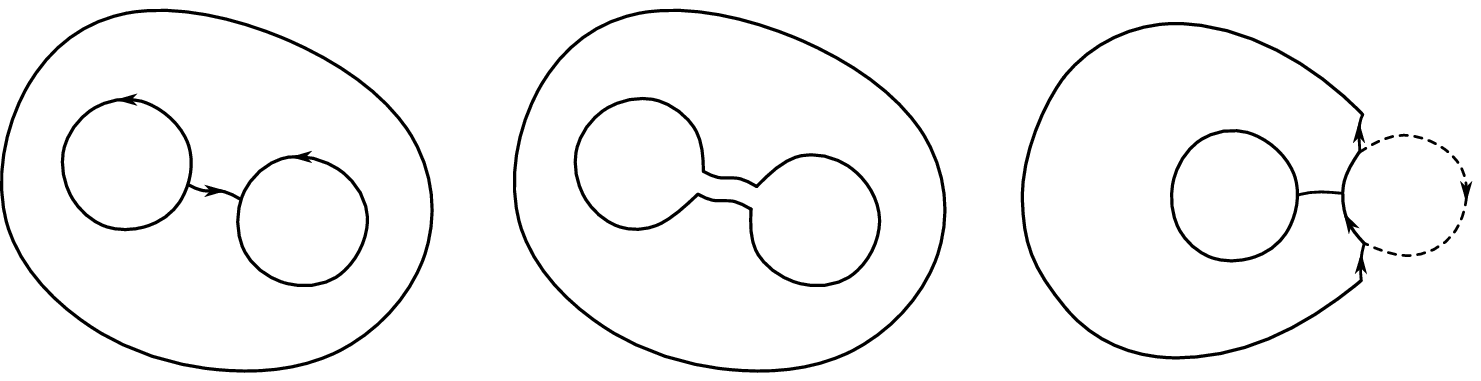_tex
\caption{}  \label{fig:diagram-cell-cell}
\end{figure}
In this case, we can perform the procedure of {\em cell-cell cancellation} as follows.
We remove cells $!C$ and $!D$ from ~$\De$, 
cut the remaining diagram along $!p$ and fill in the resulting region
by a diagram $\Th$ over the presentation of $G_{\al-1}$ (see Figure~\ref{fig:diagram-cell-cell}b). 
The boundary marking of the new diagram naturally inherits the boundary marking of $\De$ 
and the labels of sides and bridges are not changed.

Now let $!u$ be a bridge of native rank $\al$ of $\De$ with
bridge partition $!u = !v \cdot !S \cdot !w$.
The label ~$S$ of $!S$ has an associated relator $R$ of rank $\al$ such that $R \greq ST$ for some $T$
(according to the convention in \ref{ss:piece}).
We attach
a cell $!C$ of rank $\al$ to ~$\De$ along ~$!S$ so that $(ST)^{-1}$ becomes the label of the boundary loop $(!S !T)^{-1}$ 
of $!C$ (see Figure~\ref{fig:diagram-cell-cell}c).
For the new diagram $\De \cup !C$ we define the boundary marking of rank $\al$ 
with a new bridge $!v !T^{-1} !w$ instead of $!u$.
We call this operation {\em switching of $!u$}.

If $!C$ and another cell ~$!D$ of rank ~$\al$ of $\De$ form a cell-cell cancellation pair in $\De \cup !C$ 
then we say that $!u$ and $!D$
form a {\em bridge-cell cancellable pair}. In this case, after performing a cell-cell cancellation in 
$\De \cup !C$ we obtain a diagram $\De'$ having one cell of rank $\al$ less than ~$\De$.
We will refer to this reduction step as {\em bridge-cell cancellation}.

\begin{definition}[Reduced diagram] \label{df:reduced-diagram}
Let $\De$ be a diagram of rank $\al\ge1$ with fixed bridge partitions for all 
bridges of $\De$. We say that $\De$ is {\em reduced} if it has 
no cancellable pairs after any refinement. 
\end{definition}

\begin{remark}
In what follows, we will be assuming that a diagram $\De$ of rank $\al\ge 1$ has fixed bridge partitions of 
all bridges of $\De$ if it is required by context.  In particular, this applies when we consider the 
subdiagram $\De_{\al-1}$ and the property of $\De$ to be reduced.
\end{remark}

\subsection{Reduction process} \label{ss:reduction-transformations-good}
If a diagram $\De$ of rank $\al$ is not reduced then, after possible refinement, we obtain a cancellable pair which
can be removed by performing the reduction procedure described above. Thus, any diagram 
of rank $\al\ge 1$ can be transformed
to a reduced one. Note that we use a sequence of transformations of the following two types in the reduction process:
\begin{itemize}
\item 
transformations preserving the frame type (see Lemma \ref{lm:preserving-frame-type});
\item
bridge switching.
\end{itemize}
Thus, after reduction the new diagram $\bar\De$ has the same frame type as $\De$ up to bridge switching.

The following observation follows from 
definitions \ref{ss:diagram-reduction} and \ref{df:reduced-diagram}
and will be used without explicit reference.

\begin{proposition} 
Let $\Si$ be a subdiagram of a reduced diagram $\De$ of rank $\al\ge 1$ such that the central arc of any bridge of $\Si$
is either a subpath of the central arc of a bridge of $\De$ 
or a subpath of $(\de !D)^{-1}$ where $!D$ is a cell of rank $\al$
of $\De$. Then $\Si$ is reduced as well.
\end{proposition}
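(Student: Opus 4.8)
The plan is to argue by contraposition: I assume $\Si$ is not reduced, so after some refinement it carries a cancellable pair, and I must produce a cancellable pair in a refinement of $\De$, contradicting reducedness of $\De$ (Definition~\ref{df:reduced-diagram}). A preliminary point is that a refinement of $\Si$ is realized by contractions/expansions localized in the $1$-skeleton, hence extends to a refinement $\De'$ of $\De$ in which the refined $\Si$ sits as a subdiagram; moreover subdivision only replaces a central arc by a subpath, so the hypothesis on central arcs survives. Thus it suffices to treat a cancellable pair of $\Si$ itself, with $\De$ replaced by the matching refinement.

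If the cancellable pair is a cell-cell pair $\{!C,!D\}$ of rank $\al$, there is nothing to do: $!C$ and $!D$ are also cells of rank $\al$ of $\De$, the simple path $!p$ realizing the cancellation lies in $\Si\subseteq\De$, and the defining relation $\lab(!Q!p!R!p^{-1})=1$ in $G_{\al-1}$ is unaffected by enlarging the ambient diagram. So $\{!C,!D\}$ is already a cell-cell cancellable pair of $\De$.

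The substantive case is a bridge-cell pair: a bridge $!u$ of $\Si$ of native rank $\al$ with central arc $!S$, a cell $!D$ of rank $\al$ of $\Si$, and an attached cell $!C$ of rank $\al$ along $!S$ with $\lab(\de !C)\greq R^{-1}$, where $R\greq ST$ is the relator of rank $\al$ associated with the piece $S\greq\lab(!S)$, such that $\{!C,!D\}$ is cell-cell cancellable in $\Si\cup !C$. I split by the hypothesis on $!S$. If $!S$ is a subpath of the central arc $!S^\ast$ of a bridge $!u^\ast$ of $\De$, then by the inheritance convention of~\ref{ss:piece} the relator $R$ is a cyclic shift of the relator associated with $\lab(!S^\ast)$; attaching a cell $!C^\ast$ of rank $\al$ along $!S^\ast$ therefore yields, after the refinement that realizes the arc of $\de !C^\ast$ facing $!S$ as $\de !C$, a diagram $\De\cup !C^\ast$ in which $\{!C^\ast,!D\}$ is cell-cell cancellable, i.e.\ $(!u^\ast,!D)$ is a bridge-cell cancellable pair of $\De$. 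If instead $!S$ is a subpath of $(\de !E)^{-1}$ for a cell $!E$ of rank $\al$ of $\De$, then the complementary arc $!T'$ of $(\de !E)^{-1}$ from $\tau(!S)$ to $\io(!S)$ has the same endpoints as $!T$ and, again by the inheritance convention, the same label; substituting $!T'$ for $!T$ in the connecting path turns the cell-cell cancellation of $\{!C,!D\}$ in $\Si\cup !C$ into a cell-cell cancellation of $\{!E,!D\}$ in $\De$ (the case $!E=!D$ being ruled out by condition (S0), which forbids the boundary of a relator cell to cancel against a portion of itself). In every case $\De$ acquires a cancellable pair, contrary to hypothesis.

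The step I expect to be the main obstacle is the bridge-cell case, and within it subcase~(a): making rigorous that the small attached cell $!C$ is, up to refinement and the cyclic rotation identifying $R$ with the relator of $!S^\ast$, the same as the face of $!C^\ast$ along $!S$, so that the witnessing connecting path and its $G_{\al-1}$-relation transport verbatim from $\Si\cup !C$ to $\De\cup !C^\ast$; one must also verify that the replacement of $!T$ by $!T'$ in subcase~(b) preserves simplicity of the path. Both points rely on the (implicit, but essential) assumption that the bridge-partition data of $\Si$ --- in particular the relators associated with central pieces --- is the data inherited from $\De$, which is exactly what forces two cells attached to overlapping pieces to differ only by a cyclic rotation of their common boundary relator.
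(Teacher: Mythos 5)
The paper offers no proof, treating this as an observation that follows from the definitions, so there is no author argument to compare against; your contrapositive strategy of transporting a cancellable pair from a refinement of $\Si$ to the matching refinement of $\De$ is the natural one, and the cell-cell case and the hypothesis split in the bridge-cell case are handled correctly.

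There is, however, a step in subcase (a) that does not hold as written: ``the refinement that realizes the arc of $\de !C^\ast$ facing $!S$ as $\de !C$'' is not an available operation. Refinement in the sense of \ref{ss:diagrams} only inserts trivial vertices, edges, or cells with empty or $aa^{-1}$ boundary label; it cannot split the relator cell $!C^\ast$ of rank $\al$ into a sub-disk $!C$ and a complement. What one should do instead is transport the witness path directly: pick $!a$ on $!S$, note that the boundary loop $!Q^\ast$ of $!C^\ast$ at $!a$ has the same label as $!Q$ because the inherited relator of $!S$ as a subpiece of $!S^\ast$ is the corresponding cyclic shift, and if $!p$ passes through the freely attached arc $!T$ of $\de !C$ replace those segments by the label-equal arc $!B\,!T^\ast\,!A$ of $\de !C^\ast$. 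The simplicity issue you flag is then the only genuine residual concern in both subcases, and it is handled the way the paper always handles it: one is free to refine $\De \cup !C^\ast$ (resp.\ $\De$) before exhibiting the cancellable pair, which lets you push the transported path off accidental intersections. Separately, the $!E = !D$ exclusion in subcase (b) is both unnecessary and misattributed: since $!S$ is a boundary arc of $\Si$ and a subpath of $(\de !E)^{-1}$, the orientation conventions place $!E$ on the opposite side of $!S$ from $\Si$, so $!E$ cannot be a cell of $\Si$ and in particular $!E \ne !D$; and if the situation did arise, the relevant device would be Lemma~\ref{lm:no-folded-cells} resting on (S3), not (S0), which only concerns cyclic reducedness of relators in $G_{\al-1}$.
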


\section{Reduction to the previous rank}

\begin{definition} \label{df:bond}
Let $\De$ be a diagram of rank $\al$. 
A {\em bond\/} in $\De$ is a simple path $!u$ 
satisfying the following conditions:
\begin{enumerate}
\item
$!u$ joins two vertices on sides of $\De$
and intersects the boundary of $\De$ only at the endpoints of $!u$;
\item
$\lab(!u)$ is equal in $G_\al$ to a word in $\cH_{\al}$.
\item 
$!u$ is not homotopic in $\De$ (rel endpoints) to a subpath of a side of $\De$;
\item
$!u$ does not cut off from $\De$ a simply connected subdiagram 
with boundary loop $!u^{\pm1} !p !v !q$ 
where $!p$ is an end of a side of $\De$, $!v$ is a bridge of $\De$, $!q$ is a start of a side of $\De$ 
and labels of $!p$ and $!q$ are empty words. See Figure~\ref{fig:bond-excluded-cases}.
\end{enumerate}
\end{definition}
\begin{figure}[h]
\begingroup%
  \makeatletter%
  \providecommand\color[2][]{%
    \errmessage{(Inkscape) Color is used for the text in Inkscape, but the package 'color.sty' is not loaded}%
    \renewcommand\color[2][]{}%
  }%
  \providecommand\transparent[1]{%
    \errmessage{(Inkscape) Transparency is used (non-zero) for the text in Inkscape, but the package 'transparent.sty' is not loaded}%
    \renewcommand\transparent[1]{}%
  }%
  \providecommand\rotatebox[2]{#2}%
  \newcommand*\fsize{\dimexpr\f@size pt\relax}%
  \newcommand*\lineheight[1]{\fontsize{\fsize}{#1\fsize}\selectfont}%
  \ifx\svgwidth\undefined%
    \setlength{\unitlength}{368.37554458bp}%
    \ifx\svgscale\undefined%
      \relax%
    \else%
      \setlength{\unitlength}{\unitlength * \real{\svgscale}}%
    \fi%
  \else%
    \setlength{\unitlength}{\svgwidth}%
  \fi%
  \global\let\svgwidth\undefined%
  \global\let\svgscale\undefined%
  \makeatother%
  \begin{picture}(1,0.32652779)%
    \lineheight{1}%
    \setlength\tabcolsep{0pt}%
    \put(0,0){\includegraphics[width=\unitlength]{}}%
    \put(0.17557005,0.20851668){\color[rgb]{0,0,0}\makebox(0,0)[lt]{\lineheight{1.25}\smash{\begin{tabular}[t]{l}$!u$\end{tabular}}}}%
    \put(0.89549515,0.20941869){\color[rgb]{0,0,0}\makebox(0,0)[lt]{\lineheight{1.25}\smash{\begin{tabular}[t]{l}$!u$\end{tabular}}}}%
    \put(0.98103263,0.24715511){\color[rgb]{0,0,0}\makebox(0,0)[lt]{\lineheight{1.25}\smash{\begin{tabular}[t]{l}$!v$\end{tabular}}}}%
    \put(0.97116837,0.16189069){\color[rgb]{0,0,0}\makebox(0,0)[lt]{\lineheight{1.25}\smash{\begin{tabular}[t]{l}$!p$\end{tabular}}}}%
    \put(0.91854372,0.31009492){\color[rgb]{0,0,0}\makebox(0,0)[lt]{\lineheight{1.25}\smash{\begin{tabular}[t]{l}$!q$\end{tabular}}}}%
  \end{picture}%
\endgroup%

\caption{Excluded cases in (iii) and (iv)}  \label{fig:bond-excluded-cases}
\end{figure}

\subsection{} \label{ss:bond-enhacement}
In most cases, we will assume that the label of a bond $!u$ already belongs to $\cH_\al$.
Note that this condition can always be achieved by cutting $\De$ along $!u$ and attaching 
a subdiagram with boundary loop $!u^{\pm1} !v$ where $\lab(!v) \in \cH_\al$ and its mirror copy,
see Figure \ref{fig:bond-to-bridge}.
\begin{figure}[h]
\begingroup%
  \makeatletter%
  \providecommand\color[2][]{%
    \errmessage{(Inkscape) Color is used for the text in Inkscape, but the package 'color.sty' is not loaded}%
    \renewcommand\color[2][]{}%
  }%
  \providecommand\transparent[1]{%
    \errmessage{(Inkscape) Transparency is used (non-zero) for the text in Inkscape, but the package 'transparent.sty' is not loaded}%
    \renewcommand\transparent[1]{}%
  }%
  \providecommand\rotatebox[2]{#2}%
  \newcommand*\fsize{\dimexpr\f@size pt\relax}%
  \newcommand*\lineheight[1]{\fontsize{\fsize}{#1\fsize}\selectfont}%
  \ifx\svgwidth\undefined%
    \setlength{\unitlength}{338.70326789bp}%
    \ifx\svgscale\undefined%
      \relax%
    \else%
      \setlength{\unitlength}{\unitlength * \real{\svgscale}}%
    \fi%
  \else%
    \setlength{\unitlength}{\svgwidth}%
  \fi%
  \global\let\svgwidth\undefined%
  \global\let\svgscale\undefined%
  \makeatother%
  \begin{picture}(1,0.26348459)%
    \lineheight{1}%
    \setlength\tabcolsep{0pt}%
    \put(0,0){\includegraphics[width=\unitlength]{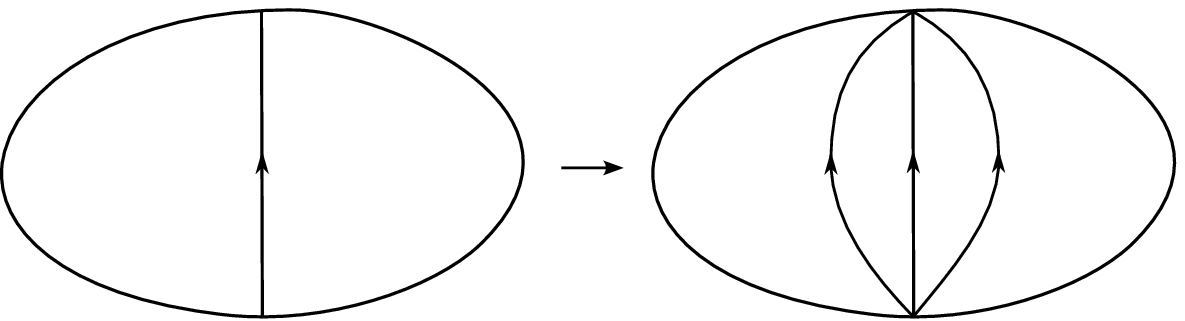}}%
    \put(0.19610527,0.12265462){\color[rgb]{0,0,0}\makebox(0,0)[lt]{\lineheight{1.25}\smash{\begin{tabular}[t]{l}$!u$\end{tabular}}}}%
    \put(0.68005053,0.12265462){\color[rgb]{0,0,0}\makebox(0,0)[lt]{\lineheight{1.25}\smash{\begin{tabular}[t]{l}$!u$\end{tabular}}}}%
    \put(0.75238667,0.12233026){\color[rgb]{0,0,0}\makebox(0,0)[lt]{\lineheight{1.25}\smash{\begin{tabular}[t]{l}$!v$\end{tabular}}}}%
    \put(0.81558747,0.12265462){\color[rgb]{0,0,0}\makebox(0,0)[lt]{\lineheight{1.25}\smash{\begin{tabular}[t]{l}$!u'$\end{tabular}}}}%
  \end{picture}%
\endgroup%

\caption{}  \label{fig:bond-to-bridge}
\end{figure}


\begin{definition} \label{df:small-diagram}
A diagram of rank $\al$ is {\em small} if it has no bonds after any refinement.
\end{definition}

The following observation is straightforward.

\begin{proposition} \strut\par
\begin{enumerate}
\item 
The property of a diagram $\De$ of rank $\al$ to be small depends only on the frame type of $\De$.
\item
The property of a diagram of rank $\al$ to be small is preserved under switching of bridges.
\item \label{pri:small-rank0-diagram}
If $\De$ is a small diagram of rank 0 with $c(\De) > 0$ then labels of all sides of $\De$ are empty words.
\end{enumerate}
\end{proposition}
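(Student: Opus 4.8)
In each of the three parts the plan is to argue straight from the definition of \emph{small} --- that no bond (Definition~\ref{df:bond}) survives any refinement --- by transporting bonds between the two diagrams being compared. For part~(i), I would invoke Lemma~\ref{lm:preserving-frame-type} together with the description of equal frame type in Lemma~\ref{lm:disk-annular-frame-type}: two diagrams of rank $\al$ with the same frame type are joined by a finite chain of refinements and of moves that excise a simply connected subdiagram $\Si$ and glue in a simply connected $\Si'$ with $\lab(\de\Si)=\lab(\de\Si')$. Refinements do not affect smallness, so it suffices to compare $\De$ with such a $\De'$. Given a bond $!u$ in a refinement of $\De$, since $\Si$ is simply connected each maximal subpath of $!u$ inside $\Si$ is homotopic rel endpoints, within $\Si\seq\De$, to a path along $\de\Si=\de\Si'$; performing these homotopies turns $!u$ into a path $!u'$ of $\De'$ with the same endpoints and with $\lab(!u')=\lab(!u)$ in $G_\al$, so condition~(ii) of Definition~\ref{df:bond} is retained, while conditions~(i), (iii), (iv) survive because they concern only the part of the diagram outside $\Si$ (unchanged) and homotopies (only more abundant in $\De'$). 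After passing to a simple subpath this is a bond of $\De'$, and the converse is symmetric; hence smallness depends only on the frame type.

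For part~(ii), switching a bridge $!u=!v\cdot!S\cdot!w$ takes $\De$ to $\De\cup!C$, where $!C$ is a single rank-$\al$ cell glued along the arc $!S$ (so $\lab(\de!C)$ is a relator of rank $\al$ and hence trivial in $G_\al$) and the new bridge $!v\cdot!T^{-1}\cdot!w$ has the same endpoints as $!u$; in particular $\De$ and $\De\cup!C$ have literally the same sides. A bond in a refinement of $\De$ avoids the interior of $!C$ and meets $\de(\De\cup!C)$ only at its endpoints --- since $\de(\De\cup!C)$ differs from $\de\De$ only along $!S$, which the bond avoids --- and keeps conditions~(ii)--(iv) for the reasons used in part~(i); thus it is a bond of $\De\cup!C$. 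Conversely a bond in a refinement of $\De\cup!C$ is rerouted off $!C$ by replacing its portions inside $!C$ with arcs running along the now-interior path $!S$, which changes the label only modulo $G_\al$ and preserves the bond conditions, so it yields a bond of $\De$. Therefore $\De$ is small exactly when $\De\cup!C$ is.

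For part~(iii), suppose $\De$ is small of rank $0$ with $c(\De)>0$ but some side carries a nonempty label. All cells of $\De$ have trivial boundary labels and all bridges have labels in $\cH_0=\{1\}$, so reading the side labels around a boundary loop gives a word trivial in the free group $G_0$; being a nonempty concatenation of freely reduced words, it can be freely trivial only if at some junction the last letter of a side cancels the first letter of the next, i.e.\ there are boundary edges $!e$ ending a side and $!f$ starting the next side with $\lab(!f)=\lab(!e)^{-1}$, separated by a single bridge. Splitting the trivial cell incident to that junction by a new interior edge $!b$ from $\io(!e)$ to $\tau(!f)$ forces $\lab(!b)=1$; choosing the junction innermost so that $!b$ is not homotopic rel endpoints to a subpath of a single side, $!b$ is then a bond: condition~(ii) holds as $\lab(!b)=1\in\cH_0$, condition~(iii) holds by the choice, and condition~(iv) holds because each of the two subdiagrams $!b$ bounds has a boundary loop containing either the nonempty side ending in $!e$ or at least two bridges --- and here $c(\De)>0$ is precisely what rules out $\De$ being a monogon or a bigon, the shapes in which every such $!b$ would instead be homotopic to a side or cut off a region of the type forbidden in Definition~\ref{df:bond}(iv). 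This bond contradicts smallness, so all sides of $\De$ carry the empty word; the annular case is the same after passing to the universal cover. I expect parts~(i) and~(ii) to be routine bookkeeping about pushing paths across simply connected pieces, and the main obstacle to be this last step of~(iii): verifying, via $c(\De)>0$ and a careful choice of folding junction, that a nonempty side always produces a bond of the non-excluded kind, including the resolution of the degenerate configurations where the obvious edge $!b$ is homotopic to a side or cuts off a forbidden region.
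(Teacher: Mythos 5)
The paper leaves this proposition unproved, dismissing it as "straightforward," so there is no source argument to compare against; what you have written has to stand on its own. Your strategy for (i) and (ii) — transport a bond across a refinement/excision move, respectively across a single attached cell — is the natural one, and the small points you gloss over (after homotoping a bond off $\Si$ or off $!S$ one must refine once more so the new path meets the boundary only at its endpoints, i.e.\ recover condition~(i) of Definition~\ref{df:bond}; and for (i) the reduction to a single excision move should be drawn from Lemma~\ref{lm:disk-annular-frame-type} by excising the entire interior, since Lemma~\ref{lm:preserving-frame-type} only runs in one direction) are indeed bookkeeping.

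Part~(iii) has a real gap, and your hedge "or at least two bridges" is precisely where it hides. Consider the trigon of rank~$0$ with side labels $a$, $1$ (empty), $a^{-1}$. Here $b(\De)=3$, $c(\De)=1>0$, and the cancellation is between the single edge $!e$ of $!X_1$ and the single edge $!f$ of $!X_3$. Your edge $!b$ runs from $\io(!e)=\io(!X_1)$ to $\tau(!f)=\tau(!X_3)$, and the subdiagram on the \emph{far} side of $!b$ has boundary loop $!b\,!u_3$ — after inserting the empty end of $!X_3$ and the empty start of $!X_1$ this is exactly the pattern $!b\,!p\,!v\,!q$ with $\lab(!p)=\lab(!q)=1$ that Definition~\ref{df:bond}(iv) forbids, so $!b$ is not a bond. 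The correct bond in this case is $!b'$ from $\tau(!e)$ to $\io(!f)$: its near-side piece now has two bridges, and its far-side piece has $!p=!X_3$, $!q=!X_1$ with nonempty labels. In general, writing $j-i$ for the number of bridges between the cancelling edges, your $!b$ can fail condition~(iv) when $j-i=b(\De)-1$ and the two cancelling sides are single edges, while $!b'$ can fail only when $j-i=1$; since $b(\De)\ge3$ these cannot both occur, so one of the two choices always yields a bond, but this dichotomy must be made explicit rather than absorbed into "choose an innermost junction." Finally, the annular case does not reduce to the disk case by "passing to the universal cover": there the boundary words are only conjugate-trivial rather than freely trivial, so the existence of a cancelling junction on a single boundary loop needs its own argument (e.g.\ via the map to the tree $\Ga_0$, where both boundary lines, being parallel periodic geodesics, have the same image).
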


\begin{definition} \label{df:contiguity-subdiagram}
Let $\De$ be a diagram of rank ~$\al\ge 1$. 
A disk subdiagram ~$\Pi$ of $\De_{\al-1}$ is a {\em contiguity subdiagram} 
of $\De$ if the boundary loop of $\Pi$ 
has the form $!P!u_1 !Q !u_2$ where $!P^{-1}$ and ~$!Q^{-1}$ are nonempty subpaths of sides 
of $\De_{\al-1}$ and each of the two paths ~$!u_i$ is either a bond in ~$\De_{\al-1}$ 
with $\lab(!u_i) \in \cH_{\al-1}$
or a bridge of $\De_{\al-1}$. Note that here we use Definition ~\ref{df:bond} 
with rank $\al-1$ instead of ~$\al$.

The paths $!P^{\pm1}$ and $!Q^{\pm1}$ are {\em contiguity arcs} of $\Pi$.
If $!P^{-1}$ and $!Q^{-1}$ occur, respectively, in sides $!S$ and ~$!T$ of $\De_{\al-1}$ then we say 
that $\Pi$ is a contiguity subdiagram {\em of $!S$ to $!T$} (or {\em between ~$!S$ and ~$!T$}).
\end{definition}

According to definition \ref{ss:close-words}, if $!P$ and $!Q$ are contiguity arcs of a contiguity 
subdiagram with boundary loop $!P!u_1 !Q !u_2$ then labels of $!P^{-1}$ and $!Q$ are close in $G_{\al-1}$.

\begin{lemma}[small cancellation in reduced diagrams] \label{lm:small-cancellation-diagram}
Let $\De$ be a reduced diagram of rank ~$\al$. Let $\Pi$ be a contiguity subdiagram of $\De$ 
with boundary loop $\de \Pi = !P !u !Q !v$ where $!P$ and $!Q$ are the contiguity arcs of $\Pi$.
Assume that $!P^{-1}$ occurs in the boundary loop of a cell $!D$ of rank $\al$ 
and $!Q^{-1}$ occurs in a side $!S$ of ~$\De_{\al-1}$.
Then:
\begin{enumerate}
\item 
If $!S$ is a side of $\De$ then $\mu(!P) < \rho$;
\item
If $!S$ is the boundary loop of a cell $!D'$ distinct from $!D$ then $\mu(!P) < \la$;
\item
If $!S$ is the central arc of a bridge of $\De$ then $\mu(!P) < \la$;
\end{enumerate}
\end{lemma}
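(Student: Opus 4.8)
The plan is to reduce all three cases to a single mechanism: if a contiguity arc $!P$ on the boundary of a rank-$\alpha$ cell $!D$ carried too large a measure, then via the contiguity subdiagram $\Pi$ we would obtain a relation in $G_{\alpha-1}$ between a piece of the relator $R_D = \lab(\delta !D)$ and a subword of $\lab(!S)$, and this relation could be exploited either to contradict reducedness of $\De$ (in cases (ii) and (iii)) or to contradict the hypothesis that $\lab(!S)$ is reduced in $G_\alpha$ (in case (i)). Concretely, the boundary loop $!P !u !Q !v$ of $\Pi$ — with $\lab(!u),\lab(!v)\in\cH_{\alpha-1}$ after the enhancement in \ref{ss:bond-enhacement} — gives $\lab(!P^{-1}) = u'\,\lab(!Q)\,v'$ in $G_{\alpha-1}$ for bridge words $u',v'\in\cH_{\alpha-1}$, i.e. the pieces $!P$ and $!Q^{-1}$ are close in rank $\alpha-1$.

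For case (i): here $!Q^{-1}$ lies in a side $!S$ of $\De$, so $\lab(!S)\in\cR_\alpha$. If $\mu(!P)\ge\rho$, then $!P^{-1}$ (a subword of the cyclic relator $R_D$ of rank $\alpha$, with $|P|_{\alpha-1}\ge\rho|R_D^\circ|_{\alpha-1}$) is close in rank $\alpha-1$ to the subword $\lab(!Q)$ of $\lab(!S)$. Reading the definition of ``reduced in $G_\alpha$'' in \ref{ss:reduced-word} — no subword of $!S$ may be close in rank $\alpha-1$ to a piece $!S'$ of a relator $R$ of rank $\alpha$ with $|S'|_{\alpha-1}>\rho|R|_{\alpha-1}$ — this is exactly the forbidden configuration, so we get a contradiction; hence $\mu(!P)<\rho$. (One must be mildly careful to pass from $|R_D^\circ|_{\alpha-1}$ to $|R|_{\alpha-1}$ for the associated relator $R$, but since $R_D$ is cyclically reduced in $G_{\alpha-1}$ by (S0), property \ref{ss:alpha-length-properties}(iv) handles this.)

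For cases (ii) and (iii): now $!Q^{-1}$ lies on the boundary of another rank-$\alpha$ cell $!D'\ne !D$ (case (ii)), or on the central arc $!S$ of a bridge of native rank $\alpha$ of $\De$ (case (iii)). In case (iii) we first apply the switching operation of \ref{ss:diagram-reduction} to that bridge: attach a rank-$\alpha$ cell $!C$ along $!S$, which reduces case (iii) to the situation of case (ii) with $!D' := !C$. So assume $!Q^{-1}$ is a starting segment of the boundary of a rank-$\alpha$ cell $!D'$, and suppose for contradiction that $\mu(!P)\ge\lambda$. Then, writing $!P$ and $!Q$ as starting segments of relators $R_1$ (a cyclic shift of $R_D^{-1}$) and $R_2$ (a cyclic shift of $R_{D'}^{-1}$) of rank $\alpha$ with $|P|_{\alpha-1}\ge\lambda|R_1|_{\alpha-1}$, the closeness relation $\lab(!P^{-1}) = u'\lab(!Q)v'$ in $G_{\alpha-1}$ is precisely the hypothesis of condition (S2) in \ref{ss:condition}. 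Hence $R_1 = w R_2 w^{-1}$ in $G_{\alpha-1}$ for some $w\in\cH_{\alpha-1}$; equivalently $R_D$ and $R_{D'}$ are conjugate in $G_{\alpha-1}$ via a bridge word. Combining this conjugacy with the path $!P\cup!u$ (or $\de\Pi$) joining the boundaries of $!D$ and $!D'$, one builds the simple path $!p$ and checks that the label of $!Q_{!D}\,!p\,!R_{!D'}\,!p^{-1}$ equals $1$ in $G_{\alpha-1}$, i.e. $!D$ and $!D'$ form a cell-cell cancellable pair (Definition \ref{df:reduced-diagram} via \ref{ss:diagram-reduction}) — in case (iii), $!D$ and the original bridge form a bridge-cell cancellable pair. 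This contradicts $\De$ being reduced. Therefore $\mu(!P)<\lambda$.

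The main obstacle is the bookkeeping in cases (ii)/(iii): one must track the associated relators and the ``start of a relator'' conventions so that (S2) applies verbatim, and then carefully convert the abstract conjugacy $R_1 = wR_2w^{-1}$ in $G_{\alpha-1}$ together with the geometric data of $\Pi$ into an honest cancellable pair in $\De$, including routing the connecting path $!p$ so that it is simple and meets the boundary only at its endpoints (this may require a preliminary refinement and use of Lemma \ref{lm:preserving-frame-type}). The measure-versus-length translations themselves — $\mu(!P)=|P|_{\alpha-1}/|R^\circ|_{\alpha-1}$, the inequality \eqref{eq:S2-piece-form}, and the semi-additivity of $|\cdot|_{\alpha-1}$ — are routine and I would not belabor them.
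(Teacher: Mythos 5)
Your proposal is correct and follows essentially the same route as the paper's proof: case (i) follows directly from the definition of a reduced word in $G_\al$, and cases (ii)/(iii) apply condition (S2) to the two pieces read along the contiguity arcs (after switching the bridge in case (iii)) to produce a cell-cell, respectively bridge-cell, cancellable pair contradicting reducedness of $\De$. The extra bookkeeping you flag (relator conventions, routing of the path $!p$) is handled implicitly in the paper's terse version but adds nothing substantively different.
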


\begin{proof}
If $!S$ is a side of $\De$ then the label of $!S$ is reduced in $G_\al$ (or cyclically reduced in $G_\al$
if $!S$ is a cyclic side), as defined in \ref{df:boundary-marking}.
Then $\mu(!P) < \rho$ by the definition of a reduced word in ~\ref{ss:reduced-word}.

Assume that $\mu(!P) \ge \ga$ and 
$!S = \de !D'$ where $!D'$ is a cell distinct from $!D$.
Let $!R$ and $!R'$ be boundary loops of $!D$ and $!D'$ starting at the initial and terminal vertices 
of $!u$, respectively. By the small cancellation condition (S2) we have 
$\lab(!R) = \lab(!u !R' !u^{-1})$ in $G_{\al-1}$, hence $!D$ and $!D'$ form a cell-cell cancellable pair
contrary to the hypothesis that $\De$ is reduced.

If $\mu(\lab(!P)) \ge \la$ and $!S$ is the central arc of a bridge of $\De$ then in a similar way
we see that $!D$ and $!S$ form a cell-bridge cancellable pair.
\end{proof}

Note that the lemma leaves uncovered a possibility when $!S = \de !D$, i.e.\ when $\Pi$ is a contiguity
subdiagram of $!D$ to itself. This case needs a special consideration.

\begin{definition} \label{df:folded-cell}
A cell $!D$ of rank $\al$ in a diagram $\De$ of rank $\al\ge 1$ is {\em folded} if there exists a simple 
path $!u$ joining two vertices $!a$ and $!b$ in the boundary of $!D$ so that $\lab(!P!Q !u !Q!P!u^{-1}) = 1$
in $G_{\al-1}$ where $!P$ and $!Q$ are subpaths of $\de !D$ from $!a$ to $!b$ and from $!b$ to $!a$ respectively
(Figure ~\ref{fig:folded-cell}).
\begin{figure}[h]
\begingroup%
  \makeatletter%
  \providecommand\color[2][]{%
    \errmessage{(Inkscape) Color is used for the text in Inkscape, but the package 'color.sty' is not loaded}%
    \renewcommand\color[2][]{}%
  }%
  \providecommand\transparent[1]{%
    \errmessage{(Inkscape) Transparency is used (non-zero) for the text in Inkscape, but the package 'transparent.sty' is not loaded}%
    \renewcommand\transparent[1]{}%
  }%
  \providecommand\rotatebox[2]{#2}%
  \newcommand*\fsize{\dimexpr\f@size pt\relax}%
  \newcommand*\lineheight[1]{\fontsize{\fsize}{#1\fsize}\selectfont}%
  \ifx\svgwidth\undefined%
    \setlength{\unitlength}{179.6109222bp}%
    \ifx\svgscale\undefined%
      \relax%
    \else%
      \setlength{\unitlength}{\unitlength * \real{\svgscale}}%
    \fi%
  \else%
    \setlength{\unitlength}{\svgwidth}%
  \fi%
  \global\let\svgwidth\undefined%
  \global\let\svgscale\undefined%
  \makeatother%
  \begin{picture}(1,0.91752234)%
    \lineheight{1}%
    \setlength\tabcolsep{0pt}%
    \put(0,0){\includegraphics[width=\unitlength]{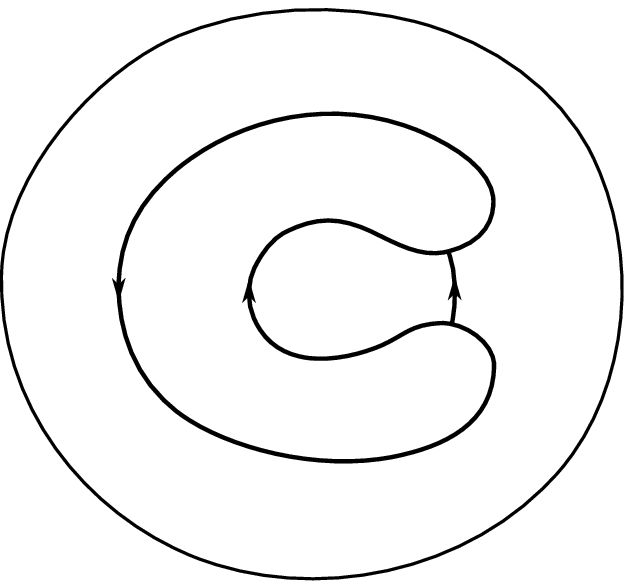}}%
    \put(0.12303574,0.4413979){\color[rgb]{0,0,0}\makebox(0,0)[lt]{\lineheight{1.25}\smash{\begin{tabular}[t]{l}$!Q$\end{tabular}}}}%
    \put(0.34174642,0.44218489){\color[rgb]{0,0,0}\makebox(0,0)[lt]{\lineheight{1.25}\smash{\begin{tabular}[t]{l}$!P$\end{tabular}}}}%
    \put(0.75060919,0.44970787){\color[rgb]{0,0,0}\makebox(0,0)[lt]{\lineheight{1.25}\smash{\begin{tabular}[t]{l}$!u$\end{tabular}}}}%
    \put(0.70242387,0.36248934){\color[rgb]{0,0,0}\makebox(0,0)[lt]{\lineheight{1.25}\smash{\begin{tabular}[t]{l}$!a$\end{tabular}}}}%
    \put(0.69746245,0.54873502){\color[rgb]{0,0,0}\makebox(0,0)[lt]{\lineheight{1.25}\smash{\begin{tabular}[t]{l}$!b$\end{tabular}}}}%
  \end{picture}%
\endgroup%

\caption{}  \label{fig:folded-cell}
\end{figure}
\end{definition}


\begin{lemma}[no folded cells] \label{lm:no-folded-cells}
Assume that no relator of rank $\al$ is conjugate in $G_{\al-1}$ to its inverse.
Then folded cells do not exist. Consequently, if $\Pi$ is a contiguity subdiagram of a cell of rank ~$\al$
to itself then for a contiguity arc $!P$ of $\Pi$ we have $\mu(\lab(!P)) < \la$.
\end{lemma}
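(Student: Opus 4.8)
The plan is to prove the two assertions separately: the non-existence of folded cells by a short word computation inside the relator, and the bound on self-contiguity arcs by reducing it to that non-existence via condition~(S2), in the manner of the proof of Lemma~\ref{lm:small-cancellation-diagram}.

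First I would treat folded cells. Suppose $!D$ is a folded cell in a diagram $\De$ of rank~$\al$, with path $!u$ and vertices $!a,!b$ as in Definition~\ref{df:folded-cell}, and write $\de !D = !P!Q$ read at $!a$, so that $R:=\lab(!P)\lab(!Q)$ is a relator of rank~$\al$ and $\lab(!Q)\lab(!P) = \lab(!P)^{-1}R\,\lab(!P)$ is the boundary label of $!D$ read at $!b$. Substituting these into the defining relation $\lab(!P!Q!u!Q!P!u^{-1})=1$ in $G_{\al-1}$ and setting $w=\lab(!u)\lab(!P)^{-1}$ gives $R\cdot wRw^{-1}=1$, hence $wRw^{-1}=R^{-1}$ in $G_{\al-1}$. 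So the relator $R$ of rank~$\al$ is conjugate in $G_{\al-1}$ to its inverse, contradicting the hypothesis; thus folded cells do not exist.

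For the consequence, let $\Pi$ be a contiguity subdiagram of a rank-$\al$ cell~$!D$ to itself, with boundary loop $\de\Pi = !P!u_1!Q!u_2$ where $!P$ is a contiguity arc and $\lab(!u_1),\lab(!u_2)\in\cH_{\al-1}$ (Definition~\ref{df:contiguity-subdiagram}), and suppose for contradiction $\mu(\lab(!P))\ge\la$. Since $!P^{-1}$ and $!Q^{-1}$ are subpaths of $\de !D$, the word $\lab(!P^{-1})$ is a starting segment of a relator $R_1$ of rank~$\al$ which is a cyclic shift of $\lab(\de !D)$, and $\lab(!Q)=\lab(!Q^{-1})^{-1}$ is a starting segment of a relator $R_2$ of rank~$\al$ which is a cyclic shift of $\lab(\de !D)^{-1}$. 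Reading the loop $\de\Pi$ gives $\lab(!P^{-1}) = \lab(!u_1)\,\lab(!Q)\,\lab(!u_2)$ in $G_{\al-1}$, so (S2) — applied exactly as in Lemma~\ref{lm:small-cancellation-diagram}, the hypothesis $|\lab(!P^{-1})|_{\al-1}\ge\la|R_1|_{\al-1}$ coming from $\mu(\lab(!P))\ge\la$ — gives $R_1 = \lab(!u_1)\,R_2\,\lab(!u_1)^{-1}$ in $G_{\al-1}$. Because $R_1$ is conjugate in $G_{\al-1}$ to $\lab(\de !D)$ and $R_2$ to $\lab(\de !D)^{-1}$ through cyclic permutation of the relator, this relation makes $\lab(\de !D)$ conjugate to its inverse — equivalently, with $!a,!b$ the endpoints of $!u_1$ on $\de !D$, it exhibits $!D$ as a folded cell. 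This contradicts the first assertion, so $\mu(\lab(!P))<\la$.

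The one part needing care is the combinatorial bookkeeping of orientations and cyclic shifts — verifying that each subpath of $\de !D$ yields a starting segment of the appropriate cyclically shifted (or inverted-and-shifted) relator, and that the relation produced by (S2) really coincides with the folded-cell relation at the basepoints $!a,!b$. The application of (S2) is otherwise routine, and I expect no substantive obstacle beyond this.
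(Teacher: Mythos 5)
Your proposal is correct and follows essentially the same route as the paper: part one unwinds Definition~\ref{df:folded-cell} into the conjugacy $wRw^{-1}=R^{-1}$ in $G_{\al-1}$, and part two applies (S2) in the manner of Lemma~\ref{lm:small-cancellation-diagram} to reduce a large self-contiguity to that situation. You supply the orientation and cyclic-shift bookkeeping that the paper leaves implicit, but the argument is the same; a minor stylistic point is that your final contradiction could equally well cite the hypothesis (no relator conjugate to its inverse) directly rather than routing through the first assertion.
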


\begin{proof}
The first statement is an immediate consequence of Definition \ref{df:folded-cell}.
If $\Pi$ is a contiguity subdiagram of a cell $!D$ of rank ~$\al$ to itself
and $!P$ is a contiguity arc of $\Pi$ with  $\mu(\lab(!P)) \ge \la$ then, as in the proof of 
Lemma \ref{lm:small-cancellation-diagram}, we conclude that $!D$ is a folded cell.
\end{proof}

%

\subsection{}
We will be considering finite sets of disjoint contiguity subdiagrams of a diagram $\De$ of rank ~$\al\ge1$.
Our goal is to produce a maximal, in an appropriate sense, such a set.

Let $\set{\Pi_i}$ be a finite set of pairwise disjoint contiguity subdiagrams of $\De$.
Each connected component $\Th$ of the complement $\De_{\al-1} - \bigcup \Pi_i$
is a diagram of rank ${\al-1}$ with
a naturally induced boundary marking of rank $\al-1$ defined as follows:
\begin{itemize}
\item 
Bridges of $\De_{\al-1}$ occurring in the boundary of $\Th$ become bridges of $\Th$;
\item
If $!u$ is a bond of $\De_{\al-1}$ occurring in the boundary of some contiguity subdiagram $\Pi_i$ and
$!u^{-1}$ occurs in the boundary of $\Th$ then $!u^{-1}$ becomes a bridge of $\Th$;
\item 
The rest of the boundary of $\Th$ consists of subpaths of sides of $\De_{\al-1}$,
or possibly cyclic sides of $\De_{\al-1}$, which are viewed as sides of $\Th$.
\end{itemize}

The following observation follows easily by induction on the number of contiguity subdiagrams 
in a set $\set{\Pi_i}$.
\begin{lemma} \label{lm:filling-geometry}
Let $\set{\Pi_i}$ be a set of
$r$ pairwise disjoint contiguity subdiagrams of a diagram ~$\De$ of rank $\al\ge1$.
Let $\set{\Th_j}$ be the set of all connected components of the complement $\De_{\al-1} - \bigcup_i \Pi_i$.
Then 
$$
  \sum_j c(\Th_j) = c(\De_{\al-1}),
$$
$$
  \sum_j \chi(\Th_j) = \chi(\De_{\al-1}) + r.
$$
\end{lemma}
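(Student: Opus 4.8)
The plan is to prove Lemma \ref{lm:filling-geometry} by induction on the number $r$ of contiguity subdiagrams in the set $\set{\Pi_i}$, peeling off one contiguity subdiagram at a time and tracking how the complexity $c(\cdot) = b(\cdot) - 2\chi(\cdot)$ and the Euler characteristic $\chi(\cdot)$ change. The base case $r = 0$ is trivial: the complement $\De_{\al-1} - \bigcup_i \Pi_i$ is just $\De_{\al-1}$ itself (possibly as several components if $\De_{\al-1}$ was disconnected, but the natural convention here makes it a single diagram), so both equalities read $c(\De_{\al-1}) = c(\De_{\al-1})$ and $\chi(\De_{\al-1}) = \chi(\De_{\al-1})$.

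For the inductive step, suppose the statement holds for any set of $r-1$ pairwise disjoint contiguity subdiagrams, and let $\set{\Pi_i}_{i=1}^{r}$ be given. First I would remove $\Pi_r$ from $\De_{\al-1}$ and apply the inductive hypothesis to $\set{\Pi_i}_{i=1}^{r-1}$ inside the diagram $\De' := \De_{\al-1} - \Pi_r$ (with its induced boundary marking); this requires checking that $\De'$ is again a diagram of rank $\al - 1$ in which the $\Pi_i$ for $i < r$ remain pairwise disjoint contiguity subdiagrams — which follows since disjointness is preserved and the relevant bonds/bridges of $\De'$ still satisfy Definition \ref{df:contiguity-subdiagram} with rank $\al - 1$. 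Then I must relate $c(\De')$ and $\chi(\De')$ to $c(\De_{\al-1})$ and $\chi(\De_{\al-1})$, i.e.\ understand the effect of cutting out a single contiguity subdiagram $\Pi_r$ whose boundary loop is $!P !u_1 !Q !u_2$ with $!P^{-1}$, $!Q^{-1}$ subpaths of sides of $\De_{\al-1}$ and each $!u_i$ either a bond or a bridge.

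The core computation is this local one. Cutting $\Pi_r$ out of $\De_{\al-1}$: the Euler characteristic of a disk is $1$, and since $\Pi_r$ meets the rest of the diagram along the arcs $!u_1$ and $!u_2$ (each contractible) we get, by inclusion–exclusion for Euler characteristic along the gluing locus, that $\chi(\De_{\al-1}) = \chi(\De') + \chi(\Pi_r) - \chi(\text{intersection})$; the intersection of $\Pi_r$ with the closure of its complement is (generically) the two arcs $!u_1 \cup !u_2$, each an interval, so $\chi(\text{intersection}) = 2$, giving $\chi(\De_{\al-1}) = \chi(\De') + 1 - 2 = \chi(\De') - 1$, i.e.\ $\chi(\De') = \chi(\De_{\al-1}) + 1$. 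For the bridge count: removing $\Pi_r$ destroys whatever bridges of $\De_{\al-1}$ were among the $!u_i$ but creates, in the new diagram $\De'$, exactly one bridge for each $!u_i$ that was a bond (by the second bullet of the construction in \ref{ss:diagram-reduction}/the paragraph before Lemma \ref{lm:filling-geometry}), while the arcs $!P$, $!Q$ become parts of sides of $\De'$; a careful bookkeeping of the four cases (both $!u_i$ bonds, both bridges, or mixed) shows the net change in $b$ is $+2$ relative to the edges contributed, which combines with $-2\chi$ changing by $-2 \cdot 1 = -2$ to give $c(\De') = c(\De_{\al-1})$ — that is, cutting out a contiguity subdiagram preserves complexity and raises $\chi$ by $1$. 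Summing over the components $\set{\Th_j}$ of $\De' - \bigcup_{i<r}\Pi_i = \De_{\al-1} - \bigcup_{i=1}^r \Pi_i$ and applying the inductive hypothesis then yields $\sum_j c(\Th_j) = c(\De') = c(\De_{\al-1})$ and $\sum_j \chi(\Th_j) = \chi(\De') + (r-1) = \chi(\De_{\al-1}) + r$, as required.

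**The main obstacle** I anticipate is the bookkeeping of bridges in the mixed and degenerate cases — in particular, being precise about what happens when a $!u_i$ is a bridge of $\De_{\al-1}$ shared between $\Pi_r$ and another region, when a bond endpoint coincides with a side endpoint, or when $\Pi_r$ is attached to the rest of $\De_{\al-1}$ along only part of its boundary because a side of $\De_{\al-1}$ runs along $!P$ or $!Q$ on both sides. These do not change the arithmetic ($c$ preserved, $\chi$ up by one) but they multiply the case analysis; in the write-up I would handle them uniformly by noting that all of this is encoded in the statement ``this follows easily by induction on the number of contiguity subdiagrams'' precisely because each contiguity subdiagram has the fixed boundary combinatorics $!P!u_1!Q!u_2$ with the two arcs $!u_i$ of controlled type, so the per-subdiagram change in $(b, \chi)$ is always $(\Delta b, \Delta \chi) = (2, 1)$ in the sense needed for $\Delta c = \Delta b - 2\Delta\chi = 0$, and conclude by iterating.
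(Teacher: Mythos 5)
Your inductive scheme is exactly what the paper indicates (it gives no proof beyond "follows easily by induction"), and your verification that the per-step change in complexity is $\Delta c = 0$ does hold in every configuration, so the first displayed equality $\sum_j c(\Th_j) = c(\De_{\al-1})$ is handled correctly.

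The problem is your assertion that the per-subdiagram change is "always $(\Delta b, \Delta\chi) = (2,1)$." It is not: if exactly one of $!u_1,!u_2$ in $\de\Pi_r$ is a bridge of $\De_{\al-1}$ and the other is a bond, then removing $\Pi_r$ changes neither $\chi$ nor $b$. Concretely, take $\De_{\al-1}$ a disk with boundary marking $!S_1 !u_1 !S_2 !u_2 !S_3 !u_3$ and a single chord from $!S_1$ to $!S_2$ cutting off a contiguity subdiagram that contains the bridge $!u_1$; the complement is again a disk with three bridges, so $\Delta\chi = 0$ and $\Delta b = 0$. The inclusion--exclusion you wrote gives $\chi(\De_{\al-1}) = \chi(\De') + 1 - 1$ here, because the intersection $\Pi_r\cap\De'$ is a single arc, not two. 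In general one gets
$\sum_j \chi(\Th_j) = \chi(\De_{\al-1}) + B - r$,
where $B$ is the total number of arcs $!u_i$, over all $\Pi_i$, that are \emph{bonds} (not bridges); only when $B = 2r$ does this collapse to the stated $\chi(\De_{\al-1}) + r$. So the "careful bookkeeping of the four cases" you allude to would in fact refute the uniform claim, not confirm it. Your argument does establish $\Delta c = \Delta b - 2\Delta\chi = 0$ in all cases (that is genuinely insensitive to bonds vs.\ bridges), but the second equality requires the standing hypothesis that every $!u_i$ in every $\de\Pi_i$ is a bond. The paper arranges exactly this in its applications by the ``cleaning'' step at the start of the proof of Proposition~\ref{pr:principal-bound-cells} (discard any $\Pi_i$ whose boundary meets a bridge of $\De_{\al-1}$), so your write-up should either state that hypothesis explicitly before the induction, or prove the corrected $\chi(\De_{\al-1}) + B - r$ version and then specialize.
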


\begin{proposition} \label{pr:contiguity-map-exists}
Let $\De$ be a diagram of rank $\al\ge1$. Then there exists another diagram ~$\De'$ of rank $\al$ 
and a finite set $\set{\Pi_i}$ of pairwise disjoint contiguity subdiagrams of ~$\De'$ such that:
\begin{enumerate}
\item 
$\De'$ is obtained from $\De$ by replacing its subdiagram $\De_{\al-1}$ with another subdiagram over the presentation of $G_{\al-1}$
of the same frame type; in particular, $\De$ and $\De'$ have the same boundary marking and the same frame type.
\item
any connected component $\Th$ of $\De'_{\al-1} - \bigcup_i \Pi_i$ is a small diagram of rank ${\al-1}$.
\item
if $c(\De_{\al-1}) > 0$ then $c(\Th) >0$ for each connected component $\Th$ of $\De'_{\al-1} - \bigcup_i \Pi_i$.
\end{enumerate}
\end{proposition}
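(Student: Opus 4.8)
The plan is to build $\De'$ and the family $\set{\Pi_i}$ greedily, extracting contiguity subdiagrams one at a time until no more can be found, and then to show that the leftover pieces are small in the sense of Definition~\ref{df:small-diagram}. First I would start with $\De_{\al-1}$, the rank-$(\al-1)$ subdiagram of $\De$ (which we may assume non-singular after refinement, as in~\ref{ss:diagram-previous}), carrying its induced boundary marking of rank $\al-1$. The key iterative step: if some connected component $\Th$ of the current complement $\De_{\al-1} - \bigcup \Pi_i$ is \emph{not} small, then after a refinement $\Th$ contains a bond $!u$ in the sense of Definition~\ref{df:bond} (with rank $\al-1$). A bond joins two vertices lying on sides of $\Th$ and, by definition, has label equal in $G_{\al-1}$ to a word in $\cH_{\al-1}$; after the enhancement described in~\ref{ss:bond-enhacement} we may assume $\lab(!u)\in\cH_{\al-1}$ (this modifies the filling subdiagram, not the frame type). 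The bond $!u$ together with short subpaths along the two sides it meets cuts out a disk subdiagram whose boundary loop has the form $!P!u_1!Q!u_2$ with $!P^{-1},!Q^{-1}$ nonempty subpaths of sides of $\De_{\al-1}$ and each $!u_i$ a bond or a pre-existing bridge — i.e.\ a new contiguity subdiagram $\Pi_{i+1}$, disjoint from the ones already chosen because it lives inside a single component $\Th$. We add it to the collection and repeat.

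The main thing to check is \textbf{termination}. Here I would use the complexity bookkeeping of Lemma~\ref{lm:filling-geometry}: adding one more disjoint contiguity subdiagram raises $\sum_j \chi(\Th_j)$ by $1$ while keeping $\sum_j c(\Th_j) = c(\De_{\al-1})$ fixed. Since each component $\Th_j$ is a diagram of rank $\al-1$ over the presentation of $G_{\al-1}$, and diagrams have bounded complexity in terms of their combinatorial size (boundary length plus number of cells, which do not grow under this process up to the bounded-size fillings we insert for the bonds), the number of iterations is bounded — more carefully, each extracted contiguity subdiagram consumes a genuine amount of $\De_{\al-1}$ (at least one cell of rank $\le\al-1$, or it reduces the genus/number of boundary components encoded in $c$ and $\chi$), so the process stops after finitely many steps. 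I expect this finiteness argument, making precise that "not small $\Rightarrow$ a bond exists $\Rightarrow$ a contiguity subdiagram can be split off $\Rightarrow$ strict decrease in a suitable monovariant", to be the delicate point; one has to be careful that inserting the bounded filling subdiagrams from~\ref{ss:bond-enhacement} (to make bond labels lie in $\cH_{\al-1}$) does not reintroduce bonds faster than they are removed, which is why the monovariant should be phrased in terms of $c(\De_{\al-1})$ and $\chi$ rather than raw cell counts.

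For part~(i): throughout, every modification we perform on the interior of $\De_{\al-1}$ — refinements, cutting along a bond and re-filling, attaching the mirrored bounded subdiagrams from~\ref{ss:bond-enhacement} — either preserves the frame type (Lemma~\ref{lm:preserving-frame-type}) or replaces a simply connected subdiagram by another one, which again preserves the frame type. Hence the final $\De'$ is obtained from $\De$ by replacing $\De_{\al-1}$ with a diagram over the presentation of $G_{\al-1}$ of the same frame type, and in particular $\De$ and $\De'$ have the same boundary marking and frame type. Part~(ii) is exactly the stopping condition of the process: we terminate precisely when every component $\Th$ is small. For part~(iii), suppose $c(\De_{\al-1}) > 0$. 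By Lemma~\ref{lm:filling-geometry} we have $\sum_j c(\Th_j) = c(\De_{\al-1}) > 0$, so at least one component has positive complexity; to get it for \emph{every} component I would, before splitting off each contiguity subdiagram, take care to cut along bonds rather than along paths homotopic into a side (excluded by condition~(iii) of Definition~\ref{df:bond}) and to avoid the degenerate configuration of condition~(iv), so that cutting along a bond never drops a component's complexity to $0$: a bond is non-contractible-into-a-side, so removing it from a component strictly splits it or lowers its Euler characteristic, and one checks inductively that no component with positive complexity is ever created with non-positive complexity. The cleanest way to organize this is to prove the stronger invariant "at every stage, $c(\Th) > 0$ for every component $\Th$" by induction on the number of extracted $\Pi_i$, using Lemma~\ref{lm:filling-geometry} and the excluded cases in Definition~\ref{df:bond} at each step.
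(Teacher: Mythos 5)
Your high-level plan --- iteratively extract contiguity subdiagrams along bonds until every complementary component is small --- matches the paper's, and you correctly flag termination as the delicate point. Two things go wrong, though.

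First, the construction of each new $\Pi$: a bond $!u$ in a component $\Th$ does not automatically cut out a contiguity subdiagram. The piece it separates can have several bridges, so its boundary need not be of the form $!P\,!u_1\,!Q\,!u_2$. The paper dichotomizes. In Case~1, $!u$ separates $\Th$ and one of the two pieces, say $\Th_1$, has complexity zero; then $\Th_1$ \emph{is} a contiguity subdiagram and is either added as a new $\Pi_i$ (when $!u$ is a bridge of $\De_{\al-1}$) or \emph{merged} into the neighboring $\Pi_j$ sharing $\Th_1$'s other bond. In Case~2 (either $!u$ does not separate, or both pieces have positive complexity), one refines so that $!u$ bifurcates into two parallel copies and inserts a \emph{degenerate} contiguity subdiagram between them, with empty contiguity arcs and no cells. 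Your sketch implicitly produces a genuine bigon from every bond and thus misses the degenerate device entirely, without which there is no move available in Case~2.

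Second, the monovariant. Your ``cells consumed, or genus/boundary reduced'' does not close: degenerate $\Pi$'s consume no cells; the fillings inserted by the enhancement of~\ref{ss:bond-enhacement} are \emph{not} of bounded size and do increase raw cell counts; and the merge move in Case~1 does not change the number $r$ of contiguity subdiagrams, so the $\chi$- and $c$-bookkeeping of Lemma~\ref{lm:filling-geometry} cannot register that progress was made. The missing quantity is $L$, the total \emph{label} length of the sides of the complementary components. It is invariant under refinement and under the~\ref{ss:bond-enhacement} insertions (which act only along the bond, never along a side), it is unchanged when a degenerate $\Pi$ is inserted, and it \emph{strictly decreases} whenever a real contiguity subdiagram is extracted or merged --- this is precisely what condition~(iv) of Definition~\ref{df:bond} is for, as it guarantees the extracted bigon has at least one nonempty side label. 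The paper then tracks the pair $(L,N)$, with $N$ the number of complementary components: each step either strictly decreases $L$ with $N$ fixed, or increases $N$ with $L$ fixed; $L\ge 0$, and $N$ is bounded via Lemma~\ref{lm:filling-geometry} because $\sum_j c(\Th_j)=c(\De_{\al-1})$ while all but at most one component has $c(\Th_j)>0$.

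Finally, for~(iii): the invariant ``$c(\Th)>0$ for every component'' is the right one, but not for the reason you give --- a bond cut certainly \emph{can} create a component of complexity zero. The remedy is to absorb any such component into the $\Pi$-family as a contiguity subdiagram, which is exactly what Case~1 does.
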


\begin{proof}
Let $\De$ be a diagram of rank $\al$ and let
$\set{\Pi_i}$ be a finite set of pairwise disjoint contiguity subdiagrams of ~$\De$.
Assume that a connected component $\Th$ of $\De_{\al-1} - \bigcup_i \Pi_i$
has a bond, possibly after refinement. 
We describe how to obtain from $\set{\Pi_i}$ a 
new set of disjoint contiguity subdiagrams by either increasing the set or increasing the part of $\De$ covered by ~$\set{\Pi_i}$.
We track on two inductive parameters: the number $N$ of 
connected components of $\De_{\al-1} - \bigcup_i \Pi_i$ and the total length $L$ of sides of these
components. 

Refining $\Th$ inside $\De$ we may assume that $\Th$ has a bond ~$!u$.
An easy analysis shows that any bond in $\Th$ is also a bond in $\De_{\al-1}$. 
Performing surgery as described in \ref{ss:bond-enhacement} 
we may assume that the label of $!u$ belongs to $\cH_{\al-1}$.

Observe that $!u$ cuts $\Th$ into a subdiagram $\Th_1$ or two subdiagrams $\Th_1$ and $\Th_2$ which 
inherit the boundary marking of rank $\al-1$. From the definition of complexity $c(*)$ we immediately see
that $c(\Th) = \sum_i c(\Th_i)$ in either of the two cases.
Since $!u$ is not homotopic to a subpath of a side of $\Th$
we have $c(\Th_i) \ge 0$ for each $\Th_i$. We change the set $\set{\Pi_i}$ 
depending on the following two cases:

{\em Case\/} 1: $!u$ cuts $\Th$ into two subdiagrams $\Th_1$ and $\Th_2$ and at least one of them, 
say $\Th_1$,
satisfies $c(\Th_1) = 0$. Then $\Th_1$ is a simply connected subdiagram with two bridges, and hence
a contiguity subdiagram of $\De$. 
Note that if for both $\Th_1$ and $\Th_2$ we have $c(\Th_1) =  c(\Th_2) = 0$ then $\De$ has no cells of 
rank $\al$ and is itself a contiguity subdiagram. We then can take $\set{\Pi_i} = \set{\De}$.
We assume that this is not the case.

Let $!v$ be the other bridge
of ~$\Th_1$. If $!u$ is a bridge of $\De_{\al-1}$ then we simply add $\Th_1$ to the set ~$\set{\Pi_i}$.
Otherwise $!v^{-1}$ is a bond of $\De_{\al-1}$ occurring in the boundary loop of some $\Pi_i$;
then we attach $\Th_1$ to $\Pi_i$ (see Figure \ref{fig:contiguity-map-exists-1}.
Note that the label of at least one side of $\Th_1$ is nonempty 
(by condition (iv) of Definition \ref{df:bond} applied to $\Th$ and ~$!u$).
Hence after performing this operation, $L$ is strictly decreased and $N$ is not changed.
\begin{figure}[h]
\begingroup%
  \makeatletter%
  \providecommand\color[2][]{%
    \errmessage{(Inkscape) Color is used for the text in Inkscape, but the package 'color.sty' is not loaded}%
    \renewcommand\color[2][]{}%
  }%
  \providecommand\transparent[1]{%
    \errmessage{(Inkscape) Transparency is used (non-zero) for the text in Inkscape, but the package 'transparent.sty' is not loaded}%
    \renewcommand\transparent[1]{}%
  }%
  \providecommand\rotatebox[2]{#2}%
  \newcommand*\fsize{\dimexpr\f@size pt\relax}%
  \newcommand*\lineheight[1]{\fontsize{\fsize}{#1\fsize}\selectfont}%
  \ifx\svgwidth\undefined%
    \setlength{\unitlength}{203.89650973bp}%
    \ifx\svgscale\undefined%
      \relax%
    \else%
      \setlength{\unitlength}{\unitlength * \real{\svgscale}}%
    \fi%
  \else%
    \setlength{\unitlength}{\svgwidth}%
  \fi%
  \global\let\svgwidth\undefined%
  \global\let\svgscale\undefined%
  \makeatother%
  \begin{picture}(1,0.40240413)%
    \lineheight{1}%
    \setlength\tabcolsep{0pt}%
    \put(0,0){\includegraphics[width=\unitlength]{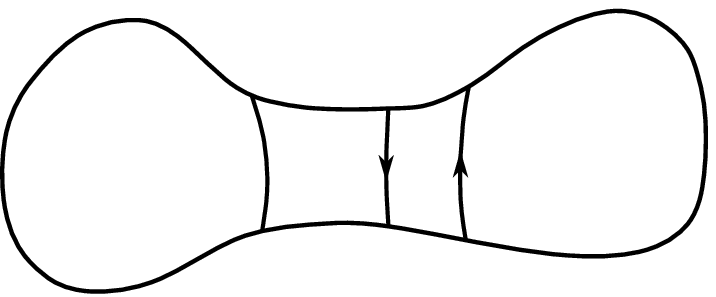}}%
    \put(0.79111345,0.1873164){\color[rgb]{0,0,0}\makebox(0,0)[lt]{\lineheight{1.25}\smash{\begin{tabular}[t]{l}$\Th_2$\end{tabular}}}}%
    \put(0.56846613,0.15155413){\color[rgb]{0,0,0}\makebox(0,0)[lt]{\lineheight{1.25}\smash{\begin{tabular}[t]{l}$\Th_1$\end{tabular}}}}%
    \put(0.42372894,0.1653825){\color[rgb]{0,0,0}\makebox(0,0)[lt]{\lineheight{1.25}\smash{\begin{tabular}[t]{l}$\Pi$\end{tabular}}}}%
    \put(0.66950416,0.163322){\color[rgb]{0,0,0}\makebox(0,0)[lt]{\lineheight{1.25}\smash{\begin{tabular}[t]{l}$!v$\end{tabular}}}}%
    \put(0.49537047,0.16793534){\color[rgb]{0,0,0}\makebox(0,0)[lt]{\lineheight{1.25}\smash{\begin{tabular}[t]{l}$!u$\end{tabular}}}}%
  \end{picture}%
\endgroup%

\caption{}  \label{fig:contiguity-map-exists-1}
\end{figure}

{\em Case\/} 2: Case 1 does not hold. 
We refine $\De$ so that $!u$ ``bifurcates'' into two paths ~$!u'$ and ~$!u''$ 
(Figure \ref{fig:contiguity-map-exists-2})
and obtain a ``degenerate'' contiguity subdiagram $\Pi$ of $\De$ between ~$!u'$ and ~$!u''$. 
We then add $\Pi$ to the set $\set{\Pi_i}$.
The operation strictly increases $N$ not changing $L$.
\begin{figure}[h]
\begingroup%
  \makeatletter%
  \providecommand\color[2][]{%
    \errmessage{(Inkscape) Color is used for the text in Inkscape, but the package 'color.sty' is not loaded}%
    \renewcommand\color[2][]{}%
  }%
  \providecommand\transparent[1]{%
    \errmessage{(Inkscape) Transparency is used (non-zero) for the text in Inkscape, but the package 'transparent.sty' is not loaded}%
    \renewcommand\transparent[1]{}%
  }%
  \providecommand\rotatebox[2]{#2}%
  \newcommand*\fsize{\dimexpr\f@size pt\relax}%
  \newcommand*\lineheight[1]{\fontsize{\fsize}{#1\fsize}\selectfont}%
  \ifx\svgwidth\undefined%
    \setlength{\unitlength}{388.85183594bp}%
    \ifx\svgscale\undefined%
      \relax%
    \else%
      \setlength{\unitlength}{\unitlength * \real{\svgscale}}%
    \fi%
  \else%
    \setlength{\unitlength}{\svgwidth}%
  \fi%
  \global\let\svgwidth\undefined%
  \global\let\svgscale\undefined%
  \makeatother%
  \begin{picture}(1,0.18464581)%
    \lineheight{1}%
    \setlength\tabcolsep{0pt}%
    \put(0,0){\includegraphics[width=\unitlength]{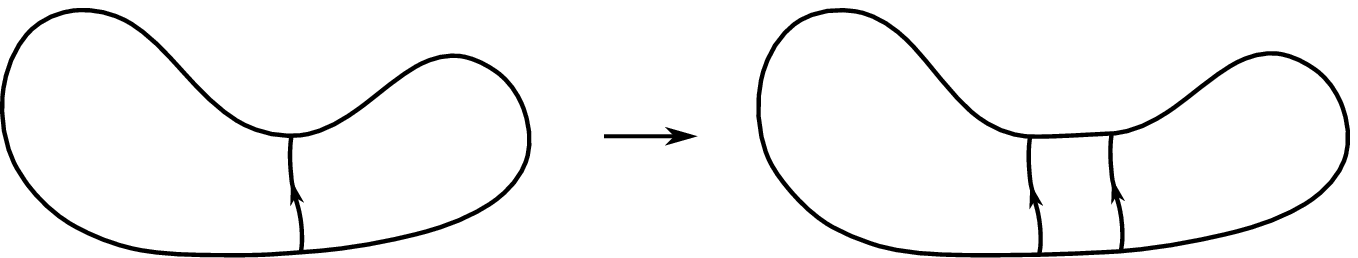}}%
    \put(0.06129038,0.0839192){\color[rgb]{0,0,0}\makebox(0,0)[lt]{\lineheight{1.25}\smash{\begin{tabular}[t]{l}$\De_{\al-1}$\end{tabular}}}}%
    \put(0.1915245,0.03615526){\color[rgb]{0,0,0}\makebox(0,0)[lt]{\lineheight{1.25}\smash{\begin{tabular}[t]{l}$!u$\end{tabular}}}}%
    \put(0.73378442,0.03129229){\color[rgb]{0,0,0}\makebox(0,0)[lt]{\lineheight{1.25}\smash{\begin{tabular}[t]{l}$!u'$\end{tabular}}}}%
    \put(0.83506977,0.0426755){\color[rgb]{0,0,0}\makebox(0,0)[lt]{\lineheight{1.25}\smash{\begin{tabular}[t]{l}$!u''$\end{tabular}}}}%
    \put(0.78690565,0.03418261){\color[rgb]{0,0,0}\makebox(0,0)[lt]{\lineheight{1.25}\smash{\begin{tabular}[t]{l}$\Pi$\end{tabular}}}}%
  \end{picture}%
\endgroup%

\caption{}  \label{fig:contiguity-map-exists-2}
\end{figure}

Starting from the empty set of contiguity subdiagrams $\Pi_i$,
we perform recursively the procedure described above. Each step we either decrease $L$ not changing $N$
or increase $N$ not changing $L$.
Furthermore, each time there is at most one connected component ~$\Th$
of $\De_{\al-1} - \bigcup_i \Pi_i$ with $c(\Th) \le 0$ and it exists only if $c(\De_{\al-1}) \le 0$ for 
the initial diagram ~$\De$. 
By Lemma ~\ref{lm:filling-geometry}, $N$ is bounded from above,
so the procedure terminates after finitely many steps.
Upon termination, all connected components of $\De_{\al-1} - \bigcup_i \Pi_i$ become small by construction.
\end{proof}

\begin{definition} \label{df:tight-set}
We say that a set $\set{\Pi_i}$ 
satisfying the conclusion of Proposition \ref{pr:contiguity-map-exists} 
is a {\em tight} set of contiguity subdiagrams of ~$\De'$.
\end{definition}

\section{Global bounds on diagrams} \label{s:diagram-bounds}
%
%

\subsection{}
Let $\De$ be a diagram of rank $\al\ge1$ and $\set{\Pi_j}$ a set of disjoint
contiguity subdiagrams of ~$\De$. We have a tiling of $\De$ by
subdiagrams of three types:
cells of rank $\al$, contiguity subdiagrams ~$\Pi_i$ and connected components of 
the complement $\De_{\al-1} - \bigcup \Pi_i$. 
We name these subdiagrams {\em tiles
of index 2, 1 and 0} respectively and refer to them also as 
{\em internal} tiles. 
We consider also external 2-cells of $\De$ as tiles of index ~2, so 
with these extra tiles we obtain a tiling of the 2-sphere. 
Boundary loops of all tiles carry naturally
induced partitions into subpaths (allowed to be whole loops) called {\em tiling sides}, defined precisely as follows  
(see Figure~\ref{fig:tiling-sides}):
\begin{figure}[h]
\input 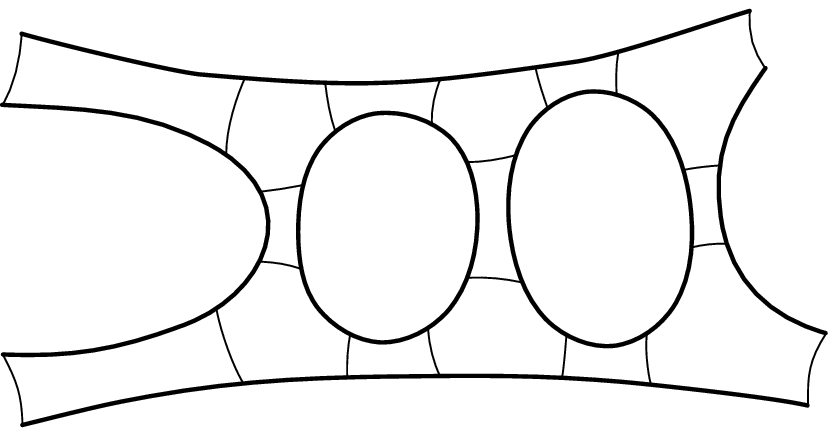_tex
\caption{}  \label{fig:tiling-sides}
\end{figure}
\begin{itemize}
\item 
The boundary loop $\de \Pi_i$ of each contiguity subdiagram ~$\Pi_i$ is partitioned as 
$!P \cdot !u \cdot !Q \cdot !v$ where $!P$ and $!Q$ are the contiguity arcs; 
thus $\de \Pi_i$ consists of four tiling sides.
\item
A component $\Th$ of $\De_{\al-1} - \bigcup_i \Pi_i$ has the induced boundary marking of rank $\al-1$
(in this case, a tiling side can be a cyclic side of $\Th$).
\item 
The boundary loop of a cell of rank $\al$ either has no nontrivial partition  
(in this case it is considered as a cyclic tiling side)
or is partitioned as an alternating product of 
contiguity arcs of subdiagrams $\Pi_i$ and paths $!S$ where 
$!S^{-1}$ is a side of a component of $\De_{\al-1} - \bigcup_i \Pi_i$.
\item
The partition of the boundary loop $!L$ of an external cell is defined as follows:
we take the partition of $!L$ induced by the boundary marking of rank $\al-1$ of $\De_{\al-1}$
and additionally subdivide sides of rank $\al-1$ into
alternating products of contiguity arcs of subdiagrams $\Pi_i$ and paths $!S$ where 
$!S^{-1}$ is a side of a component of $\De_{\al-1} - \bigcup_i \Pi_i$.
\end{itemize}

Note that we view on tiling sides as paths, i.e.\ they are considered with direction. 
By construction, the set of all tiling sides is closed under inversion, 
and each tiling side occurs in a unique way in a boundary loop of a tile. 

\begin{definition} \label{df:curvature}
Let $\cS$ be the set of tiling sides associated with $\set{\Pi_i}$.
For every tile $T$, we denote $\cS(T)$ the set of tiling sides occurring in the boundary loops of $T$.

A {\em discrete connection} on a pair $(\De,\set{\Pi_i})$ is a function $w: \cS \to \R$ such that 
 $w(!s^{-1}) = -w(!s)$ for any $!s$.
Given $w$, we define the {\em curvature} $\ka(T)$ of each internal tile $T$:
$$
  \ka(T) = (-1)^{\text{index}(T)} \chi(T) + \sum_{!s \in \cS(T)} w(!s).
$$
(Note that inequality $\chi(T) \ne 1$ is possible only if $T$ has index 0.)
For an external tile ~$T$, by definition, 
$$
  \ka(T) = \sum_{!s \in \cS(T)} w(!s).
$$
By definition, the total curvature $\ka(\De)$ of $\De$ is the sum of curvatures of all internal tiles of $\De$.
The total curvature of external tiles of $\De$ is the {\em curvature along the boundary of $\De$},
denoted $\ka(\bd\De)$.
\end{definition}

\begin{proposition}[A discrete version of the Gauss--Bonnet theorem] 
\label{pr:Gauss-Bonnet}
For any diagram $\De$ of rank $\al\ge1$ and any set $\set{\Pi_i}$ of disjoint contiguity subdiagrams of $\De$,
$$
  \ka(\De) + \ka(\bd\De)  = \chi(\De).
$$
In particular, if $\ka(T)$ is non-positive for any internal tile $T$ then $\ka(\bd\De)  \ge \chi(\De)$.
\end{proposition}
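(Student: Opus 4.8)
The plan is to prove the identity $\ka(\De)+\ka(\bd\De)=\chi(\De)$ by a direct counting argument, exploiting the fact that the tiles (internal cells of rank $\al$, contiguity subdiagrams $\Pi_i$, components $\Th$ of $\De_{\al-1}-\bigcup_i\Pi_i$, together with the external $2$-cells) form a genuine CW-tiling of the $2$-sphere $S^2=\R^2\cup\{\infty\}$. The key point is that the term $\sum_{!s\in\cS(T)}w(!s)$ contributes nothing to the global sum: since the set of tiling sides is closed under inversion, each tiling side $!s$ occurs in exactly one boundary loop of a tile and $!s^{-1}$ occurs in exactly one boundary loop of a tile, and $w(!s^{-1})=-w(!s)$, so when we sum the curvatures of \emph{all} tiles (internal and external) the connection terms cancel in pairs. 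Hence
$$
  \ka(\De)+\ka(\bd\De)=\sum_{T\ \mathrm{internal}}(-1)^{\mathrm{index}(T)}\chi(T),
$$
the external tiles contributing only connection terms (which have already been cancelled) and no $\chi$ term.

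It then remains to show $\sum_{T\ \mathrm{internal}}(-1)^{\mathrm{index}(T)}\chi(T)=\chi(\De)$. First I would handle the tiles of index $2$ (cells of rank $\al$) and index $1$ (the $\Pi_i$): each such $T$ is a disk, so $\chi(T)=1$, and they contribute $-(\#\text{cells of rank }\al)+(\#\Pi_i)$. For the index-$0$ tiles $\Th_j$, which need not be simply connected, I would invoke Lemma~\ref{lm:filling-geometry}: with $r=\#\{\Pi_i\}$ we have $\sum_j\chi(\Th_j)=\chi(\De_{\al-1})+r$. Adding up,
$$
  \sum_{T\ \mathrm{internal}}(-1)^{\mathrm{index}(T)}\chi(T)
  = \chi(\De_{\al-1})+r - r - (\#\text{cells of rank }\al)
  = \chi(\De_{\al-1}) - (\#\text{cells of rank }\al).
$$
Finally, removing a single (open) rank-$\al$ cell from a $2$-complex drops the Euler characteristic by exactly $1$ (one $2$-cell removed, its boundary edges and vertices already present in $\De_{\al-1}$), and passing from $\De$ to $\De_{\al-1}$ removes all such cells; hence $\chi(\De_{\al-1})-(\#\text{cells of rank }\al)=\chi(\De)$, possibly after the harmless refinement that makes $\De_{\al-1}$ non-singular (refinement preserves $\chi$). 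This gives the displayed formula, and the ``in particular'' clause is immediate since $\ka(\De)$ is by definition the sum of curvatures over internal tiles only.

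The bookkeeping obstacle I expect is making the bijective cancellation of connection terms fully rigorous, i.e.\ confirming that in the sphere tiling \emph{every} tiling side $!s$ — including the contiguity arcs $!P,!Q$, the bond/bridge arcs $!u,!v$ on the $\de\Pi_i$, the subdivided pieces of $\de!D$ for rank-$\al$ cells, and the pieces of external-cell boundaries — appears in precisely one tile boundary with each orientation, with no side left ``half-counted'' along $\bd\De$. The construction in \ref{df:curvature} asserts exactly this (``each tiling side occurs in a unique way in a boundary loop of a tile''), so once that is quoted the cancellation is forced; the only care needed is to treat the external $2$-cells as honest tiles of the $S^2$-tiling so that sides lying on $\bd\De$ are matched to an external tile on the other side rather than being dangling. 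Everything else is the elementary Euler-characteristic arithmetic above together with one application of Lemma~\ref{lm:filling-geometry}.
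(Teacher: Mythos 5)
Your proof follows the same route as the paper: cancel the connection terms $w(!s)$ in pairs using $w(!s^{-1})=-w(!s)$, then reduce the remaining sum $\sum_T(-1)^{\text{index}(T)}\chi(T)$ to an Euler-characteristic identity via Lemma~\ref{lm:filling-geometry} and the relation between $\chi(\De)$ and $\chi(\De_{\al-1})$. However, your computation contains two sign errors that happen to cancel each other, so the final identity comes out right for the wrong reasons.

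First, cells of rank $\al$ have index $2$, so $(-1)^{\text{index}(T)}=+1$ and they contribute $+t$ (where $t$ is the number of rank-$\al$ cells), not $-t$; meanwhile the $\Pi_i$ have index $1$ and contribute $-r$. Your prose says the two kinds contribute $-t+r$, your display has $-r-t$, and the correct contribution is $+t-r$. With Lemma~\ref{lm:filling-geometry} the full sum is therefore $\chi(\De_{\al-1})+r-r+t=\chi(\De_{\al-1})+t$, not $\chi(\De_{\al-1})-t$. Second, you correctly observe that removing a single open rank-$\al$ cell drops $\chi$ by $1$, but then conclude ``hence $\chi(\De_{\al-1})-t=\chi(\De)$'', which is the wrong direction: removing the $t$ cells takes $\De$ to $\De_{\al-1}$, so $\chi(\De_{\al-1})=\chi(\De)-t$, i.e.\ $\chi(\De_{\al-1})+t=\chi(\De)$. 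Fixing both signs, the two corrected expressions match and the argument closes exactly as in the paper, which simply quotes the second equality of Lemma~\ref{lm:filling-geometry} to get $\sum_T(-1)^{\text{index}(T)}\chi(T)=\chi(\De_{\al-1})+t=\chi(\De)$.
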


\begin{proof}
Let $t$ be the number of cells of rank $\al$ of $\De$. It follows from the second equality of 
Lemma \ref{lm:filling-geometry} that
$$
  \sum_T (-1)^{\text{index}(T)} \chi(T) = \chi(\De_{\al-1}) + t = \chi(\De)
$$
where the sum is taken over all internal tiles $T$ of $\De$. In the expansion of $\ka(\De) + \ka(\bd\De)$
all summands $w(!s)$ are canceled because of the assumption $w(!s^{-1}) = -w(!s)$.
\end{proof}

\begin{proposition}[bounding the number of cells] \label{pr:principal-bound-cells}
Let $\De$ be a reduced diagram of rank ~$\al\ge1$ with $c(\De_{\al-1}) > 0$.
Denote
\begin{equation} \label{eq:de-th} 
  \nu = \frac{\ze}{1 - 2\ze} = \frac{1}{18}, \quad
  \th = \frac16 (5 - 22\nu) = \frac{17}{27} .
\end{equation}

Let $\cT$ be a tight set of contiguity subdiagrams of ~$\De$.
We assume that the following extra condition is satisfied: 
\begin{itemize}
\item[(*)]
Each cell of rank $\al$ of $\De$ has at most one contiguity subdiagram $\Pi \in \cT$ to sides of ~$\De$.
\end{itemize}
Let $M$ be the number of cells of rank $\al$ of $\De$. Then
\begin{equation} \label{eq:principal-bound-cells}
  \th M \le \frac23 (1+\nu) b(\De) - \chi(\De).
\end{equation}
\end{proposition}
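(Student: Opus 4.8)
The plan is to run a discrete Gauss--Bonnet argument. I would apply Proposition~\ref{pr:Gauss-Bonnet} to the tiling of $\De$ determined by the tight set $\cT$, equipped with a discrete connection $w$ chosen so that the ``negative curvature'' of the diagram is concentrated on the cells of rank $\al$. Concretely, $w$ is to be calibrated so that (a)~$\ka(!D)\le-\th$ for every cell $!D$ of rank $\al$, (b)~$\ka(\Pi)\le 0$ for every contiguity subdiagram $\Pi\in\cT$, (c)~$\ka(\Th)\le 0$ for every component $\Th$ of $\De_{\al-1}-\bigcup\cT$, and (d)~$\ka(\bd\De)\le\tfrac23(1+\nu)\,b(\De)$. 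Granting these, (a)--(c) give $\sum_T\ka(T)\le-\th M$ over the internal tiles, so Proposition~\ref{pr:Gauss-Bonnet} yields $-\th M\ge\chi(\De)-\ka(\bd\De)$, and combining with (d) produces $\th M\le\tfrac23(1+\nu)\,b(\De)-\chi(\De)$, which is \eqref{eq:principal-bound-cells}.

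To build $w$, I would give each tiling side $!s$ a weight combining a fixed baseline depending only on its combinatorial type (a contiguity arc, a bond, a bridge, or a side of a component) with a correction proportional to $\mu(!s)$ when $!s$ lies on the boundary of a cell of rank $\al$; the antisymmetry $w(!s^{-1})=-w(!s)$ is built in, so all $w$-terms cancel in the Gauss--Bonnet identity. A contiguity subdiagram $\Pi$ is a disk of index~$1$ with exactly four tiling sides (two contiguity arcs and two bonds or bridges), so $\ka(\Pi)=-1+\sum_{!s\in\cS(\Pi)}w(!s)$, and the baselines are to be set so that the four weights sum to at most $1$, giving (b). A component $\Th$ has index~$0$, and since $c(\De_{\al-1})>0$ and $\cT$ is tight, $\Th$ is a small reduced diagram of rank $\al-1$ with $c(\Th)>0$; to bound $\ka(\Th)=\chi(\Th)+\sum_{!s\in\cS(\Th)}w(!s)$ I would invoke the inductive hypothesis --- the rank-$(\al-1)$ form of the side estimate, Proposition~\ref{pr:principal-bound}$_{\al-1}$ --- which controls the total $|\cdot|_{\al-1}$-length of the sides of $\Th$ in terms of $b(\Th)$, giving (c). The boundary bound (d) is analogous, again using (S1) and the same small-diagram estimate to see that the external tiling sides contribute at most $\tfrac23(1+\nu)$ per bridge of $\De$.

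The substance of the proof is (a). For a cell $!D$ of rank $\al$ one has $\ka(!D)=1+\sum_{!s\in\cS(!D)}w(!s)$, and the boundary loop $\de !D$ is a cyclic alternation of contiguity arcs $!P_1,\dots,!P_k$, one per $\Pi\in\cT$ touching $!D$, and gaps $!S_1,\dots,!S_k$, where each $!S_j^{-1}$ is a side of an adjacent component $\Th_j$. Lemma~\ref{lm:small-cancellation-diagram}(ii),(iii) together with Lemma~\ref{lm:no-folded-cells} force $\mu(!P_i)<\la$ for every arc running to another cell of rank $\al$, to the central arc of a bridge, or to $!D$ itself; condition~(*) leaves at most one arc running to the sides of $\De$, and Lemma~\ref{lm:small-cancellation-diagram}(i) bounds that one by $\rho$. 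For the gaps, Proposition~\ref{pr:principal-bound}$_{\al-1}$ gives $|!S_j|_{\al-1}\le\eta\,b(\Th_j)$ and (S1) gives $|R^\circ|_{\al-1}\ge\Om$, whence $\mu(!S_j)\le\eta\om\,b(\Th_j)$. Since the arcs and gaps cover $\de !D$, semi-additivity of $|\cdot|_{\al-1}$ gives $\sum_i\mu(!P_i)+\sum_j\mu(!S_j)\ge 1$; substituting these bounds into the formula for $w$ and carrying out the bookkeeping with the explicit values $\la\le\tfrac1{24}$, $\om\le\tfrac1{480}$, $\ze=\tfrac1{20}$ and $\nu=\tfrac1{18}$ should yield precisely $\ka(!D)\le-\th$ with $\th=\tfrac16(5-22\nu)$. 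This last step --- reconciling the number of contiguity arcs, the single arc to the sides of $\De$ permitted by~(*), and the accumulated effect of the gaps bounded through the inductive side estimate, so that the constants close up exactly to $\th$ --- is where I expect the main difficulty to lie.
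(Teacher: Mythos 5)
Your overall strategy --- apply the discrete Gauss--Bonnet formula (Proposition~\ref{pr:Gauss-Bonnet}) to the tiling given by a tight set $\cT$, with a connection calibrated so that each rank-$\al$ cell has curvature $\le -\th$, each contiguity subdiagram and each complementary component has curvature $\le 0$, and the boundary contributes $\le \frac23(1+\nu)\,b(\De)$ --- is indeed the same as the paper's. But the calibration you sketch would not close up, for a concrete structural reason. You propose a connection whose term on a tiling side $!s$ adjacent to a cell of rank $\al$ is ``proportional to $\mu(!s)$.'' Since $\mu(!s)=|!s|_{\al-1}/|R^\circ|_{\al-1}$ depends on which relator $R$ bounds the neighbouring cell, the same edge of a component $\Th$ would receive a weight scaled by that cell's $|R^\circ|_{\al-1}$. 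The estimate $\ka(\Th)\le 0$ then cannot be obtained from the inductive bound Proposition~\ref{pr:principal-bound-sides}$_{\al-1}$, because that bound controls $\sum_{!S}|!S|_{\al-1}$, not $\sum_{!S}|!S|_{\al-1}/|R^\circ|_{\al-1}$ with cell-dependent denominators. The paper sidesteps this by taking $w(!S)=\ze\th\,|!S|_{\al-1}$ on $\Th$-sides (length, not $\mu$), which is exactly what makes the inductive side bound turn $\ka(\Th)$ into $\le 0$ and, in the critical case $b(\Th)=3$, $\chi(\Th)=1$, into exactly $0$ when $\al\ge 2$.

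Two further ingredients in the paper's proof are missing or misdirected in your plan. First, the paper assigns the weight on a contiguity arc $!P$ of a subdiagram $\Pi$ not just according to the type of $!P$ but also according to the type of the \emph{opposite} arc $!Q$ of the same $\Pi$ (eq.~\eqref{eq:contiguity-connection-def}); a ``baseline depending only on its combinatorial type'' is not enough to make the critical cases of $\ka(\Pi)\le 0$ come out exactly to zero. Second, the paper performs a preliminary cleaning step so that every bridge of $\De_{\al-1}$ lies on a tile of index $0$; without it the boundary bound $\ka(\bd\De)\le\frac23(1+\nu)b(\De)$ is not obtained. Finally, your use of Proposition~\ref{pr:principal-bound}$_{\al-1}$ to deduce $\mu(!S_j)\le\eta\om\,b(\Th_j)$ inside the estimate for $\ka(!D)$ is wrong-directional: for $\ka(!D)\le-\th$ one needs a \emph{lower} bound on $\sum_j|!S_j|_{\al-1}$, which the paper gets purely from semi-additivity, (S1), and the small-cancellation upper bounds on $\sum_i\mu(!P_i)$ from Lemmas~\ref{lm:small-cancellation-diagram} and~\ref{lm:no-folded-cells}. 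The inductive side estimate is used only for $\ka(\Th)\le 0$, and should not enter the cell curvature argument at all.
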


For the proof, we 
define a discrete connection $w$ on the pair $(\De, \set{\Pi_i})$.
Note that $w(!S^{-1}) = -w(!S)$ by Definition \ref{df:curvature} and thus defining $w(!S)$
automatically defines $w(!S^{-1})$.

Recall that sides of $\De_{\al-1}$ are divided into three types: sides of ~$\De$, 
central arcs of bridges of native rank $\al$ and the boundary loops of cells of rank $\al$.
If $!S$ is a side of $\De_{\al-1}$ or a subpath of a side of $\De_{\al-1}$ then
we assign to $!S$ type I, II or III respectively. 

Before defining $w$, we perform on $\De$ the following ``cleaning'' procedure:
if a bridge of $\De_{\al-1}$ occurs in the boundary of some contiguity subdiagram $\Pi_i$ 
then we cut off ~$\Pi_i$ from $\De$
taking the bond in the boundary of $\Pi_i$ as a new bridge of the resulting $\De_{\al-1}$.
Thus we may assume that 
\begin{itemize}
\item[(**)]
every bridge of $\De_{\al-1}$ occurs in the boundary of a tile of index 0
(i.e.\ a connected component of $\De_{\al-1} - \bigcup_{\Pi \in \cT} \Pi$).
\end{itemize}

\medskip 
We define $w$ as follows:

\begin{enum-par} \label{it:principal-bound-filling-weights}
Let $\Th$ be a connected component of $\De_{\al-1} - \bigcup_{\Pi \in \cT} \Pi$.
For each bond or bridge $!u$ of rank $\al-1$ occurring in the boundary of $\Th$, 
define
$$
  w(!u) = - \frac13 (1 + \nu).
$$
For each side $!S$ of $\Th$, 
$$
  w(!S) = \ze\th |!S|_{\al-1}.
$$
\end{enum-par}

\begin{enum-par} \label{it:conectivity-def-index1}
Let $\Pi \in \cT$ and let $\de\Pi = !P !u_1 !Q !u_2$ as in Definition \ref{df:contiguity-subdiagram}.
By (**), for each $i=1,2$ the tiling side $!u_i^{-1}$ 
occurs in the boundary of a connected component of $\De_{\al-1} - \bigcup_{\Pi\in\cT} \Pi$. 
By (i), we already have
$$
  w(!u_i) = - w(!u_i^{-1}) = \frac13 (1 + \nu).
$$

We define $w(!P)$ (the definition of $w(!Q)$ is similar):
\begin{equation} \label{eq:contiguity-connection-def}
  w(!P) = 
  \begin{cases}
      0 & \text{if $!P$ has type I or II} \\
      \frac13 (1 - 2\nu) & \text{if $!P$ has type III and $!Q$ has type I} \\
      \frac16 (1 - 2\nu) & \text{if $!P$ has type III and $!Q$ has type II or III}
  \end{cases}
\end{equation}
\end{enum-par}

\begin{enum-par}
Let $!D$ be a cell of rank $\al$ of $\De$ and $!S$ be a tiling side occurring in $\de!D$.
The value of $w(!S)$ is already defined by (i) and (ii).
We have:
\begin{itemize}
\item 
If $!S^{-1}$ is the contiguity arc of a contiguity subdiagram $\Pi \in \cT$ of $!D$ to a side of $\De_{\al-1}$
of type ~I or II
then $w(!S) = -\frac13 (1 - 2\nu)$.
\item
If $!S^{-1}$ is the contiguity arc of a contiguity subdiagram $\Pi \in \cT$ of $!D$ to a side of $\De_{\al-1}$
of type ~III 
then $w(!S) = -\frac16 (1 - 2\nu)$.
\item
If $!S^{-1}$ occurs in the boundary of a connected component of $\De_{\al-1} - \bigcup_{\Pi\in\cT} \Pi$
then $w(!S) = -\ze\th |!S|_{\al-1}$.
\end{itemize}

\end{enum-par}

\medskip
We provide an upper bound for the curvature of any internal tile. 
For contiguity subdiagrams $\Pi \in \cT$ we immediately have $\ka(\Pi) \le 0$ by (ii).

Let $\Th$ be a connected component of $\De_{\al-1} - \bigcup_{\Pi \in \cT} \Pi$.
We have
$$
  \ka(\Th) = \chi(\Th) - \frac13(1+\nu) b(\Th) + \ze\th \sum_{!S} |!S|_{\al-1}
$$
where the sum is taken over the sides $!S$ of $\Th$.

If $\al = 1$ then $\sum |!S|_{\al-1} = 0$ (Proposition \ref{pri:small-rank0-diagram}).
If $\al \ge 2$ then by Proposition \ref{pr:principal-bound-sides}$_{\al-1}$,
$$
  \th \sum |!S|_{\al-1} \le \frac23 (1+\nu) b(\Th) - \chi(\Th)
$$
Using the fact that $c(\Th)>0$ it is easy to check that $\ka(\Th) \le 0$
in both cases $\al=1$ and $\al \ge 2$.
(The critical case is when $b(\Th) = 3$ and $\chi(\Th) = 1$; in this case we have $\ka(\Th) = -\nu$
if $\al=1$ and $\ka(\Th) = 0$ if $\al \ge 2$
by definition \eqref{eq:de-th} of $\nu$). 

Finally, let $!D$ be a cell of rank $\al$ of $\De$. We prove that $\ka(!D) \le - \th$.
By (*), $!D$ has at most one contiguity subdiagram to sides of $\De_{\al-1}$ of type I.
We consider first the case when $!D$ has one.
Let $r$ be the number of 
contiguity subdiagrams of ~$!D$ to sides of types II and III.
The remaining $r+1$ subpaths $!S_1, !S_2, \dots !S_{r+1}$ of $\de !D$ are tiling sides
such that $!S_i^{-1}$ belong to boundary loops of connected components of $\De_{\al-1} - \bigcup_{\Pi \in \cT} \Pi$; 
so we have
$$
  \ka(!D) \le 1 - \frac13 (1 - 2\nu) - r \left( \frac16 (1 - 2\nu) \right) - 
    \ze\th \sum_{i=1}^{r+1} |!S_i|_{\al-1}.
$$
By condition (S1) in \ref{ss:condition} and 
Lemmas \ref{lm:small-cancellation-diagram}, \ref{lm:no-folded-cells},
\begin{align*}
  \sum_{i=1}^{r+1} |!S_i|_{\al-1} \ge (1 - \rho - r \la) \Om = (9-r) \la\Om .
\end{align*}
Hence
\begin{equation} \label{eq:ICS-principal-bound-cell} 
  \ka(!D) \le \frac23(1+\nu) - r \left( \frac16 (1 - 2\nu) \right) 
    - \ze\th \la\Om \max (0,\ 9 - r). 
\end{equation}
If $r \ge 9$ then the coefficient before $r$ in the right-hand side of \eqref{eq:ICS-principal-bound-cell} is negative.
If $r \le 9$ then the coefficient is
$$
  - \frac16 (1 - 2\nu) + \ze\th \la \Om 
$$
which is positive since by the second inequality \eqref{eq:ISC-main} we have 
$\ze\th \la \Om \ge 20\ze \th = \th > \frac16$.
Hence the maximal value of the expression in \eqref{eq:ICS-principal-bound-cell}
is when $r = 9$.
Substituting $r=9$ into the right-hand side of \eqref{eq:ICS-principal-bound-cell} we obtain the expression
$$
  \frac23(1+\nu) - \frac96 (1 - 2\nu)
$$
which is equal $-\th$ by \eqref{eq:de-th}. This shows that $\ka(!D) \le -\th$.

Assume that $!D$ has no contiguity subdiagrams to sides of type I. 
Let, as above, $r$ be the number of 
contiguity subdiagrams of $!D$ to sides of types II and III and 
$!S_1, !S_2, \dots !S_{r}$ be the remaining $r$ tiling sides occurring in $\de !D$
such that $!S_i^{-1}$ belong to boundary loops of connected components of $\De_{\al-1} - \bigcup_{\Pi \in \cT} \Pi$.
Instead of \eqref{eq:ICS-principal-bound-cell}
we have 
\begin{equation} \label{eq:ICS-principal-bound-cell-case2}
  \ka(!D) \le 1 - r \left( \frac16 (1 - 2\nu) \right) 
    - \ze\th N \max (0,\ 1 - r \la) .
\end{equation}
If we allow $r$ to be a non-negative real then 
the maximal value of the right-hand side  is when 
$$
  1 - r \la  = 0. 
$$
Substituting $r = \frac{1}{\la}$ into the left-hand side of \eqref{eq:ICS-principal-bound-cell-case2} 
we obtain the expression
$$
  1 - \frac{1 - 2\nu}{6\la}
$$
which is less then $-\th$ since $\la \le \frac1{24}$.

Finally, we compute an upper bound for $\ka(\bd \De)$. For a tiling side $!S$ occurring in the
boundary loop of an external cell of $\De$ (the loop has the form $!L^{-1}$ where $!L$ is a boundary loop of ~$\De$)
we have three possibilities: either $!S^{-1}$ is a contiguity arc of a subdiagram $\Pi \in \cT$,
$!S^{-1}$ is a side of a component of $\De_{\al-1} - \bigcup_{\Pi\in\cT} \Pi$, 
or $!S^{-1}$ is a bridge of $\De_{\al-1}$
In the first two cases we have $w(!S) \le 0$ according to (ii) or (i) respectively.
If $!S^{-1}$ is a bridge of $\De_{\al-1}$ then by (**), $!S^{-1}$ is 
also a bridge of some component of $\De_{\al-1} - \bigcup_{\Pi\in\cT} \Pi$
and by (i),
$$
  w(!S) = \frac13 (1+\nu).
$$
Note that each bridge of $\De$ produces at most two bridges of $\De_{\al-1}$. 
Hence $b(\De_{\al-1}) \le 2 b(\De)$. We obtain
\begin{equation} \label{eq:ICS-principal-bound-boundary}
  \ka(\bd \De) \le \frac13 (1+\nu) b(\De_{\al-1}) \le \frac23 (1+\nu) b(\De)
\end{equation}
Application of Proposition \ref{pr:Gauss-Bonnet} gives
$$
  \frac23 (1+\nu) b(\De) - \th M  \ge \chi(\De)
$$
as required. The proof of Proposition \ref{pr:principal-bound-cells} is finished.

\begin{lemma} \label{lm:monogon-regularity}
Let $\De$ be a reduced disk diagram of rank $\al\ge1$.
If $\De$ has a single (cyclic or non-cyclic) side then $\De$ has no cells of rank $\al$.
\end{lemma}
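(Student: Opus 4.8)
The plan is to assume, for contradiction, that $\De$ has a cell of rank $\al$, and write $M\ge 1$ for the number of such cells. Since $\De$ is a disk diagram, $\chi(\De)=1$, and since $\De$ has a single side, $b(\De)\le 1$: precisely, $b(\De)=1$ if the side is non-cyclic (a monogon, with its one bridge) and $b(\De)=0$ if it is cyclic. In $\De_{\al-1}$, deleting each rank-$\al$ cell lowers the Euler characteristic by $1$, so $\chi(\De_{\al-1})=1-M$, while each bridge of $\De$ produces one or two bridges of $\De_{\al-1}$, whence $b(\De)\le b(\De_{\al-1})\le 2b(\De)$. Therefore
$$
  c(\De_{\al-1})=b(\De_{\al-1})-2\chi(\De_{\al-1})=b(\De_{\al-1})-2+2M\ \ge\ b(\De)+2M-2,
$$
which is strictly positive unless $b(\De)=0$ and $M=1$.

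Assume first that $c(\De_{\al-1})>0$. Using Proposition \ref{pr:contiguity-map-exists} I replace $\De_{\al-1}$ by a subdiagram of the same frame type — so the boundary marking of $\De$, and hence $b(\De)$ and $\chi(\De)$, are unchanged — carrying a tight set $\cT$ of pairwise disjoint contiguity subdiagrams. After a routine adjustment of $\cT$ (whenever a rank-$\al$ cell has two members of $\cT$ as contiguity subdiagrams to the unique side of $\De$, absorb these two, together with the region between them, into a single contiguity subdiagram) we may assume that condition (*) of Proposition \ref{pr:principal-bound-cells} holds. That proposition then yields
$$
  \th M\ \le\ \frac23(1+\nu)\,b(\De)-\chi(\De)\ \le\ \frac23(1+\nu)-1\ =\ \frac{19}{27}-1\ <\ 0 ,
$$
using $b(\De)\le 1$, $\chi(\De)=1$ and $\nu=\frac1{18}$; this contradicts $\th M\ge\th=\frac{17}{27}>0$.

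It remains to exclude the case $b(\De)=0$, $M=1$, in which $\De$ has a single cyclic side $!L$ and exactly one rank-$\al$ cell $!D$. Then $\De_{\al-1}=\De-!D$ is an annular diagram over the presentation of $G_{\al-1}$ whose two boundary loops carry the labels $X:=\lab(!L)$, cyclically reduced in $G_\al$, and a cyclic shift of $R^{-1}$, where $R$ is a relator of rank $\al$. By the van Kampen lemma this exhibits $X$ as conjugate to $R$ in $G_{\al-1}$; since $R=1$ in $G_\al$, it follows that $X=1$ in $G_\al$, which is impossible for a non-empty word reduced in $G_\al$ by Theorem \ref{th:reduced-nontrivial}. (If that rank-$\al$ instance of Theorem \ref{th:reduced-nontrivial} is not yet available at this stage of the induction on the rank, one argues directly with the annulus $\De-!D$: reduce it at rank $\al-1$ and apply Lemma \ref{lm:small-cancellation-diagram} together with condition (S1) to produce a subword of $X$ close in $G_{\al-1}$ to a $\rho$-large piece of $R$, contradicting reducedness of $X$ in $G_\al$.) Hence $M=0$, proving the lemma.

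The main obstacle I anticipate is the reduction to condition (*): one must check that the ``absorbing'' modification of the tight set keeps it tight (the new complementary components remain small, and of positive complexity when $c(\De_{\al-1})>0$) and that the process terminates, and one must likewise make precise the treatment of the thin-annulus case above. The remainder is the bookkeeping of $\chi$ and $b$ under passage to $\De_{\al-1}$ and the single numerical inequality.
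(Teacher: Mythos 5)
Your proposal departs from the paper's proof at two points, and both create genuine difficulties.

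For the main case $c(\De_{\al-1})>0$: the ``routine adjustment'' of $\cT$ that you invoke does not go through. When a rank-$\al$ cell $!D$ has two contiguity subdiagrams $\Pi_1,\Pi_2\in\cT$ to the unique side $!S$, the region of $\De$ trapped between $\Pi_1$, $\Pi_2$, $!S$ and $\de!D$ can itself contain cells of rank $\al$; the union $\Pi_1\cup(\text{trapped region})\cup\Pi_2$ is then not a subdiagram of $\De_{\al-1}$, hence not a contiguity subdiagram at all, and no amount of retightening fixes this. This is exactly where the paper brings in an ingredient your argument lacks: it chooses $\De$ with the \emph{minimal} nonzero number $M$ of rank-$\al$ cells. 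Under that minimality, the trapped subdiagram $\De'$ is a reduced monogon-type diagram of rank $\al$ with strictly fewer rank-$\al$ cells than $\De$, hence has none; one then derives a contradiction not from Proposition~\ref{pr:principal-bound-cells} but from tightness of $\cT$ itself (some component $\Th$ of $\De_{\al-1}-\bigcup\Pi$ has $c(\Th)=0$). Without a surrogate for that minimality you cannot arrange condition~(*), and Proposition~\ref{pr:principal-bound-cells} is not applicable.

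For the exceptional case $b(\De)=0$, $M=1$: your primary argument cites Theorem~\ref{th:reduced-nontrivial}, which comes from Proposition~\ref{pr:reduced-nontrivial}, whose proof uses the very lemma under discussion — circular. The parenthetical backup is also not adequate as stated: Lemma~\ref{lm:small-cancellation-diagram} is a statement about contiguity subdiagrams inside a reduced diagram \emph{of rank $\al$} (one arc lying on $\de!D$ for $!D$ a rank-$\al$ cell of that diagram), and cannot be applied to the annulus $\De-!D$ viewed over $G_{\al-1}$ in the way you sketch; making the backup rigorous would require single-layer or fragment-stability results that sit downstream of this lemma. The way to dispose of this case consistently with the paper's conventions is simply to refine $\De$ so that the cyclic side becomes a non-cyclic side together with a trivial bridge (the empty word lies in $\cH_\al$); this gives $b(\De)=1$, hence $c(\De_{\al-1})\ge b(\De_{\al-1})-2\chi(\De_{\al-1})\ge 1-0>0$, and folds the case into the main argument.

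Your careful bookkeeping of $\chi$ and $b$ and the explicit identification of the borderline case $b(\De)=0$, $M=1$ are genuinely useful observations, but as written the proof has the two gaps above.
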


\begin{proof}
Let $\De$ be a reduced disk diagram of rank $\al$ with a single side, i.e.\ $\De$ is 
of monogon or nullgon type. 
Assume that $\De$ has a cell of rank $\al$. 
We choose such $\De$ with minimal possible non-zero number $M$ of cells of rank $\al$.
We then have $\chi(\De_{\al-1}) \le 0$ and hence $c(\De_{\al-1}) > 0$.
We can assume that $\De$ is given a tight set $\cT$ of contiguity subdiagrams. 
If each cell of rank ~$\al$ of ~$\De$ has at most one contiguity subdiagram $\Pi \in \cT$ to the side of ~$\De$
then application of Proposition \ref{pr:principal-bound-cells} would give
$$
  \th M \le \frac23 (1+\nu) - 1 < 0.
$$
Therefore, $\De$ has a cell $!D$ of rank $\al$ having two contiguity subdiagram 
$\Pi_1,\Pi_2 \in \cT$ to the side of ~$\De$.
The union $!D \cup \Pi_1 \cup \Pi_2$ cuts off from $\De$ a disk diagram $\De'$ of rank $\al$ 
with a single side and a single bridge (Figure \ref{fig:monogon-regularity}).
\begin{figure}[h]
\begingroup%
  \makeatletter%
  \providecommand\color[2][]{%
    \errmessage{(Inkscape) Color is used for the text in Inkscape, but the package 'color.sty' is not loaded}%
    \renewcommand\color[2][]{}%
  }%
  \providecommand\transparent[1]{%
    \errmessage{(Inkscape) Transparency is used (non-zero) for the text in Inkscape, but the package 'transparent.sty' is not loaded}%
    \renewcommand\transparent[1]{}%
  }%
  \providecommand\rotatebox[2]{#2}%
  \newcommand*\fsize{\dimexpr\f@size pt\relax}%
  \newcommand*\lineheight[1]{\fontsize{\fsize}{#1\fsize}\selectfont}%
  \ifx\svgwidth\undefined%
    \setlength{\unitlength}{190.91521061bp}%
    \ifx\svgscale\undefined%
      \relax%
    \else%
      \setlength{\unitlength}{\unitlength * \real{\svgscale}}%
    \fi%
  \else%
    \setlength{\unitlength}{\svgwidth}%
  \fi%
  \global\let\svgwidth\undefined%
  \global\let\svgscale\undefined%
  \makeatother%
  \begin{picture}(1,0.48913258)%
    \lineheight{1}%
    \setlength\tabcolsep{0pt}%
    \put(0,0){\includegraphics[width=\unitlength]{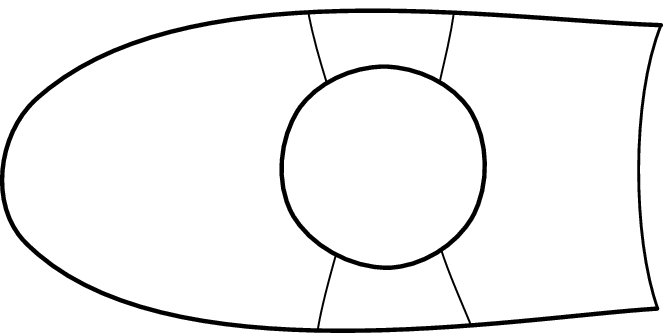}}%
    \put(0.56627459,0.22431105){\color[rgb]{0,0,0}\makebox(0,0)[lt]{\lineheight{1.25}\smash{\begin{tabular}[t]{l}$!D$\end{tabular}}}}%
    \put(0.19940662,0.22331756){\color[rgb]{0,0,0}\makebox(0,0)[lt]{\lineheight{1.25}\smash{\begin{tabular}[t]{l}$\De'$\end{tabular}}}}%
    \put(0.56694095,0.03686232){\color[rgb]{0,0,0}\makebox(0,0)[lt]{\lineheight{1.25}\smash{\begin{tabular}[t]{l}$\Pi_1$\end{tabular}}}}%
    \put(0.55196596,0.42212998){\color[rgb]{0,0,0}\makebox(0,0)[lt]{\lineheight{1.25}\smash{\begin{tabular}[t]{l}$\Pi_2$\end{tabular}}}}%
  \end{picture}%
\endgroup%

\caption{}  \label{fig:monogon-regularity}
\end{figure}
The assumption that $\De$ is reduced implies that $\De'$ is reduced as well.
By the choice of $\De$, $\De'$ has no cells of rank $\al$.
Then for some component ~$\Th$ of $\De_{\al-1} - \bigcup_{\Pi\in\cT} \Pi$
we have $c(\Th) = 0$ contrary to the choice of a tight set $\cT$ of contiguity subdiagrams of $\De$
(Definition \ref{df:tight-set}). 
\end{proof}

\begin{proposition} \label{pr:reduced-nontrivial}
If a non-empty word $X$ is reduced in $G_\al$ then $X \ne 1$ in $G_\al$.
\end{proposition}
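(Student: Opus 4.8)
The plan is to prove the statement by induction on the rank $\al$, freely invoking the rank-$(\al-1)$ form of the proposition as an inductive hypothesis. The base case $\al=0$ is immediate: a word reduced in $G_0$ is, by definition, freely reduced, and since $G_0$ is the free group on $\cA$, any non-empty freely reduced word is $\ne 1$ in $G_0$.

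For the inductive step fix $\al\ge1$ and assume the claim for rank $\al-1$. Suppose, towards a contradiction, that $X$ is non-empty, reduced in $G_\al$, yet $X=1$ in $G_\al$. By the van Kampen lemma there is a simply connected diagram over the presentation \eqref{eq:G-al-presn} of $G_\al$ with boundary label $X$, and by refinement (see \ref{ss:diagrams}) we may take it non-singular. Equip it with a boundary marking of rank $\al$ of monogon type: its boundary loop is $!X\cdot!u$, where $!X$ is the single side, $\lab(!X)\eqcirc X$ (reduced in $G_\al$ by hypothesis), and $!u$ is a bridge whose label is the empty word — which belongs to $\cH_\al$ since $\cH_0\seq\cH_\al$ — carrying the trivial bridge partition. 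As $!u$ is not of native rank $\al$, the reduction process of \ref{ss:reduction-transformations-good} applied to this diagram uses no bridge switching and preserves the frame type, so it produces a \emph{reduced} diagram $\De$ of rank $\al$ which, being simply connected, still has boundary label $X$ (Lemma~\ref{lm:disk-annular-frame-type}) and which we again regard as a monogon with single side labeled $X$.

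Lemma~\ref{lm:monogon-regularity} then applies to $\De$ and shows that $\De$ has no cells of rank $\al$. Hence every cell of $\De$ has boundary label that is either trivial or a relator of rank at most $\al-1$, so $\De$ is in fact a diagram over the presentation \eqref{eq:G-al-presn} of $G_{\al-1}$ with boundary label $X$, and the van Kampen lemma now gives $X=1$ in $G_{\al-1}$. On the other hand, being reduced in $G_\al$, $X$ is in particular reduced in $G_{\al-1}$ (see \ref{ss:reduced-word}) and non-empty, so the inductive hypothesis gives $X\ne1$ in $G_{\al-1}$ — a contradiction. This forces $X\ne1$ in $G_\al$ and completes the induction. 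All the mathematical weight of the argument sits in Lemma~\ref{lm:monogon-regularity} (and, through it, in the discrete Gauss--Bonnet estimate of Proposition~\ref{pr:principal-bound-cells}); what requires attention in assembling the present proof is the bookkeeping of the monogon boundary marking — in particular checking that the trivial boundary bridge is left undisturbed by the reduction process, so that $X$ persists as the label of the unique side — together with organizing the induction so that it descends from $G_\al$ to $G_{\al-1}$ and not to the limit group $G$.
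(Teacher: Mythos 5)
Your proof is correct and follows essentially the same route as the paper's: build a monogon-type reduced diagram with the trivial bridge, invoke Lemma~\ref{lm:monogon-regularity} to kill the cells of rank $\al$, descend to $G_{\al-1}$, and induct down to the free group. The extra bookkeeping you include — checking that the empty-labeled bridge has no native rank $\al$ so the reduction process never switches it — is a correct elaboration of a point the paper leaves implicit when it says "Consider a reduced disk diagram of rank~$\al$ with one side labeled $X$ and one bridge labeled by the empty word."
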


\begin{proof}
Let $\al\ge1$.
Let $X$ be reduced in $G_\al$ and $X = 1$ in $G_\al$. Consider a reduced disk diagram $\De$ of rank ~$\al$
with one side labeled $X$ and one bridge labeled by the empty word. Lemma \ref{lm:monogon-regularity} says
that $\De$ has no cells of rank $\al$ and hence we have $X = 1$ in $G_{\al-1}$.
Since $\cR_\al \seq \cR_{\al-1}$, arguing by induction we conclude that $X = 1$ in the free group $G_0$. 
Since $X$ is freely reduced (definition \ref{ss:reduced-word}) we conclude that $X$ is empty.
\end{proof}

\begin{lemma} \label{lm:small-bond-regularity}
Let $\De$ be a reduced diagram of rank $\al\ge1$ and let $!u$ be a simple path in $\De$ 
homotopic rel endpoints
to a subpath $!S$ of a side of $\De$. Assume, moreover, that the label of ~$!u$ is equal in $G_{\al-1}$ to 
a word in $\cH_{\al-1}$. Then the subdiagram of $\De$ with boundary loop $!S !u^{-1}$ has no cells of rank $\al$.
\end{lemma}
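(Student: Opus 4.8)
The plan is to modify $\Si$ --- the subdiagram of $\De$ bounded by $!S\,!u^{-1}$ --- into a reduced disk diagram of monogon type, and then to apply Lemma~\ref{lm:monogon-regularity}. By hypothesis $\lab(!u) = w$ in $G_{\al-1}$ for some $w \in \cH_{\al-1}$, so $\lab(!u)\,w^{-1} = 1$ in $G_{\al-1}$ and van Kampen's lemma yields a simply connected diagram $\Th$ over the presentation of $G_{\al-1}$ (which we may take non-singular) whose boundary loop reads $!u\,!v$ with $\lab(!v) \eqcirc w^{-1}$. Gluing $\Th$ to $\Si$ along $!u$, and refining so as to keep the result non-singular, produces a simply connected diagram $\Si'$ with boundary loop $!S\,!v$. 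Since $\Th$ has no cells of rank $\al$, the cells of rank $\al$ of $\Si'$ coincide with those of $\Si$. I would then equip $\Si'$ with the boundary marking of rank $\al$ having the single side $!S$ --- whose label is reduced in $G_\al$, since $!S$ is a subpath of a side of $\De$ and $\cR_\al$ is closed under subwords --- and the single bridge $!v$, whose label lies in $\cH_{\al-1} \seq \cH_\al$, so that $!v$ receives the trivial bridge partition and in particular has no central arc.

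The next step is to check that $\Si'$ is reduced. The cleanest way is to perform on $\De$ itself the surgery of~\ref{ss:bond-enhacement} along $!u$ (here $!u$ is not a bond, being homotopic to a subpath of a side, but the surgery applies verbatim), carried out over $G_{\al-1}$ rather than over $G_\al$ --- which is legitimate precisely because of the hypothesis on $\lab(!u)$. This produces a reduced diagram $\De'$ of rank $\al$ with the same cells of rank $\al$ as $\De$ and with a new internal path labelled by $w$ in place of $!u$; then $\Si'$ is the subdiagram of $\De'$ cut off by $!S$ together with that path, and since its only bridge has no central arc, the last Proposition of~\ref{ss:reduction-transformations-good} gives that $\Si'$ is reduced. (Alternatively one argues directly: a cancellable pair of $\Si'$ would consist of two cells of rank $\al$, hence of cells lying in $\Si \seq \De$, and --- after deforming the connecting path off the simply connected subdiagram $\Th$, which changes its label only modulo $G_{\al-1}$ --- it would yield a cancellable pair of $\De$, contrary to $\De$ being reduced.) Now $\Si'$ is a reduced disk diagram of rank $\al$ with a single side, so Lemma~\ref{lm:monogon-regularity} gives that $\Si'$, and hence $\Si$, has no cells of rank $\al$.

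I expect the reducedness check of the second step to be the main obstacle: one must rule out that inserting the cells of rank below $\al$ contributed by $\Th$, or rerouting a would-be cancelling path through $\Th$, creates a cancellable pair absent from $\De$. The remaining points are routine bookkeeping: that the gluing is along a genuine boundary arc of $\Si$, that non-singularity is restored by refinement, that $\Si'$ satisfies the requirements of a diagram of rank $\al$, and that the degenerate configurations in which $!u$ coincides with $!S$ (so $\Si$ is trivial) or overlaps it along a proper subpath are handled by a preliminary refinement.
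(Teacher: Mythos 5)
Your proposal takes essentially the same route as the paper: attach a simply connected diagram $\Th$ over $G_{\al-1}$ with boundary $!u\,!v$ to the subdiagram along $!u$, view the result as a monogon-type diagram of rank $\al$ with single side $!S$ and single bridge $!v$ whose label lies in $\cH_{\al-1}$ (so it has trivial bridge partition), check reducedness by noting that any connecting path can be deformed off $\Th$ with its label unchanged in $G_{\al-1}$, and conclude by Lemma~\ref{lm:monogon-regularity}. Your ``alternative'' direct reducedness argument in the second paragraph is precisely the paper's argument, and you correctly identified that check as the key step.
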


\begin{proof}
Let $\De'$ be the subdiagram of $\De$ with boundary loop $!S !u^{-1}$ and let $w \in \cH_{\al-1}$
be a word such that $\lab(!u) = w$ in $G_{\al-1}$.
We attach to $\De'$ a diagram $\Th$ over the presentation of ~$G_{\al-1}$ with 
boundary loop $!u !w^{-1}$ where $\lab(!w) = w$. 
We consider $\De' \cup \Th$ as a diagram of rank ~$\al$ with one side $!S$ and one bridge $!w^{-1}$.
Note that any simple path in $\De' \cup \Th$ with endpoints in $\De'$ is homotopic rel endpoints
to a simple path in $\De'$.
Moreover, this holds also if $\De' \cup \Th$ is refined to a diagram $\Si$ and
we take a refinement of $\De'$ in $\Si$ instead of $\De'$.
This implies that 
$\De' \cup \Th$ is a reduced diagram of rank $\al$.
Then by Lemma \ref{lm:monogon-regularity}, $\De' \cup \Th$ has no cells of 
rank ~$\al$.
\end{proof}

\begin{proposition}[bounding sides of a small diagram, raw form]  \label{pr:principal-bound-sides}
Let $\De$ be a small diagram of rank $\al\ge1$.
Assume that $\De$ is not of bigon type and $c(\De_{\al-1}) > 0$.
Then
\begin{equation} \label{eq:ICS-principal-bound}
  \th \sum_{!S} |!S|_\al \le \frac23 (1+\nu) b(\De) - \chi(\De)
\end{equation}
where the sum is taken over all sides $!S$ of $\De$. 
\end{proposition}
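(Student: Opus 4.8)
The plan is to prove \eqref{eq:ICS-principal-bound} by induction on the rank~$\al$ (the inductive hypothesis being Proposition~\ref{pr:principal-bound-sides} at ranks $<\al$), running the discrete Gauss--Bonnet argument (Proposition~\ref{pr:Gauss-Bonnet}) along the lines of the proof of Proposition~\ref{pr:principal-bound-cells}, but with a discrete connection tuned so that the curvature along the boundary of $\De$ records $\th\sum_{!S}|!S|_\al$ rather than the number of cells of rank~$\al$ in the interior.

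\emph{Reductions.}\quad Cell--cell and bridge--cell cancellations change neither the labels of the sides, nor $b(\De)$, $\chi(\De)$, nor the homotopy type of $\De$, so \eqref{eq:ICS-principal-bound} is insensitive to them and we may assume $\De$ reduced. If $\De$ has no cell of rank~$\al$ then $\De=\De_{\al-1}$ and \eqref{eq:ICS-principal-bound} follows by descending on the rank: at rank~$0$ every side of a small diagram with positive complexity has empty label by Proposition~\ref{pri:small-rank0-diagram}, while at higher ranks we combine the inequality $|X|_\al\le\ze|X|_{\al-1}$ of~\ref{ss:alpha-length-properties} with Proposition~\ref{pr:principal-bound-sides} at rank $\al-1$ applied to the same complex with its rank-$(\al-1)$ boundary marking, using $\ze<1$ and the fact that $\frac23(1+\nu)b(\De)-\chi(\De)\ge0$ under the present hypotheses; the remaining very-low-complexity configurations (monogons and the like) are excluded by $c(\De_{\al-1})>0$ together with Lemma~\ref{lm:monogon-regularity} and Proposition~\ref{pr:reduced-nontrivial}. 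So from now on $\De$ has $M\ge1$ cells of rank~$\al$, hence $c(\De_{\al-1})>0$. By Proposition~\ref{pr:contiguity-map-exists}, after replacing $\De$ by a diagram of the same frame type we fix a tight set $\cT$ of pairwise disjoint contiguity subdiagrams; performing the ``cleaning'' step of the proof of Proposition~\ref{pr:principal-bound-cells} we may assume that every bridge of $\De_{\al-1}$ occurs on a component $\Th$ of $\De_{\al-1}-\bigcup_{\Pi\in\cT}\Pi$; and, cutting off offending pieces exactly as in the proof of Lemma~\ref{lm:monogon-regularity}, we may assume condition~(*) of Proposition~\ref{pr:principal-bound-cells}.

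\emph{The connection.}\quad I would set up a discrete connection $w$ on $(\De,\cT)$ in the spirit of the one in the proof of Proposition~\ref{pr:principal-bound-cells}: $-\frac13(1+\nu)$ on each bond and bridge of $\De_{\al-1}$ on a component $\Th$, a weight proportional to $\ze|!S'|_{\al-1}$ on a side $!S'$ of a $\Th$, and weights depending on the three side-types on the contiguity arcs, but adjusted on the tiling sides that subdivide the \emph{sides of $\De$}. Two facts make this possible. First, by the definition of a contiguity subdiagram a contiguity arc of a side of $\De$ to a cell of rank~$\al$ — or to the central arc of a bridge of native rank~$\al$ — is, up to inversion, a fragment of rank~$\al$, and by the smallness of $\De$ these are the only contiguity subdiagrams of $\cT$ that meet a side of $\De$. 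Second, by $|X|_\al\le\ze|X|_{\al-1}$, a portion of a side of $\De$ lying inside some $\Th$ already contributes, through the weight of the corresponding side of $\Th$, at least $\th$ times its rank-$\al$ length to $\ka(\bd\De)$. Reading off the $\cT$-induced fragmentation of a side $!S$ of $\De$ — each contiguity arc as one fragment of rank~$\al$, each $\Th$-portion by an optimal fragmentation of rank $\al-1$ — one wants $w$ chosen so that the total weight picked up along $!S$ by $\ka(\bd\De)$ is at most $-\th|!S|_\al$.

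\emph{Curvature estimates and conclusion.}\quad One then checks, as in Proposition~\ref{pr:principal-bound-cells}, that with this $w$ every internal tile has non-positive curvature: contiguity subdiagrams directly; the components $\Th$ via Proposition~\ref{pr:principal-bound-sides} at rank $\al-1$ together with $c(\Th)>0$; and the cells of rank~$\al$ via condition~(S1) of~\ref{ss:condition} and Lemmas~\ref{lm:small-cancellation-diagram} and~\ref{lm:no-folded-cells}, in a refinement of the estimate $\ka(!D)\le-\th$ of Proposition~\ref{pr:principal-bound-cells} that also absorbs the weight shift coming from the (at most one, by condition~(*)) contiguity of $!D$ to the sides of $\De$. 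Proposition~\ref{pr:Gauss-Bonnet} then gives $\chi(\De)\le\ka(\bd\De)$, while the contributions to $\ka(\bd\De)$ are: at most $\frac23(1+\nu)b(\De)$ from the bridges of $\De_{\al-1}$ (of which there are at most $2b(\De)$); non-positive contributions from the subdivisions of the central arcs of bridges of $\De$; and at most $-\th\sum_{!S}|!S|_\al$ from the sides of $\De$. Hence $\chi(\De)\le\frac23(1+\nu)b(\De)-\th\sum_{!S}|!S|_\al$, which is \eqref{eq:ICS-principal-bound}.

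\emph{Where the difficulty lies.}\quad The crux is the design of $w$ along the sides of $\De$: raising the weight carried by a contiguity arc that meets a side of $\De$ enough to cover the fragment-cost $\th$ feeds back into the curvature of the bordering contiguity subdiagram and of the bordering cell of rank~$\al$, and one must verify that $\ka(\Pi)\le0$ and $\ka(!D)\le0$ survive. This is exactly where the hypothesis that $\De$ is \emph{small} is used in an essential way — it restricts which tiles a side of $\De$ can border, ruling out, for instance, a side wrapping round a relator of rank~$\al$ — and where the precise values of $\la$, $\Om$, $\rho$, $\ze$, $\nu$ and $\th$ must be balanced against one another; it is also where condition~(*) and Lemma~\ref{lm:monogon-regularity} do their work in disposing of the exceptional low-complexity configurations.
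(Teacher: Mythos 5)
Your proposal follows the paper's own strategy: run the discrete Gauss--Bonnet machinery of Proposition~\ref{pr:Gauss-Bonnet} with a discrete connection $w^*$, modified from the one in the proof of Proposition~\ref{pr:principal-bound-cells} so as to place weight $\th$ on contiguity arcs lying on sides of $\De$ and weight $\ze\th|\cdot|_{\al-1}$ on the sides of components~$\Th$, so that $\ka^*(\bd\De)$ records $\th\sum_{!S}|!S|_\al$. The reductions you list (reduced diagram, tight set $\cT$, cleaning, condition~(*)), the two mechanisms feeding $|!S|_\al$ into $\ka^*(\bd\De)$, and the identification of the crux as the curvature feedback are all as in the paper.

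But you model the feedback as going ``into the curvature of the bordering contiguity subdiagram and of the bordering cell of rank~$\al$'' and treat non-positivity of $\ka^*(\Pi)$ as a verification. That picture misses a case you yourself list: a contiguity subdiagram $\Pi$ between a side of $\De$ (type~I) and the central arc of a bridge (type~II), where there is no bordering cell of rank~$\al$ at all. There the check fails. With $w^*(!P)=\th$ on the type-I arc $!P$, non-positivity of $\ka^*(\Pi)=-\tfrac13(1-2\nu)+w^*(!P)+w^*(!Q)$ forces $w^*(!Q)\le-\tfrac13$; but $!Q^{-1}$ also lies on $\bd\De$ and then contributes at least $+\tfrac13$ to $\ka^*(\bd\De)$, and a single bridge's central arc can carry an unbounded number of such arcs, overspending the budget $\tfrac23(1+\nu)b(\De)$. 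No choice of $w^*$ avoids this trade-off, since the two arcs $!P$ and $!Q$ of $\Pi$ together can carry weight at most $\tfrac13(1-2\nu)<\th$. The paper does not fix this by tuning $w^*$ but by first \emph{eliminating} I-to-II contiguity subdiagrams (Claim~\ref{cl:principal-bound-no-I-II}): smallness of $\De$ forces the bridge and the side to be adjacent in $\de\De$, and a surgery cuts off the offending piece without changing any quantity in \eqref{eq:ICS-principal-bound}. This preliminary geometric removal, not a curvature estimate, is the step missing from your sketch.
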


\begin{proof}
We make $\De$ reduced and endow it
with a tight set ~$\cT$ of contiguity subdiagrams.
We assign to subpaths of sides of $\De_{\al-1}$ type I, II and III 
as in the proof of Proposition \ref{pr:principal-bound-cells}
and make several observations about $\cT$.

\begin{claim} \label{cl:principal-bound-no-I-I}
There are no contiguity subdiagrams $\Pi \in \cT$
between two (not necessarily distinct) sides of type I of $\De_{\al-1}$.
\end{claim}

Assume $\Pi$ is such a contiguity subdiagram. Let $\de\Pi = !P !u_1 !Q !u_2$
where $!P$ and $!Q$ are the contiguity arcs of $\Pi$.
According to Definition \ref{df:contiguity-subdiagram}
at least one of $!u_i$'s, say $!u_1$, is a bond in $\De_{\al-1}$
(otherwise $\Pi = \De_{\al-1}$ contrary to the assumption $c(\De_{\al-1}) > 0$).
Checking with Definition \ref{df:bond} we see that $!u_1$ is also a bond in $\De$
(condition (iii) of Definition \ref{df:bond} holds due to Lemma \ref{lm:small-bond-regularity}).
This contradicts the assumption that $\De$ is small.

\begin{claim} \label{cl:diagram-regularity}
Up to inessential change of $\De$ we may assume that condition (*) of 
Proposition ~\ref{pr:principal-bound-cells}
is satisfied, i.e.\ each cell of rank $\al$ of $\De$ has at most one contiguity subdiagram $\Pi \in \cT$ 
to sides of type I of $\De_{\al-1}$.
\end{claim}

Assume that a cell $!D$ of rank $\al$ has two contiguity subdiagrams $\Pi_i \in \cT$ $(i=1,2)$ 
to sides ~$!S_i$ 
of type I. Let $!P_i$ be the contiguity arc of $\Pi_i$ that occurs in $!S_i$.
The boundary loop of $!D \cup \Pi_1 \cup \Pi_2$ has the form $!P_1 !u_1 !P_2 !u_2$ where labels of $!u_i$
are in $\cH_\al$.
Since $\De$ is small, at least one of the conditions (iii) or (iv) 
of Definition \ref{df:bond} should be violated for each of the paths $!u_i$.
If $!S_1 = !S_2$ and some $!u_i$ (and hence both $!u_1$ and $!u_2$) are homotopic rel endpoints to a
subpath of  
$!S_1$ then $!D \cup \Pi_1 \cup \Pi_2$ cuts off a reduced disk subdiagram $\De'$ of $\De$ 
with one bridge ~$!u_1^{-1}$ or $!u_2^{-1}$.
By Lemma \ref{lm:monogon-regularity}, $\De'$ has no cells of rank $\al$.
Then either $\De'$ is a component of $\De_{\al-1} - \bigcup_{\Pi\in\cT} \Pi$ or $\De'$ 
contains a component $\Th$ of $\De_{\al-1} - \bigcup_{\Pi\in\cT} \Pi$ with $c(\Th) = 0$.
We come 
to a contradiction with the choice of a tight set $\cT$ of contiguity subdiagrams of $\De$.

Assume that condition (iv) of Definition \ref{df:bond} fails for both $!u_1$ and $!u_2$. Then,
up to renumeration of ~$\Pi_1$ and $\Pi_2$, $!D \cup \Pi_1 \cup \Pi_2$
cuts off a simply connected subdiagram $\De'$ with boundary loop $!u_1^{-1} !T_1 !v !T_2$ where $!P_1 !T_1$ is an ending subpath of $!S_1$,
$!v$ is a bridge of $\De$, $!T_2 !P_2$ is a starting subpath of ~$!S_2$ 
and labels of $!P_1 !T_1$ and $!T_2 !P_2$ are empty, see Figure ~\ref{fig:diagram-regularity-exception}a.
\begin{figure}[h]
\input 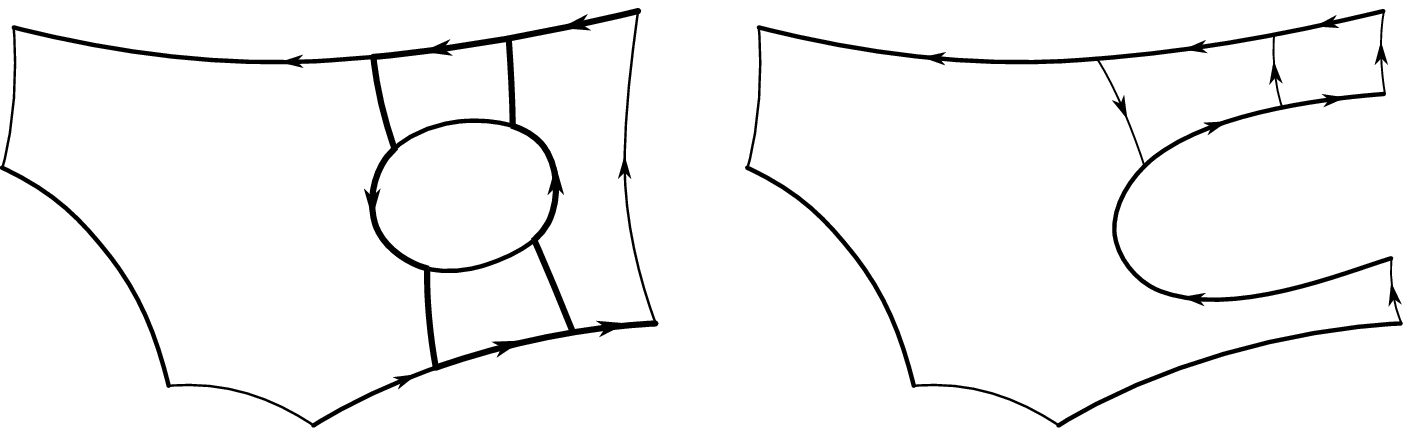_tex
\caption{}  \label{fig:diagram-regularity-exception}
\end{figure}
In this case, we cut off the subdiagram $!D \cup \Pi_1 \cup \Pi_2 \cup \De'$ from $\De$. 
The operation does not change the values of $\sum |!S|_\al$, $b(\De)$ and $\chi(\De)$
in \eqref{eq:ICS-principal-bound} and preserves the assumption that $\De$ is small.
We have also $c(\De_{\al-1}) > 0$ for the modified $\De$ (otherwise $\De$ would be a monogon type 
contradicting Lemma~\ref{lm:monogon-regularity}).

\begin{claim} \label{cl:principal-bound-no-I-II}
Up to inessential change of $\De$ 
we may assume that there are no contiguity subdiagrams ~$\Pi \in \cT$
between sides of type I and ~II of $\De_{\al-1}$.
\end{claim}

Assume that $\Pi \in \cT$ is a contiguity subdiagram between sides of type I and II.
Let $\de\Pi = !P !u_1 !Q !u_2$ where $!P$ occurs in a side $!S$ of $\De$ and $!Q$ occurs in the central arc
$!R$ of a bridge $!v = !v_1 !R !v_2$.
Observe that any of the endpoints of $!P$ can be joined with any of the endpoints of $!v$
by a path labeled with a word in $\cH_\al$ in a graph composed from paths $!u_1$, $!u_2$ and $!v$,
see Figure ~\ref{fig:diagram-regularity-exception}b. 
Since $\De$ is small, this easily implies that $!v$
and $!S$ are adjacent in the boundary of $\De$.
Up to symmetry, assume that $!v !S$ occurs in a boundary loop of $\De$.
so $!R = !R_1 !Q !R_2$ and $!S = !S_1 !P !S_2$.
Note that $\lab(!S_1!P)$ is empty (otherwise $!v_1 !R_1 !u_1^{-1}$ would give a bond in $\De$ 
after refinement)
and $\lab(!Q !R_2)$ is nonempty (because $!u_1$ is a bond in $\De_{\al-1}$).
We cut off the subdiagram of $\De$  bounded by $!Q !R_2 !v_2 !S_1 !P !u_1$.
As in the proof of the previous claim, 
the operation does not change the values of terms in \eqref{eq:ICS-principal-bound},
the value of $c(\De_{\al-1})$ and keeps the assumption that $\De$ is small.
On the other hand, we decrease the total length of labels of sides $\De_{\al-1}$.
The claim is proved.

\medskip

We now define a discrete connection $w^*$ on $(\De,\cT)$ by changing the function $w$ defined in the proof 
of Proposition \ref{pr:principal-bound-cells}.
The new function $w^*$ differs from $w$ only on contiguity arcs of
contiguity subdiagrams $\Pi \in \cT$ as follows. 
Let $\de\Pi = !P !u_1 !Q !u_2$ where $!P$ and $!Q$ are the contiguity arcs of $\Pi$.
By Claims \ref{cl:principal-bound-no-I-I} and \ref{cl:principal-bound-no-I-II},
if $!P$ has type I then $!Q$ has necessarily type III. 
Instead of \eqref{eq:contiguity-connection-def} we define 
$$
  w^*(!P) = 
  \begin{cases}
      \th & \text{if $!P$ has type I} \\
      \frac13 (1 - 2\nu) - \th & \text{if $!P$ has type III and $!Q$ has type I} \\
      \frac16 (1 - 2\nu) & \text{in all other cases}
  \end{cases}
$$
For contiguity subdiagrams $\Pi \in \cT$ we immediately have $\ka^*(\Pi) \le 0$
where $\ka^*$ denotes the curvature function defined from $w^*$.
If $\Th$ is a connected component of $\De_{\al-1} - \bigcup_{\Pi\in\cT} \Pi$
then $\ka^*(\Th) = \ka(\Th) \le 0$.
Let $!D$ be a cell of rank $\al$ of $\De$. In view of Claim \ref{cl:diagram-regularity}
$$
  \ka^*(!D) \le \ka(!D) + \th \le 0.
$$
We provide a bound for $\ka^*(\bd \De)$. Let $t$ be the number of all 
contiguity subdiagrams $\Pi \in \cT$ between sides of type I and sides of type III.
Then 
\begin{align*}
  \ka^*(\bd \De) &\le \frac13 (1+\nu) b(\De_{\al-1}) - \th t - 
    \ze \th \!\!\!\! \sum_{!S\in\text{sides}(\Th)} \!\!\! |!S|_{\al-1} \\
    & \le \frac23 (1+\nu) b(\De) - \th \!\!\!\sum_{!S\in\text{sides}(\De)} \!\!\! |!S|_\al 
\end{align*}
where $\Th$ runs over all connected components of $\De_{\al-1} - \bigcup_{\Pi\in\cT} \Pi$.
Applying Proposition \ref{pr:Gauss-Bonnet} we obtain
$$
  \frac23 (1+\nu) b(\De) - \th \!\!\!\sum_{!S\in\text{sides}(\De)} \!\!\!|!S|_\al \ge \chi(\De)
$$
as required.
\end{proof}

Below we will often use Proposition \ref{pr:principal-bound-sides} in a slightly simplified form.
We introduce yet another numerical parameter
$$
  \eta = \frac{1+2\nu}{\th} = \frac{30}{17}.
$$

\begin{proposition}[bounding sides of a small diagram, simplified form] \label{pr:principal-bound}
If $\De$ is a small diagram of rank $\al$ of positive complexity then
\begin{equation} \label{eq:ICS-small-bound}
  \sum_{!S \in \,\text{\rm sides}(\De)} |!S|_\al \le \eta \, c(\De).
\end{equation}
\end{proposition}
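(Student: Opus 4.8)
The plan is to read off \eqref{eq:ICS-small-bound} from the raw bound of Proposition \ref{pr:principal-bound-sides}, after clearing away the degenerate configurations that the raw form leaves out.

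I would first dispose of $\al = 0$: if $\De$ is a small diagram of rank $0$ with $c(\De) > 0$, then Proposition \ref{pri:small-rank0-diagram} says every side of $\De$ has empty label, so the left-hand side of \eqref{eq:ICS-small-bound} is $0$, and the inequality is immediate since $\eta > 0$ and $c(\De) \ge 1$. So assume $\al \ge 1$, and check the two hypotheses of Proposition \ref{pr:principal-bound-sides}. A bigon-type disk has $b = 2$ and $\chi = 1$, hence $c = 0$; therefore $c(\De) > 0$ forces $\De$ not to be of bigon type (and it also rules out the monogon and nullgon cases, for which $c \le -1$, so those require no separate treatment). For the condition $c(\De_{\al-1}) > 0$: passing to $\De_{\al-1}$ (see \ref{ss:diagram-previous}) deletes the $M$ open $2$-cells of rank $\al$, which lowers $\chi$ by $M$, and replaces each of the $k$ bridges of native rank $\al$ by two bridges, so $\chi(\De_{\al-1}) = \chi(\De) - M$ and $b(\De_{\al-1}) = b(\De) + k$; hence
\[
  c(\De_{\al-1}) = c(\De) + k + 2M \ge c(\De) > 0 .
\]
So Proposition \ref{pr:principal-bound-sides} applies and yields $\th \sum_{!S} |!S|_\al \le \tfrac23 (1+\nu)\, b(\De) - \chi(\De)$, the sum over all sides $!S$ of $\De$.

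It then remains to compare this with $\eta\, c(\De) = \eta\,(b(\De) - 2\chi(\De))$. Writing $b = b(\De)$ and $\chi = \chi(\De)$ and using $\th\eta = 1 + 2\nu$, a one-line computation shows that
\[
  (1+2\nu)(b - 2\chi) - \Bigl(\tfrac23(1+\nu)\,b - \chi\Bigr) = \tfrac13(1 + 4\nu)\,(b - 3\chi),
\]
which is $\ge 0$ once $b(\De) \ge 3\chi(\De)$. This last inequality holds in every remaining case: if $\De$ is a disk, then $\chi(\De) = 1$ and $c(\De) > 0$ forces $b(\De) \ge 3$; if $\De$ is not a disk, then $\De$ is a planar surface with at least one hole, so $\chi(\De) \le 0 \le \tfrac13 b(\De)$. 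Combining the two displays gives $\sum_{!S} |!S|_\al \le \eta\,(b(\De) - 2\chi(\De)) = \eta\, c(\De)$, which is \eqref{eq:ICS-small-bound}.

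The only step requiring genuine care is the Euler-characteristic bookkeeping that gives $c(\De_{\al-1}) = c(\De) + k + 2M$; the rest is either a direct appeal to results already proved or a short arithmetic verification, so I anticipate no real obstacle.
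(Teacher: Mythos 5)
Your proof is correct and takes essentially the same route as the paper's: dispose of $\al = 0$ via Proposition \ref{pri:small-rank0-diagram}, apply the raw bound of Proposition \ref{pr:principal-bound-sides}, and check the arithmetic inequality $\frac1\th\bigl(\frac23(1+\nu)b - \chi\bigr) \le \eta(b - 2\chi)$ for $c(\De) \ge 1$. You are more explicit than the paper (which merely remarks that the critical case $b=3$, $\chi=1$ gives equality) about two points the paper leaves tacit: that $c(\De)>0$ excludes the bigon/monogon/nullgon cases, and that the Euler-characteristic bookkeeping $c(\De_{\al-1}) = c(\De) + k + 2M \ge c(\De)$ guarantees the hypothesis $c(\De_{\al-1}) > 0$; these are correct verifications of hypotheses that the paper's terser proof simply assumes, not a genuinely different argument.
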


\begin{proof}
By Proposition \ref{pri:small-rank0-diagram} we may assume that $\al\ge1$.
It remains to notice that if $c(\De) \ge 1$ then 
$$
  \frac1\th \left( \frac23 (1+\nu) b(\De) - \chi(\De) \right) \le  \eta \, c(\De).
$$
(The critical case is when $b(\De) = 3$ and $\chi(\De) = 1$. In this case we have 
the equality.)
\end{proof}

\begin{lemma} \label{lm:cell-regularity}
Let $\De$ be a reduced diagram of rank $\al\ge1$ and let $\cT$ be a tight set 
of contiguity subdiagrams of $\De$. 
Let $!D$ be a cell of rank $\al$ of $\De$. Then the following is true.
\begin{enumerate}
\item \label{lmi:cell-regularity-side}
Let $\Pi_1$ and $\Pi_2$ be two contiguity subdiagrams of $!D$ to
a side $!S$ of $\De_{\al-1}$. 
Then a subdiagram $\Th$ of $\De$ bounded by $\de !D$, $\Pi_1$, $\Pi_2$ and $!S$
(there are two of them if $!S$ is a cyclic side)
is not simply connected (see Figure~\ref{fig:cell-regularity}a).
\item
Let $\Pi$ be a contiguity subdiagram of $!D$ to itself. 
Then the subdiagram $\Th'$ of $\De$ bounded by $\de !D$ and $\Pi$ (see Figure~\ref{fig:cell-regularity}b) 
is not simply connected.
\item
If $\De$ is simply connected then any cell of rank $\al$ has at most one contiguity subdiagram
to each side of $\De_{\al-1}$ and has no contiguity subdiagrams to itself.
\end{enumerate}
\end{lemma}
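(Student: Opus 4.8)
The plan is to prove (i) and (ii) by contradiction and then deduce (iii) from them. For (iii) I would use that a connected subdiagram of a disc is planar, hence a disc whenever its topological boundary consists of a single loop; so in a simply connected $\De$, if a cell $!D$ of rank $\al$ had two contiguity subdiagrams $\Pi_1,\Pi_2\in\cT$ to one side $!S$ of $\De_{\al-1}$, or a contiguity subdiagram to itself, then the associated region $\Th$ of (i) (resp.\ $\Th'$ of (ii)) --- a connected subdiagram whose boundary is a single loop --- would be simply connected, contradicting (i) (resp.\ (ii)). So it remains to prove (i) and (ii), and in each of them I assume the region in question is simply connected and look for a contradiction.

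\textbf{Part (ii).} Since the two contiguity arcs of $\Pi$ both lie on $\de !D$, the union $!D\cup\Pi$ is an annular subdiagram of $\De$, and the assumed simply connected $\Th'$ fills its bounded complementary region. Hence $\Sigma:=!D\cup\Pi\cup\Th'$ is a disc subdiagram of $\De$ whose boundary loop is the other boundary circle of $!D\cup\Pi$, that is, a product $!c\cdot!u$ of a subpath $!c$ of $\de !D$ with the remaining bond $!u$ of $\Pi$ (and $\lab(!u)\in\cH_{\al-1}$ after the surgery of~\ref{ss:bond-enhacement}). I would mark $\Sigma$ with $!c$ as its single side and $!u$ as its single bridge; it is reduced because $\De$ is reduced and its only bridge has a trivial bridge partition (cf.\ the proposition in~\ref{ss:reduction-transformations-good}). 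Since $\Sigma$ still contains the rank-$\al$ cell $!D$, this contradicts Lemma~\ref{lm:monogon-regularity}.

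\textbf{Part (i).} Let $!c,!c'$ be the two arcs into which the contiguity arcs of $\Pi_1,\Pi_2$ divide $\de !D$, with $!c$ the one facing $\Th$. Gluing $\Pi_1,\Th,\Pi_2$ along the two bonds facing $\Th$, and attaching the resulting disc to $!D$ along the connected subpath of $\de !D$ formed by $!c$ and the two contiguity arcs on $\de !D$, yields a disc $\Sigma:=!D\cup\Pi_1\cup\Th\cup\Pi_2\subseteq\De$ with boundary loop $!c'\cdot!u_1\cdot!S'\cdot!u_2$, where $!u_1,!u_2$ are the outer bonds of $\Pi_1,\Pi_2$ (labels in $\cH_{\al-1}$) and $!S'\subseteq !S$ is the union of the two contiguity arcs of $\Pi_1,\Pi_2$ with the subpath of $!S$ between them. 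Marked as a rank-$\al$ diagram of bigon type with sides $!c'\subseteq\de !D$, $!S'\subseteq!S$ and bridges $!u_1,!u_2$, the diagram $\Sigma$ is reduced, still contains $!D$, and inside $\Sigma$ the cell $!D$ has both subdiagrams $\Pi_1,\Pi_2$ attached to the single side $!S'$. I would then reach a contradiction exactly as in the proof of Lemma~\ref{lm:monogon-regularity}, only with ``bigon'' in place of ``monogon'': choosing such a $\Sigma$ with least non-zero number $M$ of rank-$\al$ cells, one has $\chi(\Sigma_{\al-1})=1-M\le 0$ and $c(\Sigma_{\al-1})=2M>0$, so if every rank-$\al$ cell of $\Sigma$ had at most one contiguity subdiagram of the tight set to the sides of $\Sigma$, Proposition~\ref{pr:principal-bound-cells} would give $\th M\le\frac23(1+\nu)\cdot 2-1=\frac{11}{27}<\th$, forcing $M=0$, impossible; otherwise some rank-$\al$ cell of $\Sigma$ has two such subdiagrams to its sides, and the union of that cell with them cuts $\Sigma$ down to a reduced subdiagram of strictly fewer rank-$\al$ cells on which Lemma~\ref{lm:monogon-regularity} applies, contradicting the choice of a tight set on $\Sigma$, just as in Lemma~\ref{lm:monogon-regularity}.

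\textbf{Main obstacle.} The delicate part will be the two-dimensional bookkeeping: verifying that $!D\cup\Pi\cup\Th'$ and $!D\cup\Pi_1\cup\Th\cup\Pi_2$ really are discs with the stated boundary decompositions (which rests on the two contiguity arcs and the arc $!c$ occurring consecutively along $\de !D$, so that successive pieces are glued along connected arcs), that the boundary marking and reducedness of $\De$ descend to the subdiagram $\Sigma$, and that the recursive step in (i) genuinely reproduces inside $\Sigma$ the argument of Lemma~\ref{lm:monogon-regularity}. Once $\Sigma$ has been produced the rest is routine.
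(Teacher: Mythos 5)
The overall plan (prove (i), (ii) by contradiction using Lemma~\ref{lm:monogon-regularity}, then deduce (iii) from planarity) is the right one and matches the paper's, but the specific construction you build the argument around does not go through.

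In both (i) and (ii) you form the subdiagram $\Sigma$ \emph{containing} the cell $!D$ ($\Sigma = !D\cup\Pi\cup\Th'$, resp.\ $\Sigma = !D\cup\Pi_1\cup\Th\cup\Pi_2$), and mark the uncovered arc $!c$ (resp.\ $!c'$) of $\de !D$ as a \emph{side} of the resulting rank-$\al$ diagram. That marking is not legitimate: by the definition in~\ref{df:boundary-marking}, a side of a diagram of rank $\al$ must carry a label that is reduced in $G_\al$, whereas $!c$ and $!c'$ are pieces of the relator of rank $\al$ on $\de !D$, and a piece of a rank-$\al$ relator need not be reduced in $G_\al$ (it fails precisely when its $\mu$-size exceeds $\rho$, which nothing here excludes). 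Consequently $\Sigma$ need not be a diagram of rank $\al$ at all, and neither Lemma~\ref{lm:monogon-regularity} nor Proposition~\ref{pr:principal-bound-cells} can be invoked for it. You cannot rescue this by arguing a posteriori (via Lemma~\ref{lm:small-cancellation-diagram}) that $\mu(!c)<\rho$: that bound is itself a consequence of the side label being reduced, so the reasoning would be circular. Nor can you instead absorb $!c$ into the bridge: because $!D$ lies \emph{inside} $\Sigma$, the arc $!c$ appears on $\partial\Sigma$ as a subpath of $\de !D$, not of $(\de !D)^{-1}$, so the criterion in~\ref{ss:reduction-transformations-good} fails and one cannot conclude $\Sigma$ is reduced (indeed, switching such a bridge would attach a cell that cancels against $!D$).

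The paper's proof sidesteps all of this by working with $\Th$ (resp.\ $\Th'$) itself --- the region cut off, \emph{without} $!D$, $\Pi_1$, $\Pi_2$ --- and by placing the arc of $\de !D$ on its boundary as the \emph{central arc of a bridge} of rank~$\al$ (with the bonds of the $\Pi_i$'s as the $\cH_{\al-1}$-parts). Since $\Th$ (resp.\ $\Th'$) does not contain $!D$, this arc appears there as a subpath of $(\de !D)^{-1}$, so the criterion of~\ref{ss:reduction-transformations-good} applies and the subdiagram is reduced. Then Lemma~\ref{lm:monogon-regularity} gives that $\Th$ has no rank-$\al$ cells, and the contradiction comes from the tight set (for (i)) or from condition~(iii) of Definition~\ref{df:bond} applied to the bond of $\Pi$ (for (ii)). Your deduction of (iii) from (i) and (ii) is fine and coincides with the paper's.
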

\begin{figure}[h]
\input 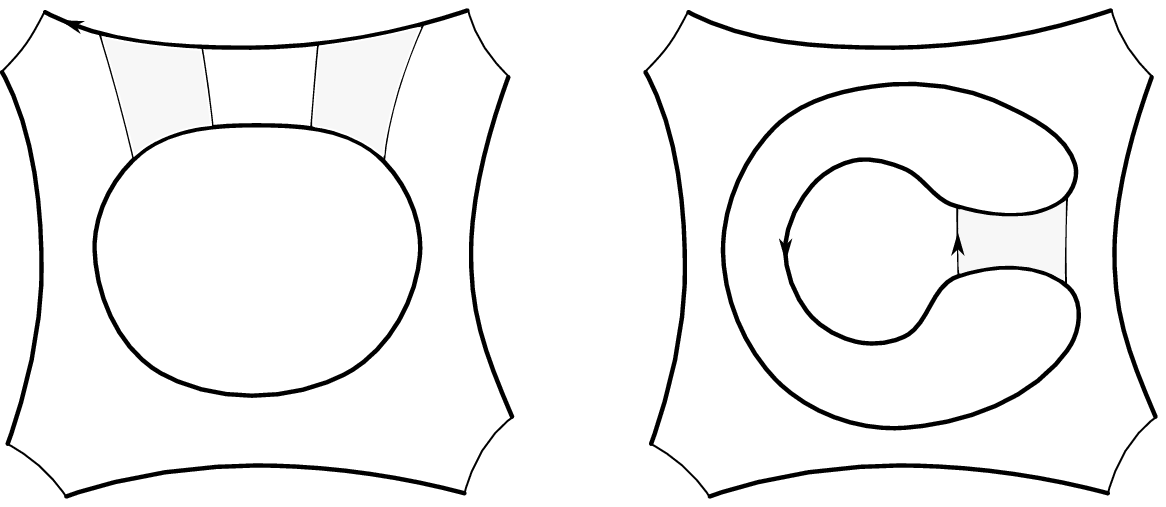_tex
\caption{}  \label{fig:cell-regularity}
\end{figure}

\begin{proof}
%
(i) 
Assume that $\Th$ is simply connected.
We consider ~$\Th$ as a diagram of rank $\al$ with a single side that is a subpath of $!S$.
The assumption that $\De$ is reduced implies that $\Th$ is reduced.
By Lemma \ref{lm:monogon-regularity} $\Th$ has no cells of rank $\al$. 
Then we obtain a contradiction with the choice of a tight set $\cT$ of contiguity subdiagrams of $\De$.

(ii) 
Assume that $\Th'$ is simply connected. Let $\bd \Th' = !R !u$ where $!R^{-1}$
occurs in the boundary loop of $!D$ and $!u^{-1}$ is the bond in $\De_{\al-1}$ that occurs in $\bd \Pi$.
We consider $\Th'$ as a a diagram of rank $\al$ with one side $!S$ labeled by the empty word and one bridge $!R !u$
(formally, to fit the definition in \ref{df:boundary-marking} 
we have to take a copy of $\Th'$ and perform a refinement to make $!S$ a non-empty path).
By Lemma \ref{lm:monogon-regularity} $\Th'$ has no cells of rank $\al$ and we come to a contradiction since
in this case $!u^{-1}$ cannot be a bond in $\De_{\al-1}$ due to condition (iii)
of Definition \ref{df:bond}.

(iii) follows from (i) and (ii).
\end{proof}

%

\begin{proposition}[diagrams of small complexity are single layered]  \label{pr:single-layer}
Let $\De$ be a reduced diagram of rank $\al\ge1$ and let $\cT$ be a tight set 
of contiguity subdiagrams of $\De$.
\begin{enumerate}
\item 
If $\De$ is a disk diagram of bigon type then every cell of rank $\al$ of $\De$
has a contiguity subdiagram $\Pi \in \cT$ to each of the two sides of $\De$.
\item \label{pr:single-layer-3gon} \label{pr:single-layer-3-4-gon}
If $\De$ is a disk diagram of trigon or tetragon type then every cell 
of rank $\al$ of $\De$
has contiguity subdiagrams $\Pi\in \cT$ to at least two sides of $\De$.
\item \label{pri:annular-single-layer}
If $\De$ is an annular diagram with two cyclic sides 
then every cell of rank $\al$ of $\De$
has a contiguity subdiagram $\Pi \in \cT$ to each of the sides of $\De$.
\item \label{pri:annular-single-layer1}
If $\De$ is an annular diagram with one cyclic side and one non-cyclic side
then every cell $!D$ of rank $\al$ of $\De$
has at least two contiguity subdiagrams $\Pi, \Pi' \in \cT$ to sides of ~$\De$.
Here we admit the possibility that both $\Pi$ and $\Pi'$ are contiguity subdiagrams between ~$!D$ 
and the non-cyclic side of ~$\De$.
\end{enumerate}
\end{proposition}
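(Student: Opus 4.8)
The plan is to prove (i)--(iv) simultaneously by induction on the number $M$ of cells of rank $\al$ of $\De$; the case $M=0$ is vacuous in every clause, so assume $M\ge1$ and fix $\De$ of one of the four types together with a tight set $\cT$. The engine of the argument is Proposition~\ref{pr:principal-bound-cells} and the curvature estimates obtained in its proof: if $!D$ is a cell of rank $\al$ with at most one contiguity subdiagram in $\cT$ to a side of $\De$ and with $r$ further contiguity subdiagrams in $\cT$ to sides of $\De_{\al-1}$ of types~II and~III, then \eqref{eq:ICS-principal-bound-cell} and \eqref{eq:ICS-principal-bound-cell-case2} give
\[
  \ka(!D)\ \le\ \tfrac23(1+\nu)-r\cdot\tfrac16(1-2\nu)-\ze\th\la\Om\max(0,\ 9-r)
\]
(and the bound is even more negative when $!D$ has no type-I contiguity subdiagram), while Proposition~\ref{pr:Gauss-Bonnet} together with \eqref{eq:ICS-principal-bound-boundary} yields $\sum_{T}\ka(T)\ge\chi(\De)-\tfrac23(1+\nu)b(\De)$, the sum running over internal tiles. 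Note that $c(\De_{\al-1})>0$ throughout: $\chi(\De_{\al-1})=\chi(\De)-M\le0$, and either $b(\De)>0$ (so $b(\De_{\al-1})\ge b(\De)>0$, each bridge surviving, possibly split in two) or $\chi(\De)=0$ (so $\chi(\De_{\al-1})=-M<0$).

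First I would locate a cell of rank $\al$ having contiguity subdiagrams in $\cT$ to the number of sides of $\De$ demanded by the relevant clause --- two distinct sides for (i) and (ii), one to each side for (iii), and two sides (both allowed on the non-cyclic side) for (iv). Suppose no such cell exists. By Lemma~\ref{lm:cell-regularity}, in a disk diagram, and --- for an annular $\De$ --- with respect to either boundary loop, every cell has at most one contiguity subdiagram to a given side: two to one side would cut off a simply connected ``lens'' carrying a cell, against Lemma~\ref{lm:monogon-regularity} and tightness of $\cT$, or would force both complementary regions to be non--simply connected, impossible in an annulus. Hence no cell meets two distinct sides, condition~(*) of Proposition~\ref{pr:principal-bound-cells} holds, and the proposition bounds $M$: $M=0$ for a bigon or an annulus with two cyclic sides (contradicting $M\ge1$), $M\le1$ for a trigon or an annulus with one cyclic and one non-cyclic side, and $M\le2$ for a tetragon. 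In the remaining small-$M$ cases a cell can have only a handful of type-II/III contiguity subdiagrams --- at most $b(\De)$ coming from native rank-$\al$ bridges, at most $M-1$ from other cells, self-contiguities being excluded in the disk cases by Lemma~\ref{lm:no-folded-cells} --- so the displayed bound with such a small $r$, using $\ze\th\la\Om\ge\th$ and $\la\le\tfrac1{24}$, makes $\sum_T\ka(T)$ strictly smaller than $\chi(\De)-\tfrac23(1+\nu)b(\De)$, a numerical contradiction.

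Let $!D$ be a cell located in the previous step, with contiguity subdiagrams $\Pi,\Pi'\in\cT$ to sides $!Y,!Y'$ of $\De$. The key observation for the second step is that the path formed by one connecting side of $\Pi$, the arc of $\de!D$ joining $\Pi$ to $\Pi'$ on one of its two sides, and one connecting side of $\Pi'$ is labelled by a word $uSv$ with $u,v\in\cH_{\al-1}$ and $S$ a piece of rank $\al$, hence by a word in $\cH_\al$; so this path and its inverse serve as bridges of rank $\al$ in the subdiagrams into which $\Pi\cup!D\cup\Pi'$ dissects $\De$. Tracking the boundary markings: a bigon is cut into two bigons, a trigon into a bigon and a trigon, a tetragon into a bigon and a tetragon (adjacent $!Y,!Y'$) or into two trigons (opposite $!Y,!Y'$); an annulus with two cyclic sides is cut into a single bigon, and an annulus with one cyclic and one non-cyclic side into a trigon or a tetragon. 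In each piece the sides are subpaths of sides of $\De$, the piece is reduced, it has fewer than $M$ cells of rank $\al$ (since $!D$ lies in no piece), and --- after discarding the contiguity subdiagrams absorbed into the new bridges --- the restricted family of contiguity subdiagrams may be taken tight; degenerate configurations where a piece collapses to a monogon or nullgon carry no rank-$\al$ cells by Lemma~\ref{lm:monogon-regularity}. Applying the inductive hypothesis --- clauses (i) and (ii) as appropriate --- each cell of each piece has the required contiguity subdiagrams to sides of that piece, hence to sides of $\De$; together with $!D$ this accounts for every cell of $\De$.

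I expect the main obstacle to be the verification that the restriction of $\cT$ to each piece is again \emph{tight} in the sense of Definition~\ref{df:tight-set}: one must ensure that the components of the complement of this family inside the piece remain small and of positive complexity, which may require absorbing leftover pieces of the old complement components and, in the worst case, re-running the construction of Proposition~\ref{pr:contiguity-map-exists} locally inside the piece (this disturbs neither the boundary marking nor the count of cells of rank $\al$). A secondary, routine nuisance is the bookkeeping of the many boundary-marking cases produced by the cut, in particular the degenerate ones and --- in the annular-to-disk cuts --- matching ``a cell contiguous in the disk to two subpaths of the former non-cyclic side'' with its image back in the annulus, which is precisely the contingency that clause (iv) is phrased to permit.
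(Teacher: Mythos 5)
Your overall strategy mirrors the paper's: induction on the number $M$ of cells of rank~$\al$, with a dichotomy between the case when some cell already has the required contiguity subdiagrams to sides of $\De$ (cut off that cell together with two of those contiguity subdiagrams, pass to the complementary pieces, and induct) and the case when no such cell exists (so condition~(*) of Proposition~\ref{pr:principal-bound-cells} holds and the curvature estimates force a contradiction, after a refined bound on $r$ in the borderline trigon/tetragon cases). Two of your details are off, one of which is a genuine error.

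Your cut decomposition in case~(iv) is misstated. When $\Pi$ and $\Pi'$ are both contiguity subdiagrams of $!D$ to the same non-cyclic side $!Y$, removing $!D\cup\Pi\cup\Pi'$ from the annulus does not produce a tetragon. Lemma~\ref{lm:cell-regularity}(i) forces the region of $\De$ bounded by $\de!D$, $\Pi$, $\Pi'$ and the arc of $!Y$ between the two contact arcs to be non--simply-connected; that region therefore contains the cyclic side and is an annular piece with one cyclic and one non-cyclic side, while the complementary piece is a disk forced to contain the bridge of $\De$ and so is a bigon. The induction then closes via clause~(i) on the bigon and the inductive hypothesis on the remaining annulus. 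The correct summary is ``a trigon (when $\Pi,\Pi'$ are to distinct sides), or a bigon plus an annulus of the same type (when they are to the same non-cyclic side)''; no tetragon arises in clause~(iv).

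A lesser slip: in the simply connected cases, self-contiguity subdiagrams are excluded by Lemma~\ref{lm:cell-regularity}(iii), not by Lemma~\ref{lm:no-folded-cells} --- the latter only bounds the measure $\mu$ of a self-contiguity arc and does not preclude its existence. In the annular cases self-contiguities are not excluded outright (they merely force the enclosed region to contain the hole by Lemma~\ref{lm:cell-regularity}(ii)), so your bound on $r$ as ``at most $b(\De)$ from bridges plus $M-1$ from other cells'' is not literally correct there; but since the rough curvature bound already yields $M\le1$ (clause~(iv)) or $M=0$ (clause~(iii)), the exceptional configuration is easily dispatched directly.
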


\begin{proof}
Let $\De$ be a reduced diagram of rank $\al$ of a type listed in (i)--(iv).
We call a cell $!D$ of rank $\al$ of $\De$ {\em regular} if it
satisfies the conclusion of the corresponding statement (i)--(iv) and {\em exceptional} otherwise.
We need to prove that $\De$ has no exceptional cells. Observe that by Lemma \ref{lm:cell-regularity},
an exceptional cell has at most one contiguity subdiagram to sides of ~$\De$, i.e.\ such a cell
satisfies condition (*) of Proposition \ref{pr:principal-bound-cells}.
We use induction on the number $M$ of cells of rank $\al$ of $\De$.

(i) Let $\De$ be of bigon type, i.e.\ a disk diagram with two sides.
If $\De$ has no regular cells of rank $\al$ but has at least one exceptional cell then application
of Proposition ~\ref{pr:principal-bound-cells} gives a contradiction. 

Assume that $!D$ is a regular cell of $\De$. Let $\Pi_i$ $(i=1,2)$ be the contiguity subdiagram of $!D$
to $!X_i$. The complement of $!D \cup \Pi_1 \cup \Pi_2$ in $\De$ consists of two components $\De_1$ and $\De_2$ of bigon type 
with the induced boundary marking of rank $\al$ (see Figure~\ref{fig:single-layer-bigon}a).
\begin{figure}[h]
\input 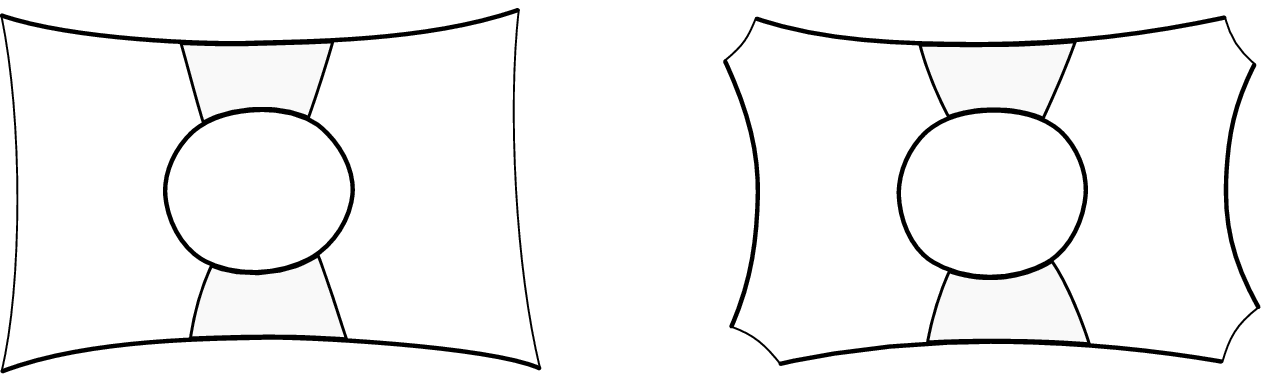_tex
\caption{}  \label{fig:single-layer-bigon}
\end{figure}
The set of subdiagrams $\Pi \in \cT$
contained in $\De_i$ is a tight set of contiguity subdiagrams of $\De_i$.
Each of the subdiagrams $\De_i$ has a smaller number of cells of rank $\al$, so the statement follows by induction.

(ii)
Let $\De$ be of trigon or tetragon type.
Assume that $\De$ has a regular cell $!D$. Let $\Pi_i$ $(i=1,2)$ be contiguity subdiagrams of $!D$
to sides of $\De$. The complement of $\De - !D \cup \Pi_1 \cup \Pi_2$ consists of two components $\De_1$ and $\De_2$
with the induced boundary marking of rank $\al$ 
(Figure~\ref{fig:single-layer-bigon}b) making them diagrams of rank $\al$.
If $\De$ is of trigon type then $\De_1$ and $\De_2$ are
of trigon and bigon types. If $\De$ is of tetragon type then either
$\De_1$ and $\De_2$ are of tetragon and bigon types, or both $\De_i$
are of trigon type.
Then we can refer to (i) and the inductive hypothesis.

Assume that all cells of rank $\al$ of $\De$ are exceptional.
Then by Proposition \ref{pr:principal-bound-cells}
\begin{equation} \label{eq:single-layer-proof}
  \th M \le \frac83 (1 + \nu) -1
\end{equation}
which implies
$M \le 2$. 
Following the proof of Proposition \ref{pr:principal-bound-cells} we compute a better bound for ~$M$
and conclude that $M=0$.

Assume that $M \ge 1$ and let $!D$ be a cell of rank $\al$ of $\De$.
Consider the discrete connection $w$ on $(\De,\cT)$ defined in the proof of 
Proposition ~\ref{pr:principal-bound-cells}.
An upper bound for $\ka(!D)$ is
given by \eqref{eq:ICS-principal-bound-cell}.
The right-hand side of \eqref{eq:ICS-principal-bound-cell} is a linear expression on $r$ and,
as we have seen in the proof of Proposition ~\ref{pr:principal-bound-cells}, in the case $r \le 9$
the coefficient before ~$r$ is positive. To get a value for the upper bound, 
we compute the maximal possible value of ~$r$. 
Observe that by Lemma ~\ref{lm:cell-regularity},
$!D$ has no contiguity subdiagrams to itself, has at most one
contiguity subdiagram to another cell of rank $\al$ of $\De$ 
(if that cell exists) and 
the number of contiguity subdiagrams of $!D$ to sides of type II is at most 4; so $r \le 5$.
Then the maximal value of the right-hand side of  \eqref{eq:ICS-principal-bound-cell} is
achieved when $r=5$.
Substituting $r=5$ into \eqref{eq:ICS-principal-bound-cell} 
and using \eqref{eq:ISC-main} we obtain
\begin{align*}
  \ka(!D) &\le \frac23(1+\nu) - \frac56 (1 - 2\nu) 
    - 4 \ze\th \la\Om   \\
  &\le -\frac16 + \frac73 \nu - 4\th = - \frac{138}{54}.
\end{align*}
By \eqref{eq:ICS-principal-bound-boundary}
$$
    \ka(\bd \De) \le \frac83 (1+\nu) = \frac{152}{54}.
$$
Proposition \ref{pr:Gauss-Bonnet} gives
$$
    1 = \ka(\De) + \ka(\bd\De) \le \frac{14}{54}.
$$
The contradiction shows that the assumption $M \ge 1$ is impossible.

(iii): 
Similarly to the proof of (ii), assume first that $\De$ has a regular cell $!D$ of rank $\al$ 
with two contiguity subdiagrams 
$\Pi_1$ and $\Pi_2$ to sides of $\De$. By Lemma \ref{lm:cell-regularity}(i) these are contiguity subdiagrams
to distinct sides of $\De$. Then the complement $\De - (!D \cup \Pi_1 \cup \Pi_2)$ is a diagram of bigon type
and the statement follows directly from (i).

If all cells of rank $\al$ of $\De$ are exceptional and there is at least one cell of rank $\al$ then application of 
Proposition ~\ref{pr:principal-bound-cells} gives an immediate contradiction.

(iv): 
Assume that $\De$ has a regular cell $!D$ of rank $\al$ with two contiguity subdiagrams 
~$\Pi_i$ $(i=1,2)$ to sides of $\De$. There are two cases depending on whether or not $\Pi_1$ 
and $!\Pi_2$ are 
contiguity subdiagrams to distinct sides of $\De$ (see Figure~\ref{fig:annular-monogon}). 
\begin{figure}[h]
\input 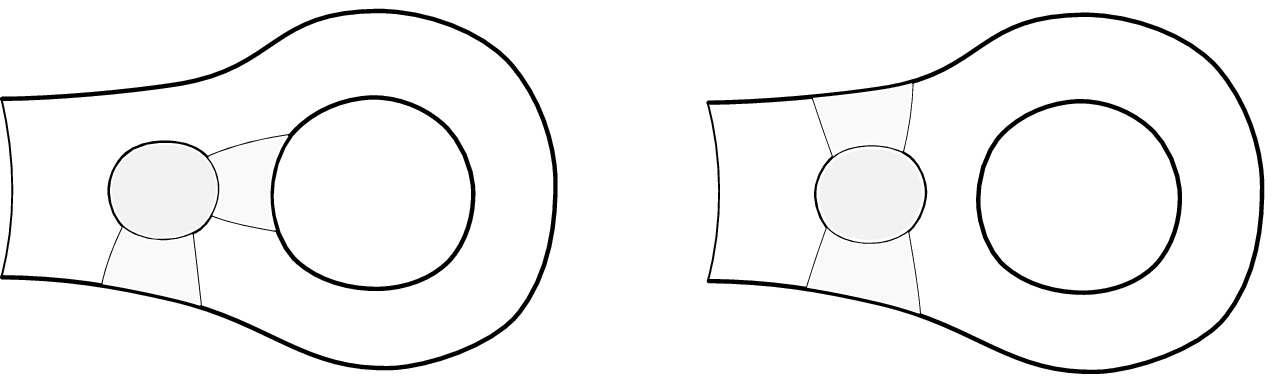_tex
\caption{}  \label{fig:annular-monogon}
\end{figure}
In the first case,
the complement $\De - (!D \cup \Pi_1 \cup \Pi_2)$ is a diagram of trigon type 
and the statement follows from the already proved part (ii).
In the second case, $\De - (!D \cup \Pi_1 \cup \Pi_2)$ 
consists of a simply connected component ~$\De_1$ and 
and an annular component $\De_2$ with one non-cyclic side.
For cells of rank ~$\al$ in $\De_1$ the statement follows by (i)
and for cells of rank $\al$ in $\De_2$ we can apply induction since $\De_2$
has a strictly smaller number of cells of rank $\al$ than $\De$.

If all cells of rank $\al$ of $\De$ are exceptional 
then application of Proposition ~\ref{pr:principal-bound-cells} gives $M = 0$.
%
\end{proof}

\begin{proposition}[small diagrams of trigon or tetragon type] 
\label{pr:small-trigons-tetragons}
Let $\De$ be a small diagram of rank $\al$ of trigon or tetragon type 
with sides $!S_i$ ($1 \le i \le k$, $k = 3$ or $k = 4$). Then
$$
  \sum_{i=1}^3 |!S_i|_\al \le 4 \ze \eta
  \quad \text{or} \quad
  \sum_{i=1}^4 |!S_i|_\al \le 6 \ze \eta
$$
in the trigon and tetragon cases, respectively.
\end{proposition}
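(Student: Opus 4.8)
The plan is to reduce to the previous rank. The crucial point is that a small diagram $\De$ of rank $\al\ge 1$ of trigon or tetragon type has no cell of rank $\al$. Granting this, $\De$ coincides, as a $2$-complex, with $\De_{\al-1}$, and we give it the boundary marking of rank $\al-1$ induced as in \ref{ss:diagram-previous}: each of the $k$ sides of $\De$ stays a side, each bridge of $\De$ of native rank $\al$ (there are at most $k$ of them) contributes its central arc as a new side and splits into two bridges, and every other bridge contributes one bridge. Thus $\De_{\al-1}$ is a simply connected diagram of rank $\al-1$ with at most $2k$ sides and at most $2k$ bridges; in particular $\chi(\De_{\al-1})=1$ and $c(\De_{\al-1})=b(\De_{\al-1})-2\le 2k-2$, which is positive since $b(\De_{\al-1})\ge k\ge 3$. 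One must also check that $\De_{\al-1}$ is small of rank $\al-1$: a bond of $\De_{\al-1}$ joining two sides of $\De$ is at once a bond of $\De$ of rank $\al$ (we have $\cH_{\al-1}\seq\cH_\al$, sides of $\De$ are sides of $\De_{\al-1}$, and conditions (iii)--(iv) of Definition \ref{df:bond} transfer), while a bond incident to a central arc is excluded using the form $!v\cdot!S\cdot!w$ of a native bridge together with the surgery of \ref{ss:bond-enhacement}. Then Proposition \ref{pr:principal-bound}$_{\al-1}$ gives $\sum_{!S\in\text{sides}(\De_{\al-1})}|!S|_{\al-1}\le\eta\,c(\De_{\al-1})\le\eta(2k-2)$, and since $!S_1,\dots,!S_k$ are among the sides of $\De_{\al-1}$, property \ref{ss:alpha-length-properties} ($|X|_\al\le\ze|X|_{\al-1}$) yields $\sum_i|!S_i|_\al\le\ze\sum_i|!S_i|_{\al-1}\le\ze\eta(2k-2)$, i.e.\ $4\ze\eta$ when $k=3$ and $6\ze\eta$ when $k=4$. (For $\al=1$ everything degenerates harmlessly: by Proposition \ref{pri:small-rank0-diagram} all sides of $\De_0$ already have empty label.)

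It remains to prove the crucial point. We may assume $\De$ is reduced (reduction preserves smallness and the diagram type) and equip it with a tight set $\cT$ of contiguity subdiagrams via Proposition \ref{pr:contiguity-map-exists} (replacing $\De_{\al-1}$ by a diagram of the same frame type preserves smallness). Suppose $\De$ has a cell of rank $\al$. By Proposition \ref{pr:single-layer}(ii) some cell $!D$ of rank $\al$ has contiguity subdiagrams $\Pi_1,\Pi_2\in\cT$ to sides of $\De$, and by Lemma \ref{lm:cell-regularity}(iii) these lie on two distinct sides $!S_1$ and $!S_2$. After enhancing the relevant bonds of $\Pi_1$ and $\Pi_2$ so their labels lie in $\cH_{\al-1}$ (see \ref{ss:bond-enhacement}), let $!u$ be the simple path that runs from a vertex of $!S_1$ across $\Pi_1$, then along the sub-arc of $\de!D$ between the two contiguity arcs (chosen to avoid other contiguity arcs), then across $\Pi_2$ to a vertex of $!S_2$. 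Its label has the shape (word in $\cH_{\al-1}$)$\cdot$(piece of rank $\al$)$\cdot$(word in $\cH_{\al-1}$), hence belongs to $\cH_\al$; so $!u$ satisfies conditions (i) and (ii) of Definition \ref{df:bond}. If $!u$ were homotopic rel endpoints to a subpath of a side, Lemmas \ref{lm:small-bond-regularity} and \ref{lm:monogon-regularity} would force the subdiagram cut off --- which contains $!D$ --- to have no cell of rank $\al$, a contradiction; the degenerate configurations of (iv) are ruled out the same way, since the region on the $!D$-side of $!u$ contains the rank-$\al$ cell $!D$ while the region on the other side is kept non-degenerate by taking $!D$ extremal in the layer (or by a minimal-counterexample induction on the number of rank-$\al$ cells). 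Hence $!u$ is a genuine bond of $\De$, contradicting smallness.

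The main obstacle is this last step: converting the single-layeredness of $\De$ into an honest bond requires the careful exclusion of cases (iii)--(iv) of Definition \ref{df:bond}, and in particular one has to position the cell $!D$ so that neither of the two subdiagrams cut off by the candidate bond collapses to one of the forbidden shapes. The descent of smallness from $\De$ to $\De_{\al-1}$ in the reduction step is a second, more routine, technical point.
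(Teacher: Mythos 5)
Your overall strategy matches the paper's: first show $\De$ has no cell of rank~$\al$ (via single\-layeredness plus smallness), then pass to $\De_{\al-1}$ and invoke the bound of Proposition~\ref{pr:principal-bound} in rank $\al-1$. The step where you build a bond of $\De$ across a rank-$\al$ cell having contiguity subdiagrams to two sides is essentially the content of Claim~\ref{cl:diagram-regularity} in the paper, and your remark about needing a minimal-counterexample or extremal choice to rule out the degenerate configurations of Definition~\ref{df:bond}(iv) correctly identifies the delicate point there.

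However, there is a genuine gap in the second half: you apply Proposition~\ref{pr:principal-bound}$_{\al-1}$ directly to $\De_{\al-1}$, which requires $\De_{\al-1}$ to be a \emph{small} diagram of rank $\al-1$, and you assert this by claiming that any bond of $\De_{\al-1}$ incident to a central arc can be upgraded to a bond of $\De$ of rank $\al$. This fails when a bond of $\De_{\al-1}$ joins two vertices lying on the central arcs of two \emph{different} bridges of $\De$: extending such a path to the endpoints of the two bridges produces a path whose label has the shape $u_1 P_1^{-1} w P_2 u_2$ with $u_1,w,u_2\in\cH_{\al-1}$ and two pieces $P_1,P_2$ of rank $\al$, which is \emph{not} a word in $\cH_\al$ (words in $\cH_\al$ carry exactly one central piece), so it does not yield a bond of $\De$. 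Nothing in the hypothesis that $\De$ is small rules out such bonds in $\De_{\al-1}$. The paper avoids this by not applying the rank-$(\al-1)$ bound to $\De_{\al-1}$ as a whole: it equips $\De_{\al-1}$ with a tight set $\cT$ of contiguity subdiagrams (Proposition~\ref{pr:contiguity-map-exists}) --- which, after Claims~\ref{cl:principal-bound-no-I-I} and \ref{cl:principal-bound-no-I-II}, consist precisely of contiguity subdiagrams between central arcs (sides of type~II) and thus absorb exactly the problematic bonds --- and then applies Proposition~\ref{pr:principal-bound}$_{\al-1}$ to each connected component $\Th$ of $\De_{\al-1}-\bigcup_{\Pi\in\cT}\Pi$, which \emph{are} small by construction of the tight set. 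Lemma~\ref{lm:filling-geometry} gives $\sum_\Th c(\Th)=c(\De_{\al-1})\le 2k-2$, which recovers your numerical bound; but the intermediate step of cutting along the contiguity subdiagrams is not optional, and your proof as written omits it.
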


\begin{proof}
By Proposition \ref{pri:small-rank0-diagram} we may assume that $\al\ge1$.

We assume that $\De$ is reduced and is given a tight set $\cT$ of contiguity subdiagrams.
Following arguments from the proof of Proposition \ref{pr:principal-bound-sides}
we can assume that Claims \ref{cl:principal-bound-no-I-I}--\ref{cl:principal-bound-no-I-II}
from that proof hold in our case.
By Claim \ref{cl:diagram-regularity} and 
Proposition \ref{pr:single-layer-3gon}, $\De$ has no cells of rank ~$\al$.
By Claims \ref{cl:principal-bound-no-I-I} and \ref{cl:principal-bound-no-I-II},
$\cT$ has only contiguity subdiagrams between sides of $\De_{\al-1}$ of type II.
Hence any side of $\De$ occurs entirely in a boundary loop of a connected component $\Th$ of 
$\De_{\al-1} - \bigcup_{\Pi\in\cT} \Pi$.
By Lemma \ref{lm:filling-geometry}, $\sum_{\Th} c(\Th) = c(\De_{\al-1})$.
Applying Proposition \ref{pr:principal-bound}$_{\al-1}$ to components $\Th$
of $\De_{\al-1} - \bigcup_{\Pi\in\cT} \Pi$ we obtain
$$
  \sum_i |!S_i|_{\al-1} \le \eta c(\De_{\al-1}) \le (b(\De_{\al-1}) -2) \eta
$$
which gives the required inequality by \ref{cl:om-rank-increment}.
\end{proof}

\begin{proposition}[cell in a diagram of small complexity] \label{pr:small-complexity-cell}
Let $\De$ be a reduced diagram of rank $\al\ge1$ of one of the types listed
in Proposition \ref{pr:single-layer}.
Let $\cT$ be a tight set of contiguity subdiagrams on $\De$
and let $!D$ be a cell of rank $\al$ of $\De$.
Let $!P_i$, $i=1,2,\dots,r$ be the contiguity arcs of contiguity subdiagrams of $!D$ to sides of $\De$
that occur in $\de !D$.
Then:
\begin{enumerate} 
\item \label{pri:bigon-cell} 
If $\De$ has bigon type or is an annular diagram with two cyclic sides then $r=2$ and 
$$
  \mu(!P_1) + \mu(!P_2) \ge 1 - 2\la - 16\ze\eta\om.
$$
\item \label{pri:trigon-cell}
If $\De$ has trigon type then 
$2 \le k \le 3$ and 
$$
  \sum_{i=1}^k \mu(!P_i) \ge 1 - 3\la - 24\ze\eta\om.
$$
\item \label{pri:cyclic-monogon-cell}
If $\De$ is an annular diagram with one cyclic side and one non-cyclic side then
$2 \le k \le 3$ and 
$$
  \sum_{i=1}^k \mu(!P_i) \ge 1 - 4\la - 24\ze\eta\om.
$$
\end{enumerate}
\end{proposition}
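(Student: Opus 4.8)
The plan is to estimate the arcs $!P_i$ directly on the boundary loop $\de !D$. Write $R=\lab(\de !D)$; this is a relator of rank $\al$, so for any piece $!S$ of $R$ we have $\mu(!S)=|!S|_{\al-1}/|R^\circ|_{\al-1}$ and $\mu(R^\circ)=1$, and since every cyclic shift of $R$ is again a relator of rank $\al$, condition (S1) gives $|R^\circ|_{\al-1}\ge\Om=1/\om$, whence $\mu(!S)\le\om\,|!S|_{\al-1}$. Note also that because $\De$ contains the cell $!D$ we have $c(\De_{\al-1})>0$, so every connected component of $\De_{\al-1}-\bigcup_{\Pi\in\cT}\Pi$ has positive complexity. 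By Proposition~\ref{pr:single-layer}, in case (i) the cell $!D$ has a contiguity subdiagram in $\cT$ to each of the two sides of $\De$, and by Lemma~\ref{lm:cell-regularity} exactly one to each, so $r=2$; in cases (ii) and (iii) it has contiguity subdiagrams to at least two and, by Lemma~\ref{lm:cell-regularity}, at most three sides of $\De$, so $2\le k\le3$. In every case Lemma~\ref{lm:small-cancellation-diagram}(i) gives $\mu(!P_i)<\rho$.

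Next I would list $!P_1,\dots,!P_k$ in the cyclic order in which they occur along $\de !D$ and write the cyclic word $R^\circ$ as the concatenation $!P_1 !Q_1 !P_2 !Q_2\cdots !P_k !Q_k$, where $!Q_i$ is the (possibly empty) portion of $\de !D$ between $!P_i$ and $!P_{i+1}$. This is a partition of the cyclic word $R^\circ$, so property (ii) of \ref{ss:alpha-length-properties} gives
$$
  |R^\circ|_{\al-1}\le\sum_i|!P_i|_{\al-1}+\sum_i|!Q_i|_{\al-1},
$$
that is, $\sum_i\mu(!P_i)\ge 1-\sum_i\mu(!Q_i)$. Thus the three asserted inequalities reduce to the bounds $\sum_i\mu(!Q_i)\le 2\la+16\ze\eta\om$, $\le 3\la+24\ze\eta\om$, $\le 4\la+24\ze\eta\om$ in cases (i), (ii), (iii) respectively.

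It then remains to analyse the arcs $!Q_i$. Each $!Q_i$ is a concatenation of tiling sides of $\de !D$, and, since $!P_1,\dots,!P_k$ exhaust the contiguity arcs of $!D$ to sides of $\De$, each such tiling side is either (a) the contiguity arc of a contiguity subdiagram of $!D$ to another cell of rank $\al$ or to the central arc of a bridge, in which case Lemma~\ref{lm:small-cancellation-diagram}(ii),(iii) gives $\mu<\la$, or (b) a side $!S$ with $!S^{-1}$ a side of a component $\Th$ of $\De_{\al-1}-\bigcup_{\Pi\in\cT}\Pi$. Using Proposition~\ref{pr:single-layer} together with Lemma~\ref{lm:cell-regularity} to control the combinatorics around $!D$, one finds that the total $\mu$ of the tiling sides of type (a) is at most $2\la$ (resp.\ $3\la$, $4\la$), while the components $\Th$ arising in (b) are small diagrams of rank $\al-1$ of trigon or tetragon type, so that Proposition~\ref{pr:small-trigons-tetragons}$_{\al-1}$ bounds the total $|\cdot|_{\al-1}$ of the sides of each such $\Th$, and hence, after multiplying by $\om$ and summing over the boundedly many components adjacent to $!D$, their total $\mu$-contribution by $16\ze\eta\om$ (resp.\ $24\ze\eta\om$, $24\ze\eta\om$). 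Adding the two contributions gives exactly the bounds above.

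The main obstacle is the case-by-case bookkeeping in the last step: one must pin down, from Proposition~\ref{pr:single-layer} and Lemma~\ref{lm:cell-regularity}, exactly how $\de !D$ decomposes into tiling sides — how many of them are contiguity arcs of $!D$ to other cells or to bridge arcs (each contributing a $\la$), and how the remaining arcs distribute among the small complementary components adjacent to $!D$ — and then confirm that those components are of trigon or tetragon type so that Proposition~\ref{pr:small-trigons-tetragons}$_{\al-1}$ applies with the constants as stated. Once this combinatorial picture is fixed, the inequalities themselves and the semi-additivity arithmetic are routine.
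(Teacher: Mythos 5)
Your proposal uses the same core ingredients as the paper: partition the cyclic word $R^\circ$ into the contiguity arcs $!P_i$ and the intermediate arcs $!Q_i$, apply semi-additivity of $|\cdot|_{\al-1}$, bound each type-(a) tiling side by $\la$ via Lemma~\ref{lm:small-cancellation-diagram}, and bound the type-(b) contributions via Proposition~\ref{pr:small-trigons-tetragons}$_{\al-1}$ together with (S1). That arithmetic is correct. However, the step you yourself flag as ``the main obstacle'' --- showing that the type-(a) arcs contribute at most $2\la$, $3\la$, $4\la$ and that the complementary components adjacent to $!D$ really are small trigons or tetragons --- is exactly where the paper's proof takes a different, simpler route, and the difference is not merely one of detail.

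The paper first performs an explicit induction reducing to the case where $!D$ is the \emph{only} cell of rank $\al$. If $!C$ is any other rank-$\al$ cell, Proposition~\ref{pr:single-layer} gives $!C$ at least two contiguity subdiagrams $\Pi_1,\Pi_2\in\cT$ to sides of $\De$; the connected component of $\De-(!C\cup\Pi_1\cup\Pi_2)$ containing $!D$ inherits the boundary marking, the restriction of $\cT$, and one of the types (i)--(iii), with strictly fewer rank-$\al$ cells. After this reduction, the only sides of $\De_{\al-1}$ are $\de !D$, the sides of $\De$, and the central arcs of the boundedly many bridges, so the combinatorics of $\De_{\al-1}-\bigcup_{\Pi\in\cT}\Pi$ near $!D$ is transparent and the worst cases can be listed directly. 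In your direct multi-cell analysis, you would instead need a separate geometric argument (not supplied) that the single-layered arrangement of Proposition~\ref{pr:single-layer} prevents $!D$ from having contiguity subdiagrams to more than two other cells, and that the components of the complement adjacent to $!D$ do not pick up extra bridges from neighbouring cells or their contiguity subdiagrams. This is true but nontrivial, and the paper's reduction sidesteps it entirely. Finally, a small point: for a contiguity subdiagram of $!D$ to itself (possible in the annular cases), the $\mu<\la$ bound comes from Lemma~\ref{lm:no-folded-cells}, not from Lemma~\ref{lm:small-cancellation-diagram}; the paper cites both.
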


\begin{proof}
Assume that $!C$ is another cell of rank $\al$ of $\De$. By Proposition \ref{pr:single-layer},
$!C$ has at least two contiguity subdiagrams $\Pi_1$, $\Pi_2$ to sides of $\De$.
Let $\De'$ be the connected component of $\De - !C - \Pi_1 - \Pi_2 $ containing $!D$.
Then $\De'$ inherits from $\De$ the boundary marking of rank $\al$ and the tight set of contiguity subdiagrams.
Observe also that $\De'$ is also a diagram of rank $\al$ of one of the types in cases (i)--(iii);
moreover, it is of the same type (i)---(iii) or has a smaller complexity. In this case the statement is reduced by
induction to the case of a diagram with a smaller number of cells of rank $\al$.

It remains to consider the case when $!D$ is a single cell of rank $\al$ of $\De$.
The equality $r=2$ in (i) and the bound $2 \le r \le 3$ in (ii) and (iii) follow from Lemma \ref{lm:cell-regularity}.
With bounds from Lemmas \ref{lm:small-cancellation-diagram}, \ref{lm:no-folded-cells},
Propositions \ref{pr:principal-bound}, \ref{pr:small-trigons-tetragons} for $\al := \al-1$
and inequality \eqref{eq:S2-piece-form},
an easy analysis shows that the worst cases for the lower bound on $\sum_i \mu(!P_i)$ are
as shown in Figure \ref{fig:small-complexity-cell}.
\begin{figure}[h]
\input 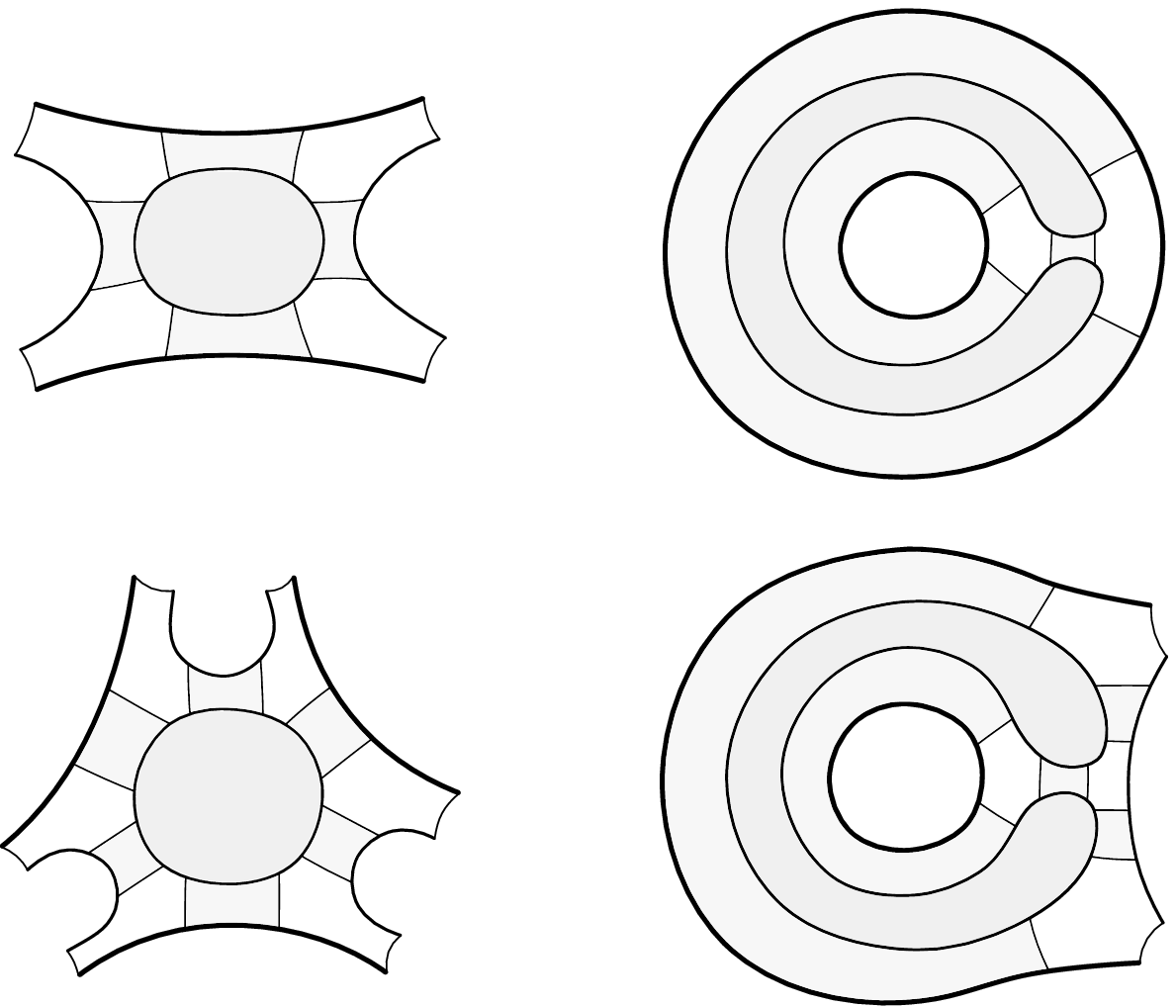_tex
\caption{}  \label{fig:small-complexity-cell}
\end{figure}
We then get the corresponding inequality in (i)--(iii).
\end{proof}

\section{Fragments} \label{s:fragments}

In this section we establish several properties of fragments of rank $\al\ge1$. 
Most of them are proved using facts about relations in $G_{\al-1}$. 
Starting from this point we use extensively statements from subsequent 
Sections \ref{s:relations}--\ref{s:overlapping-periodicity}
for values of rank $\be<\al$.
We also switch our main action scene to Cayley graphs
$\Ga_{\al-1}$ and ~$\Ga_\al$.

All statements in this section are formulated and proved under assumption $\al\ge1$.

The following observation is a consequence of the assumption that 
the graded presentation of $G_\al$ is normalized, condition (S3) and the 
fact that centralizers of non-torsion elements of ~$G_{\al-1}$ are cyclic 
(Proposition \ref{pr:nontorsion-conjugation}$_{\al-1}$).
Recall that two periodic lines $!L_1$ and $!L_2$ in $\Ga_{\al-1}$ are called parallel if 
$s_{P_1,!L_1} = s_{P_2,!L_2}$
where $P_i$ is the period of ~$!L_i$ (see \ref{ss:Cayley}).

\begin{lemma} \label{lm:compatible-lines}
If $!L_1$ and $!L_2$ are two parallel periodic lines in $\Ga_{\al-1}$ whose periods
are relators of rank $\al$ then $!L_1 = !L_2$.
\end{lemma}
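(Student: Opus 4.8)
The plan is to express parallelism as a conjugacy relation between the periods, identify the periods using the normalized structure of the presentation together with cyclicity of centralizers in $G_{\al-1}$, and then deduce $!L_1 = !L_2$ from invariance under the relevant cyclic subgroup. Accordingly, I would first put the two lines in $\Ga_{\al-1}$ in a convenient position: acting by a suitable element of $G_{\al-1}$ (all notions in sight are $G_{\al-1}$-invariant), I may assume $!L_1$ passes through the identity vertex $1$ and that, after replacing $R_1$ by an appropriate cyclic shift — still a relator of rank $\al$ — the label of $!L_1$ read from $1$ is $R_1^\infty$; then the translation element is $t := s_{R_1,!L_1}$, the element of $G_{\al-1}$ represented by $R_1$. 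Choosing a vertex $!b$ on $!L_2$ from which $!L_2$ reads $R_2^\infty$, parallelism gives $t = s_{R_2,!L_2} = !b R_2 !b^{-1}$, so $R_1$ and $R_2$ represent conjugate elements of $G_{\al-1}$. Being cyclically reduced and non-empty, $R_1$ represents a non-trivial, and in fact (as is needed here) non-torsion, element of $G_{\al-1}$ by (S0) and Proposition~\ref{pr:reduced-nontrivial}; hence by Proposition~\ref{pr:nontorsion-conjugation}$_{\al-1}$ the centralizer $C := C_{G_{\al-1}}(t)$ is infinite cyclic, say $C = \sgp c$.

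Next I would identify the periods. Since the presentation is normalized, each $R_i$ can be written as a word $R_i \eqcirc P_i^{e_i}$, where $P_i$ represents a non-power of $G_{\al-1}$ and is a cyclic shift of $S_{i,0}^{\pm1}$, with $S_i \eqcirc S_{i,0}^{e_i}$ the element of $\cX_\al$ underlying $R_i$. From $R_2 = !b^{-1} R_1 !b$, both $!b^{-1}P_1!b$ and $P_2$ represent non-power roots of the element represented by $R_2$; since in a group in which infinite-order elements have cyclic centralizers a non-power root is unique up to inversion, $P_2$ represents $g$ or $g^{-1}$ where $g = !b^{-1}P_1!b$, and comparing exponents forces $e_1 = e_2$ together with the sign $+1$ (the sign $-1$ would make $P_2$ torsion). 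Thus the roots of $S_1$ and $S_2$ represent conjugate elements of $G_{\al-1}$, possibly up to inversion, and $e_1 = e_2$; the second clause of Definition~\ref{df:normalized-presentation} (distinct elements of $\cX_\al$ are not conjugate in $G_{\al-1}$) together with (S3) (no relator of rank $\al$ is conjugate in $G_{\al-1}$ to its inverse) then forces $R_1$ and $R_2$ to be cyclic shifts of one another. I expect this identification to be the main obstacle: the subtle point is to rule out that $R_1$ and $R_2$ are conjugate in $G_{\al-1}$ while representing different cyclic words, which is exactly where (S3) and normality enter, through the prohibition on a relator conjugate to its own inverse.

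Replacing $R_2$ by that cyclic shift of $R_1$ — which changes neither the line $!L_2$ nor the translation element — I may assume $R_1 \eqcirc R_2 \eqcirc R$, and write $R \eqcirc P^e$ with $P$ representing a non-power; then $t = P^e$ in $G_{\al-1}$, so $P$ represents a non-power root of $t$, hence represents $c^{\pm1}$, so that $\sgp P = C$. Since $!L_1$ reads $P^\infty$ from $1$, it is the union $\bigcup_{n\in\Z} P^n !p$, where $!p$ is the edge path labeled $P$ starting at $1$ and $P^n$ denotes left translation by that element of $G_{\al-1}$; because $P$ generates $C$, this exhibits $!L_1$ as $C$-invariant. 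Now pick a vertex $!d$ on $!L_2$ from which $!L_2$ reads $R^\infty = P^\infty$; parallelism gives $!d R !d^{-1} = t = R$ in $G_{\al-1}$, so $!d \in C_{G_{\al-1}}(R) = C$, and therefore $!L_2 = \bigcup_{n\in\Z}(!d P^n)!p = !d\cdot\bigcup_{n\in\Z}(P^n !p) = !d\,!L_1 = !L_1$, the last equality because $!L_1$ is $C$-invariant. Hence $!L_1 = !L_2$, as required.
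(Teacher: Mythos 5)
Your proof is correct and follows essentially the same route as the paper's: read parallelism as a conjugacy $R_1 = g R_2 g^{-1}$ in $G_{\al-1}$, invoke normalization and (S3) to force $R_1$ and $R_2$ to be cyclic shifts of the same relator, and then use cyclicity of the centralizer (Proposition~\ref{pr:nontorsion-conjugation}$_{\al-1}$ applied to the root) to place the conjugating element in $\sgp{R_0}$, which yields $!L_1 = !L_2$. One small note: to know that $R_1$ has infinite order in $G_{\al-1}$ (so that Proposition~\ref{pr:nontorsion-conjugation}$_{\al-1}$ applies), the relevant fact is that relators of rank $\al$ are \emph{strongly} cyclically reduced in $G_{\al-1}$ (Proposition~\ref{pr:relator-strongly-reduced}) combined with Proposition~\ref{pr:reduced-nontrivial}$_{\al-1}$; (S0) alone only gives cyclic reducedness, hence non-triviality but not a priori non-torsion.
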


\begin{proof}
Let $!L_i$ $(i=1,2)$ be two parallel periodic lines in $\Ga_{\al-1}$ whose periods $R_i$ are
relators of rank $\al$. Up to cyclic shift of $R_i$ we can assume that $R_i \in \cX_\al^{\pm1}$
where $\cX_\al$ is the set of defining relators of rank $\al$ in the presentation \eqref{eq:G-presn}.
Let $!v_i$ be a vertex on $!L_i$ such that the label of $!L_i$
starts at ~$!v_i$ with ~$R_i$. Let $g = !v_1^{-1} !v_2 \in G_\al$ 
(recall that we identify vertices of $\Ga_\al$ with elements of $G_\al$).
Since $!L_1$ and $!L_2$ are parallel we have  $g R_2 g^{-1} = R_1$.
By (S3) we have either $R_1,R_2 \in \cX_\al$ or $R_1^{-1},R_2^{-1} \in \cX_\al$,
so according to Definition \ref{df:normalized-presentation}, we get $R_1 \greq R_2$ and 
$R_1 \greq R_0^t$ where $R_0$ it the root of $R_1$. 
Since the centralizer of $R_1$ is cyclic, we have $g = R_0^k$ for some integer $k$.
This implies $!L_1 = !L_2$.
\end{proof}

\begin{corollary}[Small cancellation in the Cayley graph] \label{co:small-overlapping-Cayley}
Let $!L_1$ and $!L_2$ be periodic lines in ~$\Ga_{\al-1}$ with periods $R_1$ and $R_2$, respectively, where both $R_i$ are relators of rank $\al$. 
Assume that $!L_1$ and ~$!L_2$ have close subpaths $!S_1$ and $!S_2$ 
such that $|!S_1|_{\al-1} \ge \la |R_1|_{\al-1}$. Then $!L_1 = !L_2$.
\end{corollary}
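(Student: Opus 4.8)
The plan is to derive the conclusion from Lemma \ref{lm:compatible-lines} together with the Cayley-graph reformulation (S2-Cayley) of the small cancellation condition (S2). First I would reduce to a situation in which the overlapping portions of $!L_1$ and $!L_2$ are short enough to invoke (S2-Cayley). The close subpaths $!S_1$ and $!S_2$ satisfy $|!S_1|_{\al-1} \ge \la |R_1|_{\al-1}$; since $\la \le \frac{1}{24}$, we may shrink $!S_1$ and $!S_2$ (moving to subpaths which are still close, using that truncating a subpath by a bridge on each end keeps closeness) so that $|!S_i| \le |R_i|$ as plain-length paths while keeping $\mu(!S_1) \ge \la$. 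More precisely: if $|!S_1|$ exceeds $|R_1|$ then $!S_1$ already contains a full period of $!L_1$; I would instead cut $!S_1$ down to a subpath of length exactly one period's worth whose $\mu$-value is still at least $\la$ (possible because $\mu$ is, up to the $+1$-type defects in the semi-additivity of $|\cdot|_{\al-1}$ recorded in \ref{ss:alpha-length-properties}, roughly additive along $!S_1$, and $\la|R_1|_{\al-1} \le \frac{1}{24}|R_1|_{\al-1}$ leaves ample room). The corresponding subpath of $!S_2$ stays close to it.

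Next I would apply (S2-Cayley) with $\ga := \la$: the hypotheses are exactly that $!L_i$ is an $R_i$-periodic line in $\Ga_{\al-1}$ with $R_i$ a relator of rank $\al$, that $!L_1$ and $!L_2$ have close subpaths $!P_i$ with $|!P_i| \le |R_i|$, and that $\mu(!P_1) \ge \la$. Hence $!L_1$ and $!L_2$ are parallel.

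Finally, parallel periodic lines whose periods are relators of rank $\al$ coincide by Lemma \ref{lm:compatible-lines}, so $!L_1 = !L_2$, as required. The only slightly delicate point is the very first reduction step — arranging that the close subpaths can be taken with plain length at most one period while retaining $\mu \ge \la$ — because closeness is defined via bridge words $!u, !v \in \cH_{\al-1}$ and one must check that passing to an appropriate subpair of subpaths keeps them close; this is routine given that $\cH_{\al-1}$ is closed under taking the relevant sub-bridges and that the $\mu$-measure cannot drop below $\la$ when we only discard at most a $(1-\la)$-fraction beyond one period. Everything after that is a direct citation of (S2-Cayley) and Lemma \ref{lm:compatible-lines}.
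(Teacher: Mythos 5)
Your overall plan matches the paper's: reduce to the case $|!S_i|\le|R_i|$, invoke (S2-Cayley) to get parallel lines, and then apply Lemma~\ref{lm:compatible-lines} to upgrade ``parallel'' to equality. That is exactly how the paper argues, and the fact that you recognize Lemma~\ref{lm:compatible-lines} is needed at the end (rather than (S2-Cayley) alone giving $!L_1=!L_2$) is correct.

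However, the mechanism you propose for the reduction step is not quite right, and this is the one place where care is actually required. You claim that after cutting $!S_1$ down to a one-period subpath ``the corresponding subpath of $!S_2$ stays close to it'' because ``$\cH_{\al-1}$ is closed under taking the relevant sub-bridges.'' Closeness of $!S_1$ and $!S_2$ only supplies bridge words joining their \emph{endpoints}; passing to an interior subpath $!S_1'$ requires producing a \emph{new} bridge from an interior vertex of $!S_1$ to an interior vertex of $!S_2$, which is not a sub-bridge of anything. The tool that actually produces these interior bridges is the fellow-traveling statement, Proposition~\ref{pr:fellow-traveling}$_{\al-1}$, applied inductively at rank $\al-1$: close reduced paths can be synchronized into segments of bounded $|\cdot|_{\al-1}$-length whose corresponding starting vertices are close. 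The paper cites exactly this, together with (S1) (so that the $|\cdot|_{\al-1}$-loss of a bounded amount is negligible against $\la|R_j|_{\al-1}\ge\la\Om\ge 20$), to guarantee that after shrinking one still has $|!S_j'|_{\al-1}\ge\la|R_j|_{\al-1}$ for some $j$. So: same strategy, correct final citations, but replace the ``sub-bridges of $\cH_{\al-1}$'' justification with Proposition~\ref{pr:fellow-traveling}$_{\al-1}$ plus condition (S1), and the argument is complete.
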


\begin{proof}
If $|!S_i| \le |R_i|$ for $i=1,2$ then the statement follows directly from 
condition (S2-Cayley) in \ref{ss:S2-S3-Cayley}. Let $|!S_1| > |R_1|$ or $|!S_2| > |R_2|$.
Using Proposition \ref{pr:fellow-traveling}$_{\al-1}$ and condition (S1) we find close
subpaths $!S_1'$ and $!S_2'$ of $!S_1$ and $!S_2$ with $|!S_i| \le |R_i|$, $i=1,2$
and $|!S_j|_{\al-1} \ge \la |R_j|_{\al-1}$ for $j=1$ or $j=2$.
This reduces the statement to the previous case.
\end{proof}

\begin{proposition} \label{pr:relator-strongly-reduced}
A relator of rank $\al$ is strongly cyclically reduced in $G_{\al-1}$.
\end{proposition}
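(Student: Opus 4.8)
The plan is to prove the equivalent statement that, for a relator $R$ of rank $\al$ and every $t\ge1$, the power $R^t$ is reduced in $G_{\al-1}$, and to do this by induction on the rank, reducing the whole proposition to a single ``jump''. Since $R$ is cyclically reduced in $G_{\al-1}$ by (S0), $R^t$ is freely reduced, i.e.\ reduced in $G_0$. So it suffices to fix $\be$ with $1\le\be\le\al-1$, assume that $R^t$ is reduced in $G_{\be-1}$ for all $t$, and deduce that $R^t$ is reduced in $G_\be$ for all $t$. Assume this fails: then for some $t$ there are a relator $R'$ of rank $\be$ and a subword $S$ of $R'$ that is close in rank $\be-1$ to a subword $T$ of $R^t$ with $|S|_{\be-1}>\rho|R'|_{\be-1}$.

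The first step is the remark that $|T|>|R|$. Indeed, any subword of $R^t$ of length at most $|R|$ is a subword of some $R_iR_{i+1}\eqcirc R^2$, and a subword of $R^2$ of length $\le|R|$ is a subword of a cyclic shift of $R$ (the ``middle window'' $T_1T_0$ of $R^2\eqcirc T_0T_1T_0T_1$ is such a shift). Since by (S0) every cyclic shift of $R$ is reduced in $G_{\al-1}$, hence in $G_\be$, no such $T$ can witness a failure of reducedness at rank $\be$. Thus $T$ is $R$-periodic and spans more than one full period. By the inductive hypothesis $T$ is reduced in $G_{\be-1}$, so it labels a simple path $!T$ in $\Ga_{\be-1}$, which I extend to the simple bi-infinite $R$-periodic line $!L$ (all finite sub-powers being reduced, $!L$ is embedded). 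Likewise $S$, a subword of $R'$ which is cyclically reduced in $G_{\be-1}$ by (S0)$_\be$, labels a simple path $!S$, and $!T$, $!S$ form a coarse bigon with $|!S|_{\be-1}>\rho|R'|_{\be-1}\ge\rho\Om$.

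The heart of the argument is to push the $R$-periodicity of $!T$ across this coarse bigon. Let $\sigma=s_{R,!L}\in G_{\be-1}$ be the period translation of $!L$. Since $|!T|>|R|$, the paths $!T$ and $\sigma!T$ overlap along $!L$; applying $\sigma$ to the closeness datum and invoking the fellow-traveling and coarse-bigon stability results of Sections \ref{s:relations}--\ref{s:overlapping-periodicity} at ranks $<\al$ (Propositions \ref{pr:fellow-traveling}$_{<\al}$ and \ref{pr:fragment-stability-bigon}$_{<\al}$), one gets that a subpath of the $R'$-periodic line through $!S$, of $|\cdot|_{\be-1}$-size still comparable to $\rho|R'|_{\be-1}$, is close in rank $\be-1$ to one of its own $\sigma$-translates. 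Feeding this into the overlapping-periodicity result (Proposition \ref{pr:coarsely-periodic-overlapping-general}$_{<\al}$) together with condition (S2) at rank $\be$, and then the rank-$\be$ instance of Lemma \ref{lm:compatible-lines}, forces $\sigma$ to fix the $R'$-periodic line through $!S$; in particular $\sigma$ commutes with its period translation $\tau$. As $R'$ has infinite order in $G_{\be-1}$ and centralizers of non-torsion elements of $G_{\be-1}$ are cyclic (Proposition \ref{pr:nontorsion-conjugation}$_{\be-1}$), $\sigma$ and $\tau$ lie in a common cyclic subgroup, so $R$ and $R'$ are commensurable in $G_{\be-1}$: the roots $R_0$ and $R'_0$ are conjugate in $G_{\be-1}$ up to inversion. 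Consequently a cyclic shift of $R$ is close in rank $\be-1$ to the whole of $R'$, i.e.\ to a subword of $R'$ of size $|R'|_{\be-1}>\rho|R'|_{\be-1}$ --- contradicting (S0), which says that cyclic shift is reduced in $G_\be$. This contradiction completes the inductive step, and the proposition follows.

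I expect the main obstacle to be exactly that middle step: upgrading the overlap of $!T$ with $\sigma!T$ (a priori as thin as a single period overhang) into a self-overlap of the $R'$-periodic line that is robust enough --- on the order of ``two of each period'' --- to trigger the overlapping-periodicity machinery, while keeping under control the bridge words of rank $\be-1$ that accumulate along the chain of closeness relations. Once the commensurability $\sigma\in\langle\tau\rangle$ (equivalently, $R$ and $R'$ have a common power in $G_{\be-1}$ up to conjugacy) is in hand, the final contradiction with (S0) is routine.
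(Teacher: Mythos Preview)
Your approach is unnecessarily elaborate and the gap you flag is real. The paper's proof is a two-line length argument that finishes immediately after your first step. Once you know $|T|>|R|$ (and, by your inductive hypothesis, that $T$ is reduced in $G_{\be-1}$), observe that $T$ is close in rank $\be-1$ to a subword of a relator of rank $\be$, so $T$ is itself a \emph{fragment of rank~$\be$} in the sense of \ref{ss:fragment}. Since $|T|>|R|$, some cyclic shift $R_1$ of $R$ is a subword of $T$; hence $|R_1|_\be\le|T|_\be\le 1$ (the single-fragment fragmentation has weight~$1$), so $|R^\circ|_\be\le 1$ and then $|R^\circ|_{\al-1}\le\ze^{\al-1-\be}|R^\circ|_\be\le 1$, contradicting (S1) and \eqref{eq:ISC-main}. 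No periodicity machinery is needed.

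Your middle step does not go through as written. The overlap of $!T$ with $\sigma!T$ along $!L$ has ordinary length $|T|-|R|$, which may equal~$1$; nothing in your hypotheses forces it to have $|\cdot|_{\be-1}$-size comparable to $\rho|R'|_{\be-1}$, so you cannot feed it into (S2) or into Proposition~\ref{pr:coarsely-periodic-overlapping-general}$_{<\al}$, both of which require a definite lower bound on the close segments. Your concluding step is also loose: even granting $\sigma\in\langle\tau\rangle$ in $G_{\be-1}$, this is a statement about group elements, not about words being close in rank~$\be-1$; it does not yield that a cyclic shift of $R$ is close in rank $\be-1$ to the whole of $R'$, so the contradiction with (S0) is not established.
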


\begin{proof}
Let $R$ be a relator of rank $\al$. Assume that some power $R^t$ is not reduced in $G_{\al-1}$.
According to definition \ref{ss:reduced-word}, for some $1 \le \be \le \al-1$ 
there exists a subword $S$ of $R^t$ which is close in $G_{\be-1}$ to a piece $P$ of rank $\be$
with $\mu(P) > \rho$. Since $R$ is cyclically reduced in $G_{\al-1}$ we have $|S| > |R|$.
Then according to the definition in \ref{ss:fragment} we have $|R^\circ|_\be \le 1$ and 
hence
$$
  |R^\circ|_{\al-1} \le \ze^{\al-\be-1} |R^\circ|_\be \le 1
$$
contradicting (S1) and \eqref{eq:ISC-main}.
\end{proof}

\subsection{} \label{ss:fragment-paths}
A {\em fragment path of rank ~$\al$} in ~$\Ga_{\al-1}$ is a path $!F$
labeled by a fragment of rank $\al$. We assume that $!F$ has an
associated $R$-periodic segment $!P$ with $R \in \cX_\al$ which is 
close to $!F$. We call $!P$ the {\em base} for $!F$.

Note that this agrees with the definition in \ref{ss:fragment}.
If $F$ is a fragment of rank $\al$ with associated triple $(P,u,v)$
and $!F$ is a path in ~$\Ga_{\al-1}$ with $\lab(!F) \greq F$ then
the loop $!F^{-1} !u !P !v$ with $\lab(!u!P!v) \greq uPv$ gives a base $!P$
for $!F$. Conversely, if $!F$ is a fragment of rank $\al$ in $\Ga_{\al-1}$
with base $!P$ then choosing a loop $!F^{-1} !u !P !v$ with 
$\lab(!u), \lab(!v) \in \cH_{\al-1}$ and denoting $F$, $P$, $u$ and ~$v$
the corresponding labels we obtain a fragment $F$ of rank $\al$ with associated
triple $(P,u,v)$.

If $\be \ge \al$ and paths $!F$ and $!P$ in $\Ga_\be$ are
obtained by mapping a fragment $\bar{!F}$ of rank $\al$ with base $\bar{!P}$ 
in $\Ga_{\al-1}$ then, by definition, we consider $!F$ as a fragment
of rank $\al$ with base $!P$ in $\Ga_\be$.

Abusing the language we will use the term `fragment' 
for both fragment words and fragment paths in $\Ga_\be$.

Recall that by a convention in \ref{ss:Cayley}, 
a base $!P$ for a fragment $!F$ of rank $\al$ in $\Ga_\be$ has an associated relator $R$ of rank $\al$ and 
the unique infinite $R$-periodic extension $!L$. 
If $\be=\al-1$ then $!L$ is a bi-infinite path
(which is simple by Proposition \ref{pr:relator-strongly-reduced}) that we call 
the {\em base axis} for ~$!F$.
If $\be > \al$ then $!L$ is winding over 
a relator loop labeled $R$ that we call the {\em base relator loop} for ~$!F$.


\subsection{} \label{ss:fragment-measuring}
We describe a way to measure fragments
of rank $\al$. 
If $P$ is a subword of a word $R^k$ where $R$ is a
relator of rank ~$\al$ then we define
\begin{equation} \label{eq:mu-def}
  \mu(P) = \frac{|P|_{\al-1}}{|R^\circ|_{\al-1}}.
\end{equation}
Note that this agrees with the definition in \ref{ss:mu-def} of 
the function $\mu(S)$ on the set of pieces $S$ of rank $\al$.
If $F$ is a fragment of rank $\al\ge 1$ then the size $\muf(F)$ of $F$
is defined to be equal to $\mu(P)$ where $P$ is the associated 
subword of $R^k$ and $R$ is the associated relator of rank $\al$.
Thus, for example, $\muf(F) = \frac12$
means approximately that $F$ is close in rank $\al-1$ to a ``half'' of its associated
relator of rank $\al$.

If $!F$ is a fragment of rank $\al$ in $\Ga_\be$ then we set
$\muf(!F) = \muf(\lab(!F))$. 
This means
that $\muf(!F)$ is given by the formula 
$$
  \muf(!F) = \frac{|!P|_{\al-1}}{|R^\circ|_{\al-1}}.
$$
where $!P$ is the base for $!F$ and $R$ is the relator
associated with $!P$.

Using Proposition \ref{pr:fellow-traveling}$_{<\al}$ 
we can easily reformulate the definition of a reduced in $G_\al$ word in \ref{ss:reduced-word} 
in the following way: 
a word ~$X$ is reduced in $G_\al$ if and only if $X$ is freely reduced and contains no
fragments $F$ of rank $1\le \be \le \al$ with $\muf(F) > \rho$.

\begin{definition}
 \label{df:fragment-compatibility}
Two fragments $!F$ and $!G$ of rank $\al$ in $\Ga_{\al-1}$ are {\em compatible} if their base axes are parallel.
Note that by Lemma \ref{lm:compatible-lines}, the base axes of fragments of rank $\al$
are parallel if and only if they coincide. 

In the case $\be \ge \al$, two fragments $!F$ and $!G$ of rank $\al$ in $\Ga_\be$ are defined to be 
compatible if they have compatible lifts in $\Ga_{\al-1}$, or, equivalently,
$!F$ and $!G$ have the same base relator loop.

It will be convenient to extend compatibility relation to fragments of rank $0$.
Recall that according to the definition in \ref{ss:fragment} 
fragments of rank 0 are letters in $\cA^{\pm1}$. Thus, fragments of rank $0$ in $\Ga_\be$
are paths of length 1. By definition, fragments $!F$ and $!G$ of rank $0$ in $\Ga_\be$ are
compatible if and only if $!F = !G$.
\end{definition}

We write compatibility of fragments as $!F \sim !G$. 
Note that we have in fact a family of relations with two parameters $\al \ge 0$ 
and $\be \ge \max(0,\al-1) $: compatibility of fragments of rank $\al$ in $\Ga_\be$.
The values of $\be$ and $\al$ will be always clear from the context.
Below we will use also ``compatibility up to invertion'' relation
on the set of fragments of rank $\al$ in $\Ga_\be$,
denoted $!F \sim !G^{\pm1}$ and meaning that $!F \sim !G$ or $!F \sim !G^{-1}$.
Both are obviously equivalence relations.

\begin{proposition}[fragment stability in bigon of the previous rank] 
\label{pr:fragment-stability-previous}
Let $\al\ge1$.
Let $!X$ and ~$!Y$ be reduced close paths in $\Ga_{\al-1}$.
Let $!K$ be a fragment of rank ~$\al$ in $!X$ with
$
  \muf(!K) \ge 2.3 \om.
$
Then there exists a fragment $!M$ of rank $\al$ in $!Y$ such that $!M \sim !K$ and
$$
  \muf(!M) > \muf(!K) - 2.6\om.
$$
\end{proposition}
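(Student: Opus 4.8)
The plan is to pass from the closeness relation between $!X$ and $!Y$ in $\Ga_{\al-1}$ to a van Kampen diagram of bigon type over the presentation of $G_{\al-1}$, then to track the fragment $!K$ through that diagram. Concretely, closeness means $X = uYv$ in $G_{\al-1}$ for some $u,v\in\cH_{\al-1}$, so there is a reduced diagram $\De$ of rank $\al-1$ of bigon type with sides labeled $X$ and $Y^{-1}$ and two bridges labeled $u$, $v$. Since $!K$ is a fragment of rank $\al$ in $!X$, by definition $!K$ is close in rank $\al-2$ to a subword $P_0$ of $R^k$ with $R$ a relator of rank $\al$, and (via Corollary~\ref{co:small-overlapping-Cayley}$_{\al-1}$ and Proposition~\ref{pr:relator-strongly-reduced}) $P_0$ sits on a unique base axis $!L$. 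The idea is: the subpath of $!X$ carrying $!K$ is close in $G_{\al-1}$ to a subpath of $!Y$; composing with the closeness of $!K$ to its base axis, that subpath of $!Y$ is also close in rank $\al-1$ to a long arc of $!L$, hence by definition contains a fragment $!M$ of rank $\al$ whose base axis is $!L$ — giving $!M\sim !K$ by Definition~\ref{df:fragment-compatibility} together with Lemma~\ref{lm:compatible-lines}.

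**Key steps, in order.** First I would invoke Proposition~\ref{pr:fellow-traveling}$_{\al-1}$ (the fellow-traveling / fragment-correspondence statement for coarse bigons at rank $\al-1$) to get, for the portion of $!X$ over which $!K$ extends, a genuinely corresponding portion of $!Y$ that is close in $G_{\al-1}$ to it. The subtlety is that closeness is transitive only up to composing bridge words: if $!K$ is close to a subpath of $!X$ and that subpath is close to a subpath of $!Y$, I need the composite bridge words to still lie in $\cH_{\al-1}$, which holds since $\cH_{\al-1}$ is closed under concatenation and inversion (from \ref{ss:bridge-word}). Second, I would translate the bound $\muf(!K)=\mu(!K)=|!P_K|_{\al-1}/|R^\circ|_{\al-1}\ge 2.3\om$ into a lower bound on $|!P_K|_{\al-1}$, using $|R^\circ|_{\al-1}\ge\Om$ from (S1), so $|!P_K|_{\al-1}\ge 2.3$. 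The loss $2.6\om$ in the conclusion should come from two sources: the semi-additivity defect of $|\cdot|_{\al-1}$ when one cuts a subword out of $!X$ to land inside $!Y$ (the $-1$ terms in \ref{ss:alpha-length-properties}(i)–(ii)), and the length of the bridge pieces that get absorbed at the two ends of $!K$ when passing through the diagram; each contributes a bounded number of $\om$-units once divided by $|R^\circ|_{\al-1}\ge\Om$. Third, with the matching subpath $!M_0$ of $!Y$ in hand, I would check it is long enough and close enough to $!L$ that it qualifies as a fragment of rank $\al$: its label is reduced in $G_{\al-1}$ (subword of the reduced word $Y$), and it is close in rank $\al-1$ to a subword of $R^k$, so it is a fragment of rank $\al$ by \ref{ss:fragment}; compute $\muf(!M_0)=|!M_0|_{\al-1}/|R^\circ|_{\al-1}>\muf(!K)-2.6\om$ from the length accounting. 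Finally, compatibility $!M\sim !K$ is immediate: both base axes are $R$-periodic lines that are parallel (they fellow-travel a common long piece of $!L$, so Corollary~\ref{co:small-overlapping-Cayley}$_{\al-1}$ forces them equal), which is the definition of compatibility.

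**Main obstacle.** The delicate part is the bookkeeping of the constant $2.6\om$: one must be careful about exactly which closeness relations are composed, how many bridge words are concatenated at each endpoint, and how the $-1$ defects in semi-additivity of $|\cdot|_{\al-1}$ accumulate, all while keeping the base axis genuinely fixed (so that the correspondence is with $!M$, not merely $!M^{\pm1}$ — here the orientation is inherited because $!X$ and $!Y$ are traversed consistently in the bigon). A secondary point is making sure that when $!K$ is close to its base axis the relevant piece of the axis actually has $\mu\ge\la$ so that Corollary~\ref{co:small-overlapping-Cayley}$_{\al-1}$ applies to identify base axes; this needs $2.3\om\ge\la$? — no, it needs the piece after fellow-traveling to exceed $\la$, which follows because after the $2.6\om$ loss one still has $\muf(!M_0)>\la$ provided $2.3\om - 2.6\om$... in fact the hypothesis $2.3\om$ is exactly calibrated so that, even after subtracting $2.6\om$ from a slightly larger intermediate quantity, one clears the threshold needed for the small-cancellation identification; verifying this numeric margin against $\la\ge 20\om$ from \eqref{eq:om-bounds} is the computation I would do most carefully.
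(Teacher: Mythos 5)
Your high-level plan --- transfer the fragment's base $!P$ through the closeness relation between $!X$ and $!Y$ and read off a corresponding fragment $!M$ in $!Y$ --- is the right one, and you correctly flag the main subtleties (transitivity of closeness only up to bridge composition; careful bookkeeping of the loss). But you invoke the wrong lemma and misread the role of the hypothesis, and both matter.

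The tool the paper uses is not Proposition~\ref{pr:fellow-traveling}$_{\al-1}$ (fellow-traveling) but Proposition~\ref{pr:closeness-stability}$_{\al-1}$ (closeness transition in bigon). Fellow-traveling only gives you a decomposition of $!X$ and $!Y$ into many tiny synchronized pieces; you would then still have to assemble them into a single subpath of $!Y$ close to a subpath of $!P$, and the error terms from that assembly are not obviously bounded by $2.6\om$. Proposition~\ref{pr:closeness-stability}$_{\al-1}$ is purpose-built for exactly this: apply it to the pairs $(!P,!K)$ and $(!X,!Y)$ (with $!K$ a subpath of $!X$) and it directly gives $!P = !z_1 !P' !z_2$ with $!P'$ close to a subpath $!M$ of $!Y$ and $|!z_i|_{\al-1}<1.3$. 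That is the whole proof: $!M$ is then a fragment of rank $\al$ with base $!P'$, the loss is $\mu(!z_1)+\mu(!z_2)<2.6\om$ by \eqref{eq:S2-piece-form}, and $!M\sim !K$ is automatic because $!P'$ is literally a subpath of $!P$ so they have the same axis --- no appeal to Corollary~\ref{co:small-overlapping-Cayley} is needed, and the orientation question you worried about dissolves.

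Your reading of the $2.3\om$ threshold is off in a way that matters. It is not there so that $\muf(!K)-2.6\om$ clears $\la$ (it cannot: $2.3\om-2.6\om<0$, and the conclusion can indeed yield a tiny or vacuous bound when $\muf(!K)$ is near the threshold). The hypothesis $\muf(!K)\ge 2.3\om$ is exactly what converts, via (S1) and \eqref{eq:S2-piece-form}, into $|!P|_{\al-1}\ge 2.3$, which is the size hypothesis required to apply Proposition~\ref{pr:closeness-stability}$_{\al-1}$ to $!P$. The constant $2.6\om$ is then $2\times 1.3$ (the bounds on the two cut-off pieces $!z_i$) scaled by $\om$, not an accumulation of semi-additivity defects and bridge lengths as you guessed. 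So the numeric calibration you propose to check is the wrong one; the check that actually matters is that $\muf(!K)\ge 2.3\om$ implies $|!P|_{\al-1}\ge 2.3$.
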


\begin{proof}
Let $!P$ be the base for $!K$.
By \eqref{eq:S2-piece-form} and Proposition \ref{pr:closeness-stability}$_{\al-1}$ we have 
$!P = !z_1 !P' !z_2$ where $!P'$ is close to a subpath $!M$ of $!Y$ and $|!z_i|_{\al-1} < 1.3$
$(i=1,2)$. Then $!M$ is a fragment of rank ~$\al$ with base $!P'$, so $\muf(!M) = \mu(!P')$.
By \eqref{eq:S2-piece-form}
$$
   \mu(!z_1) + \mu(!z_2) < 2.6 \om
$$
and hence
$$
  \mu(!P') > \mu(!P) -  2.6\om = \muf(!K) - 2.6\om.
$$
\end{proof}

\begin{proposition}[fragment stability in trigon of the previous rank] 
\label{pr:fragment-trigon-previous}
Let $!X^{-1} * !Y_1 * !Y_2 *$ be a coarse trigon in $\Ga_{\al-1}$.
Let $!K$ be a fragment of rank $\al$ in $!X$ such that
$
  \muf(!K) \ge 2.5 \om.
$
Then at least one of the following statements holds:
\begin{itemize}
\item 
For $i=1$ or $i=2$ there is a fragment $!M_i$ of rank $\al$ in $!Y_i$ such that $!M_i \sim !K$ and
$$
  \muf(!M_i) > \muf(!K) - 2.8 \om.
$$
\item 
For each $i=1,2$ there is a fragments $!M_i$ of rank $\al$ in $!Y_i$ such that $!M_i \sim !K$ and 
$$
  \muf(!M_1) + \muf(!M_2) > \muf(!K) - 3\om.
$$
\end{itemize}
\end{proposition}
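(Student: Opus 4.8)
The plan is to transport the base $!P$ of $!K$ from the side $!X$ onto the two remaining sides $!Y_1,!Y_2$ of the coarse trigon, following the proof of Proposition~\ref{pr:fragment-stability-previous} almost verbatim; the only new ingredient is the extra correction term that appears where $!P$ runs past the corner between $!Y_1$ and $!Y_2$.

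\smallskip\noindent\emph{Setup.} Let $!P$ be the base for $!K$, an $R$-periodic segment with $R$ a relator of rank $\al$, so that $\muf(!K)=\mu(!P)$, and write the coarse trigon as a loop $!X^{-1}\,!u_1\,!Y_1\,!u_2\,!Y_2\,!u_3$ with each $!u_j$ a bridge of rank $\al-1$. First I would combine the fellow-traveling statement for coarse polygons, Proposition~\ref{pr:fellow-traveling}$_{\al-1}$, with the closeness-stability estimate Proposition~\ref{pr:closeness-stability}$_{\al-1}$ — exactly as in the bigon case — to obtain a factorization
$$
  !P = !z_1\cdot !P_1\cdot !z_2\cdot !P_2\cdot !z_3
$$
in which $!P_1$ is close in $\Ga_{\al-1}$ to a subpath $!M_1$ of $!Y_1$, $!P_2$ is close to a subpath $!M_2$ of $!Y_2$, and each correction segment $!z_j$ obeys an absolute length bound; here $!z_2$ is the part of $!P$ that fellow-travels the corner bridge $!u_2$ and is empty unless $!P$ actually straddles the corner. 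The hypothesis $\muf(!K)\ge 2.5\om$ is what keeps this decomposition non-degenerate, so that not both of $!P_1,!P_2$ are empty. Each non-empty $!P_i$ is a subword of some $R^k$, so each $!M_i$ close to it is a fragment of rank $\al$ whose base we take to be (the periodic extension of) $!P_i$; then $\muf(!M_i)=\mu(!P_i)$, and since the base axis of $!M_i$ is, as an oriented periodic line, the same as that of $!K$, Lemma~\ref{lm:compatible-lines} gives $!M_i\sim !K$.

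\smallskip\noindent\emph{The two cases.} If both $!P_1,!P_2$ are non-empty, then semi-additivity of $|\cdot|_{\al-1}$ (\ref{ss:alpha-length-properties}) gives $|!P_1|_{\al-1}+|!P_2|_{\al-1}\ge |!P|_{\al-1}-\sum_j|!z_j|_{\al-1}$; dividing by $|R^\circ|_{\al-1}$ (see~\eqref{eq:mu-def}) and using $\mu(!z_j)<\om\,|!z_j|_{\al-1}$ from~\eqref{eq:S2-piece-form},
$$
  \muf(!M_1)+\muf(!M_2)=\mu(!P_1)+\mu(!P_2)\ge\mu(!P)-\om\sum_j|!z_j|_{\al-1}>\muf(!K)-3\om,
$$
the second alternative. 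If one of them, say $!P_2$, is empty, then only the two outer correction segments effectively remain (the corner term $!z_2$ merging with $!z_3$) — the bigon situation — and the same computation (or Proposition~\ref{pr:fragment-stability-previous} applied to the ambient coarse bigon) gives $\muf(!M_1)>\muf(!K)-2.8\om$, the first alternative.

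\smallskip\noindent\emph{Main obstacle.} The $\om$-bookkeeping is routine; the real work is the decomposition step — controlling how much of the relator-periodic base $!P$ can lie opposite the corner bridge $!u_2$, so that the accumulated corrections total at most $3\om$ in the two-piece case and $2.8\om$ in the one-piece case, rather than something larger. This is where I expect to lean hardest on the precise rank-$(\al-1)$ fellow-traveling and closeness estimates and on the bounds they yield for a rank-$\al$ relator-periodic path running alongside a bridge of rank $\al-1$.
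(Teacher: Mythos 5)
Your high-level plan — factorize the base $!P$ of $!K$ into pieces running alongside $!Y_1$ and $!Y_2$ (plus correction segments at the corner and at the two ends), then transfer to the $\mu$-scale via~\eqref{eq:S2-piece-form} — is exactly the paper's strategy. The $\om$-bookkeeping you sketch is also correct: the two-piece budget $3\om$ comes from two end corrections of $\alpha$-length $< 1.3$ plus a corner correction of $\alpha$-length $< 0.4$, and the one-piece budget $2.8\om$ from $1.3 + 1.45 = 2.75$.

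There is, however, a gap in where you look for the decomposition itself. You propose to get it by combining Proposition~\ref{pr:fellow-traveling}$_{\al-1}$ with Proposition~\ref{pr:closeness-stability}$_{\al-1}$, ``exactly as in the bigon case.'' But the bigon case (Proposition~\ref{pr:fragment-stability-previous}) uses only Proposition~\ref{pr:closeness-stability}$_{\al-1}$, which is a \emph{bigon} transition statement: it transports a path across one closeness relation, but it cannot by itself tell you that after $!P$ runs past the corner bridge $!u_2$ a further subpath of $!P$ runs along $!Y_2$. That ``split at the corner'' phenomenon, together with the precise absolute bounds $1.3$, $1.45$, $0.4$ on the correction segments, is the content of Proposition~\ref{pr:stability-trigon}$_{\al-1}$ (``closeness transition in trigon''), which is exactly what the paper invokes here — and which is itself a nontrivial result proved via Lemmas~\ref{lm:closeness-bigon-1side}, \ref{lm:closeness-trigon-1side} and a fair amount of case analysis. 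So the step you flag as the main obstacle is indeed the crux, but it is not a routine consequence of the two propositions you cite; it is a dedicated proposition one rank down. With Proposition~\ref{pr:stability-trigon}$_{\al-1}$ in hand (applied to the pair $(!P,!K)$ with the ambient trigon $!X^{-1}*!Y_1*!Y_2*$, admissible since $\muf(!K)\ge 2.5\om$ forces $|!P|_{\al-1}\ge 2.5 > 2.45$), the rest of your argument — compatibility via Lemma~\ref{lm:compatible-lines} and the $\om$-counting via~\eqref{eq:S2-piece-form} — goes through as you wrote it.
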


\begin{proof}
This follows from Proposition \ref{pr:stability-trigon}$_{\al-1}$
in a similar way as in the proof of 
Proposition ~\ref{pr:fragment-stability-previous}.
\end{proof}


\begin{proposition}[fragment stability in conjugacy relations of the previous rank] 
\label{pr:fragment-cyclic-monogon-previous}
Let $X$ be a word cyclically reduced in $G_{\al-1}$.
%
Let $Y$ be a word reduced in $G_{\al-1}$, $u \in \cH_{\al-1}$ and 
$Yu = z^{-1} X z$ in $G_{\al-1}$ for some $z$. We represent the conjugacy relation 
by two lines $\dots !Y_{-1} !u_{-1} !Y_{0} !u_{0} !Y_{1} !u_{1} \dots$
and $\bar{!X} = \dots !X_{-1} !X_{0} !X_{1} \dots$ 
in $\Ga_{\al-1}$ where $\lab(!X_i) \greq X$, $\lab(!Y_i) \greq Y$ and 
$\lab(!u_i) \greq u$ (see \ref{ss:relations}).
Let $!K$ be a fragment of rank $\al$ in $\bar{!X}$ with $|!K| \le |X|$ and
$\muf(!K) \ge 2.5\om$.
Then at least one of the following statements is true:
\begin{itemize}
\item 
For some $i$, there is a fragment $!M$ of rank $\al$ in $!Y_{i}$ such that $!M \sim !K$ and 
$$
  \muf(!M) > \muf(!K) - 2.9\om.
$$
\item 
For some $i$, there are fragments $!M_1$ and $!M_2$ of rank $\al$ in $!Y_{i}$ and $!Y_{i+1}$ respectively such that 
$!M_i \sim !K$ $(i=1,2)$ and 
$$
  \muf(!M_1) + \muf(!M_2) > \muf(!K) - 3\om.
$$
\end{itemize}
\end{proposition}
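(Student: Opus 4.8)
The plan is to follow the pattern of Propositions \ref{pr:fragment-stability-previous} and \ref{pr:fragment-trigon-previous}: transport the base of $!K$ across the conjugacy relation by invoking the stability statement at rank $\al-1$ for the pair of parallel periodic lines representing a conjugacy relation (the cyclic counterpart of Propositions \ref{pr:closeness-stability}$_{\al-1}$ and \ref{pr:stability-trigon}$_{\al-1}$, applied to $\dots !Y_{-1} !u_{-1} !Y_0 !u_0 !Y_1 \dots$ and $\bar{!X}$).

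First I would let $!P$ be the base of $!K$, so that $!P$ is an $R$-periodic segment with $R \in \cX_\al$, close in $G_{\al-1}$ to $!K$, and $\muf(!K) = \mu(!P)$; by \eqref{eq:S2-piece-form} this gives $|!P|_{\al-1} \ge \muf(!K)/\om \ge 2.5$, which is the length bound needed to invoke the rank-$(\al-1)$ stability statement. Since $X$ is cyclically reduced in $G_{\al-1}$, the line $\bar{!X}$ is reduced; and since $|!K| \le |X|$, both $!K$ and, by Proposition \ref{pr:fellow-traveling}$_{\al-1}$, the portion of $\bar{!X}$ that $!P$ fellow-travels stay within a range meeting at most two consecutive copies $!Y_i$, $!Y_{i+1}$ of the other line. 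Applying the stability statement to $!P$ then yields one of two outcomes: either $!P = !z_1 !P' !z_2$ with $!P'$ close in $G_{\al-1}$ to a subpath $!M$ of a single $!Y_i$, or $!P = !z_1 !P_1' !w !P_2' !z_2$ with $!P_1'$ close to a subpath $!M_1$ of $!Y_i$ and $!P_2'$ close to a subpath $!M_2$ of $!Y_{i+1}$, where $!w$ is the part of $!P$ matched against the bridge $!u_i$, and in both cases the discarded pieces $!z_1$, $!z_2$ (and $!w$) have $|\cdot|_{\al-1}$ bounded by absolute constants read off from that statement.

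Then $!M$ (resp.\ each $!M_j$) is a fragment of rank $\al$ with base $!P'$ (resp.\ $!P_j'$), so $\muf(!M) = \mu(!P')$ (resp.\ $\muf(!M_j) = \mu(!P_j')$). Since $!P'$, $!P_1'$, $!P_2'$ are subsegments of the $R$-periodic line carrying $!P$, their infinite periodic extensions coincide with the base axis of $!K$, whence $!M \sim !K$ (resp.\ $!M_j \sim !K$), Lemma \ref{lm:compatible-lines} confirming that coincidence of base axes is the right reading of compatibility here. Finally, converting the $|\cdot|_{\al-1}$-bounds on $!z_1$, $!z_2$ (and $!w$) into $\mu$-bounds via \eqref{eq:S2-piece-form} and summing gives $\mu(!P') > \mu(!P) - 2.9\om$ in the first case and $\mu(!P_1') + \mu(!P_2') > \mu(!P) - 3\om$ in the second, which is exactly the claimed dichotomy.

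The main obstacle is the constant bookkeeping: I must extract from the rank-$(\al-1)$ statement the precise sizes of $!z_1$, $!z_2$ and of the sub-segment of $!P$ a bridge $!u_i$ can absorb, and check that they sum, after scaling by $\om$, to at most $2.9$ and $3$ in the two cases — a slightly larger budget than the $2.6$/$2.8$ used in the bigon and trigon versions, the extra slack absorbing the wrap-around of the periodic line and the possibility of a bridge lying inside $!P$. A secondary point to handle carefully is that the hypothesis $|!K| \le |X|$ is precisely what prevents $!P$ from straddling two bridges of the $Y$-line, so that no configuration beyond the two listed can arise.
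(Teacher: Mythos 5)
Your proposal is correct and matches the paper's proof, which is simply the one-line remark that the statement follows from Proposition \ref{pr:stability-cyclic-monogon}$_{\al-1}$ applied to the base $!P$ of $!K$. The constants you derive ($1.45+1.45=2.9$ in the single-$!Y_i$ case, $1.3+1.3+0.4=3.0$ in the two-$!Y_i$ case) are precisely the ones read off from that proposition, and your use of $|!K|\le|X|$ as the $|!Y|\le|S|$ hypothesis there is exactly right.
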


\begin{proof}
Follows from Proposition \ref{pr:stability-cyclic-monogon}$_{\al-1}$.
\end{proof}

\begin{proposition}[inclusion implies compatibility] 
\label{pr:inclusion-compatibility}
Let $!K$ and $!M$ be fragments of rank $\al$ in ~$\Ga_\be$, $\be \ge \al-1$. Assume that $!K$ is contained in $!M$ and $\muf(!K) \ge \la + 2.6\om$. 
Then $!K \sim !M$.
\end{proposition}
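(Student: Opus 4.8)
The statement is local: a large fragment $!K$ sitting inside another fragment $!M$ of the same rank $\al$ must be compatible with it, i.e.\ the two fragments have the same base axis (in $\Ga_{\al-1}$) or base relator loop (in $\Ga_\be$, $\be\ge\al$). Since compatibility in $\Ga_\be$ for $\be\ge\al$ is defined by lifting to $\Ga_{\al-1}$ and asking that the lifts be compatible, the plan is to reduce immediately to the case $\be=\al-1$: lift $!M$ (and hence the contained copy of $!K$) to a path in $\Ga_{\al-1}$, noting that a lift of a fragment is a fragment of the same rank with a lifted base, and that $\muf$ is preserved under the covering map $\Ga_\be\to\Ga_{\al-1}$ (both equal $|!P|_{\al-1}/|R^\circ|_{\al-1}$ for the relevant base $!P$). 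So from now on work in $\Ga_{\al-1}$.

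So let $!K$ and $!M$ be fragments of rank $\al$ in $\Ga_{\al-1}$ with $!K$ a subpath of $!M$ and $\muf(!K)\ge\la+2.6\om$. Let $!P_K$ and $!P_M$ be their respective bases, with associated relators $R_K,R_M$ of rank $\al$ and base axes $!L_K,!L_M$ (bi-infinite $R_K$- and $R_M$-periodic lines, simple by Proposition~\ref{pr:relator-strongly-reduced}). By definition of a fragment, $!K$ is close in rank $\al-1$ to $!P_K$, a subpath of $!L_K$ with $|!P_K|=|!K|$-worth of periodicity and $\mu(!P_K)=\muf(!K)$; likewise $!M$ is close to $!P_M\subseteq !L_M$ with $\mu(!P_M)=\muf(!M)$. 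Since $!K$ is a subpath of $!M$, and closeness is witnessed by bridge words of rank $\al-1$ sandwiched around each, I get that a suitable subpath $!P_M'$ of $!P_M$ (cut off the ends of $!P_M$ corresponding to the part of $!M$ outside $!K$) is close in rank $\al-1$ to $!K$, hence close to $!P_K$ — the two bridges compose, using $\cH_{\al-1}\cdot\cH_{\al-1}\subseteq\cH_{\al-1}$. More carefully, I would invoke the closeness-stability machinery of the previous rank (Proposition~\ref{pr:closeness-stability}$_{\al-1}$ or Proposition~\ref{pr:fellow-traveling}$_{\al-1}$, exactly as in the proof of Proposition~\ref{pr:fragment-stability-previous}) to extract from $!P_M$ a subpath $!P_M'$ that is close to $!P_K$ and loses only a bounded amount of measure at each end, of total size $<2.6\om$ in the $\mu$-scale, so that $\mu(!P_M')>\muf(!M)-2.6\om$ but also $\mu(!P_M')$ is at least $\muf(!K)-2.6\om\ge\la$. [I should double-check the exact constant $2.6\om$ against that proposition; the point is only that it is the same as the loss appearing in Proposition~\ref{pr:fragment-stability-previous}, hence the hypothesis $\muf(!K)\ge\la+2.6\om$ is precisely what is needed to land at $\ge\la$.]

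Now I have close subpaths $!P_M'$ of $!L_M$ and (a subpath of) $!P_K$ of $!L_K$ with $|!P_M'|\le|R_M|$ and $|!P_M'|_{\al-1}\ge\la|R_M|_{\al-1}$ (this last from $\mu(!P_M')\ge\la$). This is exactly the hypothesis of Corollary~\ref{co:small-overlapping-Cayley} (small cancellation in the Cayley graph), applied to the periodic lines $!L_M$ and $!L_K$: it yields $!L_M=!L_K$. Therefore the base axes of $!M$ and $!K$ coincide, which by Lemma~\ref{lm:compatible-lines} (or directly by Definition~\ref{df:fragment-compatibility}) means $!K\sim !M$. Pulling back through the covering map finishes the case $\be\ge\al$.

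\textbf{Main obstacle.} The conceptual content is light — it is "containment forces the same base, by small cancellation" — so the real work is bookkeeping: (a) making the passage from a fragment in $\Ga_\be$ to its lift in $\Ga_{\al-1}$ fully rigorous, including that $\muf$ and the inclusion relation survive lifting; and (b) tracking the end-losses when one cuts $!P_M$ down to the part matching $!K$, to be sure the residual piece still has $\mu\ge\la$ and not merely something slightly smaller — here the slack $2.6\om$ in the hypothesis is exactly calibrated, and I expect the only genuine care is to cite the correct previous-rank stability statement (the same one used for Proposition~\ref{pr:fragment-stability-previous}) with the same constant. Everything else is a direct appeal to Corollary~\ref{co:small-overlapping-Cayley} and Lemma~\ref{lm:compatible-lines}.
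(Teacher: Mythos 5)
Your overall strategy is the paper's: lift to $\Ga_{\al-1}$, use closeness stability of the previous rank to align the two bases, then apply Corollary~\ref{co:small-overlapping-Cayley} and Lemma~\ref{lm:compatible-lines}. But the constant-tracking step has a genuine error in the direction of the cut. You cut $!P_M$ down to a subpath $!P_M'$ and then claim
$\mu(!P_M') \ge \muf(!K) - 2.6\om \ge \la$.
This inequality compares $\mu$-measures taken against \emph{different} relators: $\mu(!P_M') = |!P_M'|_{\al-1}/|R_M^\circ|_{\al-1}$ while $\muf(!K) = |!P_K|_{\al-1}/|R_K^\circ|_{\al-1}$, and there is no a priori relation between $|R_M^\circ|_{\al-1}$ and $|R_K^\circ|_{\al-1}$. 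Nor do you have any lower bound on $\muf(!M)$ from which $\mu(!P_M')$ could descend, so $\mu(!P_M') \ge \la$ is simply not established. (The side remark that ``the two bridges compose, using $\cH_{\al-1}\cdot\cH_{\al-1}\subseteq\cH_{\al-1}$'' is also false --- $\cH_{\al-1}$ is not closed under concatenation --- but you rightly moved to Proposition~\ref{pr:closeness-stability}$_{\al-1}$ instead.)

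The fix is to cut the other base. Apply Proposition~\ref{pr:closeness-stability}$_{\al-1}$ with $!X := !P_K$, $!Y := !K$, $!S := !M$, $!T := !P_M$; since $!K$ is a subpath of $!M$, these roles are the only admissible ones. You obtain $!P_K = !z_1 !P_K' !z_2$ with $!P_K'$ close to a subpath of $!P_M$ and $|!z_i|_{\al-1} < 1.3$. By \eqref{eq:S2-piece-form} this gives $\mu(!P_K') \ge \muf(!K) - 2.6\om \ge \la$, a comparison entirely within the same relator $R_K$. Now invoke Corollary~\ref{co:small-overlapping-Cayley} with $!S_1 := !P_K'$ and $R_1 := R_K$ (the corollary only needs the bound on one side). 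This is exactly what the paper does; your lifting reduction for the case $\be\ge\al$ and the final appeal to Lemma~\ref{lm:compatible-lines} are both correct.
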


\begin{proof}
First consider the case $\be = \al-1$. Let $!P$ and $!Q$ be bases for $!K$ and $!M$, respectively.
By Proposition \ref{pr:closeness-stability}$_{\al-1}$, there are close subpaths $!P'$ of $!P$ 
and $!Q'$ of $!Q$ such that $\mu(!P') \ge \la$. 
Then by Corollary \ref{co:small-overlapping-Cayley} $!P$ and $!Q$
have the same infinite periodic extension and
we conclude that $!K$ and $!M$ are compatible.

If $\be \ge \al$ then we consider lifts $\ti{!K}$ and $\ti{!M}$ of $!K$ and $!M$ in $\Ga_{\al-1}$ 
such that $\ti{!K}$ is contained in $\ti{!M}$ and apply the already proved part.
\end{proof}

\begin{proposition}[dividing a fragment] \label{pr:dividing-fragment}
Let $!K$ be a fragment of rank $\al$ in  ~$\Ga_\be$, $\be \ge \al-1$. 
If $!K = !K_1 !K_2$ then either $!K_1$ or $!K_2$ contains a fragment $!F$ of rank $\al$
with $!F \sim !K$ and $\muf(!F) > \muf(!K) - \ze\om$, 
or $!K$ can be represented as $!K = !F_1 !u !F_2$
where $!F_i$ are fragments of rank $\al$, $!F_1$ is a start of $!K_1$, $!F_2$ is an end of $!K_2$, 
$!F_1 \sim !F_2 \sim !K$ and 
$$
  \muf(!F_1) + \muf(!F_2) > \muf(!K) - \ze\om .
$$
\end{proposition}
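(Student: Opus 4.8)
The plan is to reduce everything to a coarse bigon in $\Ga_{\al-1}$ and then transfer the partition point of the fragment to its base axis using the closeness‑stability machinery of the previous rank. The slack $\ze\om$ will come entirely from a short ``junction'' piece that the transfer may leave behind.

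First I would dispose of the case $\be\ge\al$ by lifting: choose lifts $\ti{!K}=\ti{!K}_1\ti{!K}_2$ of $!K=!K_1!K_2$ to $\Ga_{\al-1}$; a fragment of rank $\al$, its base axis, and the compatibility relation are all defined in $\Ga_\be$ via such lifts, so the conclusion obtained for $\ti{!K}$ in $\Ga_{\al-1}$ projects back down verbatim. Thus assume $\be=\al-1$. If $!K_1$ or $!K_2$ is empty the first alternative holds trivially with $!F=!K$, so assume both are nonempty and let $!a=\tau(!K_1)=\io(!K_2)$ be the interior cut vertex. Now fix the base $!P$ of $!K$ and a coarse bigon $!K^{-1}!u!P!v$ with $\lab(!u),\lab(!v)\in\cH_{\al-1}$; here $!P$ is reduced in $G_{\al-1}$ by Proposition~\ref{pr:relator-strongly-reduced} and $!K$ is reduced by hypothesis, so this is a coarse bigon of reduced paths and Proposition~\ref{pr:closeness-stability}$_{\al-1}$ applies. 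Let $!L$ be the base axis of $!K$ (the bi‑infinite $R$‑periodic line extending $!P$).

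Next I would use closeness stability to carry the partition $!K=!K_1!K_2$ over to $!L$: it yields a vertex $!b$ on $!L$ such that, writing $!L=!L^-!L^+$ with $\tau(!L^-)=!b$, a start $!F_1$ of $!K_1$ is close in rank $\al-1$ to the terminal portion of $!L^-$ and an end $!F_2$ of $!K_2$ is close to the initial portion of $!L^+$, the only unmatched part being a short junction piece straddling $!a$ of $|\cdot|_{\al-1}$‑size at most one edge (for $\al=1$ there is no slack at all, since $\cH_0$ is trivial and $!K$ is literally $!P$). I would then split into three cases on the position of $!b$. If $!b$ lies strictly between $\io(!P)$ and $\tau(!P)$, then both $!F_1$ and $!F_2$ are close to sub‑segments of $!L$, hence (being reduced in $G_{\al-1}$ and close to $R$‑periodic segments) are fragments of rank $\al$ with base axis $!L$; writing $!K=!F_1\,!u\,!F_2$ with $!u$ the junction gives the second alternative, with $!F_1\sim!F_2\sim!K$ by Lemma~\ref{lm:compatible-lines}, and $\muf(!F_1)+\muf(!F_2)$ differs from $\muf(!K)=\mu(!P)$ by at most the $\mu$‑measure of the junction piece. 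If $!b$ lies on $!L$ at or beyond $\tau(!P)$, then all of $!P$ is contained in the portion of $!L$ that a start $!F$ of $!K_1$ is close to, so $!F\subseteq!K_1$ is a fragment of rank $\al$ with base axis $!L$ and $\muf(!F)\ge\mu(!P)-\,(\text{junction})=\muf(!K)-\,(\text{junction})$, giving the first alternative; the case $!b$ at or before $\io(!P)$ is symmetric with $!F\subseteq!K_2$. In each case compatibility holds because all the base axes coincide with $!L$, invoking Lemma~\ref{lm:compatible-lines} (or Proposition~\ref{pr:inclusion-compatibility} if one prefers to argue through the inclusion $!F\subseteq!K$).

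The main obstacle is the last estimate: bounding the junction slack by exactly $\ze\om$ rather than a cruder $O(\om)$. The point is that, because $!K$ is reduced in $G_{\al-1}$, the cut vertex $!a$ cannot fall in the interior of a large rank‑$(\al-1)$ piece of the base, so the unmatched junction piece is a single edge of the base (equivalently a fragment of rank $<\al-1$), whose $|\cdot|_{\al-1}$‑size is at most $\ze^{\al-1}\le\ze$ for $\al\ge2$; combined with $|R^\circ|_{\al-1}\ge\Om=\om^{-1}$ (condition (S1), cf.\ \eqref{eq:S2-piece-form}) this gives $\mu(\text{junction})\le\ze\om$, which is precisely the slack allowed in the statement. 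Everything else is the routine bookkeeping of reading off the three sub‑bigons from the closeness‑stability correspondence.
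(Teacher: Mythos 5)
Your overall plan — reduce to $\be=\al-1$, transfer the cut vertex of $!K$ to the base axis, and bound the $\mu$-size of the leftover junction — is the same as the paper's, but the key lemma you invoke and the justification for the $\ze\om$ slack are both wrong. Proposition~\ref{pr:closeness-stability}$_{\al-1}$ requires $|!X|_{\al-1}\ge 2.3$ and its error terms are $|!z_i|_{\al-1}<1.3$; that is far too coarse a correspondence to yield a junction of $\mu$-measure $\le\ze\om$, so it cannot be the tool that produces your vertex $!b$ with the claimed precision. The lemma the paper actually uses is Proposition~\ref{pr:fellow-traveling}$_{\al-1}$ (closeness fellow traveling), which decomposes $!K$ and its base $!P$ as matching products $!K=!U_1\dots!U_k$, $!P=!V_1\dots!V_k$ with $\io(!U_i)$ close to $\io(!V_i)$ and, crucially, $|!U_i|_{\al-1},|!V_i|_{\al-1}\le\ze$ for every~$i$. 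One then locates the cut vertex $!a$ inside some $!U_i$, sets $!u:=!U_i$, $!F_1:=!U_1\dots!U_{i-1}$, $!F_2:=!U_{i+1}\dots!U_k$, and the junction on the base side is exactly $!V_i$, so $\mu(!V_i)\le\om|!V_i|_{\al-1}\le\ze\om$ by \eqref{eq:S2-piece-form}.

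Your closing justification that the junction ``is a single edge of the base'' with $|\cdot|_{\al-1}$-size at most $\ze^{\al-1}$ is not supported: the cut vertex $!a$ is arbitrary, ``$!K$ being reduced'' does not constrain where it falls, and nothing in your chain of deductions forces the unmatched base piece to be a single edge. The correct statement — that the base junction has $|\cdot|_{\al-1}$-size at most $\ze$ — is exactly what fellow traveling provides. So you should cite and apply \ref{pr:fellow-traveling}$_{\al-1}$ in place of \ref{pr:closeness-stability}$_{\al-1}$, and replace the ``single edge'' argument by the bound $|!V_i|_{\al-1}\le\ze$. (Your handling of $\al=1$ and of the lift from $\Ga_\be$ to $\Ga_{\al-1}$ is fine, and the case split on the position of $!b$ correctly yields the two alternatives in the statement.)
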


\begin{proof}
If $\al=1$ then $!u$ can be taken empty and the statement is trivial.
If $\be = \al-1 \ge 1$ then the statement follows from Proposition \ref{pr:fellow-traveling}$_{\al-1}$.
The case $\be > \al-1$ follows from the case $\be = \al-1$.
\end{proof}

As an immediate consequence of Propositions \ref{pr:inclusion-compatibility} and \ref{pr:dividing-fragment} we get:

\begin{proposition}[overlapping fragments] \label{pr:small-overlapping}
Let $!X$ be a reduced path in $\Ga_\be$, $\be \ge \al-1$. 
Let $!K$ and $!M$ be non-compatible fragments of rank $\al$ in $!X$.
Assume that $!K \le !M$ and $\muf(!K), \muf(!M) \ge \la + 2.7\om$.
Then there are a start $!K_1$ of $!K$ disjoint from $!M$ and an end $!M_1$ of $!M$ disjoint from ~$!K$ 
such that $!K_1$ and $!M_1$ are fragments of rank $\al$, $!K_1 \sim !K$, $!M_1 \sim !M$, 
$\muf(!K) - \muf(!K_1) < \la + 2.7\om$ and $\muf(!M) - \muf(!M_1) < \la + 2.7\om$.
\end{proposition}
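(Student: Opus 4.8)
The plan is to derive this directly from Propositions \ref{pr:inclusion-compatibility} and \ref{pr:dividing-fragment}, which play complementary roles: Proposition \ref{pr:inclusion-compatibility} forbids a fragment of size $>\la+2.6\om$ compatible with $!K$ from lying inside $!M$ (and symmetrically for $!M$ and $!K$), while Proposition \ref{pr:dividing-fragment} lets us chop off from $!K$ the part meeting $!M$ with control on sizes.

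First I would pin down the mutual position of $!K$ and $!M$. Since $\muf(!K),\muf(!M)\ge\la+2.7\om>\la+2.6\om$, Proposition \ref{pr:inclusion-compatibility} shows that neither of $!K,!M$ is contained in the other, for containment together with this size bound would give $!K\sim!M$, against the hypothesis. Combined with $!K\le!M$ this yields $!K<!M$ within $!X$. If $!K$ and $!M$ are disjoint we are done immediately, with $!K_1:=!K$ and $!M_1:=!M$. Otherwise the overlap $!N:=!K\cap!M$ is a nonempty proper end of $!K$ and a nonempty proper start of $!M$, and I would write $!K=!K'\cdot!N$ with $!K'$ a nonempty start of $!K$ disjoint from $!M$ (nonempty because $!K\not\seq!M$).

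Then I would apply Proposition \ref{pr:dividing-fragment} to the splitting $!K=!K'\cdot!N$, preceded by the remark that rules out the bad alternatives: no fragment $!F$ of rank $\al$ with $!F\sim!K$ and $\muf(!F)>\muf(!K)-\ze\om$ can lie inside $!N$, since then $!F\seq!N\seq!M$ and $\muf(!F)>\la+2.7\om-\ze\om=\la+2.65\om>\la+2.6\om$ (as $\ze=\tfrac1{20}$), so Proposition \ref{pr:inclusion-compatibility} would give $!F\sim!M$ and hence $!K\sim!M$. Proposition \ref{pr:dividing-fragment} then leaves two possibilities. Either $!K'$ contains a fragment $!F$ of rank $\al$, which may be taken as a start of $!K'$, with $!F\sim!K$ and $\muf(!F)>\muf(!K)-\ze\om$; then $!K_1:=!F$ is a start of $!K$ disjoint from $!M$, a fragment of rank $\al$ with $!K_1\sim!K$ and $\muf(!K)-\muf(!K_1)<\ze\om<\la+2.7\om$. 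Or $!K=!F_1\,!u\,!F_2$ with $!F_1$ a start of $!K'$, $!F_2$ an end of $!N$, $!F_1\sim!F_2\sim!K$ and $\muf(!F_1)+\muf(!F_2)>\muf(!K)-\ze\om$; here $!F_2\seq!N\seq!M$ and $!F_2\sim!K$ force $\muf(!F_2)<\la+2.6\om$ (otherwise $!F_2\sim!M$ by Proposition \ref{pr:inclusion-compatibility}, whence $!K\sim!M$), so $\muf(!F_1)>\muf(!K)-\ze\om-(\la+2.6\om)=\muf(!K)-\la-2.65\om$, and $!K_1:=!F_1$ works with $\muf(!K)-\muf(!K_1)<\la+2.65\om<\la+2.7\om$. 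This gives the start $!K_1$; the end $!M_1$ of $!M$ then follows by the mirror argument in $!X^{-1}$ applied to $!M^{-1}<!K^{-1}$, using $\muf(!M^{-1})=\muf(!M)$ and the inversion symmetry of compatibility.

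The step needing the most care is reading off the start/end information from Proposition \ref{pr:dividing-fragment}: in its first alternative one must check that the fragment found in $!K'$ can be taken to be a start of $!K'$ --- i.e.\ that the deviation from the periodic pattern occurs only near the cut, not at the outer end of $!K'$ --- and in its second alternative that $!F_1,!F_2$ sit at the correct ends. The rest is routine $\ze$--$\la$--$\om$ bookkeeping, the only numerical input being $\ze=\tfrac1{20}$, which gives $\ze\om<0.1\om$ and hence $\muf(!K)-\ze\om>\la+2.6\om$ whenever $\muf(!K)\ge\la+2.7\om$.
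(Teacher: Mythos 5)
Your proof is correct and takes exactly the route the paper intends, since the paper introduces Proposition \ref{pr:small-overlapping} with the remark that it is ``an immediate consequence of Propositions \ref{pr:inclusion-compatibility} and \ref{pr:dividing-fragment}'' and provides no further argument. You have simply filled in that derivation, and the case analysis and $\ze\om$--bookkeeping are accurate ($\ze\om = 0.05\om$, so $\la + 2.7\om - \ze\om > \la + 2.6\om$ as you need).

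The one point you flag --- whether the first alternative of Proposition \ref{pr:dividing-fragment} delivers a fragment that can be taken to be a \emph{start} of $!K'$ --- is a legitimate gap in the stated form of that proposition, but it does resolve in your favor once one looks at how it is proved. The proof of Proposition \ref{pr:dividing-fragment} goes through Proposition \ref{pr:fellow-traveling}: $!K$ and its base $!P$ are partitioned into fellow-traveling pieces $!U_i$, $!V_i$ of $|\cdot|_{\al-1}$--size at most $\ze$, the split vertex of $!K = !K' !N$ lies inside a single $!U_j$, and the candidate fragments are $!U_1\cdots!U_{j-1}$ (a start of $!K'$, close to $!V_1\cdots!V_{j-1}$) and $!U_{j+1}\cdots!U_k$ (an end of $!N$, close to $!V_{j+1}\cdots!V_k$). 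The first alternative is precisely the degenerate case in which one of these two carries almost all of $\mu(!P)$; that one is still a start or end of the appropriate half. So the fragment you call $!F$ can indeed be taken to be a start of $!K'$, and since $!K'$ is a start of $!K$ disjoint from $!M$, taking $!K_1 := !F$ is legitimate. With that point settled, your argument and its mirror for $!M_1$ are complete.
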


\begin{proposition}[union of fragments] \label{pr:fragments-union}
Let $!X$ be a reduced path in $\Ga_{\al-1}$ and 
let $!K_i$ $(i=1,2)$ be compatible fragments of rank $\al$ in $!X$.
Assume that $\muf(!K_i) \ge 5.7\om$ for $i=1$ or $i=2$.
Then the union of $!K_1$ and ~$!K_2$ is a fragment of rank $\al$ with the same base axis.
Moreover, if $!K_1$ and $!K_2$ are disjoint then $\muf(!K_1 \cup !K_2) \ge \muf(!K_1) + \muf(!K_2) - 5.7\om$. 
\end{proposition}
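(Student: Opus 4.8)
The plan is to reduce the assertion to the geometry of one line in $\Ga_{\al-1}$. First I would dispose of the trivial case: if one of $!K_1,!K_2$ contains the other, say $!K_2\seq !K_1$, then $!K_1\cup !K_2=!K_1$ is already a fragment of rank $\al$, its base axis coincides with the base axis of $!K_2$ by Lemma \ref{lm:compatible-lines}, and the second assertion is vacuous. So assume neither of $!K_1,!K_2$ contains the other. Since $!X$ is reduced it is simple (Proposition \ref{pr:reduced-nontrivial}), so its subpaths are linearly ordered; after relabelling we may assume $!K_1<!K_2$, and we set $!K=!K_1\cup !K_2$. Let $!P_i$ be the base for $!K_i$, so $!K_i$ is close in rank $\al-1$ to $!P_i$, say via a loop $!K_i^{-1}\,!u_i\,!P_i\,!v_i$ with $\lab(!u_i),\lab(!v_i)\in\cH_{\al-1}$. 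Since $!K_1\sim !K_2$, the base axes of $!K_1$ and $!K_2$ are parallel, hence equal by Lemma \ref{lm:compatible-lines}; denote this line $!L$. By Proposition \ref{pr:relator-strongly-reduced} the period of $!L$ is strongly cyclically reduced in $G_{\al-1}$, so $!L$ is reduced, hence simple, and $!P_1,!P_2$ are coherently oriented subpaths of $!L$.

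The core of the proof is to locate $!P_1$ relative to $!P_2$ on $!L$. Here I would feed the closeness relations for $!K_1$ and $!K_2$ into the fellow-travelling machinery of the previous rank (Propositions \ref{pr:fellow-traveling}$_{\al-1}$ and \ref{pr:closeness-stability}$_{\al-1}$ together with \eqref{eq:S2-piece-form}), exactly in the manner of the proof of Proposition \ref{pr:fragment-stability-previous}. This produces decompositions $!P_i=!z_i^-\,!P_i'\,!z_i^+$ with $|!z_i^\pm|_{\al-1}$ bounded by a small absolute constant, such that $!P_i'$ is close to the corresponding ``inner'' part of $!K_i$, and --- this is the crucial output --- along $!L$ the segments $!P_1'$ and $!P_2'$ occur in the same linear order as $!K_1,!K_2$ occur along $!X$ and overlap in a subpath of $!L$ of bounded $|\cdot|_{\al-1}$-length. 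In particular $\io(!P_1)$ strictly precedes $\tau(!P_2)$ on $!L$, so that $!P:=!P_1\cup !P_2$ is a genuine forward subpath of $!L$ containing both $!P_1$ and $!P_2$; and when $!K_1,!K_2$ are disjoint, the full bases $!P_1,!P_2$ themselves overlap in a subpath $!J$ of $!L$ with $\mu(!J)$ bounded by a small multiple of $\om$, well below $5.7\om$. I expect this step --- converting ``$!K_1,!K_2$ are ordered and adjoin inside the reduced path $!X$'' into ``$!P_1,!P_2$ are correspondingly ordered and overlap by a bounded amount on the single line $!L$'' --- to be the only real work; it is precisely where one uses both the monotone fellow-travelling of close reduced paths in $\Ga_{\al-1}$ and the simplicity of $!L$ (two close subpaths of one reduced path must stay together). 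The hypothesis $\muf(!K_i)\ge5.7\om$ for $i=1$ or $i=2$ enters here: it guarantees that at least one of the fragments is large enough for the closeness-stability estimates to apply and to pin the shared base axis down quantitatively.

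Granting the previous paragraph, $!K$ is close in rank $\al-1$ to $!P$: the bridge $!u_1$ joins $\io(!K)=\io(!K_1)$ to $\io(!P_1)=\io(!P)$, and the bridge $!v_2^{-1}$ (whose label lies in $\cH_{\al-1}$ since $\cH_{\al-1}$ is closed under inversion) joins $\tau(!K)=\tau(!K_2)$ to $\tau(!P_2)=\tau(!P)$. Hence $!K$ is a fragment of rank $\al$ with base $!P\seq !L$, and its base axis is the infinite periodic extension of $!P$, namely $!L$; this proves the first assertion. For the second, assume $!K_1$ and $!K_2$ disjoint. Combining \eqref{eq:mu-def}, the near-additivity of $|\cdot|_{\al-1}$ under concatenation and disjoint subwords (\ref{ss:alpha-length-properties}), and the overlap bound on $!J$, we get $\mu(!P)\ge\mu(!P_1)+\mu(!P_2)-\mu(!J)-\om\ge\muf(!K_1)+\muf(!K_2)-5.7\om$, where the extra $\om$ accounts for the rounding loss coming from $|R^\circ|_{\al-1}\ge\Om$; since $\muf(!K)=\mu(!P)$, this is the desired inequality.
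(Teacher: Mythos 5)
Your overall strategy is sound, and you correctly identify Lemma \ref{lm:compatible-lines} as the source of the common base axis and Proposition \ref{pr:reduced-nontrivial} as the reason subpaths of $!X$ are linearly ordered. However, the tool you invoke for the ``only real work'' --- controlling the overlap of the bases $!P_1,!P_2$ on the line $!L$ --- is not the right one. You cite Propositions \ref{pr:fellow-traveling}$_{\al-1}$ and \ref{pr:closeness-stability}$_{\al-1}$ (modelled on the proof of Proposition \ref{pr:fragment-stability-previous}); those propositions give a step-size constant $\ze$ and end-trimming constants $1.3$, $1.15$, which do not combine in any obvious way to bound the overlap by $5.7$ in the $|\cdot|_{\al-1}$-scale. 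The paper instead applies Proposition \ref{pr:closeness-order}$_{\al-1}$ (``closeness preserves order''), which is exactly the statement that if two reduced paths each split in two and the endpoints are close in the crossed pattern you describe, then the two ``wrong-way'' pieces have $|\cdot|_{\al-1}$-length strictly less than $5.7$. That is where the constant $5.7\om$ in the Proposition actually comes from, and it resolves both the ordering claim (if $!P_2$ preceded $!P_1$ on $!L$ by a large amount, one of the $!K_i$ would have $\muf(!K_i)<5.7\om$, contradicting the hypothesis for at least one $i$) and the quantitative loss claim (when $!K_1,!K_2$ are disjoint, the overlap $!J$ of $!P_1,!P_2$ on $!L$ satisfies $|!J|_{\al-1}<5.7$, hence $\mu(!J)<5.7\om$).

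A secondary issue: even granting your overlap bound, your final estimate carries an additional $-\om$ ``rounding loss'' from the near-additivity of $|\cdot|_{\al-1}$, so you would actually need $\mu(!J)\le 4.7\om$ rather than $\le 5.7\om$. This does not close with the propositions you cite; it does close (with a bit of care) once the overlap is bounded directly by Proposition \ref{pr:closeness-order}$_{\al-1}$, which gives strict inequality. The moral is that this three-line Proposition is indeed a near-immediate corollary of the right lemma, but the right lemma is \ref{pr:closeness-order}, not the fellow-travelling or stability machinery.
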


\begin{proof}
By Lemma \ref{lm:compatible-lines}, $!K_1$ and $!K_2$ have a common base axis. 
If some of the $!K_i$'s is contained in the other  
then there is nothing to prove. Otherwise
the statement easily follows from Proposition \ref{pr:closeness-order}$_{\al-1}$.
\end{proof}

\begin{corollary}[compatibility preserves order] \label{co:compatibility-order}
Let $!X$ be a reduced path in $\Ga_{\al-1}$, 
let $!K_i, !M_i$ ($i=1,2$) be fragments of rank $\al$ in $!X$ and let
$\muf(!K_i), \muf(!M_i) \ge \la + 2.6\om$.
Assume that $!K_1 \sim !K_2$, $!M_1 \sim !M_2$ and $!K_1 \not\sim !M_1$.
Then $!K_1 < !M_1$ if and only if $!K_2 < !M_2$.
\end{corollary}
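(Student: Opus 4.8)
The plan is to prove the implication ``$!K_1 < !M_1 \Rightarrow !K_2 < !M_2$''; the converse then follows by interchanging the indices $1$ and $2$, since the hypotheses are symmetric under this swap (as $\sim$ is an equivalence relation, $!K_1\sim!K_2$ and $!M_1\sim!M_2$ make $!K_1\not\sim!M_1$ equivalent to $!K_2\not\sim!M_2$). So assume $!K_1 < !M_1$ and argue by contradiction. Recall first that $!X$, being reduced in $\Ga_{\al-1}$, is simple (Proposition \ref{pr:reduced-nontrivial}), so its subpaths behave like intervals. Since $\muf(!K_i),\muf(!M_i)\ge\la+2.6\om$, Proposition \ref{pr:inclusion-compatibility} shows that whenever one of the four fragments $!K_1,!M_1,!K_2,!M_2$ is contained in another, the two have the same base axis; in particular $!K_1,!M_1$ are non-nested (so ``$!K_1<!M_1$'' is a genuine alternative to ``$!M_1<!K_1$'') and $!K_2,!M_2$ are non-nested. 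It therefore suffices to rule out $!M_2 < !K_2$, and we suppose $!M_2 < !K_2$ henceforth.

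Next I would pass to the unions $!K := !K_1 \cup !K_2$ and $!M := !M_1 \cup !M_2$ taken inside $!X$. Since $!K_1 \sim !K_2$, their base axes coincide (Definition \ref{df:fragment-compatibility} together with Lemma \ref{lm:compatible-lines}), and $\muf(!K_1)\ge\la+2.6\om\ge 5.7\om$ (using $\la\ge 20\om$ from \eqref{eq:om-bounds}); hence Proposition \ref{pr:fragments-union} applies and $!K$ is a fragment of rank $\al$ with that same base axis, so $!K\sim!K_1\sim!K_2$. Symmetrically $!M$ is a fragment with $!M\sim!M_1\sim!M_2$, and therefore $!K\not\sim!M$. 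Moreover $!K$ and $!M$ are non-nested: if $!K\seq!M$, then $!K_1\seq!K\seq!M$ and Proposition \ref{pr:inclusion-compatibility} (applicable as $\muf(!K_1)\ge\la+2.6\om$) gives $!K_1\sim!M\sim!M_1$, contradicting $!K_1\not\sim!M_1$; the case $!M\seq!K$ is excluded the same way using $!M_1$. Consequently $!K<!M$ or $!M<!K$.

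The last step is a short bookkeeping of endpoint positions along $!X$: writing $L(\cdot)$ and $R(\cdot)$ for the starting and ending position of a subpath, the definition of $<$ in \ref{ss:Cayley} gives $L(!A)<L(!B)$ and $R(!A)<R(!B)$ whenever $!A<!B$, and the union $!A\cup!B$ has starting position $\min(L(!A),L(!B))$ and ending position $\max(R(!A),R(!B))$. Suppose $!K<!M$. Then $L(!K)<L(!M)\le L(!M_2)$ and, using $!M_2<!K_2$, $R(!M_2)<R(!K_2)\le R(!K)$; hence $!M_2\seq!K$, and Proposition \ref{pr:inclusion-compatibility} (applicable since $\muf(!M_2)\ge\la+2.6\om$) yields $!M_2\sim!K\sim!K_1$, so $!M_1\sim!K_1$ — a contradiction. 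If instead $!M<!K$, then symmetrically $L(!M)<L(!K)\le L(!K_1)$ and, using $!K_1<!M_1$, $R(!K_1)<R(!M_1)\le R(!M)$, so $!K_1\seq!M$ and Proposition \ref{pr:inclusion-compatibility} gives $!K_1\sim!M\sim!M_1$ — again a contradiction. This refutes $!M_2<!K_2$, and since $!K_2,!M_2$ are non-nested we conclude $!K_2<!M_2$.

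I expect the only point needing care to be the final paragraph: translating the relation ``$<$'' of \ref{ss:Cayley} into endpoint inequalities and checking that, once we know which of $!K,!M$ lies to the left, the strict inequalities coming from $!K_1<!M_1$ and $!M_2<!K_2$ really do force one of the cross-containments $!M_2\seq!K$ or $!K_1\seq!M$. Everything else reduces to direct applications of Proposition \ref{pr:fragments-union} (to form the unions and identify their base axes) and Proposition \ref{pr:inclusion-compatibility} (to convert containment into compatibility), together with Lemma \ref{lm:compatible-lines}.
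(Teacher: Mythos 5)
Your proof is correct and matches the paper's strategy: both proofs hinge on Proposition \ref{pr:fragments-union} to build a fragment from a union of compatible ones and on Proposition \ref{pr:inclusion-compatibility} to convert the resulting containment into a compatibility contradiction. Where the paper reduces (tersely, without detail) to the case $!K_1 = !K_2$ and then only needs the single union $!M_1 \cup !M_2$ caught between $!M_1 < !K_1 < !M_2$, you instead take both unions $!K_1 \cup !K_2$ and $!M_1 \cup !M_2$ and close the argument with an explicit endpoint comparison --- a slightly longer but more self-contained version of the same idea.
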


\begin{proof}
By Proposition \ref{pr:inclusion-compatibility}, for each $i=1,2$ 
neither of $!K_i$ or $!M_i$ can be contained in the other, so we have either $!K_i < !M_i$ or $!M_i < !K_i$.
It is enough to prove the statement in the case $!K_1 = !K_2$.
Assume, for example, that $!M_1 < !K_1 < !M_2$. 
Then by Proposition \ref{pr:fragments-union} $!M_1 \cup !M_2$ is a fragment of rank $\al$ with 
$!M_1 \cup !M_2 \not\sim !K_1$ and we get a contradiction with Proposition \ref{pr:inclusion-compatibility}.
\end{proof}

\begin{proposition}[no inverse compatibility] \label{pr:no-inverse-compatibility}
Let $!K$ and $!M$ be fragments of rank $\al$ in a reduced path $!X$ in $\Ga_{\al-1}$.
Let $\muf(!K), \muf(!M) \ge 5.7\om$. Then $!K \not\sim !M^{-1}$.
\end{proposition}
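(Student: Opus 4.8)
The plan is to argue by contradiction. Suppose $!K$ and $!M$ are fragments of rank $\al$ in a reduced path $!X$ of $\Ga_{\al-1}$ with $\muf(!K),\muf(!M)\ge 5.7\om$ and $!K\sim !M^{-1}$. Let $!L$ be the base axis of $!K$, an $R$-periodic line with $R$ a relator of rank $\al$. Since, by Lemma \ref{lm:compatible-lines}, compatibility of fragments of rank $\al$ amounts to equality of base axes, the base axis of $!M^{-1}$ is also $!L$; equivalently the base axis of $!M$ is $!L^{-1}$, an $R^{-1}$-periodic line, and $R^{-1}$ is again a relator of rank $\al$. Write $!P$, $!Q$ for the bases of $!K$, $!M$, so $!P$ is a subpath of $!L$ traversed forward, $!Q$ a subpath of $!L^{-1}$ traversed forward (hence $!Q^{-1}$ a forward subpath of $!L$), $!K$ is close to $!P$, $!M$ is close to $!Q$, and $\mu(!P)=\muf(!K)$, $\mu(!Q)=\muf(!M)$. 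By condition (S1) the $(\al-1)$-lengths of $!P$ and $!Q^{-1}$ exceed an absolute constant of size roughly $5.7\om|R^\circ|_{\al-1}\ge5.7$, while by Proposition \ref{pr:relator-strongly-reduced} the line $!L$ is reduced and simple; so $!L$ carries two arcs, $!P$ and $!Q^{-1}$, of $(\al-1)$-length bounded below by this constant, traversed by $!X$ in \emph{opposite} directions relative to $!L$. The extreme (degenerate) case $!K=!M$ already illustrates the mechanism: then $!K\sim!K^{-1}$ forces $!L=!L^{-1}$ by Lemma \ref{lm:compatible-lines}, hence $R^\infty=(R^{-1})^\infty$, i.e.\ $R$ is conjugate in $G_{\al-1}$ to $R^{-1}$, contradicting (S3).

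The heart of the proof is to upgrade this to the general case by exploiting that $!K$ and $!M$ lie in the \emph{one} reduced path $!X$. If one of $!K,!M$ contains the other, say $!K\subseteq!M$, then, being a subpath of $!M$, $!K$ is close (Proposition \ref{pr:closeness-stability}$_{\al-1}$) to a subpath $!Q'$ of $!Q$ with $|!Q'|_{\al-1}\ge|!P|_{\al-1}-O(1)$; thus $!K$ is close both to the forward arc $!P$ of $!L$ and to the backward arc $!Q'$ of $!L$. The four endpoints of $!P$ and $!Q'$ then form two pairs of close vertices of $!L$ joined by bridge words of bounded $(\al-1)$-length, and comparing positions along $!L$ — forward along $!P$, backward along $!Q'$ — forces $|!P|_{\al-1}+|!Q'|_{\al-1}$ to be bounded by that same bounded error term (using Propositions \ref{pr:fellow-traveling}$_{\al-1}$, \ref{pr:closeness-stability}$_{\al-1}$), contradicting the lower bound above. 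If $!K$ and $!M$ overlap in a nontrivial subpath, that subpath is a common sub-fragment and the same argument applies. When $!K$ and $!M$ are disjoint in $!X$, one reduces to the overlapping case: the subpath of $!X$ from $\tau(!K)$ to $\io(!M)$ has label equal in $G_{\al-1}$, up to bridge words, to the arc of $!L$ joining the corresponding vertices, hence is itself a fragment of rank $\al$ with base axis $!L$; amalgamating it with $!K$ via Proposition \ref{pr:fragments-union} yields a forward fragment of $!X$ on $!L$ reaching past $\io(!M)$, so it overlaps the region of $!X$ near $!M$, which is close to a backward arc of $!L$. (In the range where the pieces involved have relative measure at least $\la$ one can instead apply Corollary \ref{co:small-overlapping-Cayley} directly to conclude $!L=!L^{-1}$ and again contradict (S3); the sub-$\la$ range, forced by the weak hypothesis $\muf\ge5.7\om<\la$, is precisely where the explicit bounds on bridge lengths and the constant $5.7$ do the work.)

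The main obstacle is this last reduction. One must make precise that in the disjoint case the connecting subpath of $!X$ is genuinely a fragment with base axis $!L$ and control how far the amalgamated fragment extends beyond $\io(!M)$, so as to manufacture an honest overlap, of $(\al-1)$-length at least the required constant, between a forward arc of $!L$ read by $!X$ and a backward arc of $!L$ read by $!X$; here the absence of small-cancellation rigidity below the threshold $\la$ has to be replaced by the rigidity of positions on the reduced periodic line $!L$ guaranteed by condition (S1) and Proposition \ref{pr:relator-strongly-reduced}.
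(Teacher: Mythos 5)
Your setup via Lemma~\ref{lm:compatible-lines} is right, and your treatment of the degenerate and containment/overlap cases has the right geometric intuition: after placing the two bases as forward and backward arcs of the single simple line $!L$, one compares positions of close vertices on $!L$. But the step where you assert that ``comparing positions along $!L$ \dots\ forces $|!P|_{\al-1}+|!Q'|_{\al-1}$ to be bounded'' is not a consequence of the propositions you cite (Propositions~\ref{pr:fellow-traveling}$_{\al-1}$ and \ref{pr:closeness-stability}$_{\al-1}$ give fellow-travelling/stability of closeness, not a length bound from reversed ordering). What actually yields that bound is Lemma~\ref{lm:close-distance-bound}$_{\al-1}$ (if a reduced path has close endpoints then its $(\al{-}1)$-length is at most $1$), applied to the arcs of $!L$ between the two pairs of close vertices; you never invoke it.

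The disjoint case is where the proposal genuinely breaks down, and you already sense this in your last paragraph. The amalgam $!K\cup !B$ ends at $\tau(!B)=\io(!M)$, so it does not ``reach past $\io(!M)$'' as you claim and does not manufacture an overlap between a forward and a backward arc; at best the two amalgamated fragments touch at a single vertex, which is not enough to run the containment argument. Moreover you must decide whether $!B$ has base axis $!L$ or $!L^{-1}$ (i.e.\ whether $!B\sim !K$ or $!B\sim !M$), and the amalgamation you can perform depends on that. The paper dispenses with the entire case split: since $!K$ and $!M$ sit in the one reduced path $!X$ while their bases sit on $!L$ with reversed orientation, the four close vertex pairs produce exactly the crossed configuration of Proposition~\ref{pr:closeness-order}$_{\al-1}$, which immediately gives $|!P|_{\al-1}<5.7$ (respectively $|!Q|_{\al-1}<5.7$), contradicting $\muf\ge 5.7\om$ together with (S1). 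That proposition is the missing tool; without it the constant $5.7$ in the hypothesis has no source, and the disjoint case is left unproved.
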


\begin{proof}
Follows from Lemma \ref{lm:compatible-lines} and Proposition \ref{pr:closeness-order}$_{\al-1}$.
\end{proof}

\begin{proposition} 
Let $!K$ be a fragment of rank $\be$ in $\Ga_\al$ where $1 \le \be \le \al$.
\begin{enumerate}
\item \label{pri:fragment-finite-order}
Let $!R$ be the base loop for $!K$ labeled by a relator ~$R$ of rank $\be$
and let $R_0$ be the root of $R$.
Then the subgroup $\setof{g \in G_\al}{g !K \sim !K}$ is finite
cyclic and conjugate to $\sgp{R_0}$.
\item \label{pri:fragment-in-periodic}
Let $X$ be a word representing an element of $G_\al$ which is not conjugate to 
a power of $R_0$.
Let $\bar{!X}$ be an $X$-periodic line in $\Ga_\al$ labeled $X^\infty$.
Then $s_{X,\bar{X}} !K \not \sim !K$.
\item \label{pri:fragment-in-periodic-small}
Under hypothesis of (ii), if $!K$ is a subpath of $\bar{!X}$ and $\muf(!K) \ge 2\la +5.3\om$
then $|!K| < 2 |X|$.
\end{enumerate}
\end{proposition}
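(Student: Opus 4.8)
All three parts rest on (i), which computes the ``compatibility stabilizer'' $H:=\setof{g\in G_\al}{g!K\sim!K}$. By the symmetry convention the action of $G_\al$ carries over to the data attached to a fragment, so the base loop of $g!K$ is $g(!R)$ with its periodic structure transported by $g$; combined with the ``same base relator loop'' reformulation of compatibility (Definition~\ref{df:fragment-compatibility}) this gives $g!K\sim!K$ precisely when $g$ maps the oriented base loop $!R$ onto itself. Thus $H$ is exactly the orientation-preserving stabilizer of $!R$ in $G_\al$; in particular it is a subgroup.

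To identify $H$ I would lift everything to $\Ga_{\be-1}$. Since a relator of rank $\be$ is strongly cyclically reduced in $G_{\be-1}$ (Proposition~\ref{pr:relator-strongly-reduced}), the loop $!R$ lifts to an \emph{embedded} $R$-periodic line $!L$ in $\Ga_{\be-1}$; because $R_0$ is a non-power in $G_{\be-1}$ (hence, being cyclically reduced, is not a proper power as a word) the bi-infinite word $R_0^\infty$ has minimal period $|R_0|$, so the vertices of $!L$ whose forward label reads $R_0^\infty$ are exactly the $\ti s^{\,j}\ti{!v}_0$ ($j\in\Z$) for a fixed such vertex $\ti{!v}_0$, where $\ti s:=s_{R_0,!L}$. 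Projecting down, the analogous vertices of $!R$ are exactly $s^{\,j}!v_0$, where $s:=s_{R_0,!R}$ is the image of $\ti s$. Any $g\in H$ preserves $!R$ with its orientation and its labels, so it permutes these marked vertices and $g!v_0=s^{\,j}!v_0$ for some $j$; since left multiplication acts freely on the vertex set $G_\al$, this forces $g=s^{\,j}$. As $s\in H$ conversely, $H=\sgp{s}$. Writing $s=!v_0R_0!v_0^{-1}$ shows $H$ is conjugate to $\sgp{R_0}$, which is cyclic and is finite because $R=R_0^{t}$ is a relator of rank $\be\le\al$, so $R_0$ has finite order in $G_\al$. (Lemma~\ref{lm:compatible-lines} is what makes ``same base loop'' unambiguous after the lift.) This identification of the stabilizer is the only delicate step; parts (ii) and (iii) are then formal.

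Part (ii) is immediate: $s_{X,\bar{!X}}$ is conjugate in $G_\al$ to the element represented by $X$, which by hypothesis is not conjugate to any power of $R_0$, whereas by (i) every element of $H$ is conjugate to a power of $R_0$. Hence $s_{X,\bar{!X}}\notin H$, i.e.\ $s_{X,\bar{!X}}!K\not\sim!K$.

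For (iii), suppose $|!K|\ge 2|X|$ and set $s:=s_{X,\bar{!X}}$, which shifts $\bar{!X}$ forward by one period $X$. Write $!K=!A!B$ with $!B$ the last $|X|$ edges of $!K$ and apply Proposition~\ref{pr:dividing-fragment} to this factorization. Since $\muf(!K)\ge 2\la+5.3\om$ --- the constant chosen exactly for this --- in each of the cases of that proposition one extracts a genuine fragment $!F$ of rank $\be$ with $!F\sim!K$, $\muf(!F)>\la+2.6\om$, contained in either the first $|!K|-|X|$ edges or the last $|!K|-|X|$ edges of $!K$ (in the two-fragment case one of the two pieces already exceeds $\la+2.6\om$). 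Because $|!K|\ge 2|X|$, the translate $s^{\pm1}!F$ (shift by one period, in the direction that keeps it inside $!K$) is again a fragment contained in $!K$ with $\muf(s^{\pm1}!F)=\muf(!F)$. By Proposition~\ref{pr:inclusion-compatibility}, $!F$ and $s^{\pm1}!F$ are each compatible with $!K$, hence with one another; therefore $s^{\pm1}$ fixes the base loop of $!K$, so $s!K\sim!K$, contradicting (ii). Thus $|!K|<2|X|$.
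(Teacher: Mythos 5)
Your argument is correct and follows essentially the same route as the paper's proof, which is quite terse here (the paper's whole (iii), for instance, is a one-line appeal to Propositions \ref{pr:dividing-fragment}$_\be$ and \ref{pr:inclusion-compatibility}$_\be$). You have merely expanded the steps: in (i) you make explicit why the label-preserving stabilizer of the loop $!R$ is $\sgp{s_{R_0,!R}}$ by lifting to $\Ga_{\be-1}$ and using that $R_0$ is a non-power word together with the freeness of the left action; in (iii) you carry out the split $!K=!A!B$ with $|!B|=|X|$, extract via Proposition \ref{pr:dividing-fragment}$_\be$ a fragment $!F\sim!K$ with $\muf(!F)>\la+2.6\om$ landing in a translatable half, and then use Proposition \ref{pr:inclusion-compatibility}$_\be$ on both $!F$ and $s^{\pm1}!F$ to force $s^{\pm1}!K\sim!K$. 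The constants check out ($(2\la+5.3\om-\ze\om)/2=\la+2.625\om>\la+2.6\om$), and (ii) is the same conjugacy observation the paper leaves implicit.
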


\begin{proof}
(i) It follows from Lemma \ref{lm:compatible-lines}$_\be$ that 
$g !K \sim !K$ if and only if $g !R = !R$.
Since $\lab(!R) \greq R_0^t$ and $R_0$ is a non-power,
the stabilizer of $!K$ in $G_\al$ is a subgroup conjugate to $\sgp{R_0}$.

(ii) follows immediately from (i).

(iii) If $!K$ is a subpath of $\bar{!X}$, $\muf(!K) \ge 2\la +5.3\om$ and $|!K| \ge 2 |X|$ 
then using Propositions \ref{pr:dividing-fragment}$_\be$ 
and \ref{pr:inclusion-compatibility}$_\be$ we conclude
that either $s^{-1}_{X,\bar{!X}} !K \sim !K$ or $s_{X,\bar{!X}} !K \sim !K$, a contradiction with (ii).
\end{proof}

\section{Consequences of diagram analysis} \label{s:relations}


Following the terminology introduced in \ref{ss:polygon-relations}, a {\em coarse $r$-gon} in $\Ga_\al$
is a loop of the form
$$
  !P = !X_1 !u_1 !X_2 !u_2 ,\dots,!X_r !u_r
$$ 
where paths $!X_i$ are reduced and $!u_i$ are bridges of rank $\al$.

Let us assume that each bridge $!u_i$ of $!P$
is given an associate bridge partition of rank $\al$ 
(see \ref{ss:bridge-partition}) and 
consider a filling $\phi: \De^{(1)} \to \Ga_\al$ of $!P$ by a
disk diagram ~$\De$ over the presentation of ~$G_\al$, i.e.\ $\De$ has boundary loop 
$\ti{!X}_1 \ti{!u}_1 \ti{!X}_2 \ti{!u}_2 ,\dots, \ti{!X}_r \ti{!u}_r$ where $\phi(\ti{!X}_i) \greq !X_i$ and $\phi(\ti{!u}_i) \greq !u_i$.
We can assume that $\De$ has a boundary marking of rank $\al$ with
sides $\ti{!X}_i$ and bridges $\ti{!u}_i$ (see \ref{df:boundary-marking})
and that each $\ti{!u}_i$ has an induced bridge partition of rank $\al$.
Applying to ~$\De$ the reduction process described in \ref{ss:diagram-reduction}
we get a reduced diagram.
Note that during the process, bridges ~$\ti{!u}_i$ of $\De$ can be changed by switching. 
To keep the equality $\phi(\ti{!u}_i) \greq !u_i$ we have to perform
appropriated switching of bridges $!u_i$ (see \ref{ss:bridge-partition}).
As a consequence we obtain:

\begin{proposition}[filling coarse polygons by diagrams] \label{pr:filling-polygons}
Let $\al\ge 1$ and $!P = !X_1 !u_1 !X_2 !u_2 ,\dots,!X_r !u_r$ be a coarse $r$-gon in $\Ga_\al$
with fixed bridge partitions of all bridges $!u_i$.
Then, after possible switching of bridges ~$!u_i$, there exists a 
reduced disk diagram $\De$ of rank $\al$ which fills $!P$.
\end{proposition}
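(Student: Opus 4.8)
The plan is to get the diagram from van Kampen's lemma and then feed it into the reduction process of Section~\ref{s:diagrams}, the only real point being to keep track of a filling map while bridges are switched. This statement is essentially a corollary of the machinery already assembled, so the ``proof'' is mostly bookkeeping.

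First I would observe that, since $!P$ is a loop in $\Ga_\al$, its label $\lab(!P)$ represents the identity of $G_\al$; by van Kampen's lemma there is a simply connected diagram over the presentation of $G_\al$ with this boundary label. Refining it as in~\ref{ss:diagrams} I may assume it is non-singular and that the factorization of its boundary loop matching the decomposition of $!P$ into the paths $!X_i$ and $!u_i$ consists of nonempty paths; this turns it into a disk diagram $\De_0$ of rank $\al$ with sides $\ti{!X}_i$, bridges $\ti{!u}_i$, and the bridge partitions of the $\ti{!u}_i$ induced from those fixed for the $!u_i$ (see~\ref{df:boundary-marking} and~\ref{ss:bridge-partition}). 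Being simply connected with boundary label $\lab(!P)$, the diagram $\De_0$ fills $!P$ via some combinatorially continuous map, and since the lengths of the boundary factors match, this map carries $\ti{!X}_i$ onto $!X_i$ and $\ti{!u}_i$ onto $!u_i$.

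Next I would apply the reduction process of~\ref{ss:reduction-transformations-good} to $\De_0$: repeatedly, after a refinement it exposes a cancellable pair which is removed by a cell--cell or bridge--cell cancellation. Each step either preserves the frame type (a refinement, or the excision and replacement of a simply connected subdiagram; Lemma~\ref{lm:preserving-frame-type}) or switches a single bridge, and since the number of cells of rank $\al$ strictly drops at each cancellation the process terminates in a reduced diagram $\De$, of the same frame type as $\De_0$ \emph{up to bridge switching}. As $\De_0$ is a disk, so is $\De$, and by Lemma~\ref{lm:disk-annular-frame-type} its boundary loop has label $\lab(!X_1)\lab(!u_1')\lab(!X_2)\lab(!u_2')\cdots\lab(!X_r)\lab(!u_r')$, where each $!u_i'$ is either $!u_i$ or the bridge of rank $\al$ obtained from $!u_i$ by switching (\ref{ss:bridge-partition}) and the sides $!X_i$ are unchanged. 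A switched bridge has the same endpoints in $\Ga_\al$, so the loop $!P'$ obtained from $!P$ by replacing each $!u_i$ by $!u_i'$ is again a loop in $\Ga_\al$, hence again a coarse $r$-gon (its sides are the original, reduced paths $!X_i$); its label equals the boundary label of $\De$, so $\De$ fills $!P'$, which is exactly the assertion.

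The hard part — such as it is — will be checking carefully that the reduction process genuinely outputs a diagram of the same frame type up to bridge switching, i.e.\ that every refinement and every cell--cell and bridge--cell cancellation is accounted for either by Lemma~\ref{lm:preserving-frame-type} or by a switching of a single Cayley-graph bridge, and that one may restore the equality of the filling map with the (possibly re-switched) coarse polygon without disturbing the sides. This is where the bridge partitions fixed in the hypothesis and the frame-type lemmas of~\ref{ss:diagrams}--\ref{ss:reduction-transformations-good} do the work; the van Kampen step, the initial refinement to a boundary-marked non-singular diagram, and termination are routine.
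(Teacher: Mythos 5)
Your proposal is correct and follows essentially the same route as the paper: take a van Kampen filling diagram, refine to a non-singular disk with the boundary marking of rank $\al$ matching $!P$, and then apply the reduction process of \ref{ss:diagram-reduction}, appealing to the remark in \ref{ss:reduction-transformations-good} (which is itself Lemma~\ref{lm:preserving-frame-type} plus bridge switching) to see that the reduced diagram still fills $!P$ after switching the appropriate $!u_i$. The paper in fact treats this proposition as a direct consequence of those paragraphs and gives no separate proof; the one point you flag as ``the hard part'' — that every reduction step is either a frame-type-preserving operation or a single bridge switch — is exactly what \ref{ss:reduction-transformations-good} records, so there is no gap.
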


\begin{definition}
The {\em $\al$-area} of $!P$, denoted $\Area_\al(!P)$, 
is the number of cells of rank $\al$ of a filling diagram $\De$ as in Proposition \ref{pr:filling-polygons}.
To avoid correctness issues, we assume formally that $\Area_\al(!P)$ is defined with respect 
to a particular choice of $\De$.

The image $\phi(\de !D)$ in $\Ga_\al$ of the boundary loop of a cell of rank $\al$ of $\De$ is
an {\em active relator loop} for $!P$ for a particular choice $\De$.
Thus $\Area_\al(!P)$ is the number of active relator loops for ~$!P$.
Abusing the language,  we call the inverse 
loop $\phi(\de !D)^{-1}$ an active relator loop for $!P$ as well.
\end{definition}

\begin{remark} \label{rm:no-active-lift}
Equality $\Area_\al(!X_1 !u_1 !X_2 !u_2 ,\dots,!X_r !u_r)=0$ is equivalent 
to the assertion that $!X_1 !u_1 !X_2 !u_2 ,\dots,!X_r !u_r$ lifts to $\Ga_{\al-1}$
after possible switching of bridges ~$!u_i$.
\end{remark}

\subsection{} 
\label{ss:active-fragments}
As a special case of a coarse polygon, consider a coarse bigon $!X^{-1} !u !Y !v$ in $\Ga_\al$, $\al\ge 1$.
Up to switching of bridges ~$!u$ and ~$!v$ 
we can assume that there is a reduced diagram ~$\De$ of rank ~$\al$
which fills $!X^{-1} !u !Y !v$ via a map $\phi: \De^{(1)} \to \Ga_\al$.
We can assume also that $\De$ is given a tight set ~$\cT$ of contiguity subdiagrams. 
The boundary loop of $\De$ has the form 
$\ti{!X}^{-1} \ti{!u} \ti{!Y} \ti{!v}$ with sides ~$\ti{!X}^{-1}$ and ~$\ti{!Y}$
which are mapped onto $!X^{-1}$ and $!Y$ respectively.
%
By Proposition \ref{pr:single-layer}(i) each cell of rank ~$\al$ of $\De$
has a contiguity subdiagram to each of the sides $\ti{!X}^{-1}$ and $\ti{!Y}$.
The boundary loops of cells of rank $\al$ and the bridges of these contiguity subdiagrams
form a graph mapped in ~$\Ga_\al$ as in Figure ~\ref{fig:bigon-Cayley}.
\begin{figure}[h]
\input 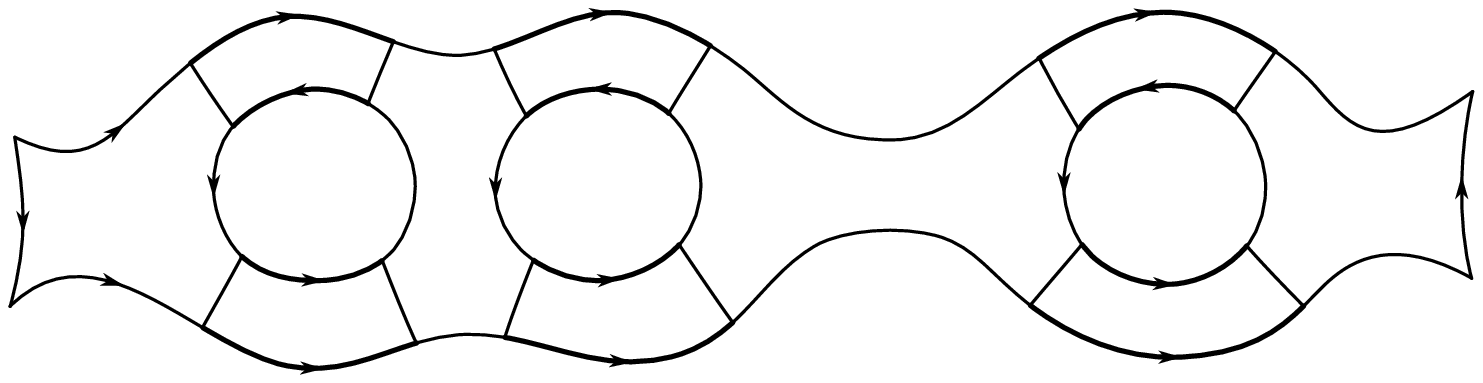_tex
\caption{}
\label{fig:bigon-Cayley}
\end{figure}
Let $!R_i$ be images in $\Ga_\al$ of boundary loops of cells of rank $\al$ of $\De$
and let $!K_i$, $!M_i$, $!Q_i$ and $!S_i$ be subpaths of $!X$, $!Y$ and $!R_i$,
respectively, that are images of the corresponding contiguity arcs of 
contiguity subdiagrams of cells 
of rank $\al$ to $\ti{!X}^{-1}$ and $\ti{!Y}$, as shown in the figure.
According to the definition in ~\ref{ss:fragment-paths},
$!K_i$ and $!M_i$ are fragments of rank $\al$ with bases $!Q_i^{-1}$ and $!S_i$
and base relator loops $!R_i^{-1}$ and $!R_i$ respectively.
We call $!K_i$ and $!M_i$ {\em active fragments} of rank $\al$ 
of the coarse bigon $!X^{-1} !u !Y !v$.

Thus, if $\Area_\al(!X^{-1} !u !Y !v)=t$ then
there are precisely $t$ disjoint active fragments of rank $\al$ in each of the paths $!X$ and $!Y$.
Note again that the set of active relator loops and the set of 
active fragments formally depend on the choice of particular ~$\De$ and $\cT$.


\subsection{} \label{ss:active-loop-induction}
Let, as above, $!P = !X^{-1} !u !Y !v$ be a coarse bigon in $\Ga_\al$
and $\De$ a reduced diagram of rank ~$\al$ with $\de \De = \ti{!X}^{-1} \ti{!u} \ti{!Y} \ti{!v}$
filling $!P$ via a map $\phi: \De^{(1)} \to \Ga_\al$ 
(we assume that the switching operation is already applied to $!u$ and $!v$ if needed).
We assume that $\De$ has a tight set $\cT$ of contiguity subdiagrams.
Let $!R = \phi(\de !D)$ be an active relator loop of $!P$ and let 
$!Q^{-1} !w_1 !K^{-1} !w_2$ and $!S^{-1} !w_3 !M !w_4$ be images of boundary loop of 
contiguity subdiagrams in $\cT$ of the cell $!D$ to sides ~$\ti{!X}^{-1}$ and ~$\ti{!Y}$
respectively as in Figure ~\ref{fig:active-loop-induction-bigon}.
Then two loops $!P_1$ and $!P_2$ as shown in the figure
can be considered as coarse bigons in $\Ga_\al$ with sides that are subpaths of $!X$ and ~$!Y$.
\begin{figure}[h]
\input 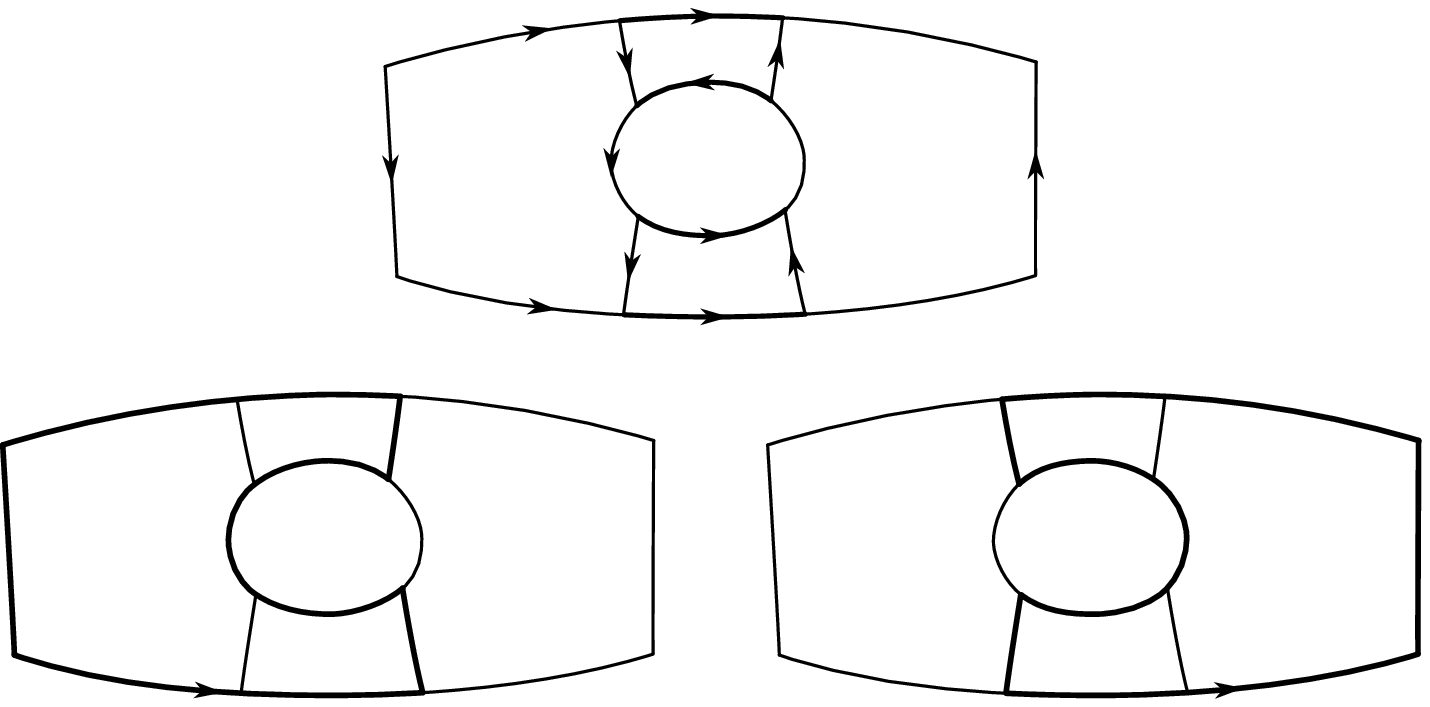_tex
\caption{}  \label{fig:active-loop-induction-bigon}
\end{figure}
They are filled by reduced subdiagrams of ~$\De$, so we
have $\Area_\al(!P_1) + \Area_\al(!P_2) = \Area_\al(!P) - 1$. 
We will use this simple observation in inductive arguments.

\subsection{} \label{ss:active-loops-trigon}
In a similar way, let $!P = !X_1 !u_1 !X_2 !u_2 !X_3 !u_3$ be a coarse trigon in $\Ga_\al$.
After possible switching of bridges $!u_i$, 
we can find a reduced diagram $\De$ of rank ~$\al$
with boundary loop 
$\ti{!X}_1 \ti{!u}_1 \ti{!X}_2 \ti{!u}_2 \ti{!X}_3 \ti{!u}_3$ which fills $!P$ 
via a map $\phi: \De^{(1)} \to \Ga_\al$ of $!P$ 
where 
$\phi(\ti{!X}_i) = !X_i$ and $\phi(\ti{!u}_i) = !u_i$.
We can also assume that $\De$ has a tight set $\cT$ of contiguity subdiagrams.
By Proposition \ref{pr:single-layer-3gon} each cell of rank $\al$ of $\De$ has contiguity 
subdiagrams in $\cT$ to at least two sides ~$\ti{!X}_i$.
This implies that for any active relator loop $!R$ of $!P$ there are two or three 
fragments $!K_i$ ($i = 1,2$
or $i=1,2,3$) of rank $\al$ with base loop $!R$ that occur in distinct paths ~$!X_j$. 
Similarly to the bigon case, we call them {\em active fragments} of rank $\al$
of $!P$.

As in the bigon case, 
for any active relator loop $!R$ of $!P$ 
we can consider a coarse bigon $!P_1$ and a coarse trigon
~$!P_2$ respectively, as shown in Figure \ref{fig:active-loop-induction-trigon}, 
with $\Area_\al(!P_1) + \Area_\al(!P_2) = \Area_\al(!P) - 1$. 
\begin{figure}[h]
\input 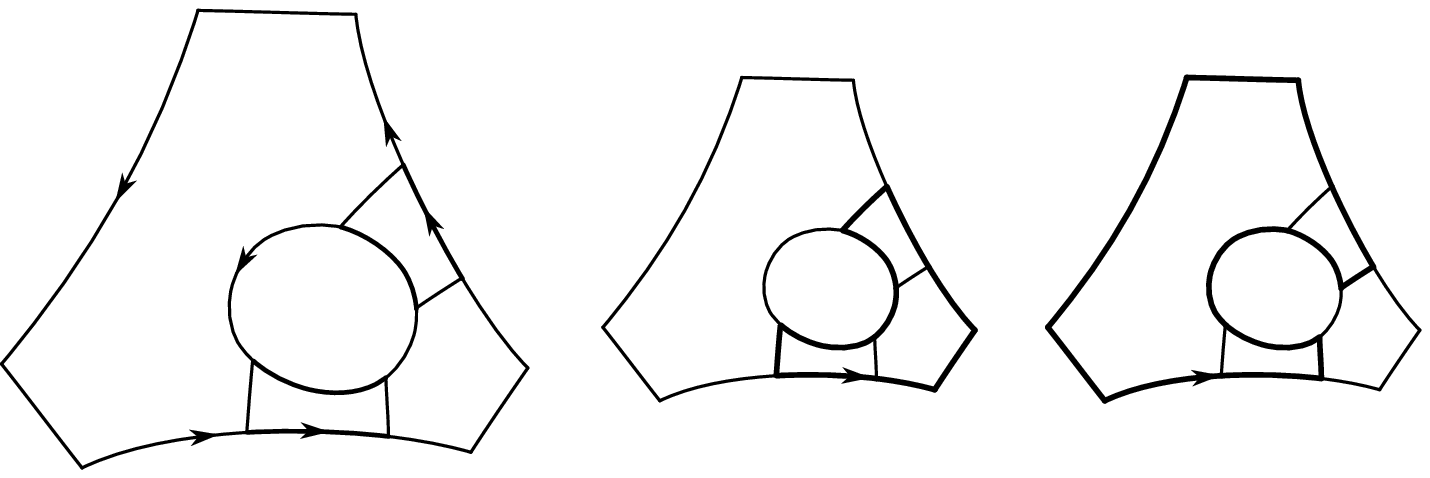_tex
\caption{}  \label{fig:active-loop-induction-trigon}
\end{figure}

\begin{proposition}[active fragments in bigon] \label{pr:active-large} 
Let $!P = !X^{-1} !u !Y !v$ be a coarse bigon in ~$\Ga_\al$, $\al\ge 1$.
\begin{enumerate}
\item \label{pri:active-bound}
Let $!K$ and $!M$ be active fragments of rank $\al$ of $!P$ in $!X$ and $!Y$, respectively,
with mutually inverse base active relator loops.
Then $!K \sim !M^{-1}$,
$$
   \muf(!K) + \muf(!M) > 1 - 2\la - 1.5\om 
$$
and 
$$
  \muf(!K), \muf(!M) > 7\la - 1.5\om.
$$
\item \label{pri:active-non-compatible}
Let $!K$ and $!K'$ be two distinct active fragments of rank $\al$ in $!X$. Then $!K \not\sim !K'$.
\end{enumerate}


\end{proposition}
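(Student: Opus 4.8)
The plan is: deduce~(i) from the first part of Proposition~\ref{pr:small-complexity-cell} (applied to a filling diagram of $!P$) together with Definition~\ref{df:fragment-compatibility}, and then deduce~(ii) from the length bound of~(i) together with Proposition~\ref{pr:fragments-union}. Fix, as in~\ref{ss:active-fragments}, a reduced diagram $\De$ of bigon type filling $!P$ via $\phi\colon\De^{(1)}\to\Ga_\al$, and a tight set $\cT$ of contiguity subdiagrams, with respect to which the active fragments and active relator loops of $!P$ are defined. Once~(ii) is available, at most one cell of $\De$ maps onto any given relator loop, so a pair $!K$, $!M$ of active fragments in $!X$ and $!Y$ with mutually inverse base relator loops is exactly a pair arising from a single cell $!D$ of rank~$\al$: writing $!R=\phi(\de !D)$, the fragment $!K$ has base relator loop $!R^{-1}$ and $!M$ has base relator loop $!R$, whence $!K\sim !M^{-1}$ by Definition~\ref{df:fragment-compatibility}. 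By Proposition~\ref{pr:single-layer} and Lemma~\ref{lm:cell-regularity}, $!D$ has exactly one contiguity subdiagram in $\cT$ to each of the two sides of $\De$; let $!P_1,!P_2$ be the two contiguity arcs of these lying in $\de !D$. Since the size of a fragment equals the $\mu$-measure of its base and $\mu$ is unchanged under inversion, $\muf(!K)=\mu(!P_1)$ and $\muf(!M)=\mu(!P_2)$; hence by the first part of Proposition~\ref{pr:small-complexity-cell},
$$
  \muf(!K)+\muf(!M)=\mu(!P_1)+\mu(!P_2)\ge 1-2\la-16\ze\eta\om>1-2\la-1.5\om ,
$$
using $16\ze\eta=\tfrac{24}{17}<\tfrac32$. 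Each $!P_i$ occurs as the $\de !D$-contiguity arc of a contiguity subdiagram of $!D$ to a side of $\De$, whose label is reduced in $G_\al$, so $\mu(!P_i)<\rho=1-9\la$ by Lemma~\ref{lm:small-cancellation-diagram}; subtracting, $\muf(!M)>(1-2\la-16\ze\eta\om)-(1-9\la)=7\la-16\ze\eta\om>7\la-1.5\om$, and symmetrically $\muf(!K)>7\la-1.5\om$. This proves~(i).

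For~(ii), suppose for contradiction that $!K$ and $!K'$ are distinct active fragments of rank~$\al$ in $!X$ with $!K\sim !K'$. Being images of the $\ti{!X}^{-1}$-contiguity arcs of two distinct, hence disjoint, contiguity subdiagrams in $\cT$, the paths $!K$ and $!K'$ are disjoint subpaths of the reduced path $!X$. Let $!D,!D'$ be the cells of rank~$\al$ associated with $!K,!K'$ and let $!M,!M'$ be the active fragments of $!P$ in $!Y$ coming from $!D,!D'$ respectively; the paths $!M,!M'$ are likewise disjoint subpaths of $!Y$. Since $!K$ and $!M$ come from one cell (and similarly $!K',!M'$), part~(i) applies and gives $!K\sim !M^{-1}$, $!K'\sim(!M')^{-1}$, together with $\muf(!K)+\muf(!M)>1-2\la-1.5\om$ and $\muf(!K')+\muf(!M')>1-2\la-1.5\om$; in particular $!K\sim !K'$ forces $!M\sim !M'$. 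Adding the two size inequalities, at least one of $\muf(!K)+\muf(!K')$ and $\muf(!M)+\muf(!M')$ exceeds $1-2\la-1.5\om$; by the $!X\leftrightarrow !Y$ symmetry we may assume it is the former. Lifting $!X$, $!K$, $!K'$ to $\Ga_{\al-1}$ — legitimate, since $\lab(!X)$, being reduced in $G_\al$, is reduced in $G_{\al-1}$ — we obtain disjoint, compatible (Definition~\ref{df:fragment-compatibility}) fragments $!K,!K'$ of rank~$\al$ in a reduced path of $\Ga_{\al-1}$, each of size at least $7\la-1.5\om\ge 5.7\om$ by part~(i) and~\eqref{eq:om-bounds}. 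Proposition~\ref{pr:fragments-union} then makes $!K\cup !K'$ a fragment of rank~$\al$ with
$$
  \muf(!K\cup !K')\ge\muf(!K)+\muf(!K')-5.7\om>1-2\la-7.2\om>1-9\la=\rho ,
$$
the last step because $7\la\ge 140\om>7.2\om$ by~\eqref{eq:om-bounds}. Thus $\lab(!X)$ contains a fragment of rank~$\al$ of size exceeding $\rho$, contradicting that $!X$ is reduced in $G_\al$ (the reformulation of reducedness in~\ref{ss:fragment-measuring}). Hence $!K\not\sim !K'$.

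The genuinely delicate step is the arithmetic in~(ii): one must notice that summing the two ``almost $1$'' lower bounds from~(i) forces the two active fragments on one side to have combined size again almost $1$ — enough for Proposition~\ref{pr:fragments-union} to produce a fragment above the reduction threshold $\rho$ — and this works only because $\la\ge 20\om$, which supplies the positive slack $7\la-7.2\om>0$. The remaining points (transferring compatibility between the $!X$- and $!Y$-sides and between $\Ga_\al$ and $\Ga_{\al-1}$, and the identification $\muf(!K)=\mu(!P_1)$, $\muf(!M)=\mu(!P_2)$) are routine.
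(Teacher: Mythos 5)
Your proof is correct and follows essentially the same route as the paper's: the size bounds in~(i) come from the first part of Proposition~\ref{pr:small-complexity-cell} (cited in the paper as Proposition~\ref{pri:bigon-cell}) combined with the reduction threshold $\rho$, and~(ii) follows by applying the union bound of Proposition~\ref{pr:fragments-union} to two compatible active fragments on one side to force a fragment of rank~$\al$ of size exceeding~$\rho$ (the paper sums the unions on both sides and compares with $2\rho$; your pigeonhole variant is equivalent). Your opening observation, that the full form of~(i) --- allowing $!K$ and $!M$ to come a priori from two different cells mapped to coincident relator loops --- is only clean once~(ii) is in hand, and the resulting three-stage structure (per-cell~(i), then~(ii), then general~(i)) is in fact a bit more careful than the paper, which tacitly reads the hypothesis of~(i) as ``from one cell.'' One small citation slip: the fact that the lifts of $!K,!K'$ along a fixed lift of $!X$ to $\Ga_{\al-1}$ remain compatible is the content of Proposition~\ref{pr:compatibility-reduction} (packaged for this setting as Corollary~\ref{coi:fragments-union-alpha}), not a direct consequence of Definition~\ref{df:fragment-compatibility}, which only produces \emph{some} pair of compatible lifts; this does not affect correctness.
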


\begin{proof}
(i):
It follows directly from the construction that $!K \sim !M^{-1}$.
The first inequality follows from Proposition \ref{pri:bigon-cell}.
Since $!X$ and $!Y$ are reduced we have $\muf(!K) \le \rho$ and $\muf(!M) \le \rho$
which implies the lower bound on $\muf(!K)$ and $\muf(!M)$.

(ii):
Assume that $!K \sim !K'$.
Let $!M$ and $!M'$ be the corresponding active fragments of rank $\al$ in $!Y$. By (i), we have $!M \sim !M'$.
Then by Proposition \ref{pr:fragments-union} and the first inequality of (i),
$$
  \muf(!K \cup !K') + \muf(!M \cup !M') \ge 2 - 4\la - 17.4\om > 2 \rho
$$
which contradicts the hypothesis that $!X$ and $!Y$ are reduced.
\end{proof}

We introduce the notation for the lower bound on the size of active fragments in (i):
$$
  \xi_0 = 7\la - 1.5\om.
$$

\begin{definition} \label{df:close-in-rank}
 We say that paths $!X$ and $!Y$ in $\Ga_\al$ are {\em close in rank $\be \le \al$}
if there exist bridges $!u$ and $!v$ of rank $\be$ such that $!X^{-1} !u !Y !v$ is a loop that 
can be lifted to $\Ga_\be$.
(So `being close' for paths in $\Ga_\al$ means the same as 
`being close in rank $\al$'.)
\end{definition}

\begin{remark} \label{rm:close-in-rank-0}
If $!X$ and $!Y$ are labeled with freely reduced words then $!X$ and $!Y$ are close in rank $0$
if and only if $!X=!Y$.
\end{remark}

\begin{proposition}[lifting bigon]
\label{pr:lifting-bigon}
Let $0 \le \be < \al$ and $!X^{-1} !u !Y !v$ be a coarse bigon in ~$\Ga_\al$ where $!u$ and $!v$
are bridges of rank $\be$. Assume that for all $\ga$ 
in the interval $\be+1 \le \ga \le \al$
either $!X$ or $!Y$ has no fragments $!K$ of rank ~$\ga$
with $\muf(!K) \ge \xi_0$. 
Then $!X^{-1} !u !Y !v$ can be lifted to $\Ga_\be$ and, consequently, $!X$ and $!Y$ are close in 
rank $\be$.
\end{proposition}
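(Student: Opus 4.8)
The plan is to prove this by downward induction on $\be$, or equivalently by iterating the single-rank statement already at hand. The key observation is that Proposition~\ref{pr:active-large}\eqref{pri:active-bound} gives us exactly the tool we need: if a coarse bigon $!X^{-1} !u !Y !v$ in $\Ga_\ga$ (with $!u,!v$ of rank $\ga-1$) has nonzero $\al$-area$_\ga$, i.e.\ cannot be lifted to $\Ga_{\ga-1}$ after switching bridges, then it has an active fragment $!K$ of rank $\ga$ in $!X$ (and one $!M$ in $!Y$) with $\muf(!K) > 7\la - 1.5\om = \xi_0$, and likewise in $!Y$. The hypothesis of the present proposition forbids this for every $\ga$ with $\be+1 \le \ga \le \al$. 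So the plan is: starting from the given coarse bigon over $\Ga_\al$, apply Proposition~\ref{pr:filling-polygons} to get a reduced filling diagram of rank $\al$; by Proposition~\ref{pr:single-layer}(i) every cell of rank $\al$ yields active fragments of rank $\al$ in both $!X$ and $!Y$ of size $>\xi_0$, contradicting the hypothesis at $\ga=\al$ unless $\Area_\al = 0$. By Remark~\ref{rm:no-active-lift}, after switching the bridges $!u,!v$ the loop lifts to $\Ga_{\al-1}$.

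Then I would repeat the argument one rank down. After lifting, $!X^{-1}!u!Y!v$ becomes a loop in $\Ga_{\al-1}$; the paths $!X$ and $!Y$, being reduced in $G_\al$, are in particular reduced in $G_{\al-1}$ (since $\cR_\al \seq \cR_{\al-1}$), and $!u,!v$ become bridges of rank $\al-1$, so this is again a coarse bigon, now over $\Ga_{\al-1}$. Its fragments of rank $\al-1$ are the same objects as before (a fragment of rank $\ga$ in $\Ga_\be$ for $\be\ge\ga$ is obtained by mapping from $\Ga_{\ga-1}$, per \ref{ss:fragment-paths}), so the hypothesis ``either $!X$ or $!Y$ has no fragment of rank $\ga$ with $\muf \ge \xi_0$'' continues to hold for all $\ga$ in the relevant range. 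Apply Proposition~\ref{pr:active-large} at rank $\al-1$: again $\Area_{\al-1} = 0$, so after a further bridge switching the loop descends to $\Ga_{\al-2}$. Iterating, we descend through $\Ga_{\al-1}, \Ga_{\al-2}, \dots, \Ga_\be$, switching bridges at each step; note bridge switching at rank $\ga$ replaces a central piece by its complementary piece in a relator and does not affect fragments of rank $\le \ga-1$, nor does it change that $!u,!v$ remain bridge words of rank $\be$ once we have reached rank $\be$.

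When we reach $\Ga_\be$ we have exactly what is claimed: $!X^{-1}!u!Y!v$ (with the finally-switched $!u,!v$, still bridges of rank $\be$) is a loop in $\Ga_\be$, hence $!X$ and $!Y$ are close in rank $\be$ in the sense of Definition~\ref{df:close-in-rank}. The one point requiring a little care — the main (though minor) obstacle — is the bookkeeping of the bridges: at each descent step we switch $!u$ and $!v$ according to \ref{ss:bridge-partition}, and I must check that a bridge word of rank $\al$ (or $\ga$) remains a bridge word of rank $\be$ after all switchings, and that the hypothesis on fragments of ranks $\be+1,\dots,\al$ genuinely survives each descent (it does, since switching at rank $\ga$ only rewrites the central piece of rank $\ga$ and leaves the reduced sides $!X,!Y$ untouched). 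For $\be = 0$ one may additionally invoke Remark~\ref{rm:close-in-rank-0}, but the statement as phrased needs nothing beyond the iteration above.
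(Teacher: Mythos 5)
Your proof is correct and is essentially the same argument as the paper's, whose one-line proof cites exactly Proposition~\ref{pr:active-large} and Remark~\ref{rm:no-active-lift}; you have simply unfolded the rank-by-rank descent those two statements license. The one point you flag as needing care --- bookkeeping of the bridges under switching --- is in fact vacuous: since $\lab(!u), \lab(!v) \in \cH_\be \seq \cH_{\ga-1}$ for every $\ga$ in the descent chain $\al, \al-1, \dots, \be+1$, each of $!u, !v$ has the trivial bridge partition of rank $\ga$ (no central piece of rank $\ga$) at every stage, so the switching operation of~\ref{ss:bridge-partition} simply never applies to them and they pass down unchanged; there is nothing to verify.
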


\begin{proof}
This is a consequence of Proposition \ref{pr:active-large} and Remark \ref{rm:no-active-lift}.
\end{proof}

\begin{proposition}[no active relators] \label{pr:no-active-loops}
Let $\al\ge 1$,  $!X^{-1} !u !Y !v$ be a coarse bigon in ~$\Ga_\al$ and
$\Area_\al(!X^{-1} !u !Y !v) =0$.
Assume that $|!X|_\al > 2+6\ze^2\eta$.
Then $!X$ and $!Y$ can be represented as 
$!X = !w_1 !X_1 !w_2$ and $!Y =!z_1 !Y_1 !z_2$ where 
$!X_1$ and $!Y_1$ are close in rank $\al-1$ and $|!w_i|_\al, |!z_i|_\al \le 1+4\ze^2\eta$ $(i=1,2)$.
\end{proposition}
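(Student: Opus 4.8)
The plan is to fill the coarse bigon and exploit the absence of rank-$\al$ cells in order to reduce everything to the structure theory at rank $\al-1$.

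First, by Remark~\ref{rm:no-active-lift} the hypothesis $\Area_\al(!X^{-1}!u!Y!v)=0$ means that, after switching the bridges $!u,!v$ if necessary, the loop $!X^{-1}!u!Y!v$ lifts to $\Ga_{\al-1}$; equivalently, by Proposition~\ref{pr:filling-polygons} it is filled by a reduced disk diagram $\De$ of rank $\al$ with no cells of rank $\al$. Thus $\De$ is a diagram over $G_{\al-1}$, and with the boundary marking of rank $\al-1$ induced as in~\ref{ss:diagram-previous} it becomes a reduced disk diagram of rank $\al-1$ whose sides are $\ti{!X}^{-1}$, $\ti{!Y}$ together with the central arcs $\ti{!S}$, $\ti{!T}$ of $!u$, $!v$, whose labels $S,T$ are pieces of rank $\al$, the remaining boundary being bridges of rank $\al-1$. (If $!u$ or $!v$ has trivial bridge partition, $\De$ is of trigon or bigon type; those cases are strictly easier and go the same way, so I treat the tetragon case.) I equip $\De$ with a tight set $\cT$ of contiguity subdiagrams, so that every component of $\De-\bigcup\cT$ is small of rank $\al-1$.

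Second, the two size facts that make the constants work. A piece of rank $\al$ is reduced in $G_{\al-1}$ by (S0) and is trivially close in rank $\al-1$ to itself inside its relator, hence is a fragment of rank $\al$ and so has $\al$-length at most $1$ (the weight of its one-fragment fragmentation of rank $\al$); the same applies to any fragment of rank $\al$ occurring inside $!X$ or $!Y$, which in addition has size $\le\rho$ since $!X,!Y$ are reduced in $G_\al$. Combining this with the single-layered structure at rank $\al-1$ (Proposition~\ref{pr:single-layer}$_{\al-1}$) and with Propositions~\ref{pr:principal-bound}$_{\al-1}$ and \ref{pr:small-trigons-tetragons}$_{\al-1}$ applied to the small components of $\De-\bigcup\cT$ sitting next to $\ti{!S}$, $\ti{!T}$ and to the rank-$(\al-1)$ bridges adjacent to them, and converting $(\al-1)$-length bounds into $\al$-length bounds via $|\cdot|_\al\le\ze|\cdot|_{\al-1}$ (property~\ref{cl:om-rank-increment}), one gets that the total $\al$-length of the portion of $!X$ (resp.\ of $!Y$) lying on these near-bridge small components is at most a $\ze$-order constant, namely $4\ze^2\eta$.

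Third, the decomposition. The side $\ti{!S}$ of $\De$ is faced, through contiguity subdiagrams and near-bridge small components, by the other three sides: the parts faced by $\ti{!X}^{-1}$ and by $\ti{!Y}$ are fragments of rank $\al$ of $!X$, $!Y$, of $\al$-length $\le 1$, while if a non-negligible part of $\ti{!S}$ is faced by $\ti{!T}$ then by Corollary~\ref{co:small-overlapping-Cayley} the base axes of the relators of $S$ and $T$ coincide, so after the switchings of~\ref{ss:bridge-partition} and by normalization (Definition~\ref{df:normalized-presentation}) $S$ and $T$ become pieces of a single relator line and may be absorbed together. In every case I cut $!X=!w_1!X_1!w_2$ with $!w_2$ equal to the union of the rank-$\al$ fragment of $!X$ facing $\ti{!S}$ and the adjacent near-bridge small-component tail, so $|!w_2|_\al\le1+4\ze^2\eta$, and do the symmetric thing at the other end of $!X$ and at both ends of $!Y$. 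The leftover middle subpaths $!X_1$, $!Y_1$ are then joined, through the re-routed bridges of $\De$, by two paths labeled by words of $\cH_{\al-1}$ that bound (with $!X_1^{-1}$ and $!Y_1$) a subdiagram of $\De$ over $G_{\al-1}$; hence $!X_1^{-1}\bar{!u}!Y_1\bar{!v}$ lifts to $\Ga_{\al-1}$ and $!X_1$, $!Y_1$ are close in rank $\al-1$. The hypothesis $|!X|_\al>2+6\ze^2\eta$ is exactly what keeps this trimming from being vacuous.

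The hard part will be the bookkeeping in the third step: deciding which cut-off fragment and which small-component tail enters each of $!w_1,!w_2,!z_1,!z_2$, and checking that each accumulated $\al$-length is genuinely $\le1+4\ze^2\eta$ — the ``$1$'' contributed by one fragment of rank $\al$ and the ``$4\ze^2\eta$'' by a single rank-drop from the rank-$(\al-1)$ small-diagram bounds — together with the $S$-faces-$T$ configuration, which is the only place where the small-cancellation input and normalization genuinely intervene. The conceptual content is carried entirely by the single-layer and small-trigon/tetragon theorems at rank $\al-1$.
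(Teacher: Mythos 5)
Your overall plan is sound and arrives at the right conclusion, but you are re-deriving, inside this proof, a statement the paper has already packaged and makes available by induction, namely Proposition~\ref{pri:4gon-closeness}$_{\al-1}$ (trigons and tetragons are thin, applied at rank $\al-1$). The paper's proof is: lift $!X^{-1}!u!Y!v$ to $\Ga_{\al-1}$ by Remark~\ref{rm:no-active-lift} (this is your first step); write $!u=!u_1!P!u_2$, $!v=!v_1!Q!v_2$ with rank-$(\al-1)$ bridges $!u_i,!v_i$ and pieces $!P,!Q$ of rank $\al$; apply Proposition~\ref{pri:4gon-closeness}$_{\al-1}$ to the coarse tetragon $!X^{-1}!u_1!P!u_2!Y!v_1!Q!v_2$; note that any subpath of $!X$ close in $\Ga_{\al-1}$ to a subpath of $!P$ or $!Q$ is a fragment of rank $\al$, hence has $\al$-length $\le 1$, so alternative~(i) of the tetragon proposition would force $|!X|_\al\le 2+6\ze^2\eta$ and is excluded by the hypothesis; alternative~(ii) gives $!X=!X_1!z_1!X_2!z_2!X_3$ with $!X_1,!X_3$ close to $!P,!Q$ (so of $\al$-length $\le1$), $!X_2$ close in rank $\al-1$ to a subpath of $!Y$, and $|!z_i|_{\al-1}\le4\ze\eta$, which yields $|!X_1!z_1|_\al,\ |!z_2!X_3|_\al\le1+4\ze^2\eta$ directly. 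Your second and third steps — the tight contiguity subdiagrams, the single-layer structure, the near-bridge small-component tails, the absorption of overlapping pieces — are essentially the ingredients of the proofs of Propositions~\ref{pr:sides-bound-Cayley} and~\ref{pr:3-4-gon-closeness}, so what you flag as "the hard part" (the bookkeeping) is work the paper has already done and is citing. One further point: your handling of the ``$\ti{!S}$ faces $\ti{!T}$'' case through Corollary~\ref{co:small-overlapping-Cayley} and normalization is not needed at this level — in the paper's route that configuration is simply alternative~(i) of the tetragon proposition, and it is killed by the hypothesis $|!X|_\al>2+6\ze^2\eta$ without ever identifying the relator lines. Finally, to justify that $!X_1$ and $!Y_1$ are close in rank $\al-1$ you would need the bridges joining their endpoints to be single $\cH_{\al-1}$-paths (bonds from contiguity subdiagrams), not concatenations across a small component; this is automatic in the paper's route because ``close'' in the conclusion of Proposition~\ref{pri:4gon-closeness}$_{\al-1}$ already means close in rank $\al-1$, but in your unrolled argument it requires an explicit choice of where to cut, which you leave implicit.
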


\begin{proof}
By Remark \ref{rm:no-active-lift} we can assume that $!X^{-1} !u !Y !v$ lifts
to $\Ga_{\al-1}$. 
To simplify notations, we assume that $!X^{-1} !u !Y !v$ is already in $\Ga_{\al-1}$. 
Let $!u = !u_1 !P !u_2$ and
$!v = !v_1 !Q !v_2$ where $!u_i$, $!v_i$ are bridges of rank $\al-1$ and $!P$, $!Q$ are paths labeled by pieces of rank ~$\al$. 
We apply Proposition \ref{pri:4gon-closeness}$_{\al-1}$ to the coarse 
tetragon $!X^{-1} !u_1 !P !u_2 !Y !v_1 !Q !v_2$. 
Observe that if a subpath of $!P$ or $!Q$ is close (in $\Ga_{\al-1}$) to a subpath $!S$ of $!X$ then
$|!S|_\al \le 1$.
Since  $|!X|_\al > 2+6\ze^2\eta$ we cannot get the first case of 
the conclusion of Proposition \ref{pri:4gon-closeness}$_{\al-1}$. Therefore, the second case holds:
we have $!X = !X_1 !z_1 !X_2 !z_2 !X_3$ where $!X_1$ is close to a start of $!P$, 
$!X_2$ is close to a subpath of $!Y$,
$!X_3$ is close to an end of $!Q$ and $|!z_i|_{\al-1} \le 4 \ze\eta$ $(i=1,2)$.
Then $|!X_1 !z_1|_\al \le 1 + 4\ze^2 \eta$, $|!z_2 !X_3|_\al \le 1 + 4\ze^2 \eta$ and we get the required bound.
\end{proof}

\begin{corollary}[no active fragments] \label{co:no-active-fragments}
Let $!X$ and $!Y$ be close reduced paths in $\Ga_\al$, $\al\ge 1$.
Assume that either $!X$ or $!Y$ has no fragments $!K$ of rank $\al$
with $\muf(!K) \ge \xi_0$. Assume also that $|!X|_\al > 2+6\ze^2\eta$.
Then $!X$ and $!Y$ can be represented as 
$!X = !w_1 !X_1 !w_2$ and $!Y =!z_1 !Y_1 !z_2$ where 
$!X_1$ and ~$!Y_1$ are close in rank $\al-1$ and 
$|!w_i|_\al, |!z_i|_\al  \le 1+4\ze^2\eta$ $(i=1,2)$.
\end{corollary}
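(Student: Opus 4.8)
The plan is to read the statement off from Proposition~\ref{pr:no-active-loops}, the only thing needing verification being that the coarse bigon witnessing closeness of $!X$ and $!Y$ carries no active relators of rank $\al$.

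First I would use that $!X$ and $!Y$ are close in $\Ga_\al$ to pick bridges $!u$, $!v$ of rank $\al$ so that $!P = !X^{-1}!u!Y!v$ is a coarse bigon in $\Ga_\al$, and apply Proposition~\ref{pr:filling-polygons} to obtain (after possibly switching $!u$ and $!v$) a reduced disk diagram $\De$ of rank $\al$ filling $!P$, which I would equip with a tight set $\cT$ of contiguity subdiagrams. Next I would check that $\Area_\al(!P)=0$. Indeed, if $\De$ had a cell $!D$ of rank $\al$, then by Proposition~\ref{pr:single-layer}(i) $!D$ has a contiguity subdiagram in $\cT$ to each of the two sides of $\De$, and passing to $\Ga_\al$ these produce active fragments $!K$ of rank $\al$ in $!X$ and $!M$ of rank $\al$ in $!Y$ with mutually inverse base active relator loops; Proposition~\ref{pr:active-large}(i) would then force $\muf(!K), \muf(!M) > 7\la - 1.5\om = \xi_0$, so each of $!X$ and $!Y$ would contain a fragment of rank $\al$ of size at least $\xi_0$, contradicting the hypothesis. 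Hence $\Area_\al(!P)=0$.

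Now $|!X|_\al > 2 + 6\ze^2\eta$ by assumption, so Proposition~\ref{pr:no-active-loops} applies to $!P$ and delivers factorizations $!X = !w_1!X_1!w_2$, $!Y = !z_1!Y_1!z_2$ with $!X_1$ and $!Y_1$ close in rank $\al-1$ and $|!w_i|_\al, |!z_i|_\al \le 1 + 4\ze^2\eta$ for $i=1,2$, which is exactly the conclusion. I do not expect a genuine obstacle here; the one point deserving attention is the bridge-switching bookkeeping — Proposition~\ref{pr:filling-polygons} may replace $!u$, $!v$ by switched bridges of rank $\al$, but since switching changes neither $!X$, $!Y$ nor the endpoints of the bridges, the statement to be proved is unaffected, and the remainder is a direct appeal to Propositions~\ref{pr:active-large}, \ref{pr:single-layer} and \ref{pr:no-active-loops}.
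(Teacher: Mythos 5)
Correct, and this is the intended route: produce a coarse bigon $!X^{-1}!u!Y!v$, fill it by a reduced diagram (Proposition~\ref{pr:filling-polygons}), use Propositions~\ref{pr:single-layer}(i) and~\ref{pr:active-large}(i) to show that any cell of rank $\al$ would force fragments of size $\ge \xi_0$ in both $!X$ and $!Y$ (contradicting the hypothesis), hence $\Area_\al=0$, and then read off the conclusion from Proposition~\ref{pr:no-active-loops}. The observation about bridge switching being harmless is also right since the conclusion refers only to $!X$ and $!Y$.
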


\begin{corollary}[no active fragments, iterated] 
\label{co:no-active-fragments-iterated}
Let $!X$ and $!Y$ be close reduced paths in ~$\Ga_\al$. 
Let $0 \le \be < \al$ and assume that for all $\ga$ 
in the interval $\be+1 \le \ga \le \al$
either $!X$ or $!Y$ has no fragments $!K$ of rank ~$\ga$
with $\muf(!K) \ge \xi_0$. 
Let $|!X|_\al \ge 2 + 3\ze$.
Then $!X$ and $!Y$ can be represented as 
$!X = !w_1 !X_1 !w_2$ and $!Y =!z_1 !Y_1 !z_2$ where 
$!X_1$ and $!Y_1$ are close in rank $\be$ and 
$
  |!w_i|_\al < 1 + 5\ze^2 \eta
$
$(i=1,2)$.
\end{corollary}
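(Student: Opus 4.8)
The plan is to deduce this from Corollary~\ref{co:no-active-fragments} by induction on $\al-\be$. The base case $\be=\al-1$ is essentially a restatement: since $1>2\ze\eta$ for the fixed values $\ze=\tfrac1{20}$, $\eta=\tfrac{30}{17}$, we have $2+3\ze>2+6\ze^2\eta$, so the hypothesis $|!X|_\al\ge 2+3\ze$ already gives $|!X|_\al>2+6\ze^2\eta$; Corollary~\ref{co:no-active-fragments} then applies and yields the conclusion with the sharper trim bound $|!w_i|_\al\le 1+4\ze^2\eta<1+5\ze^2\eta$.

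For $\be<\al-1$ I would first apply Corollary~\ref{co:no-active-fragments} at rank $\al$, writing after renaming $!X=!a_1!X'!a_2$ and $!Y=!b_1!Y'!b_2$, where $!X'$ and $!Y'$ are close in rank $\al-1$ and $|!a_i|_\al,|!b_i|_\al\le 1+4\ze^2\eta$; lifting the rank-$(\al-1)$ closeness to $\Ga_{\al-1}$, the pair $(!X',!Y')$ consists of close reduced paths in $\Ga_{\al-1}$. Next I would check that the inductive statement can be applied to $(!X',!Y')$ for the rank range $\be+1\le\ga\le\al-1$. The fragment hypothesis transfers because $!X'$ (resp.\ $!Y'$) is a subpath of $!X$ (resp.\ $!Y$), and a path containing no fragment of rank $\ga$ of size $\ge\xi_0$ cannot acquire one by being extended, so for each $\ga$ in the range whichever of $!X,!Y$ was fragment‑free at rank $\ga$ has its corresponding restriction $!X'$ or $!Y'$ fragment‑free at rank $\ga$ too. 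The length hypothesis follows from \ref{ss:alpha-length-properties}: semi‑additivity gives $|!X'|_\al\ge|!X|_\al-|!a_1|_\al-|!a_2|_\al\ge(2+3\ze)-2(1+4\ze^2\eta)=3\ze-8\ze^2\eta$, whence by \ref{cl:om-rank-increment} $|!X'|_{\al-1}\ge\ze^{-1}|!X'|_\al\ge 3-8\ze\eta>2+3\ze$ (again a fixed numerical inequality, as $8\ze\eta+3\ze=\tfrac{291}{340}<1$). The induction then produces $!X'=!c_1!X_1!c_2$, $!Y'=!d_1!Y_1!d_2$ with $!X_1,!Y_1$ close in rank $\be$ and $|!c_i|_{\al-1},|!d_i|_{\al-1}<1+5\ze^2\eta$; setting $!w_1=!a_1!c_1$, $!w_2=!c_2!a_2$ and similarly for the $!z_i$ gives the decomposition $!X=!w_1!X_1!w_2$, $!Y=!z_1!Y_1!z_2$ with $!X_1,!Y_1$ close in rank $\be$.

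The step I expect to be the main obstacle is the bookkeeping of the trim bound $|!w_i|_\al<1+5\ze^2\eta$, together with keeping $|!X'|_{\al-1}\ge 2+3\ze$ alive through each descent in rank (which is exactly what calibrates the threshold $2+3\ze$). The crude estimate from \ref{ss:alpha-length-properties} and \ref{cl:om-rank-increment}, namely $|!w_1|_\al\le|!a_1|_\al+|!c_1|_\al\le(1+4\ze^2\eta)+\ze|!c_1|_{\al-1}$, is not by itself within budget, so I would unfold the recursion and account rank by rank: the contribution absorbed at rank $\al$ is $1+4\ze^2\eta$ — the ``$1$'' being a single fragment of rank $\al$ coming from the rank‑$\al$ piece of the bridge, as in the proof of Proposition~\ref{pr:no-active-loops} — and each successively lower rank $\al-1,\dots,\be+1$ contributes a trim rescaled by a further factor $\ze$ by \ref{cl:om-rank-increment}, so that the tail must be controlled geometrically and folded into the residual $\ze^2\eta$; sharpening this, if needed, by tracking that the deeper trims are fragments of those ranks (so their rank‑$\al$ size is $\ze^{\al-\ga}$ times a quantity itself only $O(\ze^2\eta)$ above $1$) rather than bounding them all by the generic constant. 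An alternative route, which packages the same estimate differently, is to observe that under the fragment hypothesis there are no active fragments of rank $\al$, so $\Area_\al(!X^{-1}!u!Y!v)=0$ and (Remark~\ref{rm:no-active-lift}) the coarse bigon lifts to $\Ga_{\al-1}$, and then iterate this ``lift one rank down'' step through $\Ga_{\al-1},\Ga_{\al-2},\dots,\Ga_\be$, absorbing at each stage only the current‑rank piece of the bridge word into the trimmed ends before invoking Proposition~\ref{pr:lifting-bigon} in rank $\be$; in either presentation the delicate part is the same constant count.
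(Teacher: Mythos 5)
Your plan — induction on $\al-\be$, applying Corollary~\ref{co:no-active-fragments} at the top level and recursing — is the natural reading of why this statement is labeled a corollary, and your bookkeeping of the hypotheses is sound: the base case $\be=\al-1$ works numerically since $3\ze>6\ze^2\eta$, the fragment hypothesis passes to the subpaths $!X',!Y'$, and the length descent $|!X'|_{\al-1}\ge\ze^{-1}|!X'|_\al\ge 3-8\ze\eta>2+3\ze$ is computed correctly.

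The gap is exactly where you suspect it, in the trim bound, and I don't think either of the two workarounds you sketch actually closes it. Each one-rank descent — whether via Corollary~\ref{co:no-active-fragments}$_\ga$ or via the lift of Remark~\ref{rm:no-active-lift} followed by Proposition~\ref{pri:4gon-closeness}$_{\ga-1}$ — trims off each end a subpath that is (a subword of) a fragment of rank $\ga$ together with an error of order $\ze\eta$ in $|\cdot|_{\ga-1}$. A fragment of rank $\ga$ has $|\cdot|_\al$-size up to $\ze^{\al-\ga}$, and this is the whole constant, not ``$\ze^{\al-\ga}$ times $1+O(\ze^2\eta)$'': the fragment of rank $\ga$ in the trim comes from the new rank-$\ga$ piece in the bridge produced at level $\ga$ by \ref{pri:4gon-closeness}$_{\ga-1}$, which bears no relation to the rank-$\al$ piece already trimmed at the top level, so the contributions genuinely stack. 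Summing over $\ga=\al,\al-1,\dots,\be+1$ gives a lower estimate on the worst-case trim of
$$
\sum_{j=0}^{\al-\be-1}\ze^j\;\bigl(1+4\ze^2\eta\bigr)\;=\;\frac{1-\ze^{\al-\be}}{1-\ze}\,\bigl(1+4\ze^2\eta\bigr),
$$
and already for $\al-\be=2$ this is $(1+\ze)(1+4\ze^2\eta)>1+\ze$. Since $\ze=\tfrac1{20}>5\ze^2\eta=\tfrac{3}{136}$ (equivalently $1>5\ze\eta=\tfrac{15}{34}$), we have $1+\ze>1+5\ze^2\eta$, so the budget is already broken at depth $2$, by roughly $\ze-5\ze^2\eta\approx 0.03$ — larger than the entire allowance $5\ze^2\eta\approx 0.022$. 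In other words, the extra $\ze^2\eta$ that the iterated corollary grants over \ref{co:no-active-fragments} is an order of magnitude too small to absorb even one additional rank-$(\al-1)$ piece. What your write-up is implicitly assuming, and what would be needed to make the count work, is that the lower-rank pieces in the trim nest inside the single rank-$\al$ fragment (so that the whole $!w_i$ is covered by one fragment of rank $\al$ plus $O(\ze^2\eta)$ of debris). That nesting is not established by anything you cite, and it is not evident from the proof of Proposition~\ref{pr:no-active-loops}, since the bridges joining $!X_2$ to the subpath of $!Y$ at the next level are new objects coming out of \ref{pri:4gon-closeness}, not subwords of the original bridge $!u$.
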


\begin{proposition} \label{pr:nonidentity-active-piece}
Let $X$ be a nonempty freely reduced word equal 1 in $G_\al$. 
Then $X$ has a subword $P$ which is a piece of rank $\be$ where $1 \le \be \le \al$ and $\mu(P) > 136\om$.
\end{proposition}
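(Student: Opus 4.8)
The plan is to show, by an induction that peels off one rank at a time and bottoms out in the free group, that whenever $X$ contains a \emph{fragment} of some rank with $\muf$ at least $\xi_0$ it in fact contains a literal \emph{piece} of comparable $\mu$-measure.

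First I would note that, since $X$ is nonempty and $X=1$ in $G_\al$, Proposition \ref{pr:reduced-nontrivial} shows $X$ is not reduced in $G_\al$; as $\cR_\al\seq\cR_{\al-1}$ there is a least rank $\de$ ($1\le\de\le\al$) with $X$ not reduced in $G_\de$, so $X$ is reduced in $G_{\de-1}$. Unwinding the definition in \ref{ss:reduced-word}, $X$ has a subword $Y$ close in rank $\de-1$ to a subword $S$ of a relator $R$ of rank $\de$ with $|S|_{\de-1}>\rho|R|_{\de-1}$, hence (as $|R^\circ|_{\de-1}\le|R|_{\de-1}$) $\mu(S)>\rho$; both $Y$ and $S$ are reduced in $G_{\de-1}$ (the latter by (S0)), so $Y$ is a fragment of rank $\de$ in $X$ with $\muf(Y)=\mu(S)>\rho>\xi_0$.

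The heart of the matter is the claim, proved by induction on $\ga$: \emph{if $K$ is a subword of $X$ that is a fragment of rank $\ga$ ($1\le\ga\le\al$) with $\muf(K)\ge\xi_0$, then $X$ has a subword that is a piece of rank $\ga'$ for some $1\le\ga'\le\ga$ with $\mu$-measure $>136\om$.} Let $S$ be the base of $K$, a subword of $R^k$ for a relator $R$ of rank $\ga$; by Proposition \ref{pr:relator-strongly-reduced} $S$ is reduced in $G_{\ga-1}$, so $S$ and $K$ are close reduced paths in $\Ga_{\ga-1}$ with $\mu(S)=\muf(K)\ge\xi_0$, and $|S|_{\ga-1}=\mu(S)|R^\circ|_{\ga-1}\ge\xi_0(\Om-1)$ by (S1) and $|R^\circ|_{\ga-1}\ge|R|_{\ga-1}-1$, in particular $\ge2+3\ze$. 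For $\ga=1$, closeness in rank $0$ of the freely reduced words $K,S$ forces $K\eqcirc S$ (Remark \ref{rm:close-in-rank-0}), a piece of rank $1$ with $\mu\ge\xi_0>136\om$. For $\ga\ge2$ I would apply Corollary \ref{co:no-active-fragments-iterated} in $\Ga_{\ga-1}$ (with its parameter set to $0$) to the pair $S,K$. If its hypothesis holds we obtain $S=w_1S_1w_2$, $K=z_1Y_1z_2$ with $S_1,Y_1$ close in rank $0$ and $|w_i|_{\ga-1}<1+5\ze^2\eta$; then $S_1\eqcirc Y_1$, this word is a subword of $X$ and also of $R^k$, and $|S_1|_{\ga-1}>|S|_{\ga-1}-2(1+5\ze^2\eta)$ gives $\mu(S_1)>\xi_0-2(1+5\ze^2\eta)/(\Om-1)>136\om$ because $\xi_0\ge7\cdot20\om-1.5\om=138.5\om$ by \eqref{eq:om-bounds} and $\Om\ge480$; if $S_1$ is shorter than $R$ it is a piece of rank $\ga$ and we are done, while if it is longer it contains a cyclic shift of $R$, a piece of $\mu$-measure $\ge1$, still inside $X$. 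If instead the hypothesis of Corollary \ref{co:no-active-fragments-iterated} fails, then for some $\ga'<\ga$ both $S$ and $K$ carry a fragment of rank $\ga'$ with $\muf\ge\xi_0$; in particular $K$, hence $X$, contains such a fragment $K'$, and the inductive hypothesis applied to $K'$ finishes the claim. Applying the claim to $Y$ with $\ga=\de$ yields the proposition.

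The step I expect to be the main obstacle is controlling the cumulative loss during the descent: one must ensure the trimmings $w_i$ accumulated over the ranks stay of bounded $|\cdot|_{\ga-1}$-size — which is exactly what Corollary \ref{co:no-active-fragments-iterated} packages, with the $O(1)$ bound $1+5\ze^2\eta$ — and that this bounded loss is negligible against $|R^\circ|_{\ga-1}\ge\Om-1$; that comparison is precisely why the threshold $136\om$, safely below $\xi_0\ge138.5\om$, can be attained. A minor side point is the elementary observation that a subword of $R^k$ of $\mu$-measure below $1$ is automatically a subword of a cyclic shift of $R$, and otherwise it already contains a full relator.
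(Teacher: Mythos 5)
Your proof is correct and in essentials the same as the paper's: both start by extracting from the failure of $X$ to be reduced a fragment of $\muf$-size exceeding $\rho>\xi_0$, and then invoke Corollary \ref{co:no-active-fragments-iterated} (iterated down to rank $0$) to trim the fragment's base into a genuine piece with $\mu$ still above $136\om$. The only organizational difference is that the paper selects the \emph{minimal} rank $\be$ for which $X$ carries a fragment with $\muf\ge\xi_0$, which makes the corollary's hypothesis hold automatically (a large fragment of smaller rank in $K$ would also lie in $X$, contradicting minimality), whereas you run an explicit recursion that descends to a smaller rank whenever the hypothesis fails --- two equivalent bookkeepings of the same descent.
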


\begin{proof}
By Proposition \ref{pr:reduced-nontrivial}, $X$ is not reduced in $G_\al$ 
and therefore contains a fragment $K$ of rank ~$\be$ where $1 \le \be \le \al$ and $\muf(K) \ge \rho$.
Let $\be\ge1$ be the minimal rank such that $X$ contains a fragment $K$ 
of rank $\be$ with $\muf(K) \ge \xi_0$. 
If $\be = 1$ then $K$ is already a piece of rank 1 with $\mu(K) \ge \xi_0 > 138\om$ by \eqref{eq:om-bounds}.
Let $\be > 1$. 
Let $!K$ be a fragment in $\Ga_{\be-1}$ with $\lab(!K) \greq K$
and $!S$ a base for $!K$.
By Corollary \ref{co:no-active-fragments-iterated}$_{\be-1}$ we have $!S = !w_1 !P !w_2$ where
$|!w_i|_{\be-1} < 1.03$ $(i=1,2)$ and $P = \lab(!P)$ occurs in $K$. 
By \eqref{eq:om-bounds}, $\mu(P) \ge \xi_0 - 2.06\om = 7\la - 3.56\om > 136\om$. 
\end{proof}

\begin{proposition}[active fragments in trigon] 
\label{pr:trigon-active-fragments}
Let $!P = !X_1 !u_1 !X_2 !u_2 !X_3 !u_3$ be a coarse trigon in ~$\Ga_\al$,
let $!R$ be an active relator loop for $!P$ and let $!K_i$ ($i = 1,2$
or $i=1,2,3$) be active fragments of rank $\al$ with base loop $!R$. Then $!K_i \sim !K_j$ for all $i,j$,
$$
  \sum_i \muf(!K_i) > 1 - 3\la - 2.2\om
$$
and 
$$
  \muf(!K_i) > 3\la - 1.1\om \quad\text{for at least two indices } i.
$$
\end{proposition}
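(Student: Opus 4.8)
The plan is to follow the proof of Proposition~\ref{pr:active-large} for bigons, with Proposition~\ref{pr:small-complexity-cell}(ii) taking the place of the bigon input used there. I start from the reduced filling diagram $\De$ of rank~$\al$ and its tight set $\cT$ of contiguity subdiagrams for the coarse trigon $!P$, as set up in~\ref{ss:active-loops-trigon}, and I let $!D$ be the cell of rank~$\al$ of $\De$ whose $\phi$-image is $!R$. By that construction the active fragments $!K_i$ of $!P$ with base loop $!R$ are exactly the $\phi$-images of the contiguity arcs lying in sides $!X_j$ of the contiguity subdiagrams $\Pi_1,\dots,\Pi_k\in\cT$ of $!D$ to the sides of $\De$; here $k\in\{2,3\}$, these arcs lie in distinct sides $!X_j$, and each $!K_i$ has the same base relator loop $!R$. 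The assertion $!K_i\sim!K_j$ for all $i,j$ is then immediate from Definition~\ref{df:fragment-compatibility}, since sharing a base relator loop is precisely compatibility.

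Next I establish the sum bound. Let $!P_i$ denote the contiguity arc of $\Pi_i$ occurring in $\de !D$; then $!P_i$ is, up to inversion, the base of $!K_i$, so $\muf(!K_i)=\mu(!P_i)$ by the conventions of~\ref{ss:fragment-paths}--\ref{ss:fragment-measuring} together with the inversion-invariance of~$\mu$. Applying Proposition~\ref{pr:small-complexity-cell}(ii) to the trigon-type diagram $\De$ with its cell~$!D$ (that proposition allows $!D$ to be one of several cells of rank~$\al$, reducing internally to the single-cell case) yields
\[ \sum_i \muf(!K_i) = \sum_i \mu(!P_i) \ge 1-3\la-24\ze\eta\om . \]
Since the fixed values $\ze=\frac1{20}$ and $\eta=\frac{30}{17}$ give $24\ze\eta=\frac{36}{17}<2.2$, this already gives $\sum_i\muf(!K_i)>1-3\la-2.2\om$, the second assertion.

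For the last assertion I use that every $!K_i$ lies on the reduced path $!X_j$, so $\muf(!K_i)\le\rho=1-9\la$ by the characterization of reduced words recalled in~\ref{ss:fragment-measuring}. Suppose, toward a contradiction, that at most one index $i$ has $\muf(!K_i)>3\la-1.1\om$. Bounding that index, if present, by $\rho$ and each of the remaining $k-1$ indices by $3\la-1.1\om$ gives
\[ \sum_i \muf(!K_i) \le (1-9\la)+(k-1)(3\la-1.1\om), \]
whose right-hand side equals $1-3\la-2.2\om$ when $k=3$, and equals $1-6\la-1.1\om<1-3\la-2.2\om$ when $k=2$ (the strict inequality holding because $3\la\ge 60\om>1.1\om$ by~\eqref{eq:om-bounds}). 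Either way this contradicts the sum bound, so at least two indices satisfy $\muf(!K_i)>3\la-1.1\om$.

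I do not expect a genuine obstacle: as in the bigon case, the statement is a routine consequence of the single-layer analysis of Section~\ref{s:diagram-bounds}. The two points requiring care are the bookkeeping identity $\muf(!K_i)=\mu(!P_i)$ (matching the active fragments of~\ref{ss:active-loops-trigon} with the contiguity arcs in Proposition~\ref{pr:small-complexity-cell}) and the two numerical checks $24\ze\eta<2.2$ and $\la\ge 20\om$, both immediate from the fixed parameter values and~\eqref{eq:om-bounds}.
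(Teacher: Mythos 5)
Your proof is correct and takes essentially the same route as the paper: compatibility is by construction, the sum bound comes from Proposition~\ref{pr:small-complexity-cell}(ii) with the numerical check $24\ze\eta<2.2$, and the final assertion follows by combining the sum bound with $\muf(!K_i)\le\rho=1-9\la$ exactly as you argue.
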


\begin{proof}
We have $!K_i \sim !K_j$ by construction.
The first inequality follows from Proposition \ref{pri:trigon-cell}.
Since $!X_i$ is reduced in $G_\al$ we have $\mu(!K_i) \le \rho = 1 - 9\la$.
This implies the second inequality.
\end{proof}

\begin{proposition}[no active fragments in conjugacy relations] 
\label{pr:no-active-fragments-cyclic}
Let $X$ and $Y$ be words cyclically reduced in $G_\al$, $\al\ge1$.
Let $X = Z^{-1} Y Z$ in $G_\al$ for some $Z$.
Assume that no cyclic shift of ~$X$ contains a fragment $K$ of rank $\al$ with $\muf(K) \ge \xi_0$.
Then there exists a word $Z_1$ such that $Z_1 = Z$ in $G_\al$ and $X = Z_1^{-1} Y Z_1$ in $G_{\al-1}$.
\end{proposition}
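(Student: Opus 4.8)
The plan is to realise the conjugacy $X=Z^{-1}YZ$ by a reduced annular diagram of rank $\al$, to show via the single‑layer analysis that it has no cells of rank $\al$, and then to read the word $Z_1$ off the resulting diagram over the presentation of $G_{\al-1}$. First I would dispose of the degenerate case: if $X$ is empty then $X=1$ in $G_\al$, hence $Y=1$ in $G_\al$, so $Y$ is empty by Proposition \ref{pr:reduced-nontrivial} and $Z_1=Z$ works. Assuming $X$ (and hence $Y$) nonempty, I would apply the van Kampen lemma for conjugacy (see \ref{ss:diagrams}) to obtain an annular diagram $\De$ over the presentation of $G_\al$ with boundary loops $!L_1$, $!L_2$ and a path $!p$ joining them, with base points chosen so that $\lab(!L_1)\greq X$, $\lab(!L_2)\greq Y^{-1}$, and cutting $\De$ along $!p$ gives a disk diagram realising the relation $X=Z^{-1}YZ$ in $G_\al$ (with $\lab(!p)$ a representative of the conjugator). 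Refining to make $\De$ non‑singular and running the reduction process of \ref{ss:reduction-transformations-good}, I would make $\De$ reduced while keeping the boundary marking of rank $\al$ consisting of the two cyclic sides $!L_1$, $!L_2$; since $\De$ has no bridges, the reduction uses only frame‑type preserving transformations, so the reduced diagram still has the same frame type as the original one. Finally I would equip $\De$ with a tight set $\cT$ of contiguity subdiagrams (Proposition \ref{pr:contiguity-map-exists}).

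The heart of the matter is to show $\De$ has no cell of rank $\al$. Suppose $!D$ is one. By Proposition \ref{pr:single-layer}(iii) (the annular, two‑cyclic‑sides case) and Lemma \ref{lm:cell-regularity}(i), $!D$ has exactly one contiguity subdiagram in $\cT$ to each of $!L_1$ and $!L_2$; let $!P_1$, $!P_2$ be the contiguity arcs occurring in $\de!D$, each labelled by a piece of the rank‑$\al$ relator of $!D$ that is close in $G_{\al-1}$ to a subword of $\lab(!L_1)\greq X$, resp. $\lab(!L_2)\greq Y^{-1}$. Then Proposition \ref{pr:small-complexity-cell}(i) gives
$$
  \mu(!P_1)+\mu(!P_2)\ \ge\ 1-2\la-16\ze\eta\om\ >\ 1-2\la-1.5\om ,
$$
using $16\ze\eta=\tfrac{24}{17}<1.5$. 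Since $\lab(!L_2)\greq Y^{-1}$ is cyclically reduced in $G_\al$, the defining inequality for a reduced word forces $\mu(!P_2)\le\rho=1-9\la$, whence
$$
  \mu(!P_1)\ >\ 1-2\la-1.5\om-(1-9\la)\ =\ 7\la-1.5\om\ =\ \xi_0 .
$$
But the subword of a cyclic shift of $X$ to which $!P_1$ is close is then a fragment of rank $\al$ of size $>\xi_0$ (it is a proper subword, since $\De$ is non‑singular, so $!L_1$ is a simple loop and the relevant contiguity arc on it is a proper subpath), contradicting the hypothesis on $X$. Hence $\De$ has no cell of rank $\al$.

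It remains to extract $Z_1$. Now $\De$ is an annular diagram over the presentation of $G_{\al-1}$, so $X$ and $Y$ are conjugate in $G_{\al-1}$. Since $\De$ has the same frame type as the original annular diagram (Lemma \ref{lm:preserving-frame-type}), Lemma \ref{lm:disk-annular-frame-type} supplies a path $!p^*$ in $\De$ joining its two boundary loops, with the side labels $X$ and $Y^{-1}$ read at the endpoints of $!p^*$, such that $\lab(!p^*)$ equals $\lab(!p)$ in $G_\al$. Cutting $\De$ along $!p^*$ produces a simply connected diagram over the presentation of $G_{\al-1}$ whose boundary label is, up to cyclic permutation, of the form $Z_1XZ_1^{-1}Y^{-1}$; by the van Kampen lemma this word is $1$ in $G_{\al-1}$, giving $X=Z_1^{-1}YZ_1$ in $G_{\al-1}$ for a word $Z_1$ which, up to the routine orientation bookkeeping, is $\lab(!p^*)^{\pm1}$ and therefore satisfies $Z_1=Z$ in $G_\al$.

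The step I expect to require the most care is this last one: keeping the conjugating datum coherent through reduction, i.e.\ ensuring that the reduced diagram still ``knows'' that its connecting path represents $Z$ up to equality in $G_\al$. This is exactly what the frame‑type invariance of Lemma \ref{lm:preserving-frame-type} together with the annular part of Lemma \ref{lm:disk-annular-frame-type} provides, and it is precisely here that it matters that with two cyclic sides there are no bridges, so that no bridge switchings intervene during reduction. A secondary point to be attentive to is verifying that the active fragment lying on the cyclic side $!L_1$ genuinely gives a fragment of rank $\al$ inside a cyclic shift of $X$ (and likewise for $!L_2$ and $Y^{-1}$), which is what allows the hypotheses ``cyclically reduced in $G_\al$'' and ``no cyclic shift of $X$ contains a large fragment of rank $\al$'' to be invoked; this relies on $\De$ being non‑singular so that the cyclic sides are simple loops and the contiguity arcs along them are proper subpaths.
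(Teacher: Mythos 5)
Your proposal is correct and follows essentially the same route as the paper: realise the conjugacy by an annular diagram with two cyclic sides, reduce, invoke Proposition~\ref{pr:small-complexity-cell}(i) (the paper's reference is to that same item, \ref{pri:bigon-cell}) to exclude cells of rank $\al$, and recover $Z_1$ via the frame-type invariance of the reduction process (remark in \ref{ss:reduction-transformations-good} plus Lemma~\ref{lm:disk-annular-frame-type}). You merely make explicit a step the paper compresses — namely that $\mu(!P_2)\le\rho$ together with the bound from \ref{pri:bigon-cell} forces $\mu(!P_1)>\xi_0$, contradicting the hypothesis on $X$.
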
 

\begin{proof}
Let $\De_0$ be a disk diagram of rank $\al$ with boundary label $X^{-1} Z^{-1} Y Z$.
We produce an annular diagram $\De_1$ by gluing two boundary segments of $\De_0$
labeled $Z^{-1}$ and $Z$. The diagram $\De_1$ can be assigned a boundary
marking of rank $\al$ with two cyclic sides $!X^{-1}$ and ~$!Y$. 
We denote $!Z$ the path in $\De$ with $\lab(!Z) \greq Z$ that joins starting vertices of $!Y$ and $!X$.
Let $\De_2$ be a reduced diagram of rank $\al$ 
obtained from $\De_1$ by reduction process.
According to the remark in \ref{ss:reduction-transformations-good}, $\De_1$ 
and $\De_2$ have the same frame
type. It follows from Lemma \ref{lm:disk-annular-frame-type} that there exists a 
path $!Z_1$ in $\De_2$ joining starting vertices of 
boundary loops $!Y_1$ and $!X_1^{-1}$ such that $\lab(!X_1) \greq X$, 
$\lab(!Y_1) \greq Y$ and 
$Z_1 = Z$ in $G_\al$ where $Z_1 \greq \lab(!Z_1)$.
By Proposition \ref{pri:bigon-cell}, $\De_2$ has no cells of rank $\al$. 
Then $X = Z_1^{-1} Y Z_1$ in $G_{\al-1}$.
\end{proof}

\begin{proposition}[no active fragments in conjugacy relations, iterated] 
\label{pr:no-active-fragments-cyclic-iterated}
Let $X$ and $Y$ be cyclically reduced in $G_\al$ words which represent conjugate elements of $G_\al$,
$\al\ge 1$.
Let $\be \le \al$. Assume that at least one of the words ~$X$ or ~$Y$ has the property that 
no its cyclic shift contains a fragment $K$ of rank $\ga$ with $\muf(K) \ge \xi_0$ 
and $\be < \ga\le \al$.
Let $\bar{!X} = \dots !X_{-1} !X_0 !X_1 \dots$ and $\bar{!Y} = \dots !Y_{-1} !Y_0 !Y_1 \dots$ 
be parallel periodic lines in $\Ga_\al$ with $\lab(!X_i) \greq X$ and 
$\lab(!Y_i) \greq Y$ representing the conjugacy relation. 
Then some vertices on $\bar{!X}$ and $\bar{!Y}$ are joined by a bridge of rank $\be$.

Moreover, for any subpath $!Z$ of $\bar{!X}$ there exists a loop $!S^{-1} !u !T !v$ which can lifted to $\Ga_\be$ such that 
$!S$ and $!T$ are subpaths of $\bar{!X}$ and $\bar{!Y}$ respectively, 
$!u$ and $!v$ are bridges of rank $\be$
and $!Z$ is contained in $!S$.
\end{proposition}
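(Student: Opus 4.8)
The plan is to argue by induction on the rank $\al$, the statement being understood to range over all $\be\le\al$ and all admissible $X$, $Y$, $Z$. Since closeness is symmetric and the whole configuration is invariant under the common translation element $g$ of $\bar{!X}$ and $\bar{!Y}$, I may assume throughout that it is $X$ (not $Y$) none of whose cyclic shifts contains a fragment $K$ of rank $\ga$ with $\muf(K)\ge\xi_0$ for $\be<\ga\le\al$. In the base case $\al=0$ one has $\be=0$, $G_0$ is free, and $Y$ is a cyclic permutation of $X$, so $\bar{!X}=\bar{!Y}$ and every assertion holds with $!u$, $!v$ empty and $!S=!T$ any subpath of $\bar{!X}$ containing $!Z$.

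For the inductive step fix $\al\ge1$ and realize the conjugacy by a reduced annular diagram $\De$ of rank $\al$ whose two boundary loops are taken as cyclic sides (labelled by cyclic shifts of $X$ and $Y^{-1}$), equipped with a tight set $\cT$ of contiguity subdiagrams. First suppose that either $\be<\al$, or $\be=\al$ and $\De$ has no cell of rank $\al$. In the first case Proposition \ref{pr:no-active-fragments-cyclic} applies, because then no cyclic shift of $X$ carries a rank-$\al$ fragment of size $\ge\xi_0$; in the second $\De$ is itself a diagram over $G_{\al-1}$. Either way, reading off a connecting path (Lemma \ref{lm:disk-annular-frame-type}), one obtains a diagram $\De'$ over $G_{\al-1}$ with two cyclic sides labelled by cyclic shifts of $X$ and $Y^{-1}$ and a connecting path whose label $Z_1$ satisfies $Z_1=Z$ in $G_\al$, so $X=Z_1^{-1}YZ_1$ in $G_{\al-1}$. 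Mapping $\ti{\De'}^{(1)}$ into $\Ga_{\al-1}$ gives parallel periodic lines $\bar{!X}_1$, $\bar{!Y}_1$ representing this lower-rank conjugacy, and I apply the induction hypothesis at rank $\al-1$: with the same $\be$ when $\be<\al$, and with $\be:=\al-1$ when $\be=\al$ (in the latter case closeness in rank $\al-1$ entails closeness in rank $\al$ because $\cH_{\al-1}\seq\cH_\al$). Composing with the natural covering $\pi\colon\Ga_{\al-1}\to\Ga_\al$ gives a map $\ti{\De'}^{(1)}\to\Ga_\al$; since $\De'$ viewed over $G_\al$ realizes $X=Z_1^{-1}YZ_1=Z^{-1}YZ$ in $G_\al$, it has the same frame type as the annular diagram attached to $\bar{!X}$, $\bar{!Y}$, so Lemma \ref{lm:disk-annular-frame-type} lets me choose the maps with $\pi(\bar{!X}_1)=\bar{!X}$ and $\pi(\bar{!Y}_1)=\bar{!Y}$. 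Now, given a finite subpath $!Z$ of $\bar{!X}$, I lift it through $\pi|_{\bar{!X}_1}$ to a subpath $!Z_1$ of $\bar{!X}_1$, feed $!Z_1$ to the induction hypothesis to get a subpath $!S_1$ of $\bar{!X}_1$ containing $!Z_1$, a subpath $!T_1$ of $\bar{!Y}_1$, and bridges $!u_1$, $!v_1$ of rank $\be$ with $!S_1^{-1}!u_1!T_1!v_1$ liftable to $\Ga_\be$, and push forward by $\pi$: since $\pi$ preserves labels, $!S=\pi(!S_1)$ is a subpath of $\bar{!X}$ containing $!Z$, $!T=\pi(!T_1)$ a subpath of $\bar{!Y}$, $!u=\pi(!u_1)$, $!v=\pi(!v_1)$ bridges of rank $\be$, and $!S^{-1}!u!T!v$ is again liftable to $\Ga_\be$. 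Taking $!Z$ a single vertex gives the first assertion.

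The remaining case is $\be=\al$ with $\De$ having at least one cell of rank $\al$. By Proposition \ref{pr:single-layer}\itemref{pri:annular-single-layer} every cell $!D$ of rank $\al$ of $\De$ has a contiguity subdiagram in $\cT$ to each of the two cyclic sides, and a cell $!D$ together with these two contiguity subdiagrams $\Pi'$, $\Pi''$ furnishes a ``rung'': a path from a vertex of one cyclic side through $\Pi'$, then along $\de!D$, then through $\Pi''$ to a vertex of the other cyclic side, whose label has the form $uSv$ with $u,v\in\cH_{\al-1}$ and $S$ a piece of a relator of rank $\al$ — that is, a bridge word of rank $\al$. Since such a cell, carrying contiguity subdiagrams to both cyclic sides, is a barrier across the annulus, every loop around the annulus crosses at least one rung; unrolling $\De$ in its universal cover and mapping into $\Ga_\al$ turns the rungs into bridges of rank $\al$ joining $\bar{!X}$ to $\bar{!Y}$ that recur at bounded spacing along both lines. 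Hence any finite subpath $!Z$ of $\bar{!X}$ lies between two consecutive such bridges, which bound a subpath $!S$ of $\bar{!X}$ containing $!Z$ and a subpath $!T$ of $\bar{!Y}$ with $!S^{-1}!u!T!v$ a loop in $\Ga_\al$ (trivially liftable to $\Ga_\al$) and $!u$, $!v$ bridges of rank $\al$; with $!Z$ a single vertex this also gives the first assertion, completing the inductive step.

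The step I expect to be the main obstacle is this last case: making precise that the rungs genuinely chain together into a cycle encircling the whole annulus — so that the enclosing-window assertion holds for \emph{every} $!Z$ and not merely for windows in a half-line of $\bar{!X}$ — and that each rung really carries a label in $\cH_\al$; this is where Proposition \ref{pr:single-layer}\itemref{pri:annular-single-layer} must be combined with the side-length bounds of Propositions \ref{pr:principal-bound} and \ref{pr:small-complexity-cell}\itemref{pri:bigon-cell} applied to the small components of $\De_{\al-1}-\bigcup_{\Pi\in\cT}\Pi$ lying between consecutive rungs. By comparison, the transfer in the step-down case — matching frame types across $\pi$ and the stability of ``liftable to $\Ga_\be$'' under $\Ga_{\al-1}\to\Ga_\al$ — is routine bookkeeping.
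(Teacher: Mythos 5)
Your proof is correct and follows essentially the same route as the paper's: realize the conjugacy by a reduced annular diagram with a tight set of contiguity subdiagrams, split on whether the diagram carries a cell of rank $\al$, handle the cell-free case by descending to $G_{\al-1}$ and invoking the induction hypothesis, and in the cell case extract from Proposition~\ref{pri:annular-single-layer} the bridge of rank $\al$ crossing the annulus (your ``rungs''). The paper's own proof is terser — it cites Proposition~\ref{pri:bigon-cell} directly to rule out rank-$\al$ cells when $\be<\al$ rather than routing through Proposition~\ref{pr:no-active-fragments-cyclic}, and it dispatches the frame-type bookkeeping and the reduction of the ``moreover'' clause to the first assertion in a single sentence about the translation elements $s_{X,\bar{!X}}$, $s_{Y,\bar{!Y}}$ — but the underlying structure is identical.
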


\begin{proof}
Since $\bar{!X}$ and $\bar{!Y}$ are parallel, 
if vertices $!a$ on $\bar{!X}$ and $!b$ on $\bar{!Y}$ are joined by a path labeled ~$Z$ then the same is true
for all their translates $s_{X,\bar{!X}}^k !a$ and $s_{Y,\bar{!Y}}^k !b$. Then the second statement follows from the first.

Let $\De$ be an annular diagram of rank $\al$ with boundary loops $\hat{!X}^{-1}$ and $\hat{!Y}$
and $\phi: \ti{\De}^{(1)} \to \Ga_\al$ a combinatorially continuous
map of the 1-skeleton of the universal cover $\ti\De$ of $\De$ to $\Ga_\al$
sending lifts $\ti{!X}$ of $\hat{!X}$ and $\ti{!Y}$ of $\hat{!Y}$ 
to $\bar{!X}$ and $\bar{!Y}$ respectively.
We can assume that $\De$ is reduced and has a tight set of contiguity subdiagrams.
If $\be = \al$ and $\De$ has a cell of rank $\al$ then the statement follows from 
Proposition \ref{pri:annular-single-layer}.
If $\De$ has no cells of rank $\al$ 
then we can lift $\bar{!X}$ and $\bar{!Y}$ to $\Ga_{\al-1}$ and use induction
on $\al$.
If $\be < \al$ and at least one of the words ~$X$ or ~$Y$ has 
no cyclic shift containing a fragment $K$ of rank $\al$ with $\muf(K) > \xi_0$
then by Proposition \ref{pri:bigon-cell}, $\De$ has no cells of rank $\al$
and, again, the statement follows by induction.
\end{proof}

\begin{proposition}[small coarse polygons] \label{pr:sides-bound-Cayley}
Let $!P = !X_1*!X_2*\dots!X_r*$ be a coarse $r$-gon in $\Ga_\al$ where $r \ge 3$
and $!X_i$ are sides of $!P$.
Assume that there are no pairs of close vertices lying on distinct paths $!X_i$ and $!X_j$
except pairs $\set{\tau(!X_i),\io(!X_{i+1})}$ and $\set{\tau(!X_r),\io(!X_1)}$.
Then 
$$
  \sum_i |!X_i|_\al \le (r-2)\eta .
$$
If $r = 3$ or $r = 4$ then we have a stronger bound
$$
  \sum_i |!X_i|_\al \le 2(r-1)\ze\eta.
$$
\end{proposition}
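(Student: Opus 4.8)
The plan is to fill the coarse $r$-gon by a reduced disk diagram of rank $\al$ and to show that, under the hypothesis, this diagram is \emph{small}; the two bounds then come directly from Propositions~\ref{pr:principal-bound} and~\ref{pr:small-trigons-tetragons}. Write $!P = !X_1 !u_1 !X_2 !u_2 \dots !X_r !u_r$ with the $!u_i$ the bridges, and equip each $!u_i$ with a bridge partition. By Proposition~\ref{pr:filling-polygons}, after possibly switching some of the $!u_i$ there is a reduced disk diagram $\De$ of rank $\al$ together with a filling map $\phi\colon \De^{(1)} \to \Ga_\al$ whose boundary loop is $\ti{!X}_1 \ti{!u}_1 \dots \ti{!X}_r \ti{!u}_r$ with $\phi(\ti{!X}_i) \greq !X_i$ and $\phi(\ti{!u}_i) \greq !u_i$, the $\ti{!X}_i$ being the sides and the $\ti{!u}_i$ the bridges of $\De$. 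Switching bridges does not change the sides $!X_i$, the junction vertices $\tau(!X_i)$, $\io(!X_{i+1})$, or the closeness relation on vertices of $\Ga_\al$, so the hypothesis of the proposition is unaffected.

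The core step is to show that $\De$ is small. Suppose $!u$ is a bond in some refinement $\De^*$ of $\De$, with endpoints $!a$ and $!b$; the map $\phi$ also carries $\De^{*(1)}$ to $\Ga_\al$. Since $\De^*$ is simply connected, the simple arc $!u$ separates it into two simply connected pieces. The endpoints $!a$, $!b$ cannot lie on one and the same side: if both lay on a side $\ti{!X}_i$, then $!u$ and the subpath $!t$ of $\ti{!X}_i$ between $!a$ and $!b$ would bound a simply connected subdiagram, inside which $!u$ is homotopic rel endpoints to $!t$, contradicting condition~(iii) of Definition~\ref{df:bond}. So $!a$ lies on a side $\ti{!X}_i$ and $!b$ on a side $\ti{!X}_j$ with $i \ne j$. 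As $\lab(!u)$ is equal in $G_\al$ to a word of $\cH_\al$, the vertices $\phi(!a)$ on $!X_i$ and $\phi(!b)$ on $!X_j$ are close, hence by hypothesis form one of the pairs $\{\tau(!X_k), \io(!X_{k+1})\}$, so $\{i,j\} = \{k,k+1\}$. Since a reduced path in $\Ga_\al$ is simple, $\phi$ is injective on each side; thus $!a$ and $!b$ are, up to edges of empty label introduced by the refinement, the two endpoints of the bridge $\ti{!u}_k$, and $!u$ cuts off from $\De^*$ a simply connected subdiagram with boundary loop $!u^{\pm 1} !p\, \ti{!u}_k !q$ with $!p$, $!q$ of empty label, contradicting condition~(iv) of Definition~\ref{df:bond}. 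Hence $\De$ has no bond after any refinement, i.e.\ $\De$ is small.

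It remains to read off the estimates. The diagram $\De$ is a disk with $b(\De) = r$ bridges and $\chi(\De) = 1$, so $c(\De) = r - 2 \ge 1$, and Proposition~\ref{pr:principal-bound} gives $\sum_i |!X_i|_\al \le \eta\, c(\De) = (r-2)\eta$. When $r = 3$ or $r = 4$, $\De$ is a small diagram of trigon, respectively tetragon, type, and Proposition~\ref{pr:small-trigons-tetragons} yields $\sum_i |!X_i|_\al \le 4\ze\eta$ or $\le 6\ze\eta$, which equals $2(r-1)\ze\eta$ in both cases.

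I expect the smallness claim to be the main obstacle: one must check carefully that passing to a refinement does not disturb the identification, under $\phi$, of sides and junction vertices, and that conditions~(iii) and~(iv) of the bond definition really do exclude, respectively, a bond joining a side to itself and a bond that realizes an admissible junction pair. Once $\De$ is known to be small, the rest is a direct appeal to the global bounds of Section~\ref{s:diagram-bounds}.
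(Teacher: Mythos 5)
Your proof follows exactly the paper's own argument: fill the coarse $r$-gon by a reduced disk diagram via Proposition~\ref{pr:filling-polygons}, observe that the hypothesis on close vertices forces the diagram to be small, and read off the two bounds from Propositions~\ref{pr:principal-bound} and~\ref{pr:small-trigons-tetragons}. The paper states the smallness implication in one sentence; you spell it out by running through conditions~(iii) and~(iv) of Definition~\ref{df:bond}, which is the right verification and matches what the paper implicitly relies on.
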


\begin{proof}
Consider a filling $\phi: \De^{(1)} \to \Ga_\al$ of $!P$
by a reduced disk diagram $\De$ of rank ~$\al$.
Let $\de\De = \bar{!X}_1 !u_1 \bar{!X}_2 !u_2 \dots \bar{!X}_r !u_r$ where $!u_i$ are bridges and $!X_i$ are sides 
of $\De$ with $\phi(\bar{!X}_i) = !X_i$.
The hypothesis of the proposition 
implies that $\De$ is small. Then the statement follows from 
Propositions \ref{pr:principal-bound} and \ref{pr:small-trigons-tetragons}.
\end{proof}

\begin{proposition}[trigons and tetragons are thin] \strut\par 
\label{pr:3-4-gon-closeness}
\begin{enumerate}
\item \label{pri:3gon-closeness}
Let $!X^{-1} * !Y_1 * !Y_2 *$ be a coarse trigon in $\Ga_\al$. Then $!X$ can be represented as 
$!X = !X_1 !z !X_2$ where $!X_1$ is close to a start of $!Y_1$, $!X_2$ is close to an end of $!Y_2$
and $|!z|_\al \le 4 \ze\eta$.
\item \label{pri:4gon-closeness}
Let $!X^{-1} * !Y_1 * !Y_2 * !Y_3 *$ be a coarse tetragon in $\Ga_\al$. Then at least
one of the following possibilities holds:
\begin{itemize}
\item 
$!X$ can be represented as 
$!X = !X_1 !z !X_2$ where $!X_1$ is close to a start of $!Y_1$, $!X_2$ is close to an end of $!Y_3$
and $|!z|_\al \le 6\ze\eta$; or 
\item
$!X$ can be represented as 
$!X = !X_1 !z_1 !X_2 !z_2 !X_3$ where $!X_1$ is close to a start of $!Y_1$, $!X_2$ is close to a subpath of $!Y_2$,
$!X_3$ is close to an end of $!Y_3$ and $|!z_i|_\al \le 4 \ze\eta$ $(i=1,2)$.
\end{itemize}
\end{enumerate}
\end{proposition}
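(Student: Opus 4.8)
The plan is to reduce both parts to the bound on small coarse polygons, Proposition~\ref{pr:sides-bound-Cayley}: I cut the given coarse polygon down to a ``core'' sub-polygon that has no pair of close vertices on distinct sides apart from the forced ones joining consecutive sides, and then read off the constants from the cases $r=3$ and $r=4$ of that proposition.

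For part~(i), write the coarse trigon as $!X^{-1} * !Y_1 * !Y_2 *$ with bridges $!u_0, !u_1, !u_2$, so that $\io(!X)$ is close to $\io(!Y_1)$ through $!u_0$ and $\tau(!X)$ is close to $\tau(!Y_2)$ through $!u_2$. Let $!a$ be the last vertex of $!X$ close to a vertex of $!Y_1$ and $!b$ the first vertex of $!X$ close to a vertex of $!Y_2$. If $!b$ is not strictly after $!a$ on $!X$, the arcs of $!X$ up to $!a$ and from $!b$ onwards cover $!X$ and the conclusion holds with $!z$ empty; so assume $!a \ll !b$ and put $!X = !X_1 !z !X_2$ with $!X_1$ ending at $!a$ and $!X_2$ starting at $!b$. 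Fixing a vertex $!c_1 \in !Y_1$ close to $!a$ and $!c_2 \in !Y_2$ close to $!b$, the arc $!X_1$ is close to the start of $!Y_1$ ending at $!c_1$ and $!X_2$ is close to the end of $!Y_2$ starting at $!c_2$, which is the required shape; so only $|!z|_\al$ remains to be bounded. For that I form the coarse trigon whose sides are $!z$, the end of $!Y_1$ after $!c_1$ read backwards, and the start of $!Y_2$ before $!c_2$ read backwards, with bridges the one joining $!b$ and $!c_2$, the original bridge $!u_1$, and the one joining $!c_1$ and $!a$. By maximality of $!a$ and minimality of $!b$, no interior vertex of $!z$ is close to a vertex of either of its $!Y$-sides, so the only close pairs that could still violate the hypothesis of Proposition~\ref{pr:sides-bound-Cayley} lie between the two $!Y$-sides; a single peeling step --- take the vertex of $!Y_1$ nearest $!c_1$ that is close to the $!Y_2$-side, then the vertex of $!Y_2$ farthest along among those close to it, and replace the two $!Y$-sides and $!u_1$ by the corresponding shorter arcs and the bridge joining the chosen pair --- yields a coarse trigon with the same side $!z$ and no offending close pairs. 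Proposition~\ref{pr:sides-bound-Cayley} then gives $|!z|_\al \le 2(3-1)\ze\eta = 4\ze\eta$. Degenerate situations, where one of the $!Y$-arcs collapses so that one faces a coarse bigon, only improve the bound.

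Part~(ii) runs in parallel: take $!a$ the last vertex of $!X$ close to $!Y_1$ and $!b$ the first close to $!Y_3$, assume $!a \ll !b$, and form the coarse \emph{tetragon} with sides $!z$, a backward end of $!Y_1$, all of $!Y_2$ read backwards, and a backward start of $!Y_3$, using $!u_1, !u_2$ and the two witness bridges. Maximality and minimality of $!a, !b$ rule out close pairs of $!z$ with the $!Y_1$- and $!Y_3$-sides but not with $!Y_2$, and there may also be close pairs between the $!Y_1$- and $!Y_3$-sides; whenever such an offending pair occurs I cut the tetragon along a bridge through it into two coarse trigons and apply part~(i). In the $!z$-versus-$!Y_2$ case this splits $!z$ itself and produces the two-bend conclusion $!X = !X_1 !z_1 !X_2 !z_2 !X_3$ with $|!z_i|_\al \le 4\ze\eta$, the middle piece $!X_2$ being close to a subpath of $!Y_2$; in the $!Y_1$-versus-$!Y_3$ case one of the two resulting trigons still has a sub-arc of $!z$ as a side and part~(i) bounds that sub-arc, while close pairs between adjacent $!Y$-sides are peeled exactly as in part~(i). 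If after all these reductions the tetragon has no offending close pair, Proposition~\ref{pr:sides-bound-Cayley} (case $r=4$) gives the one-bend conclusion $|!z|_\al \le 2(4-1)\ze\eta = 6\ze\eta$.

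The delicate part, where I expect to spend the most care, is the bookkeeping that guarantees the core sub-polygon really has no stray close vertices: one must make the extremal choices of the cut points in the right order and check --- using only the definition of closeness by bridges of rank~$\al$, which here need not be transitive --- that a single peeling step removes each problematic pair of sides, and one must verify that all arcs that end up as sides are nonempty and reduced (subpaths of reduced paths are reduced, by~\ref{ss:reduced-words}) so that Proposition~\ref{pr:sides-bound-Cayley} genuinely applies. The remaining ingredients are the additivity of $|\cdot|_\al$ from~\ref{ss:alpha-length-properties} and the elementary arithmetic with $2(r-1)\ze\eta$.
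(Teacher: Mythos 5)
Your proposal is correct and follows essentially the same approach as the paper's proof: both reduce to Proposition~\ref{pr:sides-bound-Cayley} by cutting the coarse polygon down to a sub-polygon $!z^{-1}*!w_1*!w_2*$ (or the tetragonal analogue) with no offending close vertex pairs, by means of extremal choices of cut vertices, and in part (ii) both use the same case split (close $!Y_1$--$!Y_3$ pair, close $!X$--$!Y_2$ pair, neither). The paper merely asserts the existence of the reduced sub-polygon, whereas you spell out the peeling procedure in more detail.
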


\begin{proof}
(i) We can represent $!X_1 = !X_1 !z !X_2$, $!Y_i = !Y_{i1} !w_i !Y_{i2}$ $(i=1,2)$
with close pairs $(!X_1,!Y_{11})$, $(!Y_{12},!Y_{21}^{-1})$ and $(!Y_{22},!X_2)$
where no vertices lying on distinct paths $!z$, $!w_1$ and $!w_2$ are close except appropriate endpoints (Figure~\ref{fig:side-bound}a). 
Then the statement follows by application of 
Proposition \ref{pr:sides-bound-Cayley} to $!z^{-1} * !w_1 * !w_2 *$. 
\begin{figure}[h] 
\input 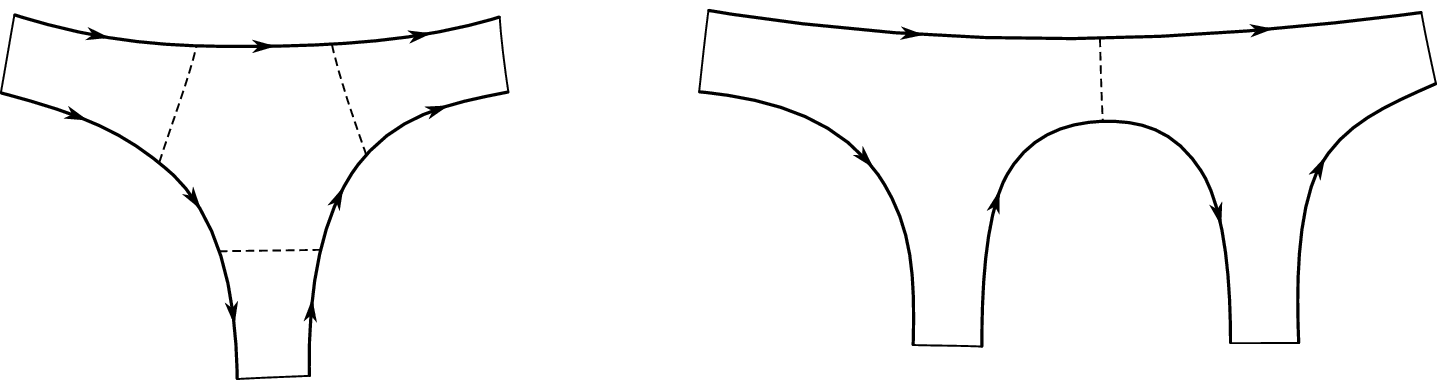_tex
\caption{} \label{fig:side-bound}
\end{figure}

(ii) If there is a pair of close vertices on $!Y_1$ and $!Y_3$ then the statement follows from (i)
giving the first alternative. 
If there is a pair of close vertices on $!X$ and on $!Y_2$ then we 
represent $!X$ and ~$!Y_2$ as $!X = !X_1 !X_2$, $!Y_2= !Y_{21} !Y_{22}$ 
where $\tau(!X_1)$ and $\tau(!Y_{21})$
are close, and apply ~(i) to 
$!X_1^{-1} * !Y_1 * !Y_{21} *$ and $!X_2^{-1} * !Y_{22} * !Y_3 *$ (Figure~\ref{fig:side-bound}b).
We then come to the second alternative to the statement.
Otherwise we use an argument similar to the proof of (i) coming to the first alternative.
\end{proof}

\begin{proposition}[small cyclic monogon] \label{pr:small-cyclic-monogon}
Let $X$ be a word cyclically reduced in $G_\al$ and let $X$ be conjugate in $G_\al$ to a word $Yu$ 
where $Y$ is reduced in $G_\al$ and $u$ is a bridge of rank ~$\al$.
Let $\bar{!X} = \prod_{i \in \Z} !X_i$ and $\prod_{i \in \Z} !Y_i !u_i$ be lines in 
$\Ga_\al$ representing the conjugacy relation. 
Assume that no vertex on $!X_0$ is close to a vertex on $!Y_i$.
Then $|X|_\al \le \eta$.
\end{proposition}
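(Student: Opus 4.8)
The plan is to reduce the statement to the case of a disk diagram with no cells of rank $\al$ and then to apply the bound for small diagrams of trigon or tetragon type (Proposition \ref{pr:small-trigons-tetragons}). First I would realize the conjugacy relation by an annular diagram $\De$ of rank $\al$ with two boundary loops, one labeled $X^{-1}$ (a cyclic side, using that $X$ is cyclically reduced in $G_\al$) and one labeled $Yu$ — more precisely, with boundary marking consisting of a side $!Y$ and a bridge $!u$ of rank $\al$. Reduce $\De$ by the reduction process of \ref{ss:diagram-reduction} and equip it with a tight set $\cT$ of contiguity subdiagrams; it has the same frame type as the original annular diagram (up to bridge switching), so the periodic lines $\bar{!X}$ and $\prod !Y_i!u_i$ obtained by lifting are preserved. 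By Proposition \ref{pri:cyclic-monogon-cell} (the annular case of Proposition \ref{pr:small-complexity-cell}, one cyclic side and one non-cyclic side) every cell of rank $\al$ would have at least two contiguity subdiagrams to sides of $\De$, hence to the cyclic side $!X^{-1}$; lifting, this produces two active fragments of rank $\al$ sitting on copies of $\bar{!X}$ that are close to subpaths of the $!Y_i$'s. The hypothesis that no vertex of $!X_0$ is close to a vertex on any $!Y_i$ forbids this, so $\De$ has no cells of rank $\al$.

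Next I would pass to $\De_{\al-1}$, which is now an annular diagram over the presentation of $G_{\al-1}$ with the induced boundary marking of rank $\al-1$ — a cyclic side labeled (a cyclic shift of) $X$ coming from the cyclic side of $\De$, a side $!Y$, and bridges/central arcs of rank $\al-1$ of bounded size coming from $!u$. Since $\De$ is small, $\De_{\al-1}$ is small as well. Cut $\De_{\al-1}$ along a path from the cyclic side to the side $!Y$ to obtain a small disk diagram of bigon or trigon type whose sides include a copy of $X$; the non-closeness hypothesis, transported to rank $\al-1$, guarantees that the only close pairs of vertices are the forced ones at the corners, so Proposition \ref{pr:small-trigons-tetragons} (equivalently Proposition \ref{pr:sides-bound-Cayley}) applies and bounds $\sum|!S_i|_{\al-1}$ by a multiple of $\ze\eta$. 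Using $|X|_\al \le \ze|X|_{\al-1}$ from \ref{cl:om-rank-increment} and discarding the contributions of the (short) arcs coming from $u$, I would get $|X|_\al \le \eta$, with room to spare from the factor $\ze$.

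The main obstacle I anticipate is the bookkeeping at the seam where the annular diagram is cut: one must check that cutting along a geodesic-type path and then reading off the sides of the resulting disk diagram does not create spurious closeness between a subpath of the copy of $X$ and a subpath of $!Y$ beyond what the hypothesis already excludes, and that the bridge $!u$ (of native rank $\al$, so contributing a central arc of rank $\al$ and two bridges of rank $\al-1$ after passing to $\De_{\al-1}$) is accounted for correctly in the count $b(\De_{\al-1})$ — it is here that one uses that each bridge of $\De$ produces at most two bridges of $\De_{\al-1}$. Once the diagram is confirmed to be small of trigon (or tetragon) type, the numerical bound is immediate and generous; the delicate part is purely the geometric set-up of the cut and the translation of the ``no close vertices'' hypothesis from $\Ga_\al$ down to the rank $\al-1$ picture, for which Remark \ref{rm:close-in-rank-0} and the relation between closeness in $\Ga_\al$ and in $\Ga_{\al-1}$ are the relevant tools.
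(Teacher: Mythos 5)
Your proposal does not match the paper's proof, and it contains genuine gaps that would need to be filled.

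The key structural mismatch is that you try to show $\De$ has no cells of rank $\al$ and then descend to $\De_{\al-1}$ and cut the annulus. Neither step is justified, and neither is needed. For the first step, you invoke Proposition \ref{pri:cyclic-monogon-cell} to say each cell of rank $\al$ has two contiguity subdiagrams to sides of $\De$ ``hence to the cyclic side''---but that proposition explicitly allows both contiguity subdiagrams of a cell to be to the \emph{non-cyclic} side $\hat{!Y}^{-1}$, and the hypothesis of Proposition \ref{pr:small-cyclic-monogon} only forbids closeness between $\bar{!X}$ and $\bar{!Y}$, not between $\bar{!Y}$ and itself. So such cells are not excluded, and your deduction fails. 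For the second step, the cut path from $\hat{!X}$ to $\hat{!Y}^{-1}$ has no a priori control on its label, so the disk diagram you obtain is not a coarse polygon with the boundary marking structure that Proposition \ref{pr:sides-bound-Cayley} or Proposition \ref{pr:small-trigons-tetragons} requires; you flag this as a ``main obstacle'' but offer no resolution, and in fact it is fatal: the only thing that could make the cut path short is precisely the kind of closeness the hypothesis forbids.

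The paper's argument avoids all of this. It realizes the conjugacy by the same annular diagram $\De$ of rank $\al$, makes it reduced, and then observes that the hypothesis forbids bonds between $\hat{!X}$ and $\hat{!Y}^{-1}$, while any bond from $\hat{!Y}^{-1}$ to itself must, by Definition \ref{df:bond}(iii)--(iv), cut off a simply connected piece containing the bridge $\hat{!u}^{-1}$ and with one of the ends of $\hat{!Y}^{-1}$ nonempty; removing that piece strictly shortens $\hat{!Y}^{-1}$, so by induction $\De$ can be assumed small. Then one applies Proposition \ref{pr:principal-bound} \emph{directly} to the small annular diagram $\De$ of rank $\al$ (which is allowed: it applies to any small diagram of positive complexity, including annular ones), with $b(\De)=1$ and $\chi(\De)=0$ so $c(\De)=1$, giving $|\hat{!X}|_\al + |\hat{!Y}|_\al \le \eta$, hence $|X|_\al \le \eta$. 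There is no need to pass to $\De_{\al-1}$, no need to cut, and no need to rule out cells of rank $\al$.
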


\begin{proof}
Let $\De$ be an annular diagram of rank $\al$ with boundary loops $\hat{!X}$
and $\hat{!Y}^{-1} \hat{!u}^{-1}$ representing the conjugacy relation. We
consider $\De$ as having a cyclic side $\hat{!X}$, a non-cyclic side 
$\hat{!Y}^{-1}$ and a bridge $\hat{!u}^{-1}$. 
Up to switching of $\hat{!u}^{-1}$ we can assume that $\De$ is reduced.
The hypothesis implies that $\De$ cannot have a bond between $\hat{!X}$
and $\hat{!Y}^{-1}$ after any refinement. Assume that $\De$ has a bond $!v$
(possibly after refinement) joining 
two vertices on the same side $\hat{!Y}^{-1}$. Then $!v$ cuts off 
from $\De$ a simply connected subdiagram $\Si$ with boundary 
loop $!Z_1 \hat{!u}^{-1} !Z_2 !v^{\pm1}$ where $\hat{!Y}^{-1}= !Z_2 !W !Z_1$
for some $!W$. According to Definition \ref{df:bond}, at least one of the words
$\lab(!Z_i)$ $(i=1,2)$ is nonempty. Removing $\Si$ from $\De$ we obtain a diagram $\De'$
with a shorter total label of its two sides. Hence, by induction, we can assume
that $\De'$ is small. Then $|X|_\al = |\hat{!X}|_\al \le \eta$ by
Proposition \ref{pr:principal-bound}.
\end{proof}

\begin{proposition}[closeness fellow traveling] \label{pr:fellow-traveling}
Let $!X$ and $!Y$ be close reduced paths in ~$\Ga_\al$, $\al\ge1$. 
Then $!X$ and $!Y$ can be represented as
$!X = !U_1 !U_2 \dots !U_k$ and $!Y = !V_1 !V_2 \dots !V_k$ ($!U_i$ and $!V_i$ can be empty)
where the starting vertex of each $!U_i$ is close to the starting vertex of ~$!V_i$ and
$ |!U_i|_\al, |!V_i|_\al \le \ze$ for all $i$.
\end{proposition}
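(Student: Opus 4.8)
\emph{Proof proposal.}
The plan is to argue by induction on $\al$, with a secondary induction on $\Area_\al(!P)$, where $!P = !X^{-1} !u !Y !v$ is a coarse bigon realizing the closeness of $!X$ and $!Y$ (switching the bridges of $!P$ is harmless, since the conclusion does not mention them). I will use the auxiliary statement $(\star)$: for every $\be \ge 0$, any two close reduced paths in $\Ga_\be$ admit decompositions $!U_1 \cdots !U_k$ and $!V_1 \cdots !V_k$ with possibly empty factors, such that $\io(!U_i)$ and $\io(!V_i)$ are close in rank $\be$ and $|!U_i|_\be, |!V_i|_\be \le 1$ for all $i$. For $\be = 0$ this is trivial (closeness in rank $0$ means the paths coincide, by Remark~\ref{rm:close-in-rank-0}; take single edges), and for $1 \le \be < \al$ it follows from the proposition at rank $\be$, since there $|\cdot|_\be \le \ze < 1$. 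Because $\cH_{\al-1} \seq \cH_\al$ and $|W|_\al \le \ze |W|_{\al-1}$ (see \ref{ss:alpha-length-properties}), it suffices to produce matched decompositions of $!X$ and $!Y$ into factors of $|\cdot|_{\al-1} \le 1$ whose corresponding starting vertices are joined by a bridge of rank $\al$. Two facts about rank-$\al$ bridges will be used throughout: any two vertices of a relator loop of rank $\al$ are joined by a subpath of it, whose label is a piece of rank $\al$ and hence a bridge of rank $\al$; and a word $u S v$ with $u, v \in \cH_{\al-1}$ and $S$ a piece of rank $\al$ again lies in $\cH_\al$.

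If $\Area_\al(!P) = 0$, then $!P$ lifts to $\Ga_{\al-1}$ after a suitable switching of bridges (Remark~\ref{rm:no-active-lift}); expanding the bridge partitions, the lift is a coarse tetragon in $\Ga_{\al-1}$ whose two extra sides are the central arcs of the bridges, which are pieces of rank $\al$, hence reduced in $G_{\al-1}$ by Proposition~\ref{pr:relator-strongly-reduced} (they degenerate when the bridges have rank $< \al$). By Proposition~\ref{pr:3-4-gon-closeness}$_{\al-1}$ this polygon is thin: $!X$ and $!Y$ break up into corner pieces $!z$ with $|!z|_{\al-1} \le 6\ze\eta < 1$, which are single $\ze$-pieces, together with pieces each close in rank $\al-1$ to a subpath of the opposite side or to a subpath of one of the relator loops carrying a central arc. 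Pieces of the first kind are matched against the relevant subpath of the other side by $(\star)$ at rank $\al-1$; pieces of the second kind (and the corner pieces) are matched, via $(\star)$ at rank $\al-1$ against arcs of the relevant relator loops and the two facts above, routing closeness through those loops and padding with empty factors where necessary.

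If $n := \Area_\al(!P) \ge 1$, fill $!P$ by a reduced diagram $\De$ of rank $\al$ with a tight set $\cT$ of contiguity subdiagrams (Propositions~\ref{pr:filling-polygons} and~\ref{pr:contiguity-map-exists}) and pick a cell $!D$ of rank $\al$; by Proposition~\ref{pr:single-layer}(i) and Lemma~\ref{lm:cell-regularity}(iii), $!D$ has exactly one contiguity subdiagram in $\cT$ to each side of $\De$. As in \ref{ss:active-loop-induction} this yields $!X = !X_1 !K !X_2$ and $!Y = !Y_1 !M !Y_2$, with $!K, !M$ the active fragments of rank $\al$ coming from $!D$, and coarse bigons $!P_1$ (sides $!X_1, !Y_1$) and $!P_2$ (sides $!X_2, !Y_2$) with $\Area_\al(!P_1) + \Area_\al(!P_2) = n - 1$. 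The secondary induction decomposes $!P_1$ and $!P_2$, and it remains to decompose $!K$ against $!M$ so that the first and last factors start and end at $\io(!K), \tau(!K)$ and $\io(!M), \tau(!M)$. By \ref{ss:active-fragments}, $!K$ and $!M$ have base relator loops $!R^{-1}$ and $!R$ (one loop up to inversion) and are close in rank $\al-1$ to their bases $!Q^{-1}, !S$, which are subpaths of that loop, hence reduced in $G_{\al-1}$ (Proposition~\ref{pr:relator-strongly-reduced}). Applying $(\star)$ at rank $\al-1$ to $(!K, !Q^{-1})$ and $(!M, !S)$ yields decompositions of $!K$ and $!M$ into $|\cdot|_\al \le \ze$ factors whose starting vertices, and also $\tau(!K)$ and $\tau(!M)$, are joined by rank-$(\al-1)$ bridges to vertices of $!R$; by the two facts above, any starting vertex of a $!K$-factor (and $\tau(!K)$) is then close in rank $\al$ to any starting vertex of an $!M$-factor (and to $\tau(!M)$). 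Listing the $!K$-factors followed by one empty factor at $\tau(!K)$ per $!M$-factor, against one empty factor at $\io(!M)$ per $!K$-factor followed by the $!M$-factors, gives matched decompositions of equal length with the required endpoints; concatenating with the decompositions for $!X_1, !X_2, !Y_1, !Y_2$ finishes the induction. When $\al = 1$ the bases and central arcs are pieces of rank $1$, and the rank-$0$ invocations of $(\star)$, of the lifting step, and of Proposition~\ref{pr:3-4-gon-closeness} collapse to the trivial statement that close reduced paths in $\Ga_0$ coincide, so the argument is unchanged.

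The main difficulty is the bookkeeping around the active-fragment pair and the central arcs of bridges: two paths that together wrap around a single relator loop form a coarse bigon but cannot be matched factor-for-factor, so the matching must be routed through the relator loop, padding with empty factors and repeatedly using that arcs of a rank-$\al$ relator flanked by rank-$(\al-1)$ bridges are rank-$\al$ bridges. Everything else is the inductive hypotheses (packaged as $(\star)$), the single-layer structure of a reduced filling diagram, and the thinness of polygons of low complexity.
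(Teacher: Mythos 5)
Your overall strategy is the same as the paper's: pass to a reduced filling diagram, exploit its single-layered structure to split the close pair into segments that are either (a) handled by the inductive hypothesis at rank $\al-1$, or (b) active-fragment pairs that can each be routed through their common relator loop and matched with empty factors, and then observe that matching on the loop works because any two vertices close in rank $\al-1$ to vertices of a rank-$\al$ relator loop are close in rank $\al$. Your treatment of the active-fragment pair in the $\Area_\al \ge 1$ step, with the ``$!K$-factors, then empty factors; empty factors, then $!M$-factors'' interleaving, is essentially identical to what the paper does for its type-(2) subpaths.

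The genuine gap is in the $\Area_\al(!P)=0$ case. You lift to a coarse tetragon $!X^{-1}\ast!P\ast!Y\ast!Q\ast$ in $\Ga_{\al-1}$ and say that ``$!X$ and $!Y$ break up'' by Proposition~\ref{pr:3-4-gon-closeness}$_{\al-1}$. But that proposition only decomposes one designated side of the tetragon. Applying it to $!X$ gives $!X=!X_1!z_1!X_2!z_2!X_3$ with $!X_2$ close in rank $\al-1$ to some subpath $!W_2$ of $!Y$, and applying it to $!Y$ independently gives $!Y=!Y_a!w_1!Y_b!w_2!Y_c$ with $!Y_b$ close to some subpath $!W_2'$ of $!X$. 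Nothing in the statement forces $!W_2=!Y_b$ or $!W_2'=!X_2$, so the two decompositions are not automatically aligned, and the factors of $!Y_a$ (say) that your scheme would like to pad against empty $!X$-factors are not yet shown to be close in rank $\al-1$ to the central arc $!P$; the proposition places part of $!Y_a$ close to a subpath of $!X_1!z_1$, not directly on the relator loop, and one step of rank-$(\al-1)$ closeness composed with another only lands in $\cH_\al$, which does not self-compose. To close this you need an extra step: take $!X_2$ and $!W_2$ to be a \emph{maximal} pair of close subpaths, and then argue (via a coarse trigon with sides $!X_1!z_1$, $!Y_a$ and $!P$, using Proposition~\ref{pri:3gon-closeness}$_{\al-1}$ together with the maximality to kill the part of $!Y_a$ that would be matched against $!X_1!z_1$) that $!Y_a$ is, up to a short connector, close in rank $\al-1$ to $!P$, and symmetrically for $!Y_c$ and $!Q$. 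The paper avoids this issue globally rather than locally: it forms the alternating decomposition all at once, demands that the pieces near each relator loop be \emph{unextendable} (``subpaths $!X_i$, $!Y_i$, $!S_i$, $!R_i$ cannot be extended''), and then in the no-cell segments it may assume there are no close vertices on $(!X,!P)$, $(!X,!Q)$, $(!Y,!P)$, $(!Y,!Q)$ except at the designated endpoints, which is exactly what makes the subcase analysis with Proposition~\ref{pr:sides-bound-Cayley}$_{\al-1}$ clean and the decompositions of $!X$ and $!Y$ automatically consistent. Adding such a maximality step to your $\Area=0$ case would repair the argument without any new ideas.
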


\begin{proof}
Observe that the statement of the lemma holds in the case $\al=0$ with
$|!U_i|_0, |!V_i|_0 = 1$. 
Thus we may refer to the statement of the lemma in rank $\al-1$ with bounds
$|!U_i|_{\al-1}, |!V_i|_{\al-1} \le 1$ which imply $|!U_i|_{\al}, |!V_i|_{\al} \le \ze$.
Observe also that if $!X = !X_1 !X_2 \dots !X_r$ and $!Y = !Y_1 !Y_2 \dots !Y_r$ 
where for each $i$, $!X_i$ and $!Y_i$ are close then the statement of the lemma for each pair $(!X_i,!Y_i)$
implies the statement of the lemma for $!X$ and $!Y$.
By \ref{ss:active-loop-induction} we represent $!X$ and $!Y$ as $!X = !X_1 !X_2 \dots !X_r$ 
and $!Y = !Y_1 !Y_2 \dots !Y_r$ where pairs $(!X_i,!Y_i)$ satisfy the following conditions (1) or ~(2)
in the alternate way:
(1) for some bridges $!u_i$ and $!v_i$ of rank ~$\al$ the loop $!X_i^{-1} !u_i !Y_i !v_i$ lifts to $\Ga_{\al-1}$
or (2) there are loops $!X_i^{-1} !w_{i1} !R_i !w_{i2}$ and 
$!Y_i !w_{i3} !S_i !w_{i4}$ which can be lifted to $\Ga_{\al-1}$
such that $!S_i$ and $!R_i$ occur in one relation loop of rank $\al$ and $!w_{ij}$ are bridges of rank $\al-1$
(see Figure ~\ref{fig:fellow-traveling}).
\begin{figure}[h]
\input 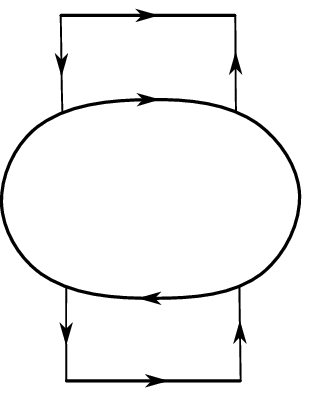_tex
\caption{}  \label{fig:fellow-traveling}
\end{figure}
We can assume that pairs $(!X_1,!Y_1)$ and $(!X_r,!Y_r)$ satisfy (2) and that in the case of ~(2),
subpaths $!X_i$, $!Y_i$ 
of ~$!X$, $!Y$ and $!S_i$, $!R_i$ of the appropriate relation loop cannot be extended.
We prove the statement for each of the pair $(!X_i,!Y_i)$.

{\em Case of\/} (1):
Omitting the index $i$ for $!X_i$ and $!Y_i$, assume that a loop 
$!X^{-1} !w_1 !P !w_2 !Y !w_3 !Q !w_4$ lifts to $\Ga_{\al-1}$
where $!w_i$ are bridges of rank $\al-1$ and $!P$ and $!Q$ are labeled by pieces of rank ~$\al$. 
Without changing notations, we assume that $!X^{-1} !w_1 !P !w_2 !Y !w_3 !Q !w_4$ 
is already in $\Ga_{\al-1}$.
By the maximal choice of  $!X_i$, $!Y_i$, $!S_i$ and $!R_i$ in the case of (2),
there are no close vertices on pairs $(!X,!P)$, $(!X,!Q)$, $(!Y,!P)$ and $(!Y,!Q)$ except 
appropriate endpoints (i.e.\ except $\io(!X)$ and $\io(!P)$ for $(!X,!P)$ etc.).
Depending on existence of close vertices on pairs $(!P,!Q)$ and $(!X,!Y)$ we consider three cases 
(a)--(c) as in Figure ~\ref{fig:fellow-traveling-subcases}.
\begin{figure}[h]
\input 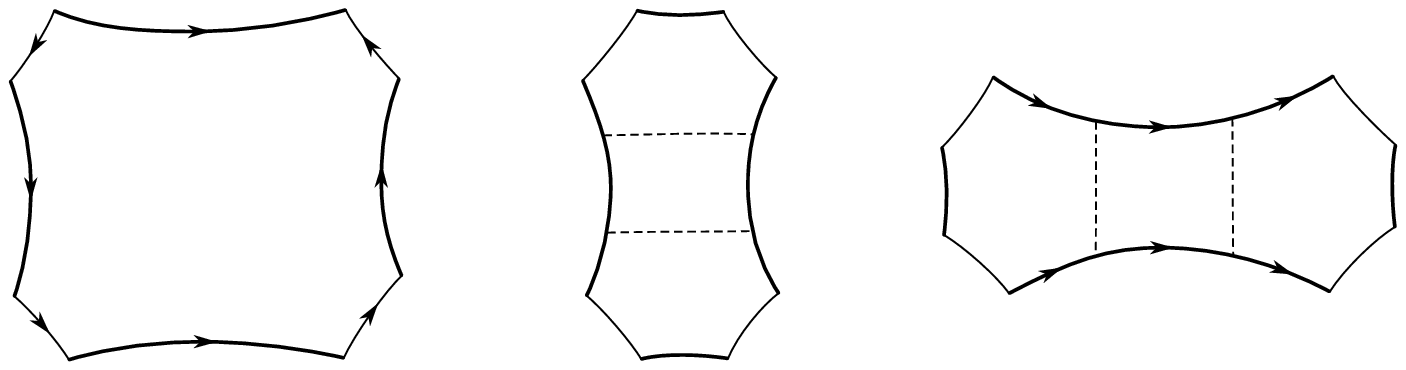_tex
\caption{}  \label{fig:fellow-traveling-subcases}
\end{figure}
In case (a) we have $|!X|_\al, |!Y|_\al \le 6\ze^2\eta < \ze$ 
by Proposition \ref{pr:sides-bound-Cayley}$_{\al-1}$.
In case (b) taking the maximal pair of close subpaths of $!P$ and $!Q$ we 
get $|!X|_\al, |!Y|_\al \le 4\ze^2\eta < \ze$ again by Proposition \ref{pr:sides-bound-Cayley}$_{\al-1}$.
In case (c) we have $!X = !X_1 !X_2 !X_3$ and $!Y = !Y_1 !Y_2 !Y_3$ where $!X_2$ and $!Y_2$
are close. Taking $!X_2$ and $!Y_2$ maximal possible we get $|!X_i|_\al, |!Y_i|_\al \le 4\ze^2\eta $
for $i=1,3$ by Proposition \ref{pr:sides-bound-Cayley}$_{\al-1}$. For $!X_2$ and $!Y_2$
we can apply the statement for $\al := \al-1$.

{\em Case of\/} (2):
In the second case by the statement of the lemma for $\al := \al-1$ we have 
$!X = !U_1 !U_2 \dots !U_k$ and $!Y = !W_1 !W_2 \dots !W_l$ where 
$|!U_i|_\al, |!W_i|_\al \le \ze$, the starting vertex of each $!U_i$
can be joined by a bridge of rank $\al-1$ with a vertex on $!R$ and the starting vertex of each $!W_i$
can be joined by a bridge of rank $\al-1$ with a vertex on ~$!S$. Then each $\io(!U_i)$ is close to $\io(!Y)$
and each $\io(!W_i)$ is close to $\tau(!X)$. 
We take $!X = !U_1 !U_2 \dots !U_{k+l}$ and $!Y = !V_1 !V_2 \dots !V_{k+l}$ where $!U_{k+1}$, $\dots$,
$!U_{k+l}$, $!V_1$, $\dots$, $!V_k$ are empty and $!V_j = !W_{j-k}$ for $k +1 \le j \le k+l$.
\end{proof}

\begin{lemma} \label{lm:monogon-graph-version}
Let $!X$ be a reduced path and $!R$ a relation loop of rank $\al$ in $\Ga_\al$, $\al\ge 1$.
Let $!u_i$ $(i=1,2)$ be a path labeled by a word in $\cH_{\al-1}$ and joining vertices $!a_i$ on $!X$ and $!b_i$ on $!R$.
Let $!Y$ be the subpath of $!X^{\pm1}$ that starts at $!a_1$ and ends at $!a_2$, and let $!R = !R_1 !R_2$
where $!R_i$ starts at ~$!b_i$ (Figure ~\ref{fig:monogon-graph-version}).
Then one of the two loops $!Y !u_2 !R_1^{-1} !u_1^{-1}$ or $!Y !u_2 !R_2 !u_1^{-1}$ lifts to $\Ga_{\al-1}$.
\end{lemma}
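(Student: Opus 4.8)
\emph{Setup.} Write $!R = !R_1 !R_2$ with $!R_i$ starting at $!b_i$, and put $L' = !Y\,!u_2\,!R_1^{-1}\,!u_1^{-1}$ and $L'' = !Y\,!u_2\,!R_2\,!u_1^{-1}$. Identifying vertices of $\Ga_\al$ with elements of $G_\al$, one checks immediately that $L'$ and $L''$ are loops, and the assertion of the lemma is precisely that at least one of them lifts to $\Ga_{\al-1}$, i.e.\ is trivial in $G_{\al-1}$. Since $\lab(!u_1),\lab(!u_2)\in\cH_{\al-1}$, both bridges of each of these loops carry only the trivial bridge partition of rank $\al$, so no switching is available; hence for $L'$ and $L''$ the property ``lifts to $\Ga_{\al-1}$'' is equivalent to ``$\Area_\al=0$'' (Remark \ref{rm:no-active-lift}). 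From semi-additivity of $|\cdot|_{\al-1}$ (see \ref{ss:alpha-length-properties}) and $|!R^\circ|_{\al-1}\ge|!R|_{\al-1}-1\ge\Om-1$ one gets $\mu(!R_1)+\mu(!R_2)\le 1+O(\om)$, so at least one of $\mu(!R_1),\mu(!R_2)$ is $\le\tfrac12+\om$; by the symmetry between $L'$ and $L''$ I assume it is $!R_1$. Since $\rho=1-9\la>\tfrac12+O(\om)$ (here $\la\le\tfrac1{24}$ is used), a routine application of Corollary \ref{co:small-overlapping-Cayley} shows that a subword of the relator $!R$ covering at most a $(\tfrac12+\om)$-part of $!R$ cannot contain a fragment of rank $\al$ of size exceeding $\rho$; together with Proposition \ref{pr:relator-strongly-reduced} this gives that $\lab(!R_1^{-1})$ is reduced in $G_\al$. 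Consequently $L' = (!Y^{-1})^{-1}\,!u_2\,!R_1^{-1}\,!u_1^{-1}$ is a coarse bigon in $\Ga_\al$.

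\emph{Bounding the $\al$-area of $L'$.} Fill $L'$ by a reduced diagram $\De$ of rank $\al$ equipped with a tight set of contiguity subdiagrams (Proposition \ref{pr:filling-polygons}), and let $t=\Area_\al(L')$ be its number of cells of rank $\al$. If $t=0$ we are done, since then $L'$ lifts to $\Ga_{\al-1}$, which is the first alternative. I claim $t\le 1$. By Proposition \ref{pr:active-large} each cell of rank $\al$ of $\De$ produces an active fragment $!M$ of rank $\al$ in the side $!R_1^{-1}$ with $\muf(!M)>\xi_0=7\la-1.5\om>\la$, together with a corresponding active fragment $!K$ in the side $!Y^{-1}$ satisfying $!K\sim!M^{-1}$. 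Since $!R_1^{-1}$ lies on the single relator loop $!R^{-1}$, Corollary \ref{co:small-overlapping-Cayley} and Lemma \ref{lm:compatible-lines} force the base relator loop of each such $!M$ to have the same periodic extension as $!R^{-1}$; hence all these $!M$ are pairwise compatible, and so are the corresponding $!K$'s. As distinct active fragments in $!Y^{-1}$ are non-compatible by Proposition \ref{pr:active-large}, $\De$ can have at most one cell of rank $\al$, i.e.\ $t\le 1$.

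\emph{The case $t=1$.} Let $!C$ be the unique cell of rank $\al$ of $\De$. Attach to $\De$, along the boundary arc $!R_1^{-1}$ of $\de\De$, a cell $!D$ of rank $\al$ whose boundary loop, read from $!b_2$, is $!R_1^{-1}$ followed by a path labelled $\lab(!R_2)^{-1}$ (this is the cell-attachment operation of \ref{ss:diagram-reduction}, applied to a side in place of a bridge); the resulting diagram $\De^*$ of rank $\al$ fills $L''$ and has exactly the two cells $!C$ and $!D$ of rank $\al$. The contiguity subdiagram of $!C$ to the side $!R_1^{-1}$ furnished by Proposition \ref{pr:single-layer} becomes in $\De^*$ a contiguity subdiagram between the two distinct cells $!C$ and $!D$, and its contiguity arc on $\de!C$ is the base of $!M$, hence has $\mu$-size $>\xi_0>\la$. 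By condition (S2) this means that $!C$ and $!D$ form a cell-cell cancellable pair. Performing this cancellation (\ref{ss:diagram-reduction}) replaces $!C\cup!D$ by a subdiagram over the presentation of $G_{\al-1}$ and yields a diagram filling $L''$ with no cell of rank $\al$; therefore $\lab(L'')=1$ in $G_{\al-1}$ and $L''$ lifts to $\Ga_{\al-1}$, which is the second alternative.

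\emph{Main obstacle.} The delicate step is the bound $t\le 1$: this is where the small-cancellation content enters, through the observation that every active fragment on the relator side sits on one and the same relator loop $!R^{-1}$, so that the non-compatibility of the active fragments on the reduced side (Proposition \ref{pr:active-large}) can be exploited. A secondary technical point, for which the numerical slack $\rho>\tfrac12$ is needed, is the verification that $\lab(!R_1^{-1})$ is reduced in $G_\al$ — without it one could not put $L'$ in coarse-bigon form and invoke Proposition \ref{pr:active-large} at all.
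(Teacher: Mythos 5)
Your proof is correct in substance, but it takes a considerably longer road than the paper. The paper's own argument treats the entire arc $!u_2 !R_1^{-1} !u_1^{-1}$ as a \emph{single bridge of rank $\al$} with bridge partition $!u_2 \cdot !R_1^{-1} \cdot !u_1^{-1}$, so that the filling diagram has one side $!Y$ and one bridge, i.e.\ is of monogon type. Lemma~\ref{lm:monogon-regularity} then says the reduced diagram has no cells of rank $\al$, and the ``either $!R_1^{-1}$ or $!R_2$'' dichotomy arises automatically because the reduction process may switch the bridge. That is the whole proof; in particular it never has to decide whether $\lab(!R_1^{-1})$ is reduced in $G_\al$. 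Your approach, by contrast, promotes $!R_1^{-1}$ to a genuine side, which forces you first to verify that $\lab(!R_1^{-1})$ is reduced in $G_\al$, then to bound $\Area_\al(L')$ by $1$ via compatibility of active fragments, and finally (when $\Area_\al=1$) to attach a cell along the side and exhibit a cell--cell cancellable pair. This last step effectively re-derives, by hand and for a side, the bridge-switching-plus-cancellation machinery that the paper packages in \ref{ss:diagram-reduction}; and the ``main obstacle'' you single out ($t\le1$) is an artefact of choosing the bigon viewpoint, not something the lemma inherently requires. Two smaller points worth flagging: (a) the step ``a routine application of Corollary~\ref{co:small-overlapping-Cayley} shows $\lab(!R_1^{-1})$ is reduced'' actually needs the rank $\al-1$ statement of Lemma~\ref{lm:close-distance-bound} (equivalently, Lemma~\ref{lm:monogon-graph-version}$_{\al-1}$) to bound the $\mu$-size of a fragment whose axis coincides with $!R^{-1}$ — the weaker Proposition~\ref{pr:length-compare-close}$_{\al-1}$ only gives $\muf(K)\lesssim 0.65$, which does not beat $\rho$; this is exactly the content of Lemma~\ref{lm:piece-fragment} and Proposition~\ref{pr:piece-reduction}, which you are re-deriving rather than citing; and (b) the ``cell-attachment operation of \ref{ss:diagram-reduction}'' is defined there only for bridges, so your phrase ``applied to a side in place of a bridge'' describes an ad hoc construction, valid but not actually one of the paper's stated moves.
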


\begin{figure}[h]
\input 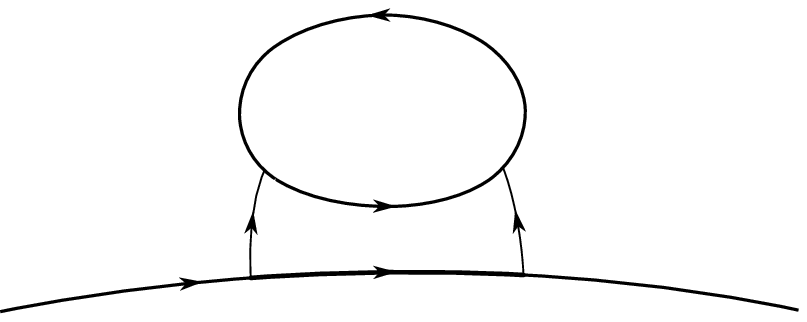_tex
\caption{}  \label{fig:monogon-graph-version}
\end{figure}

\begin{proof}
We fill the loop $!Y !u_2 !R_1^{-1} !u_1^{-1}$ by 
a disk diagram $\De$ of rank $\al$ with boundary loop $\bar{!Y} \bar{!u}_2 !S \bar{!u}_1^{-1}$ where $\lab(!S) \greq \lab(!R_1^{-1})$.
We take $\bar{!Y}$ as a side and $\bar{!u}_2 !S \bar{!u}_1^{-1}$ as a bridge of $\De$ with bridge partition $\bar{!u}_2 \cdot !S \cdot \bar{!u}_1^{-1}$.
Then we apply the reduction process making $\De$ reduced. After reduction, we get either 
 $\lab(!S) \greq \lab(!R_1^{-1})$ or $\lab(!S) \greq \lab(!R_2)$.
By Lemma \ref{lm:monogon-regularity}, $\De$ has no cells of rank ~$\al$.
Depending on the case, this implies that either
$!Y !u_2 !R_1^{-1} !u_1^{-1}$ or $!Y !u_2 !R_2 !u_1^{-1}$ lifts to ~$\Ga_{\al-1}$.
\end{proof}

\begin{proposition}[compatibility lifting] \label{pr:compatibility-reduction} 
Let $1 \le \be \le \al$. Let $!K$ and $!M$ be fragments of rank ~$\be$ which occur in a reduced path $!X$ in $\Ga_{\al}$.
Let $\hat{!X}$ be a lift of $!X$ in $\Ga_{\be-1}$ and $\hat{!K}$ and $\hat{!M}$ 
be the subpaths of ~$\hat{!X}$ which are projected onto $!K$ and $!M$ respectively.
Then $!K \sim !M$ implies $\hat{!K} \sim \hat{!M}$
and $!K \sim !M^{-1}$ implies $\hat{!K} \sim \hat{!M}^{-1}$.
\end{proposition}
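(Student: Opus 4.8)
The plan is to reduce the statement to a single lifting step $\Ga_\ga\to\Ga_{\ga-1}$ and then to carry out that step using the rigidity of a reduced path against a relator loop (the monogon‑regularity phenomenon).

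First I would dispose of the routine reductions. Since $\cR_\al\seq\cR_{\al-1}\seq\dots\seq\cR_{\be-1}$, the path $!X$ is reduced, hence simple, in every $\Ga_\ga$ with $\be-1\le\ga\le\al$; the given lift $\hat{!X}$ of $!X$ to $\Ga_{\be-1}$ factors through the tower of coverings $\Ga_\al\to\Ga_{\al-1}\to\dots\to\Ga_{\be-1}$ and carries $!K$, $!M$ to fragments $!K^{(\ga)}$, $!M^{(\ga)}$ of rank $\be$ inside the successive lifts $!X^{(\ga)}$ of $!X$, with $!X^{(\al)}=!X$ and $!K^{(\be-1)}=\hat{!K}$, $!M^{(\be-1)}=\hat{!M}$. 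So it suffices to establish the one-step claim: for each $\ga$ with $\be\le\ga\le\al$, if $!K^{(\ga)}\sim !M^{(\ga)}$ in $\Ga_\ga$ then $!K^{(\ga-1)}\sim !M^{(\ga-1)}$ in $\Ga_{\ga-1}$; chaining these from $\ga=\al$ down to $\ga=\be$ yields $\hat{!K}\sim\hat{!M}$. By Definition \ref{df:fragment-compatibility}, for $\ga>\be$ both compatibilities mean ``same base relator loop'', while for $\ga=\be$ the conclusion is the defining one, ``base axes parallel''. The assertion for $!K\sim !M^{-1}$ will follow from the one for $!K\sim !M$ by reversing the orientation of $!M$, since the relator loop that enters the argument does so unoriented.

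For the one-step I would argue as follows. Let $!R$ be the common base relator loop of $!K^{(\ga)}$ and $!M^{(\ga)}$ in $\Ga_\ga$; it is a loop labelled by a power of a cyclic shift of a relator $R$ of rank $\be$. By the definition of a fragment, $!K^{(\ga)}$ and $!M^{(\ga)}$ are close in rank $\be-1$ to subpaths $!P_K$, $!P_M$ of $!R$, witnessed by bridges $!u_K$, $!u_M$ whose labels lie in $\cH_{\be-1}\seq\cH_{\ga-1}$ and which join vertices of $!X^{(\ga)}$ to vertices of $!R$. Then I would apply Lemma \ref{lm:monogon-graph-version} (when $\ga=\be$), and, when $\ga>\be$, the evident analogue of that lemma for a relator loop of rank $\be$ lying in a reduced path of $\Ga_\ga$ — the proof is verbatim the same, since the filling diagram is a diagram of rank $\ga$ and Lemma \ref{lm:monogon-regularity} forbids cells of rank $\ga$ in it — to the reduced path $!X^{(\ga)}$, the loop $!R$ and the bridges $!u_K$, $!u_M$. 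This gives that the subpath of $!X^{(\ga)}$ between the feet of $!u_K$ and $!u_M$, followed by a suitable arc of $!R$, bounds a loop that lifts to $\Ga_{\ga-1}$. Lifting $!X^{(\ga)}$ to $\Ga_{\ga-1}$ and tracing this loop around, one sees that the lift of $!R$ attached to $!K^{(\ga-1)}$ through $!u_K$ and the lifted closeness data, and the lift of $!R$ attached to $!M^{(\ga-1)}$ through $!u_M$, share a vertex; since two lifts of $!R$ are either vertex-disjoint or equal, they coincide. Hence $!K^{(\ga-1)}$ and $!M^{(\ga-1)}$ share a base relator loop in $\Ga_{\ga-1}$, and when $\ga=\be$ a base axis (passing from ``parallel'' to ``equal'' by Lemma \ref{lm:compatible-lines}), i.e.\ $!K^{(\ga-1)}\sim !M^{(\ga-1)}$.

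The hard part will be precisely this rigidity: one must be sure that the lift of the base relator loop singled out by the closeness data of $!K$ is the same as the one singled out by the closeness data of $!M$, and the only mechanism that forces this is that a reduced path cannot meet a relator loop ``inconsistently'' — the monogon-regularity behind Lemmas \ref{lm:monogon-regularity} and \ref{lm:monogon-graph-version}. The surrounding work is bookkeeping: checking that every bridge involved has its label in $\cH_{\be-1}$, hence in $\cH_{\ga-1}$ (the sets $\cH_j$ are nested in $j$); that the closeness loops of $!K$ and $!M$, being equal to $1$ in $G_{\be-1}$, are still equal to $1$ in $G_{\ga-1}$, so that they lift simultaneously with $!X^{(\ga)}$; and that Lemma \ref{lm:monogon-graph-version} is invoked at the right rank when $\ga>\be$.
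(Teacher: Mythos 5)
Your proposal is correct and rests on the same idea as the paper's (very terse) proof: use the monogon rigidity of Lemma \ref{lm:monogon-regularity}, packaged as Lemma \ref{lm:monogon-graph-version}, to lift the base relator loop coherently with $!X$, iterating one covering level at a time down to $\Ga_{\be-1}$. You have simply made explicit what the paper leaves implicit. Two small technical points you glide over are nevertheless worth recording: (a) at each intermediate level $\ga>\be$ the bridge $!u_2!R_1^{-1}!u_1^{-1}$ lies entirely in $\cH_{\ga-1}$, so your ``analogue'' of Lemma \ref{lm:monogon-graph-version} is in fact easier than the $\ga=\be$ case (no bridge switching can occur, and Lemma \ref{lm:monogon-regularity} gives the lift directly); and (b) the claim that two lifts of $!R$ in $\Ga_{\ga-1}$ are either vertex-disjoint or equal requires that $!R$ be a simple loop in $\Ga_\ga$, which follows from Propositions \ref{pr:piece-reduction} and \ref{pr:reduced-nontrivial} (a nonempty proper subword of a cyclic shift of $R$ of size $\le\frac12+\om$ is reduced and hence nontrivial in $G_\ga$). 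Neither of these is a gap so much as a verification you should have flagged as needed.
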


\begin{proof}
Assume that $!K \sim !M^\ep$ where $\ep =\pm1$.
Let $!R$ be the common base loop for $!K$ and $!M^\ep$. 
Lemma \ref{lm:monogon-graph-version} implies that $!R$ can be lifted to a line $\hat{!R}$
which is the common base axis for both $\hat{!K}$ and $\hat{!M}^\ep$.
This implies $\hat{!K} \sim \hat{!M}^\ep$.
\end{proof}

\begin{corollary} \label{co:compatibility-same-rank} 
Let $1 \le \be \le \al$.
Then statements of Proposition \ref{pr:fragments-union}, Corollary \ref{co:compatibility-order} and
Proposition \ref{pr:no-inverse-compatibility} hold for fragments of rank $\be$ in 
a reduced path $!X$ in $G_\al$.

More precisely, let $!X$ be a reduced path in $\Ga_{\al}$. Then the following is true. 
\begin{enumerate}
\item \label{coi:fragments-union-alpha}
Let $!K_i$ $(i=1,2)$ be fragments of rank $\be$ in $!X$, $!K_1 \sim !K_2$ and 
$\muf(!K_i) \ge 5.7\om$ for $i=1$ or $i=2$.
Then $!K_1 \cup !K_2$ is a fragment of rank $\be$ with $!K_1 \cup !K_2 \sim !K_1$.
If $!K_1$ and $!K_2$ are disjoint then $\muf(!K_1 \cup !K_2) \ge \muf(!K_1) + \muf(!K_2) - 5.7\om$.
\item \label{coi:compatibility-order-alpha}
Let $!K_i, !M_i$ ($i=1,2$) be fragments of rank $\be$ in $!X$ with
$\muf(!K_i), \muf(!M_i) \ge \ga+2.6\om$.
Assume that $!K_1 \sim !K_2$, $!M_1 \sim !M_2$ and $!K_1 \not\sim !M_1$.
Then $!K_1 < !M_1$ if and only if $!K_2 < !M_2$.
\item \label{coi:no-inverse-compatibility-alpha}
If $!K$ and $!M$ are fragments of rank $\be$ in $!X$ and 
$\muf(!K), \muf(!M) \ge 5.7\om$ then $!K \not\sim !M^{-1}$.
\end{enumerate}
\end{corollary}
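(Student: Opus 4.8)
The plan is to reduce all three assertions to the versions already available one rank below the fragment rank, by lifting the whole configuration along the covering projection $\Ga_{\be-1}\to\Ga_\al$. First I would fix a lift $\hat{!X}$ of $!X$ in $\Ga_{\be-1}$ (a path lift always exists, since $!X$ need not be a loop), and for each rank-$\be$ fragment $!F$ occurring in $!X$ that appears in the hypotheses let $\hat{!F}$ denote the subpath of $\hat{!X}$ projecting onto $!F$. Since $\lab(!X)\in\cR_\al\seq\cR_{\be-1}$, the lift $\hat{!X}$ is reduced in $\Ga_{\be-1}$, and by \ref{ss:fragment-paths} each $\hat{!F}$ is again a fragment of rank $\be$, now living in $\Ga_{\be-1}$. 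As $\muf$ of a fragment path is determined by its label and its associated relator of rank $\be$, neither of which is altered by the projection $\hat{!X}\to!X$, we get $\muf(\hat{!F})=\muf(!F)$; hence every numerical hypothesis (of the form $\muf\ge 5.7\om$ or $\muf\ge\la+2.6\om$) transfers verbatim to the lifted fragments.

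Second, I would record that the assignment $!F\mapsto\hat{!F}$ faithfully records the two relations the statements are about. For compatibility this is precisely Proposition \ref{pr:compatibility-reduction}: $!K\sim!M$ iff $\hat{!K}\sim\hat{!M}$, and $!K\sim!M^{-1}$ iff $\hat{!K}\sim\hat{!M}^{-1}$ — the forward implications are that proposition, while the reverse ones are immediate from the definition of compatibility of rank-$\be$ fragments in $\Ga_\al$, which is phrased through existence of compatible lifts in $\Ga_{\be-1}$ (and, by Lemma \ref{lm:compatible-lines}, equality of base axes downstairs projects to equality of base relator loops upstairs). For the order relation I would use that $!X$ and $\hat{!X}$ are both simple (Proposition \ref{pr:reduced-nontrivial}) and that $\hat{!X}\to!X$ is a label-preserving bijection of paths, so it preserves the relation $<$ on subpaths and commutes with the union operation $!K_1\cup!K_2$.

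With these preparations each assertion follows by ``lift, apply the rank-$\be$ statement, project back''; for $\be=\al$ the rank-$\be$ statement is the one just proved above, and for $\be<\al$ it is available by the induction on the rank. For (i) I would apply Proposition \ref{pr:fragments-union}$_\be$ to $\hat{!K}_1,\hat{!K}_2$ in $\hat{!X}$ to see that $\hat{!K}_1\cup\hat{!K}_2$ is a rank-$\be$ fragment sharing the base axis of $\hat{!K}_1$, with $\muf(\hat{!K}_1\cup\hat{!K}_2)\ge\muf(\hat{!K}_1)+\muf(\hat{!K}_2)-5.7\om$ in the disjoint case; projecting down, using preservation of $\muf$, of $\cup$, and of the base data, yields that $!K_1\cup!K_2$ is a rank-$\be$ fragment with $!K_1\cup!K_2\sim!K_1$ and the stated inequality. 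For (ii), Corollary \ref{co:compatibility-order}$_\be$ gives $\hat{!K}_1<\hat{!M}_1$ iff $\hat{!K}_2<\hat{!M}_2$, which transports back along the order-preserving bijection. For (iii), if $!K\sim!M^{-1}$ then $\hat{!K}\sim\hat{!M}^{-1}$ with $\muf(\hat{!K}),\muf(\hat{!M})\ge 5.7\om$, contradicting Proposition \ref{pr:no-inverse-compatibility}$_\be$. I do not expect a real obstacle here: the substantive work — matching a fragment's base and $\mu$-value across the two levels — is already done in Proposition \ref{pr:compatibility-reduction} (through Lemma \ref{lm:monogon-graph-version}), and the present statement is just the bookkeeping that repackages hypotheses and conclusions along the covering.
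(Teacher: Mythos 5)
Your proof is correct and is exactly the argument the paper leaves implicit: the corollary appears immediately after Proposition \ref{pr:compatibility-reduction} with no separate proof because it is precisely the lift--apply--project bookkeeping you spell out. In particular you correctly handle both directions of the compatibility transfer (forward via Proposition \ref{pr:compatibility-reduction}, reverse directly from Definition \ref{df:fragment-compatibility}) and correctly note that the lift preserves $\muf$, the subpath order, and the union operation, so that the rank-$\be$ statements in $\Ga_{\be-1}$ (available by the simultaneous induction, or as the just-proved case when $\be=\al$) yield the claims downstairs.
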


\section{Stability}

Let $F_A$ be a free group with basis $A$ and let $X^{-1} Y_1 Y_2 \dots Y_{k+1} = 1$ be a relation in $F_A$
where $X$, $Y_1$, $\dots$, $Y_k$ are freely reduced words in the generators $A$.
Then for any occurrence of a letter $a^\ep \in A^{\pm1}$ in $X$ there is a unique occurrence of 
the same letter $a^\ep$ in some $Y_i$ which cancels with $a^{-\ep}$ in $X^{-1} Y_1 Y_2 \dots Y_{k+1}$.
The main goal of this section is to establish an analog of this statement for relations in $G_\al$.
The role of letters $a^\ep$ will be played by fragments of rank $\al$ and instead of relation
$X^{-1} Y_1 Y_2 \dots Y_{k+1} = 1$ we consider coarse polygons $!X^{-1} * !Y_1 * \dots !Y_k *$ in ~$\Ga_{\al}$ (for our considerations, it is enough to consider cases $k=1,2,3$).
The role of correspondence of canceled letters will be played by equivalence relation `$!K \sim !L^{\pm1}$'.


There are two essential differences of the case of groups $G_\al$ from the case of a free group $F_A$.
One is a ``fading effect'': a fragment in $!Y_i$ can be of a ``smaller size'' than an initial fragment in $!X$. 
Another difference is that bridges of the coarse polygon can produce exceptions for stability (to describe such situations we introduce a special relation
between fragments and bridges of the same rank $\be$, 
see Definition \ref{def:interference}).

We start with a statement which shows how closeness 
is propagated in coarse tetragons in $\Ga_{\al-1}$. This is essentially 
a consequence of inductive hypotheses.


\begin{definition}[uniformly close] 
For $\al\ge 1$, we say that vertices $!a_1$, $!a_2$, $\dots$, $!a_r$ of $\Ga_\al$ are {\em uniformly close} if at least
one of the following is true:
\begin{itemize}
\item 
they are pairwise close in rank $\al-1$; or 
\item
there exists a relator loop $!R$ of rank $\al$ such that each 
$!a_i$ is close in rank $\al-1$ to a vertex on $!R$. 
\end{itemize}
We cover also the case $\al=0$: vertices $!a_1$, $!a_2$, $\dots$, $!a_r$ of $\Ga_0$
are said to be uniformly close if $!a_1 = !a_2 = \dots = !a_r$.
\end{definition}

Note that uniformly close vertices are pairwise close. 
If $r=2$ then being uniformly close and being close is equivalent. 

\begin{lemma} \label{lm:stability-tetragon-previous}
Let $\al\ge1$, $!X$ and $!Y$ be close reduced paths in $\Ga_{\al-1}$, and let 
$!S^{-1} * !T_1 * !T_2 * !T_3 *$ be a coarse tetragon in $\Ga_{\al-1}$ such that $!Y$ is a subpath of ~$!S$.
Assume that $|!X|_{\al-1} \ge 5.2$.
Then $!X$ can be represented as 
$!z_0 !X_1 !z_1 \dots !X_r !z_r$ $(1 \le r \le 3)$ where $!X_i$ is close 
to a subpath $!W_i$ of some ~$!T_{j_i}$, $j_1 < \dots < j_r$ and 
\begin{equation} \label{eq:stability-tetragon-previous-bound}
  \sum_i |!X_i|_{\al-1} > |!X|_{\al-1} - 5.8.
\end{equation}
Moreover:
\begin{enumerate}
\item 
if $r=3$ then we have a stronger bound
$$
  \sum_i |!X_i|_{\al-1} > |!X|_{\al-1} - 3.4.
$$
\item
There is a subpath $!Y_1$ of $!Y$ such that the starting vertices $\io(!X_1)$, $\io(!Y_1)$ and $\io(!W_1)$
are uniformly close and the same is true for the ending vertices $\io(!X_r)$, $\io(!Y_1)$ and $\io(!W_r)$.
\end{enumerate}
\end{lemma}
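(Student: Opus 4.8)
The plan is to follow $!X$ along the chain $!X \approx !Y \subseteq !S \approx !T_1!T_2!T_3$, where the first $\approx$ is the given closeness, the inclusion is a hypothesis, and the last $\approx$ is thinness of the coarse tetragon. First I would apply Proposition~\ref{pr:3-4-gon-closeness}(ii)$_{\al-1}$ to the coarse tetragon $!S^{-1} * !T_1 * !T_2 * !T_3 *$. This presents $!S$ as an alternating product of at most three ``close pieces'' $!A_1, !A_2, !A_3$ — each close to a subpath of a distinct $!T_{j_i}$ with the indices $j_i$ increasing, with $!A_1$ meeting a start of $!T_1$ and the last $!A_i$ meeting an end of $!T_3$ — separated by ``gap'' pieces of $|\cdot|_{\al-1}$-length at most $6\ze\eta$ (at most $4\ze\eta$ in the three-piece alternative).

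Next I would intersect this decomposition of $!S$ with the subpath $!Y$. Since $!Y \subseteq !S$, this writes $!Y$ in the same shape, with subpaths $!B_i \subseteq !A_i$ replacing the $!A_i$ — the interior $!B_i$ equal to the whole $!A_i$, the two outermost possibly proper, plus a possible short piece at either end of $!Y$ falling inside a gap. An interior $!B_i = !A_i$ is already close to a subpath of $!T_{j_i}$; for each of the at most two outermost $!B_i$ I would apply the bigon-stability statement (Proposition~\ref{pr:closeness-stability}$_{\al-1}$, as used in the proof of Proposition~\ref{pr:fragment-stability-previous}) to the close pair formed by $!A_i$ and its subpath of $!T_{j_i}$, trimming a bounded end and obtaining closeness of the remainder to a subpath $!W_i$ of $!T_{j_i}$. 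Then I would transfer the resulting decomposition of $!Y$ across the coarse bigon $!X^{-1} * !Y *$, using Proposition~\ref{pr:fellow-traveling}$_{\al-1}$ together with Proposition~\ref{pr:closeness-stability}$_{\al-1}$, to produce $!X = !z_0 !X_1 !z_1 \cdots !X_r !z_r$ with $!X_i$ close to $!W_i$, and I would set $!Y_1$ to be the subpath of $!Y$ matched by $!X_1 !z_1 \cdots !X_r$. The pieces $!z_i$ absorb three kinds of error: the two bigon end-errors, the trimmings of the two outermost $!B_i$, and the thin-tetragon gaps carried along inside $!Y$; summing these gives $\sum_i |!X_i|_{\al-1} > |!X|_{\al-1} - 5.8$. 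When $r = 3$ one is forced into the three-piece alternative of Step~1 (so the gaps are bounded by $4\ze\eta$) and $!Y$ must span essentially all of $!S$, hence every $!B_i$ is used whole, no trimming of $!Y$ occurs, and only the two bigon end-errors and the two gaps remain, giving the sharper $\sum_i |!X_i|_{\al-1} > |!X|_{\al-1} - 3.4$. Finally $r \ge 1$: if $r = 0$ then all of $!X$ would be covered by the bounded gap- and end-pieces, forcing $|!X|_{\al-1} < 5.2$ against the hypothesis.

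For part (ii), the vertices $\io(!X_1), \io(!Y_1), \io(!W_1)$ are pairwise close by construction, and I would upgrade this to uniform closeness by inspecting the local filling near these vertices: either the joining paths can all be taken to be bridges of rank $\al-2$, so the three vertices are pairwise close in rank $\al-2$; or one of them runs along the central arc of a bridge of rank $\al-1$, hence along a rank-$(\al-1)$ relator loop to which all three vertices are close in rank $\al-2$ (cf.\ \ref{ss:bridge-partition}). This is exactly the dichotomy in the notion of uniformly close vertices, and the situation at the ending endpoints $\tau(!X_r), \tau(!Y_1), \tau(!W_r)$ is symmetric.

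The hard part is the constant bookkeeping. One has to check that at each end of $!X$ the bigon end-error and the trimming of the adjacent outermost $!B_i$ are the only contributions stacking into $!z_0$ (resp.\ $!z_r$); that an endpoint of $!Y$ lying inside a gap \emph{replaces} rather than adds to the trimming there, since the adjacent $!B_i$ is then a genuine prefix or suffix of the full $!A_i$ and carries no trimming error; and that the interior $!z_i$ carry only thin-tetragon gaps. This, together with the sharper gap bound $4\ze\eta$ and the observation that $r = 3$ forces the three-piece alternative, is what brings the totals below $5.8$ and $3.4$; the geometric decomposition itself is routine once Proposition~\ref{pr:3-4-gon-closeness} and the bigon-stability statement are available.
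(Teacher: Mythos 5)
Your overall strategy — push $!X$ through the chain $!X \leftrightarrow !Y \subseteq !S \leftrightarrow !T_1!T_2!T_3$ — is the right idea, but your implementation diverges from the paper's and, as sketched, has two real gaps.

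The paper introduces a reduced path $!Z$ from $\io(!S)$ to $\tau(!T_2)$, splitting the coarse tetragon into two coarse trigons $!S^{-1}*!Z*!T_3*$ and $!Z^{-1}*!T_1*!T_2*$, and then applies Proposition~\ref{pr:stability-trigon}$_{\al-1}$ twice. That proposition already takes the pair $(!X,!Y)$ as input alongside a trigon with $!Y\subseteq !S$; that is, it carries the bigon transfer and the trigon decomposition out \emph{simultaneously}, and its conclusion supplies both the decomposition of $!X$ with bounds on the $!z_i$ and the uniform closeness of the endpoint triples. You instead decompose $!S$ first via Proposition~\ref{pri:4gon-closeness}$_{\al-1}$, restrict to $!Y$, and then try to transfer the resulting decomposition of $!Y$ across the bigon to $!X$. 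This ``decompose-then-transfer'' plan is a genuinely different route, not a paraphrase of the paper's.

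The first gap is in the transfer step. Once you have $!Y = \dots !B_i \dots$ with $!B_i$ close to $!W_i\subseteq !T_{j_i}$, you need a proposition that, given the close pair $(!X,!Y)$ and the close pair $(!B_i,!W_i)$ with $!B_i\subseteq !Y$, produces a subpath of $!X$ close to a subpath of $!W_i$ \emph{with explicit control on the part of $!X$ lying outside the $!X_i$'s}. Proposition~\ref{pr:closeness-stability}$_{\al-1}$ is the wrong shape for this: it asks for $!Y$ to be a subpath of the first member $!S$ of the \emph{other} close pair (i.e.\ the chain $!X\to!Y\subseteq!S\to!T$), whereas you have the middle inclusion reversed ($!X\to!Y\supseteq!B_i\to!W_i$). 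You can apply it with the roles swapped — $(!W_i,!B_i)$ and $(!Y,!X)$ — and this does produce a subpath of $!X$ close to a subpath of $!W_i$, but the $|!z_j|<1.3$ trims in the conclusion live on the $!W_i$-side, not the $!X$-side; it does not bound the gaps $!z_0,\dots,!z_r$ in $!X$. Proposition~\ref{pr:fellow-traveling}$_{\al-1}$ also will not close this: it compares $!X$ and $!Y$ only, and because closeness is not transitive you cannot chain it with the closeness of $!B_i$ to $!W_i$ to get a direct bound. Controlling the error in exactly this kind of composed transfer is precisely what Lemmas~\ref{lm:closeness-bigon-1side} and~\ref{lm:closeness-trigon-1side} (and hence Proposition~\ref{pr:stability-trigon}) are for; by avoiding them you have to reprove a nontrivial part of that machinery, and the constant bookkeeping you defer is where that hidden work lives.

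The second gap is in part~(ii). You assert that $\io(!X_1)$, $\io(!Y_1)$, $\io(!W_1)$ are pairwise close ``by construction'' and that you would ``upgrade'' this to uniform closeness by inspecting the local filling. But pairwise closeness in $\Ga_{\al-1}$ does \emph{not} imply uniform closeness: $\io(!X_1)$ and $\io(!Y_1)$ can be joined by a bridge whose rank-$(\al-1)$ central piece comes from one relator loop, while $\io(!Y_1)$ and $\io(!W_1)$ are joined via a different one, and then no single relator loop of rank $\al-1$ (and no rank-$(\al-2)$ bridges) witnesses uniform closeness. Your dichotomy omits exactly this case. The paper gets uniform closeness for free from the ``Moreover'' clause of Proposition~\ref{pr:stability-trigon}, which in turn is established by the painstaking triple-tracking in Lemmas~\ref{lm:closeness-bigon-1side} and \ref{lm:closeness-trigon-1side}; you would need to supply an argument of that kind, not just a case-check near the vertices.

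If you want to keep your decompose-then-transfer outline, the fix is to import the combined bigon+trigon machinery rather than the weaker bigon tools: that is, replace the naive transfer with an application of Proposition~\ref{pr:stability-trigon}$_{\al-1}$ (or its one-sided lemma) for each close piece. At that point, though, you have essentially rederived the paper's proof: inserting the auxiliary $!Z$ and applying \ref{pr:stability-trigon}$_{\al-1}$ twice is the cleaner way to invoke that machinery and gives the $-5.8$ and $-3.4$ bounds directly (roughly $2.75 + 3$ for $r<3$, and $1.3 + 1.3 + 2\cdot 4\ze\eta$ for $r=3$).
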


\begin{proof}
If $\al = 1$ the statement is obvious 
(see Remark \ref{rm:stability-tetragon-previous-rank0} below).
Let $\al > 1$.
Let $!Z$ be a reduced path joining $\io(!S)$ and $\tau(!T_2)$ which exists by Proposition \ref{pr:reduction}$_{\al-1}$
(see Figure~\ref{fig:stability-tetragon-previous}).
\begin{figure}[h]
\begingroup%
  \makeatletter%
  \providecommand\color[2][]{%
    \errmessage{(Inkscape) Color is used for the text in Inkscape, but the package 'color.sty' is not loaded}%
    \renewcommand\color[2][]{}%
  }%
  \providecommand\transparent[1]{%
    \errmessage{(Inkscape) Transparency is used (non-zero) for the text in Inkscape, but the package 'transparent.sty' is not loaded}%
    \renewcommand\transparent[1]{}%
  }%
  \providecommand\rotatebox[2]{#2}%
  \newcommand*\fsize{\dimexpr\f@size pt\relax}%
  \newcommand*\lineheight[1]{\fontsize{\fsize}{#1\fsize}\selectfont}%
  \ifx\svgwidth\undefined%
    \setlength{\unitlength}{171.66787939bp}%
    \ifx\svgscale\undefined%
      \relax%
    \else%
      \setlength{\unitlength}{\unitlength * \real{\svgscale}}%
    \fi%
  \else%
    \setlength{\unitlength}{\svgwidth}%
  \fi%
  \global\let\svgwidth\undefined%
  \global\let\svgscale\undefined%
  \makeatother%
  \begin{picture}(1,0.62694819)%
    \lineheight{1}%
    \setlength\tabcolsep{0pt}%
    \put(0,0){\includegraphics[width=\unitlength]{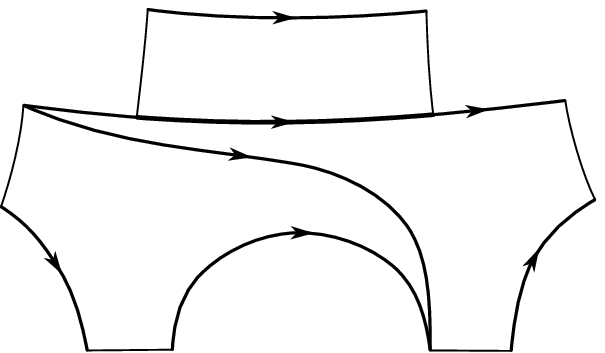}}%
    \put(0.44801472,0.58035795){\color[rgb]{0,0,0}\makebox(0,0)[lt]{\lineheight{1.25}\smash{\begin{tabular}[t]{l}$!X$\end{tabular}}}}%
    \put(0.44646309,0.40183785){\color[rgb]{0,0,0}\makebox(0,0)[lt]{\lineheight{1.25}\smash{\begin{tabular}[t]{l}$!Y$\end{tabular}}}}%
    \put(0.77118553,0.42454977){\color[rgb]{0,0,0}\makebox(0,0)[lt]{\lineheight{1.25}\smash{\begin{tabular}[t]{l}$!S$\end{tabular}}}}%
    \put(0.36600894,0.26571059){\color[rgb]{0,0,0}\makebox(0,0)[lt]{\lineheight{1.25}\smash{\begin{tabular}[t]{l}$!Z$\end{tabular}}}}%
    \put(0.0263636,0.1019253){\color[rgb]{0,0,0}\makebox(0,0)[lt]{\lineheight{1.25}\smash{\begin{tabular}[t]{l}$!T_1$\end{tabular}}}}%
    \put(0.47586973,0.12894727){\color[rgb]{0,0,0}\makebox(0,0)[lt]{\lineheight{1.25}\smash{\begin{tabular}[t]{l}$!T_2$\end{tabular}}}}%
    \put(0.9075621,0.09864203){\color[rgb]{0,0,0}\makebox(0,0)[lt]{\lineheight{1.25}\smash{\begin{tabular}[t]{l}$!T_3$\end{tabular}}}}%
  \end{picture}%
\endgroup%

\caption{}  \label{fig:stability-tetragon-previous} 
\end{figure}
We apply Proposition \ref{pr:stability-trigon}$_{\al-1}$ 
first to the coarse trigon $!S^{-1} * !Z * !T_3*$ and then, possibly, to 
the coarse trigon $!Z^{-1} * !T_1 * !T_2$. 
Since $|!X|_{\al-1} \ge 5.2$, after the first application of Proposition \ref{pr:stability-trigon}$_{\al-1}$,
we find either a subpath ~$!X_3$ of $!X$ that is close to a subpath of $!T_3$ 
or a subpath ~$!X'$ of $!X$ that is close to a subpath of ~$!Z$ with 
$|!X'|_{\al-1} > |!X|_{\al-1} - 2.75 > 2.45$. 
In the latter case, the second application of \ref{pr:stability-trigon}$_{\al-1}$
gives the remaining ~$!X_1$ and/or $!X_2$. 
If $r < 3$ then for the bound \eqref{eq:stability-tetragon-previous-bound},
the worst cases are when we get two $!X_i$'s after double application 
of \ref{pr:stability-trigon}$_{\al-1}$. 
In those cases we have once case (iii) of \ref{pr:stability-trigon}$_{\al-1}$
and another time case (i) or (ii). Hence $\sum_i |!X_i|_{\al-1} >  |!X|_{\al-1} - 3 - 2.75$.
Statement ~(ii) follows from the appropriate part of Proposition \ref{pr:stability-trigon}$_{\al-1}$.

Assume that $r=3$ and therefore 
$!X = !z_0 !X_1 !z_1 !X_2 !z_2 !X_3 !z_3$ where each $!X_i$ is close 
to a subpath of $!T_i$. From application of Proposition 
\ref{pr:stability-trigon}$_{\al-1}$ we have
$|!z_0|_{\al-1}, |!z_3|_{\al-1} < 1.3$. 
Then using Proposition \ref{pri:3gon-closeness}$_{\al-1}$ we extend all $!X_i$
to get $|!z_1|_{\al-1}, |!z_2|_{\al-1} \le 4\ze\eta < 0.4$.
This proves ~(i).
\end{proof}

\begin{remark} \label{rm:stability-tetragon-previous-rank0}
If $\al=1$ then hypotheses of Lemma \ref{lm:stability-tetragon-previous} say that $!X = !Y$
and $!S^{-1} !T_1 !T_2 !T_3$ is a loop in the Cayley graph $\Ga_0$ of the free group $G_0$.
Then the statement of the lemma holds without the assumption $|!X|_{\al-1} \ge 5.2$.
Furthermore, in the conclusion we have $\sum_i |!X_i|_{\al-1} = |!X|_{\al-1}$.
\end{remark}

\begin{definition}[independence] \label{def:interference}
Let $1 \le \be \le \al$, $!K$ be a fragment of rank $\be$ in ~$\Ga_\al$ 
and $!u$ be a bridge of rank $\be$ in $\Ga_\al$.
Recall that $!K$ is considered with the associated base loop ~$!R$ of rank ~$\be$.
We say that $!K$ is {\em independent of} $!u$ if either $\lab(!u) \in \cH_{\be-1}$
or $!u$ possesses a bridge partition $!u = !v \cdot !S \cdot !w$ of rank $\be$
where $!S$ occurs in a relator loop $!L$ of rank $\be$ such that 
$!L \ne !R^{\pm1}$.
%
\end{definition}


It follows from the definition that if $!K$ is independent of $!u$ and 
$!M \sim !K^{\pm1}$ then $!M$ is also independent of $!u$.

\begin{proposition}[non-active fragment in bigon] 
\label{pr:fragment-nonactive-bigon}
Let $\al\ge 1$, $!X^{-1} !u !Y !v$ be a coarse bigon in ~$\Ga_\al$
and let $!X = !F_0 !K_1 !F_1 \dots !K_r !F_r$ where $!K_i$ are the associated active fragments of rank $\al$.
Let $!K$ be a fragment of rank $\al$ in $!X$ with $\muf(!K) \ge 2\la +5.8\om$.
Assume that $!K \not\sim !K_i$ for all $i$ and that $!K$ 
is independent of $!u$ and ~$!v$.
Then there exists a fragment of rank $\al$ in $!Y$ such that $!M \sim !K$ and
$$
  \muf(!M) \ge \muf(!K) - 2 \la - 3.4\om.
$$
\end{proposition}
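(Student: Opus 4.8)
The plan is to induct on $r = \Area_\al(!X^{-1} !u !Y !v)$. Fix, once and for all, a reduced diagram $\De$ of rank $\al$ filling the coarse bigon together with a tight set $\cT$ of contiguity subdiagrams, so that the active fragments of rank $\al$ occurring in $!X$ are the disjoint subpaths $!K_1,\dots,!K_r$ with $!X = !F_0 !K_1 !F_1 \cdots !K_r !F_r$. The first step is a purely combinatorial observation about the position of $!K$ relative to the $!K_i$. Since $\muf(!K) \ge 2\la + 5.8\om > 5.7\om$ and, by Proposition \ref{pr:active-large}, $\muf(!K_i) > \xi_0 = 7\la - 1.5\om > 5.7\om$, neither of $!K$, $!K_i$ can contain the other (Proposition \ref{pr:inclusion-compatibility}, using $!K\not\sim !K_i$), and by Proposition \ref{pr:small-overlapping} the overlap of $!K$ with $!K_i$ has $\muf$-size below $\la + 2.7\om$; consequently $!K$ overlaps at most two of the $!K_i$, one at each end. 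Writing $!R$ for the base relator loop of $!K$ and $!R_i$ for the active relator loop of $!D_i$ (so $!K_i$ has base loop $!R_i^{-1}$), the same reasoning together with Corollary \ref{co:compatibility-same-rank}(iii) (which gives $!K \not\sim !K_i^{-1}$) shows $!R \ne !R_i^{\pm1}$ for every $i$.

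The second step is the reduction. Suppose some active fragment $!K_j$ is disjoint from $!K$. Cutting off the cell $!D_j$ by the surgery of \ref{ss:active-loop-induction} splits $!X^{-1} !u !Y !v$ into two coarse bigons of strictly smaller $\al$-area, exactly one of which — call it $!P'$ — carries $!K$ in the side it inherits from $!X$ and a subpath of $!Y$ in the opposite side. The inductive hypothesis applies to $(!P', !K)$: the new bridge of $!P'$ has a bridge partition whose central arc lies on $!R_j^{\pm1}$, and $!R \ne !R_j^{\pm1}$, so $!K$ is independent of it; the surviving original bridge keeps $!K$ independent by hypothesis; the active fragments of $!P'$ are subpaths (of $\muf$-size still $> \xi_0$) of some $!K_i$, hence compatible with that $!K_i$ by Proposition \ref{pr:inclusion-compatibility} and therefore not with $!K$; and $\muf(!K)$ is unchanged. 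So the inductive hypothesis yields the required $!M$. We may therefore assume that every active fragment overlaps $!K$, so that $r \le 2$.

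It remains to treat the cases $r \le 2$ in which every active fragment overlaps $!K$ (this includes $r = 0$). When $r = 0$ the bigon lifts to $\Ga_{\al-1}$ and, after exposing the central pieces $!P_u$ of $!u$ and $!P_v$ of $!v$ (omitting either side when the corresponding bridge lies in $\cH_{\al-1}$), becomes a coarse tetragon (or trigon) $!X^{-1} * !P_u * !Y * !P_v *$ in $\Ga_{\al-1}$ whose sides $!P_u$, $!P_v$ lie, by independence of $!K$ from $!u$ and $!v$, on relator loops different from $!R^{\pm1}$. When $r = 1$ or $2$ the single-layer property (Proposition \ref{pr:single-layer}(i)) lets one read off, from the cell(s) overlapped by $!K$ together with the adjacent contiguity subdiagrams and the intervening components of $\De_{\al-1}$, a coarse polygon of rank $\al-1$ — lifted to $\Ga_{\al-1}$ — with at most four sides: a path containing $!K^{-1}$, a subpath of $!Y$, and at most two ``foreign'' sides, each being an arc of one of the $!R_i$ (foreign since $!R \ne !R_i^{\pm1}$) or, only when $!K$ sits near an end of $!X$ so that $r = 1$, a central piece of $!u$ or $!v$ (foreign by independence); a $!K$ overlapping two active fragments meets neither $!u$ nor $!v$, so no fifth side ever appears. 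In each case one transfers the base $!P_K$ of $!K$ across this polygon by Lemma \ref{lm:stability-tetragon-previous}, or by Proposition \ref{pr:fragment-trigon-previous}$_{\al-1}$ when a foreign side degenerates; the hypothesis of Lemma \ref{lm:stability-tetragon-previous} is met because, by (S1) in the form \eqref{eq:S2-piece-form}, $|!P_K|_{\al-1} = \muf(!K)\,|R^\circ|_{\al-1} \ge (2\la + 5.8\om)\Om = 2\la\Om + 5.8 \ge 45.8$. By Corollary \ref{co:small-overlapping-Cayley}, the portion of $!P_K$ close to a subpath of any one foreign side has $\mu$-measure below $\la$ (otherwise the $R$-periodic axis of $!P_K$ would coincide with a relator axis $\ne !R^{\pm1}$); with at most two foreign sides, the portion of $!P_K$ matched to the $!Y$-side accounts for all but at most $2\la + 3.4\om$ of $\muf(!K)$, and the term $5.8\om$ in the hypothesis is precisely what keeps this portion nonempty. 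It yields a subpath $!M$ of $!Y$ close in $\Ga_{\al-1}$ to a sub-segment of $!P_K$; hence $!M$ is a fragment of rank $\al$ whose base axis coincides with that of $!K$ (Lemma \ref{lm:compatible-lines}), so $!M \sim !K$ and $\muf(!M) > \muf(!K) - 2\la - 3.4\om$.

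The main obstacle is the last step, specifically the case where $!K$ straddles two consecutive active fragments: one must extract the correct four-sided coarse polygon of rank $\al-1$ from the single-layered diagram — going \emph{around} the overlapped cells rather than through them, so that their boundary arcs appear as foreign sides and the small overlaps of $!K$ with the $!K_i$ are absorbed into (rather than added to) the $<\la$ leakage into those arcs — and one must verify that the polygon never acquires a fifth side, which is exactly where the count ``at most two overlapped active fragments'' and the independence hypothesis are both needed (independence is what prevents a central piece of $!u$ or $!v$ from becoming a genuine $!R$-side that could swallow an unbounded part of $!P_K$). Keeping the orientations consistent with the conventions of \ref{ss:active-fragments} (under which the active fragments of a bigon are matched by $\sim{}^{\pm1}$), and checking that the total leakage into foreign sides together with the seam losses in Lemma \ref{lm:stability-tetragon-previous} stays within the budget $2\la + 3.4\om$, is what pins down the precise constant $2\la + 5.8\om$ in the statement.
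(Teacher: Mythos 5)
Your proposal is correct, and the core mechanism is the same as the paper's: in the end you read off a coarse tetragon in $\Ga_{\al-1}$ whose sides are the relevant piece of $!X$, a subpath $!H_i$ of $!Y$, and two arcs lying on the active relator loops, then transfer the base $!P_K$ of $!K$ across it via Lemma~\ref{lm:stability-tetragon-previous}, with the leakage into the two relator-loop arcs bounded by $\la$ each via Corollary~\ref{co:small-overlapping-Cayley}. What differs is the reduction: you induct on $\Area_\al$, cutting off one cell disjoint from $!K$ at a time (and checking that independence and non-compatibility transfer to the new, smaller bigon), until only cells overlapping $!K$ survive; the paper instead deduces directly from Proposition~\ref{pr:inclusion-compatibility} that $!K$ is contained in one of the paths $!F_0!K_1,\ !K_1!F_1!K_2,\ \dots,\ !K_r!F_r$, and then extracts the tetragon from \ref{ss:active-fragments} around that single gap in one step; the paper also uses Proposition~\ref{pr:stability-trigon}$_{\al-1}$ rather than Proposition~\ref{pr:fragment-trigon-previous} for the boundary trigon case and Proposition~\ref{pr:fragment-stability-previous} for the fully degenerate bigon case. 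Both routes work; the paper's is shorter and avoids the bookkeeping of re-verifying the inductive hypotheses after each surgery. Finally, the worry you flag at the end is not actually an obstacle: the $2\la$ leakage is measured on the \emph{base} $!P_K$ (the portions close to subpaths of the two relator-loop arcs $!S_1, !S_2$), not on the overlap of $!K$ with $!K_i$ in $!X$; Corollary~\ref{co:small-overlapping-Cayley} bounds each of those portions by $\la$ independently of how $!K$ overlaps the $!K_i$, and the case distinction inside Lemma~\ref{lm:stability-tetragon-previous} (bound $3.4$ when all three target sides are hit, $5.8$ otherwise) together with the contradiction argument when too much of $!P_K$ would have to land on the relator arcs is exactly what makes the constant $2\la + 3.4\om$ come out right, with $\muf(!K)\ge 2\la+5.8\om$ as the tight threshold.
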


\begin{proof}
By Proposition \ref{pr:inclusion-compatibility}
$!K$ is a subpath of one of the paths $!F_0 !K_1$, $!K_1 !F_1 !K_2$, $\dots$, $!K_r !F_r$.
We consider the case when $!K$ is a subpath of some $!K_i !F_i !K_{i+1}$ 
(the cases when $!K$ is a subpath of $!F_0 !K_1$ or $!K_r !F_r$ are similar; see also the remark in the end of the proof).
Let $!Y = !H_0 !M_0 !H_1 \dots !M_r !H_r$ where $!M_i$ are the corresponding active fragments of rank $\al$ in $!Y$.

As we can see from \ref{ss:active-fragments}, there is a loop
$!T = (!K_i !F_i !K_{i+1})^{-1} !w_1 !S_1 !w_2 !H_i  !w_3 !S_2 !w_4$ which can be lifted to $\Ga_{\al-1}$
and where $!w_j$ are bridges of rank $\al-1$ and $!S_1$ and $!S_2$
occur in base loops for $!K_i$ and $!K_{i+1}$ respectively
(see Figure \ref{fig:fragment-stability-bigon}).
\begin{figure}[h]
\input 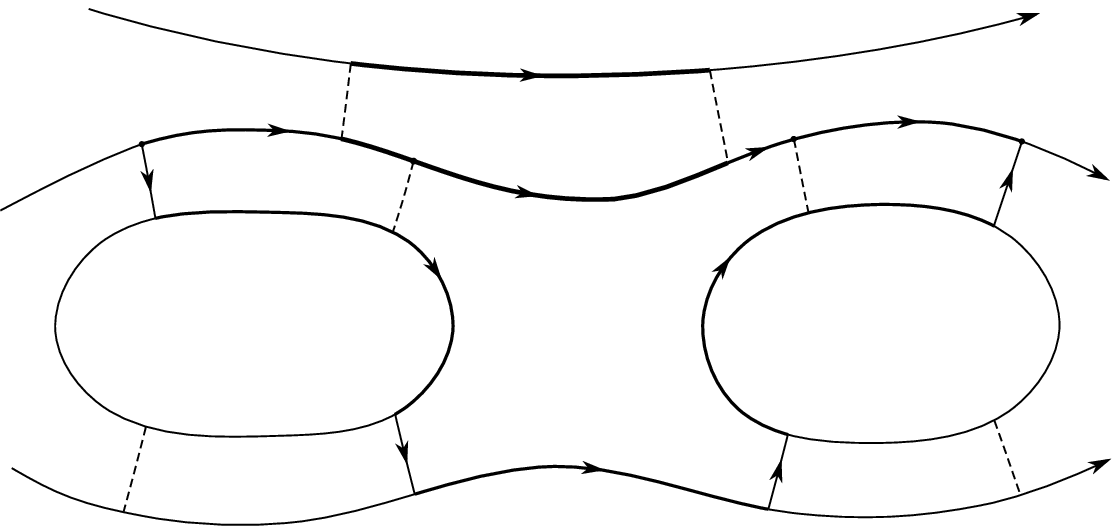_tex
\caption{}  \label{fig:fragment-stability-bigon}
\end{figure}
Abusing notation we assume that $!T$ is already in ~$\Ga_{\al-1}$.
Then, instead of base loops,  $!S_1$ and $!S_2$ occur in base axes $!L_1$ and $!L_2$ for $!K_i$ and 
$!K_{i+1}$ respectively.

Let $!L$ be the base axis for $!K$ and $!S$ the base for $!K$ (which is contained in $!L$ by definition).
Assumptions $!K \not\sim !K_i$ and $!K \not\sim !K_{i+1}$ imply $!L \ne !L_i$ $(i = 1,2)$.
By Corollary \ref{co:small-overlapping-Cayley}, if a subpath ~$!P$ of $!S$ is close to a subpath of $!S_i$
then $\mu(!P) < \la$.
Then by Lemma \ref{lm:stability-tetragon-previous} we find a subpath $!Q$ of ~$!S$ which is close 
to a subpath $!M$ of $!H_i$ and satisfies 
$$
  \mu(!Q) > \mu(!S) - 2\la - 3.4\om.
$$
Then $!M$ is a fragment of rank $\al$ with base $!Q$. Clearly, $!M$ satisfies the conclusion of the proposition.

If $!K$ is a subpath of $!F_0 !K_1$ or $!K_r !F_r$, a similar argument applies.
For example, assume that $!K$ is a subpath of $!F_0 !K_1$.
As above, we assume that all paths are in $\Ga_{\al-1}$ not changing
their notations. Let $!L$ be a base axis for $!K$.
By hypothesis, either $\lab(!u) \in \cH_{\al-1}$ or 
$!u = !u_1 !V  !u_2$ where $!V$
occurs in a line $!L_1$ labeled by the infinite power $R^\infty$ of a relator $R$
of rank $\al$ and $!L_1$ is distinct from $!L$.
In the case $\lab(!u) \in \cH_{\al-1}$ we apply Proposition \ref{pr:stability-trigon}$_{\al-1}$. 
Otherwise the argument is the same as in the case when $!K$ is a subpath 
of $!K_i !F_i !K_{i+1}$.
The case when $!K$ is a subpath of $!K_r !F_r$ is similar.

Finally, there is a ``degenerate'' case when $\Area_\al(!X^{-1} !u !Y !v) = 0$ and both $!u$ and ~$!v$
are bridges of rank $\al-1$. In this case, the statement follows directly from 
Proposition \ref{pr:fragment-stability-previous}.
\end{proof}

\begin{proposition}[fragment stability in bigon] \label{pr:fragment-stability-bigon}
Let $\al\ge 1$,  $!X^{-1} !u !Y !v$ be a coarse bigon in ~$\Ga_\al$
and let $!K$ be a fragment of rank $\al$ in $!X$ with $\muf(!K) \ge 2\la +5.8\om$.
Assume that $!K$ is independent of ~$!u$ and ~$!v$.
Then there exists a fragment $!M$ of rank $\al$ in $!Y$ such that $!M \sim !K^{\pm1}$ and
$$
  \muf(!M) \ge \min\set{\muf(!K) - 2 \la - 3.4\om, \ \xi_0}
$$
\end{proposition}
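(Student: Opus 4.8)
The plan is to reduce this to two results already established by splitting according to whether or not $!K$ is compatible with one of the active fragments of rank $\al$ of the coarse bigon $!P = !X^{-1} !u !Y !v$. First I would fix, after possible switching of the bridges $!u$ and $!v$, a reduced disk diagram $\De$ of rank $\al$ filling $!P$ together with a tight set of contiguity subdiagrams, and write $!X = !F_0 !K_1 !F_1 \cdots !K_r !F_r$ and $!Y = !H_0 !M_0 !H_1 \cdots !M_r !H_r$ for the associated active fragments, numbered so that $!K_i$ and $!M_i$ come from the same active relator loop and hence have mutually inverse base relator loops (see \ref{ss:active-fragments}).

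\emph{If $!K \not\sim !K_i$ for all $i$:} since $\muf(!K) \ge 2\la + 5.8\om$ and $!K$ is independent of $!u$ and $!v$ by hypothesis, Proposition \ref{pr:fragment-nonactive-bigon} applies directly and produces a fragment $!M$ of rank $\al$ in $!Y$ with $!M \sim !K$ and $\muf(!M) \ge \muf(!K) - 2\la - 3.4\om$, which already dominates $\min\set{\muf(!K) - 2\la - 3.4\om,\ \xi_0}$.

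\emph{If $!K \sim !K_i$ for some $i$:} take $!M := !M_i$. By Proposition \ref{pri:active-bound} we have $!K_i \sim !M_i^{-1}$ and $\muf(!M_i) > 7\la - 1.5\om = \xi_0$. Chaining $!K \sim !K_i \sim !M_i^{-1}$ and using that compatibility is symmetric under inversion gives $!M \sim !K^{-1}$, hence $!M \sim !K^{\pm1}$, while $\muf(!M) > \xi_0 \ge \min\set{\muf(!K) - 2\la - 3.4\om,\ \xi_0}$.

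I do not expect a real obstacle here: the substantive work lies in Propositions \ref{pr:fragment-nonactive-bigon} and \ref{pri:active-bound}, while the only new ingredient is the elementary dichotomy above. One should note, though, that this dichotomy is precisely what forces the two relaxations appearing in the statement — compatibility only up to inversion, rather than outright compatibility, and the minimum with $\xi_0$ — because in the second case the only natural candidate for $!M$ is an active fragment of $!Y$, which is inverted relative to $!K$ and whose size is controlled from below only by $\xi_0$.
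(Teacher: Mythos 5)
Your proof is correct and is essentially identical to the paper's: the paper also splits on whether $!K$ is compatible with some active fragment $!K_i$, invoking Proposition~\ref{pr:active-large} in the compatible case (taking $!M := !M_i$) and Proposition~\ref{pr:fragment-nonactive-bigon} otherwise. Your closing observation about why the statement needs both the $\pm 1$ in the compatibility and the minimum with $\xi_0$ is a good piece of motivation that the paper leaves implicit.
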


\begin{proof}
Let $!X = !F_0 !K_1 !F_1 \dots !K_r !F_r$ and $!Y = !H_0 !M_0 !H_1 \dots !M_r !H_r$ where
$!K_i$ and $!M_i$ are the associated active fragments of rank $\al$. 
If $!K \sim !K_i$ for some $i$ then we can take $!M = !M_i$ due to Proposition ~\ref{pr:active-large}.
Otherwise we apply Proposition \ref{pr:fragment-nonactive-bigon}.
\end{proof}

\begin{proposition}[fragment stability in trigon] \label{pr:fragment-stability-trigon}
Let $\al\ge 1$, $!X^{-1} !u_1 !Y_1 !u_2 !Y_2 !u_3$ be a coarse trigon in ~$\Ga_\al$ 
and let $!K$ be a fragment of rank $\al$ in $!X$
with $\muf(!K) \ge 3\la +10\om$.
Assume that $!K$ is independent of any of $!u_i$.
Then there is a fragment $!M$ of rank $\al$ in $!Y_1$ or $!Y_2$ such that $!M \sim !K^{\pm1}$ and
$$
  \muf(!M) > \min\bbset{3\la - 1.1\om, \ \frac12 (\muf(!K) - 3\la - 6.8\om)}.
$$
\end{proposition}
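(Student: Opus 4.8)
The plan is to adapt the proof of Proposition~\ref{pr:fragment-stability-bigon} from bigons to trigons, using Proposition~\ref{pr:trigon-active-fragments} in place of Proposition~\ref{pr:active-large} and imitating the proof of Proposition~\ref{pr:fragment-nonactive-bigon} for the non-active part. First I would fix, by Proposition~\ref{pr:filling-polygons}, a reduced filling diagram of rank~$\al$ for the coarse trigon $!P=!X^{-1}!u_1!Y_1!u_2!Y_2!u_3$ together with a tight set of contiguity subdiagrams, and, as in \ref{ss:active-loops-trigon}, write $!X=!F_0!K_1!F_1\dots!K_r!F_r$ where $!K_1,\dots,!K_r$ are the active fragments of rank~$\al$ of $!P$ lying on $!X$; I likewise record the active fragments lying on $!Y_1$ and on $!Y_2$. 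Note that $\muf(!K)\ge 3\la+10\om$ forces, via \eqref{eq:S2-piece-form}, the base of $!K$ to be long enough (in $|\cdot|_{\al-1}$) for Lemma~\ref{lm:stability-tetragon-previous}, and that $\muf(!K)>\la+2.6\om$.

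If $!K\sim!K_i^{\pm1}$ for some $i$, then $!K$ shares, up to orientation, a base relator loop with $!K_i$; by Proposition~\ref{pr:trigon-active-fragments} the active fragments of $!P$ attached to that base loop lie in pairwise distinct sides among $!X,!Y_1,!Y_2$, are mutually compatible, and at least two have size greater than $3\la-1.1\om$. Hence one of them, $!M$, lies on $!Y_1$ or on $!Y_2$; it is compatible with $!K^{\pm1}$ and $\muf(!M)>3\la-1.1\om$, which meets the first term of the asserted minimum.

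Otherwise $!K\not\sim!K_i^{\pm1}$ for all $i$. By Proposition~\ref{pr:inclusion-compatibility}, $!K$ contains no active fragment of size $\ge\la+2.6\om$ and is contained in no active fragment; absorbing into the $!F_j$'s the (at most one per base loop, by Proposition~\ref{pr:trigon-active-fragments}) small active fragments, I reduce to the case that $!K$ lies inside $!K_i!F_i!K_{i+1}$ for large active fragments $!K_i,!K_{i+1}$, or inside an end piece $!F_0!K_1$ or $!K_r!F_r$. Exactly as in \ref{ss:active-fragments}, the cells carrying $!K_i$ and $!K_{i+1}$, together with their contiguity subdiagrams to the sides, yield after passing to $\Ga_{\al-1}$ a loop with $!X$-side $(!K_i!F_i!K_{i+1})^{-1}$, further reduced sides $!S_1\subseteq!L_1$ (a base axis for $!K_i$), a gap $!H$ running along $!Y_1!u_2!Y_2$, and $!S_2\subseteq!L_2$ (a base axis for $!K_{i+1}$), the remaining factors being bridges of rank $\al-1$. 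This loop is a coarse tetragon in $\Ga_{\al-1}$ if $!H$ stays in a single side $!Y_1$ or $!Y_2$, and a coarse pentagon $(!K_i!F_i!K_{i+1})^{-1}\,!S_1\,!H_1\,!V\,!H_2\,!S_2$ (bridges interspersed) if $!H$ runs through the corner $!u_2$, where $!V$ is the central arc of $!u_2$ and $!H_1\subseteq!Y_1$, $!H_2\subseteq!Y_2$.

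Write $!S$ for the base of $!K$, a reduced path in $\Ga_{\al-1}$ close to $!K$ and lying on a base axis $!L$; Case 2 with Lemma~\ref{lm:compatible-lines} gives $!L\ne!L_1,!L_2$, and independence of $!K$ from $!u_2$ gives $!L$ different from the relator line through $!V$. In the tetragon case I would apply Lemma~\ref{lm:stability-tetragon-previous} to $!S$, $!K$ and this tetragon, getting a decomposition of $!S$ into parts close to subpaths of $!S_1$, $!H$, $!S_2$; by Corollary~\ref{co:small-overlapping-Cayley} the outer two carry $\mu$-mass below $\la$ each, so the part close to a subpath $!M$ of $!H$ satisfies $\mu>\mu(!S)-2\la-3.4\om$, and $!M$ is the desired fragment on $!Y_1$ or $!Y_2$, with $\muf(!M)>\muf(!K)-2\la-3.4\om\ge\frac12(\muf(!K)-3\la-6.8\om)$ since $\muf(!K)\ge\la$. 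In the pentagon case the same scheme, with Lemma~\ref{lm:stability-tetragon-previous} extended by one more side, decomposes $!S$ into parts close to subpaths of $!S_1$, $!H_1$, $!V$, $!H_2$, $!S_2$; the three non-gap parts each carry $\mu$-mass below $\la$ (again Corollary~\ref{co:small-overlapping-Cayley}, the $!V$-part by independence), so the parts close to $!H_1$ and $!H_2$ together carry $\mu$-mass exceeding $\muf(!K)-3\la-6.8\om$, and the larger of them gives $!M$ on $!Y_1$ or $!Y_2$ with $\muf(!M)>\frac12(\muf(!K)-3\la-6.8\om)$. The end pieces $!F_0!K_1$ and $!K_r!F_r$ are treated identically with $!u_1$, resp.\ $!u_3$, in the role of $!u_2$. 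The main obstacle is this Case 2 bookkeeping: identifying exactly when the gap $!H$ straddles a corner (the source of the factor $\frac12$), verifying that independence of $!K$ from the $!u_j$ is precisely what bounds the corner contribution by $\la$, and tracking the additive $\om$-constants through the iterated use of Lemma~\ref{lm:stability-tetragon-previous}.
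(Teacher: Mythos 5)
Your plan captures the two main ingredients of the argument (Proposition~\ref{pr:trigon-active-fragments} for the active case, Lemma~\ref{lm:stability-tetragon-previous} and Corollary~\ref{co:small-overlapping-Cayley} for the non-active case), but the direct ``read the filling diagram once'' analysis has a genuine gap. In the trigon case, Proposition~\ref{pr:single-layer-3gon} only guarantees that each cell of rank~$\al$ has contiguity subdiagrams to \emph{at least two} sides. A cell may well touch only $!Y_1$ and $!Y_2$ and never $!X$; such a cell contributes no active fragment to the sequence $!K_1,\dots,!K_r$ along $!X$, yet it sits in the ``gap'' between two consecutive $!K_i$'s (or in an end piece). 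Consequently, when you lift to $\Ga_{\al-1}$, the path $!H$ running along $!Y_1!u_2!Y_2$ between the bases of $!K_i$ and $!K_{i+1}$ is \emph{not} a side of a coarse tetragon or pentagon in $\Ga_{\al-1}$ --- it still straddles cells of rank~$\al$ and the whole loop does not lift. Your argument silently assumes this never happens. The paper sidesteps this by inducting on $\Area_\al(!P)$: when an active relator loop $!R$ has both its fragments $!N_1,!N_2$ on $!Y_1$ and $!Y_2$ (none on $!X$), it cuts $!P$ along $!R$, producing a smaller trigon whose new corner bridge $!u_2'$ the fragment $!K$ is still independent of (precisely because $!K\not\sim!N_1^{\pm1}$), and recurses. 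Only at the base case $\Area_\al(!P)=0$ does the lifted-polygon analysis become valid. Without this peeling step your tetragon/pentagon picture is simply wrong in general.

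A secondary issue: you invoke ``Lemma~\ref{lm:stability-tetragon-previous} extended by one more side'' for the pentagon case, but the paper proves this lemma only for tetragons, and the constants it returns are tuned for four sides. The paper does not hand-extend the lemma; instead it introduces a reduced path $!Z$ joining $\tau(!u_1)$ to $\io(!u_3)$ (from Proposition~\ref{pr:reduction}$_{\al-1}$) to split the up-to-hexagonal loop into two coarse tetragons, applying Lemma~\ref{lm:stability-tetragon-previous} twice. This double application, rather than an unproved pentagon version, is where the $\frac{1}{2}$ factor and the $-3\la-6.8\om$ shift come from, so you would in any case need to reorganize the constant-tracking around that splitting device.
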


\begin{proof}
The idea of the proof is the same as in the proof of Proposition \ref{pr:fragment-nonactive-bigon}.
To avoid complicated notations, we proceed by induction on the $\al$-area of 
$!P = !X^{-1} !u_1 !Y_1 !u_2 !Y_2 !u_3$
as described in \ref{ss:active-loops-trigon}. Assume that $!R$ is an active relator loop of rank $\al$
of $!P$. As observed in \ref{ss:active-loops-trigon}, there are two or three fragments $!N_i$ ($i=1,2$ or 
$i=1,2,3$) of rank $\al$ with base loop ~$!R$ that occur in distinct paths $!X^{-1}$, $!Y_1$ or $!Y_2$.
By Proposition \ref{pr:trigon-active-fragments} we can assume that $\muf(!N_i) \ge 3\la - 1.1\om$
for $i=1,2$.
If $!K \sim !N_1^{\pm1}$ then we for the required $!M$ we take that $!N_i$
which occurs in $!Y_1$ or $!Y_2$. Let $!K \not\sim !N_1^{\pm1}$.

If $!N_1$ and $!N_2$ occur in $!Y_1$ and $!Y_2$ then we can replace $!P$ by a coarse trigon 
with smaller $\al$-area
and use induction (see Figure \ref{fig:fragment-stability-trigon}a). 
\begin{figure}[h]
\input 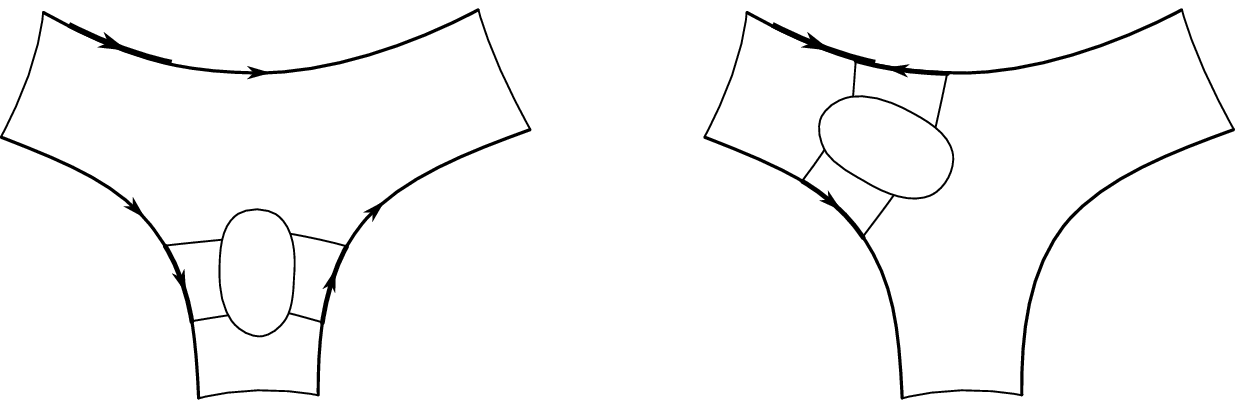_tex 
\caption{}  \label{fig:fragment-stability-trigon}
\end{figure}
(In this case $!u_2$ is replaced by a new bridge $!u_2'$ and the 
assumption $!K \not\sim !N_1^{\pm1}$ implies that $!K$ is independent of ~$!u_2'$.)
Otherwise, assume that $!N_1$ occurs in $!X^{-1}$ and $!N_2$ occurs in $!Y_1$
(the case when $!N_2$ occurs in $!Y_2$ is symmetric).

Since $!K \not\sim !N_1^{-1}$ we have either $!K < !N_1^{-1}$
or $!K > !N_1^{-1}$. In the first case, we reduce the statement to the case of a coarse bigon 
as in Figure \ref{fig:fragment-stability-trigon}b and apply Proposition ~\ref{pr:fragment-nonactive-bigon}.
In the second case, the statement follows by inductive hypothesis.

It remains to consider the case $\Area_\al(!P) = 0$. Then the loop $!P$ can be lifted to $\Ga_{\al-1}$
and we assume that $!P$ is already in $\Ga_{\al-1}$.
Let $!L$ be the base axis for $!K$ and $!S$ the base for $!K$.
Since $!K$ is independent of $!u_i$ (when viewed in $\Ga_\al$), we have
either $\lab(!u_i) \in \cH_{\al-1}$ or
 $!u_i = !v_i !Q_i !w_i$ where $\lab(!v_i), \lab(!w_i) \in \cH_{\al-1}$ and $!Q_i$ 
occurs in a line $!L_i$ labeled by the infinite power $R_i^\infty$ of a relator $R_i$
of rank $\al$ such that $!L_i \ne !L$.
We obtain a coarse $r$-gon with sides $!X^{-1}$, $!Y_1$, $!Y_2$ and $!Q_i$ where $3 \le r \le 6$
(see Figure \ref{fig:fragment-stability-trigon-non-active}).
We consider the ``worst'' case $r=6$ (the other cases are similar, with application of
Propositions \ref{pr:stability-trigon}$_{\al-1}$  or \ref{pr:fragment-stability-previous}$_{\al-1}$
where needed).  
\begin{figure}[h]
\input 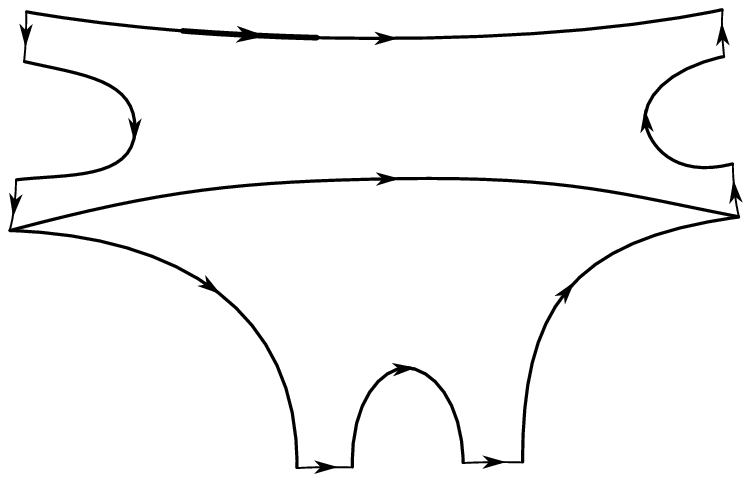_tex
\caption{}  \label{fig:fragment-stability-trigon-non-active}
\end{figure}
Let $!Z$ be a reduced path joining $\tau(!u_1)$ and $\io(!u_3)$ existing by 
Proposition \ref{pr:reduction}$_{\al-1}$.
By Corollary ~\ref{co:small-overlapping-Cayley}, if a subpath ~$!P$ of $!S$ is close to a subpath of $!Q_i$
then $\mu(!P) < \la$.
Then the statement easily follows by applying Lemma \ref{lm:stability-tetragon-previous} twice
to coarse tetragons $!X^{-1} !v_1 !Q_1 !w_1 !Z !v_3 !Q_3 !w_3$ and 
$!Z^{-1} !Y_1 !v_2 !Q_2 !w_2 !Y_2 $.
\end{proof}

\begin{lemma} 
\label{lm:piece-fragment-no-higher-fragments}
Let $\al\ge1$, $X$ be a piece of rank $1 \le \be < \al$ or a fragment
of rank $\be<\al$. Then $X$ contains no fragment $K$ of rank $\al$ with $\muf(K) \ge 3.2\om$.

In particular, any fragment $K$ of rank $\al$ with $\muf(K) \ge 3.2\om$ is a nonempty word
(since otherwise it would occur in a fragment of rank 0).
\end{lemma}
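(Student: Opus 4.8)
The plan is to bound the ``size'' $|X|_{\al-1}$ of $X$ by an absolute constant and then argue that a fragment of rank $\al$ inside such a short word must have a short base, hence small $\muf$. For the first step, observe that $X$ is, after the obvious conventions, a fragment of rank $\be$: a nonempty piece of rank $\be\ge1$ is a subword of a relator $R'$ of rank $\be$, which is cyclically reduced in $G_{\be-1}$ by (S0), so $X$ is reduced in $G_{\be-1}$ and is trivially close in rank $\be-1$ to itself (a subword of $R'^{1}$); a fragment of rank $\be\ge1$ is already one; and for $\be=0$, $X$ is a single letter. In each case the one-block fragmentation $\cF=\set{X}$ of rank $\be$ has $\text{weight}_\be(\cF)=1$, so $|X|_\be\le1$, and repeated use of~\ref{cl:om-rank-increment} gives $|X|_{\al-1}\le\ze^{\,\al-1-\be}|X|_\be\le1$ (and even $|X|_{\al-1}\le\ze$ when $\be\le\al-2$). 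Consequently every subword of $X$ also has $|\cdot|_{\al-1}\le1$.

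Next I would suppose, for a contradiction, that $X$ contains a fragment $K$ of rank $\al$ with $\muf(K)\ge3.2\om$; here $K$ may be taken nonempty, since fragments of rank $\al\ge1$ are nonempty (and an empty $X$ contains nothing). Let $P$ be the base of $K$: it is a subword of $R^{k}$ for some relator $R$ of rank $\al$, $K$ is close in rank $\al-1$ to $P$, and $\muf(K)=\mu(P)=|P|_{\al-1}/|R^{\circ}|_{\al-1}$. Since $|R^{\circ}|_{\al-1}\ge\Om=1/\om$ by (S1) --- this is exactly what underlies~\eqref{eq:S2-piece-form} --- we obtain $|P|_{\al-1}=\muf(K)\,|R^{\circ}|_{\al-1}\ge3.2$, while on the other hand $|K|_{\al-1}\le|X|_{\al-1}\le1$ by the first step.

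Then I would compare $|P|_{\al-1}$ with $|K|_{\al-1}$. Both $K$ and $P$ are reduced in $G_{\al-1}$: $K$ by the definition of a fragment of rank $\al$, and $P$ because $R$ is strongly cyclically reduced in $G_{\al-1}$ (Proposition~\ref{pr:relator-strongly-reduced}), so $R^{k}$, and hence its subword $P$, is reduced in $G_{\al-1}$. Thus $K$ and $P$ are close reduced paths in $\Ga_{\al-1}$, and I would invoke the closeness-stability estimate (Proposition~\ref{pr:closeness-stability}$_{\al-1}$, used just as in the proof of Proposition~\ref{pr:fragment-stability-previous}): it writes $P=z_1P'z_2$ with $P'$ close to a subword of $K$ and the correction pieces $z_i$ of bounded $|\cdot|_{\al-1}$, and symmetrically for $K$, so that $|P|_{\al-1}\le|K|_{\al-1}+c$ for an absolute constant $c<2.2$. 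Combining with the previous paragraph, $3.2\le|P|_{\al-1}\le1+c<3.2$, a contradiction. The final ``in particular'' is then the case $\be=0$ of what has been proved, applied to a single letter, of which the empty word is a subword.

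The only delicate point, and the one I would expect to be the real obstacle, is the quantitative form of this last comparison. The estimate is not a matter of naive bookkeeping: $|\cdot|_{\al-1}$ is \emph{not} invariant under equality in $G_{\al-1}$, so one cannot simply replace $P$ by $u^{-1}Kv^{-1}$ and add up bridge-word lengths; and even if one could, a single bridge word of rank $\al-1$ has $|\cdot|_{\al-1}\le\sum_{j\ge0}(2\ze)^j=\tfrac{1}{1-2\ze}=\tfrac{10}{9}$, so two of them already contribute about $2.22$, which is just over the threshold $2.2$ forced by the tight case $\be=\al-1$ (where $|X|_{\al-1}$ can genuinely equal $1$, e.g.\ $X$ a whole relator of rank $\al-1$). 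Hence one must use the sharp form of closeness stability --- the same one responsible for the constant $1.3$ appearing in Proposition~\ref{pr:fragment-stability-previous} --- to keep $c$ strictly below $2.2$; everything else in the argument is routine manipulation of the length function.
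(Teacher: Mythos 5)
Your overall strategy — bound $|X|_{\al-1}$ by $1$, compare $|P|_{\al-1}$ to $|K|_{\al-1}$, conclude $|P|_{\al-1}$ is too small to reach $3.2$ — is different from the paper's, and the comparison step is a genuine gap. Proposition~\ref{pr:closeness-stability} does not compare $|\cdot|_{\al-1}$ of close paths: applied (as in the proof of Proposition~\ref{pr:fragment-stability-previous}) to the base $P$ of $K$ and a pair of close ambient paths, it writes $P = z_1 P' z_2$ with $|z_i|_{\al-1}<1.3$ and $P'$ close to a subpath $M$ of the second ambient path, but it gives no bound on $|P'|_{\al-1}$ in terms of $|M|_{\al-1}$, so the inequality $|P|_{\al-1}\le|K|_{\al-1}+c$ is not obtained at all from it. (There is also no second ambient close pair in your setting: you have only the single pair $(P,K)$, so the hypotheses of~\ref{pr:closeness-stability} are not even met.) The tool that actually relates $|\cdot|_{\al-1}$ of close words is Proposition~\ref{pr:length-compare-close}$_{\al-1}$, which is legitimately available here and gives $|P|_{\al-1}<1.3\,|K|_{\al-1}+2.2$. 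For $\be\le\al-2$ you have $|K|_{\al-1}\le\ze$ and the bound is $<2.3<3.2$, so your argument closes; but in the tight case $\be=\al-1$ you only get $|K|_{\al-1}\le1$ and hence $|P|_{\al-1}<3.5$, which is \emph{not} less than $3.2$. You flag this tight case yourself, but the claim that "the sharp form of closeness stability" gives $c<2.2$ is unsupported — no statement in the paper yields an additive comparison with constant below $2.2$, and the multiplicative $1.3$ in \ref{pr:length-compare-close} is not reducible here.

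The paper avoids this head-on length comparison entirely. Its proof starts the same way (the base $S=P$ has $|S|_{\al-1}\ge3.2$), but instead of bounding $|S|_{\al-1}$ from above, it lowers the rank of the closeness relation between $S$ and $K$: the inductive hypothesis of this very lemma at ranks $<\al$ shows $K$ contains no fragments of ranks strictly between $\be$ and $\al$ of size $\ge\xi_0$, and Corollary~\ref{co:no-active-fragments-iterated} then produces subpaths $S_1\subset S$, $K_1\subset K$ close in rank $\max(0,\be-1)$ with $|S_1|_{\al-1}>1.15$. For $\be\ge1$ one lifts to $\Ga_{\be-1}$ and applies closeness stability at rank $\be-1$ to the base $T$ of the fragment $X$, extracting a further subpath $S_2$ of $S$ that is close in rank $\be-1$ to a piece $T_1$ of a rank-$\be$ relator, i.e.\ $S_2$ is itself a \emph{fragment of rank $\be$}, and still $|S_2|_{\al-1}>1$. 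But any fragment of rank $\be<\al$ has $|\cdot|_\be\le1$ and hence $|\cdot|_{\al-1}\le1$, the same elementary bound you use in your first paragraph — the paper just applies it to a sub-piece of the base $S$ rather than to $K$. This descent to rank $\be-1$ is what the argument really needs: it sidesteps the missing sharp additive comparison and makes the tight case $\be=\al-1$ unproblematic.
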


\begin{proof}
We consider the case when $X$ is a fragment of rank $\be<\al$. 
We represent $X$ by a path ~$!X$ in $\Ga_{\al-1}$. Assume that $!X$ contains a fragment $!K$  
of rank $\al$ with $\muf(!K) \ge 3.2\om$. 
Let $!S$ be a base for $!K$ with $|!S|_{\al-1} \ge 3.2$.
By Lemma \ref{lm:piece-fragment-no-higher-fragments}$_{\le\al-1}$
and Corollary \ref{co:no-active-fragments-iterated} we have $!S = !w_1 !S_1 !w_2$ and $!K = !z_1 !K_1 !z_2$
where $!S_1$ and $!K_1$ are close in rank $\max(0,\be-1)$ and 
$|!S_1|_{\al-1} > |!S|_{\al-1}- 2-10\ze^2\eta > 1.15$.
If $\be=0$ we already get a contradiction since in this case 
$|!K_1|\le 1$ but $|!S_1| \ge |!S_1|_{\al-1} > 1$.
Let $\be\ge 1$.
Up to change of notation, we assume that $!X$, $!K_1$ and $!S_1$ are lifted to $\Ga_{\be-1}$.
Let $!T$ be a base for $!X$. By Proposition \ref{pr:closeness-stability}$_{\be-1}$
a subpath $!T_1$ of ~$!T$ is close to a subpath $!S_2$ of $!S$ with 
$|!S_2|_{\al-1} > |!S_1|_{\al-1} - 2.6\ze > 1$.  Then $!S_2$ is a fragment of rank $\be$ with base $!T_1$
and we should have $|!S_2|_{\al-1} \le 1$, a contradiction.

In the case when $X$ is a piece of rank $\al$ a similar argument works with skipping application of
Proposition \ref{pr:closeness-stability}$_{\be-1}$.
\end{proof}

\begin{lemma} \label{lm:strongly-cyclically-reduced-previous}
Let $\al\ge 1$ and $X$ be a word cyclically reduced in $G_{\al-1}$. 
Assume that a cyclic shift of $X$ contains a fragment $K$ of rank $\al$ with $\muf(K) \ge 6.5\om$.
Then $X$ is strongly cyclically reduced in $G_{\al-1}$. 
\end{lemma}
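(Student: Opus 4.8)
The plan is to argue by contradiction after reformulating the conclusion in terms of the $X$-periodic line. Using the characterization in \S\ref{ss:fragment-measuring} (a word is reduced in $G_{\al-1}$ precisely when it is freely reduced and contains no fragment of rank $1\le\ga\le\al-1$ with $\muf>\rho$) together with the fact that all powers $X^t$ are freely reduced because $X$ is cyclically reduced in $G_{\al-1}$, the statement ``$X$ is strongly cyclically reduced in $G_{\al-1}$'' becomes equivalent to: the path $\bar{!X}$ in $\Ga_{\al-1}$ labelled $X^\infty$ carries no fragment $!F$ of rank $\ga$, $1\le\ga\le\al-1$, with $\muf(!F)>\rho$. (For $\al=1$ this is vacuous, so there is nothing to prove.) So I assume that such an $!F$ is a subpath of $\bar{!X}$ and aim to contradict the hypothesis that a cyclic shift of $X$ contains a fragment $!K$ of rank $\al$ with $\muf(!K)\ge6.5\om$.

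First I would observe that $!F$ is forced to be long: any subpath of $\bar{!X}$ of length at most $|X|$ reads a subword of a cyclic shift of $X$, and such a subword, being reduced in $G_{\al-1}$ (closure of $\cR_{\al-1}$ under subwords), carries no fragment of rank $\le\al-1$ with $\muf>\rho$; hence $|!F|>|X|$. Write $s=s_{X,\bar{!X}}\in G_{\al-1}$ for the translation by one period. By the $G_{\al-1}$-equivariance recorded in the Symmetry paragraph, every translate $s^j!F$ is again a fragment of rank $\ga$ with $\muf(s^j!F)=\muf(!F)>\rho$ lying on $\bar{!X}$, and since $|!F|>|X|$ consecutive translates overlap and the family $\{s^j!F\}_{j\in\Z}$ covers $\bar{!X}$. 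On the other hand $!K$, being a subword of a cyclic shift of $X$, occurs on $\bar{!X}$ with $|!K|\le|X|<|!F|$.

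The heart of the argument is then a short position count. Since the $s^j!F$ cover $\bar{!X}$ with consecutive overlaps and $|!K|<|!F|$, the occurrence of $!K$ is contained in a single translate $s^j!F$, or else it straddles the terminal vertex of some $s^j!F$, in which case cutting $!K=!K'!K''$ at that vertex puts $!K'$ inside $s^j!F$ and $!K''$ inside $s^{j+1}!F$. In the first case $!K$ itself --- a fragment of rank $\al$ with $\muf(!K)\ge6.5\om>3.2\om$ --- lies inside the fragment $s^j!F$ of rank $\ga<\al$, contradicting Lemma \ref{lm:piece-fragment-no-higher-fragments}. In the second case I apply Proposition \ref{pr:dividing-fragment} to the factorization $!K=!K'!K''$: it yields either a fragment of rank $\al$ contained in $!K'$ or in $!K''$ of size $>\muf(!K)-\ze\om$, or two compatible fragments $!F_1^*,!F_2^*$ of rank $\al$ --- a start of $!K'$ and an end of $!K''$ --- with $\muf(!F_1^*)+\muf(!F_2^*)>\muf(!K)-\ze\om$. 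Either way some fragment of rank $\al$ of size exceeding $\min\{\,6.5\om-\ze\om,\ \tfrac12(6.5\om-\ze\om)\,\}>3.2\om$ sits inside $s^j!F$ or $s^{j+1}!F$, a fragment of rank $\ga<\al$; this again contradicts Lemma \ref{lm:piece-fragment-no-higher-fragments}, which finishes the proof.

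The routine calculations (the inequalities $6.5\om-\ze\om>3.2\om$ and $\tfrac12(6.5\om-\ze\om)>3.2\om$, and the observation that a length-$\le|X|$ subpath of $\bar{!X}$ reads a subword of a cyclic shift of $X$) present no difficulty. The one point needing genuine care --- and the only real obstacle --- is keeping the whole argument inside $\Ga_{\al-1}$: the path $\bar{!X}$ need not be simple there, so $s_{X,\bar{!X}}$ and the covering family $\{s^j!F\}$ must be read off the definitions in \S\ref{ss:Cayley} directly, one must verify that the fragments supplied by Proposition \ref{pr:dividing-fragment} are legitimate fragments of rank $\al$ in $\Ga_{\al-1}$ (rather than in some lower Cayley graph), and the containments of $!K'$, $!K''$ in the translates $s^{j}!F$, $s^{j+1}!F$ must be checked on the nose as subpaths.
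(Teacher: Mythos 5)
Your proof is correct and follows essentially the same route as the paper's: both reduce to the observation that a fragment $F$ of rank $\le\al-1$ in $X^t$ with $|F|>|X|$ would have to contain a fragment of rank $\al$ of size $>3.2\om$, contradicting Lemma~\ref{lm:piece-fragment-no-higher-fragments}, via Proposition~\ref{pr:dividing-fragment}. The only difference is cosmetic: the paper fixes a balanced split $K\eqcirc K_1uK_2$ with $\muf(K_1),\muf(K_2)>3.2\om$ and notes that a long $F$ must contain a translate of $K_1$ or $K_2$, whereas you cut $!K$ at the boundary between consecutive translates of $!F$ and then invoke the dividing lemma at that point.
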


\begin{proof}
Let $F$ be a fragment of rank $1 \le \be \le \al-1$ in a word  $X^t$.
Assume that $|F| > |X|$.
Using Proposition \ref{pr:dividing-fragment} represent $K$ as 
$K \greq K_1 u K_2$ where $\muf(K_1), \muf(K_2) > 3.2\om$.
Since $|K| \le |X|$,  $F$ should contain a translate of $K_1$ or $K_2$.
But this is impossible by Lemma ~\ref{lm:piece-fragment-no-higher-fragments}.
Hence $|F| \le |X|$ and then $\muf(F) \le \rho$ since $X$ is cyclically reduced in  $G_{\al-1}$.
This shows that any power $X^t$ is reduced in $G_{\al-1}$, i.e.\  
$X$ is strongly cyclically reduced in ~$G_{\al-1}$.
\end{proof}

\begin{proposition} [fragment stability in conjugacy relations with cyclic sides] 
\label{pr:fragment-stability-cyclic}
Let $\al\ge 1$ and $X$ and ~$Y$ be words which are cyclically reduced in $G_\al$ and represent conjugate 
elements of $G_\al$.
Let $\bar{!X} = \prod_{i \in \Z} !X_i$ and $\bar{!Y} = \prod_{i \in \Z} !Y_i$ be parallel lines in $\Ga_\al$ 
representing the conjugacy relation.
Let $!K$ be a fragment of rank ~$\al$ in $\bar{!X}$ with $\muf(!K) \ge 2\la +5.8\om$
and $|!K| \le |X|$.
Then there is a fragment $!M$ of rank $\al$ in $\bar{!Y}$ such that $!M \sim !K^{\pm1}$ and
$$
  \muf(!M) \ge \min\set{\muf(!K) - 2 \la - 3.4\om, \ \xi_0}
$$
\end{proposition}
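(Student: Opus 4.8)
The plan is to imitate the proof of Proposition~\ref{pr:fragment-stability-bigon}, with the coarse bigon replaced by an annular diagram with two cyclic sides. First I would fill the conjugacy relation by a reduced annular diagram $\De$ of rank~$\al$ with cyclic sides $\hat{!X}$ and $\hat{!Y}^{-1}$, endow it with a tight set $\cT$ of contiguity subdiagrams, and lift it via a map $\phi:\ti\De^{(1)}\to\Ga_\al$ of the universal cover carrying the two boundary lines onto $\bar{!X}$ and $\bar{!Y}$, compatibly with the translation $s=s_{X,\bar{!X}}=s_{Y,\bar{!Y}}$. By Proposition~\ref{pri:annular-single-layer} every cell of rank~$\al$ of $\De$ has exactly one contiguity subdiagram in $\cT$ to each side, so, exactly as in~\ref{ss:active-fragments}, we obtain an $s$-periodic family of active fragments $\dots,!K_i,!K_{i+1},\dots$ of rank~$\al$ in $\bar{!X}$, the corresponding active fragments $\dots,!M_i,!M_{i+1},\dots$ in $\bar{!Y}$ with $!K_i\sim!M_i^{-1}$, and gaps $!F_i$ of $\bar{!X}$ between consecutive $!K_i$. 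Applying Proposition~\ref{pri:bigon-cell} to $\De$ and each cell $!D_i$ of rank~$\al$ (its two contiguity arcs to the sides are the bases of $!K_i$ and of $!M_i$) gives $\muf(!K_i)+\muf(!M_i)\ge 1-2\la-16\ze\eta\om$, hence, since $\bar{!X}$ and $\bar{!Y}$ are reduced and so $\muf(!K_i),\muf(!M_i)\le\rho=1-9\la$, also $\muf(!K_i),\muf(!M_i)>7\la-16\ze\eta\om>\xi_0$. This is the annular analogue of Proposition~\ref{pr:active-large}(i).

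Now split into cases according to whether $!K$ is compatible, up to inversion, with an active fragment of $\bar{!X}$. If $!K\sim!K_i^{\pm1}$ for some $i$, take $!M:=!M_i$: then $!M\sim!K_i^{-1}\sim!K^{\pm1}$ and $\muf(!M)>\xi_0$, which is at least the required minimum, and we are done. So assume $!K\not\sim!K_i^{\pm1}$ for all $i$, and first suppose $\Area_\al(\De)\ge 1$. Since $\muf(!K)\ge 2\la+5.8\om>\la+2.6\om$ and every $!K_j$ has $\muf(!K_j)>\xi_0>\la+2.6\om$, Proposition~\ref{pr:inclusion-compatibility} forbids $!K$ from containing any $!K_j$ entirely; together with $|!K|\le|X|$ and the periodicity of the family this forces $!K$ to be a subpath of $!K_i!F_i!K_{i+1}$ for some~$i$.

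At this point the argument of Proposition~\ref{pr:fragment-nonactive-bigon} applies essentially verbatim. As in~\ref{ss:active-fragments}, the cells $!D_i,!D_{i+1}$, their contiguity subdiagrams in $\cT$, and the portion of $\De_{\al-1}$ lying between them assemble into a loop $!T=(!K_i!F_i!K_{i+1})^{-1}!w_1!S_1!w_2!H_i!w_3!S_2!w_4$ that lifts to $\Ga_{\al-1}$, where $!H_i$ is the corresponding subpath of $\bar{!Y}$, the $!w_j$ are bridges of rank $\al-1$, and $!S_1,!S_2$ occur in the base axes of $!K_i,!K_{i+1}$. Because $!K\not\sim!K_i,!K_{i+1}$, Lemma~\ref{lm:compatible-lines} and Corollary~\ref{co:small-overlapping-Cayley} show that the base $!S$ of $!K$ overlaps each $!S_j$ in a subpath of $\mu$-measure $<\la$; since $|!S|_{\al-1}\ge\muf(!K)/\om\ge 2\la/\om\ge 40>5.2$, Lemma~\ref{lm:stability-tetragon-previous} applied to $!T$ produces a subpath $!Q$ of $!S$ close in $\Ga_{\al-1}$ to a subpath $!M$ of $!H_i$ with $\mu(!Q)>\mu(!S)-2\la-3.4\om$. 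Then $!M$ is a fragment of rank~$\al$ in $\bar{!Y}$ with base $!Q$, so $!M\sim!K$ and $\muf(!M)>\muf(!K)-2\la-3.4\om$, as required. It remains to treat the degenerate case $\Area_\al(\De)=0$, in which there are no active fragments and the previous argument is unavailable: then $\De$ has no cells of rank~$\al$, so $X$ and $Y$ are conjugate in $G_{\al-1}$ and the conjugacy is represented by parallel periodic lines in $\Ga_{\al-1}$. Proposition~\ref{pr:fragment-cyclic-monogon-previous} (applied with empty bridge word) then yields either a single fragment $!M\sim!K$ in $\bar{!Y}$ with $\muf(!M)>\muf(!K)-2.9\om$, or two compatible fragments $!M_1,!M_2$ in consecutive $!Y_j,!Y_{j+1}$ with $\muf(!M_1)+\muf(!M_2)>\muf(!K)-3\om$; in the latter case one of them has $\muf\ge\tfrac12(\muf(!K)-3\om)\ge 5.7\om$, so by Proposition~\ref{pr:fragments-union} their union $!M=!M_1\cup!M_2$ is a fragment with $!M\sim!K$ and $\muf(!M)>\muf(!K)-3\om-5.7\om>\muf(!K)-2\la-3.4\om$.

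The main obstacle is not any single hard step but the bookkeeping forced by periodicity: verifying that $|!K|\le|X|$ really confines $!K$ to one block $!K_i!F_i!K_{i+1}$ (in particular when $\Area_\al(\De)=1$, where there is just one active fragment per period of $\bar{!X}$), checking that the translation $s$ indexes the active fragments of $\bar{!X}$ and $\bar{!Y}$ consistently, getting the orientation convention $!K_i\sim!M_i^{-1}$ right so that $!M\sim!K^{\pm1}$ in the first case, and confirming via $\la\ge20\om$ that all the constants (including those in the degenerate-case union estimate) stay on the right side of $\muf(!K)-2\la-3.4\om$.
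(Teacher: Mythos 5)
Your strategy is the same as the paper's: fill the conjugacy by a reduced annular diagram with two cyclic sides, use Proposition \ref{pri:annular-single-layer} and Proposition \ref{pri:bigon-cell} to obtain an $s$-periodic family of active fragments with sizes above $\xi_0$, and then either take the corresponding active fragment in $\bar{!Y}$, or confine $!K$ to a block between consecutive active fragments and run the argument of Proposition \ref{pr:fragment-nonactive-bigon} via Lemma \ref{lm:stability-tetragon-previous}; your $\Area_\al=0$ case, done through Proposition \ref{pr:fragment-cyclic-monogon-previous} plus a fragment union, is a cosmetic variation of the paper's reduction to a lower rank $\be$ followed by Proposition \ref{pr:fragment-stability-previous}.

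However there is a genuine omission. At several points you treat $\bar{!X}$ and $\bar{!Y}$ as reduced paths --- ``since $\bar{!X}$ and $\bar{!Y}$ are reduced and so $\muf(!K_i),\muf(!M_i)\le\rho$'', in ordering subpaths of $\bar{!X}$, in bounding translates, and most critically when applying Proposition \ref{pr:fragments-union} to $!M_1\cup!M_2$ in the degenerate case (the union lemma requires the ambient path, a subpath of $\bar{!Y}$ spanning two copies of $Y$, to be reduced in $G_{\al-1}$). None of this follows from the hypothesis, which only says $X$ and $Y$ are cyclically reduced in $G_\al$, not that their powers are reduced. The paper's proof opens precisely by establishing that $X$ is strongly cyclically reduced in $G_{\al-1}$ (via Lemma \ref{lm:strongly-cyclically-reduced-previous}, using the given fragment $!K$ with $\muf(!K)\ge 2\la+5.8\om\ge 6.5\om$), and then by a short argument that $Y$ is as well (transferring a fragment of rank $\al$ to $Y$ via Propositions \ref{pr:no-active-fragments-cyclic-iterated}, \ref{pr:dividing-fragment} and \ref{pr:fragment-stability-previous}); this is what licenses treating the periodic lines as reduced paths. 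Your proposal needs this step added, and without it the union estimate $\muf(!M)>\muf(!K)-8.7\om$ in the degenerate case, as well as the block confinement of $!K$, are not justified. A secondary note: ``$\muf(!K)/\om\ge 40>5.2$'' should be a bound on $|!S|_{\al-1}$, which is $\muf(!K)|R^\circ|_{\al-1}\ge\muf(!K)\Om$; the conclusion is still fine, but the intermediate expression is off by a factor.
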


\begin{proof}
By Lemma \ref{lm:strongly-cyclically-reduced-previous} $X$ is 
strongly cyclically reduced in $G_{\al-1}$. 
We claim that a cyclic shift of $Y$ also contains a fragment $F$ of rank $\al$ with $\muf(F) \ge 6.5$
and thus $Y$ is strongly cyclically reduced in $G_{\al-1}$ as well.
Indeed, by Proposition \ref{pr:no-active-fragments-cyclic-iterated} with $\be :=\al-1$
we may assume for some cyclic shifts $X'$ and $Y'$ of $X$ and $Y$ we have $Y' = w^{-1} X' w$
in $G_{\al-1}$ where $w \in \cH_{\al-1}$. Then existence of $F$ easily follows by 
Propositions \ref{pr:dividing-fragment} and \ref{pr:fragment-stability-previous}.

Consider a reduced annular diagram $\De$ of rank $\al$ with boundary loops $\hat{!X}$ and $\hat{!Y}^{-1}$ 
representing the conjugacy relation given in the proposition.
Let $\ti\De$ be the universal cover of $\De$ and let $\phi: \ti\De^{(1)} \to \Ga_\al$ 
be a combinatorially continuous 
map which sends lifts of $\hat{!X}$ 
and $\hat{!Y}$ to $\bar{!X}$ and $\bar{!Y}$ respectively.

Assume that $\De$ has a cell of rank $\al$. Let $!D$ be some lift of this cell in $\ti\De$.
By Proposition \ref{pri:bigon-cell}, $\phi(\de !D)$ and $\phi(\de !D)^{-1}$ are base loops 
for fragments $!N_i$ 
$(i=1,2)$ of rank $\al$ in $\bar{!X}$ and ~$\bar{!Y}$ respectively, 
such that $\muf(!N_1) + \muf(!N_2) \ge 1 - 2\la - 1.5\om$.
Since $X$ and $Y$ are cyclically reduced in $G_\al$ we have
$\muf(!N_i) \le \rho$ and hence 
$\muf(!N_i) \ge 1 - \rho - 2\la - 1.5\om = \xi_0$. 
By construction, we have $!N_1 \sim !N_2^{-1}$. Since $\bar{!X}$ and $\bar{!Y}$ are parallel, 
we have $s^k_{X,\bar{!X}} !N_1 \sim s^k_{Y,\bar{!Y}} !N_2^{-1}$ for any $k \in \Z$.
If $!K \sim s^k_{X,\bar{!X}} !N_1$ for some $k$ then we can take $s^k_{Y,\bar{!Y}} !N_2$ for $!M$.
Otherwise we have $s^k_{X,\bar{!X},} !N_1 < !K < s^{k+1}_{X,\bar{!X}} !N_1$ for some ~$k$ and the rest 
of the argument is the same as in the proof of Proposition \ref{pr:fragment-nonactive-bigon}. 

Now assume that $\De$ has no cells of rank $\al$. 
We can assume that $\De$ is a reduced diagram of rank ~${\be}$
for some $\be \le \al-1$ and in case $\be \ge 1$, $\De$ has at least one cell of rank $\be$.
If $\be = 0$ then $\bar{!X} = \bar{!Y}$ and there is nothing to prove.
Let $\be \ge 1$.
Up to change of notations, we assume that $!K$, $\bar{!X}$ and $\bar{!Y}$ are lifted to $\Ga_{\al-1}$.
Proposition \ref{pri:bigon-cell}$_\be$ implies that some vertices $!a$ on $\bar{!X}$
and ~$!b$ on $\bar{!Y}$ are joined by a bridge of rank $\be$.
This is true also for any translates $s^i_{X,\bar{!X}} !a$ and $s^i_{Y,\bar{!Y}} !b$.
Then the statement follows by Proposition \ref{pr:fragment-stability-previous}
(here we use that $X$ and $Y$ are strongly cyclically reduced in $G_{\al-1}$).
\end{proof}

\begin{lemma} \label{lm:stability-cyclic-monogon-previous}
Let $\al\ge 1$ and $S$ be a word cyclically reduced in $G_{\al-1}$. 
Assume that $S$ is conjugate in $G_{\al-1}$ to a word $T_1v_1T_2v_2$ 
where $T_i$ are reduced in $G_{\al-1}$ and $v_i$ are bridges of rank $\al$.
Let $\bar{!S} = \prod_{i \in \Z} !S_i$ and 
$\prod_{i \in \Z} !T_1^{(i)} !v_1^{(i)} !T_2^{(i)} !v_2^{(i)}$ be parallel lines in $\Ga_{\al-1}$ 
representing the conjugacy relation. Denote $!U_{2i} = !T_1^{(i)}$ and 
$!U_{2i+1} = !T_2^{(i)}$.

Assume that a reduced path $!X$ in $\Ga_{\al-1}$ is close to a subpath $!Y$ of $\bar{!S}$ with $|!Y| \le |S|$. 
Let $|!X|_{\al-1} \ge 8$.
Then $!X$ can be represented as $!z_0 !X_1 !z_1 \dots !X_r !z_r$ $(1 \le r \le 4)$ where each $!X_i$ is close 
to a subpath of some $!U_{j_i}$, $j_1 < \dots < j_r$, $j_r - j_1 \le 3$ and 
$$
  \sum_i |!X_i|_{\al-1} \ge |!X|_{\al-1} - 9.
$$
\end{lemma}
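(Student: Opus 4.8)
The plan is to transfer the argument of Lemma~\ref{lm:stability-tetragon-previous} to the present cyclic situation, realizing the conjugacy relation by an annular diagram and exploiting (a version of) the single-layered structure of such diagrams from Section~\ref{s:diagram-bounds}. First I would realize ``$S$ conjugate to $T_1v_1T_2v_2$ in $G_{\al-1}$'' by a reduced annular diagram $\De$ over the presentation of $G_{\al-1}$: one boundary loop $\hat{!S}$ is a cyclic side labelled $S$ (legitimate since $S$ is cyclically reduced in $G_{\al-1}$), while the other, labelled $(T_1v_1T_2v_2)^{-1}$, gets a boundary marking of rank $\al-1$ in which every bridge $v_j$ of native rank $\al$ is split as $a_j\cdot P_j\cdot b_j$ with $a_j,b_j\in\cH_{\al-1}$ and $P_j$ a piece of rank $\al$ — each $P_j$ is reduced in $G_{\al-1}$ by Proposition~\ref{pr:relator-strongly-reduced}, so the copies of $T_1,T_2,P_1,P_2$ become sides. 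Since $S$ and $T_1v_1T_2v_2$ are conjugate in $G_{\al-1}$, the single-layered structure of Section~\ref{s:diagram-bounds} (adapted to annular diagrams with this boundary marking) shows that the slab of $\De$ between $\hat{!S}$ and the other boundary is controlled by contiguity subdiagrams and small components.

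Next comes localization. In the universal cover $\ti\De$, read $\bar{!S}$ and $!L'=\prod_i!U_i!v_i$ along the two boundary lines; these are parallel. Because $|!Y|\le|S|$, the path $!Y$ lies inside at most two consecutive copies of $S$ on $\bar{!S}$, and comparing periods shows that the portion of $!L'$ meeting the slab beneath $!Y$ spans at most two consecutive periods of $T_1v_1T_2v_2$ — i.e.\ the copies $!U_j$ with $j_1\le j\le j_r$ and $j_r-j_1\le 3$. Since $!Y$, being a subword of a cyclic shift of $S$, is reduced in $G_{\al-1}$, I may fill the coarse bigon $!X^{-1}!a!Y!b$ (with $!a,!b$ bridges of rank $\al-1$) by a reduced diagram of rank $\al-1$ and glue it onto a finite sub-strip of $\ti\De$ along $!Y$. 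The resulting planar diagram exhibits $!X$ as a side of a bounded coarse polygon in $\Ga_{\al-1}$ of the form
$$
  !X^{-1}*!U_{j_1}*[\,!P\,]*!U_{j_1+1}*\cdots*!U_{j_r}*,
$$
where consecutive $!U$'s are separated by bridges of rank $\al-1$ and, wherever the intervening $v_j$ has native rank $\al$, the central piece $!P=!P_j$ is inserted as an extra side.

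Now I would triangulate this polygon by reduced diagonals (Proposition~\ref{pr:reduction}$_{\al-1}$) and apply Lemma~\ref{lm:stability-tetragon-previous}, Proposition~\ref{pr:stability-trigon}$_{\al-1}$ and Proposition~\ref{pr:stability-cyclic-monogon}$_{\al-1}$ successively, just as in the proof of Lemma~\ref{lm:stability-tetragon-previous}, pushing the closeness of $!X$ with $!Y$ onto the sides $!U_{j_i}$; the hypothesis $|!X|_{\al-1}\ge 8$ guarantees $r\ge 1$, and since there are at most four $!U$-sides and at most three intervening $!P$-sides, the fading accumulated over the reductions stays below $9$ in $|\cdot|_{\al-1}$, which yields $!X=!z_0!X_1!z_1\cdots!X_r!z_r$. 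It remains to see that any subpath of $!X$ that comes out close to some central piece $!P_j$ is short (hence absorbed into the $!z_i$): otherwise a long subpath of the $R$-periodic line, $R$ the relator of rank $\al$ carrying $!P_j$, would be close in $\Ga_{\al-1}$ to a subpath of $!Y\subseteq\bar{!S}$, and since $S$ is cyclically reduced in $G_{\al-1}$ with $|!Y|\le|S|$, Corollary~\ref{co:small-overlapping-Cayley} (together with the lower-rank stability already used for the $\cH_{\al-1}$-parts of the bridges) bounds this overlap.

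The \emph{main obstacle} is exactly this localization and bookkeeping: turning the genuinely cyclic, parallel-periodic-line configuration into one bounded coarse polygon in $\Ga_{\al-1}$, establishing that at most four of the $!U_j$ occur with $j_r-j_1\le 3$, and checking that the fading accumulated over the several trigon, tetragon and cyclic-monogon reductions — including the contributions of the rank-$\al$ central arcs of the bridges $v_j$ — never exceeds $9$. Once the configuration has been linearized in this way, the rest is a careful but essentially routine repetition of the computations used for Lemma~\ref{lm:stability-tetragon-previous}.
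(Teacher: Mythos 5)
Your plan reconstructs an annular diagram from scratch and tries to turn the slab between the two boundary lines into a single coarse polygon with $!X$ and the $!U_j$ as sides; this is far more machinery than the lemma needs, and it has a genuine gap precisely at the step you flag as the ``main obstacle.''

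The paper's proof is a two-line reduction. One sets $Z$ to be a reduced-in-$G_{\al-1}$ word equal to $T_1v_1T_2$, so that $S$ is conjugate to $Zv_2$. Applying Proposition~\ref{pr:stability-cyclic-monogon}$_{\al-1}$ (closeness transition in conjugacy relations, which already encapsulates all of the annular-diagram analysis you propose redoing) produces one or two subpaths $!X',!X''$ of $!X$ close to subpaths of consecutive $!Z_i$, $!Z_{i+1}$, losing at most $\approx 3$ in $|\cdot|_{\al-1}$. Then one or two applications of Proposition~\ref{pr:stability-trigon}$_{\al-1}$ to the loops $!Z_i^{-1}!T_1^{(i)}!v_1^{(i)}!T_2^{(i)}$ split each $!X'$ further into pieces close to $!T_1^{(i)}$ and/or $!T_2^{(i)}$, each application losing at most $\approx 3$, giving the stated total loss under $9$ and the constraints $r\le 4$, $j_r-j_1\le 3$ for free from the pairing $!U_{2i}=!T_1^{(i)}$, $!U_{2i+1}=!T_2^{(i)}$. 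No new diagram is built.

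The concrete gap in your route: you assert that gluing a filling of $!X^{-1}!a!Y!b$ onto a finite sub-strip of $\ti\De$ ``exhibits $!X$ as a side of a bounded coarse polygon
$!X^{-1}*!U_{j_1}*[!P]*\cdots*!U_{j_r}*$
in $\Ga_{\al-1}$.'' That is not warranted. A coarse polygon in the sense of the paper (\ref{ss:polygon-relations}) requires the connecting paths between consecutive sides to be bridges of rank $\al-1$; but after gluing, what sits between $!X$ and the first $!U_{j_1}$ (and similarly at the other end) is an unanalysed van Kampen subdiagram of $\ti\De$ together with the filling of the bigon, which need not have a bridge label. Passing from ``$!X$ and the $!U_j$ lie on the boundary of some disk diagram over $G_{\al-1}$'' to ``$!X$ is a side of a small coarse polygon'' is exactly the content of the cyclic-monogon stability Proposition~\ref{pr:stability-cyclic-monogon}$_{\al-1}$. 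So if you filled this hole rigorously you would be reproving that proposition, while the paper simply invokes it. The quantitative tail of your argument (the bound $9$) is also left entirely to bookkeeping that you don't carry out, whereas in the paper's two-step route it falls out of the explicit constants $1.45$, $1.3$, $0.4$ in Propositions \ref{pr:stability-cyclic-monogon} and \ref{pr:stability-trigon}.

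Short version: look for a way to group $T_1v_1T_2$ into a single reduced word before touching diagrams — that collapses the problem to two already-available closeness-transition lemmas and avoids the coarse-polygon claim you cannot justify directly.
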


\begin{proof}
Let $Z$ be a word reduced in $G_{\al-1}$ such that $Z = T_1 v_1 T_2$ in $G_{\al-1}$.
We join $\io(!T_1^{(i)})$ and $\tau(!T_2^{(i)})$ with the path $!Z_i$ labeled $Z$.
Since  $|!X|_{\al-1} \ge 8$, application of Propositions \ref{pr:stability-cyclic-monogon}$_{\al-1}$
gives $!X = !w_1 !X' !w_2$ or $!X = !w_1 !X' !w_2 !X'' !w_3$ where, respectively, $!X'$ is close to 
a subpath of some ~$!Z_i$ and $|!X'|_{\al-1} \ge |!X|_{\al-1} - 2.9$ or for some $i$, $!X'$  is close to 
a subpath of $!Z_i$, $!X''$  is close to a subpath of $!Z_{i+1}$ and 
$|!X'|_{\al-1} + |!X''|_{\al-1} \ge |!X|_{\al-1} - 3$. 
Then a single or double application of  Proposition \ref{pr:stability-trigon}$_{\al-1}$ gives
the required $!X_i$'s. 
\end{proof}

\begin{proposition} [fragment stability in conjugacy relations with non-cyclic side] 
\label{pr:fragment-stability-cyclic1}
Let $\al\ge 1$ and $X$ be a word cyclically reduced in $G_\al$. 
Assume that $X$ is conjugate in $G_\al$ to a word $Yu$ 
where $Y$ is reduced in $G_\al$ and $u$ is a bridge of rank $\al$.
Let $\bar{!X} = \prod_{i\in\Z} !X_i$ and $\prod_{i\in\Z} !Y_i !u_i$ be 
parallel lines in $\Ga_\al$ 
representing the conjugacy relation. 
Let $!K$ be a fragment of rank ~$\al$ in $\bar{!X}$ with $\muf(!K) \ge 3\la +9\om$ and $|!K| \le |X|$.
Assume that $!K$ is independent of any of the bridges ~$!u_i$.
Then there is a fragment $!M$ of rank $\al$ in some $!Y_k$ such that $!M \sim !K^{\pm1}$ and
$$
  \muf(!M) > \min\bbset{\frac52 \la - 1.1\om, \ \frac12 (\muf(!K) - 3\la - 6.8\om)}.
$$
\end{proposition}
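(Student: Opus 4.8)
The plan is to run, in the annular setting with one cyclic and one non‑cyclic side, the induction underlying Propositions \ref{pr:fragment-stability-cyclic} and \ref{pr:fragment-stability-trigon}: realise the conjugacy by a reduced annular diagram, strip off cells of rank $\al$ one at a time using the single‑layer structure, and in the remaining ``flat'' situation lift to $\Ga_{\al-1}$ and appeal to the previous‑rank statement Lemma \ref{lm:stability-cyclic-monogon-previous} (the machinery behind Proposition \ref{pr:fragment-cyclic-monogon-previous}). First I would record that $X$ is strongly cyclically reduced in $G_{\al-1}$: since $\muf(!K)\ge 3\la+9\om>6.5\om$ by \eqref{eq:om-bounds} and $|!K|\le|X|$, this is Lemma \ref{lm:strongly-cyclically-reduced-previous}. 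Then take a reduced annular diagram $\De$ of rank $\al$ realising $X=Z^{-1}(Yu)Z$, with a cyclic side $\hat{!X}$ (label $X$), a non‑cyclic side $\hat{!Y}^{-1}$ and a bridge $\hat{!u}^{-1}$, endow it with a tight set $\cT$ of contiguity subdiagrams, and fix a combinatorially continuous map of its universal cover onto $\Ga_\al$ sending the relevant boundary lines to $\bar{!X}=\prod_{i\in\Z}!X_i$ and to $\prod_{i\in\Z}!Y_i!u_i$. I induct on $\Area_\al(\De)$.

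Suppose $\De$ has a cell of rank $\al$. As in the proof of Proposition \ref{pr:small-complexity-cell} I first reduce to the case of a single such cell $c$ (excising any other cell together with its two contiguity subdiagrams to sides is harmless: if $!K$ were compatible with the fragments of that cell we would already be done, as explained below). Fix a lift $!D$ of $c$ and put $!R=\phi(\de !D)$. By Proposition \ref{pri:annular-single-layer1} and Lemma \ref{lm:cell-regularity}, $!D$ has $k\in\{2,3\}$ contiguity subdiagrams in $\cT$ to sides of $\De$, at most one to $\hat{!X}$, hence at least one to $\hat{!Y}^{-1}$; the contiguity arcs $!P_1,\dots,!P_k$ of these subdiagrams lying in $\de !D$ carry fragments of rank $\al$ (those against $\hat{!X}$ lying in $\bar{!X}$, those against $\hat{!Y}^{-1}$ lying in some $!Y_i$), all of base loop $!R^{\pm1}$ and hence mutually compatible up to inversion, with $\muf=\mu(!P_i)$. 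By Proposition \ref{pri:cyclic-monogon-cell} one has $\sum_i\mu(!P_i)\ge 1-4\la-24\ze\eta\om$, and $\mu(!P_i)\le\rho$ since the sides of $\De$ are reduced; so the largest fragment lying in a $!Y_i$ has size $\ge\frac52\la-12\ze\eta\om>\frac52\la-1.1\om$. If the base loop of $!K$ is a translate $s_{X,\bar{!X}}^{j}!R^{\pm1}$, I take for $!M$ the corresponding translate of that largest fragment; since $\bar{!X}$ and $\prod_{i}!Y_i!u_i$ are parallel, $s_{X,\bar{!X}}^{j}$ carries the $!Y$‑line to itself, so $!M$ lies in some $!Y_k$, $!M\sim!K^{\pm1}$ and $\muf(!M)\ge\frac52\la-1.1\om$. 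Otherwise the base loop of $!K$ is none of the $s_{X,\bar{!X}}^{j}!R^{\pm1}$, so $!K$ is independent of every bridge in play (the new ones are based at $!R$), and by Proposition \ref{pr:inclusion-compatibility} it lies strictly between consecutive translates of the $\bar{!X}$‑fragment of $!D$; I then finish exactly as in Proposition \ref{pr:fragment-nonactive-bigon}, passing to $\Ga_{\al-1}$ and invoking Lemma \ref{lm:stability-cyclic-monogon-previous}.

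Finally let $\Area_\al(\De)=0$. After switching $\hat{!u}^{-1}$ if necessary the whole configuration lifts to $\Ga_{\al-1}$, so $X$ is conjugate in $G_{\al-1}$ to $Yu$ and $\bar{!X}$, $\prod_{i}!Y_i!u_i$ become parallel periodic lines in $\Ga_{\al-1}$ with $X$ strongly cyclically reduced in $G_{\al-1}$. Write $u=vSw$ with $\lab(v),\lab(w)\in\cH_{\al-1}$ and $S$ a piece of rank $\al$ (take $S$ empty if $\lab(u)\in\cH_{\al-1}$). Independence of $!K$ from $u$ means the base loop of $!S$ is not the base axis $!L$ of $!K$, whence by Corollary \ref{co:small-overlapping-Cayley} any subpath of the base $!S_K$ of $!K$ close to a subpath of a translate of $!S$ has $\mu<\la$. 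Now apply Lemma \ref{lm:stability-cyclic-monogon-previous}, writing the period of the $(Yu)$‑line as $Y\cdot v\cdot S\cdot w$ so that the segments $!U_j$ are alternately translates of $!Y$ and of $!S$; here $|!S_K|_{\al-1}=\muf(!K)\,|R^\circ|_{\al-1}\ge(3\la+9\om)\Om>8$, and $|!K|\le|X|$ supplies the hypothesis $|!Y|\le|S|$. The lemma writes $!S_K=!z_0!X_1!z_1\cdots!X_r!z_r$ with $1\le r\le4$, $j_r-j_1\le3$, each $!X_j$ close to a subpath of some $!U_{j'}$, and $\sum_j|!X_j|_{\al-1}\ge|!S_K|_{\al-1}-9$. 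Since $j_r-j_1\le3$, at most two of the $!X_j$ lie against translates of $!Y$ and at most two against translates of $!S$; discarding the latter (each of $\mu<\la$) leaves the $!Y$‑parts with $\mu$‑total $\ge\muf(!K)-9\om-2\la$, so the larger of them, say $!X_{j_0}$, is close to a subpath $!M$ of some $!Y_k$ with $\muf(!M)=\mu(!X_{j_0})\ge\frac12(\muf(!K)-2\la-9\om)\ge\frac12(\muf(!K)-3\la-6.8\om)$ by $\la\ge20\om$; moreover $!X_{j_0}$ is an $R$‑periodic subsegment of $!L$, so $!M$ is a fragment of rank $\al$ with base $!X_{j_0}$ and $!M\sim!K$. (When $\lab(u)\in\cH_{\al-1}$ there are no $!S$‑parts and the bound is only better.) Combining the two regimes gives the asserted minimum.

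I expect the main obstacle to be the bookkeeping of the inductive step in the third paragraph — deciding which of the at most three contiguity subdiagrams of $c$ reaches the cyclic side, verifying that excising $c$ or its translates always leaves a configuration to which the inductive hypothesis, Proposition \ref{pr:fragment-stability-trigon}, or Proposition \ref{pr:fragment-nonactive-bigon} applies, and checking that $!K$ stays independent of every bridge throughout — together with arranging the period decomposition of the $(Yu)$‑line so that Lemma \ref{lm:stability-cyclic-monogon-previous} produces at most two substantial $!Y$‑parts. Given these structural points, the numeric estimates are routine, exactly as in the proofs of Propositions \ref{pr:fragment-stability-trigon} and \ref{pr:fragment-stability-cyclic}.
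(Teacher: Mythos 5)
Your overall plan (reduced annular diagram, single‑layered structure via Proposition \ref{pri:annular-single-layer1}, induction on cells of rank $\al$, lift to $\Gamma_{\al-1}$ and apply Lemma \ref{lm:stability-cyclic-monogon-previous} when there are no cells) is the same as the paper's, and your treatment of the no‑cell case reproduces the paper's Case 1 faithfully, including the use of $u=vSw$, Corollary \ref{co:small-overlapping-Cayley}, and the numeric reconciliation $\frac12(\muf(!K)-2\la-9\om)\ge\frac12(\muf(!K)-3\la-6.8\om)$ via $\la\ge20\om$.

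There is, however, a genuine gap in your handling of the case $\Area_\al(\De)>0$, namely the claim that you may ``first reduce to the case of a single such cell $c$'' by excision à la Proposition \ref{pr:small-complexity-cell}. That reduction is valid only for a cell whose two contiguity subdiagrams in $\cT$ both go to the non‑cyclic side $\hat{!Y}$; cutting such a cell off keeps the diagram annular, and the paper does exactly this. But a cell with one contiguity subdiagram to the cyclic side $\hat{!X}^{-1}$ and one to $\hat{!Y}$ cannot be excised while preserving the annular type: removing $!D\cup\Pi_1\cup\Pi_2$ turns $\De$ into a disk diagram (of trigon or larger type), and then the cyclic side becomes a finite segment, $\bar{!X}$ and $\bar{!Y}$ are no longer realised as $\ti\De$‑lines mapped to parallel periodic lines, and the position of $!K$ relative to the new boundary has to be re‑established. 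Moreover, invoking the trigon statement Proposition \ref{pr:fragment-stability-trigon} after such a cut would require $\muf(!K)\ge 3\la+10\om$, slightly more than the present hypothesis $3\la+9\om$, so it cannot be quoted as a black box. In the paper the residual case (all cells with exactly one contiguity to each side) is treated without further excision: the cells are enumerated, their active $\bar{!X}^{-1}$‑fragments $!N_{i,1}^{(j)}$ are ordered along $\bar{!X}$, one locates the pair of consecutive ones that bracket $!K$, and then a bigon‑type or trigon‑type estimate (extracted from the proofs of Propositions \ref{pr:fragment-nonactive-bigon} and \ref{pr:fragment-stability-trigon}, not the propositions themselves) is applied to the bounded region, depending on whether the bridge $!u_j$ lies between the two bracketing fragments. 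Your sentence ``by Proposition \ref{pr:inclusion-compatibility} it lies strictly between consecutive translates of the $\bar{!X}$‑fragment of $!D$'' tacitly presupposes that $!D$'s fragment and its translates are the \emph{only} active fragments in $\bar{!X}$, which is exactly what the unjustified reduction would have supplied. You should drop the reduction‑to‑one‑cell step and replace it with the simultaneous ordering argument, as in the paper; the rest of your proof then goes through.
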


\begin{proof}
Let $\De$ be an annular diagram of rank $\al$ with boundary loops $\hat{!X}^{-1}$ and $\hat{!Y} \hat{!u}$
representing the conjugacy relation. Let $\ti{\De}$ be the universal cover of $\De$ and
$\phi: \ti{\De}^{(1)} \to \Ga_\al$ a combinatorially continuous 
map sending lifts $\ti{!X}_i$, $\ti{!Y}_i$ 
and $\ti{!u}_i$ of $\hat{!X}$, $\hat{!Y}$ and $\hat{!u}$
to $!X_i$, $!Y_i$ and $!u_i$ respectively.
Up to switching of $\hat{!u}$, 
we assume that $\De$ is reduced and has a tight set $\cT$ of contiguity subdiagrams.

{\em Case\/} 1: $\De$ has no cells of rank $\al$. 
Then parallel lines $\bar{!X} = \prod_{i\in\Z} !X_i$ and $\prod_{i\in\Z} !Y_i !u_i$
can be lifted to $\Ga_{\al-1}$; we assume that they and the subpath $!K$ of $\bar{!X}$ are
already lifted to $\Ga_{\al-1}$.
If $u \in \cH_{\al-1}$ then the statement follows by Proposition \ref{pr:stability-cyclic-monogon}$_{\al-1}$,
so we assume that $u \notin \cH_{\al-1}$.
Let $!L$ be the base axis for $!K$ and $!S$ the base for $!K$.
Since $!K$ is independent of $!u_i$ (when viewed in $\Ga_\al$) we have 
$!u_i = !w_1^{(i)} !Q_i !w_2^{(i)}$ where $\lab(!w_j^{(i)}) \in \cH_{\al-1}$ 
and $!Q_i$ occurs in a line $!L_i$ labeled by the infinite power $R_i^\infty$ of a relator $R_i$
of rank $\al$ such that $!L_i \ne !L$.
By Corollary \ref{co:small-overlapping-Cayley}, if a subpath ~$!P$ of $!S$ is close to a subpath of $!Q_i$
then $\mu(!P) < \la$.
Applying Lemma \ref{lm:stability-cyclic-monogon-previous} we conclude 
that either there exists a fragment $!M$ of rank $\al$ in some 
$!Y_k$ such that $!M \sim \bar{!K}$ and 
$\muf(!M) > \muf(!K) - 2\la - 9\om$ or there exist 
fragments $!M_1$ and $!M_2$ of rank $\al$ in some $!Y_k$
and $!Y_{k+1}$ respectively such that 
$!M_1 \sim !M_2 \sim !K$ and 
$$
  \muf(!M_1) + \muf(!M_2) > \muf(!K) - 2\la - 9\om.
$$
In the latter case, for at least one $!M_i$ we have 
$\muf(!M_i) > \frac12 (\muf(!K) - 2\la - 9\om)$ and we can take its image in  $\Ga_\al$
for the required $!M$.

{\em Case\/} 2: 
$\De$ has at least one cell of rank $\al$.
Let $!D$ be such a cell and let $\ti{!D}$ be a lift of $!D$ in ~$\ti{\De}$.
By Proposition \ref{pri:annular-single-layer1} and Lemma \ref{lmi:cell-regularity-side}, $!D$ has 
two or three contiguity subdiagrams $\Pi_i \in \cT$ to sides of ~$\De$, 
at most two to $\hat{!Y}$ and at most one to ~$\hat{!X}^{-1}$. 
By Proposition \ref{pri:cyclic-monogon-cell},
$\phi(\de \ti{!D})$ is the base loop for two or three fragments $!N_i$
($i=1,2$ or $i=1,2,3$) of rank $\al$ in two or three of the paths
$\bar{!X}^{-1}$, $!Y_j$ and $!Y_{j+1}$ for some ~$j$, respectively, with
\begin{equation} \label{eq:fragment-stability-cyclic1}
    \sum_i \muf(!N_i) > 1 - 4\la - 2.2\om.
\end{equation}
Since $\muf(!N_i) \le \rho$ for each $i$, for at least two indices $i$ we have 
$$
    \muf(!N_i) > \frac12 (1 - 4\la - 2.2\om - \rho) = \frac52 \la - 1.1\rho.
$$
Note that all $!N_i$ are pairwise compatible.
If $!K \sim !N_1^{\pm1}$ then for the required $!M$ we can take that $!N_i$
which occurs in $!Y_i$ or in $!Y_{j+1}$ and has a larger $\muf(!N_i)$.
Hence we can assume that $!K \not\sim !N_i^{\pm1}$ for all $!N_i$
produced by all lifts $\ti{!D}$ of all cells $!D$ of rank $\al$ of $\De$.

Assume that $!D$ has two contiguity subdiagrams $\Pi_i \in \cT$ $(i=1,2)$
to $\hat{!Y}$, i.e.\ the corresponding fragments $!N_1$ and $!N_2$
of rank $\al$ occur in $!Y_{k}$ and $!Y_{k+1}$ respectively.
Then we cut off from $\De$ the subdiagram $\De \cup \Pi_1 \cup \Pi_2$ 
and the remaining simply connected component. This replaces 
$\De$ with a new diagram $\De'$ with a smaller number of cells of rank $\al$,
$!Y_i$ with a subpath of $!Y_i$, bridges $!u_i$ with another bridges $!u_i'$
and the assumption that $!K \not\sim !N_i^{\pm1}$ for $!N_i$
produced by all lifts $\ti{!D}$ of $!D$ implies that $!K$ is
independent of all new bridges $!u_i'$.
In this case we can apply induction on the number of the cells of 
rank $\al$ of $\De$.

We may assume now that each cell $!D$ of rank $\al$ of $\De$
has precisely two contiguity subdiagrams $\Pi_i \in\cT$ to sides of $\De$, 
one to $\hat{!X}^{-1}$ and another one to $\hat{!Y}$. This implies that each
lift of $!D$ produces two fragments $!N_i$, one in $\bar{!X}^{-1}$ 
and one in some $!Y_j$. 
Let $\set{!D_1,!D_2,\dots,!D_k}$ be the set of all cells of 
rank $\al$ of $\De$. For each lift $\ti{!D}_i^{(j)}$ ($t \in \Z$) of $!D_i$,
denote $!N_{i,1}^{(j)}$ and $!N_{i,2}^{(j)}$ 
the corresponding fragments of rank $\al$ that occurs in 
$\bar{!X}^{-1}$ and $!Y_j$ respectively (the requirement that 
$!N_{i,2}^{(j)}$ occurs in $!Y_j$ determines uniquely the lift $\ti{!D}_i^{(j)}$
and the fragment $!N_{i,1}^{(j)}$).
Note that \eqref{eq:fragment-stability-cyclic1} implies 
$$
    \muf(!N_{i,k}^{(j)})
    > 1 - 4\la - 2.2\om - \rho  = 5\la - 2.2\om.
$$
We order cells $!D_i$ to get $!N_{i,2}^{(j)}$ ordered in $!Y_j$ as
$!N_{1,2}^{(j)} \ll \dots \ll !N_{k,2}^{(j)}$.
Consequently, in $\bar{!X}$ we have
$ \cdots {!N_{1,1}^{(j)}}^{-1} \ll \dots \ll {!N_{k,1}^{(j)}}^{-1}
\ll {!N_{1,1}^{(j+1)}}^{-1} \ll \dots \ll {!N_{k,1}^{(j+1)}}^{-1} \cdots$
(Figure \ref{fig:fragment-stability-cyclic1}). 
\begin{figure}[h]
\input 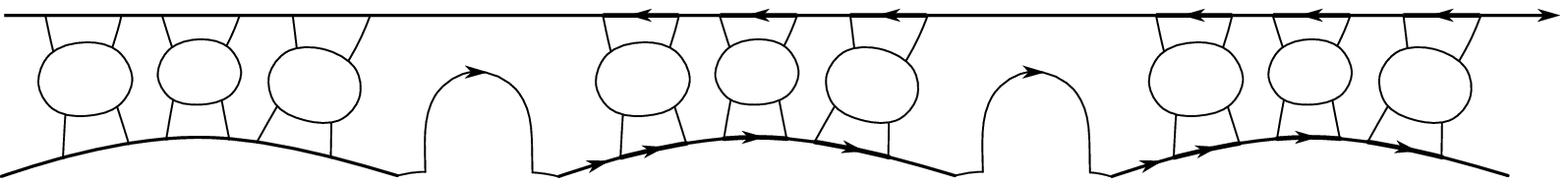_tex
\caption{}  \label{fig:fragment-stability-cyclic1}
\end{figure}
By the assumption above, we have $!K \not\sim {!N_{i,1}^{(j)}}^{-1}$
for all $i,j$.
Then by Proposition \ref{pr:inclusion-compatibility} we 
have either 
${!N_{i,1}^{(j)}}^{-1} < !K < {!N_{i+1,1}^{(j)}}^{-1}$
for some $i,j$ or 
${!N_{k,1}^{(j)}}^{-1} < !K < {!N_{1,1}^{(j+1)}}^{-1}$
for some $i$. In each of these cases, we find the required $!M$ by
applying an appropriate part of the proof of 
Proposition ~\ref{pr:fragment-nonactive-bigon} or 
Proposition ~\ref{pr:fragment-stability-trigon}.
\end{proof} 

We will use the following observation.

\begin{lemma} 
\begin{enumerate}
\item 
\label{lmi:compatible-close}
Let $!K$ be a fragment of rank $1 \le \be \le \al$ in $\Ga_\al$.
Let $!M$ be either another fragment of rank $\be$ in $\Ga_\al$ 
such that $!K \sim !M^{\pm1}$ or a bridge of rank $\be$ 
such that $!K$ is not independent of ~$!M$.
Then any of the endpoints of $!K$ can be joined with any of the endpoints 
of $!M$ by a bridge $!w$ of rank ~$\be$.

Moreover, $!w$ can be chosen with the following property.
If $!N$ is any other fragment of rank ~$\be$ such that 
$!N \not\sim !M^{\pm1}$ then $!N$ is independent of $!w$.
\item
\label{lmi:compatible-uniformly-close}
Let $!K_1$, $!K_2$, $\dots$, $!K_r$ be fragments of rank $\be \le \al$ in $\Ga_\al$
such that $!K_1 \sim !K_i^{\pm1}$ for all $i$. Then all endpoints of all $!K_i$ are uniformly close.
\end{enumerate}
\end{lemma}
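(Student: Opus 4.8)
The plan is to express everything through the base relator loop of $!K$. Fix the fragment $!K$ of rank $\be$ in $\Ga_\al$ and let $!R$ be its base relator loop; since $\be\le\al$, a relator of rank $\be$ is trivial in $G_\al$, so $!R$ is a genuine loop in $\Ga_\al$. From the definition of a fragment path and its base $!P$ (a subpath of a periodic extension of $!R$) there is a loop $!K^{-1}!u!P!v$ in $\Ga_\al$ with $\lab(!u),\lab(!v)\in\cH_{\be-1}$; reading it off, $\io(!K)$ is joined to the vertex $\io(!P)$ of $!R$ by a path labeled in $\cH_{\be-1}$, and $\tau(!K)$ to the vertex $\tau(!P)$ of $!R$. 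The same description applies to $!M$ in both cases of (i): if $!M$ is a fragment with $!K\sim!M^{\pm1}$, then by Definition \ref{df:fragment-compatibility} its base relator loop is $!R$ or $!R^{-1}$, which has the same vertex set as $!R$; if $!M$ is a bridge of rank $\be$ not independent of $!K$, then $\lab(!M)\notin\cH_{\be-1}$, so $!M$ admits a bridge partition $!M=!v\cdot!S\cdot!w$, and non-independence (Definition \ref{def:interference}) forces the relator loop carrying $!S$ to be $!R$ or $!R^{-1}$. In all situations, each endpoint of $!K$ and each endpoint of $!M$ is joined to a vertex of $!R$ by a path labeled in $\cH_{\be-1}$.

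For part (i), let $p$ be a chosen endpoint of $!K$ and $q$ a chosen endpoint of $!M$, with $!a,!b$ the associated vertices on $!R$; let $!z_1$ join $p$ to $!a$ and $!z_2$ join $!b$ to $q$, both labeled in $\cH_{\be-1}$ (using that $\cH_{\be-1}$ is closed under inversion), and let $!S_0$ be the arc of $!R$ from $!a$ to $!b$, whose label is a subword of a cyclic shift of a relator of rank $\be$, i.e. a piece of rank $\be$. Then $!w:=!z_1\cdot!S_0\cdot!z_2$ is a path from $p$ to $q$ whose label lies in $\cH_\be$ by the definition of $\cH_\be$, so $!w$ is a bridge of rank $\be$. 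Furthermore $!z_1\cdot!S_0\cdot!z_2$ is a bridge partition of $!w$ whose central arc $!S_0$ occurs in $!R$; hence if $!N$ is a fragment of rank $\be$ with $!N\not\sim!M^{\pm1}$ --- equivalently, with base relator loop distinct from $!R$ and $!R^{-1}$ --- then this bridge partition witnesses, through Definition \ref{def:interference}, that $!N$ is independent of $!w$. This remains valid in the degenerate case $!a=!b$, where $!S_0$ is the empty piece of rank $\be$. Letting $p$ and $q$ run over the endpoints of $!K$ and $!M$ yields all four required bridges.

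For part (ii), since $!K_1\sim!K_i^{\pm1}$ for every $i$, Definition \ref{df:fragment-compatibility} gives that all $!K_i$ have base relator loop $!R$ or $!R^{-1}$, where $!R$ is the base relator loop of $!K_1$; by the first paragraph every endpoint of every $!K_i$ is joined to a vertex of $!R$ by a path labeled in $\cH_{\be-1}$. If $\be=\al$, then $!R$ is a relator loop of rank $\al$, and this is exactly the second alternative in the definition of uniformly close. If $\be<\al$, then $\cH_{\be-1}\seq\cH_{\al-1}$ and every piece of rank $\be$ lies in $\cH_\be\seq\cH_{\al-1}$; so, given two such endpoints $p,q$ with associated vertices $!a,!b$ on $!R$, the concatenation of the connecting paths with the arc of $!R$ from $!a$ to $!b$ is a bridge of rank $\be$, hence a bridge of rank $\al-1$, joining $p$ to $q$. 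Thus the endpoints are pairwise close in rank $\al-1$, which is the first alternative in the definition of uniformly close.

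The calculations --- checking that all concatenations land inside the appropriate $\cH_{(\cdot)}$, and reading endpoint incidences off the defining loops --- are routine. The one place demanding care is the ``moreover'' clause: one must make sure that the bridge partition attached to $!w$ genuinely certifies independence for every $!N$ not compatible with $!M$, including the empty-arc degeneracy. This is guaranteed by the fact that the central arc of $!w$ is literally a subpath of $!R$, so the relator loop it lies on is $!R$ itself.
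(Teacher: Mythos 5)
Your proof is correct, and it is the intended argument — the paper disposes of this lemma with a one-sentence citation to the definitions in \ref{ss:fragment-paths} and Definition \ref{def:interference}, and you have simply unrolled that appeal. In particular you identify the right bridge in part (i): route through the base relator loop $!R$, take an arc of $!R$ as central piece, and observe that this exhibits a bridge partition of $!w$ whose central arc lies on $!R$, which certifies independence for every fragment whose base loop differs from $!R^{\pm1}$. The split in part (ii) between $\be=\al$ (second alternative of ``uniformly close'', via $!R$ itself) and $\be<\al$ (first alternative, since bridges of rank $\be$ are bridges of rank $\al-1$) is also exactly right.
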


\begin{proof}
Follows from  definitions in \ref{ss:fragment-paths} and 
Definition \ref{def:interference}.
\end{proof}

\begin{lemma} \label{lm:closeness-2bigons}
Let $(!X_i, !Y_i)$ $(i=1,2)$ be two pairs of close reduced paths in $\Ga_\al$ where $!X_1$ and ~$!X_2$
are subpaths of a reduced path $\bar{!X}$. Assume that for the common subpath $!Z$ of $!X_1$ and ~$!X_2$
we have $|!Z|_\al \ge 2.2$. Then  there exists a triple $!a_i$ $(i=1,2,3)$ of uniformly close vertices 
on $!Z$, $!Y_1$ and ~$!Y_2$ respectively.
\end{lemma}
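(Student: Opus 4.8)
The plan is to exploit the fragment stability machinery established earlier in the section (Propositions~\ref{pr:fragment-stability-bigon}, \ref{pr:inclusion-compatibility}, \ref{pr:fragments-union}) together with the ``uniformly close'' notion and Lemma~\ref{lmi:compatible-uniformly-close}. The key point is that the common subpath $!Z$ is long enough ($|!Z|_\al \ge 2.2$) to be ``seen'' from both bigons through a single fragment, but since $!Z$ itself need not contain a fragment of rank $\al$ of large size, I will first descend to the previous rank $\al-1$ and build the required triple there, pulling it back via the identification $\Ga_\al \to \Ga_{\al-1}$ of lifts. Concretely: if the two coarse bigons $!X_i^{-1} !u_i !Y_i !v_i$ both have zero $\al$-area after switching, then everything lifts to $\Ga_{\al-1}$ and the statement reduces directly to an application of Lemma~\ref{lm:stability-tetragon-previous} (or of Proposition~\ref{pr:fragment-stability-previous} combined with Lemma~\ref{lmi:compatible-uniformly-close}) at rank $\al-1$. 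Otherwise at least one bigon carries an active relator loop of rank $\al$.

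First I would fix, via Proposition~\ref{pr:filling-polygons}, reduced diagrams $\De_i$ of rank $\al$ filling $!X_i^{-1}!u_i!Y_i!v_i$ with tight sets of contiguity subdiagrams, and consider the induced lists of active fragments: $!X_i = !F_0^{(i)} !K_1^{(i)} !F_1^{(i)} \cdots !K_{r_i}^{(i)} !F_{r_i}^{(i)}$ and correspondingly in $!Y_i$. The active fragments in $!X_1$ and $!X_2$ all lie inside the ambient reduced path $\bar{!X}$, so by Corollary~\ref{co:compatibility-same-rank}\eqref{coi:no-inverse-compatibility-alpha} no two of them are inversely compatible, and by Corollary~\ref{co:compatibility-same-rank}\eqref{coi:fragments-union-alpha} two compatible ones with large size have a union which is again a fragment. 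The strategy is to locate a vertex in the interior of $!Z$ that is an endpoint of some active fragment common to both pictures, or else a vertex from which $!Z$ lifts to $\Ga_{\al-1}$ on a stretch long enough to apply Lemma~\ref{lm:stability-tetragon-previous}. I would argue by induction on $\Area_\al(!X_1^{-1}!u_1!Y_1!v_1) + \Area_\al(!X_2^{-1}!u_2!Y_2!v_2)$, using the surgery of~\ref{ss:active-loop-induction}: splitting off one active relator loop from $\De_1$ (say) splits $\bar{!X}$ and reduces the total area; one then applies the inductive hypothesis to the piece containing the bulk of $!Z$, checking via Proposition~\ref{pr:inclusion-compatibility} and the length bound $|!Z|_\al \ge 2.2$ that a long enough sub-segment of $!Z$ survives in that piece. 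The base case is the lifted situation described above, where Lemma~\ref{lm:stability-tetragon-previous}, applied to a coarse tetragon of rank $\al-1$ formed from $!Y_1$, a bridge, $!Y_2$ and the relevant bridge, produces subpaths of $!Y_1$ and $!Y_2$ close to a common sub-segment of $!Z$ with the ``uniformly close'' endpoint assertion in part~(ii) of that lemma — this last assertion is exactly what gives the uniformly close triple.

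The main obstacle I anticipate is bookkeeping of the length estimates through the induction: each surgery and each descent costs a bounded ``fading'' amount (of order $\om$ or of order $\ze\eta$), and one must verify that $2.2$ is large enough to absorb all of these losses and still leave a sub-segment of $!Z$ on which Lemma~\ref{lm:stability-tetragon-previous} is applicable (its hypothesis requires $|!X|_{\al-1} \ge 5.2$, which after the $\ze$-rescaling $|!Z|_{\al-1} \ge \ze^{-1}|!Z|_\al$ is comfortably met when $|!Z|_\al \ge 2.2$ since $\ze = 1/20$, but the intermediate surgeries eat into this). A secondary subtlety is the interaction of $!Z$ with the bridges $!u_i, !v_i$: one must ensure that the relevant active fragments meeting $!Z$ are independent of those bridges, which follows because $!Z$ lies in the reduced path $\bar{!X}$ and one can invoke Lemma~\ref{lmi:compatible-close} to route the connecting bridges so that independence is preserved. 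Once the triple of uniformly close vertices is produced at rank $\al-1$, its image in $\Ga_\al$ is the required triple, since ``uniformly close'' vertices are in particular pairwise close and the definition is stable under the covering $\Ga_\al \to \Ga_{\al-1}$.
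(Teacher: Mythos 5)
Your overall architecture—case split by the total $\al$-area of the two coarse bigons $!X_i^{-1}!u_i!Y_i!v_i$, active fragments and independence in the positive-area case, and descent to rank $\al-1$ in the zero-area case—matches the paper's, but two steps contain genuine gaps.

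In the zero-area case you propose to lift to $\Ga_{\al-1}$ and apply Lemma~\ref{lm:stability-tetragon-previous}, but this fails on two counts. First, there is no coarse tetragon of rank $\al-1$ with $!Y_1$ and $!Y_2$ as two sides: the available bridges connect $\io(!Y_i),\tau(!Y_i)$ to $\io(!X_i),\tau(!X_i)$, and $\tau(!Y_1)$ and $\io(!Y_2)$ are not joined by a bridge, since $\tau(!X_1)$ and $\io(!X_2)$ need not be close (they are merely both on $\bar{!X}$). Second, even modulo the topology, Proposition~\ref{pr:no-active-loops} eats $\le 1+4\ze^2\eta$ off each side of each $!X_i$, so the surviving part $!Z'$ of $!Z$ satisfies $|!Z'|_\al\ge 2.2-2.04=0.16$, hence $|!Z'|_{\al-1}\ge 3.2$; but Lemma~\ref{lm:stability-tetragon-previous} requires $|!X|_{\al-1}\ge 5.2$, so it cannot be applied. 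The paper's route is to recurse on the \emph{present} lemma at rank $\al-1$, whose hypothesis $|!Z'|_{\al-1}\ge 2.2$ is comfortably met.

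In the positive-area case, the claim that ``a long enough sub-segment of $!Z$ survives in that piece'' after splitting off an active relator loop is not generally true: if the active fragment $!K$ of $!X_1$ falls in the interior of $!Z$, the split cuts $!Z$ in two, and with $|!Z|_\al$ close to $2.2$ neither piece need meet the hypothesis, so the induction cannot continue. The correct stopping condition is the opposite one: reduce the area only while some side of the split contains all of $!Z$; once this fails, $!K\seq !Z$, and then one exhibits the triple directly as $!a_1=\io(!K)$, $!a_2=\io(!M)$ (where $!M$ is the companion active fragment of $!Y_1$ with $!K\sim !M^{-1}$), and $!a_3$ chosen to be $\io(!Y_2)$ or $\tau(!Y_2)$ when $!K$ is not independent of $!u_2$ or $!v_2$ (this already gives uniform closeness through the base relator loop of $!K$), and otherwise $\io(!N)$ where $!N$ in $!Y_2$ is furnished by Proposition~\ref{pr:fragment-stability-bigon}. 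Your plan gestures at independence but does not handle the non-independent alternative, which is precisely the case where you cannot propagate $!K$ via fragment stability and must fall back to the endpoints of $!Y_2$.
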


\begin{proof}
If $\al=0$ there is nothing to prove. Let $\al\ge 1$.
Let $!X_i^{-1} !u_i !Y_i !v_i$ $(i=1,2)$ be a coarse bigon where $!u_i$ and $!v_i$
are bridges of rank $\al$. 

{\em Case\/} 1: $\Area_\al(!X_i^{-1} !u_i !Y_i !v_i) = 0$ for both $i=1,2$.
We apply Proposition \ref{pr:no-active-loops} and find loops 
$!X_i'^{-1} !u_i' !Y_i' !v_i'$ that can be lifted to $\Ga_{\al-1}$ where
$!X_i'$ and $!Y_i'$ are subpaths of $!X_i$ and $!Y_i$ respectively. 
For the common part $!Z'$ of $!X_1'$ and $!Z_2'$ we have 
$|!Z'|_\al \ge |!Z|_\al - 2.04 \ge 0.16$ and hence $|!Z'|_{\al-1} \ge 3.2$.
Then the statement follows by induction.

{\em Case\/} 2: $\Area_\al(!X_i^{-1} !u_i !Y_i !v_i) > 0$ for $i=1$ or $i=2$.
Without loss of generality, assume that $!K$ and $!M$ are active fragments of rank $\al$ in 
$!X_1$ and in $!Y_1$, respectively, such that $!K \sim !M^{-1}$. 
Let $!X_1 = !S_1 !K !S_2$ and $!Y_1 = !T_1 !M !T_2$. If $!S_1 !K$ contains $!Z$ then 
we shorten $!X_1$ and $!Y_1$ replacing them with $!S_1 !K$ and $!T_1$ thereby decreasing 
$\Area_\al(!X_1^{-1} !u_1 !Y_1 !v_1)$ as described in \ref{ss:active-loop-induction}.
Similarly, if $!K !S_2$ contains $!Z$ then we can replace $!X_1$ and $!Y_1$ 
with $!K !S_2$ and $!T_2$. Therefore, we can assume that $!K$ is contained in $!Z$. 
We take $!a_1 = \io(!K)$ and $!a_2 = \io(!M)$. If $!K$ is not independent of $!u_2$
or from $!v_2$ then for $!a_3$ we can take $\io(!Y_2)$ or $\tau(!Y_2)$ 
respectively. Otherwise by Proposition \ref{pr:fragment-stability-bigon} there exists
a fragment $!N$ of rank $\al$ in $!Y_2$ such that $!N \sim !K^{\pm1}$ and we 
can take $!a_3 = \io(!N)$.
\end{proof}

\begin{lemma} \label{lm:closeness-bigon-1side}
Let $(!S,!T)$ and $(!X,!Y)$ be pairs of close reduced paths in $\Ga_\al$ where $!Y$ is an end of $!S$
and the ending vertices $\tau(!X)$, $\tau(!Y)=\tau(!S)$ and $\tau(!T)$ are uniformly close.
Then there exists a triple $!a_i$ $(i=1,2,3)$ of uniformly close 
vertices on $!X$, $!Y$ and $!T$ respectively,
such that $!a_1$ cuts off a start $!X_1$ of $!X$ with 
$|!X_1|_\al < 1.3$ and $!a_2$ cuts off a start $!Y_1$ of $!Y$ with 
$|!Y_1|_\al < 1.15$.
\end{lemma}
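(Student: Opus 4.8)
The plan is to induct on $\al$. For $\al=0$, closeness of reduced paths is equality by Remark~\ref{rm:close-in-rank-0}, so $!X=!Y$ and $!S=!T$, $!Y$ is an end of $!S=!T$, and one takes $!a_1=!a_2=!a_3=\io(!Y)$ with $!X_1$ and $!Y_1$ empty. So assume $\al\ge1$ and that this statement, together with that of Lemma~\ref{lm:closeness-2bigons}, holds in all ranks below $\al$. I would fix coarse bigons $!S^{-1}!p!T!q$ and $!Y^{-1}!r!X!s$ realizing the two closeness relations (so $!p,!q,!r,!s$ are bridges of rank~$\al$) and write $!S=!S_0!Y$ with $\io(!Y)=\tau(!S_0)$. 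The first move is to run the active-fragment reduction of~\ref{ss:active-loop-induction} on $!S^{-1}!p!T!q$, each time splitting at the active relator loop whose contiguity arc in $!S$ is closest to $\io(!S)$ and keeping only the component on the $\tau(!S)$ side. Since one only ever discards from the $\io(!S)$ end, the residual left-hand path stays of the form $!S_0'!Y$ with $!Y$ untouched as an end-segment, the endpoints $\tau(!X),\tau(!Y)=\tau(!S),\tau(!T)$ are unchanged, so the hypothesis is preserved; the process terminates because the $\al$-area drops, ending in one of two situations: (a) some active fragment of the residual bigon has its image in $!S_0'!Y$ inside the initial segment of $!Y$ of $|\cdot|_\al$-length below $1.15-\ze$; or (b) there is no active fragment of the residual bigon within the initial segment of $!Y$ of $|\cdot|_\al$-length $1.15$.

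In case~(a), let $!K\subseteq !Y$ be such an active fragment and $!M\subseteq !T$ the corresponding one, so $!K\sim !M^{-1}$ and $\muf(!K)\ge\xi_0\ge 2\la+5.8\om$ by Proposition~\ref{pr:active-large}. I would then apply Proposition~\ref{pr:fragment-stability-bigon} to the second bigon $!Y^{-1}!r!X!s$ and the fragment $!K$ of rank~$\al$ in its side $!Y$ (checking that $!K$ is independent of $!r$ and $!s$, or handling the non-independent cases exactly as in the proof of Proposition~\ref{pr:fragment-nonactive-bigon}), obtaining a fragment $!M'\subseteq !X$ with $!M'\sim !K^{\pm1}$. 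Now $!K$, $!M$ and $!M'$ are pairwise compatible up to inversion, so by the observation preceding Lemma~\ref{lm:closeness-2bigons} (part~\itemref{lmi:compatible-uniformly-close}) all of their endpoints are uniformly close; setting $!a_2=\io(!K)$, $!a_3=\io(!M)$, and $!a_1$ equal to the corresponding endpoint of $!M'$ gives the required triple. The bound $|!Y_1|_\al<1.15$ is exactly the choice made in case~(a), and the bound $|!X_1|_\al<1.3$ comes from the fellow-travelling slack built into Proposition~\ref{pr:fragment-stability-bigon} (ultimately Lemma~\ref{lm:stability-tetragon-previous}) near the start, together with the estimate of the bridge $!r$ via~\eqref{eq:S2-piece-form}.

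In case~(b), the initial segment of $!Y$ of $|\cdot|_\al$-length just below $1.15$, with the matching pieces of $!S_0'$ and of $!T$, carries no active fragments of the residual first bigon, so a local use of Proposition~\ref{pr:no-active-loops} (after the appropriate bridge switching) places these pieces in $\Ga_{\al-1}$; likewise I would push the second bigon, restricted to a neighbourhood of $\io(!Y)$, down to $\Ga_{\al-1}$ by Proposition~\ref{pr:no-active-loops} again. The three residual paths then lie in $\Ga_{\al-1}$ with their far endpoints still uniformly close in rank $\al-1$, and the inductive hypothesis — preceded if necessary by Lemma~\ref{lm:closeness-2bigons}$_{\al-1}$ to realign them over a common subpath of the left-hand path — produces the triple; lifting back to $\Ga_\al$ and absorbing the two $|\cdot|_\al$-losses of size $\le 1+4\ze^2\eta$ coming from Proposition~\ref{pr:no-active-loops} into the constants $1.3$ and $1.15$ finishes this case.

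The step I expect to be the main obstacle is the quantitative bookkeeping: keeping the position of the matched vertex relative to $\io(!Y)$ below $1.15$ and relative to $\io(!X)$ below $1.3$ through the recursion and the descents to $\Ga_{\al-1}$. This is what forces the ``leftmost active fragment'' discipline in the reduction rather than an arbitrary choice of active relator loop, and it requires verifying that the fellow-travelling slack of Proposition~\ref{pr:fragment-stability-bigon} and the end-losses of Proposition~\ref{pr:no-active-loops} never accumulate past those thresholds; the rest of the argument is the now-familiar active-fragment induction of Lemma~\ref{lm:closeness-2bigons} carried out while tracking one extra endpoint.
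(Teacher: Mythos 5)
Your proposal diverges substantially from the paper's actual argument, and the divergence creates a genuine gap that you yourself flag as ``the main obstacle'' but do not close.

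The central problem is in your case~(a). Having located an active fragment $!K$ of the first bigon inside the initial segment of $!Y$ of $|\cdot|_\al$-length below $1.15-\ze$, you set $!a_2 = \io(!K)$ and then produce a compatible fragment $!M'$ in $!X$ via Proposition~\ref{pr:fragment-stability-bigon}, taking $!a_1$ to be an endpoint of~$!M'$. But Proposition~\ref{pr:fragment-stability-bigon} asserts only the \emph{existence} of $!M'$ and a lower bound on $\muf(!M')$; it says nothing about where $!M'$ sits in $!X$. The best positional control you can extract is that $\io(!M')$ is close (in rank $\al$) to $\io(!K)$, via Lemma~\ref{lmi:compatible-close}, and then the start $!X_1$ of $!X$ ending at $!a_1$ is close to the start $!Y_1$ of $!Y$ ending at $!a_2$. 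Applying Proposition~\ref{pr:length-compare-close} gives $|!X_1|_\al < 1.3\,|!Y_1|_\al + 2.2 < 1.3\cdot 1.15 + 2.2$, far above $1.3$. There is no local estimate in the paper that would shrink this to $<1.3$; the ``fellow-travelling slack'' you invoke from Lemma~\ref{lm:stability-tetragon-previous} does not help, because that lemma gives bounds in rank $\al-1$ inside a single tetragon, not a position bound for the output of Proposition~\ref{pr:fragment-stability-bigon}. A secondary problem is that your dichotomy (a)/(b) is not exhaustive: an active fragment with $\io$ at $|\cdot|_\al$-position in $[\,1.15-\ze,\ 1.15\,)$ satisfies neither condition. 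And in case~(b) the ``local use of Proposition~\ref{pr:no-active-loops}'' is not a move the proposition licenses, since it requires vanishing $\al$-area of the \emph{whole} bigon.

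The paper's proof sidesteps all of this by inducting on $|!X|+|!Y|+|!T|$ at the fixed rank $\al$. One first observes that if $|!X|_\al<1.3$ and $|!Y|_\al<1.15$ the triple $(\tau(!X),\tau(!Y),\tau(!T))$ already works, so the numerical bounds are discharged entirely by this base case. In the inductive step it then suffices to produce \emph{any} uniformly close triple $(!a_1,!a_2,!a_3)$ where at least one $!a_i$ cuts off a proper start: truncating $!X,!Y,!S,!T$ at these vertices preserves all hypotheses of the lemma and strictly decreases the total length, so the inductive hypothesis applied to the truncated paths already yields vertices with the required $|\cdot|_\al$-bounds, and these vertices sit on the original paths at the same positions. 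The case analysis (area-zero versus positive area of each bigon, with the tools you correctly identify --- Propositions~\ref{pri:4gon-closeness}, \ref{pr:fragment-stability-bigon}, Lemma~\ref{lm:stability-tetragon-previous}, and Lemma~\ref{lm:closeness-2bigons}$_{\al-1}$) then only needs to deliver such a crude triple, not one satisfying the final bounds directly. This reformulation is the missing idea in your proposal; without it the positional bookkeeping you attempt does not close.
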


\begin{proof}
We can assume $\al\ge1$. 
We use induction on $|!X| + |!Y| + |!T|$. If $|!X|_\al < 1.3$ and 
$|!Y|_\al < 1.2$ there is nothing to prove.
We assume that $|!X|_\al \ge 1.3$ or $|!Y|_\al \ge 1.15$. 
It is enough to find a triple $!a_i$ $(i=1,2,3)$ of uniformly close vertices on 
$!X$, $!Y$ and $!T$ respectively, such that at least one $!a_i$ cuts off a proper start of appropriate
path $!X$, $!Y$ or $!T$.

Let $!X^{-1} !u_1 !Y !u_2$ and $!S^{-1} !v_1 !T !v_2$ be coarse bigons in ~$\Ga_\al$
where $!u_i$ and $!v_i$ are bridges of rank ~$\al$. 

{\em Case\/} 1: 
$\Area_\al(!X^{-1} !u_1 !Y !u_2) = \Area_\al(!S^{-1} !v_1 !T !v_2) = 0$.
We assume that $!u_2$ and $!v_2$ are defined from the condition that $\tau(!X)$, $\tau(!Y)$ and $\tau(!T)$
are uniformly close;
that is, either $!u_2$ and $!v_2$ are bridges of rank $\al-1$ or
have the form $!u_2 = !w_1 !P_1 !w_2$ and $!v_2 = !w_3 !P_2 !w_4$ where 
$!w_i$ are bridges of rank $\al-1$ and $!P_i^{\pm1}$ are subpaths of a relator loop $!R$ of rank $\al$.
We consider the second case (the case when $!u_2$ and $!v_2$ are bridges of rank $\al-1$ is treated in
a similar manner).

Without changing notations, we assume that loops $!X^{-1} !u_1 !Y !u_2$ and $!S^{-1} !v_1 !T !v_2$
are lifted to ~$\Ga_{\al-1}$ and, consequently, all paths introduced are in $\Ga_{\al-1}$
(the only change is that $!P_i^{\pm1}$ become subpaths of an $R$-periodic line $\ti{!R}$ where 
$R$ is a relator of rank $\al$). After choosing $!a_i$ ($i=1,2,3$) in $\Ga_{\al-1}$ 
we pass on to their images in $\Ga_\al$.

{\em Case\/} 1a: $|!X|_\al \ge 1.3$.
If a vertex $!b_1 \ne \tau(!X)$ on $!X$ is close in rank $\al-1$ to a vertex 
$!b_2$ on ~$!P_1$ then we can take $!a_1 := !b_1$, $!a_2 := \tau(!Y)$ and 
$!a_3 := \tau(!T)$. We assume that no such $!b_1$ and $!b_2$ exist.
Then application of Proposition \ref{pri:4gon-closeness}$_{\al-1}$  shows that 
$!X = !z_1 !X' !z_2$ where $!X'$ is close to a subpath $!Y'$ of $!Y$,
$|!z_1|_\al \le 1 + 4\ze^2\eta$, $|!z_2|_\al \le 4\ze^2\eta$ and hence 
$  |!X'|_\al \ge  0.3 - 8\ze^2\eta $.

Assume first that $\al\ge 2$. Then shortening $!X'$ 
from the end by Proposition \ref{pr:fellow-traveling}$_{\al-1}$ we can assume that $!z_1 !X'$ 
is a proper start of $!X$ (and that $!X'$ is still close
to a subpath $!Y'$ of $!Y$). For the shortened $!X'$, we have 
$
    |!X'|_{\al} > 0.3 - 8\ze^2\eta - \ze^2 > 0.26
$ 
which implies 
$|!X'|_{\al-1} \ge \frac1\ze |!X'|_{\al} > 5.2$. 
Let $!v_1 = !w_5 !Q !w_6$ where $!w_5, !w_6$ are bridges of rank $\al-1$ and 
$!Q$ is labeled by a piece of rank ~$\al$.
Application of Lemma \ref{lm:stability-tetragon-previous}
gives a triple of uniformly close vertices $!a_i$ $(i=1,2,3)$ where $!a_1$ lies on $!X'$, $!a_2$ lies on $!Y'$ and 
$!a_3$ lies either on $!Q$ or $!T$. If $!a_3$ lies on $!Q$ then we replace it with $\io(!T)$.
In the case $\al=1$ we shorten $!X'$ by one edge and for the new $!X'$ we have 
$ |!X'|_{\al} > 0.3 - 8\ze^2\eta - \ze > 0$.
We can still apply Lemma \ref{lm:stability-tetragon-previous} 
due to Remark \ref{rm:stability-tetragon-previous-rank0}, so the argument remains the same.

{\em Case\/} 1b: $|!Y|_\al \ge 1.15$.
Similarly to Case 1, we can assume that there is no vertex $!b \ne \tau(!Y)$ on $!Y$ 
(and hence on $!S$ since $|!Y|_{\al-1} \ge \frac{1.15}\ze = 23$)
close in rank $\al-1$ to a vertex on $!P_1$ or on ~$!P_2$.
Applying Proposition \ref{pri:4gon-closeness}$_{\al-1}$ we represent $!Y$ and $!S$
as $!Y = !z_1 !Y' !z_2$,  $!S = !z_3 !S' !z_4$ where $!Y'$ is close (in rank $\al-1$) to 
a subpath $!X'$ of $!X$, $!S'$ is close to a subpath $!T'$ of $!T$ and $|!z_1|_\al,  |!z_3|_\al < 1 + 4\ze^2\eta$, 
$|!z_2|_\al,  |!z_4|_\al < 4\ze^2\eta$. 
In the case $\al=1$ there is a common subpath $!Z$ of $!X'$, $!Y'$, $!S'$ and $!T'$ of
size $|!Z|_\al \ge |!Y|_\al - 1 - 8\ze^2\eta > 0$ and we can take $\io(!Z)$ for all ~$!a_i$.
In the case $\al \ge 2$, 
shortening $!Y'$ from the end by Proposition \ref{pr:fellow-traveling}$_{\al-1}$ 
we can assume that $!z_1 !Y'$ is a proper start of $!Y$. 
Let $!Z$ be the common subpath of $!Y'$ and $!S'$.
We have 
$
    |!Z|_\al > |!Y|_\al - 1 - 8\ze^2\eta - \ze^2 > 0.11 
$
and hence  $|!Z|_{\al-1} > 2.2$. 
Then the statement follows by Lemma \ref{lm:closeness-2bigons}$_{\al-1}$.

{\em Case\/} 2: $\Area_\al(!S^{-1} !v_1 !T !v_2) > 0$. Let $!K$ and $!M$ be active fragments of rank $\al$ in 
$!S$ and in $!T$, respectively, such that $!K \sim !M^{-1}$. 
Let $!S = !G_1 !K !G_2$ and $!T = !H_1 !M !H_2$.
Note that $|!K|,|!M| > 0$ by Lemma \ref{lm:piece-fragment-no-higher-fragments}.
If $!K$ is not contained in $!Y$ then we replace $!S$ and $!T$ with $!K !G_2$ and $!H_2$
respectively and use induction. Assume that $!K$ is contained in $!Y$. 
We first take $!a_2 := \io(!K)$, $!a_3 := \io(!M)$. If $!M$ is not independent
on ~$!u_1$ or from $!u_2$ then we take $!a_1 := \io(!X)$ or $!a_1 := \tau(!X)$ respectively.
Otherwise by Proposition \ref{pr:fragment-stability-bigon}
there exits a fragment $!N$ of rank $\al$ in $!X$ such that $!N \sim !M^{\pm1}$. In this case
we take $!a_1 := \io(!N)$ by Lemma \ref{lmi:compatible-uniformly-close}.

{\em Case\/} 3: $\Area_\al(!X^{-1} !u_1 !Y !u_2) > 0$. Let $!K$ and $!M$ be active fragments of rank $\al$ in 
$!X$ and $!Y$ respectively such that $!K \sim !M^{-1}$. 
Then take $!a_1 := \io(!K)$, $!a_2 := \io(!M)$. Depending on whether $!M$ 
is not independent of $!v_1$ or $!v_2$
we find $!a_3$ similarly to the case 2 using Proposition \ref{pr:fragment-stability-bigon}
and Lemma \ref{lmi:compatible-uniformly-close}.
\end{proof}

\begin{proposition}[closeness transition in bigon] 
\label{pr:closeness-stability} 
Let $(!X,!Y)$ and $(!S,!T)$ be pairs of close reduced paths in $\Ga_\al$ where $!Y$ is a subpath of $!S$. 
Assume that $|!X|_\al \ge 2.3$.
Then $!X = !z_1 !X' !z_2$ where 
$!X'$ is close to a subpath $!W$ of ~$!T$ and 
$|!z_i|_\al < 1.3$ $(i=1,2)$.

Moreover, we have $!Y = !t_1 !Y' !t_2$ where $|!t_1|_\al, |!t_2|_\al < 1.15$ and 
triples $(\io(!X'), \io(!Y'), \io(!W))$ and $(\tau(!X'), \tau(!Y'), \tau(!W))$ are uniformly close.
\end{proposition}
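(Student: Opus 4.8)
The plan is to combine the one-sided closeness-transition Lemma~\ref{lm:closeness-bigon-1side} with Lemma~\ref{lm:closeness-2bigons} on two bigons sharing part of a side. Geometrically: $!X$ fellow-travels $!Y$, $!Y$ lies inside $!S$, and $!S$ fellow-travels $!T$; Lemma~\ref{lm:closeness-bigon-1side} already propagates closeness across two consecutive bigons when $!Y$ is an \emph{end} of $!S$ and the outer endpoints form a uniformly close triple, so the whole task is to cut $!S$, $!X$ and $!T$ at one well-chosen interior point so as to be in that situation at each of the two ends of $!X$. First I would dispose of $\al=0$, where $!X=!Y$ and $!S=!T$ by Remark~\ref{rm:close-in-rank-0} and the statement is trivial, and then assume $\al\ge1$. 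I would also assume $|!Y|_\al\ge2.2$ --- this is what the hypothesis $|!X|_\al\ge2.3$ is there to guarantee --- and treat the complementary ``short $!Y$'' case separately.

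For the cut I invoke Lemma~\ref{lm:closeness-2bigons} with $\bar{!X}:=!S$ and the two close pairs $(!Y,!X)$ (permissible since $!Y$ is a subpath of $!S$ and closeness is symmetric) and $(!S,!T)$; the common subpath of $!Y$ and $!S$ inside $!S$ is $!Y$, which has $|!Y|_\al\ge2.2$, so the lemma yields a triple of uniformly close vertices $!a\in!Y$, $!b\in!X$, $!c\in!T$. I then split: $!X=!X_{\mathrm L}!X_{\mathrm R}$ at $!b$, $!Y=!Y_{\mathrm L}!Y_{\mathrm R}$ at $!a$ (so $!S=!S_1!Y_{\mathrm L}!Y_{\mathrm R}!S_2$), and $!T=!T_{\mathrm L}!T_{\mathrm R}$ at $!c$. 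Since uniformly close vertices are in particular pairwise close, a direct check shows $(!X_{\mathrm L},!Y_{\mathrm L})$, $(!X_{\mathrm R},!Y_{\mathrm R})$, $(!S_1!Y_{\mathrm L},!T_{\mathrm L})$ and $(!Y_{\mathrm R}!S_2,!T_{\mathrm R})$ are all pairs of close reduced paths, reducedness being inherited by subpaths.

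Now apply Lemma~\ref{lm:closeness-bigon-1side} twice. On the left, $!Y_{\mathrm L}$ is an end of $!S_1!Y_{\mathrm L}$ and the ending vertices $\tau(!X_{\mathrm L})=!b$, $\tau(!Y_{\mathrm L})=!a=\tau(!S_1!Y_{\mathrm L})$, $\tau(!T_{\mathrm L})=!c$ are uniformly close, so the lemma applied to $(!S_1!Y_{\mathrm L},!T_{\mathrm L})$ and $(!X_{\mathrm L},!Y_{\mathrm L})$ produces uniformly close vertices $!p_1\in!X_{\mathrm L}$, $!p_2\in!Y_{\mathrm L}$, $!p_3\in!T_{\mathrm L}$ such that $!p_1$ cuts off a start of $!X_{\mathrm L}$, hence of $!X$, of $|\cdot|_\al$-size $<1.3$ and $!p_2$ cuts off a start of $!Y$ of size $<1.15$. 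On the right, the inversion-symmetric form of Lemma~\ref{lm:closeness-bigon-1side} applied to $(!Y_{\mathrm R}!S_2,!T_{\mathrm R})$ and $(!X_{\mathrm R},!Y_{\mathrm R})$, with $!Y_{\mathrm R}$ a start of $!Y_{\mathrm R}!S_2$ and the starting vertices $!b=\io(!X_{\mathrm R})$, $!a=\io(!Y_{\mathrm R})$, $!c=\io(!T_{\mathrm R})$ uniformly close, produces uniformly close $!q_1\in!X_{\mathrm R}$, $!q_2\in!Y_{\mathrm R}$, $!q_3\in!T_{\mathrm R}$ with $!q_1$ cutting off an end of $!X$ of size $<1.3$ and $!q_2$ an end of $!Y$ of size $<1.15$. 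Finally I let $!X'$ be the subpath of $!X$ from $!p_1$ to $!q_1$ (well defined, since $!p_1$ lies in $!X_{\mathrm L}$ and $!q_1$ in $!X_{\mathrm R}$), $!Y'$ the subpath of $!Y$ from $!p_2$ to $!q_2$, $!W$ the subpath of $!T$ from $!p_3$ to $!q_3$, and let $!z_1,!z_2$ and $!t_1,!t_2$ be the complementary end-pieces of $!X$ and of $!Y$: then $|!z_i|_\al<1.3$, $|!t_i|_\al<1.15$, $!W$ is a subpath of $!T$, the pair $(!X',!W)$ is close because its endpoints $\{!p_1,!p_3\}$ and $\{!q_1,!q_3\}$ are close, and the triples $(\io(!X'),\io(!Y'),\io(!W))=(!p_1,!p_2,!p_3)$ and $(\tau(!X'),\tau(!Y'),\tau(!W))=(!q_1,!q_2,!q_3)$ are uniformly close by construction --- exactly the asserted conclusion.

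The main obstacle is the uniform-closeness bookkeeping rather than any single estimate: Lemma~\ref{lm:closeness-bigon-1side} both consumes a uniformly close triple of outer endpoints and delivers one, and Lemma~\ref{lm:closeness-2bigons} is precisely the tool that manufactures the initial interior triple, so the argument succeeds or fails on aligning these and on checking that each auxiliary configuration really is a pair of close reduced paths (here one uses that a subpath of a reduced path is reduced and that uniform closeness implies pairwise closeness). The one genuinely separate point I would still need to settle is the reduction to $|!Y|_\al\ge2.2$; I expect this to follow from the comparability of the $|\cdot|_\al$-lengths of close reduced paths that is implicit in Proposition~\ref{pr:fellow-traveling}, but it is the step I would want to verify carefully rather than assume.
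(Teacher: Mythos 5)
Your top-level structure matches the paper's: both arguments reduce the proposition to producing a uniformly close triple $!a_1\in!X$, $!a_2\in!Y$, $!a_3\in!T$, and then feed that triple into two applications of Lemma~\ref{lm:closeness-bigon-1side}, which is exactly the second half of your write-up. The gap is in how you propose to find the triple. Invoking Lemma~\ref{lm:closeness-2bigons} with $\bar{!X}=!S$, $!X_1=!Y$, $!X_2=!S$ requires $|!Y|_\al\ge 2.2$, and this does \emph{not} follow from $|!X|_\al\ge 2.3$. The length-comparison estimates (Proposition~\ref{pr:length-compare-close}, and a fortiori Proposition~\ref{pr:fellow-traveling}) only give $|!X|_\al<1.3|!Y|_\al+2.2$, i.e.\ $|!Y|_\al>(|!X|_\al-2.2)/1.3\ge 0.08$ or so, which is far from $2.2$. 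Geometrically, $!X$ can pick up almost all of its rank-$\al$ length from a relator piece sitting inside a bridge $!u_i$ while $!Y$ is tiny; so your ``short $!Y$'' case is not a degenerate leftover but the heart of the difficulty, and your proposal leaves it unaddressed.

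The paper's proof is built precisely to avoid needing a lower bound on $|!Y|_\al$. When $\Area_\al(!X^{-1}!u_1!Y!u_2)>0$ or $\Area_\al(!S^{-1}!v_1!T!v_2)>0$ the triple is produced from active fragments via Proposition~\ref{pr:fragment-stability-bigon}. When both $\al$-areas vanish, the paper uses Proposition~\ref{pr:no-active-loops} to strip $!X$ down to a subpath $!X_1$ with $|!X_1|_\al>2.3-2(1+4\ze^2\eta)>0.26$ that is close in rank $\al-1$ to a subpath $!Y_1$ of $!Y$, lifts to $\Ga_{\al-1}$, and applies Lemma~\ref{lm:stability-tetragon-previous} to the coarse tetragon $!S^{-1}!v_1!T!v_2$. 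The hypothesis there is $|!X_1|_{\al-1}\ge 5.2$, which holds because $|!X_1|_{\al-1}\ge\frac1\ze|!X_1|_\al>5.2$ --- a bound driven entirely by $|!X|_\al\ge 2.3$, with no condition on $|!Y|_\al$ or $|!Y_1|_{\al-1}$. That is the device your proposal lacks, and substituting Lemma~\ref{lm:closeness-2bigons} for it breaks the proof.
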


\begin{proof}
We can assume that $\al\ge 1$.
Let $!X^{-1} !u_1 !Y !u_2$ and $!S^{-1} !v_1 !T !v_2$ be coarse bigons in ~$\Ga_\al$
where $!u_i$ and $!v_i$ are bridges of rank ~$\al$. 
By Lemma \ref{lm:closeness-bigon-1side} it is enough to find a triple $!a_i$ $(i=1,2,3)$
of uniformly close vertices on $!X$, $!Y$ and $!T$ respectively.
An easy analysis involving Proposition ~\ref{pr:fragment-stability-bigon} shows how to do this 
in the case when $\Area_\al(!X^{-1} !u_1 !Y !u_2) > 0$ or $\Area_\al(!S^{-1} !v_1 !T !v_2)> 0$.
It remains to consider the case when $\Area_\al(!X^{-1} !u_1 !Y !u_2) = \Area_\al(!S^{-1} !v_1 !T !v_2) = 0$.
Let $!v_i = !v_{i1} !R_i !v_{i2}$ $(i=1,2)$ where $!v_{ij}$ is a bridge of rank $\al-1$ and 
$!R_i$ is labeled by a piece of rank ~$\al$.
By Proposition ~\ref{pr:no-active-loops} 
we have $!X = !w_1 !X_1 !w_2$ where endpoints of $!X_1$ and a subpath $!Y_1$ of $!Y$ 
can be joined by bridges $!u_1'$ and 
$!u_2'$ of rank $\al-1$, so that the loop $!X_1^{-1} !u_1' !Y_1 !u_2'$ can be lifted to ~$\Ga_{\al-1}$ and 
$|!w_i|_\al \le 1 + 4\ze^2\eta$ $(i=1,2)$. 
Without changing notations, we assume that loops $!X_1^{-1} !u_1' !Y_1 !u_2'$ and $!S^{-1} !v_1 !T !v_2$
are already lifted to $\Ga_{\al-1}$ (and $!Y_1$ is still a subpath of $!S$). 
We have 
$$
  |!X_1|_\al \ge |!X|_\al - |!w_1|_\al - |!w_2|_\al > 0.3 - 8 \ze^2\eta > 0.26
$$
and, consequently, $|!X_1|_{\al-1} > 5.2$. 
By Lemma \ref{lm:stability-tetragon-previous} there is a triple of uniformly close vertices 
~$!b_1$ on $!X$, $!b_2$ on $!Y$ and $!b_3$ on one of the paths $!R_1$, $!T$ or ~$!R_2$.
For $!a_1$ and $!a_2$ we take images of $!b_1$ and $!b_2$ in $\Ga_\al$.
Depending on the location of $!b_3$ we take for $!a_3$ the image of either $\io(!T)$, $!b_3$
or $\tau(!T)$ as shown in Figure \ref{fig:bigon-stability}. 
\begin{figure}[h]
\input 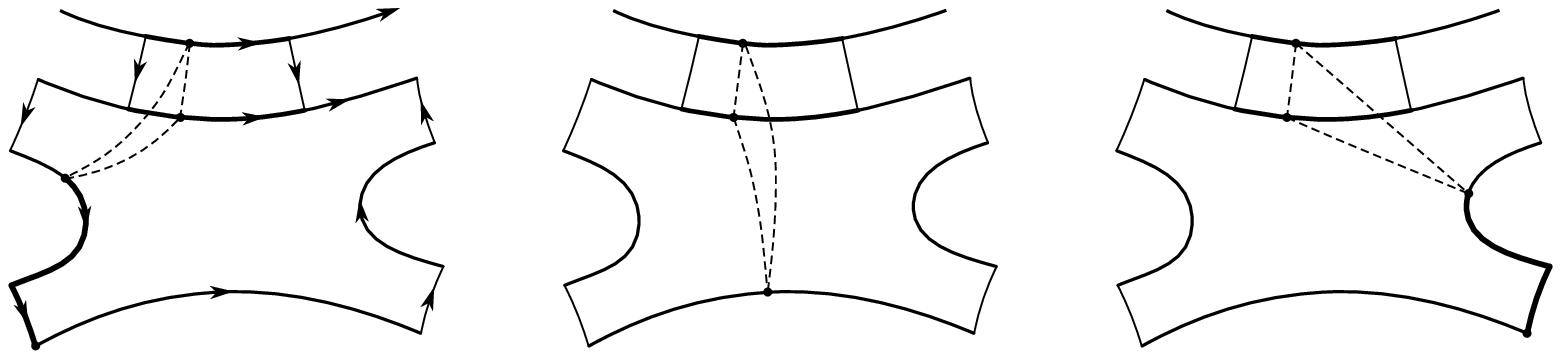_tex
\caption{}  \label{fig:bigon-stability}
\end{figure}
\end{proof}

\begin{lemma} \label{lm:closeness-trigon-1side}
Let $(!X,!Y)$ be a pair of close reduced paths in $\Ga_\al$, and let 
$!S^{-1} * !T_1 * !T_2 *$ be a coarse trigon in $\Ga_\al$ where $!Y$ is an end of $!S$ and ending vertices $\tau(!X)$, $\tau(!Y)$ and $\tau(!T_2)$ are uniformly close.   
Then either
\begin{enumerate}
\item 
there exists a triple $!a_i$ $(i=1,2,3)$ of uniformly close vertices on $!X$, $!Y$ and $!T_1$ respectively,
such that $!a_1$ cuts off a start $!X_1$ of $!X$ with 
$|!X_1|_\al < 1.3$;
\item 
there exists a triple $!a_i$ $(i=1,2,3)$ of uniformly close vertices on $!X$, $!Y$ and $!T_2$ respectively,
such that $!a_1$ cuts off a start $!X_1$ of $!X$ with 
$|!X_1|_\al \le 1.45$.
\end{enumerate}
\end{lemma}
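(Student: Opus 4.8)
The plan is to mimic the proof of Lemma \ref{lm:closeness-bigon-1side}, with the coarse bigon $(!S,!T)$ there replaced by the coarse trigon $!S^{-1} * !T_1 * !T_2 *$ and with Proposition \ref{pr:fragment-stability-trigon} invoked wherever Lemma \ref{lm:closeness-bigon-1side} invokes Proposition \ref{pr:fragment-stability-bigon}. Fix coarse polygons $!X^{-1} !u_1 !Y !u_2$ and $!S^{-1} !v_1 !T_1 !v_2 !T_2 !v_3$ in $\Ga_\al$ with bridges of rank $\al$, chosen so that $!u_2$ and $!v_3$ realize the uniform closeness of $\tau(!X)$, $\tau(!Y)=\tau(!S)$, $\tau(!T_2)$; write $!S = !S_0 !Y$. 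I argue by induction on $\Area_\al(!X^{-1}!u_1!Y!u_2)+\Area_\al(!S^{-1}!v_1!T_1!v_2!T_2!v_3)+|!X|+|!Y|+|!S|$. As in Lemma \ref{lm:closeness-bigon-1side}, it suffices at each step either to exhibit a triple of uniformly close vertices $!a_1\in!X$, $!a_2\in!Y$, $!a_3\in!T_1$ (then concluding (i)) or $!a_3\in!T_2$ (then concluding (ii)) with $!a_1$ cutting off from $!X$ a start of $\al$-size $<1.3$, resp. $\le1.45$; or else to exhibit such a triple with $!a_1$ cutting off a \emph{proper} start of $!X$, which allows the configuration to be shortened and the induction applied.

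First I remove rank-$\al$ cells. If $\Area_\al(!X^{-1}!u_1!Y!u_2)>0$, take active fragments $!K\subseteq!X$, $!M\subseteq!Y$ with $!K\sim!M^{-1}$ (see \ref{ss:active-fragments}) and set $!a_1:=\io(!K)$, $!a_2:=\io(!M)$; then $!M$ is a fragment inside $!S$ of size $\ge\xi_0=7\la-1.5\om$, so (since $\la\ge20\om$) the threshold of Proposition \ref{pr:fragment-stability-trigon} is met. If $!M$ is independent of every $!v_i$ (Definition \ref{def:interference}), that proposition yields a compatible fragment on $!T_1$ or $!T_2$ whose initial vertex is taken as $!a_3$, uniform closeness following from Lemma \ref{lmi:compatible-uniformly-close}; if $!M$ fails independence with some $!v_i$, one takes $!a_3$ to be the corresponding endpoint of $!T_1$ or $!T_2$. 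Since $!K$ is nonempty, $!a_1$ cuts off a proper start of $!X$, so one may shorten and recurse. Symmetrically, if $\Area_\al(!S^{-1}!v_1!T_1!v_2!T_2!v_3)>0$, an active fragment of the trigon either lies on $!S$ within the region covered by $!Y$ — whence Proposition \ref{pr:trigon-active-fragments} places a compatible fragment on $!T_1$ or $!T_2$ and Proposition \ref{pr:fragment-stability-bigon} carries it to $!X$, reproducing the triple — or else, by \ref{ss:active-loops-trigon}, a sub-trigon of smaller $\al$-area is split off and the induction applied.

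It remains to treat the case of vanishing $\al$-areas. By Remark \ref{rm:no-active-lift} and Proposition \ref{pr:no-active-loops}, after switching bridges both loops lift to $\Ga_{\al-1}$; then $!X=!w_1!X_1!w_2$ with $!X_1$ close in $\Ga_{\al-1}$ to a subpath $!Y_1$ of $!Y\subseteq!S$ and $|!w_i|_\al\le 1+4\ze^2\eta$, while $|!X|_\al\ge2.3$ forces $|!X_1|_{\al-1}>5.2$. In $\Ga_{\al-1}$ the switched trigon becomes a coarse hexagon whose sides are $!S^{-1}$, $!T_1$, $!T_2$ together with three sides labeled by pieces of rank $\al$, joined by bridges of rank $\al-1$; by Corollary \ref{co:small-overlapping-Cayley} any subpath of $!X_1$ close to one of those pieces has $\al$-size $\le1$, so they may be ignored. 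Cutting this hexagon into trigons and tetragons by reduced paths furnished by Proposition \ref{pr:reduction}$_{\al-1}$ and applying Lemma \ref{lm:stability-tetragon-previous} together with Proposition \ref{pr:stability-trigon}$_{\al-1}$ iteratively — exactly as in the proofs of Lemma \ref{lm:stability-tetragon-previous} and Lemma \ref{lm:stability-cyclic-monogon-previous}, with the end-trimming of $!X_1$ via Proposition \ref{pr:fellow-traveling}$_{\al-1}$ and the thinness bound of Proposition \ref{pri:3gon-closeness}$_{\al-1}$ used along the way as in Lemma \ref{lm:closeness-bigon-1side} — one extracts uniformly close vertices $!b_1\in!X_1$, $!b_2\in!Y_1$ and $!b_3$ on $!T_1$ or on $!T_2$. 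Passing to images in $\Ga_\al$ and setting $!a_i$ accordingly, the start of $!X$ cut off by $!a_1$ has $\al$-size at most $|!w_1|_\al$ plus the (rank-$(\al-1)$, hence $\ze$-scaled) loss incurred in the iteration, which remains below $1.3$ when $!b_3\in!T_1$ and $\le1.45$ when $!b_3\in!T_2$.

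The hard part is this last step: decomposing the rank-$(\al-1)$ coarse hexagon so that every $\al$-length estimate stays sharp enough for the constants $1.3$ and $1.45$ to come out, and disposing of the degenerate configurations — a short or empty $!T_i$, $!Y$ straddling the image of the cut, or $!Y$ reaching past the $!T_2$-portion of $!S$ into its $!T_1$-portion, where a closeness-transition step (Proposition \ref{pr:closeness-stability}) is needed before the triple can be located. The small gap between the two constants reflects that, in the worst configuration, reaching $!T_2$ from $!X$ costs one extra thin-trigon/stability application over reaching $!T_1$.
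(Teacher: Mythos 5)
Your overall structure matches the paper's: reduce, via Lemma~\ref{lm:closeness-bigon-1side}, to producing a triple of uniformly close vertices on $!X$, $!Y$ and some $!T_i$ with $!a_1$ (or $!a_2$) cutting off a proper start; then split into cases according to whether $\Area_\al(!X^{-1}!u_1!Y!u_2)$ or $\Area_\al(!S^{-1}!v_1!T_1!v_2!T_2!v_3)$ is positive or both vanish. Your treatment of the two positive-area cases is faithful to the paper (active fragments, independence dichotomy, Propositions~\ref{pr:fragment-stability-trigon}, \ref{pr:trigon-active-fragments}, \ref{pr:fragment-stability-bigon}).

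The gap is exactly where you flag it. The zero-area case is not a matter of routine bookkeeping to be "done exactly as in" the bigon lemma; the specific decomposition matters for the constants. Concretely, three things are missing. First, your working hypothesis $|!X|_\al\ge 2.3$ is too strong: the reduction via Lemma~\ref{lm:closeness-bigon-1side} only yields $|!X|_\al\ge 1.45$, and to survive with that you need the asymmetric bounds $|!w_1|_\al\le 1+4\eta\ze^2$, $|!w_2|_\al\le 4\eta\ze^2$. The second of these comes from the preliminary reduction ``no vertex on $!X$ other than $\tau(!X)$ is close in rank $\al-1$ to the relator loop $!R$ underlying the chosen bridges $!u_2$, $!v_3$'' — an assumption you never make, without which $|!w_2|_\al$ is only $\le 1+4\eta\ze^2$ and the arithmetic breaks for $1.45\le|!X|_\al<2.3$. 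Second, the paper does not cut the rank-$(\al-1)$ hexagon into ``trigons and tetragons'' generically; it introduces one specific reduced path $!S_1$ joining $\io(!T_1)$ to $\tau(!T_2)$, applies Lemma~\ref{lm:stability-tetragon-previous} to the tetragon $!S^{-1}*!P_1*!S_1*!P_3*$ to land $!X''$ on $!S_1$, then uses the \emph{self-referential} Lemma~\ref{lm:closeness-trigon-1side}$_{\al-1}$ to pin down $|!z_i|_{\al-1}<1.45$ so that $|!X''|_{\al-1}>5.3$, and only then applies Lemma~\ref{lm:stability-tetragon-previous} a second time to the tetragon $!S_1^{-1}*!T_1*!P_2*!T_2*$. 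You invoke Proposition~\ref{pr:stability-trigon}$_{\al-1}$ instead, which is not what the paper does and whose constants do not obviously combine to $1.3/1.45$. Third, you do not actually close the constant chain; the margin between $1.3$ and $1.45$ is exactly one extra $4\ze\eta$-thin step, and verifying this is the substance of the lemma, not a footnote.
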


\begin{proof}
We can assume $\al\ge 1$.
We use the same strategy as in the proof of Lemma \ref{lm:closeness-bigon-1side}
and proceed by induction on $|!X| + |!Y| + |!T_2|$.
In view of Lemma \ref{lm:closeness-bigon-1side}, it is enough to prove that 
if $|!X| \ge 1.45$ 
then there exists a triple $!a_i$ of uniformly close vertices 
on $!X$, $!Y$ and some ~$!T_i$ respectively
such that $!a_1$ or $!a_2$ cuts off a proper start of the 
appropriate path $!X$ or $!Y$.

Let $!u_i$ ($i=1,2$) and $!v_j$ ($j=1,2,3$) be bridges of rank $\al$ in $\Ga_\al$ 
such that $!u_1 !X !u_2 !Y^{-1}$ 
is a coarse bigon and  $!S^{-1} !v_1 !T_1 !v_2 !T_2 !v_3$ is a coarse trigon.

{\em Case\/} 1: $\Area_\al(!X^{-1} !u_1 !Y !u_2) = \Area_\al(!S^{-1} !v_1 !T_1 !v_2 !T_2 !v_3) = 0$.
We assume that $!u_2$ and $!v_3$ are defined from the condition that $\tau(!X)$, $\tau(!Y)$ and $\tau(!T_2)$
are uniformly close;
that is, either $!u_2$ and $!v_3$ are bridges of rank $\al-1$ or
have the form $!u_2 = !u_{21} !Q !u_{22}$ and $!v_3 = !v_{31} !P_3 !v_{32}$ where 
$!u_{2i}, !v_{3i}$ are bridges of rank $\al-1$ and $!Q^{\pm1}, !P_3^{\pm1}$ 
are subpaths of a relator loop $!R$ of rank $\al$.
We consider the second case (in the first case the argument is similar).
Let $!v_i = !v_{i1} !P_i !v_{i2}$ 
($i=1,2$) where $!v_{ij}$ is a bridge of rank $\al-1$ and $\lab(!P_i)$ is a piece of rank $\al$.

We can assume that there is no vertex on $!X$ other than $\tau(!X)$ which is close in rank $\al-1$ to a vertex on $!R$ 
(otherwise we can take those for $!a_1$ and $!a_2$ as
in the proof of Lemma \ref{lm:closeness-bigon-1side}).
By Remark \ref{rm:no-active-lift}, we can assume that loops 
$!X^{-1} !u_1 !Y !u_2$ and $!S^{-1} !v_1 !T_1 !v_2 !T_2 !v_3$ can be lifted to $\Ga_{\al-1}$. 
Abusing notations, we assume that they are already in $\Ga_{\al-1}$.
Application of Proposition \ref{pri:4gon-closeness}$_{\al-1}$  shows that 
$!X = !w_1 !X' !w_2$ where $!X'$ is close to a subpath $!Y'$ of $!Y$,
$|!w_1|_\al \le 1 + 4\eta\ze^2$, $|!w_2|_\al \le 4\eta\ze^2$ and hence
$ |!X'|_\al \ge 0.45 - 8\eta\ze^2$.

As in the proof of Lemma \ref{lm:closeness-bigon-1side} the proof slightly differs in 
cases $\al\ge2$ and $\al\ge1$. In the case $\al\ge2$, shortening $!X'$ 
from the end by Proposition \ref{pr:fellow-traveling}$_{\al-1}$ we can assume that $!w_1 !X'$ is a proper start of $!X$, with 
a new bound $|!X'|_{\al} > 0.45 - 8\eta\ze^2 - \ze^2 > 0.41$ which implies 
$|!X'|_{\al-1} > 8.2$. 
If there is a triple of uniformly close vertices on $!X'$, $!Y'$ and some $!P_i$ then we are done.
We assume that no such triple exists.
Let $!S_1$ be a reduced path joining $\io(!T_1)$ and $\tau(!T_2)$ 
(see Figure~\ref{fig:trigon-stability-claim1}). 
\begin{figure}[h]
\input 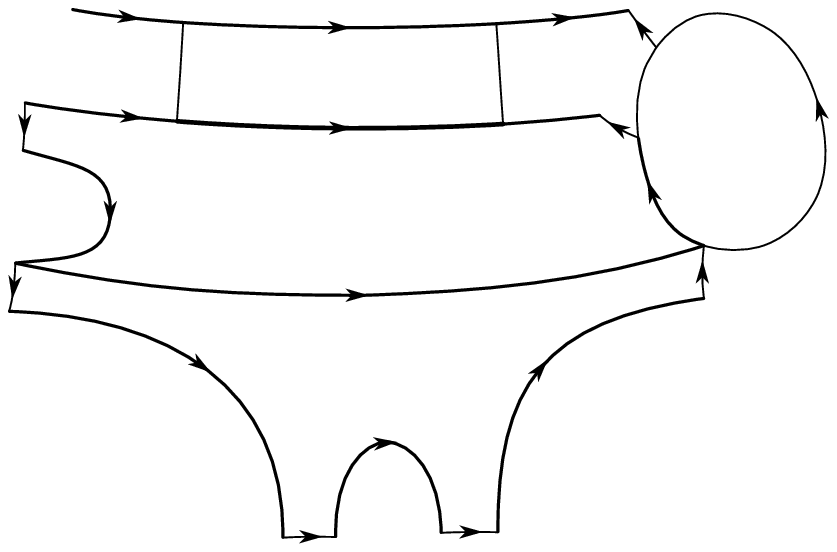_tex
\caption{}  \label{fig:trigon-stability-claim1}
\end{figure}
By Lemma \ref{lm:stability-tetragon-previous} we have $!X' = !z_1 !X'' !z_2$ where 
$!X''$ is close to a subpath of $!S_1$. Moreover, the lemma says that 
there exists a triple 
of uniformly close vertices on $!X'$, $!Y'$ and $!S_1$
and then applying Lemma \ref{lm:closeness-trigon-1side}$_{\al-1}$ 
we may assume that $|z_i|_{\al-1} < 1.45$.
Then 
$$
   |!X''|_{\al-1} \ge |!X'|_{\al-1} - |!z_1|_{\al-1} - |!z_2|_{\al-1} > 5.3.
$$
Another application of Lemma \ref{lm:stability-tetragon-previous}
gives a triple of uniformly close vertices $!b_i$ $(i=1,2,3)$ where $!b_1$ lies on $!X'$, 
$!b_2$ lies on $!Y'$ and $!b_3$ lies either on $!T_1$ or on $!T_2$. For $!a_i$ we take the images 
of $!b_i$ in $\Ga_\al$.

In the case $\al=1$ the argument is similar 
(see Case 1a in the proof of Lemma  \ref{lm:closeness-bigon-1side}) with  no need
for a lower bound on $|!X''|_{\al-1}$ for application of Lemma  \ref{lm:stability-tetragon-previous}.

{\em Case\/} 2: $r = \Area_\al(!S^{-1} !v_1 !T_1 !v_2 !T_2 !v_3) > 0$.
Let $!L$ be an active relator loop for $!S^{-1} !v_1 !T_1 !v_2 !T_2 !v_3$
and $!K_i$ ($i=1,2$ or $i=1,2,3$) be the associated active 
fragments of rank ~$\al$ occurring in $!S$, $!T_1$ or $!T_2$. 
If some $!K_i$ occurs in $!T_1$ and some $!K_j$ occur in $!T_2$ 
then we can shorten $!T_1$ and $!T_2$ decreasing $r$ as described in 
\ref{ss:active-loops-trigon}. 
A similar inductive argument works in the case when some ~$!K_i$ occurs in 
$!S$ and is not contained in $!Y$. 
Thus we may assume that there are only $!K_1$ and $!K_2$,, 
$!K_1$ is contained in $!Y$ and $!K_2$ occurs in $!T_1$ or $!T_2$. 
By Proposition \ref{pr:trigon-active-fragments},
$\muf(!K_i) \ge 3\la - 1.1\om$.
The rest of the argument is the same as in the Case 2 of
the proof of Lemma \ref{lm:closeness-bigon-1side}.

{\em Case\/} 3: $\Area_\al(!X^{-1} !u_1 !Y !u_2) > 0$.
Let $!K$ and $!M$ be active fragments of rank $\al$ in 
$!X$ and in $!Y$ respectively such that $!K \sim !M^{-1}$. 
We take $!a_1 := \io(!K)$, $!a_2 := \io(!M)$ and
define $!a_3$ according to the following cases: 
\begin{itemize}
\item 
If $!M$ is not independent of $!v_1$ then $!a_3 := \io(!T_1)$;
\item
If $!M$ is not independent of $!v_2$ then $!a_3 := \tau(!T_1)$; 
\item
If $!M$ is not independent of $!v_3$ then $!a_3 := \tau(!T_2)$; 
\item
Otherwise by Proposition \ref{pr:fragment-stability-trigon} 
applied to $!M$ there exists
a fragment $!N$ or rank $\al$ in $!T_1$ or $!T_2$ such that $!M \sim !N^{\pm1}$. 
Then $!a_3 := \io(!N)$.
\end{itemize}
\end{proof}

\begin{proposition}[closeness transition in trigon] 
\label{pr:stability-trigon}
Let $(!X,!Y)$ be a pair of close reduced paths in $\Ga_\al$, and let 
$!S^{-1} * !T_1 * !T_2 *$ be a coarse trigon in $\Ga_\al$ where $!Y$ is a subpath of $!S$.
Assume that $|!X|_\al \ge 2.45$.
Then $!X$ can be represented as in one of the following three cases:
\begin{enumerate}
\item 
$!X = !z_1 !X_1 !z_2$ where $!X_1$ is close to a subpath $!W_1$ of $!T_1$ and $|!z_1|_\al < 1.3$, 
$|!z_2|_\al < 1.45$.
\item
$!X = !z_1 !X_2 !z_2$ where $!X_2$ is close to a subpath $!W_2$ of $!T_2$ and $|!z_1|_\al < 1.45$, 
$|!z_2|_\al < 1.3$.
\item
$!X = !z_1 !X_1 !z_3 !X_2 !z_2$ where $!X_i$ is close to a subpath $!W_i$ of $!T_i$ ($i=1,2$), 
$|!z_1|_\al,|!z_2|_\al < 1.3$ and $|!z_3|_\al < 0.4$.
\end{enumerate}
Moreover, we can assume that there exists a subpath $!Y'$ of $!Y$ such that 
triples $(\io(!X_p), \io(!Y'), \io(!W_p))$ and $(\tau(!X_q), \tau(!Y'), \tau(!W_q))$ are uniformly close
where $p$ and $q$ are the smallest and the greatest indices of $!X_i$ in (i)--(iii),
i.e.\ $p=q=1$ in (i), $p=q=2$ in (ii) and $p=1$, $q=2$ in (iii).
\end{proposition}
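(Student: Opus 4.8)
The plan is to imitate the proof of Proposition~\ref{pr:closeness-stability}, replacing its one‑sided bigon lemma by the one‑sided trigon lemma~\ref{lm:closeness-trigon-1side}. We may assume $\al\ge1$, the case $\al=0$ being immediate from free cancellation in $G_0$. Fix bridges of rank $\al$ so that $!X^{-1}!u_1!Y!u_2$ is a coarse bigon and $!S^{-1} * !T_1 * !T_2 *$ is a coarse trigon in $\Ga_\al$. The central reduction is: it suffices to exhibit one triple $(!a_1,!a_2,!a_3)$ of uniformly close vertices with $!a_1$ on $!X$, $!a_2$ on $!Y$ and $!a_3$ on $!T_1$ or on $!T_2$. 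Indeed, such a triple splits $!X$, $!S$ (hence $!Y$), and $!T_1 !T_2$ into a ``left'' and a ``right'' half; one half is a configuration handled by Lemma~\ref{lm:closeness-bigon-1side} and the other by Lemma~\ref{lm:closeness-trigon-1side} (the second half in reversed form, which half being which depending on whether $!a_3$ lies on $!T_1$ or on $!T_2$). The two near‑endpoint triples these lemmas furnish, together with the fact that closeness of paths depends only on endpoints, give the outer factors $!z_1,!z_2$ with the bounds $1.3$ and $1.45$ and realise the remaining portion of $!X$ as close to a subpath of $!T_1$ (case (i)), of $!T_2$ (case (ii)), or, when that portion crosses the corner $\tau(!T_1)$–$\io(!T_2)$, as $!X_1!z_3!X_2$ with $!X_i$ close to a subpath of $!T_i$ and $|!z_3|_\al\le4\ze\eta<0.4$ by Proposition~\ref{pri:3gon-closeness} (case (iii)). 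The uniformly close endpoint triples of the ``Moreover'' clause are exactly the ones produced by Lemmas~\ref{lm:closeness-bigon-1side} and~\ref{lm:closeness-trigon-1side}, and $!Y'$ is the subpath of $!Y$ they cut out.

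To produce the triple, distinguish cases by the $\al$‑areas. If $\Area_\al(!X^{-1}!u_1!Y!u_2)>0$, choose active fragments $!K$ of rank $\al$ in $!X$ and $!M$ in $!Y$ with $!K\sim !M^{-1}$; by Proposition~\ref{pr:active-large} their sizes exceed $\xi_0$, hence exceed both $3\la+10\om$ and $2\la+5.8\om$, and they are nonempty by Lemma~\ref{lm:piece-fragment-no-higher-fragments}. Put $!a_1=\io(!K)$, $!a_2=\io(!M)$ and view $!M$ as a fragment of the side $!S$ of the coarse trigon. If $!M$ fails to be independent of some $!v_j$, take for $!a_3$ the endpoint of $!v_j$ lying on $!T_1$ or on $!T_2$; otherwise Proposition~\ref{pr:fragment-stability-trigon} supplies a fragment $!N$ of rank $\al$ in $!T_1$ or $!T_2$ with $!N\sim !M^{\pm1}$, and we set $!a_3=\io(!N)$. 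Uniform closeness of $(!a_1,!a_2,!a_3)$ follows from Lemma~\ref{lmi:compatible-uniformly-close} (respectively from the definition of independence). If instead $\Area_\al(!S^{-1} * !T_1 * !T_2 *)>0$, argue by induction on this $\al$‑area as in~\ref{ss:active-loops-trigon}: peeling off active relator loops all of whose active fragments lie in $!T_1\cup !T_2$ or in $!S\sm !Y$ reduces us either to the degenerate case below, or to an active relator loop carrying an active fragment inside $!Y$ (of size at least $3\la-1.1\om\ge2\la+5.8\om$ by Proposition~\ref{pr:trigon-active-fragments}) together with a compatible active fragment in $!T_1$ or $!T_2$; Proposition~\ref{pr:fragment-stability-bigon} then transports the $!Y$‑fragment to a compatible fragment of $!X$, and the triple is read off as before.

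It remains to treat the degenerate case in which both $\al$‑areas vanish. Then, after switching of bridges, both loops lift to $\Ga_{\al-1}$, and we argue there. Since $|!X|_\al\ge2.45>2+6\ze^2\eta$, Proposition~\ref{pr:no-active-loops} applied to the (zero‑area) bigon gives $!X=!w_1!X'!w_2$ with $|!w_i|_\al\le1+4\ze^2\eta$ and $!X'$ close in $\Ga_{\al-1}$ to a subpath $!Y'$ of $!Y$, whence $|!X'|_{\al-1}\ge|!X'|_\al/\ze>5.2$. Write $!v_i=!v_{i1}!P_i!v_{i2}$ with $!P_i$ labelled by a piece of rank $\al$ (taking $!P_i$ empty when $!v_i$ is already a bridge of rank $\al-1$); by Corollary~\ref{co:small-overlapping-Cayley} any subpath of the rank‑$\al$‑reduced path $!X'$ close to a subpath of some $!P_i$ has $\mu<\la$, so the arcs $!P_i$ contribute only bounded length. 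Exactly as in Case~1 of the proof of Lemma~\ref{lm:closeness-trigon-1side}, introduce an auxiliary reduced path joining $\io(!T_1)$ to $\tau(!T_2)$, cutting the coarse trigon $!S^{-1} * !T_1 * !T_2 *$ with its bridges opened at the $!P_i$ into two coarse tetragons, one of which has $!S$ as a side; apply Lemma~\ref{lm:stability-tetragon-previous} to it (once, or twice with an intermediate length bound if a landing vertex falls on the auxiliary path) to obtain a triple of uniformly close vertices on $!X'$, $!Y'$ and one of the sides, replacing a landing vertex on some $!P_i$ by the adjacent endpoint of $!T_1$ or $!T_2$ as in that proof. This produces the triple required in the first paragraph (when every $!v_i$ is already a bridge of rank $\al-1$ one may instead invoke the ``Moreover'' clause of Proposition~\ref{pr:stability-trigon}$_{\al-1}$ directly).

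The step I expect to be the main obstacle is the constant bookkeeping in this degenerate case: tracking the losses through Proposition~\ref{pr:no-active-loops}, through the possibly double application of Lemma~\ref{lm:stability-tetragon-previous}, and through the corner‑regluing of Proposition~\ref{pri:3gon-closeness}, and organising the case analysis so that every landing vertex ends up on $!T_1$, on $!T_2$, or on a central arc $!P_i$ adjacent to one of them—never deep inside a bridge—so that the precise bounds $|!z_1|_\al<1.3$, $|!z_2|_\al<1.45$, $|!z_3|_\al<0.4$ come out exactly.
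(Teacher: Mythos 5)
Your proof follows essentially the same route as the paper's: reduce to exhibiting one uniformly close triple on $!X$, $!Y$, and some $!T_i$ (so that Lemmas~\ref{lm:closeness-bigon-1side} and~\ref{lm:closeness-trigon-1side} plus Proposition~\ref{pri:3gon-closeness} finish), split on the two $\al$-areas with active fragments handling the positive cases, and in the doubly degenerate case lift to $\Ga_{\al-1}$, apply Proposition~\ref{pr:no-active-loops}, and finish with Lemma~\ref{lm:stability-tetragon-previous} as in Case~1 of Lemma~\ref{lm:closeness-trigon-1side}. One arithmetic slip: from $|!X'|_\al>0.41$ and $\ze=1/20$ you get $|!X'|_{\al-1}>8.2$, not $5.2$; the stronger bound is what survives the (possibly double) application of Lemma~\ref{lm:stability-tetragon-previous} after the losses from Lemma~\ref{lm:closeness-trigon-1side}$_{\al-1}$.
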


\begin{proof}
Let $!u_i$ ($i=1,2$) and $!v_j$ ($j=1,2,3$) be bridges of rank $\al$ such that $!u_1 !X !u_2 !Y^{-1}$ 
is a coarse bigon and  $!S^{-1} !v_1 !T_1 !v_2 !T_2 !v_3$ is a coarse trigon.
In view of Lemmas \ref{lm:closeness-bigon-1side} and \ref{lm:closeness-trigon-1side},
finding a triple $!a_i$ $(i=1,2,3)$
of uniformly close vertices on $!X$, $!Y$ and some $!T_i$
implies the conclusion of the proposition except the bound 
$|!z_3|_\al < 0.4$ in (iii). The latter follows from
Proposition ~\ref{pri:3gon-closeness}.
An easy analysis as in Cases 2 and 3 of the proof of 
Lemma \ref{lm:closeness-trigon-1side} shows how to find the vertices $!a_i$
in the case when 
$\Area_\al(!X^{-1} !u_1 !Y !u_2) > 0$ or $\Area_\al(!S^{-1} !v_1 !T !v_2 !T_2 !v_3)> 0$.
It remains to consider the case when 
$\Area_\al(!X^{-1} !u_1 !Y !u_2) = \Area_\al(!S^{-1} !v_1 !T !v_2 !T_2 !v_3) = 0$.
Let $!v_i = !w_{i1} !R_i !w_{i2}$ $(i=1,2,3)$ where $\lab(!w_{ij}) \in \cH_{\al-1}$ and the label of $!R_i$
is a piece of rank ~$\al$. 
By Proposition \ref{pr:no-active-loops} 
we have $!X = !w_1 !X_1 !w_2$ where endpoints of $!X_1$ and a subpath $!Y_1$ of $!Y$ 
can be joined by bridges $!u_1'$ and 
$!u_2'$ of rank $\al-1$ and the loop $!X_1 !u_1' !Y_1^{-1} !u_2'^{-1}$ can be lifted to ~$\Ga_{\al-1}$ and 
$|!w_i|_\al \le 1 + 4\ze^2\eta$ $(i=1,2)$. 
Without changing notations, we assume that loops $!X_1^{-1} !u_1' !Y_1 !u_2'$ and $!S^{-1} !v_1 !T !v_2$
are already in $\Ga_{\al-1}$ (and $!Y_1$ is still a subpath of $!S$). 
We have 
$$
  |!X_1|_\al \ge |!X|_\al - |!w_1|_\al - |!w_2|_\al > 0.41
$$
and, consequently, $|!X_1|_{\al-1} > 8.2$. 
Then we find $!a_i$ applying 
Lemmas \ref{lm:closeness-trigon-1side}$_{\al-1}$ and \ref{lm:stability-tetragon-previous} as in the proof
of Lemma \ref{lm:closeness-trigon-1side}.
\end{proof}

\begin{proposition}[closeness transition in conjugacy relations] 
\label{pr:stability-cyclic-monogon}
Let $S$ be a word cyclically reduced in ~$G_\al$.
Assume that $S$ is conjugate in ~$G_\al$ to a word $Tv$ 
where $T \in \cR_\al$ and $v \in \cH_\al$.
Let $\bar{!S} = \prod_{i \in \Z} !S_i$ and $\prod_{i \in \Z} !T_i !v_i$ be lines in $\Ga_\al$ 
representing the conjugacy relation.


Assume that a reduced path $!X$ in $\Ga_\al$ is close to a subpath $!Y$ of $\bar{!S}$ with $|!Y| \le |S|$.
Let $|!X|_\al \ge 2.45$.
Then either:
\begin{enumerate}
\item 
$!X$ can be represented as 
$!X = !z_1 !X_1 !z_2$ where $!X_1$ is close to a subpath $!W_1$ of $!T_i$ for some ~$i$ and 
$|!z_1|_\al,|!z_2|_\al < 1.45$.
\item
$!X$ can be represented as 
$!X = !z_1 !X_1 !z_3 !X_2 !z_2$ where for some $i$, $!X_1$ is close to a subpath $!W_1$ of $!T_i$,
$!X_2$ is close to a subpath $!W_2$ of $!T_{i+1}$, $|!z_1|_\al,|!z_2|_\al < 1.3$
and $|!z_3|_\al \le 0.4$. 
\end{enumerate}
Moreover, we can assume that there exists a subpath $!Y'$ of $!Y$ such that 
triples $(\io(!X_1), \io(!Y'), \io(!W_1))$ and $(\tau(!X_q), \tau(!Y'), \tau(!W_q))$ are uniformly close
where $q=1$ in (i) and $q=2$ in (ii).
\end{proposition}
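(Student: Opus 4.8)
The plan is to follow the strategy of the proof of Proposition~\ref{pr:stability-trigon}: reduce the conjugacy situation to a coarse bigon or trigon in $\Ga_\al$ by peeling off active fragments, and in the residual case descend to rank $\al-1$ via Lemma~\ref{lm:stability-cyclic-monogon-previous}. The case $\al=0$ is immediate (then $!X=!Y$ is a subpath of $\bar{!S}$, which coincides with $\prod_i !T_i$, so $!X$ lies over at most two consecutive $!T_i$), so assume $\al\ge1$. Fix a coarse bigon $!X^{-1}!u_1!Y!u_2$ with $!u_i$ bridges of rank $\al$, and realize the conjugacy of $S$ and $Tv$ by a reduced annular diagram $\De$ of rank $\al$ together with a combinatorially continuous map of its universal cover onto $\bar{!S}$ and $\prod_i !T_i!v_i$. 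As in Lemmas~\ref{lm:closeness-bigon-1side} and~\ref{lm:closeness-trigon-1side}, it will be enough to produce a triple $!a_i$ ($i=1,2,3$) of uniformly close vertices lying on $!X$, on $!Y$, and either on a single $!T_j$ or on a subpath of $!T_j!v_j!T_{j+1}$; the bounds on the $!z$'s and the ``moreover'' clause then follow by invoking those one-sided lemmas together with Proposition~\ref{pri:3gon-closeness} for the bound $|!z_3|_\al<0.4$.

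I would then run the usual induction. If $\Area_\al(!X^{-1}!u_1!Y!u_2)>0$, peel off an active fragment as in~\ref{ss:active-loop-induction} and reduce the $\al$-area. If $\De$ has a cell $!D$ of rank $\al$, then by Proposition~\ref{pri:annular-single-layer1} and Proposition~\ref{pri:cyclic-monogon-cell} a lift of $!D$ produces active fragments $!N_i$ of rank $\al$ joining $\bar{!S}^{\pm1}$ to one or two of the $!T_j$, with $\sum_i\muf(!N_i)>1-4\la-2.2\om$; arguing as in the proof of Proposition~\ref{pr:fragment-stability-cyclic1}, if such an $!N$ lies entirely over $!Y$ then either one shortens $!T_j$, $!T_{j+1}$ (and applies induction) or one takes $!a_1:=\io(!N)$ on the $\bar{!S}$-side, transporting $!N$ across the bigon to a fragment of rank $\al$ in $!X$ by Proposition~\ref{pr:fragment-stability-bigon} (or Proposition~\ref{pr:fragment-stability-trigon} when three $!N_i$ are present), using parallelism of $\bar{!S}$ and $\prod_i!T_i!v_i$ to pick the correct translate; if $!N$ straddles an endpoint of $!Y$ one instead compares positions and reduces to a coarse bigon as in the proof of Proposition~\ref{pr:fragment-nonactive-bigon}.

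This leaves the case $\Area_\al(!X^{-1}!u_1!Y!u_2)=0$ with $\De$ having no cell of rank $\al$. Then the bigon and the annular diagram both lift to $\Ga_{\al-1}$: by Proposition~\ref{pr:no-active-loops} write $!X=!w_1!X_1!w_2$ with $|!w_i|_\al\le 1+4\ze^2\eta$ and $!X_1$ close in rank $\al-1$ to a subpath $!Y_1$ of $!Y$, and lift $\bar{!S}$ and $\prod_i!T_i!v_i$ to parallel periodic lines in $\Ga_{\al-1}$, splitting each $!T_i$ as $!T_1^{(i)}!v_1^{(i)}!T_2^{(i)}!v_2^{(i)}$ as in Lemma~\ref{lm:stability-cyclic-monogon-previous}. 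From $|!X|_\al\ge2.45$ one gets $|!X_1|_\al>0.41$, hence $|!X_1|_{\al-1}>8$, so Lemma~\ref{lm:stability-cyclic-monogon-previous} applies to $!X_1$ and yields $!X_1=!z_0!X_1'!z_1\cdots!X_r'!z_r$ with $1\le r\le4$, each $!X_i'$ close in rank $\al-1$ to a subpath of some $!U_{j_i}$ with $j_r-j_1\le3$, plus a uniformly close triple of endpoints. Since $j_r-j_1\le3$ and the $!U_j$ are exactly the $T$-parts, all $!U_{j_i}$ lie inside $!T_i$ and $!T_{i+1}$ of rank $\al$ for a single $i$, while the central pieces of the $!v_j$'s contribute only paths of $|\cdot|_\al\le1$; regrouping the $\le4$ pieces into at most one piece close in rank $\al-1$ to a subpath of $!T_i$ and at most one close to a subpath of $!T_{i+1}$, absorbing the interiors of the $!v_j$'s together with $!w_1,!w_2,!z_0,\dots,!z_r$ into the $!z$'s of the conclusion, and passing to rank $\al$ via~\ref{cl:om-rank-increment}, gives the required decomposition; trimming the ends of the $!X_i'$ by Proposition~\ref{pr:fellow-traveling}$_{\al-1}$ and Proposition~\ref{pri:3gon-closeness}$_{\al-1}$ then secures the precise bounds $|!z_1|_\al,|!z_2|_\al<1.45$ (resp.\ $<1.3$) and $|!z_3|_\al\le0.4$.

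The main obstacle I expect is the bookkeeping in this last case: showing that the up to four rank-$(\al-1)$ pieces genuinely collapse to at most two rank-$\al$ pieces (this is where $|!Y|\le|S|$ is used, to keep everything within two consecutive $!T$-sides, and where $|!R_i|_\al\le1$ is used, to discard the bridge interiors), tracking the $\sim$-classes so that the regrouped pieces are close rather than merely nearby, and propagating the uniform closeness of endpoints through the regrouping and through the passage from $\Ga_{\al-1}$ to $\Ga_\al$. Matching the numerical constants so that the estimates coming from Lemmas~\ref{lm:closeness-bigon-1side}, \ref{lm:closeness-trigon-1side} and~\ref{lm:stability-cyclic-monogon-previous} and from Proposition~\ref{pri:3gon-closeness} compose correctly is routine but delicate.
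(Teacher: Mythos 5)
Your proposal is correct and follows essentially the same route as the paper: reduce to producing a uniformly close triple on $!X$, $!Y$, and a $!T_j$ and then invoke Lemmas~\ref{lm:closeness-bigon-1side} and~\ref{lm:closeness-trigon-1side}; handle positive $\al$-area and rank-$\al$ cells of the annular diagram via Propositions~\ref{pri:annular-single-layer1}, \ref{pri:cyclic-monogon-cell}, \ref{pr:fragment-stability-bigon}/\ref{pr:fragment-stability-cyclic1}; and in the residual case lift to $\Ga_{\al-1}$ and descend. The only differences are presentational (you route the descent through Proposition~\ref{pr:no-active-loops} and Lemma~\ref{lm:stability-cyclic-monogon-previous} where the paper invokes Proposition~\ref{pr:stability-cyclic-monogon}$_{\al-1}$ and Proposition~\ref{pr:stability-trigon}$_{\al-1}$ directly, and you peel active fragments where the paper applies Proposition~\ref{pr:fragment-stability-cyclic1} at once), but these are the same ingredients differently packaged.
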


\begin{proof}
It is enough to find a uniformly close triple of vertices $!a_i$ $(i=1,2,3)$ on $!X$, 
$!Y$ and some ~$!T_i$
and then use Lemmas \ref{lm:closeness-trigon-1side} or \ref{lm:closeness-bigon-1side}.
Let $!X^{-1} !u_1 !Y !u_2$ be a coarse bigon where $!u_1$ and $!u_2$ are bridges of rank $\al$.
If $\Area_\al(!X^{-1} !u_1 !Y !u_2) > 0$ then we reach the goal 
using Proposition ~\ref{pr:fragment-stability-cyclic1} and Lemma \ref{lmi:compatible-uniformly-close}.
Assume that $\Area_\al(!X^{-1} !u_1 !Y !u_2) = 0$.

Let $\De$ be an annular diagram of rank $\al$ with boundary loops $\hat{!S}^{-1}$ and $\hat{!T} \hat{!v}$
representing the conjugacy relation. Let $\ti{\De}$ be the universal cover of $\De$ and
$\phi: \ti{\De}^{(1)} \to \Ga_\al$ the combinatorially continuous 
map sending lifts $\ti{!S}_i$, $\ti{!T}_i$ and $\ti{!v}_i$
to $!S_i$, $!T_i$ and $!v_i$ respectively.
We assume that $\De$ is reduced and has a tight set of contiguity subdiagrams.
Let $r$ be the number of cells of rank ~$\al$ of $\De$.

Assume that $r > 0$ and let $!D$ be a cell of rank $\al$ of $\De$.
By Proposition \ref{pri:annular-single-layer1} and Lemma \ref{lmi:cell-regularity-side}, $!D$ has 
two or three contiguity subdiagrams $\Pi_i \in \cT$ to sides of ~$\De$, 
at most two to $\hat{!T}$ and at most one to ~$\hat{!S}^{-1}$. 
If there are two contiguity subdiagrams $\Pi_i$ $(i=1,2)$ of $!D$ 
to $\hat{!T}$ then we consider a new annular diagram $\De'$ obtained by
cutting off $!D \cup \Pi_1 \cup \Pi_2$ and the remaining simply connected component from $\De$, 
and new words $T'$ and $v'$ where $T'$ is 
a subword of $T$. In this case, the statement follows by induction on $r$.

We can assume now that $!D$ has 
one contiguity subdiagram to $\hat{!S}^{-1}$ and one to $\hat{!T}$.
Let $\ti{!D}_i$ $(i \in \Z)$ be the lifts of $!D$ in $\ti\De$.
With an appropriate numeration of $\ti{!D}_i$'s, each 
relation loop $\phi(\de\ti{!D}_i)$
is a base loop for a fragment $!K_i$ in $\bar{!S}^{-1}$ 
and a fragment $!M_i$ in $!T_i$.
By Proposition \ref{pri:cyclic-monogon-cell},
$$
  \muf(!K_i^{-1}) + \muf(!M_i) > 1 - 4\la - 2.2\om.
$$
Since $T$ is reduced in $G_\al$, we have $\muf(!M_i) \le \rho$ and hence
$$
  \muf(!K_i^{-1}) > 5 \la - 2.2\om.
$$
If none of $!K_i^{-1}$'s is contained in $!Y$ then we can apply Proposition \ref{pr:stability-trigon}.
Otherwise we use an argument similar to one in Case 2 of the proof 
of Lemma \ref{lm:closeness-bigon-1side}.

Now assume that $\De$ has no cells of rank $\al$.
Without changing notations, we assume that parallel lines 
$\bar{!S} = \prod_{i \in \Z} !S_i$, $\prod_{i \in \Z} !T_i !v_i$
and paths $!X$ and $!Y$ are lifted to $\Ga_{\al-1}$ so that $!Y$ is still a subpath of $\bar{!S}$.
Let $v \greq w_1 R w_2$
where $w_i \in \cH_{\al-1}$ and $R$ is a piece of rank $\al$. 
We represent $!v_i$ accordingly as $!v_i = !w_1^{(i)} !R_i !w_2^{(i)}$.
Let $Z$ be a word reduced in $G_{\al-1}$ such that $Z = T w_1 R$ and let $!Z_i$ $(i \in \Z)$
be appropriate paths in ~$\Ga_{\al-1}$ with $\lab(!Z_i) \greq Z$
(Figure \ref{fig:stability-cyclic-monogon-claim1}).   
\begin{figure}[h]
\input 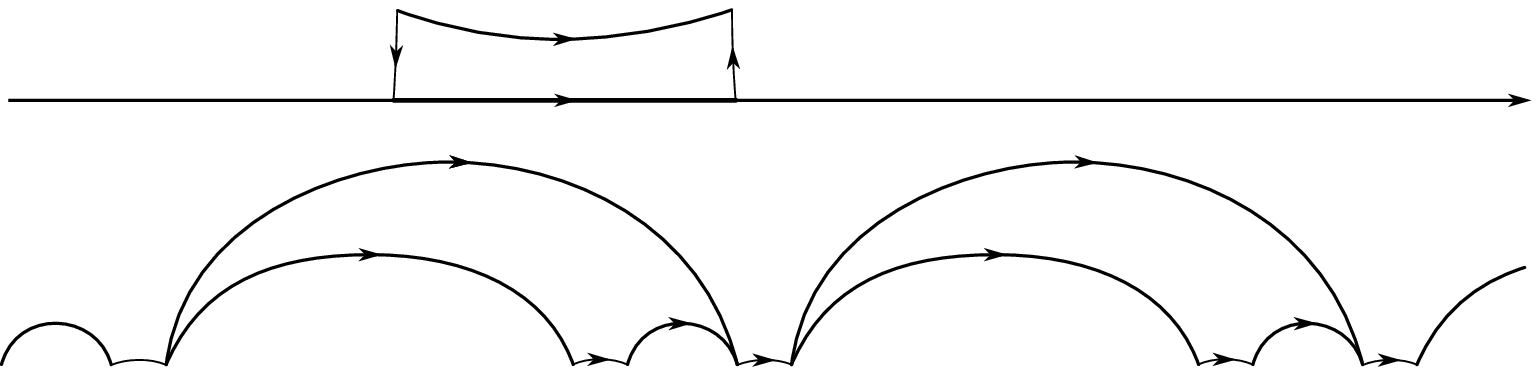_tex
\caption{}  \label{fig:stability-cyclic-monogon-claim1}
\end{figure}
Since $|!X|_\al \ge 2.45$ we have $|!X|_{\al-1} \ge \frac1\ze |!X|_\al \ge 49$.
By Proposition \ref{pr:stability-cyclic-monogon}$_{\al-1}$, 
a subpath $!X'$ of $!X$ with $|!X'|_{\al-1} > 23$ is close to a subpath of some $!Z_i$.
Then using Proposition \ref{pr:stability-trigon}$_{\al-1}$ we find 
a triple $!b_i$ of uniformly close vertices on $!X'$, $!Y$ and $!T_i$ or ~$!R_i$
respectively. If $!b_3$ lies on $!T_i$ then for the desired $!a_i$
we take images of $!b_i$ in $\Ga_\al$. If $!b_3$ lies on ~$!R_i$ then 
for $!a_i$ $(i=1,2,3)$ we take images of $!b_1$, $!b_2$ and $\tau(!T_i)$,
respectively.
\end{proof}

\begin{lemma} \label{lm:dimmed-fragment}
Let $1 \le \be \le \al$ and 
$!X$ be a reduced path in $\Ga_\al$. Let $!K_1$ and $!K_2$ be fragments 
of rank $\be$ in $!X$ such that $\muf(!K_i) \ge \la + 2.6\om$ $(i=1,2)$,
$!K_1 < !K_2$ and $!K_1 \not\sim !K_2$.
If a bridge of rank $\be$ starts or ends at $\io(!X)$
then $!K_2$ is independent of $!u$. Similarly, if a 
bridge of rank ~$\be$ starts or ends at $\tau(!X)$
then $!K_1$ is independent of $!u$.
\end{lemma}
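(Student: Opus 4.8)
The plan is a proof by contradiction. First I would reduce to a single case: replacing $!u$ by $!u^{-1}$ if it ends (rather than starts) at $\io(!X)$, and applying the statement for $\io(!X)$ to the reversed path $!X^{-1}$ (in which $!K_2^{-1}<!K_1^{-1}$, so the roles of the two fragments are swapped and reducedness, $\muf$, $\sim$ are unchanged), it is enough to show that if a bridge $!u$ of rank $\be$ starts at $\io(!X)$ and $!K_2$ is \emph{not} independent of $!u$, then a contradiction follows. Unwinding Definition~\ref{def:interference}, "$!K_2$ not independent of $!u$" means $\lab(!u)\notin\cH_{\be-1}$ and the bridge partition $!u=!v\cdot !S\cdot !w$ has central arc $!S$ lying on the base relator loop $!R^{\pm1}$ of $!K_2$; in particular $!S$ is a fragment of rank $\be$ with $!S\sim !K_2^{\pm1}$. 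I would also record at the outset that $\muf(!K_1),\muf(!K_2)\ge\la+2.6\om\ge5.7\om$ by \eqref{eq:om-bounds}, so Corollary~\ref{coi:no-inverse-compatibility-alpha} gives $!K_1\not\sim !K_2^{-1}$; together with the hypothesis $!K_1\not\sim !K_2$ this yields $!K_1\not\sim !K_2^{\pm1}$.

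Next I would apply Lemma~\ref{lmi:compatible-close} to the fragment $!K_2$ and the bridge $!u$, which are not independent: this joins the endpoint $\tau(!K_2)$ of $!K_2$ to the endpoint $\io(!u)=\io(!X)$ by a bridge $!w_0$ of rank $\be$. Let $!P$ be the initial subpath of $!X$ from $\io(!X)$ to $\tau(!K_2)$; since $!K_1<!K_2$ we have $\io(!X)\le\io(!K_1)$ and $\tau(!K_1)<\tau(!K_2)$, so $!K_1$ is contained in $!P$. Then $!P^{-1}\cdot\mathbf 1\cdot !e\cdot !w_0^{-1}$, with $!e$ the empty path at $\tau(!K_2)$, is a coarse bigon in $\Ga_\al$ one of whose sides is the empty path. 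Since $!P$ is reduced, Proposition~\ref{pr:active-large} forces its $\al$-area to vanish: an active relator loop would produce compatible active fragments in \emph{both} sides, which is impossible in the empty side because a fragment of size $\ge 3.2\om$ is nonempty (Lemma~\ref{lm:piece-fragment-no-higher-fragments}), while a smaller one would force the fragment in $!P$ to have $\muf>1-2\la-4.7\om>\rho$, contradicting reducedness. By Remark~\ref{rm:no-active-lift} the bigon then lifts, after switching bridges, to $\Ga_{\al-1}$, and the same argument applies at every rank down to $\Ga_{\be-1}$, since an empty side persists: at ranks above $\be$ the bridge $!w_0^{-1}$ is trivial (as $\lab(!w_0)\in\cH_\be$), and only at rank $\be$ may it be switched, turning the central piece of $R$ into the complementary one. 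Reading the resulting configuration in $\Ga_{\be-1}$, a reduced path $\hat{!P}$ — containing a fragment $\hat{!K}_1$ of rank $\be$ with $\muf(\hat{!K}_1)=\muf(!K_1)$ — is close in rank $\be-1$ to a path $!F$ labeled by a piece of $R$, so $!F$ is a fragment of rank $\be$ whose base axis is the $R$-periodic line through it.

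Then I would invoke Proposition~\ref{pr:fragment-stability-previous} in rank $\be$: since $\muf(\hat{!K}_1)\ge\la+2.6\om>2.3\om$, there is a fragment $!M$ of rank $\be$ contained in $!F$ with $!M\sim\hat{!K}_1$ and $\muf(!M)>\muf(\hat{!K}_1)-2.6\om>\la$. Being a subpath of $!F$, $!M$ lies on the $R$-periodic line; being compatible with $\hat{!K}_1$, its base is a subword of the periodic line over the relator $R_1$ of $!K_1$; since $\mu(!M)>\la$, Corollary~\ref{co:small-overlapping-Cayley} (together with Lemma~\ref{lm:compatible-lines}) forces these two periodic lines to coincide. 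Thus $\hat{!K}_1$ has the same base axis as the descent-image of $!K_2$, i.e. $\hat{!K}_1\sim\hat{!K}_2$; transporting this identification back up through the rank descent — at each step a zero-area bigon, which preserves compatibility of fragments — gives $!K_1\sim !K_2$ in $\Ga_\al$, contradicting the hypothesis. The symmetric statement for $\tau(!X)$ follows by the reduction noted at the start.

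The step I expect to be the real obstacle is the rank descent together with its bookkeeping: one must verify that at each intermediate rank the bigon still has zero area and lifts (isolating the single rank $\be$ where the central-piece bridge may be switched into its complement), that the fragment $!K_1$ and the relator data of $!K_2$ carry over correctly under the successive lifts, and — most delicately — that the coincidence of base axes obtained in $\Ga_{\be-1}$ transports back to $\Ga_\al$ as $!K_1\sim !K_2$ there (Proposition~\ref{pr:compatibility-reduction} supplies one direction of such a transfer, and the zero-area of the intermediate bigons should supply the other). The numerical bookkeeping is routine but must be tracked, since the hypothesis threshold $\la+2.6\om$ is exactly what is needed: $\la\ge20\om$ drives the no-inverse-compatibility step, and the $2.6\om$ loss in Proposition~\ref{pr:fragment-stability-previous} matches it precisely so that $\mu(!M)>\la$, which is the input required by Corollary~\ref{co:small-overlapping-Cayley}.
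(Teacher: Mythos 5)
Your proof is correct and rests on the same core idea as the paper's: failure of independence means that, after lifting to $\Ga_{\be-1}$, the starting vertex of $!X$ becomes close to a vertex on a lift $\ti{!R}$ of the base relator loop of $!K_2$, so the initial segment of $!X$ up to $\tau(!K_2)$ lifts to a fragment of rank $\be$ with base axis $\ti{!R}^{\pm1}$; since this fragment contains both $\hat{!K}_1$ and $\hat{!K}_2$, Proposition \ref{pr:inclusion-compatibility} gives $!K_1\sim !K_2$, a contradiction. Where you differ is mostly packaging. The paper reaches the lifted picture in one step via Lemma \ref{lm:monogon-graph-version} and then uses Proposition \ref{pr:closeness-order} to certify that the two pairs of close vertices sit along $\ti{!R}$ in the right order; you instead re-derive the lifting by hand through a degenerate coarse bigon with an empty side, killing rank-$\ga$ area one rank at a time, which is essentially a re-proof of Lemma \ref{lm:monogon-regularity} in the form packaged as Lemma \ref{lm:monogon-graph-version}. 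Two smaller remarks. Your case split ``while a smaller one would force $\muf>\rho$'' in the zero-area step is redundant, since Proposition \ref{pr:active-large} already gives $\muf(!M)>\xi_0>3.2\om$ unconditionally. And the final detour through Proposition \ref{pr:fragment-stability-previous} and Corollary \ref{co:small-overlapping-Cayley} is unnecessary and actually leaves a small loose end: it shows the base axis of $\hat{!K}_1$ coincides with the $R$-periodic line carrying the lifted piece, but then silently identifies that line with the base axis of $\hat{!K}_2$, which a priori depends on an unspecified choice of lift of $!K_2$'s base. Both issues vanish if you observe that $\hat{!P}$ is itself a fragment of rank $\be$ (being close in rank $\be-1$ to a piece of $R^{\pm1}$) and apply Proposition \ref{pr:inclusion-compatibility} directly to $\hat{!K}_1\subset\hat{!P}$ and $\hat{!K}_2\subset\hat{!P}$, which is in substance what the paper does.
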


\begin{proof}
We consider the case when $\io(!u) = \io(!X)$ (all other cases are similar).
Assume that $!K_2$ is not independent of $!u$.
By Definition \ref{def:interference}, $!u = !v !S !w$ where $!S$ occurs in a 
relation loop $!R$ of rank $\be$, $!v$ and $!w$ are bridges of rank $\be-1$ and $!R^{\pm1}$ is the base relation loop for $!K$. 
Let $\ti{!R}$ and $\ti{!X}$ be lifts of $!R$ and $!X$ in $\Ga_{\be-1}$ so that 
$\ti{!R}^{\pm1}$ is the base axis for $\ti{!K}_2$. 
Lemma \ref{lm:monogon-graph-version} implies that the starting vertex of $\ti{!X}$ is close to a vertex on $\ti{!R}$.
Then using Proposition \ref{pr:closeness-order}$_{\al-1}$ we conclude that
the starting segment 
$\ti{!X}_1 \ti{!K}_2$ of $\ti{!X}$ is a fragment of rank $\al$ 
with base axis $\ti{!R}$.
Since $!K_1$ is contained in $\ti{!X}_1 \ti{!K}_2$, 
Proposition \ref{pr:inclusion-compatibility} gives $!K_1 \sim !K_2$, 
a contradiction. 
\end{proof}

\begin{proposition}[closeness preserves order] \label{pr:closeness-order}
Let $!X_1 !X_2$ and $!Y_1 !Y_2$ be reduced paths in $\Ga_\al$ such that endpoints of $!X_i$ and $!Y_i$
are close in the order as 
in Figure \ref{fig:closeness-order}.  
Then $|!X_1|_\al, |!Y_2|_\al < 5.7$.
\end{proposition}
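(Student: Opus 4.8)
The plan is to induct on the rank~$\al$. When $\al = 0$ the paths lie in the tree $\Ga_0$, closeness of reduced paths is just equality (Remark~\ref{rm:close-in-rank-0}), and the configuration of Figure~\ref{fig:closeness-order} degenerates, forcing $|!X_1|_0$ and $|!Y_2|_0$ to be at most~$1$. So let $\al \ge 1$ and suppose, towards a contradiction, that $|!X_1|_\al \ge 5.7$. The closeness data of Figure~\ref{fig:closeness-order} furnish a coarse bigon $!X_1^{-1} !u !W !v$ in $\Ga_\al$ in which $!u,!v$ are bridges of rank~$\al$ and $!W$ is a subpath of $!Y_2$ close to $!X_1$; the ``crossing'' in the figure is exactly that $!W$ sits in the \emph{second} part $!Y_2$ of the reduced path $!Y = !Y_1 !Y_2$, against the order in which $!X$ and $!Y$ run alongside each other. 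I would fill $!X_1^{-1} !u !W !v$ by a reduced diagram $\De$ of rank~$\al$ equipped with a tight set of contiguity subdiagrams, switching $!u,!v$ if needed.

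The heart of the matter is showing that $\De$ has no cell of rank~$\al$. If $!D$ were such a cell, then by the single-layered structure of bigon diagrams (Proposition~\ref{pr:single-layer}(i)) together with Proposition~\ref{pr:active-large}(i), $!D$ yields an active fragment $!K$ of rank~$\al$ in $!X_1$ and one, $!M$, in $!W \seq !Y_2$, with $!K \sim !M^{-1}$ and $\muf(!K), \muf(!M) > \xi_0 = 7\la - 1.5\om$, which exceeds $5.7\om$ by~\eqref{eq:om-bounds}. Carrying $!M$ back along the remaining closeness of the figure by Proposition~\ref{pr:fragment-stability-bigon} (its independence hypotheses being secured by Lemma~\ref{lm:dimmed-fragment}, and, in the exceptional extreme positions, first shortening $!X_1$ and $!W$ as in~\ref{ss:active-loop-induction} so as to reduce $\Area_\al$) produces a fragment $!N$ of rank~$\al$ inside the reduced path~$!X$ with $!N \sim !M^{\pm1}$, and the reversal recorded in Figure~\ref{fig:closeness-order} pins the sign so that $!N \sim !K^{-1}$. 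But then $!K$ and $!N$ are fragments of rank~$\al$ in one reduced path $!X$, both of size $\ge 5.7\om$, with $!K \sim !N^{-1}$ — contradicting Corollary~\ref{co:compatibility-same-rank}(iii).

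Hence $\De$ is free of rank-$\al$ cells, so $!X_1^{-1} !u !W !v$ lifts to $\Ga_{\al-1}$. Rewriting the lifted bridges $!u,!v$ as bridges of rank $\al-1$ together with pieces of a relator of rank~$\al$ and invoking Proposition~\ref{pr:no-active-loops}, I would obtain $!X_1 = !w_1 !X_1' !w_2$ with $|!w_i|_\al \le 1 + 4\ze^2\eta$ and $!X_1'$ close in rank $\al-1$ to a subpath of~$!Y_2$; the same treatment of the symmetric bigon re-installs, now inside $\Ga_{\al-1}$, a configuration to which Proposition~\ref{pr:closeness-order}$_{\al-1}$ applies and gives $|!X_1'|_{\al-1} < 5.7$. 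Since $\ze = 1/20$, this yields $|!X_1|_\al \le \ze\cdot 5.7 + 2(1 + 4\ze^2\eta) < 5.7$, the desired contradiction. The bound $|!Y_2|_\al < 5.7$ then follows symmetrically, or directly, as $!Y_2$ is close to a subpath of the now-bounded $!X_1$ up to a bridge correction that the constant leaves room for.

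I expect the cell-free claim to be the main obstacle, and inside it the orientation bookkeeping that makes $!N$ \emph{inverse}-compatible with $!K$: in the merely-compatible case one only gets $\muf(!K \cup !N) \ge 2\xi_0 - 5.7\om$, which need not exceed $2\rho$, so reducedness is not violated — it is precisely the reversal in Figure~\ref{fig:closeness-order} that forces inverse-compatibility and hence the contradiction. A subsidiary annoyance is meeting the independence hypotheses of Proposition~\ref{pr:fragment-stability-bigon} (via Lemma~\ref{lm:dimmed-fragment}) uniformly over the positions of $!K$, and checking that the constants surviving the rank-reduction stay comfortably below~$5.7$.
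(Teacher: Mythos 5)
Your opening move is to fill the coarse bigon $!X_1^{-1} !u_1 !Y_2 !u_2^{-1}$ by a reduced diagram and then to argue that it has \emph{no} cells of rank~$\al$. That is too strong: the paper's Claim~1 in this proof only establishes $\Area_\al(!X_1^{-1} !u_1 !Y_2 !u_2^{-1}) \le 1$, and the remaining analysis genuinely has to cope with $\Area_\al = 1$. Your cell-elimination step fails exactly there. With a single rank-$\al$ cell you have a single active fragment $!M$ in $!Y_2$, and Lemma~\ref{lm:dimmed-fragment} — which is what you invoke to secure the independence hypotheses of Proposition~\ref{pr:fragment-stability-bigon} for the ``carry-back'' — requires \emph{two} non-compatible fragments $!K_1 < !K_2$ in the same reduced path; with only $!M$ available in $!Y$ you cannot conclude that $!M$ is independent of $!u_2$ or $!u_3$, so the carry-back across the bigon $(!X_2^{-1})^{-1} !u_3 (!Y_1!Y_2) !u_2^{-1}$ is not licensed. (In the paper's proof this is precisely why the $\Area \ge 2$ hypothesis is the right one: it supplies the second fragment that Lemma~\ref{lm:dimmed-fragment} needs, and the contradiction then comes from combining Corollary~\ref{co:compatibility-same-rank}(ii) \emph{and} (iii), not from (iii) alone.) Your fallback, ``first shortening $!X_1$ and $!W$ so as to reduce $\Area_\al$,'' changes the paths whose $\al$-length is being bounded and does not address the single-cell configuration.

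There is a second, independent problem with the cell-elimination step: the claim that ``the reversal pins the sign so that $!N \sim !K^{-1}$'' is not justified. Proposition~\ref{pr:fragment-stability-bigon} only delivers $!N \sim !M^{\pm1}$, hence $!N \sim !K^{\mp1}$, and the compatible case $!N \sim !K$ is not ruled out. Moreover it would not produce a contradiction: $!K$ in $!X_1$ and $!N$ in $!X_2$ are disjoint fragments in the reduced path $!X$, and even after the union (Proposition~\ref{pr:fragments-union}) their combined size is on the order of $2\xi_0 = 14\la - 3\om$, which is below $\rho = 1-9\la$ (since $\la \le 1/24$), so reducedness of $!X$ is not violated. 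So even granting the independence, you cannot conclude ``no cells'' this way.

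The consequence is that your final estimate accounts only for the $\Area_\al = 0$ case, giving roughly $2.3$, whereas the $\Area_\al = 1$ case is where the constant is actually spent: the paper writes $!X_1 = !R_1 !S_1 !R_2 !S_2 !R_3$ with the $!R_i$ fragments of rank~$\al$ contributing $3$ to $|!X_1|_\al$ and bounds the $!S_i$ by Claim~2 and Proposition~\ref{pri:4gon-closeness}$_{\al-1}$, which is what pushes the bound close to~$5.7$. This whole branch is missing from your argument. Finally, a minor point: the ``or directly'' route for $|!Y_2|_\al$ does not work — Proposition~\ref{pr:length-compare-close} applied to the pair $(!X_1,!Y_2)$ would only give $|!Y_2|_\al < 1.3\cdot 5.7 + 2.2 > 5.7$ — but the symmetric argument you also mention (swap the roles of $!X$ and $!Y$) is what the paper uses and is fine.
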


\begin{figure}[h]
\begingroup%
  \makeatletter%
  \providecommand\color[2][]{%
    \errmessage{(Inkscape) Color is used for the text in Inkscape, but the package 'color.sty' is not loaded}%
    \renewcommand\color[2][]{}%
  }%
  \providecommand\transparent[1]{%
    \errmessage{(Inkscape) Transparency is used (non-zero) for the text in Inkscape, but the package 'transparent.sty' is not loaded}%
    \renewcommand\transparent[1]{}%
  }%
  \providecommand\rotatebox[2]{#2}%
  \newcommand*\fsize{\dimexpr\f@size pt\relax}%
  \newcommand*\lineheight[1]{\fontsize{\fsize}{#1\fsize}\selectfont}%
  \ifx\svgwidth\undefined%
    \setlength{\unitlength}{194.6423595bp}%
    \ifx\svgscale\undefined%
      \relax%
    \else%
      \setlength{\unitlength}{\unitlength * \real{\svgscale}}%
    \fi%
  \else%
    \setlength{\unitlength}{\svgwidth}%
  \fi%
  \global\let\svgwidth\undefined%
  \global\let\svgscale\undefined%
  \makeatother%
  \begin{picture}(1,0.4193295)%
    \lineheight{1}%
    \setlength\tabcolsep{0pt}%
    \put(0,0){\includegraphics[width=\unitlength]{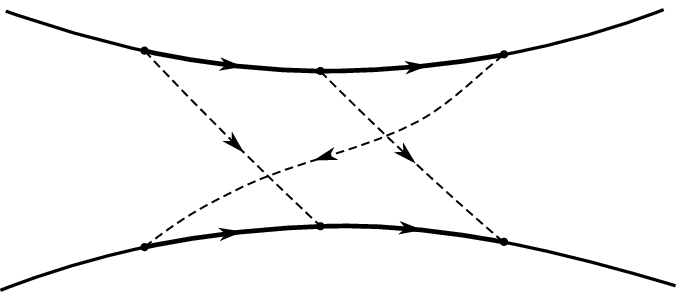}}%
    \put(0.31818032,0.35362153){\color[rgb]{0,0,0}\makebox(0,0)[lt]{\lineheight{1.25}\smash{\begin{tabular}[t]{l}$!X_1$\end{tabular}}}}%
    \put(0.58665811,0.3517962){\color[rgb]{0,0,0}\makebox(0,0)[lt]{\lineheight{1.25}\smash{\begin{tabular}[t]{l}$!X_2$\end{tabular}}}}%
    \put(0.31703144,0.03049255){\color[rgb]{0,0,0}\makebox(0,0)[lt]{\lineheight{1.25}\smash{\begin{tabular}[t]{l}$!Y_1$\end{tabular}}}}%
    \put(0.58339299,0.034269){\color[rgb]{0,0,0}\makebox(0,0)[lt]{\lineheight{1.25}\smash{\begin{tabular}[t]{l}$!Y_2$\end{tabular}}}}%
    \put(0.29679006,0.18780249){\color[rgb]{0,0,0}\makebox(0,0)[lt]{\lineheight{1.25}\smash{\begin{tabular}[t]{l}$!u_1$\end{tabular}}}}%
    \put(0.61016895,0.19886117){\color[rgb]{0,0,0}\makebox(0,0)[lt]{\lineheight{1.25}\smash{\begin{tabular}[t]{l}$!u_2$\end{tabular}}}}%
    \put(0.442068,0.22197432){\color[rgb]{0,0,0}\makebox(0,0)[lt]{\lineheight{1.25}\smash{\begin{tabular}[t]{l}$!u_3$\end{tabular}}}}%
  \end{picture}%
\endgroup%

\caption{}  \label{fig:closeness-order}
\end{figure}

\begin{proof}
We can assume that $\al\ge1$.
Due to symmetry, it is enough to show that $|!X_1|_\al  < 5.7$. 
Denote $!u_i$ $(i=1,2,3)$ bridges of rank $\al$ joining endpoints of $!X_i$ and $!Y_i$ as 
 shown in Figure ~\ref{fig:closeness-order}.

\begin{claim} \label{cl:closeness-order-claim1}
$\Area_\al(!X_1^{-1} !u_1 !Y_2 !u_2^{-1}) \le 1$.
\end{claim}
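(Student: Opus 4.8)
\textbf{Proof plan for Claim~\ref{cl:closeness-order-claim1}.}
The plan is to argue by contradiction, assuming $\Area_\al(!X_1^{-1} !u_1 !Y_2 !u_2^{-1}) \ge 2$, i.e.\ that a reduced filling diagram $\De$ of the coarse bigon $!X_1^{-1} !u_1 !Y_2 !u_2^{-1}$ has at least two cells of rank $\al$. The key structural input is Proposition~\ref{pr:single-layer}(i): in a bigon-type diagram every cell of rank $\al$ has a contiguity subdiagram to each of the two sides, hence by \ref{ss:active-fragments} there are two (or more) disjoint active fragments of rank $\al$ in each of the paths $!X_1$ and $!Y_2$. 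Calling these $!K_1, !K_2$ (ordered, say $!K_1 < !K_2$) in $!X_1$ and $!M_1, !M_2$ in $!Y_2$, with $!K_i \sim !M_i^{-1}$ and $!K_1 \not\sim !K_2$ by Proposition~\ref{pr:active-large}, I would use Proposition~\ref{pri:active-bound} to get $\muf(!K_i), \muf(!M_i) > \xi_0 = 7\la - 1.5\om$.

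The heart of the argument is to exploit the position of the bridges $!u_1$ and $!u_2$ relative to these fragments. The bridge $!u_1$ joins $\io(!X_1) = \io(!X_1!X_2)$ (a vertex that, in the original picture, lies interior to $!X$) — but more importantly, $!u_2$ joins $\tau(!X_1)$ to $\tau(!Y_2)$, and these endpoints, together with the remaining pieces $!X_2$, $!Y_1$ and the bridge $!u_3$, feed back into a second coarse relation. The idea is that the active fragment $!K_1$ (the one closer to $\io(!X_1)$) interferes, via Lemma~\ref{lm:dimmed-fragment}, with whatever bridge starts at $\tau(!X_1)$, forcing it not to be independent — but $!u_2$ at $\tau(!X_1)$ together with the fragment structure along $!Y_1 !u_3$ will then yield that $!K_1$ is ``too large'' and compatible with a second active fragment along a neighbouring part of $\bar{!X}$. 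Concretely, I would combine the two active fragments in $!X_1$: if $\Area_\al \ge 2$ gives $!K_1, !K_2$ with $\muf(!K_i) > \xi_0$ and $!K_1 \not\sim !K_2$, then applying Proposition~\ref{pr:fragments-union} is blocked only because they are not compatible, so I instead use that $!X_1$ is reduced ($\muf$ of any fragment $\le \rho = 1-9\la$) to bound lengths: by Proposition~\ref{pr:small-complexity-cell}(i) (the bigon-cell bound) applied after peeling off one cell via \ref{ss:active-loop-induction}, one gets $\muf(!K_1) + \muf(!M_1) \ge 1 - 2\la - 16\ze\eta\om$ and similarly for the second cell, and summing two such inequalities against the reduction bound $\muf(!K_1 \cup !K_2) \le \rho$, $\muf(!M_1 \cup !M_2) \le \rho$ would be the contradiction — exactly the mechanism used in the proof of Proposition~\ref{pri:active-non-compatible}, which already shows two distinct active fragments in one side of a \emph{single} coarse bigon cannot be compatible; here the extra point is that even when they are \emph{not} compatible, the geometric constraint that $!Y_2$ is short relative to what its active fragments occupy forces $\Area_\al \le 1$.

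The main obstacle I anticipate is correctly tracking the bridge-independence bookkeeping: the bridges $!u_1$ and $!u_2$ are bridges of rank $\al$, and the active fragments of rank $\al$ of the bigon $!X_1^{-1} !u_1 !Y_2 !u_2^{-1}$ might a priori ``hide inside'' the central arcs of $!u_1$ or $!u_2$ rather than genuinely living in $!X_1$ and $!Y_2$. To handle this I would first invoke Lemma~\ref{lm:piece-fragment-no-higher-fragments} (a fragment of rank $\al$ of size $\ge 3.2\om$ cannot sit inside a piece of rank $\be < \al$, and in particular $\xi_0 > 3.2\om$ by \eqref{eq:om-bounds}) to ensure the active fragments really are carried by the sides, and then Lemma~\ref{lm:dimmed-fragment} and Lemma~\ref{lmi:compatible-close} to manage the interference of $!K_1$ and $!K_2$ with $!u_1$, $!u_2$. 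Once those routine compatibility/independence facts are in place, the numerical contradiction is the same two-cell-bound-versus-reduction-threshold estimate as in Proposition~\ref{pr:active-large}(ii), just applied to $!X_1$ and $!Y_2$ in place of full sides, so I expect no new quantitative difficulty beyond checking $2 - 4\la - 32\ze\eta\om > 2\rho = 2 - 18\la$, i.e.\ $14\la > 32\ze\eta\om$, which holds by $\la \ge 20\om$ from \eqref{eq:om-bounds}.
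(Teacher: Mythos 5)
Your overall strategy — assume $\Area_\al(!X_1^{-1}!u_1!Y_2!u_2^{-1}) \ge 2$, extract active fragments $!K_1 < !K_2$ in $!X_1$ and $!M_1, !M_2$ in $!Y_2$, and exploit the remaining data $!X_2$, $!Y_1$, $!u_3$ — points in the right direction, and you correctly sense that Lemma~\ref{lm:dimmed-fragment} and the bridge positions matter. But the argument you actually carry through has a genuine gap. You propose to sum two instances of the cell bound $\muf(!K_i)+\muf(!M_i) \ge 1 - 2\la - 16\ze\eta\om$ against $\muf(!K_1 \cup !K_2) \le \rho$ and $\muf(!M_1 \cup !M_2) \le \rho$. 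In the case that actually needs handling, however, Proposition~\ref{pri:active-non-compatible} already forces $!K_1 \not\sim !K_2$, so $!K_1 \cup !K_2$ is not a fragment of rank $\al$ and the reduction threshold $\le \rho$ does not apply to it. The only bound available is $\muf(!K_i) \le \rho$ \emph{individually}, which gives $\sum_i\bigl(\muf(!K_i)+\muf(!M_i)\bigr) \le 4\rho$; a contradiction would then require $1 - 2\la - 16\ze\eta\om > 2\rho = 2 - 18\la$, i.e.\ $16\la > 1 + 16\ze\eta\om$, which is false since $\la \le \tfrac1{24}$. Your final check $14\la > 32\ze\eta\om$ stems from the invalid union bound and is immaterial. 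In effect, your estimate is a rerun of the proof of Proposition~\ref{pri:active-non-compatible} and only rules out the compatible case — a reduced path is perfectly entitled to contain two disjoint non-compatible fragments of rank $\al$ each near $\rho$ in size, so there is no length obstruction.

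The mechanism you gesture at ("feeds back into a second coarse relation") but never execute is the actual proof. The paper shows, via $!K_1 < !K_2$ and Lemma~\ref{lm:dimmed-fragment}, that the farther active fragment $!K_2$ is independent of $!u_1$; and since $!M_2 \sim !K_2^{-1}$ and $!M_2$ is independent of $!u_3$, so is $!K_2$. It then applies Propositions~\ref{pr:active-large} and~\ref{pr:fragment-nonactive-bigon} to the \emph{other} coarse bigon $(!X_1!X_2)^{-1}!u_1!Y_1^{-1}!u_3^{-1}$ to manufacture a fragment $!N$ in $!Y_1$ with $!N \sim !K_2^{\pm1}$ and $\muf(!N) \ge 5\la - 4.9\om$. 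Now $!N$ (in $!Y_1$) and $!M_2$ (in $!Y_2$) both lie in the single reduced path $!Y_1!Y_2$ and are both compatible with $!K_2$ up to inversion, and Corollary~\ref{co:compatibility-same-rank}(ii),(iii) yields the contradiction. The step you are missing is this transfer of $!K_2$ into $!Y_1$ through the second bigon: the contradiction is a compatibility-and-order statement in $!Y_1!Y_2$, not a size estimate in $!X_1$ and $!Y_2$.
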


\begin{proof}[Proof of Claim 1]
Assume that $\Area_\al (!X_1^{-1} !u_1 !Y_2 !u_2^{-1}) \ge 2$. Let $!K_i$ and $!M_i$ $(i=1,2)$ be  
active fragments of rank $\al$ in $!X_1$ and $!Y_2$, respectively, 
such that $!K_1 < !K_2$ and $!K_i \sim !M_i^{-1}$.
By Proposition \ref{pri:active-non-compatible} and Lemma \ref{lm:dimmed-fragment}, $!K_2$ is independent of ~$!u_1$. 
Similarly, $!M_2$ and hence ~$!K_2$, are independent of $!u_3$.
By Propositions \ref{pr:active-large} and \ref{pr:fragment-nonactive-bigon} 
applied to $(!X_1 !X_2)^{-1} !u_1 !Y_1^{-1} !u_3^{-1}$,
there is a fragment $!N$ of rank $\al$ in $!Y_1$ such that $!N \sim !K_2^{\pm 1}$ and 
$\muf(!N) \ge 5\la - 4.9\om$.
We obtain a contradiction with Corollary \ref{co:compatibility-same-rank}(ii),(iii).
\end{proof}

\begin{claim} \label{cl:closeness-order-claim2}
If $\Area_\al(!X_1^{-1} !u_1 !Y_2 !u_2^{-1}) = 0$ and $\lab(!u_1), \lab(!u_2) \in \cH_{\al-1}$ then 
$|!X_1|_\al < 1 + 6.1\ze$.
\end{claim}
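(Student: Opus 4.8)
The plan is to push the whole picture down one rank. Since $\Area_\al(!X_1^{-1} !u_1 !Y_2 !u_2^{-1}) = 0$ and $\lab(!u_1),\lab(!u_2) \in \cH_{\al-1}$, Remark \ref{rm:no-active-lift} provides a lift of the coarse bigon $!X_1^{-1} !u_1 !Y_2 !u_2^{-1}$ to $\Ga_{\al-1}$; I regard $!X_1$, $!Y_2$ and the endpoints $\io(!X_1),\tau(!X_1),\io(!Y_2),\tau(!Y_2)$ as living in $\Ga_{\al-1}$, so that $!X_1$ is a reduced path close in rank $\al-1$ to $!Y_2$. Because $\cR_\al \seq \cR_{\al-1}$, both $!X_1 !X_2$ and $!Y_1 !Y_2$ are reduced in $G_{\al-1}$, so $!X_1$ is a reduced subpath of $!X_1 !X_2$ and $!Y_2$ a reduced subpath of $!Y_1 !Y_2$.

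\textbf{Key step.} The point to exploit is the crossed position of the four endpoints in Figure \ref{fig:closeness-order}: $\io(!X_1)$ is close (in rank $\al-1$) to the \emph{interior} vertex $\tau(!Y_1)=\io(!Y_2)$ of $!Y_1!Y_2$, while $\tau(!X_1)=\io(!X_2)$ is close to its \emph{endpoint} $\tau(!Y_2)$, and the continuation $!X_2$ leaves $\tau(!X_1)$ running ``back along $!Y$'' (this is what the loop through $!u_1$, $!Y_1^{-1}$, $!u_3$ records). Propagating the closeness $!X_1 \sim_{\al-1} !Y_2$ by Proposition \ref{pr:closeness-stability}$_{\al-1}$ (fellow travelling) and combining it with the fact that in the reduced path $!X_1!X_2$ the segment $!X_1$ is immediately followed by a segment fellow travelling an inverse copy of an initial part of $!Y_2$, I exhibit two subpaths of the reduced path $!X_1!X_2$ that are close in rank $\al-1$ to a common path with opposite orientations — a ``fold''. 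By Proposition \ref{pr:closeness-order} in rank $\al-1$ (applied to the reconfigured pair), or equivalently by Proposition \ref{pr:no-inverse-compatibility}$_{\al-1}$ together with Lemma \ref{lm:compatible-lines}$_{\al-1}$, such a fold is short: one gets $|!X_1|_{\al-1}$ below an absolute constant, with at most an initial segment of $!X_1$ close in rank $\al-1$ to a piece of rank $\al$ (hence of $\al$-length $\le 1$). Converting the remaining part via $|\cdot|_\al \le \ze|\cdot|_{\al-1}$ (property \ref{cl:om-rank-increment}) then yields a bound of the shape $|!X_1|_\al < 1 + 6.1\ze$.

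\textbf{Main obstacle.} The delicate point is that only the bigon $!X_1^{-1} !u_1 !Y_2 !u_2^{-1}$ is handed to us with rank-$(\al-1)$ bridges and zero $\al$-area, whereas the fold argument wants the crossing data near $\tau(!X_1)$ (the bridge $!u_3$ and the continuation $!X_2$) available in $\Ga_{\al-1}$ as well. Making this rigorous requires either showing, from the standing hypotheses at this point of the proof and from Claim \ref{cl:closeness-order-claim1}, that the companion bigon at $\tau(!X_1)$ may likewise be taken with bridges in $\cH_{\al-1}$ and zero (or at most unit) $\al$-area, or else running the fold argument one active relator at a time as in \ref{ss:active-loop-induction} so that it never leaves rank $\al-1$. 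The quantitative estimate itself — bounding the length of a fold in a reduced path — is routine once the rank-$(\al-1)$ statements are in force; the entire care lies in the bookkeeping of which bridges belong to $\cH_{\al-1}$.
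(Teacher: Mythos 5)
Your high-level plan is the same as the paper's — lift down one rank, exploit the crossed configuration, and collect one unit of $\al$-length from a rank-$\al$ piece plus a $\ze$-scaled contribution from the rest — but you stop exactly at the point where the real work begins. You yourself flag the obstacle (``only the bigon $!X_1^{-1}!u_1!Y_2!u_2^{-1}$ is handed to us with rank-$(\al-1)$ bridges and zero $\al$-area'') and then offer two possible ways out without executing either. That is a genuine gap: without control of the companion loop, the fold never materialises in $\Ga_{\al-1}$.

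The paper closes the gap concretely. It first runs the standard area-induction of \ref{ss:active-fragments}/\ref{ss:active-loop-induction} on the companion coarse bigon $!X_2!u_3!Y_1!Y_2!u_2^{-1}$, reducing to the case $\Area_\al(!X_2!u_3!Y_1!Y_2!u_2^{-1})=0$. At that point \emph{both} loops lift to $\Ga_{\al-1}$ (after switching $!u_3$ if necessary); $!u_3$ need not have label in $\cH_{\al-1}$, and the key object is its bridge partition $!u_3=!v_1!Q!v_2$ with $\lab(!Q)$ a piece of rank $\al$. This produces in $\Ga_{\al-1}$ a coarse \emph{trigon} $(!X_1!X_2)\,!v_1\,!Q\,!v_2\,!Y_1\,!u_1^{-1}$ with sides $!X_1!X_2$, $!Q$, $!Y_1$. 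Applying Proposition \ref{pri:3gon-closeness}$_{\al-1}$ splits $!X_1!X_2$ into a part close to a subpath of $!Q$ (contributing $\le 1$ to $|\cdot|_\al$), a gap of $(\al-1)$-length $\le 4\ze\eta$, and a part close to a subpath of $!Y_1$; the inductive hypothesis Proposition \ref{pr:closeness-order}$_{\al-1}$ — not \ref{pr:no-inverse-compatibility}$_{\al-1}$ or Lemma \ref{lm:compatible-lines}$_{\al-1}$, which are compatibility statements and carry no length bound — gives $<5.7$ for the $(\al-1)$-length of the crossed part. Rescaling by $\ze$ yields $|!X_1!X_2|_\al<1+4\ze^2\eta+5.7\ze<1+6.1\ze$, which is exactly the claimed bound.

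So: right intuition, correct identification of the obstruction, but no actual resolution; and the suggested alternative lemmas would not deliver the quantitative estimate. To repair the argument, carry out the area-reduction on the second bigon, extract the piece $!Q$ from the bridge partition of $!u_3$, form the trigon, and invoke \ref{pri:3gon-closeness}$_{\al-1}$ together with \ref{pr:closeness-order}$_{\al-1}$.
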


\begin{proof}[Proof of Claim 2]
If $r = \Area_\al(!X_2 !u_3 !Y_1 !Y_2 !u_2^{-1}) >0$ then we can 
reduce the statement to the case of a smaller $r$ as explained in \ref{ss:active-fragments}.
So we can assume that $\Area_\al(!X_2 !u_3 !Y_1 !Y_2 !u_2^{-1}) =0$.
Then loops $!X_1^{-1} !u_1 !Y_2 !u_2^{-1}$ and $!X_2 !u_3 !Y_1 !Y_2 !u_2^{-1}$ can be
lifted to $\Ga_{\al-1}$ (up to possible switching of ~$!u_3$).
To simplify notations, we assume that these loops are already in $\Ga_{\al-1}$.
Let $!u_3 = !v_1 !Q !v_2$ where $\lab(!v_i) \in \cH_{\al-1}$ and $\lab(!Q)$ is
a piece of rank $\al$.
We obtain a coarse trigon in $\Ga_{\al-1}$ with sides $!X_1 !X_2$, $!Q$ and $!Y_1$,
see Figure \ref{fig:closeness-order-prev}.
Applying Propositions \ref{pri:3gon-closeness}$_{\al-1}$ and \ref{pr:closeness-order}$_{\al-1}$ we obtain
$$
  |!X_1 !X_2|_\al < 1 + 4\ze^2 \eta + 5.7\ze < 1 + 6.1\ze.
$$
\end{proof}  

\begin{figure}[h]
\input 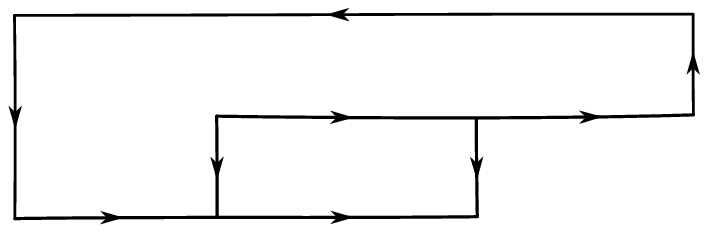_tex
\caption{}  \label{fig:closeness-order-prev}
\end{figure}

{\em The rest of the proof:}
If $\Area_\al(!X_1^{-1} !u_1 !Y_2 !u_2^{-1}) = 0$ then the statement follows from Claim 2 and
Proposition \ref{pr:no-active-loops}.
By Claim 1,
it remains to consider the case $\Area_\al(!X_1^{-1} !u_1 !Y_2 !u_2^{-1}) = 1$.
Then $!X_1$ can be represented as $!R_1 !S_1 !R_2 !S_2 !R_3$ (see Figure~\ref{fig:closeness-order-active})
where each $!R_i$ is a fragment of rank ~$\al$ and by Claim 2 
and Proposition \ref{pri:4gon-closeness}$_{\al-1}$ each $!S_i$ satisfies
$|!S_i|_\al < 1 + 6.1\ze + 8\ze^2\eta$. 
We obtain
$$
  |!X_1|_\al < 3 + 2(1+6.1\ze + 8\ze^2\eta) < 5.7.
$$
The proof is completed.  
\begin{figure}[h]
\input 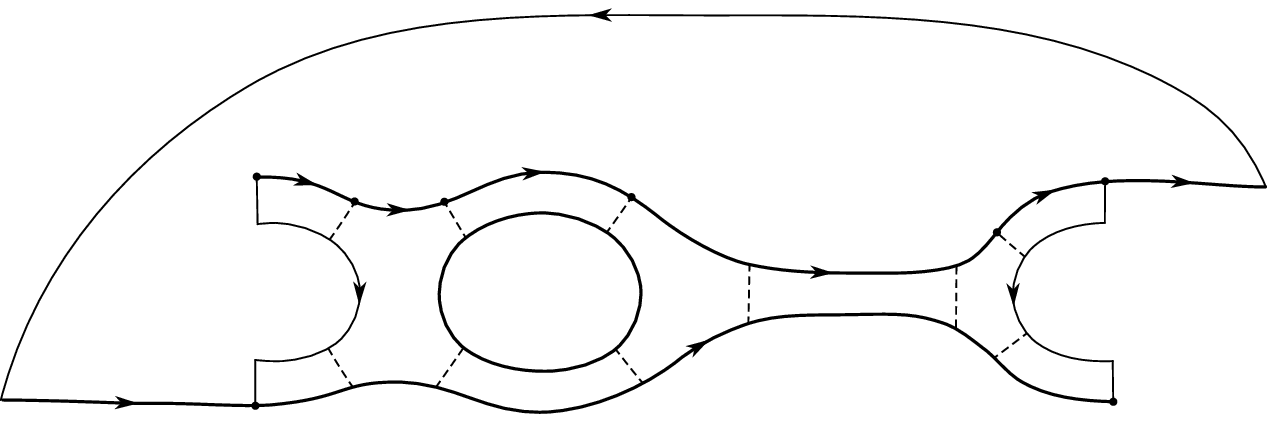_tex
\caption{}  \label{fig:closeness-order-active}
\end{figure}
\end{proof}


In the end of the section we formulate several statements about stability of fragments in a 
more general setup when fragments have arbitrary rank $\be$ in the interval $0 \le \be \le \al$.

\begin{proposition} \label{pr:closeness-stability-beta} 
Let $!S$ and $!T$ be close reduced paths in $\Ga_\al$.
Let $0 \le \be < \al$ and let $!X$ and ~$!Y$ be close 
in rank $\be$ reduced paths in $\Ga_\al$ 
such that $!Y$ is a subpath of $!S$.
Assume that $|!X|_\al \ge 2.3$ 
and $!Y$ contains no fragments $!K$ of rank $\ga$ 
with $\be < \ga \le \al$ and $\muf(!K) \ge \xi_0$.
Then $!X$ can be represented as $!X = !w_1 !X' !w_2$ where 
$!X'$ is close in rank $\be$ to a subpath of ~$!T$ and $|!w_i|_\al < 1.2$ $(i=1,2)$.
\end{proposition}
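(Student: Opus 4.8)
The plan is to reduce Proposition~\ref{pr:closeness-stability-beta} to the rank-$\al$ case already handled by Proposition~\ref{pr:closeness-stability} together with the ``no active fragments'' machinery of Corollaries~\ref{co:no-active-fragments} and~\ref{co:no-active-fragments-iterated}. First I would set up the coarse bigons: since $!X$ and $!Y$ are close in rank $\be$ there are bridges $!u,!v$ of rank $\be$ with $!X^{-1}!u!Y!v$ liftable to $\Ga_\be$, and since $!S$ and $!T$ are close in rank $\al$ there is a coarse bigon $!S^{-1}!p!T!q$ in $\Ga_\al$. The hypothesis on $!Y$ -- that it contains no fragments of rank $\ga$ with $\be<\ga\le\al$ and $\muf(!K)\ge\xi_0$ -- is exactly what is needed to feed into Corollary~\ref{co:no-active-fragments-iterated}: applied to the pair $(!S,!T)$, or rather to the bigon they span, it will let me peel off an initial and terminal bounded-length piece of $!S$ (hence also of $!Y$) and replace the remaining middle part of $!Y$ by a subpath of $!T$ that is close in rank $\be$.

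The key steps, in order, are as follows. (1)~Handle the degenerate cases $\al=0$ (trivial, by Remark~\ref{rm:close-in-rank-0}) and, if needed, the case where $!Y$ is too short, so that I may assume $|!X|_\al\ge 2.3$ with room to spare. (2)~Apply Proposition~\ref{pr:closeness-stability} to the pair $(!X,!Y)$ and $(!S,!T)$ -- but with ``close'' replaced by ``close in rank $\be$'' -- to get $!X=!z_1!X'!z_2$ where $!X'$ is close \emph{in rank $\al$} to a subpath $!W$ of $!T$, with $|!z_i|_\al<1.3$, together with the uniform-closeness of the endpoint triples. This does not yet give closeness in rank $\be$. (3)~Now run Corollary~\ref{co:no-active-fragments-iterated} (or iterate Corollary~\ref{co:no-active-fragments}) on the coarse bigon between $!X'$ and $!W$: the obstruction to lifting down to rank $\be$ is the presence of active fragments of some rank $\ga$ with $\be<\ga\le\al$ and $\muf\ge\xi_0$ in one of the two sides. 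By Proposition~\ref{pr:fragment-stability-bigon} (fragment stability), any such fragment in $!X'$ would propagate to a fragment of comparable size in $!W\subseteq !T\supseteq$, and via the $(!S,!T)$-closeness back to a fragment in $!Y$; if I arrange that this fragment has $\muf\ge\xi_0$ in $!Y$, that contradicts the hypothesis. So I can assume $!X'$ or $!W$ has no such fragments, apply Corollary~\ref{co:no-active-fragments-iterated}, and peel off $|!w_i'|_\al<1+5\ze^2\eta$ from each end, leaving a piece close in rank $\be$. (4)~Absorb the peeled-off pieces into $!z_1,!z_2$; using $\ze\le 1/480$ so that $1.3+1+5\ze^2\eta<1.2$ fails -- wait, this needs care -- actually one needs $1.3 + (1+5\ze^2\eta)$ which exceeds $1.2$, so instead the correct bookkeeping is to run Corollary~\ref{co:no-active-fragments-iterated} \emph{directly} on the bigon $!X^{-1}!u!Y!v$ rather than after Proposition~\ref{pr:closeness-stability}, comparing $|!X|_\al\ge 2.3 > 2+3\ze$ against its hypothesis, giving $|!w_i|_\al<1+5\ze^2\eta<1.2$ since $\ze^2\eta$ is tiny.

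Here is the cleaner version of the argument, which is what I would actually write. The bigon $!X^{-1}!u!Y!v$ has bridges of rank $\be$; for any rank $\ga$ in $\be+1\le\ga\le\al$, if $!Y$ has no fragment $!K$ of rank $\ga$ with $\muf(!K)\ge\xi_0$ then the hypothesis on the $(!X,!Y)$ side of Corollary~\ref{co:no-active-fragments-iterated} is met. The only gap is that the hypothesis there concerns the path $!X$ or $!Y$ themselves, whereas here the no-large-fragment condition is imposed on $!Y$ directly -- so this is immediate. Then Corollary~\ref{co:no-active-fragments-iterated} with $|!X|_\al\ge 2.3\ge 2+3\ze$ yields $!X=!w_1!X_1!w_2$, $!Y=!z_1!Y_1!z_2$ with $!X_1,!Y_1$ close in rank $\be$ and $|!w_i|_\al<1+5\ze^2\eta<1.2$ (using $\ze=1/20$, $\eta=30/17$). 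Since $!Y_1$ is a subpath of $!Y$ hence of $!S$, we are done with $!X'=!X_1$ and the subpath of $!T$ being any subpath close in rank $\be$ to $!Y_1$; but we must produce such a subpath of $!T$, which is where the $(!S,!T)$-closeness enters -- apply Proposition~\ref{pr:closeness-stability} (now legitimately in rank $\be$, since $!Y_1$ is close in rank $\be$ to $!X_1$ and is a subpath of $!S$ which is close in rank $\be$ to $!T$) to transport $!Y_1$ across to a subpath $!W$ of $!T$; transitivity of closeness in rank $\be$ then gives $!X_1$ close in rank $\be$ to $!W$.

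The main obstacle I expect is \textbf{the bookkeeping of constants and the direction of the ``close in rank $\be$'' reduction}: one must be careful that $!S$ and $!T$ are assumed close in rank $\al$ (not in rank $\be$), so Proposition~\ref{pr:closeness-stability} in rank $\al$ gives closeness in rank $\al$, and the descent to rank $\be$ must be done on the $(!X,!Y)$ bigon where the hypothesis lives. Getting the two reductions to compose -- first descend $(!X,!Y)$ to a rank-$\be$-close pair $(!X_1,!Y_1)$, then transport $!Y_1$ to $!T$ -- without the error terms ballooning past the bound $1.2$ requires using that $5\ze^2\eta$ is genuinely negligible, and checking that the transport step via Proposition~\ref{pr:closeness-stability} does not itself reintroduce a term of size $1.3$ (it won't, because it is applied to $!Y_1$ which is already a subpath of $!S$, so only endpoint-identification is needed, not a full closeness-stability with its $1.3$ loss). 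If the constants do not quite close, the fallback is to weaken the claimed bound slightly or to invoke Proposition~\ref{pr:fellow-traveling} to re-absorb a short tail; but I expect $1.2$ to be attainable as stated.
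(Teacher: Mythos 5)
Your proposal has a genuine gap, and the ``cleaner version'' you settle on rests on a false claim. You write that $!S$ ``is close in rank $\be$ to $!T$'' and use this to run Proposition~\ref{pr:closeness-stability} in rank $\be$ to transport $!Y_1$ to a subpath of $!T$. But the hypothesis of the proposition is only that $!S$ and $!T$ are close in $\Ga_\al$, i.e.\ close in rank $\al$; nothing makes them close in rank $\be$, and there is no hypothesis on $!S$ or $!T$ (the no-large-fragment hypothesis is on $!Y$, which is only a \emph{subpath} of $!S$) that would allow you to descend the $(!S,!T)$ bigon from rank $\al$ to rank $\be$. Moreover, your preliminary application of Corollary~\ref{co:no-active-fragments-iterated} to the $(!X,!Y)$ bigon is vacuous: those two paths are already close in rank $\be$ by hypothesis, so that corollary peels off nothing and produces no new information. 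Your first version correctly identified that composing Proposition~\ref{pr:closeness-stability} (loss $1.3$) with a further descent cannot stay under $1.2$, but the second version does not actually repair this.

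The ingredient you are missing is Lemma~\ref{lm:stability-tetragon-previous} together with the reduction that makes it applicable. The paper first reduces $\Area_\al$ of the $(!S,!T)$ bigon to zero by the inductive peeling of \ref{ss:active-loop-induction}, then lifts both bigons to $\Ga_{\al-1}$, so that the rank-$\al$ bridges $!u_i$ become $!u_{i1}!P_i!u_{i2}$ with $!P_i$ a \emph{piece of rank $\al$}. The tetragon lemma (applied at rank $\al-1$) then splits $!X$ as $!z_0!X_1!z_1!X_2!z_2!X_3!z_3$ where $!X_1,!X_3$ are close to subpaths of $!P_1,!P_2$. The decisive point, which your proposal has no analogue of, is that being close to a piece of rank $\al$ forces $|!X_1|_\al,|!X_3|_\al\le 1$ \emph{outright}; combined with the $|!z_i|_{\al-1}$-bounds from Lemma~\ref{lm:closeness-bigon-1side} and Proposition~\ref{pri:3gon-closeness}$_{\al-1}$, these wrap into $|!w_i|_\al<1+2.73\ze<1.2$ even after the final application of Corollary~\ref{co:no-active-fragments-iterated}$_{\al-1}$ to $(!X_2,!T')$. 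Your approach, staying at rank $\al$, never converts the central-piece contribution into a cost-$1$ term, which is exactly why your constants cannot close.

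Your instinct in the last paragraph---that the transport step across the $(!S,!T)$ bigon is the place where things may ``reintroduce a term of size $1.3$''---was correct, and your attempted remedy (claiming only ``endpoint-identification'' is needed) does not work: finding a subpath of $!T$ that is close \emph{in rank $\be$} to $!Y_1$ genuinely requires descending the $(!S,!T)$ bigon from rank $\al$, and the hypotheses do not permit doing that via Corollary~\ref{co:no-active-fragments-iterated} because no no-large-fragment condition is given for $!S$ or $!T$. The paper sidesteps this by carrying $!X$ directly into the lifted picture and exploiting the piece-bound, rather than transporting $!Y$ across.
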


\begin{proof}
Let $!S^{-1} !u_1 !T !u_2$ and $!X^{-1} !v_1 !Y !v_2$ be corresponding coarse bigons. 
If $\Area_\al(!S^{-1} !u_1 !T !u_2) > 0$ 
then by the argument from \ref{ss:active-loop-induction} we reduce the statement to a new pair $(!S,!T)$
and a coarse bigon $!S^{-1} !u_1 !T !u_2$ with a smaller value of $\Area_\al(!S^{-1} !u_1 !T !u_2)$.
Hence we can assume that $\Area_\al(!S^{-1} !u_1 !T !u_2) = 0$. 
Without changing notations, we assume that both loops $!S^{-1} !u_1 !T !u_2$ and 
$!X^{-1} !v_1 !Y !v_2$ are in $\Ga_{\al-1}$.
Let $!u_i = !u_{i1} !P_i !u_{i2}$ where $\lab(!u_{ij}) \in \cH_{\al-1}$
and $\lab(!P_i)$ is a piece of rank $\al$.
Observe that if a subpath $!X'$ is close to a subpath of $!P_1$ or $!P_2$
then $|!X'|_\al \le 1$. 
Since $|!X|_\al \ge 2.3$ applying Lemma \ref{lm:stability-tetragon-previous} 
we find a subpath of $!X$ close to a subpath of $!T$. 
We consider the case when 
$!X = !z_0 !X_1 !z_1 !X_2 !z_2 !X_3 !z_3$
where $!X_i$ $(i=1,2,3)$ are close to subpaths of $!P_1$, $!T$ and ~$!P_2$ respectively 
(the other cases from Lemma \ref{lm:stability-tetragon-previous}
give a better lower bound on $|!X_2|_{\al}$).
By Lemma \ref{lm:closeness-bigon-1side} we can assume that $|!z_0|_{\al-1}, |!z_3|_{\al-1} < 1.3$
and by Proposition \ref{pri:3gon-closeness}$_{\al-1}$ we can assume that 
$|!z_1|_{\al-1}, |!z_2|_{\al-1} < 0.4$. We have $|!X_1|_\al, |!X_3|_\al \le 1$, 
so $|!X_2|_{\al} > 2.3 - 2 - 3\ze = 0.15$ and hence
$|!X_2|_{\al-1} > 3$. 
Then by Corollary \ref{co:no-active-fragments-iterated}$_{\al-1}$
we have $!X_2 = !t_1 !X' !t_2$ where $!X'$ is close in rank $\be$ to a subpath of $!T$
and $|!t_i|_{\al-1} < 1.03$. We have $!X = !z_1 !X_1 !z_2 !t_1 !X' !t_2 !z_3 !X_3 !z_4$
where $|!z_1 !X_1 !z_2 !t_1|_\al < 1 + 2.73\ze < 1.2$ and a similar bound holds for
$|!t_2 !z_3 !X_3 !z_4|_\al$.
\end{proof}

\begin{proposition} \label{pr:fragment-correspondence-between-fragments}
Let $!X$ and $!Y$ be reduced paths in $\Ga_\al$. 
Let $1 \le \be \le \al$ and assume that either $!X$ or $!Y$ contains 
no fragments $!N$ of rank $\ga$ with $\be < \ga \le \al$
and $\muf(!N) \ge \xi_0$.

Let $!K_i$ $(i=1,2)$ be fragments of rank $\be$ in $!X$ such that 
$!K_1 \not\sim !K_2$ and $!K_1 < !K_2$. Assume that at least one 
of the following conditions holds:
\begin{itemize}
\item[(*)] 
there exist fragments $!M_i$ $(i=1,2)$ of rank $\be$
in $!Y$ such that $\muf(!M_i) \ge \la + 2.7\om$, $!K_i \sim !M_i^{\pm1}$ and $!M_1 < !M_2$; or
\item[(**)]
$!X$ and $!Y$ are close in rank $\be$.
\end{itemize}
Then the following is true:

\begin{enumerate}
\item 
\label{pri:fragment-correspondence-between-fragments}
Let $!N$ be a fragment of rank $\be$ 
in $!X$ with $\muf(!N) \ge 2\la + 9.1\om$
such that $!K_1 < !N < !K_2$ and $!N \not\sim !K_i$ for $i=1,2$.
Then there exists a fragment $!N'$ 
of rank $\be$ in $!Y$ such that $!N' \sim !N^{\pm1}$, 
$!M_1 < !N' < !M_2$ in case (*)  and 
\begin{equation} \label{eq:fragment-correspondence-between-fragments}
  \muf(!N') \ge \min\set{\muf(!N_i) - 2 \la - 3.4\om, \ \xi_0}
\end{equation}
In case (*), if $!M_1$ and $!M_2$ are disjoint then we can assume that $!M_1 \ll !N' \ll !M_2$.
This is the case (that is, $!M_1$ and $!M_2$ are necessarily disjoint) if $\muf(!N) \ge 4\la + 9\om$.
\item 
\label{pri:fragment-ordering-between-fragments}
Assume that $\muf(!K_i) \ge 2\la + 9.1\om$ and  in case (*),
$\muf(!M_i) \ge 2\la + 9.1\om$.
Let $!K_i'$ $(i=1,2)$ be a pair of another fragments of rank $\be$ in $!X$ and $!M_i'$ $(i=1,2)$ a pair 
of another fragments of rank $\be$ in ~$!Y$ such that $\muf(!K_i'), \muf(!M_i') \ge 2\la+9.1\om$, 
$!K_i' \sim !M_i'^{\pm1}$ $(i=1,2)$ and $!K_1' \not\sim !K_2'$. Then $!K_1' < !K_2'$ if and only if $!M_1' < !M_2'$.
\end{enumerate}
Furthermore, the statement of the proposition is true also in the case $\be=0$ if we drop all conditions
of the form $\muf(\cdot) \ge \dots$ for fragments of rank $\be$.
\end{proposition}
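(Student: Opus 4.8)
The plan is to reduce, in both cases~(*) and~(**), to a single coarse bigon whose bridges have rank~$\be$, to lift that bigon to $\Ga_\be$, and then to read off~(i) and~(ii) from the rank~$\be$ instances of the fragment--stability and order statements already established. We first dispose of $\be=0$: here $G_0$ is free, fragments of rank~$0$ are single edges and $\sim$ is equality of edges, so in case~(**) Remark~\ref{rm:close-in-rank-0} gives $!X=!Y$ and both assertions are immediate, while in case~(*) they are the classical bookkeeping of cancelled letters in a free--group relation. So assume $1\le\be\le\al$ from now on.

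\emph{Building the bigon.} In case~(**) Definition~\ref{df:close-in-rank} already supplies bridges $!u,!v$ of rank~$\be$ with $!X^{-1}!u!Y!v$ liftable to $\Ga_\be$; set $!X_1:=!X$ and $!Y_1:=!Y$. In case~(*), after first replacing $!M_1,!M_2$ by disjoint subfragments if necessary (Proposition~\ref{pr:small-overlapping}; this is harmless because the constants $\la+2.7\om$ and $2\la+9.1\om$ in the hypotheses leave room to keep $!N$ strictly between the framing fragments), we apply Lemma~\ref{lmi:compatible-close} to the compatible pairs $(!K_1,!M_1)$ and $(!K_2,!M_2)$ to join appropriate endpoints by bridges $!u,!v$ of rank~$\be$ chosen generically, so that any fragment $!N'$ of rank~$\be$ with $!N'\not\sim!K_1^{\pm1}$ (resp.\ $!N'\not\sim!K_2^{\pm1}$) is independent of $!u$ (resp.\ $!v$). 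Let $!X_1\subseteq!X$ be the subpath between $!K_1$ and $!K_2$ --- it contains $!N$ since $!K_1<!N<!K_2$ --- and $!Y_1\subseteq!Y$ the corresponding subpath between $!M_1$ and $!M_2$; then $!X_1^{-1}!u!Y_1!v$, suitably oriented, is a coarse bigon in $\Ga_\al$. In either case, if $\be<\al$ then (the relevant one of) $!X_1,!Y_1$ carries no fragment of rank~$\ga$ with $\be<\ga\le\al$ and $\muf\ge\xi_0$, so Proposition~\ref{pr:lifting-bigon} lifts the bigon to a coarse bigon $\hat{!X}_1^{-1}\hat{!u}\hat{!Y}_1\hat{!v}$ in $\Ga_\be$; if $\be=\al$ it already lives in $\Ga_\be$.

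\emph{Reading off the conclusions.} For~(i): let $\hat{!N}$ be the lift of $!N$ to $\hat{!X}_1$, with $\muf(\hat{!N})=\muf(!N)\ge2\la+5.8\om$. We check that $\hat{!N}$ is independent of $\hat{!u}$ and $\hat{!v}$ --- in case~(*) by the genericity of the construction together with $!N\not\sim!K_i^{\pm1}$, the $-1$ part coming from Corollary~\ref{coi:no-inverse-compatibility-alpha} when $\muf(!K_i)\ge5.7\om$ (a shorter framing fragment is handled directly: then $!N^{\pm1}\sim!M_i^{\pm1}$ and one enlarges $!M_i$ inside $!Y$ to meet the bound), and in case~(**) by Lemma~\ref{lm:dimmed-fragment}$_\be$ applied in $\Ga_\be$ to $\hat{!K}_1<\hat{!N}<\hat{!K}_2$ with $\hat{!u},\hat{!v}$ at the ends of $\hat{!X}_1$. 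Then Proposition~\ref{pr:fragment-stability-bigon}$_\be$ produces a fragment $\hat{!M}$ of rank~$\be$ in $\hat{!Y}_1$ with $\hat{!M}\sim\hat{!N}^{\pm1}$ and $\muf(\hat{!M})\ge\min\set{\muf(\hat{!N})-2\la-3.4\om,\ \xi_0}$; its image $!N'$ in $\Ga_\al$ is a fragment of rank~$\be$ in $!Y_1\subseteq!Y$ satisfying $!N'\sim!N^{\pm1}$ and \eqref{eq:fragment-correspondence-between-fragments}, and $!N'\subseteq!Y_1$ forces $!M_1<!N'<!M_2$, with $!M_1\ll!N'\ll!M_2$ when $!M_1,!M_2$ are disjoint; they are necessarily disjoint once $\muf(!N)\ge4\la+9\om$, since then $!N'$ (of $\muf$ at least $2\la+5.6\om$) is too large to lie in an overlap of $!M_1$ and $!M_2$ (Proposition~\ref{pr:small-overlapping} once more, using $!N'\not\sim!M_1,!M_2$, checked as above). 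For~(ii): $!K_1'\not\sim!K_2'$ and Corollary~\ref{coi:no-inverse-compatibility-alpha} (all four fragments now have $\muf\ge5.7\om$) give $!M_1'\not\sim!M_2'$, so both orderings are strict; reframing the bigon around $!K_1',!K_2',!M_1',!M_2'$, lifting, transporting $\hat{!K}_1',\hat{!K}_2'$ across by Proposition~\ref{pr:fragment-stability-bigon}$_\be$, and combining the ordering part of~(i) with Corollary~\ref{coi:compatibility-order-alpha} inside $\hat{!Y}$ yields $!K_1'<!K_2'\iff!M_1'<!M_2'$.

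The one delicate point is the construction in case~(*): making the coarse bigon genuinely usable, i.e.\ arranging that the framing fragments can be taken disjoint while still trapping $!N$ strictly between them, and that the constructed rank~$\be$ bridges leave $!N$ --- and, for~(ii), the fragments $!K_i'$, $!M_i'$ --- independent. Both hinge on the no--inverse--compatibility facts, whose threshold $5.7\om$ forces the short separate argument when a framing fragment is small, and this is precisely why the particular constants $2\la+9.1\om$ and $4\la+9\om$ appear in the statement: they are tuned so that all of these reductions go through. Everything downstream of the lift to $\Ga_\be$ is a routine appeal to the already--established statements about fragments of rank~$\be$ --- Proposition~\ref{pr:fragment-stability-bigon}, Proposition~\ref{pr:small-overlapping}, Lemma~\ref{lm:dimmed-fragment}, and Corollary~\ref{co:compatibility-same-rank}.
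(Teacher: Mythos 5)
The overall architecture matches the paper: in both cases build a coarse bigon with bridges of rank~$\be$ via Lemma~\ref{lmi:compatible-close}, lift it to $\Ga_\be$ by Proposition~\ref{pr:lifting-bigon}, and invoke Proposition~\ref{pr:fragment-stability-bigon}$_\be$. For disjoint $!M_1,!M_2$ your treatment coincides with the paper's. The one genuine departure is the overlapping case, and there you have a gap.

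You replace $!M_1,!M_2$ by disjoint subfragments $!M_1',!M_2'$ via Proposition~\ref{pr:small-overlapping} and run a \emph{single} pass with $!Y_1$ the interior between $!M_1'$ and $!M_2'$, then claim that $!N'\subseteq!Y_1$ forces $!M_1<!N'<!M_2$. This does not follow. That interior stretches from $\tau(!M_1')$ to $\io(!M_2')$, so it contains both the tail $!M_1\setminus!M_1'$ and the head $!M_2\setminus!M_2'$; nothing prevents the output $!N'$ of the stability proposition from landing entirely inside, say, $!M_1$, and then $!N'\subsetneq!M_1$ rather than $!M_1<!N'$. You cannot rule this out with Proposition~\ref{pr:inclusion-compatibility}$_\be$, because in the borderline regime $\muf(!N)=2\la+9.1\om$ the stability bound only gives $\muf(!N')\ge 5.7\om$, which is strictly below the $\la+2.6\om$ threshold that inclusion-compatibility requires (recall $\la\ge 20\om$, so $\la+2.6\om\ge 22.6\om$). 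The paper avoids this precisely by running \emph{two} passes --- once with $(!M_1',!M_2)$ to get $!N_1$ disjoint from $!M_2$, once with $(!M_1,!M_2')$ to get $!N_2$ disjoint from $!M_1$ --- and taking $!N'=!N_1\cup!N_2$. Disjointness of $!N_2$ from $!M_1$ forces $\tau(!N')>\tau(!M_1)$, and disjointness of $!N_1$ from $!M_2$ forces $\io(!N')<\io(!M_2)$, so the ordering $!M_1<!N'<!M_2$ holds by construction, with no size threshold needed. Your one-pass shortcut loses exactly this.

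Two secondary remarks. Your handling of the case $\muf(!K_i)<5.7\om$, where $!N\not\sim!K_i$ does not by itself exclude $!N\sim!K_i^{-1}$, is sketchy (``enlarges $!M_i$ inside $!Y$'') and does not obviously deliver the size bound; the paper's proof shares this looseness, but your phrasing does not fix it. And your argument for~(ii) compresses into one sentence what the paper does via a careful reduction to a three-fragment ordering claim proved by contradiction with Proposition~\ref{pr:inclusion-compatibility}$_\be$ and Corollary~\ref{coi:fragments-union-alpha}; as written it is not clear the ``reframing'' is well-posed, since one does not know in advance which of $!K_1',!K_2'$ is first.
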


\begin{proof}
If $\be=0$ then by  Proposition \ref{pr:lifting-bigon}
we have 
$!M_i = !K_i$ $(i=1,2)$, $!M_1 \cup !M_2 = !K_1\cup !K_2$ in case (*) and $!X = !Y$
in case (**). Then the statement is trivial. 
We assume that $\be\ge 1$.

(i): Assume that (*) holds. 
First assume that $!M_1$ and $!M_2$ are disjoint. 
Let $!X_1 = !K_1 \cup !K_2$ and $!Y_1$ be the subpath of $!Y$ between $!M_1$ and $!M_2$, i.e.\ 
$!Y = * !M_1 !Y_1 !M_2 *$. 
By Lemma \ref{lmi:compatible-close} and Proposition \ref{pr:lifting-bigon}
we have a loop $!X_1^{-1} !u !Y_1 !v$ that can be lifted to $\Ga_\be$
where $!u$ and $!v$ are bridges of rank $\be$.
Up to change of notation, we assume that $!X_1^{-1} !u !Y_1 !v$ is already in $\Ga_\be$.
Again by Lemma \ref{lmi:compatible-close}$_\be$, $!N$ is independent of ~$!u$ and $!v$.
By Proposition \ref{pr:fragment-stability-bigon}$_\be$, there exists $!N'$ in $!Y_1$
satisfying \eqref{eq:fragment-correspondence-between-fragments} such that $!N' \sim !N^{\pm1}$,
i.e.\  we have $!M_1 \ll !N' \ll !M_2$ as required.

Assume that $!M_1$ and $!M_2$ have a nonempty intersection. 
By Proposition \ref{pr:small-overlapping}$_\be$ there exist fragments $!M_1'$ and $!M_2'$ of
rank $\be$ such that $!M_i' \sim !M_i$, $!M_1'$ is a start of $!M_1$ disjoint from $!M_2$ and 
$!M_2'$ is an end of $!M_2$ disjoint from $!M_1$. Let $!Y_2 = !M_1 \cup !M_2$.
Using the argument above with $!Y_2$ instead of $!Y_1$ and $!M_1'$ instead of $!M_1$
we find $!N_1$ in $!Y_2$ disjoint from $!M_2$ 
such that $\muf(!N_1) > 5.7\om$ and  $!N_1 \sim !N^{\pm1}$.
Similarly, using $!Y_2$ instead of $!Y_1$ and $!M_2'$ instead of $!M_2$ we find $!N_2$ in $!Y_2$
disjoint from $!M_1$ 
such that $\muf(!N_2) > 5.7\om$ and  $!N_2 \sim !N^{\pm1}$.
Then we can take $!N' = !N_1 \cup !N_2$ by 
Corollary \ref{coi:fragments-union-alpha},  \itemref{coi:no-inverse-compatibility-alpha}.

If $\muf(!N) \ge 4\la + 9\om$ then $\muf(!N') > 2\la + 5.6\om$ and using 
Propositions \ref{pr:dividing-fragment}$_\be$ and \ref{pr:inclusion-compatibility}$_\be$ we
conclude that $!M_1$ and $!M_2$ cannot cover $!N'$ together, i.e.\ $!M_1 \ll !M_2$.

In case  (**) we already have a loop $!X^{-1} !u !Y !v$
with bridges $!u$ and $!v$ of rank $\be$. We lift it to ~$\Ga_\be$ and then apply 
Lemma \ref{lm:dimmed-fragment}$_\be$ to see that 
the lift of $!N$ is independent of the lifts of ~$!u$ and ~$!v$. 
Then application of  Proposition \ref{pr:fragment-stability-bigon}$_\be$ gives the 
required $!N'$.

(ii): An easy analysis with a help of 
Propositions \ref{coi:compatibility-order-alpha} 
and \ref{pr:inclusion-compatibility}$_\be$
shows that it is enough to prove the following: 
{\em Let $!X$ and $!Y$ be reduced paths in $\Ga_\al$.
Let $!K_i$ $(i=1,2,3)$ be fragments of rank $\be$ in $!X$, $!M_i$ $(i=1,2,3)$ be fragments 
of rank $\be$ in $!Y$, $\muf(!K_i), \muf(!M_i) \ge \la + 9.1\om$,
$!K_i \sim !M_i^{\pm1}$ for all $i$ and $!K_i \not\sim !K_j$ for $i \ne j$.
If $!K_1 < !K_2 < !K_3$ and $!M_1 < !M_3$ then $!M_1 < !M_2 < !M_3$.}

Assume that this is not the case, that is, we have $!K_1 < !K_2 < !K_3$, $!M_1 < !M_3$ and 
either $!M_2 < !M_1$ or $!M_3 < !M_2$.
By (i), there exists a fragment $!N$ of rank $\al$ in $!Y$ such that 
$!K_2 \sim !N^{\pm1}$ and $!M_1 < !N < !M_3$. Then by Propositions \ref{coi:fragments-union-alpha} and
\ref{pr:inclusion-compatibility}$_\be$ we obtain $!M_1 \sim !N$ or $!M_3 \sim !N$, a contradiction.
\end{proof}

\begin{proposition} \label{pr:fragment-correspondence-cyclic-beta}
Let $X$ and $Y$ be words strongly cyclically reduced in $G_\al$,
representing conjugate elements of $G_\al$.
Let $\bar{!X}$ and $\bar{!Y}$ be lines in $\Ga_\al$ representing the conjugacy relation.
Let $1 \le \be \le \al$.
Assume that at least one of the words ~$X$ or ~$Y$ has the property that 
no its cyclic shift contains a fragment $K$ of rank $\ga$ with $\muf(K) > \xi_0$ 
and $\be < \ga\le \al$.
Let $\bar{!X} = \dots !X_{-1} !X_0 !X_1 \dots$ and $\bar{!Y} = \dots !Y_{-1} !Y_0 !Y_1 \dots$ 
be lines in $\Ga_\al$ representing the conjugacy relation. 
\begin{enumerate}
\item 
\label{pri:fragment-correspondence-cyclic-beta}
Then for any fragment $!K$ of rank $\be$ in $\bar{!X}$ with $\muf(!K) \ge 2\la+9.1\om$ 
there exists a fragment $!M$ of rank $\be$
in $\bar{!Y}$ such that $!M \sim !K^{\pm1}$ and 
$$
  \muf(!M) \ge \min\set{\muf(!K) - 2 \la - 3.4\om, \ \xi_0}
$$
\item
\label{pri:fragment-correspondence-cyclic-ordering}
If $X$ and $Y$ are strongly cyclically reduced in $G_\al$ then 
the correspondence between fragments of rank $\be$ in $\bar{!X}$ and in $\bar{!Y}$ 
preserves the ordering in the following sense: if $!K_i$ $(i=1,2)$ are fragments of rank $\be$ in $\bar{!X}$,
$!M_i$ $(i=1,2)$ are fragments of rank $\be$ in ~$\bar{!Y}$, $\muf(!K_i), \muf(!M_i) \ge 2\la+9.1\om$,
$!K_i \sim !M_i^{\pm1}$ $(i=1,2)$ and $!K_1 \not\sim !K_2$. Then $!K_1 < !K_2$ if and only if $!M_1 < !M_2$.
\end{enumerate}
Furthermore, the statement of the proposition is true also in the case $\be=0$ if we drop all conditions
of the form $\muf(\cdot) \ge \dots$ for fragments of rank $\be$.
\end{proposition}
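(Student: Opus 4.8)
The plan is to reduce part~(i) to the bigon case, Proposition~\ref{pr:fragment-stability-bigon}, by cutting the conjugacy configuration open with the help of Proposition~\ref{pr:no-active-fragments-cyclic-iterated}. Fix a fragment $!K$ of rank $\be$ in $\bar{!X}$ with $\muf(!K)\ge 2\la+9.1\om$, and let $!R$ be its base relator loop of rank $\be$; we must produce a fragment $!M$ of rank $\be$ in $\bar{!Y}$ with $!M\sim!K^{\pm1}$ and $\muf(!M)\ge\min\{\muf(!K)-2\la-3.4\om,\ \xi_0\}$. The hypothesis of the proposition is precisely the one required to apply Proposition~\ref{pr:no-active-fragments-cyclic-iterated} with its parameter equal to our $\be$ (when $\be=\al$ that hypothesis is vacuous and the proposition holds for any conjugate pair). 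Thus, choosing a sufficiently long subpath $!Z$ of $\bar{!X}$ containing $!K$, we obtain a loop $!S^{-1}!u!T!v$ that lifts to $\Ga_\be$, with $!S$ a subpath of $\bar{!X}$ containing $!K$, $!T$ a subpath of $\bar{!Y}$, and $!u,!v$ bridges of rank $\be$. As $X$ and $Y$ are strongly cyclically reduced in $G_\al$, the paths $!S$ and $!T$ are reduced, so $!S^{-1}!u!T!v$ is a coarse bigon over $\Ga_\be$ and, after lifting to $\Ga_\be$, $!K$ is a fragment of rank $\be$ lying in its side $!S$.

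Now I would apply Proposition~\ref{pr:fragment-stability-bigon}$_\be$ to this bigon and to $!K$. If $!K$ is compatible with an active fragment of the bigon, the conclusion is immediate, with $\muf(!M)>\xi_0$ by Proposition~\ref{pr:active-large}$_\be$. Otherwise Proposition~\ref{pr:fragment-stability-bigon} demands that $!K$ be independent of $!u$ and of $!v$, which the bigon supplied by Proposition~\ref{pr:no-active-fragments-cyclic-iterated} need not satisfy; arranging this is the one genuine obstacle. The remedy is to translate the whole bigon by a power $s^k$ of the common translation element $s=s_{X,\bar{!X}}=s_{Y,\bar{!Y}}\in G_\al$: since $s$ preserves $\bar{!X}$ and $\bar{!Y}$ and the label of a loop --- hence its property of lifting to $\Ga_\be$ --- is unchanged by translation, $(s^k!S)^{-1}(s^k!u)(s^k!T)(s^k!v)$ is again such a bigon with $s^k!S\subseteq\bar{!X}$ and $s^k!T\subseteq\bar{!Y}$. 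Because $X$ is nonempty and strongly cyclically reduced, $s$ has infinite order, whereas the set of $g\in G_\al$ with $g!R=!R$ is finite (Proposition~\ref{pri:fragment-finite-order}$_\be$); consequently $s^k!u$ and $s^k!v$ have central relator loops distinct from $!R^{\pm1}$, i.e.\ $!K$ is independent of both, for all but finitely many $k$. Taking $!Z$ long enough that $!K$ still lies inside $s^k!S$ for some such $k$, Proposition~\ref{pr:fragment-stability-bigon}$_\be$ yields the required $!M$ inside $s^k!T\subseteq\bar{!Y}$, and projecting back to $\Ga_\al$ completes part~(i).

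Part~(ii) follows from part~(i) in combination with Proposition~\ref{pr:inclusion-compatibility}$_\be$ and Corollary~\ref{coi:compatibility-order-alpha}, exactly as in the proof of Proposition~\ref{pri:fragment-ordering-between-fragments}: given $!K_1\not\sim!K_2$ in $\bar{!X}$ with $!K_1<!K_2$ and corresponding $!M_1,!M_2$ in $\bar{!Y}$, localize all four fragments inside one coarse-bigon window $(!S,!T)$ of the conjugacy configuration (again via Proposition~\ref{pr:no-active-fragments-cyclic-iterated}, with the $!M_i$ automatically in $!T$ by part~(i)), and apply Proposition~\ref{pri:fragment-ordering-between-fragments} to that bigon; any reversal of order would, by part~(i) and Corollary~\ref{coi:fragments-union-alpha}, force a fragment of rank $\be$ interpolating between $!K_1$ and $!K_2$ to be compatible with $!M_1$ or $!M_2$, contradicting Proposition~\ref{pr:inclusion-compatibility}$_\be$. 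The case $\be=0$ of both parts is trivial, since compatibility of rank-$0$ fragments is equality and Proposition~\ref{pr:no-active-fragments-cyclic-iterated} (with $\be=0$) makes $\bar{!X}$ and $\bar{!Y}$ lift locally to a common path in the tree $\Ga_0$ (Remark~\ref{rm:close-in-rank-0}).
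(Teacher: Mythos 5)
Your proof is correct, and for part~(i) it takes a genuinely different route from the paper. Both approaches first apply Proposition~\ref{pr:no-active-fragments-cyclic-iterated} to obtain a subpath $!S$ of $\bar{!X}$ (containing $!K$) close in rank $\be$ to a subpath $!T$ of $\bar{!Y}$. The paper then simply invokes Proposition~\ref{pri:fragment-correspondence-between-fragments} with $!K_1 = s^{-1}!K$, $!K_2 = s!K$ (using Proposition~\ref{pri:fragment-in-periodic} to see that these flanking fragments are not compatible with $!K$); internally that proposition achieves independence of $!K$ from the bridges of the bigon via Lemma~\ref{lm:dimmed-fragment}, whose argument is precisely the flanking one. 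You instead achieve independence directly: you translate the whole bigon by $s^k$ and observe that because $s$ has infinite order while the stabilizer of the base loop $!R$ is finite (Proposition~\ref{pri:fragment-finite-order}, which should be cited without the subscript $\be$, since the stabilizer lives in $G_\al$), only finitely many $k$ can make the central relator loops of $s^k!u$ or $s^k!v$ coincide with $!R^{\pm1}$; taking $!S$ long enough that several values of $k$ keep $!K$ inside $s^k!S$ then produces a good translate to which Proposition~\ref{pr:fragment-stability-bigon}$_\be$ applies. The translation argument is self-contained and arguably more elementary, at the cost of being longer and of reproving, in a special case, what Lemma~\ref{lm:dimmed-fragment} packages once and for all; the paper's flanking trick is shorter because the conjugacy setting hands you the flanking fragments for free. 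Note also the step where independence established in $\Ga_\al$ must be seen to persist after lifting the bigon to $\Ga_\be$: this is true because the lift of $!K$'s base loop and the lift of the central relator loop of $s^k!u$ project back to $!R$ and $s^k!L_u$ respectively, so equality upstairs would force equality downstairs; you leave this implicit, but it is the same kind of lifting step the paper itself treats informally. For part~(ii) your argument is essentially the paper's (localize in a window and apply Proposition~\ref{pri:fragment-ordering-between-fragments}), and your handling of the case $\be=0$ agrees with the paper's.
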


\begin{proof}
By Proposition ~\ref{pr:no-active-fragments-cyclic-iterated} every subpath of $\bar{!X}$ can be extended
to be close in rank $\be$ to a subpath of $\bar{!Y}$.
Then (i) follows from Proposition \ref{pri:fragment-in-periodic} and 
Proposition \ref{pri:fragment-correspondence-between-fragments}
with $!K_1 = s_{X,\bar{!X}}^{-1} !K$ and $!K_2 = s_{X,\bar{!X}} !K$.
Statement (ii) follows by Proposition \ref{pri:fragment-ordering-between-fragments}.
In the case $\be=0$ the statement becomes trivial after application of 
Proposition ~\ref{pr:no-active-fragments-cyclic-iterated}.
\end{proof}

\section{Reduced representatives} \label{s:reduction}

The main goal of this section is to prove that any element of $G_\al$ can be represented by a reduced word and to prove a cyclic analog of this statement 
(Proposition \ref{pr:cyclic-reduction}).

\begin{proposition}[reduced representative] \label{pr:reduction}
Every element of $G_\al$ can be represented by a reduced in $G_\al$ word
which contains no fragments $F$ of rank $1\le \be\le \al$ with 
$\muf(!F) \ge \frac12 + 2\la + 15\om$.
\end{proposition}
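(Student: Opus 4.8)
The plan is to reduce a given element $g\in G_\al$ first to \emph{some} representative reduced in $G_\al$ (which is the first sentence of Proposition~\ref{pr:reduction}, and this I expect follows by the same argument that is announced as Proposition~\ref{pr:reduction} proper — see below), and then to run a further ``length-shortening'' argument to kill all large fragments of every rank $\be\le\al$. So the substantive new content beyond mere existence of a reduced representative is the bound $\muf(F)<\tfrac12+2\la+15\om$ for all fragments $F$ of rank $1\le\be\le\al$ occurring in the chosen representative.

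\textbf{Existence of a reduced representative.} First I would argue by induction on $\al$. Given $g\in G_\al$, pick by induction a word $W$ reduced in $G_{\al-1}$ representing $g$. Among all words reduced in $G_{\al-1}$ and equal to $g$ in $G_\al$, choose one, call it $X$, minimizing $|X|_{\al-1}$ (ties broken, say, by ordinary length). I claim $X$ is reduced in $G_\al$. If not, $X$ has a subword $S'$ close in rank $\al-1$ to a subword $S$ of a relator $R$ of rank $\al$ with $|S|_{\al-1}>\rho|R|_{\al-1}$; equivalently a fragment $K$ of rank $\al$ in $X$ with $\muf(K)>\rho$. Then write $R\eqcirc S\bar S$ with $\mu(\bar S^{-1})=1-\muf(K)<1-\rho=9\la$. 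Replacing the subword of $X$ close to $S$ by the corresponding word close to $\bar S^{-1}$ (using the defining relation $R=1$ in $G_\al$ and Proposition~\ref{pr:closeness-stability} or directly Proposition~\ref{pr:fragment-stability-bigon} in rank $\al-1$ to transfer the closeness witnesses) produces a new word $X'$ with $X'=g$ in $G_\al$ and strictly smaller $|\cdot|_{\al-1}$: indeed $|X'|_{\al-1}\le |X|_{\al-1}-|S|_{\al-1}+|\bar S|_{\al-1}+O(\om|R|_{\al-1})$, and since $|\bar S|_{\al-1}-|S|_{\al-1}<(9\la-\rho)|R|_{\al-1}$ is very negative while the correction terms are $O(\om|R|_{\al-1})$ with $\la\ge20\om$, the net change is negative, contradicting minimality. (After the replacement one must re-reduce in $G_{\al-1}$, which only decreases $|\cdot|_{\al-1}$ further.) The base case $\al=0$ is just free reduction. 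The $G$ statement follows because a word reduced in $G_\al$ for all $\al$ can be obtained by a direct-limit / König's-lemma argument on the lengths (once one knows reduction in each rank does not increase length, which the above shows).

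\textbf{The fragment bound.} For the strengthened conclusion, among all words reduced in $G_\al$ representing $g$, again choose $X$ with $|X|_{\al-1}$ (or $|X|$) minimal. Suppose $X$ contains a fragment $F$ of rank $\be$, $1\le\be\le\al$, with $\muf(F)\ge\tfrac12+2\la+15\om$. Let $R$ be the relator of rank $\be$ associated to $F$'s base $P$, write $R^\circ$ as $P$ together with the complementary arc, and let $\bar F$ be the fragment obtained from the complementary piece, so $\muf(\bar F)\le 1-\muf(F)\le\tfrac12-2\la-15\om$. Using the relation $R^n=1$ (or just $R\cdot\bar R=1$ on the level of pieces) together with the fragment-stability machinery of Section~8 (Propositions~\ref{pr:fragment-stability-bigon}, \ref{pr:fellow-traveling}, \ref{pr:closeness-stability}) applied \emph{in rank $\be-1$}, replace the subword of $X$ carrying $F$ by a word carrying $\bar F$ instead: this changes $|X|_{\al-1}$ by at most $|\bar F|_{\be-1}-|F|_{\be-1}+(\text{bridge corrections})$. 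Now $|F|_{\be-1}\ge\muf(F)|R^\circ|_{\be-1}\ge(\tfrac12+2\la+15\om)|R^\circ|_{\be-1}$ while $|\bar F|_{\be-1}\le(\tfrac12-2\la-15\om)|R^\circ|_{\be-1}+1$, and the closeness/bridge corrections are bounded (again) by a constant times $\om|R^\circ|_{\be-1}$ via \eqref{eq:S2-piece-form}. Since $\la\ge20\om$, the gain $4\la+30\om$ times $|R^\circ|_{\be-1}\ge\Om$ dominates all corrections, so $|X|_{\al-1}$ strictly decreases; the new word is then re-reduced in $G_\al$ (which does not increase $|\cdot|_{\al-1}$ by the previous part), contradicting minimality. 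Hence no such $F$ exists. The $G$ statement follows by the same limiting argument as before.

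\textbf{Main obstacle.} The delicate point is the \emph{bookkeeping of the bridge/closeness corrections} when one swaps a large fragment $F$ for its complement $\bar F$: the subword of $X$ close to $P$ is only close in rank $\be-1$, so after the swap one must re-express the result as a genuine word and control how much the closeness ``collars'' (the $!u,!v\in\cH_{\be-1}$) and the subsequent re-reduction inflate $|\cdot|_{\al-1}$. This is exactly where one needs the quantitative stability results of Section~8 together with $\la\ge20\om$ to guarantee the inflation is $O(\om|R^\circ|_{\be-1})$ and therefore swamped by the linear gain $\Theta(\la|R^\circ|_{\be-1})$; the precise constant $\tfrac12+2\la+15\om$ is presumably calibrated so that the inequality $(\tfrac12+2\la+15\om)-(\tfrac12-2\la-15\om)=4\la+30\om$ exceeds the total correction, which I would verify by tracking the constants in Propositions~\ref{pr:fragment-stability-bigon} and \ref{pr:closeness-stability} applied at rank $\be-1$. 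A secondary (routine) obstacle is making the ``choose a minimal representative and limit over ranks'' argument rigorous for the statement about $G$ itself, which is handled by noting that reduction never increases length and invoking Theorem~\ref{th:reduced-nontrivial} to know the process stabilizes.
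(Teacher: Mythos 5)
Your strategy is precisely the classical small-cancellation reduction — replace a large fragment by its complement to shorten the word — and the paper explicitly warns (in the paragraph preceding Theorem~\ref{th:absolute-reduction}) that ``this approach does not work in our version of the iterated small cancellation and existence of reduced representatives is a nontrivial fact.'' The specific point where your argument breaks is the claim that the ``closeness/bridge corrections'' are $O(\om|R^\circ|_{\be-1})$. They are not: the subword $S'$ of $X$ is only \emph{close in rank $\be-1$} to the piece $S$, and by Proposition~\ref{pr:length-compare-close} (applied at rank $\be-1$) the two $|\cdot|_{\be-1}$-lengths can differ by a multiplicative factor of $1.3$ plus an additive $2.2$; the same factor is lost when passing from $\bar S$ to a reduced word close to it. Moreover your parenthetical claim that re-reduction in $G_{\al-1}$ ``only decreases $|\cdot|_{\al-1}$ further'' is unjustified — $|\cdot|_{\al-1}$ is a function on words, and replacing a word by an equal-in-$G_{\al-1}$ reduced word is not monotone in this function (again Proposition~\ref{pr:length-compare-close} gives only a $1.3$-factor bound, and only for close \emph{reduced} words). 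With a multiplicative loss on each side, swapping a $(\tfrac12+\varepsilon)$-fraction for a $(\tfrac12-\varepsilon)$-fraction does not force $|\cdot|_{\al-1}$ to decrease: one has $\tfrac{1}{1.3}(\tfrac12+\varepsilon) < 1.3(\tfrac12-\varepsilon)$ precisely when $\varepsilon < 0.128\ldots$, and here $\varepsilon = 2\la+15\om$ is well below that. So the length-minimizing reduced representative is not forced to satisfy the stated fragment bound, and the minimality argument stalls.

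The paper's actual proof is structured quite differently: it is a double induction on the rank $\al$ and on the length of a word spelling the group element, built on Lemma~\ref{lm:reduction-adding-letter} (appending one generator to an already-reduced representative creates only rank-$\al$ fragments of size $\le \rho+6.2\om$) and Lemma~\ref{lm:reduction-step}. Lemma~\ref{lm:reduction-step} performs a \emph{bulk} replacement: it picks a maximal family of pairwise non-compatible rank-$\al$ fragments of size $\ge \tfrac12+11\om$, replaces them all simultaneously by complementary pieces, takes a reduced path $!Y$ with the same endpoints, and then bounds the size of \emph{every} rank-$\al$ fragment of $!Y$ directly via the geometric Lemma~\ref{lm:reduction-contiguity} in the Cayley graph — at no point is $|X|_{\al-1}$ compared with $|Y|_{\al-1}$. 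Crucially, the hypothesis that all rank-$\al$ fragments of the input are already $\le 1-3\la-5\om$ is what keeps the complementary pieces large enough ($\muf(!M_i) > \la+3.5\om$) for the argument, and it is precisely what the one-letter-at-a-time induction with Lemma~\ref{lm:reduction-adding-letter} supplies; your approach has no mechanism to guarantee that initial control. To repair your proof you would essentially have to replace the length-minimality argument by the paper's direct fragment-size estimate.
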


\begin{lemma} \label{lm:reduction-contiguity}
Let $m \ge 3$ and $!X^{-1} * !Y_1 * !Y_2* \dots * !Y_m * $ be a coarse $(m+1)$-gon in $\Ga_{\al-1}$.
Assume that there are indices $1 \le t_1 < t_2 < \dots < t_k \le m$ $(k \ge 1)$ such that 
$$
  t_1 \le 3, \quad t_k \ge m-2, \quad t_j - t_{j-1} \le 2 \text{ for all } j 
$$
and 
$$
  |!Y_{t_j}|_{\al-1} > 4\eta \quad \text{for all } j.
$$
Assume further that there are no close vertices in each of the pairs 
$(!Y_i,!Y_{i+1})$,
$(!Y_1,!Y_{t_1})$, $(!Y_{t_j}, !Y_{t_j+1})$, $(!Y_{t_k}, !Y_m)$ except appropriate endpoints
(i.e.\ except $\tau(!Y_i)$ and $\io(!Y_{i+1})$). Then each of the paths ~$!Y_{t_j}$
has a vertex close to a vertex $!a_j$ on $!X$ and these vertices $!a_j$ are in $!X$ 
in the (non-strict) order from start to end.
\end{lemma}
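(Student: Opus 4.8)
The proof is by induction on $m$, with a subsidiary induction on the total number of edges $\sum_i|!Y_i|$ on the sides $!Y_i$; the degenerate small cases, where the polygon is a coarse bigon or coarse trigon, are immediate. The engine for the inductive step is the following dichotomy. Call the coarse $(m+1)$-gon \emph{tight} if its only close pairs of vertices lying on two distinct sides are the forced ones $\{\tau(!X),\io(!Y_1)\}$, $\{\tau(!Y_m),\io(!X)\}$ and $\{\tau(!Y_i),\io(!Y_{i+1})\}$. A tight polygon cannot satisfy all the hypotheses of the lemma: it has $m+1\ge3$ sides, so Proposition~\ref{pr:sides-bound-Cayley}$_{\al-1}$ gives
\[
  |!X|_{\al-1}+\sum_{i=1}^{m}|!Y_i|_{\al-1}\ \le\ (m-1)\,\eta ;
\]
but $|!Y_{t_j}|_{\al-1}>4\eta$ for each of the $k$ large sides forces the left side to exceed $4k\eta$, while the gap conditions $t_1\le3$, $t_j-t_{j-1}\le2$, $t_k\ge m-2$ give $m-2\le t_k\le 2k+1$, hence $m\le 2k+3$; so $4k\eta<(m-1)\eta\le(2k+2)\eta$, i.e. $k<1$, contradicting $k\ge1$.

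Hence the polygon of the lemma is not tight, so it carries an unexpected close pair: close vertices $!p$, $!q$ on distinct sides which do not form a forced pair. Joining $!p$ to $!q$ by a bridge $!w$ of rank $\al-1$ splits the loop $!X^{-1}*!Y_1*\cdots*!Y_m*$ along $!w$ into two coarse polygons sharing $!w$. The crux is that $!w$ can be chosen so that one of these sub-polygons, $\cP'$, is strictly smaller in the chosen order (it has fewer $!Y$-sides, or as many but fewer edges on them), has a subpath of $!X$ as the side playing the role of $!X$, carries all of $!Y_{t_2},\dots,!Y_{t_k}$ (and possibly $!Y_{t_1}$), and — after deleting any index $t_j$ whose side has vanished and renumbering — again satisfies all hypotheses of the lemma; moreover, in the favourable case that $!w$ joins $!X$ at $!p$ to the leftmost large side $!Y_{t_1}$, the vertex $!p$ is the desired $!a_1$ and $\cP'$ is the portion of the polygon lying beyond $!p$.

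Granting this, the induction hypothesis applies to $\cP'$, and iterating the cut from the left produces vertices $!a_1,\dots,!a_k$ with $!a_j$ close to a vertex of $!Y_{t_j}$; each $!a_j$ with $j\ge2$ lies on the part of $!X$ beyond $!a_{j-1}$, so they occur along $!X$ in order from start to end, which is the conclusion of the lemma.

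The one genuinely delicate point, and the main obstacle to a complete write-up, is the choice of $!w$ and the verification that $\cP'$ inherits admissible indices — this is exactly where the whole list of excluded close pairs and the constraints $t_1\le3$, $t_k\ge m-2$, $t_j-t_{j-1}\le2$ get used. Working in a reduced disk diagram over $G_{\al-1}$ filling the polygon, one argues by planarity that among the available chords there is always one which neither cuts a large side (outside the favourable $!X$-to-$!Y_{t_1}$ case) nor isolates a large side on the sub-polygon not meeting $!X$, and that the sides and bridges of $\cP'$ inherit the required non-closeness conditions from those of the original polygon. Apart from Proposition~\ref{pr:sides-bound-Cayley}$_{\al-1}$ — and Proposition~\ref{pr:reduction}$_{\al-1}$, used only to put an auxiliary sub-polygon into standard form when the side playing the role of $!X$ must be replaced by a reduced path — no small-cancellation machinery is needed.
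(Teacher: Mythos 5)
Your proposal uses the right quantitative engine (Proposition~\ref{pr:sides-bound-Cayley}$_{\al-1}$ combined with the density and size constraints on the $!Y_{t_j}$'s) and the right overall strategy (find an unexpected close pair, cut the polygon along a rank-$(\al-1)$ bridge, induct). The tight-polygon computation is essentially the paper's final step. However, there is a genuine gap precisely at the point you flag as delicate, and you do not close it.

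The issue is this: once the polygon is not tight, you obtain an unexpected close pair $\{!p,!q\}$ on two distinct sides, but nothing in your argument forces one of $!p,!q$ to lie on $!X$. A priori the pair could lie on $!Y_i$ and $!Y_j$ with $j-i>1$ (the hypotheses only rule out adjacent pairs $(!Y_i,!Y_{i+1})$ and a few pairs involving the $!Y_{t_j}$'s). Cutting along such a chord produces one sub-polygon that contains no subpath of $!X$ at all, to which the lemma's hypotheses manifestly do not apply, and the induction stalls; appealing to planarity of a filling diagram does not by itself let you choose a better chord. The paper handles this by proving a separate preliminary claim before the cut: \emph{there are no close vertices in any pair $(!Y_i,!Y_j)$ with $j-i>1$.} This is established by taking such a pair with $j-i$ minimal, forming the coarse polygon $!Y_i''*!Y_{i+1}*\cdots*!Y_{j-1}*!Y_j'*$ bounded by the chord, and running the same estimate from Proposition~\ref{pr:sides-bound-Cayley}$_{\al-1}$ against the fact that the constraints $t_1\le 3$, $t_j-t_{j-1}\le 2$, $t_k\ge m-2$ guarantee enough large sides $!Y_{t_l}$ among $!Y_{i+1},\dots,!Y_{j-1}$. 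With the claim in hand, the only unexpected close pairs are between $!X$ and some $!Y_{i_0}$ with $2\le i_0\le m-1$ (after a harmless shortening of $!X$ near its ends to eliminate $!X$-to-$!Y_1$ and $!X$-to-$!Y_m$ pairs), the cut is forced to have a subpath of $!X$ on each side, both halves visibly inherit the hypotheses, and the ordering statement follows from the fact that $!b$ separates $!X_1$ from $!X_2$. In short: the estimate you ran for the whole polygon is the one that has to be run for the intermediate sub-loops as well, and that extra claim is exactly the missing step. (Minor: your counting in the tight case concludes $k<1$; what is actually needed and what the paper derives is the weaker but sufficient $4k\ge m-1$, and the chain ``$4k\eta<(m-1)\eta$'' needs to be justified by combining the strict lower bound on $\sum|!Y_{t_j}|_{\al-1}$ with the Proposition's upper bound on the total, which your write-up glosses over. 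Also, Proposition~\ref{pr:reduction}$_{\al-1}$ is not needed in the paper's version since all sides of the sub-polygons are subpaths of the original sides.)
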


\begin{proof}
We first claim that there are no close vertices in pairs $(!Y_i,!Y_j)$ for $j-i > 1$.
Assume there are. We choose such a pair with minimal possible $j-i$.
Then an ending segment ~$!Y_i'$ of $!Y_i$, paths $!Y_{i+1}$, $\dots$, $!Y_{j-1}$ and
a starting segment $!Y_j'$ of $!Y_j$ form a coarse $r$-gon with $r = j-i+1 \ge 3$.
Applying Proposition \ref{pr:sides-bound-Cayley}$_{\al-1}$ we get
$$
  \sum_{k=i+1}^{j-1} |!Y_i|_{\al-1} \le (r-2) \eta.
$$
On the other hand, it follows from the hypothesis of the lemma that
there are at least $\min(1,\frac12 (r-3))$ paths $!Y_{t_k}$ among $!Y_{i+1}$, $\dots$, $!Y_{j-1}$
and hence
$$
  \sum_{k=i+1}^{j-1} |!Y_i|_{\al-1} > 4\eta \min \left(1,\frac12 (r-3) \right).
$$
We get a contradiction since the right hand side of the inequality is at least $(r-2)\eta$.
This proves the claim.

Shortening if necessary $!Y_1$ and $!X$ we can assume that there is no pair of close vertices 
on ~$!Y_1$ and $!X$
other that $(\io(!Y_1),\io(!X))$. Similarly, we can assume that
there is no pair of close vertices on $!Y_m$ and $!X$ 
other than $(\tau(!Y_m),\tau(!X))$. 
Now we claim that there is a pair of close vertices on $!Y_i$ and $!X$ for some $2 \le i \le m-1$.
Indeed, otherwise we can apply Proposition ~\ref{pr:sides-bound-Cayley}$_{\al-1}$
to the whole coarse $(m+1)$-gon
$!X^{-1} * !Y_1 * !Y_2* \dots * !Y_m *$ and obtain a contradiction since $4k\eta \ge (m-1)\eta$.

Let $(!b,!c)$ be a pair of close vertices on $!X$ and $!Y_{i_0}$ where $2 \le i_0 \le m-1$.
Let $!b$ divide ~$!X$ as $!X_1 !X_2$ and $!c$ divide $!Y_{i_0}$ as $!Z_1 !Z_2$
If there is at least one index ~$t_j$ in the interval $2 \le t_j \le i_0-1$ then the conditions of the 
lemma are satisfied for the coarse $(i_0+1)$-gon $!X_1^{-1} * !Y_1 * \dots !Y_{i_0-1} * !Z_1 *$
and we conclude by induction that every $!Y_{t_j}$ with $t_j < i_0$ 
has a vertex close to a vertex $!a_j$ on $!X$ and the vertices $!a_j$ occur in $!X$ in the appropriate order.
Similarly, we conclude the same for every path $!Y_{t_j}$ with $t_j > i_0$. 
This implies the statement for all ~$!Y_{t_j}$.
\end{proof}

\begin{lemma} \label{lm:reduction-step}
Let $X$ be a word reduced in $G_{\al-1}$. Assume that 
for any fragment $K$ of rank ~$\al$ in $X$ we have
$$
  \muf(K) \le 1 - 3\la - 5\om.
$$
Then there exists a word $Y$ equal to $X$ in $G_\al$ which is reduced in $G_{\al-1}$ and such that for any
fragment $M$ of rank $\al$ in $Y$ we have
$$
  \muf(M) < \frac12 + 2 \la + 15\om.
$$
In particular, $Y$ is reduced in $G_\al$
(note that $\frac12 + 2 \la + 15\om < \rho = 1 - 9\la$ by 
\eqref{eq:ISC-main} and \eqref{eq:om-bounds}.)
\end{lemma}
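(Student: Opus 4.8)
The statement we must prove is Lemma~\ref{lm:reduction-step}: starting from a word $X$ reduced in $G_{\al-1}$ in which every rank-$\al$ fragment $K$ has $\muf(K) \le 1 - 3\la - 5\om$, we want a word $Y$, equal to $X$ in $G_\al$, still reduced in $G_{\al-1}$, and with every rank-$\al$ fragment $M$ satisfying $\muf(M) < \tfrac12 + 2\la + 15\om$. The approach is a greedy ``relator substitution'' procedure: as long as $X$ contains a rank-$\al$ fragment $K$ with $\muf(K)$ in the bad range $[\tfrac12 + 2\la + 15\om,\ 1 - 3\la - 5\om]$ (the hypothesis already excludes anything larger), we replace the corresponding subword by the complementary part of its base relator and show that this strictly decreases some complexity measure of the word — typically $|{\cdot}|_{\al-1}$, or a lexicographic combination of $|{\cdot}|_{\al-1}$ with the number of long fragments — so that the process terminates. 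At termination no bad fragment remains, which (since the hypothesis rules out $\muf > 1 - 3\la - 5\om$, and $1-3\la-5\om > \tfrac12+2\la+15\om$ is false — wait, one checks $\tfrac12 + 2\la + 15\om < 1 - 3\la - 5\om$ holds by \eqref{eq:ISC-main} and \eqref{eq:om-bounds}, so the two ranges together cover everything above $\tfrac12+2\la+15\om$) gives exactly the desired bound on $Y$.

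\textbf{Key steps.} First I would set up the substitution move precisely. Given a fragment $K$ in $X$ of rank $\al$ with base $P$ a subword of $R^k$, $R \in \cX_\al^{\pm1}$, and $\muf(K) \ge \tfrac12 + 2\la + 15\om$, write $X = U K' V$ where $K'$ is the actual subword of $X$ and $K = u K' v$ (or rather $K'$ is close in rank $\al-1$ to $P$); choose the base cyclic shift so that $R = P P^{c}$ with $\mu(P) \ge \tfrac12$, and replace $K'$ by a word close in rank $\al-1$ to $(P^{c})^{-1}$, obtaining $Y_0 = U K'' V$ with $Y_0 = X$ in $G_\al$. The size estimate $|P^c|_{\al-1} \le (1 - \muf(K))|R^\circ|_{\al-1} + O(1)$ versus $|P|_{\al-1} \ge (\muf(K) - O(\om))|R^\circ|_{\al-1}$, together with (S1) giving $|R^\circ|_{\al-1} \ge \Om = 1/\om$, shows that because $\muf(K) > \tfrac12$ the replacement strictly shortens $|{\cdot}|_{\al-1}$ by a definite amount (roughly $(2\muf(K) - 1 - O(\om))|R^\circ|_{\al-1} \ge (4\la + 29\om)\Om - O(1) > 0$). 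Second, after the substitution $Y_0$ need not be reduced in $G_{\al-1}$; I would invoke Proposition~\ref{pr:reduction}$_{\al-1}$ (reduced representative in the previous rank) to replace $Y_0$ by a word $Y_1 = Y_0$ in $G_{\al-1}$ reduced in $G_{\al-1}$ with $|Y_1|_{\al-1} \le |Y_0|_{\al-1}$; this does not increase the complexity measure. Third — this is where the stability machinery enters — I must check that the newly created fragments around the substitution site do not already have $\muf \ge 1 - 3\la - 5\om$ again, i.e.\ that the hypothesis is preserved through the loop so that we never face a fragment larger than what the hypothesis originally allowed. Here one argues: a rank-$\al$ fragment $M$ in $Y_1$ that is ``large'' either lies essentially inside the untouched parts $U$, $V$ (where it was already controlled, using Proposition~\ref{pr:closeness-stability} / Proposition~\ref{pr:inclusion-compatibility} to transfer it back to a fragment in $X$), or it overlaps the new piece $(P^c)^{-1}$, in which case Corollary~\ref{co:small-overlapping-Cayley} (small cancellation in the Cayley graph) forces its base axis to coincide with that of $K$, and then $\muf(M) \le \muf((P^c)^{-1}) + O(\la + \om) \le (1 - \muf(K)) + O(\la+\om) \le \tfrac12 - O(\om) < 1 - 3\la - 5\om$. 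Finally, termination: $|Y|_{\al-1}$ is a non-negative integer-ish quantity strictly decreasing at each step (by the definite amount above), so the loop halts, and at the halting word $Y$ every rank-$\al$ fragment has $\muf(M) < \tfrac12 + 2\la + 15\om$; the parenthetical inequality $\tfrac12 + 2\la + 15\om < \rho$ then gives that $Y$ is reduced in $G_\al$.

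\textbf{Main obstacle.} The hard part is the bookkeeping in the third step — verifying that the substitution does not manufacture new bad fragments and, more delicately, that it does not destroy reducedness of the \emph{already-good} long fragments in $U$ and $V$ in a way that re-inflates them. This requires using the fragment-stability and compatibility results of Sections~\ref{s:fragments}--\ref{s:relations} (Propositions~\ref{pr:closeness-stability}, \ref{pr:inclusion-compatibility}, \ref{pr:small-overlapping}, \ref{pr:fragments-union}, Corollaries~\ref{co:small-overlapping-Cayley}, \ref{co:compatibility-same-rank}) to relate fragments of $Y_1$ to fragments of $X$ and to the base relator of $K$; the constants $2\la + 15\om$, $3\la + 5\om$ in the statement are precisely tuned so that the error terms $2\la + 3.4\om$ from closeness transition, the $\la + 2.7\om$ overlap losses, and the $O(1)$ length corrections (bounded via $\om \le 1/480$) all fit. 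A secondary subtlety is the correctness/well-definedness of ``the'' substitution when $K$'s base wraps around $R$ more than once ($k \ge 2$); but Proposition~\ref{pr:relator-strongly-reduced} (relators strongly cyclically reduced in $G_{\al-1}$) combined with (S1) rules this out for fragments of the relevant size, so one may assume $P$ is a subword of a single cyclic shift of $R$. I expect the argument to follow the pattern of the analogous reduction lemma in \cite{Lys96}, adapted to the present closeness formalism.
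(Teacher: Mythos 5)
Your proposal takes a genuinely different route from the paper, and it has two concrete gaps that I do not think can be patched without essentially redoing the argument the way the paper does.

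\textbf{Different strategy.} The paper does \emph{not} iterate. It selects, all at once, a maximal family $!K_1,\dots,!K_r$ of pairwise non-compatible fragments of rank $\al$ in $!X$ with $\muf(!K_i)\ge t=\tfrac12+11\om$, shortens them (via Proposition~\ref{pr:small-overlapping}) to a disjoint family, replaces \emph{every} such fragment simultaneously by the complementary piece of its relator, reduces the result to $Y$ in $G_{\al-1}$, and then bounds all rank-$\al$ fragments of $Y$ directly. The crucial step in that bound is Lemma~\ref{lm:reduction-contiguity}, a contiguity lemma that forces each replacement piece $!R_i$ (of $\mu(!R_i)>\la+4\om$) to leave a visible trace $!M_i$ inside the reduced word $!Y$, in the right cyclic order, with $\muf(!M_i)>\la+3.5\om$. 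A fragment $!N$ of $!Y$ is then either compatible with some $!M_i$ (in which case a direct geometric estimate using Proposition~\ref{pr:closeness-order} bounds the overlap of its base with $!K_i$'s base, giving $\muf(!N)+\muf(!K_i)<1+2\la+26\om$), or lives strictly between consecutive $!M_i$'s (in which case the argument of Proposition~\ref{pr:fragment-nonactive-bigon} transfers it to a fragment of $!X$ not compatible with any $!K_j$, hence of size $<t$, losing only $2\la+3.4\om$). Nothing in your write-up anticipates this contiguity lemma, which is the engine of the proof.

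\textbf{Gap 1: the termination measure does not decrease.} You propose that each single substitution strictly decreases $|{\cdot}|_{\al-1}$ ``by a definite amount.'' The estimate $(2\muf(K)-1-O(\om))|R^\circ|_{\al-1}$ would hold if you could compare $|K'|_{\al-1}$ with $|P|_{\al-1}$ and $|K''|_{\al-1}$ with $|(P^c)^{-1}|_{\al-1}$ losslessly, but closeness in rank $\al-1$ only gives the multiplicative comparison of Proposition~\ref{pr:length-compare-close}$_{\al-1}$, namely $|Y|_{\al-1}<1.3\,|X|_{\al-1}+2.2$. So the best you can say is
$$
  |K'|_{\al-1}\ \ge\ \tfrac{1}{1.3}\bigl(|P|_{\al-1}-2.2\bigr),
  \qquad
  |K''|_{\al-1}\ \le\ 1.3\,|(P^c)^{-1}|_{\al-1}+2.2 ,
$$
and for a fragment with $\muf(K)$ just above $\tfrac12+2\la+15\om$ (with $\la\le\tfrac1{24}$, $\om\le\tfrac1{480}$ this threshold is about $0.61$), the factor $1.3^2\approx1.69$ swamps the $2\muf(K)-1\approx0.22$ gain; the substitution can easily \emph{increase} $|{\cdot}|_{\al-1}$. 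Your fallback ``lexicographic with the number of long fragments'' is equally problematic: showing the substitution does not create new fragments above the threshold is precisely the content that needs proving, and doing it one step at a time is no easier than doing it once.

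\textbf{Gap 2: fragments straddling the cut.} Your bound for a new fragment $M$ that overlaps the new piece $(P^c)^{-1}$ --- ``$\muf(M)\le\muf((P^c)^{-1})+O(\la+\om)\le\tfrac12-O(\om)$'' --- only controls the part of $M$ whose base lies along the $R^{-\infty}$ line. But $M$ may extend past the boundary of the replacement into $U$ or $V$, and there $M$'s base still lies along $R^{-\infty}$; the small-cancellation bound $\mu<\la$ does not apply at the junction, and the portion of $M$'s base outside $(P^c)^{-1}$ is not \emph{a priori} small. The paper handles exactly this case by comparing the base of $!N$ with the base of $!K_i$ on the common relator loop and estimating the two overlapping arcs via Proposition~\ref{pri:4gon-closeness}$_{\al-1}$ and Proposition~\ref{pr:closeness-order}$_{\al-1}$, which yields $\mu(!W)<\la+13\om$ for each overlap arc and hence $\muf(!N)+\muf(!K_i)<1+2\la+26\om$. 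Your sketch does not produce this bound. Relatedly, you do not explain how to handle a maximal compatibility class of fragments: the paper replaces a representative of each class and shortens by Proposition~\ref{pr:small-overlapping} to make the replacements pairwise disjoint; a one-fragment-at-a-time scheme needs some version of this too.

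In short: the simultaneous-replacement-plus-contiguity structure is not an optimization over your iterative plan but a necessity, because the closeness machinery is lossy in a way that defeats a length-decreasing monovariant. You would need to rediscover something like Lemma~\ref{lm:reduction-contiguity} and the Case-1/Case-2 dichotomy to make your approach rigorous, at which point you would have the paper's proof.
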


\begin{proof}
We represent $X$ by a reduced path $!X$ in $\Ga_{\al-1}$. 
Denote
$$
  t = \frac12 + 11\om.
$$
Let $!K_1$, $\dots$, $!K_r$ be a maximal set of pairwise non-compatible fragments of rank $\al$ in $!X$ with 
$\muf(!K_i) \ge t$.
We assume that each $!K_i$ has maximal size $\muf(!K_i)$ in its equivalence class of compatible fragments 
of rank ~$\al$ occurring in $!X$.
Using Proposition \ref{pr:small-overlapping} we shorten each $!K_i$ from the start 
obtaining a fragment $\bar{!K}_i$ of rank $\al$ 
so that $\bar{!K}_i$ do not intersect pairwise; we have
$\muf(\bar{!K}_i) > \muf(!K_i) - \la - 2.7\om$.
Let 
$$
  !X = !S_0 \bar{!K}_1 !S_1 \dots \bar{!K}_r !S_r.
$$ 
Let $!P_i$ be a base for $\bar{!K}_i$; 
for each $i$, we have a coarse bigon $\bar{!K}_i^{-1} !u_i !P_i !v_i$
with bridges $!u_i$ and ~$!v_i$.
Let $P_i \greq \lab(!P_i)$ and $P_iQ_i^{-1}$ be the associated relator of rank $\al$.
We consider a path in ~$\Ga_{\al-1}$
$$
  !Z = !S_0^* !u_1^* !Q_1! v_1^* !S_1^* \dots !u_r^* !Q_r !v_r^* !S_r^*
$$ 
where labels of $!S_i^*$, $!u_i^*$ and $!v_i^*$ are equal to corresponding labels of 
~$!S_i$, $!u_i$ and $!v_i$ and $\lab(!Q_i) \greq Q_i$.
Note that $\lab(!Z) = X$ in $G_\al$.
We perform the following procedure:
\begin{enumerate}
\item 
if a pair of vertices on $!Q_i$ and $!S_i^*$ are close and is distinct from $(\tau(!Q_i), \io(!S_i^*))$ 
then we choose a bridge $!w$ of rank $\al-1$ joining these vertices,
replace $!v_i^*$ with $!w$ and shorten ~$!Q_i$ from the end and $!S_i^*$ from the start; 
similarly, if a pair of vertices on $!Q_i$ and $!S_{i-1}^*$ are close 
and is distinct from $(\io(!Q_i), \tau(!S_{i-1}^*))$ 
then we choose a bridge $!w$ of rank $\al-1$ joining them and
replace ~$!u_i^*$ with $!w$ shortening ~$!Q_i$ from the start and $!S_{i-1}^*$ from the end;
we apply recursively the operation until possible;
\item
if a vertex on $!Q_i$ is close to a vertex on $!Q_{i+1}^*$ then we choose a bridge $!w$ 
of rank $\al-1$ joining these vertices,
shorten $!Q_i$ from the end and $!Q_{i+1}$ from the end and join then by $!w$ 
(so ~$!S_i^*$ is eliminated and $!v_i^* !S_i^* !u_i^*$ is replaced with a bridge $!w$ of rank $\al-1$);
we apply recursively the operation until possible;
\end{enumerate}
After the procedure, we obtain a path
$$
  !Z_1 = !T_0 !U_0 !R_1 !U_1 \dots !R_r !U_r !T_r
$$
where for each $i$, $!R_i$ is a subpath of $!Q_i$ and $!U_i$ either is a bridge of rank $\al-1$ 
or has the form $!w_i !T_i !z_i$ where 
$!T_i$ is a subpath of $!S_i^*$ and $!w_i$ and $!z_i$ are bridges of rank $\al-1$.
Let $!Y$ be a reduced path with the same endpoints as $!Z_1$. 
Our goal is to prove that the label $Y$ of $!Y$ satisfies the requirement of the lemma,
that is, for any fragment $!N$ of rank $\al$ in $!Y$ we have 
$\muf(!N) < \frac12 + 2\la + 15\om$.

We compute a lower bound for $\mu(!R_i)$. Fix $i$ and let $!Q_i = !Q' !R_i !Q''$. 
At step (i) of the procedure, we do not shorten $!Q_i$ more than this would give
a fragment of rank $\al$ in $!X$ with a base that is a proper extension of $!P_i$,
so we get $\mu(!Q_i) \ge 1 - \muf(!K_i) \ge 3\la + 5\om$.
At step (ii) we shorten $!Q_i$ from each side by less than $\la + 0.4\om$ 
(this follows from Proposition \ref{pri:3gon-closeness}$_{\al-1}$, 
Proposition \ref{pr:no-inverse-compatibility} and Corollary \ref{co:small-overlapping-Cayley}).
This implies $\mu(!R_i) > \la + 4\om$ and, in particular, $|!R_i|_{\al-1} > 4 \eta$.

We apply Lemma \ref{lm:reduction-contiguity} with $!X :=!Y$ where $!R_i$ and $!T_i$
play the role of $!Y_i$'s 
and $!R_i$ are taken as ~$!Y_{t_i}$. 
The lemma says that each path $!R_i$ has a vertex close to a vertex on $!Y$
and these vertices on $!Y$ are appropriately ordered.
We can write 
$$
    !Y = !V_0 !M_1 !V_1 \dots !M_r !V_r
$$ 
where each $!M_i$ is close to a subpath of $!Q_i$
(at the moment each $!M_i$ is empty because it is represented by a vertex on ~$!Y$).
Extending $!M_i$'s we make them maximal so that no vertex on ~$!W_i$ except $\io(!V_i)$ is close to
a vertex on $!Q_i$ and no vertex on $!V_i$ except $\tau(!V_i)$ is close to a vertex on ~$!Q_{i+1}$. 
Up to location of $!Z$ in $\Ga_{\al-1}$ we can assume that it starts at $\io(!X)$.
Combining the two graphs shown in Figure \ref{fig:reduction-step-graphs}a  
and mapping them to $\Ga_\al$ we obtain a graph as shown in Figure  ~\ref{fig:reduction-step-graphs}b.
\begin{figure}[h]
\input 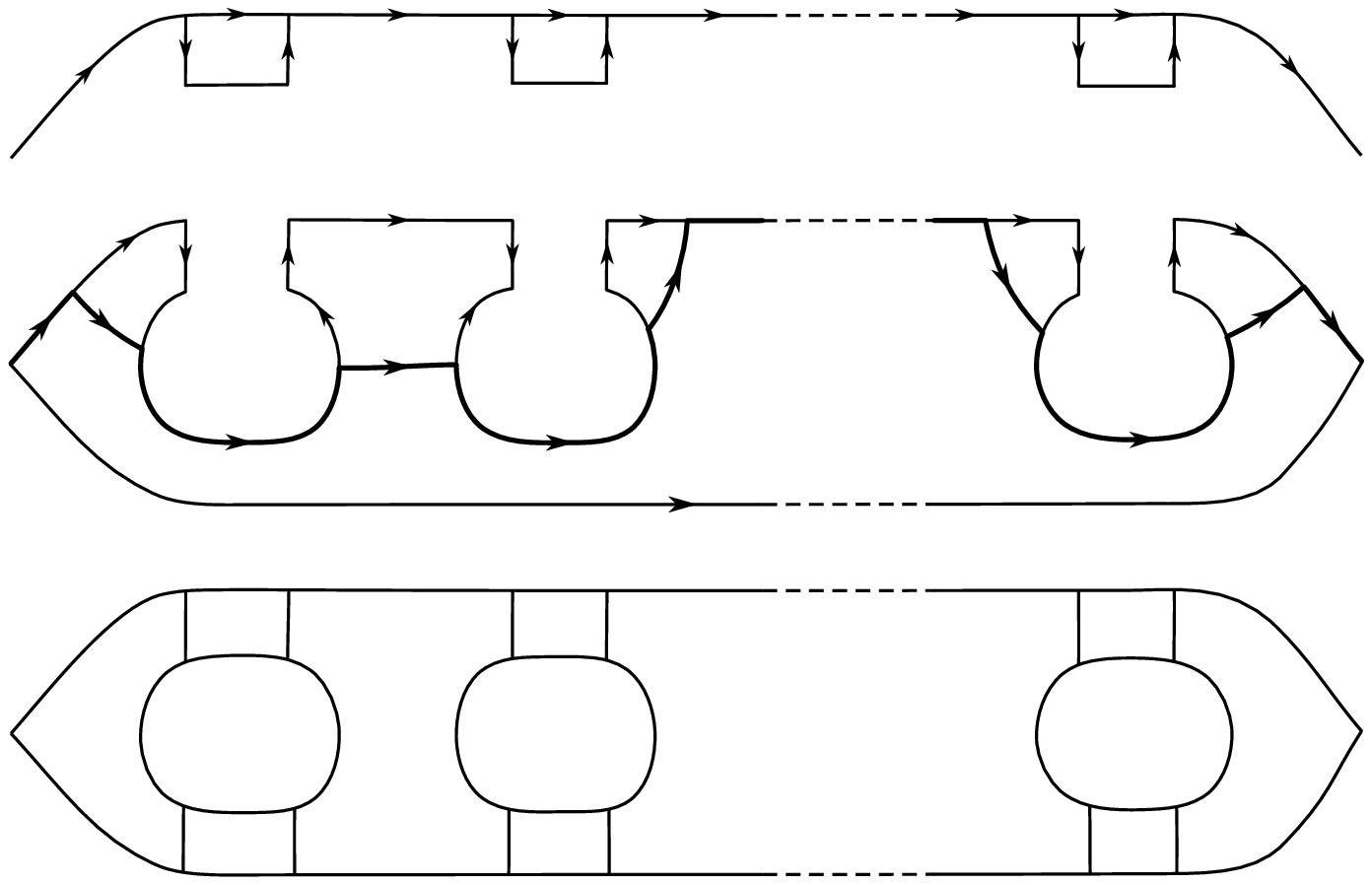_tex
\caption{}  \label{fig:reduction-step-graphs}
\end{figure}
This graph is similar to one obtained from a single-layer diagram (as in Figure ~\ref{fig:bigon-Cayley}).
An easy analysis with use of Proposition \ref{pr:3-4-gon-closeness}$_{\al-1}$,
Proposition ~\ref{pr:no-inverse-compatibility} and Corollary ~\ref{co:small-overlapping-Cayley}
shows that $!M_i$ and some extension $\ti{!K}_i$ of $\bar{!K}_i$ satisfy the bound as
in Proposition ~\ref{pr:active-large}, i.e.\
$$
  \muf(!M_i) + \muf(\ti{!K}_i) > 1 - 2\la - 1.5\om.
$$
Since $\muf(\ti{!K}_i) \le \muf(!K_i) \le 1 - 3\la - 5\om$ we obtain that for all $i$,
$$
  \muf(!M_i) > \la + 3.5\om.
$$

Let $!N$ be a fragment of rank $\al$ in $!Y$. 
By Proposition \ref{pr:inclusion-compatibility},
we have either $!N \sim !M_i$ or $!N \seq !M_i \cup !M_{i+1}$ for some $i$.
In the case when $!N \seq !M_i \cup !M_{i+1}$, $!N \not\sim !M_i$ and $!N \not\sim !M_{i+1}$
we can apply the argument
from the proof of Proposition \ref{pr:fragment-nonactive-bigon} and find a fragment $!N'$ in $!X$ 
such that 
$$
  \muf(!N') > \muf(!N) - 2 \la - 3.4\om.
$$
We have also $!N' \not\sim !K_i, !K_{i+1}$ and hence $!N' \not\sim !K_j$ for all $j$.  
By the choice of the $!K_i$'s, we have $\muf(!K') < t$ and hence  
$$
  \muf(!N) < t + 2\la + 3.4\om < \frac12 + 2\la + 15\om.
$$

Assume that $!N \sim !M_i$ for some $i$. 
Let $\bar{!Q}$ and $\bar{!P}$ be bases for $!N$ and $!K_i$ respectively. Images of $\bar{!Q}^{-1}$ and $\bar{!P}$
in $\Ga_\al$ are subpaths of a relator loop and have at most two overlapping parts. We give
an upper bound for $\mu(\bar{!Q}) + \mu(\bar{!P})$ by finding an upper bound for the size of each
overlapping part. Assume, for example, that an end of the image of $\bar{!P}$ in $\Ga_\al$
overlaps with a start of the image of $\bar{!Q}^{-1}$.
Changing the location of $!Z$ in $\Ga_{\al-1}$ we can assume that $\bar{!P}$ and ~$\bar{!Q}^{-1}$
overlap on a subpath $!W$ of the same size already in $\Ga_{\al-1}$. 

We consider the case $i < r$ (see Figure \ref{fig:reduction-active-upper-bound}; the case $i=r$ is similar with a better upper bound on $\mu(!W)$).  
\begin{figure}[h]
\input 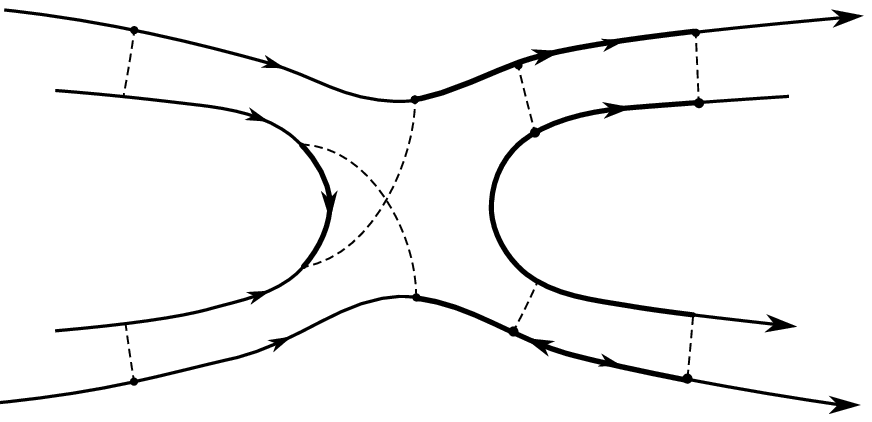_tex
\caption{}  \label{fig:reduction-active-upper-bound}
\end{figure}
We apply Proposition \ref{pri:4gon-closeness}$_{\al-1}$ to a coarse tetragon with one side $!W$
and other sides which are an end $!S$ of $!S_{i} \bar{!K}_{i+1}$, 
a start $!V$ of $!M_{i+1}^{-1} !V_{i}^{-1}$ and a subpath of a common base axis ~$!L$ for $!K_{i+1}^{-1}$
and $!N_{i+1}$. In the worst case we have $!W = !W_1 !z_1 !W_2 !z_2 !W_3$ where $!W_1$ is close to a subpath of $!V^{-1}$,
$!W_2$ is close to a subpath of $!L^{-1}$, 
$!W_3$ is close to a subpath of ~$!S^{-1}$ and $|!z_i|_{\al-1} \le 4\eta\ze$.
Proposition \ref{pr:closeness-order}$_{\al-1}$ implies $|!W_1|_{\al-1} < 5.7$ and $|!W_3|_{\al-1} < 5.7$.
Since $!K_i \not\sim !K_{i+1}$ we obtain $\mu(!W_2) < \la$.
Hence
$$
  \mu(!W) < \la + 2\om(5.7+4\eta\ze) < \la + 13\om.
$$
We obtain
$$ 
  \muf(!N) + \muf(!K_i) < 1 + 2\la + 26\om.
$$
Since $\muf(!K_i) \ge t$ this implies the required bound $\muf(!N) < \frac12 + 2\la + 15\om$.
\end{proof}

\begin{lemma} \label{lm:reduction-adding-letter}  
Let $\al\ge1$ and $X$ be a word reduced in $G_\al$ and 
$a \in\cA^{\pm1}$ a letter in the generators of $G_\al$. 
Let $Y$ be a word reduced in $G_{\al-1}$ such that $Y = Xa$ in $G_{\al-1}$.
Then $Y$ has no fragments ~$K$ of rank $\al$ with $\muf(K) \ge \rho + 6.2\om$.
\end{lemma}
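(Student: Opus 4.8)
The plan is to argue by contradiction: suppose $Y$ contains a fragment $K$ of rank $\al$ with $\muf(K)\ge\rho+6.2\om$, and transfer (most of) its base back onto $X$ to produce a fragment of rank $\al$ in $X$ of relative size exceeding $\rho$ — which is impossible, since $X$ is reduced in $G_\al$ (recall the characterization of $\cR_\al$ in \ref{ss:fragment-measuring}). For $\al=1$ this runs inside the free group $G_0$ and is elementary (removing the single extra letter from a piece lowers $\mu$ by at most $\om$, via Proposition \ref{pr:dividing-fragment}$_1$), so one may assume $\al\ge2$.

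First I would set up the geometry in $\Ga_{\al-1}$. Since $X\in\cR_\al\seq\cR_0$, the word $X$ is freely reduced; fix a reduced path $!X$ with $\lab(!X)=X$ starting at a vertex $!o$, and let $!Y$ be the reduced path with $\lab(!Y)=Y$ starting at $!o$. Then $\tau(!Y)=!o\,Y=!o\,Xa=\tau(!X)\,a$, so there is an edge $!a$ from $\tau(!X)$ to $\tau(!Y)$ and $!Y^{-1}!X!a$ is a loop in $\Ga_{\al-1}$. All three of $!Y^{-1}$, $!X$, $!a$ carry labels reduced in $G_{\al-1}$ ($!Y$ is reduced, $!X$ is reduced in $G_\al\seq G_{\al-1}$, and a one-edge path is reduced in every $G_\be$), so $!Y^{-1}*!X*!a*$ is a coarse trigon in $\Ga_{\al-1}$; this is uniform, requiring no separate treatment of the case where $Xa$ fails to be freely reduced.

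Next I would realize the hypothetical large fragment as a subpath $!K$ of $!Y$ with base $!P$, an $R$-periodic segment with $R$ a relator of rank $\al$, so that $\mu(!P)=\muf(!K)\ge\rho+6.2\om$. By condition (S1) (the set of relators of rank $\al$ is closed under cyclic shifts, so $|R^\circ|_{\al-1}\ge\Om$) and \eqref{eq:mu-def}, $|!P|_{\al-1}=\mu(!P)\,|R^\circ|_{\al-1}\ge\rho\Om$, which far exceeds $2.45$; moreover $!P$ is reduced in $G_{\al-1}$ by Proposition \ref{pr:relator-strongly-reduced}. I then apply Proposition \ref{pr:stability-trigon}$_{\al-1}$ to the pair of close reduced paths $(!P,!K)$, with $!K$ a subpath of the side $!Y$ of the trigon $!Y^{-1}*!X*!a*$ (matching $!S=!Y$, $!T_1=!X$, $!T_2=!a$). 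Since $|!a|_{\al-1}\le1$ is negligible against $|!P|_{\al-1}$, the alternative in which all of $!P$ is close to a subpath of $!a$ cannot occur, so one of the other alternatives holds: $!P=!z_1!X_1!z_2$ or $!P=!z_1!X_1!z_3!X_2!z_2$, where $!X_1$ is close in rank $\al-1$ to a subpath $!W$ of $!X$, where $!X_2$ (when present) is close in rank $\al-1$ to a subpath of $!a$, and where the $|!z_i|_{\al-1}$ are bounded by the fixed constants $1.3$, $1.45$, $0.4$ of Proposition \ref{pr:stability-trigon}. Bounding $|!X_2|_{\al-1}$ by a small absolute constant — a reduced path close in rank $\al-1$ to a path of at most one edge has bounded length, by Proposition \ref{pr:fellow-traveling}$_{\al-1}$ — one checks that the total $|\cdot|_{\al-1}$ of all the discarded factors $!z_1,!z_2,!z_3,!X_2$ stays strictly below $6.2$.

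Finally, $!X_1$ is an $R$-periodic segment inside $!P$ that is close in rank $\al-1$ to the subpath $!W$ of $!X$, so $!W$ is a fragment of rank $\al$ in $!X$ with base $!X_1$ and $\muf(!W)=\mu(!X_1)$. The factors $!z_1,!z_2,!z_3,!X_2$ are subpaths of the $R$-periodic $!P$, hence pieces of $R$, so \eqref{eq:S2-piece-form} gives $\mu(!z_i)\le\om|!z_i|_{\al-1}$ and likewise for $!X_2$; therefore
$$
  \muf(!W)=\mu(!X_1)\ge\mu(!P)-\om\bigl(|!z_1|_{\al-1}+|!z_2|_{\al-1}+|!z_3|_{\al-1}+|!X_2|_{\al-1}\bigr)>(\rho+6.2\om)-6.2\om=\rho,
$$
so $X$ contains a fragment of rank $\al$ of relative size exceeding $\rho$, contradicting $X\in\cR_\al$. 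The hard part will be the constant bookkeeping in the third paragraph: confirming that the total relative size lost near the tiny side $!a$, together with the fixed trigon-transition constants, never reaches $6.2$, and in particular the routine-but-slightly-delicate bound on $|!X_2|_{\al-1}$ for a reduced path shadowing a single edge. Everything else is a direct invocation of the rank-$(\al-1)$ closeness-transition machinery.
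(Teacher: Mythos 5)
Your overall route is the same as the paper's: form the coarse trigon $!Y^{-1}\,!X\,!a$ in $\Ga_{\al-1}$, transport the base $!P$ of the hypothetical large fragment across it via Proposition~\ref{pr:stability-trigon}$_{\al-1}$, and conclude that a large piece of $!P$ must land in $!X$. (The paper compresses exactly this into its one-line reference to Proposition~\ref{pr:fragment-trigon-previous} --- which is proved by precisely this application of Proposition~\ref{pr:stability-trigon}$_{\al-1}$ --- together with Lemma~\ref{lm:piece-fragment-no-higher-fragments}; the constant $6.2\om = 3\om + 3.2\om$ is the sum of the two contributions.)

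There is, however, a genuine gap in the step where you bound $|!X_2|_{\al-1}$. Proposition~\ref{pr:fellow-traveling} does not say that a reduced path close to a one-edge path has bounded length: it only produces a fellow-traveling decomposition $!X_2 = !U_1\cdots!U_k$, $!W_2 = !V_1\cdots!V_k$ with $|!U_i|_{\al-1},|!V_i|_{\al-1}\le\ze$, and it bounds neither $k$ nor $|!X_2|_{\al-1}$. The fact that many starting vertices $\io(!U_i)$ are close to one of the two vertices of the tiny path $!W_2$ does not make them close to \emph{each other} (closeness is not transitive), so Lemma~\ref{lm:close-distance-bound} cannot be applied to squeeze $!X_2$ either. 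The bound you need is supplied instead by Lemma~\ref{lm:piece-fragment-no-higher-fragments}: the subpath $!W_2$ of $!a$ is itself a fragment of rank $\al$ with base $!X_2$, sitting inside the fragment of rank $0$ given by the single letter $a$, so $\mu(!X_2) = \muf(!W_2) < 3.2\om$ directly. (Alternatively, Proposition~\ref{pr:length-compare-close}$_{\al-1}$ gives $|!X_2|_{\al-1} < 1.3\,\ze^{\al-1} + 2.2 < 3.2$ once $\al\ge 2$, but that is a forward reference you would have to justify.) With this substitution your bookkeeping closes: the $|!z_i|_{\al-1}$ contribute at most $1.3+1.3+0.4 = 3$, and $|!X_2|_{\al-1} < 3.2$, so the total relative loss is strictly below $6.2\om$ via \eqref{eq:S2-piece-form}, and the fragment of rank $\al$ left in $!X$ has $\muf > \rho$, the desired contradiction.
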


\begin{proof}
Follows from Lemma \ref{lm:piece-fragment-no-higher-fragments} and 
Proposition \ref{pr:fragment-trigon-previous}.
\end{proof}

\begin{proof}[Proof of Proposition \ref{pr:reduction}]
It is trivial if $\al=0$.
In the case $\al\ge 1$
Proposition \ref{pr:reduction} follows by induction from Lemmas \ref{lm:reduction-step}
and \ref{lm:reduction-adding-letter} since
$
  \rho + 6.2\om < 1 - 3\la -5\om.
$
\end{proof}

We turn to the cyclic analogue of Proposition \ref{pr:reduction}:

\begin{proposition}[cyclically reduced representative] \label{pr:cyclic-reduction}
Every element of $G_\al$ of finite order is conjugate to a cyclically reduced word of the form $R_0^k$ where $R_0$
is the root of a relator of rank $\be$, $1 \le \be \le \al$.

Every element of $G_\al$ of infinite order is conjugate to a 
strongly cyclically reduced word in ~$G_\al$.
\end{proposition}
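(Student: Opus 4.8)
The plan is to regard the proposition as the cyclic counterpart of Proposition~\ref{pr:reduction}, and to prove it by induction on $\al$, the case $\al=0$ being trivial since $G_0$ is free and torsion free. The argument splits into two tasks. First, I would show that every element is conjugate to a word that is \emph{cyclically} reduced in $G_\al$. Second, I would analyse such a representative $W$: if $W$ is strongly cyclically reduced we land in the infinite-order clause (apart from $g=1$), and if it is not, I would show that $g$ must be conjugate to a power of the root of one of the relators, which is the finite-order clause. Throughout, Proposition~\ref{pr:reduction} and the fragment-stability results of the previous sections are used as black boxes.

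For the first task: start (Proposition~\ref{pr:reduction}) with a word $W$ reduced in $G_\al$ representing a conjugate of $g$. If some cyclic shift $W^\sigma$ is not reduced, then since every honest subword of $W$ is reduced the offending fragment $K$, of some rank $\be$ with $\muf(K)>\rho$, must straddle the base point. Writing $K=uPv$ in $G_{\be-1}$ with $u,v\in\cH_{\be-1}$ and $P$ a starting segment of a relator $R\eqcirc R_0^n$ of rank $\be$, and $R=PT$, one has $P=T^{-1}$ in $G_\be$ with $\mu(T)<1-\rho+2\om<\rho$; so replacing the chunk $P$ of $R$, whose $|\cdot|_{\be-1}$-size exceeds $\rho|R^\circ|_{\be-1}\ge\rho\Om$, by its short complement $T^{-1}$ yields a conjugate of $g$ on which a suitable complexity has strictly decreased, namely the cyclic analogue of the quantity controlled in Lemma~\ref{lm:reduction-step}, bookkeeping the large rank-$\le\al$ fragments along the cyclic word. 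Re-reducing by Proposition~\ref{pr:reduction} and iterating, the process terminates at a cyclically reduced representative. Verifying the decrease is the delicate point: one must see that the gain, coming from $2\rho-1$ being bounded away from $0$ and from $|R^\circ|_{\be-1}\ge\Om$, really dominates the contribution of the bridge words $u,v$, and for this one invokes Proposition~\ref{pr:fellow-traveling} and Proposition~\ref{pr:closeness-stability} (close reduced paths fellow-travel with uniformly bounded corrections). The case $\be=1$ is immediate, since then $u,v$ are empty and $|T|<9\la|R|<\rho|R|=|P|$ by \eqref{eq:ISC-main}.

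For the second task: let $W$ be cyclically reduced in $G_\al$, representing a conjugate of $g$, and let $\bar{!W}$ be the $W$-periodic line in $\Ga_\al$. If no fragment of rank $1\le\be\le\al$ in $\bar{!W}$ has $\muf>\rho$, then every power $W^t$ is reduced in $G_\al$, i.e.\ $W$ is strongly cyclically reduced; in that case, if $g$ had finite order $t>1$ then $W^t$ would be nonempty and reduced, hence $W^t\ne1$ in $G_\al$ by Proposition~\ref{pr:reduced-nontrivial}, a contradiction, so $g$ has infinite order (while $g=1$ is conjugate to the empty word $R_0^0$). If on the other hand $\bar{!W}$ carries a fragment $K$ of rank $\be$ with $\muf(K)>\rho$, then cyclic reducedness forces $|K|>|W|$, since any subpath of $\bar{!W}$ of length $\le|W|$ lies inside a cyclic shift of $W$, which is reduced. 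A careful analysis then shows $\bar{!W}$ must carry such a fragment of length $\ge 2|W|$: otherwise a maximal-length one, whose length is $>|W|$, could be compared with its period-translate $s_{W,\bar{!W}}^{\pm1}K$ — if compatible with $K$, the union $K\cup s_{W,\bar{!W}}^{\pm1}K$ is a strictly longer fragment of the same base axis with $\muf>2\rho-5.7\om>\rho$ (Corollary~\ref{co:compatibility-same-rank}), contradicting maximality; if not compatible, the two are big non-compatible fragments overlapping in $\bar{!W}$, which is ruled out by the overlapping-fragment estimates (Proposition~\ref{pr:small-overlapping}) together with the small-cancellation statement Corollary~\ref{co:small-overlapping-Cayley}. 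With a fragment of length $\ge 2|W|$ in hand, Proposition~\ref{pri:fragment-in-periodic-small} forces $g$ to be conjugate to a power of the root $R_0$ of the rank-$\be$ relator underlying $K$; conjugating, $g$ is represented by $R_0^k$, and replacing $k$ by its residue of smallest absolute value modulo $n$ (and $R_0$ by the root $R_0^{-1}$ of $R^{-1}$ if needed) gives $|k|\le n/2$, which makes $R_0^k$ cyclically reduced in $G_\al$ by (S0), Proposition~\ref{pr:relator-strongly-reduced}, Lemma~\ref{lm:piece-fragment-no-higher-fragments} and \eqref{eq:ISC-main}. Since such a $g$ has finite order, the two cases correspond exactly to the two clauses of the proposition.

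The main obstacle is twofold. One is the length bookkeeping in the cyclic-reduction step, where one must control the bridge words $u,v\in\cH_{\be-1}$ produced by ``closeness'' and show the relation move genuinely reduces complexity. The other is, in the second task, the verification that a periodic line which is cyclically but not strongly cyclically reduced must carry a fragment of length at least two periods — i.e.\ combining the period-translation $s_{W,\bar{!W}}$ with the union, overlapping, and small-cancellation results to eliminate the intermediate-size case. The bookkeeping over ranks $\be<\al$ introduces nothing new, since the stability, union, and overlapping results are all stated uniformly for fragments of arbitrary rank $\be\le\al$ (Corollary~\ref{co:compatibility-same-rank}).
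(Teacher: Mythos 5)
Your Task~1 is where the argument breaks. You propose an iterative Dehn-type reduction on the cyclic word: locate a fragment $K=uPv$ of rank $\be$ with $\muf(K)>\rho$ straddling the base point, replace the long piece $P$ by the short complement $T^{-1}$, re-reduce by Proposition~\ref{pr:reduction}, and iterate. The paper explicitly warns against exactly this scheme: in the iterated small cancellation setting, ``a word containing more than a half of a relator can be shortened'' is precisely the classical mechanism that \emph{fails} here, because the re-reduction after a single replacement can create new large fragments of rank $\al$ (or lower) that span the seam of the replacement, so the ``suitable complexity'' you invoke need not strictly decrease. This is why Lemma~\ref{lm:reduction-step} and its cyclic counterpart Lemma~\ref{lm:cyclic-reduction-step} do \emph{not} iterate: they select a maximal set of pairwise non-compatible large fragments, replace them \emph{all at once}, compare against a single reduced word $Y$ via the contiguity machinery of Lemma~\ref{lm:reduction-contiguity} (resp.~\ref{lm:reduction-contiguity-cyclic}), and then bound the size of every new fragment of rank $\al$ in $Y$ directly. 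You acknowledge the decrease is ``the delicate point,'' but it is not merely delicate --- it is the obstruction that forces the single-pass design of the reduction lemmas, and your sketch has no mechanism to avoid it.

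The paper's route is also structured differently in a way your plan misses. It never visits the intermediate station ``cyclically reduced in $G_\al$.'' Instead, starting from $X\in\cR_\al$, it performs a coarse cyclic cancellation: $X\equiv UX_1V$ with $VU$ equal in $G_\al$ to a bridge word of rank $\al$, and $X_1$ of \emph{minimal} length. Minimality of $X_1$ produces the crucial observations (ii)--(iv) in the paper's proof, which control compatibility of fragments of rank $\al$ across the seam and feed into Proposition~\ref{pr:fragment-cyclic-monogon-previous}. It then takes a word $Y$ cyclically reduced in $G_{\al-1}$ conjugate to $X_1u$ and shows the dichotomy at threshold $\muf\ge\rho+2\la+16\om$: either $Y$ is conjugate to a power of a relator root (then Proposition~\ref{pr:piece-reduction} lands in the finite-order clause), or Lemma~\ref{lm:cyclic-reduction-step} applies in one shot to produce a word strongly cyclically reduced in $G_\al$. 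Your Task~2, which tries to deduce conjugacy to a relator root from a fragment of length $\ge2|W|$ via Proposition~\ref{pri:fragment-in-periodic-small}, is aimed at the right conclusion but rests on an unestablished lemma (``a cyclically but not strongly cyclically reduced periodic line must carry a fragment spanning two periods''); the paper does not need or prove such a statement, and your justification for it via ``union with a period-translate if compatible, overlapping estimates if not'' is circular in the compatible case (a compatibility $!K\sim s_{W,\bar{!W}}!K$ already yields the relator-root conclusion directly via Proposition~\ref{pri:fragment-finite-order}, with no need for the length bound) and incomplete in the non-compatible case.
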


\begin{lemma}[a cyclic version of Lemma \ref{lm:reduction-contiguity}] \label{lm:reduction-contiguity-cyclic}
Let $X$ be a word cyclically reduced in $G_{\al-1}$ representing an element of $G_{\al-1}$ of infinite order. 
Let $m \ge 2$, 
$Y_1, \dots, Y_m$ be words reduced in $G_{\al-1}$, $u_1,\dots,u_m$ be bridges of rank $\al-1$
and let $X$ be conjugate to $Y_1 u_1 \dots Y_m u_m$ in $G_{\al-1}$. 
Let $\prod_{i \in \Z} !Y_1^{(i)} !u_1^{(i)} \dots !Y_m^{(i)} !u_m^{(i)}$ and $\bar{!X} = \prod_{i \in \Z} !X^{(i)}$
be lines in $\Ga_{\al-1}$ labeled $(Y_1 u_1 \dots Y_m u_m)^\infty$ and ~$X^\infty$
respectively representing the conjugacy relation.

Assume that there are indices $1 \le t_1 < t_2 < \dots < t_k \le m$ $(k \ge 1)$ such that 
$$
   m+ t_1 - t_m \le 2, \quad t_j - t_{j-1} \le 2 \quad\text{ for all } j, 
$$
and 
$$
  |Y_{t_j}|_{\al-1} > 4\eta \quad \text{for all } j.
$$
Assume that there are no close vertices in each of the pairs $(!Y_i^{(0)},!Y_{i+1}^{(0)})$, $(!Y_m^{(0)}, !Y_1^{(1)})$,
$(!Y_{t_j}^{(0)}, !Y_{t_j+1}^{(0)})$, $(!Y_{t_k}^{(0)}, !Y_{t_1}^{(1)})$ except appropriate endpoints
(i.e.\ except pairs $(\tau(!Y_i^{(0)}), \io(!Y_{i+1}^{(0)}))$ and 
$(\tau(!Y_m^{(0)}), \io(!Y_{1}^{(1)}))$). 
Then each of the paths $!Y_{t_j}^{(0)}$, $j=1,\dots,k$
has a vertex close to a vertex $!a_j$ on $\bar{!X}$ and these vertices $!a_j$ are in the (non-strict) order
corresponding to the order of the $!Y_j^{(0)}$'s 
(and $!a_k$ is located non-strictly before $s_{X,\bar{!X} }!a_0$).
\end{lemma}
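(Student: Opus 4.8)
The plan is to transpose the proof of Lemma~\ref{lm:reduction-contiguity} to the annular setting and, once a single closeness between the $Y$-line and $\bar{!X}$ is found, to unroll the annulus and invoke Lemma~\ref{lm:reduction-contiguity} itself. The infinite-order hypothesis on $X$ is used to guarantee that $\bar{!X}$ is a genuine bi-infinite line and that $s_{X,\bar{!X}}$ has infinite order, so that the periodic picture does not degenerate; in particular the $!Y_i^{(p)}$ and the $!X^{(p)}$ are pairwise distinct, and the restriction of the "no close vertices" hypothesis to the $0$-th period propagates to every period by applying powers of $s_{X,\bar{!X}}$.

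\textbf{Step 1: no long-range closeness among the $Y$-sides.} First I would show that no vertex of $!Y_i^{(p)}$ is close to a vertex of $!Y_j^{(q)}$ unless $!Y_j^{(q)}$ immediately follows $!Y_i^{(p)}$ along the $Y$-line, in which case the hypothesis (together with its $s_{X,\bar{!X}}$-translates) forbids it. Choosing a violating close pair with the least number $g\ge 1$ of $Y$-sides strictly between the two vertices, the arc of the $Y$-line joining them, closed up by a bridge of rank $\al-1$, is a coarse $(g+2)$-gon in $\Ga_{\al-1}$ which, by minimality of $g$, has no spurious close vertices; Proposition~\ref{pr:sides-bound-Cayley}$_{\al-1}$ then bounds the total length of its sides by $g\eta$. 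On the other hand the conditions $t_j-t_{j-1}\le 2$ and $m+t_1-t_m\le 2$ say that the $t$-type indices are cyclically spaced by at most $2$, so among any $g$ consecutive $Y$-sides a positive fraction are of $t$-type and hence of $|\cdot|_{\al-1}$-length exceeding $4\eta$; this forces the total length of the middle sides above $2\eta g$, a contradiction (the wrap-around case $g\ge m$ only makes more $t$-type sides available, while sides too short to be of $t$-type are simply absorbed by the surgery of Step~2).

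\textbf{Step 2: a $Y$-side close to $\bar{!X}$.} Next I would show that some $!Y_{i_0}^{(0)}$ carries a vertex close to a vertex of $\bar{!X}$. Assuming not, fill the conjugacy relation by a reduced annular diagram $\De$ of rank $\al-1$ whose universal cover maps so that the cyclic side lifts to $\bar{!X}$ and the non-cyclic side, marked by the $!Y_i$ and the bridges $!u_i$, lifts to the $Y$-line, and equip $\De$ with a tight set of contiguity subdiagrams. By Step~1 and the hypothesis there are no bonds or long contiguity subdiagrams between distinct $Y$-sides, nor between a $Y$-side and the cyclic side; arguing exactly as in the proof of Proposition~\ref{pr:small-cyclic-monogon}$_{\al-1}$ (using Proposition~\ref{pr:single-layer}(iv) to dispose of cells of rank $\al-1$ and cutting off the simply connected pieces that the internal bonds and these cells create) one brings $\De$ to a small diagram, so Proposition~\ref{pr:principal-bound}$_{\al-1}$ gives $\sum_i|!Y_i^{(0)}|_{\al-1}\le\eta\,c(\De)=\eta m$. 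But $\sum_i|!Y_i^{(0)}|_{\al-1}\ge\sum_j|!Y_{t_j}^{(0)}|_{\al-1}>4\eta k$, and the cyclic spacing gives $2k\ge m$, so $\sum_i|!Y_i^{(0)}|_{\al-1}>2\eta m$, contradicting the previous bound for $m\ge 2$.

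\textbf{Step 3: unrolling.} Having a vertex $!b$ on some $!Y_{i_0}^{(0)}$ close to a vertex $!c$ on $\bar{!X}$, I would cut the $Y$-line at $!b$ and $\bar{!X}$ at $!c$ and follow the $Y$-line forward for two full periods, producing a finite coarse polygon whose sides run along a finite arc of $\bar{!X}$ and along the $Y$-line --- precisely the input for Lemma~\ref{lm:reduction-contiguity}. The condition $m+t_1-t_m\le 2$ ensures that the $t$-type indices of this unrolled chain still satisfy the spacing and endpoint conditions, and the absence of spurious close vertices is inherited from Step~1 and the hypothesis. Lemma~\ref{lm:reduction-contiguity} then yields, for every $t_j$, a vertex $!a_j$ on the $\bar{!X}$-arc close to a vertex of $!Y_{t_j}^{(0)}$, ordered from start to end; since the unrolling spans two periods, $!a_k$ lies non-strictly before $s_{X,\bar{!X}}!a_1$, and translating back to $\bar{!X}$ gives the stated ordered family.

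The main obstacle will be Step~2. In Lemma~\ref{lm:reduction-contiguity} the relevant object is an honest coarse polygon carrying no $2$-cells, so the length estimate is immediate from Proposition~\ref{pr:sides-bound-Cayley}; here one must first run the "make the annular diagram small" surgery from the proof of Proposition~\ref{pr:small-cyclic-monogon}, all the while keeping the decomposition of the non-cyclic side into the $!Y_i$ and the $t$-type bookkeeping intact under the cuts, and one must verify that the hypothesis ruling out $Y$--$\bar{!X}$ closeness genuinely prevents $\De$ from acquiring the contiguity subdiagrams or single-layered cells that would otherwise invalidate the estimate $\sum_i|!Y_i^{(0)}|_{\al-1}\le\eta m$.
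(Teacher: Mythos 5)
Your proposal follows the paper's proof essentially step for step: the three stages (ruling out long-range closeness among $Y$-sides via Proposition~\ref{pr:sides-bound-Cayley}$_{\al-1}$, establishing via an annular diagram and Proposition~\ref{pr:principal-bound}$_{\al-1}$ that some $Y$-side meets $\bar{!X}$, then unrolling and quoting Lemma~\ref{lm:reduction-contiguity}) are exactly the paper's Claim~1, Claim~2, and the final cut-and-apply argument. The only deviation is in Step~2, where you propose to perform the cell-disposal and bond surgery of Proposition~\ref{pr:small-cyclic-monogon} before applying the length bound; the paper simply observes that the hypotheses together with Claim~1 already make the annular diagram small (no bonds) — and smallness is compatible with cells of rank $\al-1$ being present, since Proposition~\ref{pr:principal-bound} already absorbs them — so that surgery is redundant, though harmless.
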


\begin{proof}
The proof follows the proof of Lemma \ref{lm:reduction-contiguity}
with appropriate changes. 

\begin{claim}
There are no close vertices in pairs $(!Y_i^{(0)},!Y_j^{(0)})$ with $j-i > 1$
and $(!Y_i^{(0)},!Y_j^{(1)})$ with $j+m -i > 1$.
\end{claim}

The proof repeats the argument from the proof of Lemma \ref{lm:reduction-contiguity}.

\begin{claim}
For some $i$, there are close vertices in the pair $(!Y_i^{(0)}, \bar{!X})$.
\end{claim}

Assume this is not true. Consider an annular diagram $\De$ of rank ${\al-1}$ with boundary loops
$\hat{!X}^{-1}$ and $\hat{!Y}_1 \hat{!u}_1 \dots \hat{!Y}_m \hat{!u}_m$ 
and a combinatorially continuous map $\phi: \ti{\De} \to \Ga_{\al-1}$
such that $\phi$ maps the boundary of $\ti\De$ to 
$\bar{!X}^{-1}$ and $\prod_i !Y_1^{(i)} !u_1^{(i)} \dots !Y_m^{(i)} !u_m^{(i)}$.
The assumption, Claim 1 and the hypothesis of the lemma imply that $\De$ is small. 
Application of Proposition \ref{pr:principal-bound}$_{\al-1}$ gives
$$
  \sum_i |Y_i|_{\al-1} \le \eta m.
$$
On the other hand, from the hypothesis of the lemma we have $\sum_i |Y_i|_{\al-1} \ge 4k \eta > \eta m$,
a contradiction. This proves the claim.

By Claim 2, assume 
without loss of generality 
that there is a vertex $!b$ on $!Y_1^{(0)}$ which is close to a vertex $!c$
on $\bar{!X}$.
Let $!b$ divide $!Y_1^{(0)}$ as $!Y_1^{(0)} = !Z_1 !Z_2$ and up to cyclic shift of $X$, assume that $!X^{(0)}$
starts at $!c$. Now we can directly apply Lemma \ref{lm:reduction-contiguity} to the coarse $(m+2)$-gon
$$
  (!X^{(0)})^{-1} * !Z_2 !u_1^{(0)} !Y_2^{(0)} \dots !u_{m-1}^{(0)} !Y_m^{(0)} !u_m^{(0)} !Z_1 *
$$
and get the required conclusion.
\end{proof}



\begin{lemma}[a cyclic version of Lemma \ref{lm:reduction-step}] 
\label{lm:cyclic-reduction-step}
Let $X$ be a word strongly cyclically reduced in $G_{\al-1}$.
Assume that $X$ is not conjugate in $G_{\al}$ to a power of the root of a relator of rank $\be\le\al$.
Next, assume that for any fragment $K$ of rank $\al$ in a cyclic shift 
of $X$ we have
$$
  \muf(K) \le 1 - 4\la - 8\om.
$$
Then there exists a word $Z$ conjugate to $X$ in $G_\al$ which is strongly cyclically reduced in $G_{\al-1}$ and
such that no power $Z^k$ contains a fragment $L$ of rank $\al$ with 
$$
  \muf(L) < \frac12 + 2\la + 15\om.
$$
In particular, $Z$ is strongly cyclically reduced in $G_\al$.
\end{lemma}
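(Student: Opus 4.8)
The plan is to transcribe the proof of Lemma~\ref{lm:reduction-step} onto a periodic line, replacing each tool by its cyclic counterpart. First I would represent $X$ by a periodic line $\bar{!X}$ in $\Ga_{\al-1}$ with $\lab(\bar{!X}) \greq X^\infty$; since $X$ is strongly cyclically reduced in $G_{\al-1}$ this line is reduced, hence simple. By hypothesis $X$ is not conjugate in $G_\al$ to a power of the root of a relator of rank $\be\le\al$; combining this with Proposition~\ref{pr:cyclic-reduction}$_{\al-1}$, $X$ cannot have finite order in $G_{\al-1}$, so $X$ represents an element of infinite order in $G_{\al-1}$. Writing $s = s_{X,\bar{!X}}$ for the translation by one period, Propositions~\ref{pri:fragment-in-periodic} and~\ref{pri:fragment-in-periodic-small} apply: no fragment of rank~$\al$ on $\bar{!X}$ is compatible with its $s$-translate, and every fragment of rank~$\al$ on $\bar{!X}$ of size $\ge 2\la+5.3\om$ spans fewer than two periods. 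Put $t = \frac12 + 11\om$ as in Lemma~\ref{lm:reduction-step}.

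Next I would carry out the replacement step $s$-equivariantly. For each orbit under $s$ of compatibility classes of fragments of rank~$\al$ on $\bar{!X}$ that contains a fragment of size $\ge t$, pick a representative $!K_i$ of maximal size in its class, with all chosen $!K_i$ lying in a bounded window of $\bar{!X}$ (finitely many, by the span bound above); using Proposition~\ref{pr:small-overlapping} together with the non-compatibility of $s$-translates, shorten each $!K_i$ to $\bar{!K}_i$ so that the $\bar{!K}_i$ and all their $s$-translates are pairwise disjoint, with $\muf(\bar{!K}_i) > \muf(!K_i) - \la - 2.7\om$; these, together with the complementary arcs, tile $\bar{!X}$. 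Replacing each $\bar{!K}_i$ by the complementary arc $!Q_i$ of its base relator exactly as in Lemma~\ref{lm:reduction-step} produces, $s$-equivariantly, a periodic line in $\Ga_{\al-1}$ whose period is a word $W$ conjugate to $X$ in $G_\al$ (an annular diagram of rank~$\al$ carrying one ring of rank-$\al$ cells realizes the conjugacy). Since $W$ is conjugate in $G_\al$ to $X$, it again cannot have finite order in $G_{\al-1}$ (else $X$ would be conjugate in $G_\al$ to some $R_0^k$), so by Proposition~\ref{pr:cyclic-reduction}$_{\al-1}$ there is a word $Z$, strongly cyclically reduced in $G_{\al-1}$ and conjugate to $W$ (hence to $X$) in $G_\al$; represent it by a reduced periodic line $\bar{!Z}$. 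Running the two recursive ``cleaning'' operations of Lemma~\ref{lm:reduction-step} before this last step, one period of the cleaned line decomposes as $\prod_i (!R_i !U_i)$ with each $!R_i$ a subarc of $!Q_i$ and, by the same computation as in Lemma~\ref{lm:reduction-step} (via Proposition~\ref{pri:3gon-closeness}$_{\al-1}$, Proposition~\ref{pr:no-inverse-compatibility}, Corollary~\ref{co:small-overlapping-Cayley}), $\mu(!R_i) > \la + 4\om$, so $|!R_i|_{\al-1} > 4\eta$. If no such orbit exists, i.e.\ $r=0$, I would take $Z = X$: then $\bar{!X}$ has no rank-$\al$ fragment of size $\ge t$, a fortiori none of size $\ge \frac12 + 2\la + 15\om > t$, and $X$ is already strongly cyclically reduced in $G_{\al-1}$, hence in $G_\al$ since $\frac12 + 2\la + 15\om < \rho$ by~\eqref{eq:ISC-main},~\eqref{eq:om-bounds}.

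For $r\ge 1$ I would finish as in Lemma~\ref{lm:reduction-step}, the one new ingredient being the cyclic contiguity Lemma~\ref{lm:reduction-contiguity-cyclic}. Apply it to the conjugacy relation between $\bar{!Z}$ and the line built from the arcs $!R_i, !U_i$, taking the $!R_i$ as the distinguished segments $Y_{t_j}$; this yields on $\bar{!Z}$ a sequence of vertices close to the $!R_i$ and ordered compatibly with $s$, hence a decomposition $\prod_i (!M_i !V_i)$ of one period of $\bar{!Z}$ with each $!M_i$ close to a subarc of $!Q_i$. Extending the $!M_i$ maximally and reading off the single-layer-like graph of Figure~\ref{fig:reduction-step-graphs}, the estimate of Lemma~\ref{lm:reduction-step} (via Proposition~\ref{pr:active-large}, Proposition~\ref{pr:3-4-gon-closeness}$_{\al-1}$, Proposition~\ref{pr:no-inverse-compatibility}, Corollary~\ref{co:small-overlapping-Cayley}) gives $\muf(!M_i) + \muf(\ti{!K}_i) > 1 - 2\la - 1.5\om$ for suitable extensions $\ti{!K}_i$ of $\bar{!K}_i$, whence $\muf(!M_i) > 2\la + 6.5\om$ using the hypothesis $\muf(\ti{!K}_i) \le \muf(!K_i) \le 1 - 4\la - 8\om$ (this is where the weaker bound $1-4\la-8\om$ enters, in place of $1-3\la-5\om$). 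Now for a fragment $!L$ of rank~$\al$ in $\bar{!Z}$, Proposition~\ref{pr:inclusion-compatibility} places $!L$ either inside some $!M_i \cup !M_{i+1}$ or compatible with some $!M_i$. In the first case, if $!L \not\sim !M_i, !M_{i+1}$, the argument of Proposition~\ref{pr:fragment-nonactive-bigon} pulls $!L$ back to a fragment $!L'$ of $\bar{!X}$ with $\muf(!L') > \muf(!L) - 2\la - 3.4\om$ and $!L' \not\sim !K_j$ for all $j$, so $\muf(!L') < t$ by maximality and $\muf(!L) < \frac12 + 2\la + 15\om$; the subcase $!L \sim !M_i$ is the second case. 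In the second case, bounding the overlap of the bases of $!L$ and $!K_i$ via Proposition~\ref{pri:4gon-closeness}$_{\al-1}$ and Proposition~\ref{pr:closeness-order}$_{\al-1}$ exactly as in Lemma~\ref{lm:reduction-step} gives $\muf(!L) + \muf(!K_i) < 1 + 2\la + 26\om$, and since $\muf(!K_i) \ge t$ again $\muf(!L) < \frac12 + 2\la + 15\om$. Applied to the subpaths of $\bar{!Z}$ that are powers $Z^k$, this is the asserted bound; and since $\bar{!Z}$ contains the rank-$\al$ fragment $!M_1$ with $\muf(!M_1) > 6.5\om$, Lemma~\ref{lm:strongly-cyclically-reduced-previous} applied to $Z$ already gives that $Z$ is strongly cyclically reduced in $G_{\al-1}$ (so the recourse to Proposition~\ref{pr:cyclic-reduction}$_{\al-1}$ may in fact be avoided here), hence in $G_\al$ since $\frac12 + 2\la + 15\om < \rho$.

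\textbf{The main obstacle} I anticipate is not the quantitative estimates — these are a line-by-line transcription of Lemma~\ref{lm:reduction-step} with Lemma~\ref{lm:reduction-contiguity-cyclic} in place of Lemma~\ref{lm:reduction-contiguity} — but bookkeeping the $s$-equivariance so that the output is a genuine periodic line: that the $\bar{!K}_i$ and all their translates tile $\bar{!X}$ without overlap, that the replacement and the two cleaning operations commute with the $\Z$-action, and that passing to a reduced periodic representative does not destroy the coarse-polygon (conjugacy-relation) structure needed to invoke Lemma~\ref{lm:reduction-contiguity-cyclic}. This is exactly where the hypothesis that $X$ is not conjugate in $G_\al$ to a power of a relator root is essential, through Propositions~\ref{pri:fragment-in-periodic} and~\ref{pri:fragment-in-periodic-small} (no fragment of rank~$\al$ on $\bar{!X}$ is compatible with its $s$-translate, and such fragments are short), and it is the only genuinely new content compared with the linear case.
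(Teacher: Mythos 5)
Your proposal follows essentially the same path as the paper's proof: pass to the periodic line $\bar{!X}$, pick a maximal non-compatible family of large rank-$\al$ fragments confined to one period (using Proposition~\ref{pri:fragment-in-periodic} to guarantee non-compatibility with $s$-translates), replace them $s$-equivariantly by the complementary relator arcs to produce a conjugate word, run the cyclic cleaning procedure, and then invoke Lemma~\ref{lm:reduction-contiguity-cyclic} in place of Lemma~\ref{lm:reduction-contiguity} before finishing by the single-layer estimate of the linear case. One small imprecision: to bound $\muf(\ti{!K}_i)$ you write $\muf(\ti{!K}_i)\le\muf(!K_i)\le 1-4\la-8\om$, but the maximal extension $\ti{!K}_i$ need not sit inside a single cyclic shift of $X$, so you cannot cite the hypothesis directly on it; the paper instead first derives (via Propositions~\ref{pr:inclusion-compatibility} and~\ref{pr:dividing-fragment}) the slightly weaker bound $\muf(!K)<1-3\la-5\om$ for \emph{all} fragments on $\bar{!X}$, which is what is then used — both versions suffice for the final $\frac12+2\la+15\om$ bound, but the paper's intermediate step is the one that actually justifies the chain of inequalities.
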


\begin{proof}
The general scheme is the same as in the proof of Lemma \ref{lm:reduction-step}.
Let $\bar{!X} = \prod_{i\in \Z} !X_i$ be a line in $\Ga_{\al-1}$ labeled $X^\infty$.
First we note that for any fragment $!K$ of rank $\al$ in $\bar{!X}$
we have $s_{X,\bar{!X}} !K \not \sim !K$ by Proposition \ref{pri:fragment-in-periodic}.
By Propositions \ref{pr:inclusion-compatibility} and \ref{pr:dividing-fragment}
there exists a starting segment $!K'$ of $!K$ that is a fragment
of rank $\al$ with $\muf(!K') > \muf(!K) - \la - 3\om$ and $|!K'| \le |X|$,
i.e.\ $\lab(!K')$ occurs in a cyclic shift of $X$.
Then the hypothesis of the lemma implies that $\bar{!X}$ contains
no fragments $!K$ of rank $\al$ with $\muf(!K) \ge 1 - 3\la - 5\om$.


Denote $t = \frac12 + 11\om$.
We can assume that there is at least one fragment $!K$ of rank ~$\al$ in $\bar{!X}$ with $\muf(!K) \ge t$
(otherwise we can take $Z := X$).
We choose a maximal set $!K_1$, $\dots$, $!K_r$ of pairwise non-compatible fragments of rank $\al$ in $\bar{!X}$ with 
$\muf(!K_i) \ge t$ such that $!K_1 < \dots < !K_r < s_{X,\bar{!X}} !K_1$ 
and $!K_r \not\sim s_{X,\bar{!X}} !K_1$
(after choosing $!K_1$ we use Proposition \ref{pri:fragment-in-periodic} 
to get $s_{X,\bar{!X}} !K_1 \not \sim !K_1$).
We assume that each $!K_i$ has maximal size $\muf(!K_i)$ in its class of compatible fragments of rank ~$\al$ in $\bar{!X}$.
Using Proposition \ref{pr:small-overlapping} we shorten each $!K_i$ from its start obtaining a fragment $\bar{!K}_i$ 
of rank $\al$ so that all $\bar{!K}_i$ do not intersect pairwise and $|!K_1 \cup !K_r| \le |X|$;
we have $\muf(\bar{!K}_i) > \muf(!K_i) - \la - 2.7\om$.
Passing to a cyclic shift of $X$ (and changing all $!X_i$ accordingly) we may
assume also that 
$$
  !X_0 = \bar{!K}_1 !S_1 \dots \bar{!K}_r !S_r.
$$

Let $!P_i$ be the base for $\bar{!K}_i$ and $\bar{!K}_i^{-1} !u_i !P_i !v_i$
a loop in $\Ga_{\al-1}$ with bridges $!u_i$ and $!v_i$.
Denote $S_i \greq \lab(!S_i)$, $P_i \greq \lab(!P_i)$, $u_i \greq \lab(!u_i)$, 
$v_i \greq \lab(!v_i)$
and let $P_i Q_i^{-1}$ be the associated relator of rank ~$\al$.
Let 
$$
  Z = u_1 Q_1 v_1 S_1 u_2 Q_1 v_2 S_2 \dots u_r Q_r v_r S_r.
$$
Let $Y$ be a word strongly cyclically reduced in $G_{\al-1}$ that is conjugate to $Z$ in $G_{\al-1}$.
We prove that $Y$ satisfies the requirements of the lemma.
Note that $Y$ and hence $Z$ are conjugate to $X$ in $G_\al$.

We transform $Z$ using a procedure analogous to the procedure described in the proof of Lemma \ref{lm:reduction-step}.
At any moment, we will have a word $Z_1$ of the form
$$
  Z_1 = R_1 U_1 \dots R_r U_r,
$$
conjugate to $Z$ in $G_{\al-1}$ where each $R_i$ is a subword of $Q_i$ and each $U_i$ either is a bridge 
of rank $\al-1$ or has the form $w_i T_i z_i$ where $w_i$, $z_i$ are bridges of rank $\al-1$ and $T_i$
is a subword of $S_i$. At the start, we have $R_i = Q_i$ and $U_i = v_i S_i u_{i+1}$ (here and
below $i+1$ is taken modulo ~$r$).
The transformation procedure consists of the following steps applied recursively until possible.
\begin{enumerate}
\item 
Suppose that $U_i$ has the form $w_i T_i z_i$ above. If $R_i = R' R''$, $T_i = T' T''$ where $|R''| + |T'| > 0$ and 
$R'' w_i T'$ is equal in $G_{\al-1}$
to a bridge $w$ of rank $\al-1$ then replace $R_i$, $w_i$ and $T_i$ with $R'$, $w$ and $T''$ respectively;
similarly, if $T_i = T' T''$, $R_{i+1} = R' R''$ where $|T''| + |R'| >0$ and 
$T'' z_i R'$ is equal in $G_{\al-1}$ 
to a bridge $w$ of rank $\al-1$ then replace $T_i$, $z_i$ and $R_{i+1}$ with $T'$, $w$ and $R''$ respectively.
\item
If $R_i = R'R''$ and $R_{i+1} = R^* R^{**}$ where $|R''| + |R^*| >0$ and $R'' U_i R^*$ is equal in $G_{\al-1}$
to a bridge $w$ of rank $\al-1$ then replace $R_i$, $U_i$ and $R_{i+1}$ with $R'$, $w$ and $R^{**}$ respectively.
\end{enumerate}
Similar to the proof of Lemma \ref{lm:reduction-step}, after 
performing the procedure we obtain $|R_i|_{\al-1} > 4\eta$
for all ~$i$. 

Let $\bar{!Z} = \prod_{i \in \Z} !Z^{(i)}$ be a line in $G_{\al-1}$ labeled $Z^\infty$
and let $!Q_j^{(i)}$ denote the appropriate subpath of $!Z^{(i)}$ labeled $Q_j$.
We can implement the procedure above on the line $\bar{!Z}$ instead of a word ~$Z$
by changing appropriate paths instead of words (to each change of words 
in (i) or (ii) there corresponds infinitely many changes of paths translated
by $s_{X,\bar{!X}}$).
As a result, we get a line $\prod_{i \in \Z} !Z_1^{(i)}$ so that the corresponding 
subpath $!R_j^{(i)}$ of $!Z_1^{(i)}$ is also a subpath of $!Q_j^{(i)}$.
Denote also ~$!T_j^{(i)}$ the appropriate subpath of $!Z_1^{(i)}$ labeled $T_j$.
Let $\bar{!Y} = \prod_{i \in \Z} !Y^{(i)}$ be the line in $G_{\al-1}$ such that 
$\bar{!Z}$ and $\bar{!Y}$ are associated with conjugate words $Z$ and ~$Y$.
We apply Lemma \ref{lm:reduction-contiguity-cyclic} with $\bar{!X} :=\bar{!Y}$ 
where $!R_j^{(i)}$ and $!T_j^{(i)}$ play the role of $!Y_j^{(i)}$'s 
and $!R_j^{(i)}$ are taken as $!Y_{t_j}^{(i)}$. 
According to the lemma, each path $!R_j^{(0)}$ has a vertex close to a vertex 
on $\bar{!Y}$,
these vertices on $\bar{!Y}$ are ordered along $\bar{!Y}$ in the increasing
order of the index $j$, and the length of the segment of $\bar{!Y}$ between 
the first and the last one is not more that $|Y|$.
Up to cyclic shift of $Y$, we can write
$$
  !Y^{(0)} = !W_0 !M_1 !W_1 \dots !M_r !W_r
$$ 
where each $!M_j$ is close to a subpath of $!Q_j^{(0)}$.
Taking $!M_j$ maximal with these properties we obtain, as in the proof of Lemma \ref{lm:reduction-step},
$$
  \muf(!M_i) > \la + 3.5\om \quad\text{for all }j.
$$
The rest of the proof is similar to the proof of Lemma \ref{lm:reduction-step}.
\end{proof}

\begin{lemma} \label{lm:close-distance-bound}
If $!X$ is a reduced path in $\Ga_\al$ and the endpoints of $!X$ are close then $|!X|_\al \le 1$.
\end{lemma}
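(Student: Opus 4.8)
The statement to prove is Lemma~\ref{lm:close-distance-bound}: if $!X$ is a reduced path in $\Ga_\al$ whose endpoints are close, then $|!X|_\al \le 1$. The plan is to set up a degenerate coarse bigon and apply the structural bounds already established for small diagrams. Since the endpoints of $!X$ are close, there is a bridge $!w$ of rank $\al$ joining $\tau(!X)$ to $\io(!X)$, so that $!X !w$ is a loop in $\Ga_\al$, i.e.\ a coarse monogon with the single side $!X$ and the single bridge $!w$. Equivalently, one can view this as a coarse bigon $!X^{-1} !u !Y !v$ with $!Y$ the empty path (and $!u,!v$ absorbing $!w$), but it is cleaner to work directly with the monogon.

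\textbf{Key steps.} First I would fill $!X !w$ by a reduced disk diagram $\De$ of rank $\al$ with one side $\bar{!X}$ (mapping to $!X$ under $\phi$) and one bridge $\bar{!w}$, using Proposition~\ref{pr:filling-polygons} (after possibly switching $!w$). Second, by Lemma~\ref{lm:monogon-regularity}, a reduced disk diagram of rank $\al$ with a single side has no cells of rank $\al$; hence $\Area_\al(!X !w) = 0$. This already lifts $!X !w$ to $\Ga_{\al-1}$ after possible switching of $!w$ (Remark~\ref{rm:no-active-lift}), but lifting alone does not immediately bound $|!X|_\al$, so I proceed by induction on $\al$. Third, for the inductive step: assuming the lemma holds in rank $\al-1$, I pass $!X$ and $!w$ down to $\Ga_{\al-1}$. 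Writing $!w = !w_1 !S !w_2$ for its bridge partition of rank $\al$ (with $!w_i$ bridges of rank $\al-1$ and $\lab(!S)$ a piece of rank $\al$), the loop $!X !w_1 !S !w_2$ lies in $\Ga_{\al-1}$ and is a coarse monogon of rank $\al-1$ with side $!X$ and central arc $!S$ of length $|!S|_\al \le 1$ (a single piece contributes at most one to the rank-$\al$ length by the definition in \ref{ss:alpha-length} together with property \eqref{cl:om-rank-increment}). Applying Proposition~\ref{pr:small-cyclic-monogon} or, more directly, Proposition~\ref{pr:principal-bound} to a small rank-$(\al-1)$ diagram filling this monogon, I bound $|!X|_{\al-1}$; but this gives a bound in terms of $\eta$, not the sharp constant $1$. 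The sharp bound requires a finer argument: I would instead observe that since $\De_{\al-1}$ has complexity controlled and no cells of rank $\al$, the side $!X$ is ``single-layered'' over at most one piece $!S$, and since $!X$ is reduced in $G_\al$ (hence contains no fragment of rank $\le \al$ of size $> \rho$), the only way $!X$ can be filled monogon-fashion against the single arc $!S$ with $|!S|_\al \le 1$ is if $|!X|_\al \le 1$ as well.

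\textbf{Main obstacle.} The delicate point is squeezing out the \emph{sharp} constant $1$ rather than a weaker bound like $\eta$ or $5.7$. Propositions~\ref{pr:principal-bound} and \ref{pr:small-cyclic-monogon} give bounds proportional to $\eta$, which are too coarse. The resolution should come from the base case $\al = 0$, where a reduced (= freely reduced) path whose endpoints are joined by a bridge of rank $0$ — i.e.\ by the empty word (Remark~\ref{rm:close-in-rank-0}) — must itself be the empty path, giving $|!X|_0 = 0 \le 1$; and from carefully tracking in the inductive step that the reduced monogon $\De$ of rank $\al$, having no rank-$\al$ cells, reduces to a reduced monogon of rank $\al-1$ whose single side again has its endpoints joined by a bridge, once the central arc $!S$ (of $\mu$-measure less than $\rho$ since $!X$ is reduced) and its contiguity subdiagram are stripped off. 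Thus the induction closes with the constant $1$ because stripping a piece changes $|!X|_\al$ by at most one unit, and at the bottom the length is $0$. I would organize the induction so that the inductive hypothesis is precisely Lemma~\ref{lm:close-distance-bound} for rank $\al-1$, and the only genuine work is verifying that after removing the (at most one) contiguity subdiagram between the side and the central arc, what remains is again a reduced rank-$(\al-1)$ diagram of monogon type with close endpoints.
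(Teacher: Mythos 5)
There is a genuine gap. You correctly set up the coarse monogon $!X !w$, fill it, and invoke Lemma~\ref{lm:monogon-regularity} to conclude there are no rank-$\al$ cells — this is essentially what Lemma~\ref{lm:monogon-graph-version}, the lemma the paper actually cites, does internally. But from that point on you lose the thread. After removing rank-$\al$ cells the diagram $\De_{\al-1}$ is (when $!w$ has native rank $\al$ with bridge partition $!u\,!S\,!v$) a rank-$(\al-1)$ diagram of \emph{bigon} type, with the two sides $!X$ and $!S^{-1}$ — not another monogon, and not something to strip $!S$ off of. The conclusion one should draw is that $!X$ is close in rank $\al-1$ to the piece $!S^{-1}$ of rank $\al$. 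By the definition in \ref{ss:fragment}, this means $\lab(!X)$ is, in its entirety, a single fragment of rank $\al$ (or is empty). Hence the trivial fragmentation of $\lab(!X)$ by itself already has weight $\le 1$, and $|!X|_\al\le 1$ follows from the definition of $|\cdot|_\al$ in \ref{ss:alpha-length}, with no further inequality needed. This is the sharp-constant mechanism you were looking for, and it is definitional, not an estimate.

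You explicitly recognize that the small-diagram bounds (Propositions~\ref{pr:principal-bound}, \ref{pr:small-cyclic-monogon}, or even \ref{pr:length-compare-close}) only give constants like $\eta$ or $1.3|!S|_\al + 2.2$, which is a correct diagnosis, but your proposed fix does not close: ``stripping the central arc and its contiguity subdiagram'' does not leave behind a monogon on the full $!X$, and the assertion that ``stripping a piece changes $|!X|_\al$ by at most one unit'' is neither a statement in the paper nor a consequence of the definitions — it conflates decomposing $!X$ into a union of pieces (which would lose control) with recognizing that $!X$ as a whole is close to a piece (which wins immediately). The paper's proof is one line precisely because Lemma~\ref{lm:monogon-graph-version} directly delivers the closeness-to-a-piece conclusion and the rest is the definition of a fragment.
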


\begin{proof}
For $\al\ge 1$ this follows from 
Lemma \ref{lm:monogon-graph-version}. 
\end{proof}

\begin{lemma} \label{lm:piece-fragment}
If $P$ is a piece of rank $\al$ then for any fragment $K$ of rank $\al$ in $P$ we have
$\muf(K) \le \max\set{\la, \mu(P) + 2\om}$. 
\end{lemma}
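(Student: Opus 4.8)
The plan is to play off two descriptions of $K$ against each other. On one hand, $K$ is a subword of $P$, hence of the relator $R$ associated with $P$, so $K$ is automatically a piece of $R$ and $\mu(K):=|K|_{\al-1}/|R^\circ|_{\al-1}\le\mu(P)$ by subadditivity of $|\cdot|_{\al-1}$ (\ref{ss:alpha-length-properties}(1)). On the other hand, $K$ carries its own fragment data: a base $P'$, a subword of $S^k$ for a relator $S$ of rank $\al$, with $K=uP'v$ in $G_{\al-1}$ ($u,v\in\cH_{\al-1}$) and $\muf(K)=\mu(P')=|P'|_{\al-1}/|S^\circ|_{\al-1}$. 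I will show that either $\muf(K)<\la$, or the two periodic lines attached to these two descriptions coincide, in which case $\muf(K)\le\mu(K)+2\om\le\mu(P)+2\om$; together these give $\muf(K)\le\max\set{\la,\mu(P)+2\om}$. (For $\al=0$ there is nothing to prove.)

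First I would transfer everything to $\Ga_{\al-1}$. Let $!L_R$ be the infinite $R$-periodic line, which is a bi-infinite simple line since $R$ is strongly cyclically reduced in $G_{\al-1}$ by Proposition~\ref{pr:relator-strongly-reduced}, and let $!K$ be the subpath of $!L_R$ carrying the occurrence of $K$ as a subword of $P\subseteq R$. Using the fragment data, complete $!K$ to a loop $!K^{-1}!u!P'!v$ with $\lab(!u!P'!v)\greq uP'v$ as in \ref{ss:fragment-paths}; then $!P'$ determines its own $S$-periodic base axis $!L_S$, and $!K$, $!P'$ are close subpaths of $!L_R$, $!L_S$ respectively.

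Then comes the case split. If $!L_R\ne!L_S$, then Corollary~\ref{co:small-overlapping-Cayley}, applied to the close subpaths $!K$ of $!L_R$ and $!P'$ of $!L_S$, forces $\muf(K)=\mu(!P')<\la$, and the claim holds. If $!L_R=!L_S=:!L$, then $R$ and $S$ are cyclic shifts of a single relator up to inversion — this is exactly the content extracted from (S3) and the normalization assumption in the proof of Lemma~\ref{lm:compatible-lines} — so in particular $|R^\circ|_{\al-1}=|S^\circ|_{\al-1}$, and by (S1) in the form \eqref{eq:S2-piece-form} this common value is at least $\Om=1/\om$. Now the segment $!w_1$ of the simple line $!L$ between $\io(!K)$ and $\io(!P')$ is a reduced path with close endpoints, so $|!w_1|_{\al-1}\le1$ by Lemma~\ref{lm:close-distance-bound}$_{\al-1}$; likewise the segment $!w_2$ between $\tau(!K)$ and $\tau(!P')$ has $|!w_2|_{\al-1}\le1$. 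Since $!P'$ lies in $!L$ and is covered by $!w_1$, $!K$, $!w_2$, subadditivity (\ref{ss:alpha-length-properties}(2)) gives $|!P'|_{\al-1}\le|!K|_{\al-1}+2$, whence
\[
  \muf(K)=\frac{|!P'|_{\al-1}}{|S^\circ|_{\al-1}}
  \le\frac{|!K|_{\al-1}+2}{|R^\circ|_{\al-1}}
  \le\mu(K)+2\om\le\mu(P)+2\om .
\]

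The technical care needed is mostly bookkeeping: checking that placing $!K$ on $!L_R$ and then reading off $!L_S$ from the completing loop agrees with the definitions in \ref{ss:fragment-paths}, and verifying that "$!P'$ is covered by $!w_1,!K,!w_2$'' holds for every ordering of the four endpoints $\io(!K),\tau(!K),\io(!P'),\tau(!P')$ along the line $!L$. I expect no serious obstacle here; all the substance is already packaged in Corollary~\ref{co:small-overlapping-Cayley} and Lemma~\ref{lm:close-distance-bound}, together with the identification $|R^\circ|_{\al-1}=|S^\circ|_{\al-1}$ coming from (S3) and normalization.
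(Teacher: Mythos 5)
Your proposal is correct and follows essentially the same route as the paper's proof: the same dichotomy on whether the base axis of $K$ coincides with the $R$-periodic line $!L$ through $!P$, invoking Corollary~\ref{co:small-overlapping-Cayley} in the divergent case and Lemma~\ref{lm:close-distance-bound}$_{\al-1}$ plus subadditivity in the coincident case. You are a bit more explicit than the paper about the identification $|R^\circ|_{\al-1}=|S^\circ|_{\al-1}$ coming from Lemma~\ref{lm:compatible-lines} and about the endpoint-covering bookkeeping, but these are just small steps the paper leaves implicit.
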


\begin{proof}
Let $!P$ be a path in $\Ga_{\al-1}$ with $\lab(!P) \greq P$, let $R$ be the associated relator of rank ~$\al$
and let $!L$ be the line labeled $R^\infty$ extending $!P$.
Assume that $!K$ is a fragment of rank $\al$ contained in $!P$.
If the base axis for $!K$ is distinct from $!L$ then $\muf(!K) < \la$ by Corollary \ref{co:small-overlapping-Cayley}.
Otherwise the base $!Q$ for $!K$ is contained in $!L$ and
Lemma \ref{lm:close-distance-bound}$_{\al-1}$ implies
$$
    \muf(!K) = \mu(!Q) \le \mu(!K) + 2\om \le \mu(!P) + 2\om.
$$
\end{proof}

\begin{proposition} \label{pr:piece-reduction}
Let $P$ be a piece of rank $1\le \be \le \al$ with $\mu(P) \le \rho - 2\om$.
Then $P$ is reduced in $G_\al$. 
If $R\greq QS$ where $R$ is a relator of rank $\be$ then either $Q$ or $S$ 
is reduced in ~$G_\al$. 
\end{proposition}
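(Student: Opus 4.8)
The plan is to prove the first assertion by checking, rank by rank, that $P$ has no ``large'' fragment, and then to derive the second assertion from the first by an elementary length count.

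For the first assertion I would use the reformulation of reducedness recorded in \ref{ss:fragment-measuring}: a word is reduced in $G_\al$ precisely when it is freely reduced and contains no fragment $F$ of rank $1\le\ga\le\al$ with $\muf(F)>\rho$. Free reducedness of $P$ is immediate from (S0), since the relator $R$ of rank $\be$ associated with $P$ is cyclically reduced in $G_{\be-1}$, hence freely reduced, and $P$ is a start of $R$. For a fragment $F$ of rank $\ga\le\be-1$: by (S0) the word $R$ is reduced in $G_{\be-1}$, hence so is its subword $P$ (the class $\cR_{\be-1}$ being closed under subwords), so no such $F$ with $\muf(F)>\rho$ exists. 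For a fragment $F$ of rank exactly $\be$: Lemma~\ref{lm:piece-fragment} gives $\muf(F)\le\max\{\la,\ \mu(P)+2\om\}$, which is at most $\rho$ by the hypothesis $\mu(P)\le\rho-2\om$ together with $\la\le\rho$ (a consequence of $\la\le\frac1{24}$). For a fragment $F$ of rank $\ga$ with $\be<\ga\le\al$: Lemma~\ref{lm:piece-fragment-no-higher-fragments}, applied with its ambient rank taken to be $\ga$, shows $P$ has no fragment of rank $\ga$ with $\muf(F)\ge3.2\om$, and $3.2\om<\rho$. These four cases are exhaustive, so $P$ is reduced in $G_\al$.

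For the second assertion, note that $Q$ is a piece of rank $\be$ with associated relator $R$ and $S$ is a piece of rank $\be$ with associated relator $SQ$, a cyclic shift of $R$; hence $\mu(Q)=|Q|_{\be-1}/|R^\circ|_{\be-1}$ and $\mu(S)=|S|_{\be-1}/|R^\circ|_{\be-1}$. I would bound $\mu(Q)+\mu(S)$ from above: semi-additivity \ref{ss:alpha-length-properties}(1) gives $|Q|_{\be-1}+|S|_{\be-1}\le|R|_{\be-1}+1$, and the same item applied to $R$ and to the cyclic shift of $R$ realizing $|R^\circ|_{\be-1}$ yields $|R|_{\be-1}\le|R^\circ|_{\be-1}+1$; combined with $|R^\circ|_{\be-1}\ge|R|_{\be-1}-1\ge\Om-1$ from (S1), this gives $\mu(Q)+\mu(S)\le 1+\frac{2}{\Om-1}=1+\frac{2\om}{1-\om}$. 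Therefore $\min\{\mu(Q),\mu(S)\}\le\frac12+\frac{\om}{1-\om}$, and using $\om\le\frac1{480}$ and $\rho\ge\frac58$ one checks that this quantity is $\le\rho-2\om$. By the first assertion, the shorter of $Q$ and $S$ is then reduced in $G_\al$.

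The only point requiring a little care — essentially the sole obstacle — is the comparison $|R|_{\be-1}\le|R^\circ|_{\be-1}+1$ between the linear and cyclic $(\be-1)$-lengths of $R$; everything else is bookkeeping with the constants through \eqref{eq:ISC-main} and \eqref{eq:om-bounds}.
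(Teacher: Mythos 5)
Your proof is correct and follows essentially the same strategy as the paper: the first assertion is a rank-by-rank check using (S0) for ranks $\le\be-1$, Lemma~\ref{lm:piece-fragment} for rank $\be$, and Lemma~\ref{lm:piece-fragment-no-higher-fragments} for ranks $>\be$, while the second assertion reduces to the first by bounding $\min\{\mu(Q),\mu(S)\}$. The only minor divergence is in the second part: the paper invokes \ref{ss:alpha-length-properties}(ii) directly with $Q,S$ viewed as disjoint subwords of the \emph{cyclic} word $R^\circ$, obtaining $\mu(Q)+\mu(S)\le1+2\om$ at once, whereas you route through the linear word $|R|_{\be-1}$ and the comparison $|R|_{\be-1}\le|R^\circ|_{\be-1}+1$, which yields the marginally weaker $1+\tfrac{2\om}{1-\om}$; both bounds comfortably clear the threshold $\rho-2\om$, so nothing is lost.
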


\begin{proof}
The first statement follows from Lemmas \ref{lm:piece-fragment-no-higher-fragments} 
and \ref{lm:piece-fragment}.
If $R$ is a relator of rank $\be$ and $R\greq QS$ then 
by \ref{ss:alpha-length-properties}(ii),
we have either $\mu(Q) \le \frac12 + \om$ or $\mu(S) \le \frac12 + \om$.
It remains to note that $\frac12 + \om <  \rho - 2\om$.
\end{proof}

\begin{proof}[Proof of Proposition \ref{pr:cyclic-reduction}]
Let $X$ be a word representing an element of $G_\al$. We may assume that $X$ is reduced in $G_\al$ as 
a non-cyclic word. We perform a ``coarse cyclic cancellation'' in ~$X$:
represent $X$ as $U X_1 V$ where $V U$ is equal in $G_\al$ 
to a bridge $u$ of rank $\al$ and $X_1$ has the minimal possible length.
Let $u \greq v_1 P v_2$ where $P$ is a piece of rank $\al$.
We can assume that $\mu(P) \le \frac12 + \om$.
Let $Y$ be a word cyclically reduced in $G_{\al-1}$ and conjugate to $X_1 u$ in ~$G_{\al-1}$. 
Note that $X_1 u$ and hence $Y$ are conjugate to $X$ in $G_\al$.
We show that either $Y$ is conjugate in $G_{\al-1}$
to a power $R_0^t$ of the root $R_0$ of a relator of rank $\be\le\al$
or no cyclic shift of $Y$ contains a fragment $K$ of rank $\al$
with $\muf(!K) \ge \rho + 2\la + 16\om$.
In the first case,
by Proposition ~\ref{pr:piece-reduction} we can assume that $R_0^k$ is cyclically reduced in $G_\al$
and we come to the first alternative of Proposition \ref{pr:cyclic-reduction}.
Otherwise, according to Proposition \ref{pr:cyclic-reduction}$_{\al-1}$ we can assume
that $Y$ is strongly cyclically reduced in $G_{\al-1}$.
Then we apply Lemma \ref{lm:cyclic-reduction-step} to
find a strongly cyclically reduced in $G_\al$ word $Z$ conjugate to $Y$ in $G_\al$
(note that $\rho + 2\la + 16\om < 1 - 4\la - 8\om$), coming to the second alternative.


Let $\bar{!Y} = \prod_{i \in \Z} !Y_i$ and $\prod_{i \in \Z} !X_1^{(i)} !v_1^{(i)} !P_i !v_2^{(i)}$
be lines in $\Ga_{\al-1}$ representing the conjugacy relation. We observe that
\begin{enumerate}
\item
{\em The base axis of any fragment $!N$ of rank $\al$ in $!P_i$ with $\muf(!N) \ge \la$ is the 
infinite periodic extension of 
$!P_i$. In particular, If $!N_1$ and $!N_2$ are fragments of rank $\al$ in $!P_i$ with $\muf(!N_j) \ge \la$
then $!N_1 \sim !N_2$.} 
(This follows from Corollary \ref{co:small-overlapping-Cayley}.)
\end{enumerate}
Now formulate some consequences of the choice of $X_1$ of minimal possible length:
\begin{enumerate}
\setcounter{enumi}{1}
\item 
{\em There exist no fragments $!N_1$ and $!N_2$ of rank $\al$ in $!X_1^{(i)}$ and in $!X_1^{(i+1)}$, respectively,
such that $!N_1 \sim !N_2$ and $\muf(!N_i) \ge 3.2\om$.}  
\end{enumerate}
Indeed, assume that such $!N_1$ and $!N_2$ do exist. Note that both $!N_1$ and $!N_2$ are nonempty
by Lemma \ref{lm:piece-fragment-no-higher-fragments}. 
By Lemma  \ref{lmi:compatible-close}, any two of the endpoints of the images of $!N_1$ and $!N_2$
in ~$\Ga_\al$ are close. Then we can shorten $X_1$ to its subword $X_2$ so that $X_2 u'$ is conjugate
to $X$ in ~$G_\al$ for some $u' \in \cH_\al$ contrary to the choice of $X_1$ 
(see Figure~\ref{fig:cyclic-reduction}a; in the figure we have 
$!N_2 \ll s_{Y,\bar{!Y}} !N_1$ in $!X_1^{(i+1)}$ but in all other cases we can easily find an 
appropriate path $!X_2$ with $|!X_2| < !X_1$ and take $X_2 := \lab(!X_2)$). 
\begin{figure}[h]
\input 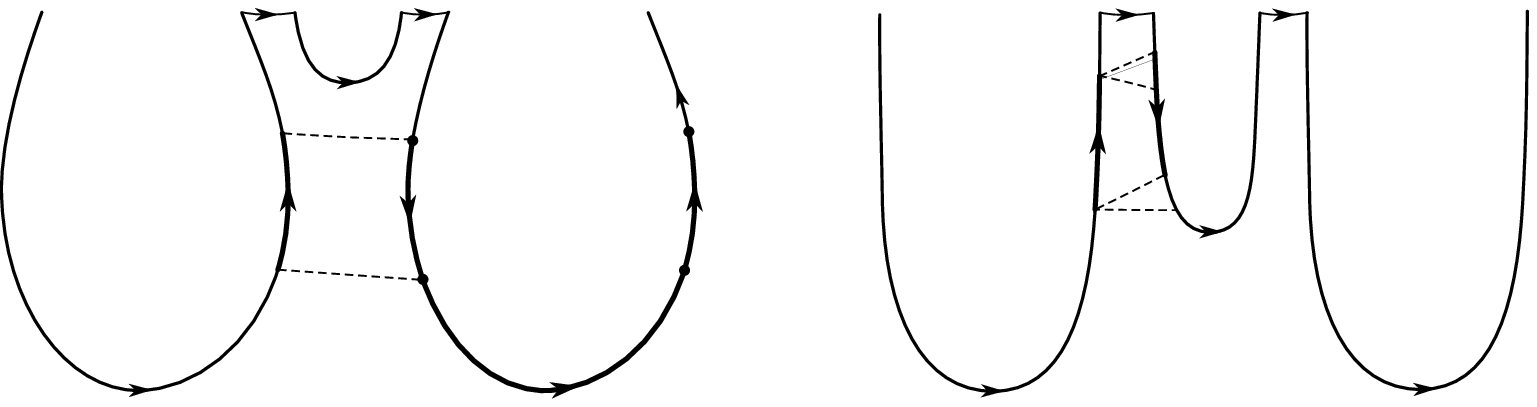_tex
\caption{}  \label{fig:cyclic-reduction}
\end{figure}
\begin{enumerate}
\setcounter{enumi}{2}
\item
{\em There exist no fragments $!N_1$ and $!N_2$ of rank $\al$ in $!X_1^{(i)}$ and in $!P_i$ or $!P_{i-1}$, 
respectively, such that $!N_1 \sim !N_2$,  $\muf(!N_1) \ge 3.2\om$ and $\muf(!N_2) \ge \la$.} 
(Otherwise using (i) we can shorten ~$X_1$ to $X_2 := \lab(!X_2)$ as shown in 
Figure~\ref{fig:cyclic-reduction}b.)
\end{enumerate}
Let $Q$ be a word reduced in $G_{\al-1}$ which is equal 
to $X_1 v_1 P$ in $G_{\al-1}$.
We denote $!Q_i$ the corresponding path in $\Ga_{\al-1}$ joining $\io(!X_1^{(i)})$ with $\tau(!P_i)$.
Using (iii), Proposition \ref{pr:fragment-trigon-previous} and Lemma ~\ref{lm:piece-fragment} we conclude that
\begin{enumerate}
\setcounter{enumi}{3}
\item
There are no fragments $!M$ of rank $\al$ in $!Q_i$ with $\muf(!M) \ge \rho + \la + 6.2\om$.
\end{enumerate}

Assume that $!K$ is a fragment of rank $\al$ in $\bar{!Y}$ with $\muf(!K) \ge \rho + 2\la + 16\om$
and $|!K| \le |Y|$.
By (iv) and Proposition \ref{pr:fragment-cyclic-monogon-previous},
for some $i$ there are fragments $!M_1$ and $!M_2$ of rank $\al$ in $!Q_i$ and $!Q_{i+1}$ respectively such that 
$!M_j \sim !K$ $(i=1,2)$ and $\muf(!M_j) > \la +6.8\om$.
By Proposition ~\ref{pr:fragment-trigon-previous} there is a fragment $!N_1$ of rank $\al$ 
such that $!M_1 \sim !N_1$
and either $!N_1$ occurs in $!X_1^{(i)}$ and $\muf(!N_1) > 3.2\om$ or 
$!N_1$ occurs in $!P_i$ and $\muf(!N_1) > \la$. Similarly, there 
is a fragment $!N_2$ of rank $\al$ such that $!M_2 \sim !N_2$
and either $!N_2$ occurs in $!X_1^{(i+1)}$ and $\muf(!N_2) > 3.2\om$ or 
$!N_2$ occurs in $!P_{i+1}$ and $\muf(!N_2) > \la$.
If $!N_1$ occurs in $!X_1^{(i)}$ and $!N_2$ occurs in $!X_1^{(i+1)}$
we get a contradiction with (ii). 
If $!N_1$ occurs in $!P_i$ and $!N_2$ occurs in $!X_1^{(i+1)}$
or $!N_1$ occurs in $!X_1^{(i)}$ and $!N_2$ occurs in $!P_{i+1}$ we get 
a contradiction with ~(iii). Finally, if $!N_1$ occurs in $!P_i$ 
and $!N_2$ occurs in ~$!P_{i+1}$ then by (i), 
we have $s_{Y,\bar{!Y}} !N_1 \sim !N_2$ and hence $!K \sim s_{Y,\bar{!Y}} !K$. 
By Proposition \ref{pri:fragment-finite-order}$_{\al-1}$ this implies that $Y$ is 
conjugate in $G_{\al-1}$ to a power of the root of a relator of rank $\al$.
This finishes the proof.
\end{proof}

\begin{proposition} \label{pr:relator-root}
Let $R$ be a relator of rank $\be \le \al$ and let $R \greq R_0^n$ where $R_0$ is the root of $R$.
Then $R_0$ has order $n$ in $G_\al$.
\end{proposition}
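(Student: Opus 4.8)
The statement asserts that if $R \eqcirc R_0^n$ is a relator of rank $\be\le\al$ with root $R_0$, then $R_0$ has order exactly $n$ in $G_\al$. Since $R=1$ in $G_\al$ (it is a defining relator of rank $\be$), we certainly have $R_0^n=1$, so the order of $R_0$ divides $n$; it remains to show the order is not a proper divisor, i.e.\ that $R_0^k\ne1$ in $G_\al$ whenever $1\le k<n$.

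First I would reduce to a statement about the Cayley graph. Suppose $R_0^k=1$ in $G_\al$ for some $1\le k<n$. Consider an $R_0$-periodic line $!L$ in $\Ga_\be$ (or, after lifting, in $\Ga_{\al-1}$); since $R_0^n=1$ in $G_\be$, $!L$ projects to a loop of length $n|R_0|$ in $\Ga_\be$ winding $n$ times over a relator loop labeled $R$. The hypothesis $R_0^k=1$ in $G_\al$ would force, in $\Ga_\al$, a shorter closure: the translation element $s_{R_0,!L}^k$ would be trivial in $G_\al$, so the period of periodicity of $!L$ viewed in $\Ga_\al$ strictly divides $n$. The natural move is then to exhibit a fragment of rank $\be$ sitting inside $!L$ of size larger than $\rho$ — indeed, since $R_0$ is a non-power in $G_{\al-1}$ and $R$ is cyclically reduced in $G_{\al-1}$ (condition (S0) and Proposition~\ref{pr:relator-strongly-reduced}), the word $R$ itself, read along $!L$, is a fragment of rank $\be$ of size $\muf=1>\rho$. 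More carefully, I would take the subword of $R_0^{k}$ spanning a full copy of $R$ and observe that the closeness relation $R_0^k=1$ in $G_\al$, combined with Proposition~\ref{pri:fragment-finite-order} (the stabilizer of such a fragment in $G_\al$ is cyclic conjugate to $\sgp{R_0}$), pins down that $s_{R_0,!L}^k$ stabilizes the base relator loop and hence $s_{R_0,!L}^k\in\sgp{R_0}$ already forces $n\mid k$ — contradiction with $k<n$.

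The cleanest route, which I would try first, is: apply Proposition~\ref{pri:fragment-in-periodic} directly. Take $X\eqcirc R_0^k$ with $1\le k<n$ and suppose $X=1$ in $G_\al$, equivalently $R_0^k$ is conjugate (in fact equal) to the empty element; more usefully, if $R_0^k\ne 1$ but has order $d<n$ with $d\mid n$, then $R_0^d=1$ in $G_\al$, and the element represented by $R_0$ in $G_\al$ is not conjugate to a power of $R_0$ that equals identity in any smaller sense — here I need to be careful, since $R_0$ itself \emph{is} a power of the root $R_0$. The right formulation: let $!K$ be the fragment of rank $\be$ in $\Ga_\al$ whose base loop is labeled $R$, obtained from the $R_0$-periodic segment of length $n|R_0|$; then $\muf(!K)=1\ge 2\la+5.3\om$, and if $R_0^k=1$ in $G_\al$ with $k<n$ the periodic extension has $|!K|\ge 2|X|$ where $X\eqcirc R_0^k$, so Proposition~\ref{pri:fragment-in-periodic-small} (with the word $X\eqcirc R_0^k$, which is \emph{not} conjugate in $G_\al$ to a power of $R_0$ equal to $1$... ) gives the contradiction. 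I expect I will have to massage which ``$X$'' plays the role in \ref{pri:fragment-in-periodic}: the cleanest choice is to note that if $R_0$ had order $k<n$ in $G_\al$, then $G_\al$ contains a \emph{finite} cyclic subgroup $\sgp{R_0}$ of order $k$ whose generator is conjugate to the root of the relator $R$ of rank $\be$, which by Proposition~\ref{pri:fragment-finite-order} has its stabilizer subgroup conjugate to $\sgp{R_0}$ — but that stabilizer, computed inside $G_\be$ already, has order $n$ (since $R_0$ has order $n$ in $G_\be$ by induction and $R=R_0^n$ is reduced so $R_0^j\ne1$ in $G_\be$ for $j<n$); the covering map $\Ga_\al\to\Ga_\be$ then forces order $n$.

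\textbf{Main obstacle.} The delicate point is the bookkeeping of which group the element $R_0$ ``lives in'' and against which statement to apply the fragment-stability machinery: $R_0$ is simultaneously the root of a rank-$\be$ relator (so its order is governed by the normalized presentation and condition (S3)) and a genuine group element of $G_\al$. The inductive hypothesis gives order $n$ in $G_{\al-1}$ (hence in $G_\be$), and the content of the proposition is that passing from $G_{\al-1}$ to $G_\al$ — i.e.\ imposing the rank-$\al$ relators — does not collapse $\sgp{R_0}$ further. This is exactly the kind of assertion Proposition~\ref{pri:fragment-finite-order} is built to deliver, so I expect the proof to be short: reduce to $\al\ge1$, invoke Proposition~\ref{pri:fragment-finite-order} for the fragment of rank $\be$ with base relator loop labeled $R$ to see that the order of $R_0$ in $G_\al$ equals the order of $R_0$ acting on that loop, which is $n$ by construction; the $\al=0$ (free group) case is immediate since $R_0$ is not a proper power there either, given $R$ is freely reduced. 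The one thing to verify carefully is that $\muf$ of that base fragment meets the threshold hypotheses of whichever lemma is cited, which it does trivially as $\muf=1$.
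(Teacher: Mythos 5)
Your proposed route does not establish the statement, and the key claim in your final step is false. You write that the stabilizer subgroup conjugate to $\sgp{R_0}$ has order $n$ in $G_\be$ by induction, and that ``the covering map $\Ga_\al\to\Ga_\be$ then forces order $n$.'' That implication runs the wrong way: passing from $G_\be$ to $G_\al$ (for $\be<\al$) means imposing additional relations, so the order of $R_0$ can only drop or stay the same; the quotient gives only the upper bound $n$, and the entire content of the proposition is precisely that the order does not drop further. Your earlier attempts via Propositions \ref{pri:fragment-in-periodic} and \ref{pri:fragment-in-periodic-small} cannot be applied, because their hypothesis requires the period $X$ to be \emph{not} conjugate to a power of $R_0$, while here $X \eqcirc R_0^k$ literally is such a power; you noticed this tension yourself but never resolved it. Proposition \ref{pri:fragment-finite-order} only identifies the stabilizer with $\sgp{R_0}$ up to conjugacy and says nothing about its size, so invoking it to compute the order of $R_0$ is circular.

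The argument the paper uses is considerably more elementary and you did not find it. For any proper divisor $k$ of $n$, the word $R_0^k$ (and every cyclic shift of it) is a \emph{piece} of the rank-$\be$ relator $R \eqcirc R_0^n$ with $\mu(R_0^k) \le k/n + \om < \rho - 2\om$. Proposition \ref{pr:piece-reduction}$_\be$ then gives that $R_0^k$ is cyclically reduced in $G_\be$, and Lemma \ref{lm:piece-fragment-no-higher-fragments} shows it contains no fragments of rank $\ga$ with $\be<\ga\le\al$ of size $\ge 3.2\om$, so it remains cyclically reduced in $G_\al$. A nonempty word reduced in $G_\al$ cannot equal $1$ there by Proposition \ref{pr:reduced-nontrivial}. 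The observation you were missing is that proper subpowers $R_0^k$ are \emph{small pieces} of $R$ and therefore survive to all ranks by the basic reducedness machinery --- no stabilizer, periodic-line, or covering-space argument is needed.
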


\begin{proof}
Let $k$ be a proper divisor of $n$.
By Lemma \ref{lm:piece-fragment-no-higher-fragments}, $R_0^k$ contains no fragments $K$ 
of rank ~$\ga$ with 
$\muf(K) \ge 3.2\om$, for all $\ga=\be+1,\dots,\al$. 
By Proposition \ref{pr:piece-reduction}$_\be$,
$R_0^k$ is cyclically reduced in ~$G_\be$ and hence also in rank $\al$. Hence $R_0^k \ne 1$ in $G_\al$.
\end{proof}

\begin{proposition}[conjugate powers of relator roots] 
\label{pr:conjugate-relator-roots}
Let $R$ be a relator of rank $1\le \be \le \al$ and let $R \greq R_0^n$ where $R_0$ is the root of $R$.
If $R_0^k = g^{-1} R_0^l g$ in $G_\al$ for some $k,l \not\equiv 0 \pmod n$ then $g \in \sgp{R_0}$ 
and $k \equiv l \pmod n$.
\end{proposition}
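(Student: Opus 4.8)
The plan is to realise the conjugacy by an annular diagram, exploiting that by Proposition~\ref{pr:relator-root} the element $R_0$ has order exactly $n$ in $G_\al$: an $R_0$-periodic line in a Cayley graph then has image a single relator loop of rank~$\be$, whose vertex set is a left coset of the finite cyclic group $\sgp{R_0}$. First I would make the routine reductions. Replacing $k,l$ by their residues modulo~$n$ we may assume $1\le k,l\le n-1$, so $R_0^k\ne1\ne R_0^l$ in $G_\al$ by Proposition~\ref{pr:relator-root}. For each of $k,l$ I choose a word representing the same element of $G_\al$ which is cyclically reduced in $G_\al$: put $W_k\greq R_0^k$ when $k$ is small enough for this word to be cyclically reduced — which, since $\mu(R_0^k)$ is essentially $k/n$, holds by Proposition~\ref{pr:piece-reduction} and Lemma~\ref{lm:piece-fragment-no-higher-fragments} as soon as $k\le(\rho-3\om)n$ — and $W_k\greq(R_0^{-1})^{n-k}$ otherwise, the latter being a power of the root $R_0^{-1}$ of the relator $R^{-1}$ of rank~$\be$ with exponent $n-k\le(1-\rho+3\om)n\le(\rho-3\om)n$, hence again cyclically reduced; define $W_l$ likewise. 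In every case the periodic line built from $W_k$ (resp.\ $W_l$) as in~\ref{ss:relations} has image a relator loop of rank~$\be$, of $R$ or of $R^{-1}$, with vertex set a coset of $\sgp{R_0}$. Since $W_k,W_l$ are pieces of rank~$\be$, no cyclic shift of either contains a fragment of a rank $\ga$ with $\be<\ga\le\al$ of size $\ge\xi_0$ (Lemma~\ref{lm:piece-fragment-no-higher-fragments}); so applying Proposition~\ref{pr:no-active-fragments-cyclic} successively at ranks $\al,\al-1,\dots,\be+1$, we may assume that the relation $W_k=g^{-1}W_lg$ already holds in $G_\be$ (after replacing $g$ by an equal element of $G_\al$).

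Now take a reduced annular diagram $\De$ of rank~$\be$ with cyclic sides labelled $W_k$ and $W_l^{-1}$ realising this conjugacy, and a combinatorially continuous map $\phi$ of $\ti\De$ to $\Ga_\be$ sending the two boundary lines to parallel periodic lines $\bar{!X},\bar{!Y}$, with $\bar{!X}$ through the identity vertex; thus the image $!R$ of $\bar{!X}$ is a relator loop of rank~$\be$ with vertex set $\sgp{R_0}$, and the image $!R'$ of $\bar{!Y}$ has vertex set $g^{-1}\sgp{R_0}$. The heart of the proof is the claim that $!R$ and $!R'$ coincide as subsets of $\Ga_\be$. If $\De$ has a cell $!D$ of rank~$\be$, then by Proposition~\ref{pri:annular-single-layer} it has a contiguity subdiagram to each cyclic side, and by Proposition~\ref{pri:bigon-cell} there are exactly two, with contiguity arcs $!P_1,!P_2\subseteq\de!D$ of total $\mu$-measure close to~$1$; by Lemma~\ref{lm:small-cancellation-diagram}(i) each $\mu(!P_i)<\rho$, so $\mu(!P_1),\mu(!P_2)>\la$. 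Passing to $\Ga_\be$ as in~\ref{ss:active-fragments}, $\phi(\de!D)$ is the base relator loop of a fragment $!K\subseteq\bar{!X}$ and $\phi(\de!D)^{-1}$ that of a fragment $!M\subseteq\bar{!Y}$, with $!K\sim!M^{-1}$, $\muf(!K)=\mu(!P_1)>\la$ and $\muf(!M)=\mu(!P_2)>\la$; since a fragment of size $>\la$ contained in $\bar{!X}$ (resp.\ $\bar{!Y}$) has base relator loop $!R$ (resp.\ $!R'$) by Corollary~\ref{co:small-overlapping-Cayley} (applied after lifting to $\Ga_{\be-1}$), we get $\phi(\de!D)=!R$ and $\phi(\de!D)^{-1}=!R'$ up to orientation, so $!R$ and $!R'$ coincide. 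If instead $\De$ has no cell of rank~$\be$, then $W_k=g^{-1}W_lg$ holds in $G_{\be-1}$, so $\bar{!X},\bar{!Y}$ lift to parallel periodic lines in $\Ga_{\be-1}$ with periods $R^{\pm1}$, which are relators of rank~$\be$; condition (S3-Cayley) of~\ref{ss:S2-S3-Cayley} forbids parallel $R$- and $R^{-1}$-periodic lines, so the two periods have the same orientation, and Lemma~\ref{lm:compatible-lines}$_\be$ then forces the two lines, hence $!R$ and $!R'$, to coincide.

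Finally, $!R=!R'$ as subsets of $\Ga_\be$ forces the vertex sets to agree, i.e.\ $\sgp{R_0}=g^{-1}\sgp{R_0}$, so $g\in\sgp{R_0}$. As $\sgp{R_0}$ is abelian the relation becomes $R_0^k=g^{-1}R_0^lg=R_0^l$, whence $R_0^{k-l}=1$ and therefore $k\equiv l\pmod n$ by Proposition~\ref{pr:relator-root}. I expect the main obstacle to lie in the middle of the argument: $R_0^k$ has finite order in $G_\al$, so it is neither strongly cyclically reduced nor, for $k$ near~$n$, even cyclically reduced, and the conjugacy-stability machinery of Section~\ref{s:relations} is tailored to elements that are \emph{not} conjugate to a power of a relator root; one must therefore introduce the auxiliary cyclically reduced words $W_k,W_l$ (possibly powers of $R_0^{-1}$), check that this replacement does not disturb the relevant relator loops, descend via Proposition~\ref{pr:no-active-fragments-cyclic} to the rank at which $R$ is top, and there run the diagram analysis that yields $!R=!R'$.
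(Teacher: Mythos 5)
Your approach reaches the correct conclusion but is organized differently from the paper's. The paper passes directly to a reduced annular diagram over $G_\al$ and splits on whether a cell of rank $\al$ exists: in the cell case, the contiguity arcs of measure $>\la$ force $\be=\al$ via Lemma~\ref{lm:piece-fragment-no-higher-fragments}, and the conclusion comes from mapping a one-cell subgraph to $\Ga_{\al-1}$ and applying Corollary~\ref{co:small-overlapping-Cayley}; in the no-cell case, the relation descends to $G_{\al-1}$ and is handled by induction (Proposition~\ref{pr:conjugate-relator-roots}$_{\al-1}$) when $\be<\al$, and for $\be=\al$ by a sign-of-$kl$ split using Proposition~\ref{pr:nontorsion-conjugation}$_{\al-1}$ for $kl>0$ and Corollary~\ref{coi:root-unique}$_{\al-1}$ together with (S3) for $kl<0$. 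You instead first descend the conjugacy from $G_\al$ all the way to $G_\be$ by iterating Proposition~\ref{pr:no-active-fragments-cyclic} (legitimate because $W_k,W_l$ are pieces of a rank-$\be$ relator, hence by Lemma~\ref{lm:piece-fragment-no-higher-fragments} carry no fragments of any rank $>\be$ of size $\ge\xi_0$), then run the cell/no-cell dichotomy once at rank $\be$, using (S3-Cayley) and Lemma~\ref{lm:compatible-lines}$_\be$ in the no-cell case. This trades the paper's explicit induction and sign-of-$kl$ case split for one uniform descent; the price is the auxiliary representatives $W_k,W_l$ and the heavier appeal to Proposition~\ref{pr:no-active-fragments-cyclic}, but the consolidation is clean and the cell-case argument is essentially the same as the paper's.

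One step is misstated and should be repaired. You write that ``$!R=!R'$ as subsets of $\Ga_\be$ forces the vertex sets to agree, i.e.\ $\sgp{R_0}=g^{-1}\sgp{R_0}$.'' The vertex set of the relator loop $!R$ is the full set of $n\cdot|R_0|$ vertices of a simple loop, not the $n$-element group $\sgp{R_0}$, so this is not a valid deduction as stated. What you actually need (and have morally established) is that $e$ and $g^{-1}$ both lie at $R_0$-\emph{period boundaries} of the common loop, i.e.\ at vertices where the winding boundary line of $\ti\De$ reads a fresh $R_0^{\pm1}$; that this subset of the loop's vertices is exactly $\sgp{R_0}$ (and is unchanged under reversing the orientation of the loop, so the possibility $!R=!R'^{-1}$ is harmless); and hence $g^{-1}\in\sgp{R_0}$. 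This is precisely the content of the paper's step ``$\bar{!X}_1 = \bar{!X}_2 = \bar{!R}$ implies $\lab(!Z_1)$ is a power of $R_0$,'' carried there with the aid of the figure; it should be spelled out explicitly in your argument.
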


\begin{proof}
By Proposition \ref{pr:relator-root}, 
if $R_0^k = g^{-1} R_0^l g$ in $G_\al$ and $g \in \sgp{R_0}$
then $k \equiv l \pmod n$.
It remains to prove that equality 
$R_0^k = g^{-1} R_0^l g$ for $k,l \not\equiv 0 \pmod n$
implies $g \in \sgp{R_0}$.

By Proposition \ref{pr:piece-reduction} we can assume that $R_0^k$
and $R_0^l$ are cyclically reduced in $G_\al$. 
We represent $g$ by a word $Z$ and consider an annular diagram $\De$ 
of rank $\al$ with two cyclic sides ~$!X_1$ and $!X_2$ 
labeled $R_0^{-k}$ and $R_0^{l}$ 
which is obtained from a disk diagram with boundary label 
$R_0^{-k} Z^{-1} R_0^l Z$ by gluing two boundary segments
labeled $Z^{-1}$ and $Z$.
Let $!Z$ be the path in $\De$ with $\lab(!Z) \greq Z$ that joins starting vertices of $!X_2$ and $!X_1$.

We apply to $\De$ the reduction process \ref{ss:reduction-transformations-good}.  
By Lemma \ref{lm:disk-annular-frame-type},
we can replace $!Z$ by a new path ~$!Z_1$ with the same endpoints 
such that $\lab(Z_1) = Z$ in $G_\al$ (so $\lab(Z_1)$ represents $g$
in $G_\al$).
We can assume also that $\De$ has a tight set $\cT$ of contiguity subdiagrams.

{\em Case\/} 1: $\De$ has a cell $!D$ of rank $\al$. 
By Proposition \ref{pri:bigon-cell}, $!D$ has a contiguity subdiagram
$\Pi_i \in \cT$ to each of the sides $!X_i$ of $\De$.
Moreover, if $\de\Pi_i = !S_i !u_i !Q_i !v_i$ where $!S_i^{-1}$
is a contiguity arc occurring in $\de!D$ then $\mu(!S_i) > \la$.
By Lemma \ref{lm:piece-fragment-no-higher-fragments} this implies $\be=\al$.
Let $\lab(\de\De) \greq R'$ where $R'$ is a relator of rank ~$\al$.
Consider lines $\bar{!X}_1$, $\bar{!X}_2$ and $\bar{!R}$ in $\Ga_{\al-1}$
labeled $R^{\pm \infty}$, $R^{\pm \infty}$ and $R'^{\infty}$
which are obtained by mapping the universal cover of the subgraph 
of $\De$ shown in Figure \ref{fig:conjugate-relator-roots}.    
\begin{figure}[h]
\input 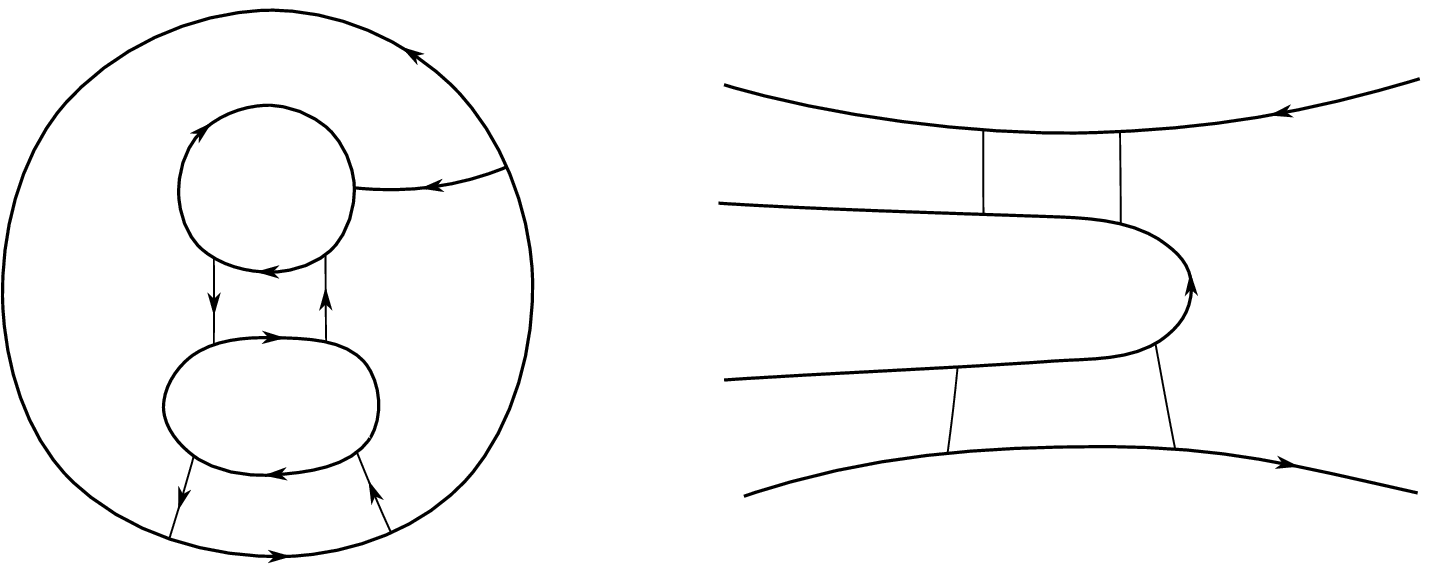_tex
\caption{}  \label{fig:conjugate-relator-roots}
\end{figure}
By Corollary \ref{co:small-overlapping-Cayley} we get $\bar{!X}_1 = \bar{!X}_2 = \bar{!R}$. 
This implies that $\lab(!Z_1)$ is equal in ~$G_{\al-1}$ to a power of $R_0$, as required. 

{\em Case\/} 2: $\De$ has no cells of rank $\al$. Then we have equality $R_0^k = Z_1^{-1} R_0^l Z_1$
in $G_{\al-1}$. If $\be < \al$ then the statement follows from
Proposition \ref{pr:conjugate-relator-roots}$_{\al-1}$.
Let $\be = \al$. 
If $kl > 0$ then the statement follows from 
Proposition \ref{pr:nontorsion-conjugation}$_{\al-1}$.
If $kl < 0$ then by Corollary \ref{coi:root-unique}$_{\al-1}$ we obtain 
$R_0 = g^{-1} R_0^{-1} g$ which contradicts our condition (S3) on the presentation of $G_\al$.
\end{proof}

\begin{proposition} \label{pr:root-existence}
Every element of $G_\al$ of infinite order has the form $h^m$ 
where $h$ is a non-power.
\end{proposition}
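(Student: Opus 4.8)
The plan is to fix an element $g$ of infinite order and show that the set of positive integers $m$ for which $g = h^m$ for some $h \in G_\al$ is bounded; a largest such $m$ then yields a non-power root $h$, since $h = k^j$ with $j \ge 2$ would give $g = k^{mj}$ with $mj > m$, contradicting maximality. (The set is non-empty because $m = 1$, $h = g$ always works.) Thus the whole statement amounts to a uniform bound on the exponents of roots.

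First I would normalise by conjugation. Since powers of the root $R_0$ of a relator have finite order (Proposition \ref{pr:relator-root}), an element of infinite order is never conjugate to such a power, so Proposition \ref{pr:cyclic-reduction} conjugates $g$ to a strongly cyclically reduced word $X$. Any root $h$ of $g$ gives $X = h_0^m$ with $h_0$ a conjugate of $h$ (write $g = z^{-1} X z$, so $X = (zhz^{-1})^m$); as $h_0$ again has infinite order, Proposition \ref{pr:cyclic-reduction} conjugates it to a strongly cyclically reduced word $W$, and hence $X$ is conjugate in $G_\al$ to $W^m$. Since $W$ is strongly cyclically reduced, every power $W^m$, and every cyclic shift of it, is reduced in $G_\al$, so $W^m$ is strongly cyclically reduced as well. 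So everything reduces to bounding $m$ under the hypothesis that the strongly cyclically reduced word $X$ is conjugate in $G_\al$ to the $m$-th power $W^m$ of a strongly cyclically reduced word $W$.

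Next I would fill the conjugacy relation between $X$ and $W^m$ by a reduced annular diagram $\De$ of rank $\al$ with two cyclic sides, carrying a tight set of contiguity subdiagrams, and split on whether $\De$ has cells of rank $\al$. If it has none, then $X$ and $W^m$ are already conjugate in $G_{\al-1}$; here I would invoke the inductive hypothesis that centralizers of infinite-order elements of $G_{\al-1}$ are cyclic (Proposition \ref{pr:nontorsion-conjugation}$_{\al-1}$). Writing $\langle c\rangle$ for the centralizer in $G_{\al-1}$ of the class of $X$ (an element of infinite order), we have $[X] = c^k$ for some $k \ne 0$, and a conjugate $W'$ of $W$ with $[W']^m = [X]$; then $[W']$ centralizes $[X]$, so $[W'] = c^l$, whence $c^{lm} = c^k$ and $lm = k$. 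Thus $m$ divides $k$ and is bounded in terms of $X$, hence of $g$. This disposes of the case of no rank-$\al$ cells.

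The main work, and the step I expect to be the obstacle, is the case where $\De$ has cells of rank $\al$. Each such cell produces, by Proposition \ref{pri:bigon-cell} applied to the annular diagram (as in the proof of Proposition \ref{pr:fragment-stability-cyclic}), a fragment of rank $\al$ of size at least $\xi_0$ occurring in the $X$-periodic line $\bar{!X}$ and, up to inversion and compatibly, in the $W$-periodic line $\bar{!W}$. Since neither $X$ nor $W$ is conjugate to a power of a relator root, Propositions \ref{pri:fragment-finite-order} and \ref{pri:fragment-in-periodic} give that the $s_X$-translates of such a fragment along $\bar{!X}$ are pairwise non-compatible, while \ref{pri:fragment-in-periodic-small} bounds the length of each such fragment, read inside $\bar{!W}$, by $2|W|$. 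Matching these ``active'' fragments of rank $\al$ between $\bar{!X}$ and $\bar{!W}$ via the compatibility correspondence of Propositions \ref{pr:fragment-stability-cyclic} and \ref{pr:fragment-correspondence-cyclic-beta}, and using that the active fragments within one period are disjoint with $|\cdot|_{\al-1}$-size at least $\xi_0/\om > 1$, one should bound both the $\al$-area of the conjugacy relation and the exponent $m$ in terms of $|X|$, or else reduce to the previous case. The delicate point, very much in the spirit of the periodic-overlap arguments of Sections \ref{s:relations}--\ref{s:overlapping-periodicity}, is to show that a long sub-period of $\bar{!W}$ carrying no rank-$\al$ activity cannot occur unless $m$ is already bounded, i.e.\ that the rank-$\al$ cells are distributed densely enough along the periodic lines to force $W$ to be a controlled root of $X$.
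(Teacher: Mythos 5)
Your opening reduction --- showing it suffices to bound the exponent $m$ in any equality $g=h^m$, then conjugating $g$ and $h$ to strongly cyclically reduced words $X$ and $W$ --- matches the paper exactly. Your handling of the case where the annular diagram $\De$ has no rank-$\al$ cells is a genuinely different and valid route: the paper instead takes $\be$ to be the maximal rank for which a cyclic shift of $X$ carries a fragment $K$ with $\muf(K)\ge\xi_0$, and when no such $\be$ exists iterates Proposition \ref{pr:no-active-fragments-cyclic} all the way down to $G_0$; you instead drop one rank and invoke the inductively available Proposition \ref{pr:nontorsion-conjugation}$_{\al-1}$ together with Corollary \ref{coi:root-unique}$_{\al-1}$ to get a cyclic centralizer $\sgp{c}$, and bound $m$ by the finite exponent $k$ in $[X]=c^k$. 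That is fine.

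The gap is in the case where $\De$ has cells of rank $\al$, and you correctly flag it as the sticking point but aim at a non-issue. The worry you name --- that rank-$\al$ cells might be sparse along $\bar{!W}$, leaving long activity-free stretches with $m$ uncontrolled --- is a red herring: Proposition \ref{pr:fragment-correspondence-cyclic-beta} propagates fragments between the parallel periodic lines regardless of whether they arise from cells, so density of cells plays no role. What is missing is the observation that one should translate by $s_{W,\bar{!W}}$ rather than by $s_{X,\bar{!X}}=s_{W,\bar{!W}}^m$. Concretely: a rank-$\al$ cell gives an active fragment $!K$ of rank $\al$ in $\bar{!X}$ with $\muf(!K)\ge\xi_0$; by Proposition \ref{pri:fragment-correspondence-cyclic-beta} there is $!M$ in $\bar{!W}$ with $!M\sim!K^{\pm1}$ and $\muf(!M)\ge\xi_0-2\la-3.4\om$; the $m$ translates $s_{W,\bar{!W}}^j!M$, $j=0,\dots,m-1$, are pairwise non-compatible by Proposition \ref{pri:fragment-in-periodic} applied with period $W$ (legitimate since $W$ has infinite order, hence is not conjugate to a power of a relator root), and all lie in a stretch of $\bar{!W}$ of length about $|W^m|$; mapping them back into $\bar{!X}$ via Propositions \ref{pri:fragment-correspondence-cyclic-beta} and \ref{pri:fragment-correspondence-cyclic-ordering} yields $m$ pairwise non-compatible, hence disjoint and non-empty, fragments inside a bounded number of periods $X$, so $|m|\le O(|X|)$. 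This is the step the paper compresses into its last two sentences (working uniformly at the activity rank $\be$, which equals $\al$ in your with-cells case), and it is precisely what your sketch lacks.
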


\begin{proof}
We need to prove this only in the case $\al\ge1$.
Let $g \in G_\al$ be an element of infinite order.
It is enough to find an upper bound on $|m|$ in the equality of the form $g = h^m$.
Up to conjugation, we represent $g$ and $h$ by a strongly cyclically reduced in $G_\al$ words $X$ and ~$Y$
by Proposition \ref{pr:cyclic-reduction}.
Let $\be$ be the maximal rank with $1 \le \be \le \al$ 
such that a cyclic shift of $X$ contains a fragment $K$ of 
rank $\be$ with $\muf(K) \ge \xi_0$. (It there is no such $K$ then by Proposition \ref{pr:no-active-fragments-cyclic}
$X$ in conjugate to $Y^m$ in the free group $G_0$ and then $|m| \le |X|$.)
Using Propositions \ref{pri:fragment-correspondence-cyclic-beta} and \ref{pri:fragment-in-periodic}
we find $m$ pairwise non-compatible fragments $M$ of rank ~$\be$ with 
$\muf(M) \ge \xi_0 - 2\la - 3.4\om$ in
a cyclic shift of $X$. This again implies $|m| \le |X|$.
\end{proof}


\section{Coarsely periodic words and segments over $G_\al$}
\label{s:coarsely-periodic}

In this section we analyze words which are ``geometrically close'' in $G_\al$ to periodic words.
In Sections \ref{s:coarsely-periodic} and \ref{s:overlapping-periodicity}
we use the following notation for numeric parameters:
$$
  \xi_1 = \xi_0 - 2.6\om, \quad  \xi_2 = \xi_1 - 2\la - 3.4\om.
$$

\begin{definition}
A {\em simple period over $G_\al$} is 
a strongly cyclically reduced word representing a non-power element of $G_{\al}$.
\end{definition}

According to \ref{ss:reduced-word}, 
if $A$ is a simple period over $G_\al$ then any word $A^n$ is reduced over $G_\al$.
Proposition \ref{pr:reduced-nontrivial} implies that $A$ has
infinite order in $G_\al$.


\begin{definition}  \label{df:activity-rank}
Let $A$ be a simple period over $G_\al$.
The {\em activity rank} of $A$ is the maximal rank $\be$ 
such that an $A$-periodic word contains a fragment $K$ of rank $\be\ge 1$ 
with $\muf(K) \ge \xi_1$ or it is $0$ if no such fragments exist.
\end{definition}

\subsection{Case of activity rank 0} \label{ss:activity-rank-0}
The arguments below differ depending on whether the activity rank $\be$
of a simple period over $G_\al$ is positive or $0$. However, the difference is only that 
in the case $\be\ge 1$ we use various conditions on the size $\muf(!F)$ of fragments $!F$ of 
rank $\be$. {\em All definitions, statements and proofs in Sections \ref{s:coarsely-periodic} and \ref{s:overlapping-periodicity} apply
in cases when the activity rank $\be$ of a simple period over $G_\al$ is 0 
simply ignoring conditions of the form $\muf(\cdot) \ge \dots$ for fragments of rank ~$\be$
(i.e.\ assuming that these conditions are all formally true in case $\be=0$).}
Below we do not distinguish this special case $\be=0$.

\medskip
We will use the following notations. If $!K$ and $!M$ are fragments of 
the same rank $0 \le \be \le \al$ occurring in a reduced path $!X$ in $\Ga_\ga$
then $!K \lesssim !M$ means $!K < !M$ or $!K \sim !M$;
similarly, $!K \lnsim !M$ means $!K < !M$ and $!K \not\sim !M$ .
Note that by Corollary \ref{coi:compatibility-order-alpha}, 
for fragments $!K$, $!M$ of rank $\be\ge1$ with $\muf(!K),\muf(!L) \ge \ga + 2.6\om$
the relation `$!K \lesssim !M$' depends only on their equivalence classes with respect to compatibility. Thus, for fixed $!X$ and $\be$
it induces the linear order on the set of equivalence classes of `$\sim$'
of fragments $!N$ of rank $\be$ in $!X$ with $\muf(!N) \ge \ga + 2.6\om$.
(In case $\be=0$ 
relation $!K \lesssim !M$ is defined on subpaths on length 1 and means $!K \ll !M$ or $!K = !M$.)

\begin{definition} \label{df:coarsely-periodic-segment}
Let $A$ be a simple period over $G_\al$ and $\be$ the activity rank of $A$.
A reduced path $!S$ in $\Ga_\al$ is a {\em coarsely periodic segment with period $A$} (or a {\em coarsely $A$-periodic segment} for short)
if there exists a path $!P$ labeled by an $A$-periodic word, 
fragments $!K_0$, $!K_1$ of rank ~$\be$ in $!P$ 
and fragments $!M_0$, $!M_1$ of rank $\be$ in $!S$ such that:
\begin{itemize}
\item 
$!P$ starts with $!K_0$ and ends with $!K_1$; $!S$ starts with $!M_0$ and ends with $!M_1$;
\item
$!K_0 \sim !M_0^{\pm1}$, $!K_1 \sim !M_1^{\pm1}$ and $!K_0 \not\sim !K_1$;
\item 
$\muf(!K_i) \ge \xi_1$,  $\muf(!M_i) \ge \xi_2$ ($i=0,1$); 
\item
$s_{A,!P} !K_0 \lesssim !K_1$ (informally, $!P$ ``contains at least one period $A$'').
\end{itemize}
The path $!P$ is a {\em periodic base} for $!S$. The infinite $A$-periodic 
extension of $!P$ is an {\em axis} for $!S$.

Note that the starting fragment $!M_0$ and the ending fragment $!M_1$ of $!S$ are defined up 
to compatibility.

Note also that by Lemma \ref{lmi:compatible-close} and Proposition \ref{pr:lifting-bigon}, 
$!P$ and $!S$ are close in rank $\be$. In particular, if $\be=0$ then $!P=!Q$ and 
thus $!P$ is an $A$-periodic segment.

We will be assuming that a coarsely $A$-periodic segment is always considered with a fixed associated axis. (In fact, we prove later that the axis of a coarsely $A$-periodic segment 
is defined in a unique way, see Corollary \ref{co:compatible-periodic-axes}).
Note that under this assumption, 
the periodic base $!P$ for $!S$ is defined up to changing the starting and the ending 
fragments $!K_0$ and $!K_1$ of rank $\be$ with compatible ones. 

The label of a coarsely $A$-periodic segment in $\Ga_\al$ is a {\em coarsely $A$-periodic word over $G_\al$}.

Note that a simple period $A$ over $G_0$ is any cyclically freely reduced word that is not a proper power.
A coarsely $A$-periodic word over $G_0$ is simply any $A$-periodic word $P$ with $|P| > |A|$.
\end{definition}

\begin{definition} \label{df:coarsely-periodic-size}
We measure the size of a coarsely $A$-periodic segment $!S$,
which roughly corresponds to the number of periods $A$, 
in the following way. 
Let $!P$ be the periodic base for ~$!S$ and $!K_0$, $!K_1$ as in Definition \ref{df:coarsely-periodic-segment}.
Then we write $\ell_A(!S) = t$ where $t$ is the maximal integer such that 
$s_{A,!P}^t !K_0 \lesssim !K_1$.
Thus, we always have $\ell_A(!S) \ge 1$.

Since we consider a fixed associated axis for $!S$, the number $\ell_A(!S)$
does not depend on the choice of a periodic base $!P$.

If $S$ is a coarsely $A$-periodic word over $G_\al$ then we formally 
define $\ell_A(S)$ to be
the maximal possible value of $\ell_A(!S)$ where $!S$ is a coarsely $A$-periodic segment labeled ~$S$.
\end{definition}

\begin{remark}
(i) It immediately follows from the definition that $t$ is also the maximal integer such that 
$!K_0 \lesssim s_{A,!P}^{-t}  !K_1$. Thus, $\ell_A(!S) = \ell_{A^{-1}} (!S^{-1})$.

(ii) To compute $\ell_A(S)$
we have to take a path $!S$ in $\Ga_\al$ with $\lab(!S) \greq S$ and then choose
a periodic base $!P$ for $!S$ so that $\ell_A(!S)$ is maximal possible;
it will follow from Proposition \ref{pr:strictly-close-parallel}
that any choice of $!P$ gives in fact the same value $\ell_A(!S)$.
\end{remark}

\begin{remark} \label{rm:base-whole-periods}
Up to changing the periodic base $!P$,
we can always assume in Definition \ref{df:coarsely-periodic-size} \
that both $!K_0$ and its translation $s_{A,!P}^t !K_0$ occur in $!P$.
In this case we have $|!P| \ge \ell_A(!S) |A|$.
\end{remark}

\begin{definition} \label{df:coarsely-periodic-compatibility}
Let $!S_1$ and $!S_2$ be coarsely $A$-periodic segments in $\Ga_\al$.

We say that $!S_1$ and $!S_2$ are {\em compatible} if 
they have the same axis
and {\em strongly compatible} if they share a common periodic base.

We use notations $!S_1 \sim !S_2$ and $!S_1 \approx !S_2$ for compatibility and strong compatibility
respectively.
\end{definition}

Note that in the case $!S_1 \approx !S_2$ any periodic base for $!S_1$ is a periodic base for $!S_2$ and vice versa. This easily follows from Definition \ref{df:coarsely-periodic-segment}.

If $!S_1$ and $!S_2$ are coarsely $A$-periodic segments in $\Ga_0$ then $!S_1 \sim  !S_2$
if and only if they have a common periodic extension and $!S_1 \approx !S_2$ if and only if $!S_1 = !S_2$.

\begin{proposition}
Let $!S_1$ and $!S_2$ be coarsely $A$-periodic segments in $\Ga_\al$.
\begin{enumerate}
\item 
If $!S_1 \approx !S_2$ then $\ell_A(!S_1) = \ell_A(!S_2)$.
\item \label{pri:coarsely-periodic-union}
Assume that $!S_1$ and $!S_2$ occur in a reduced path $!X$
in $\Ga_\al$ and $!S_1 \sim !S_2$. Then the union of $!S_1$ and $!S_2$ in $!X$ is an $A$-coarsely periodic segment
where a periodic base for $!S_1 \cup !S_2$ is the union of periodic bases f
or $!S_1$ and $!S_2$
in their common infinite $A$-periodic extension.
\end{enumerate}
\end{proposition}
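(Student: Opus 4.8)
The plan is to prove the two parts separately, both by unwinding the definitions in \ref{df:coarsely-periodic-segment}, \ref{df:coarsely-periodic-size} and \ref{df:coarsely-periodic-compatibility}, and applying the fragment-union machinery already established.

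For part (i), assume $!S_1 \approx !S_2$, so by definition they share a common periodic base $!P$. But $\ell_A(!S_i)$ is computed with respect to a fixed axis, and the axis is the infinite $A$-periodic extension of $!P$; hence the computations of $\ell_A(!S_1)$ and $\ell_A(!S_2)$ use the same periodic base and the same starting/ending fragments $!K_0, !K_1$ of rank $\be$ in $!P$ (up to replacing them by compatible ones, which does not change the maximal $t$ with $s_{A,!P}^t !K_0 \lesssim !K_1$ by Corollary \ref{coi:compatibility-order-alpha}). So the defining quantity is literally the same integer, giving $\ell_A(!S_1) = \ell_A(!S_2)$. This part is essentially a bookkeeping remark.

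For part (ii), the main work is to produce a periodic base for $!S_1 \cup !S_2$. Since $!S_1 \sim !S_2$, they have the same axis $!L$, the infinite $A$-periodic extension of periodic bases $!P_1$ for $!S_1$ and $!P_2$ for $!S_2$; so $!P_1$ and $!P_2$ are subpaths of one line $!L$. First I would show that the starting and ending fragments of $!S_1$ and $!S_2$ are consistently placed along $!X$: writing $!S_i$ with starting fragment $!M_0^{(i)}$ and ending fragment $!M_1^{(i)}$ of rank $\be$ (of sizes $\ge \xi_2$), and using that $!M_j^{(i)} \sim !K_j^{(i)\,\pm1}$ where $!K_j^{(i)}$ lies on $!L$, the compatibility classes of these fragments correspond to points on $!L$; by Proposition \ref{pr:fragment-correspondence-cyclic-beta} (or more directly by Corollary \ref{coi:compatibility-order-alpha} applied inside the reduced path $!X$) the order of the $!M$'s along $!X$ matches the order of the corresponding $!K$'s along $!L$. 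The union $!S_1 \cup !S_2$ in $!X$ is, say, the path from $\io(!S_1)$ to $\tau(!S_2)$ (after relabelling so $!S_1 \le !S_2$ in $!X$); its starting fragment is $!M_0^{(1)}$ and its ending fragment $!M_1^{(2)}$, and these are non-compatible because their $!L$-projections $!K_0^{(1)}$ and $!K_1^{(2)}$ satisfy $!K_0^{(1)} \not\sim !K_1^{(2)}$ (this needs a short argument: since $s_{A,!P_1} !K_0^{(1)} \lesssim !K_1^{(1)} \lesssim$ the projection of $\tau(!S_1)$, and $!K_1^{(2)}$ is at least as far along $!L$, the two differ by at least one period $A$, so they are non-compatible by Proposition \ref{pri:fragment-in-periodic}). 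Taking $!P$ to be the subpath of $!L$ from the start of $!K_0^{(1)}$ to the end of $!K_1^{(2)}$ (a genuine $A$-periodic segment containing both $!P_1$ and $!P_2$), one checks all four bullets of Definition \ref{df:coarsely-periodic-segment}: $!P$ starts with $!K_0^{(1)}$ and ends with $!K_1^{(2)}$, the size bounds $\muf \ge \xi_1, \xi_2$ are inherited, non-compatibility was just argued, and $s_{A,!P} !K_0^{(1)} \lesssim !K_1^{(2)}$ holds since $!P$ contains $!P_1$ which already contains one period. Hence $!S_1 \cup !S_2$ is coarsely $A$-periodic with the claimed periodic base.

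The main obstacle I anticipate is the ordering/non-overlap bookkeeping in part (ii): one must rule out pathological configurations where, e.g., $!S_1$ and $!S_2$ overlap heavily or where the starting fragment of one sits ``inside'' the other, and must make precise that the fragments $!M_j^{(i)}$ in $!X$ are located in the order dictated by their projections to the common axis $!L$. This is where Corollary \ref{coi:compatibility-order-alpha}, Proposition \ref{pr:inclusion-compatibility}, and Proposition \ref{pr:fragments-union} (in the form of Corollary \ref{co:compatibility-same-rank}) do the real work, together with the observation that $\ell_A(!S_i) \ge 1$ forces $!P_i$ to span at least one full period so that the endpoints are genuinely non-compatible. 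Once the order is pinned down, the verification of the four defining bullets is routine.
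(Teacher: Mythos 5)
Your overall strategy is sound: part (i) is a bookkeeping consequence of the definitions, and part (ii) hinges on matching the order of fragments along the ambient path $!X$ with the order of the corresponding fragments on the common axis $!L$, after which the four bullets of Definition \ref{df:coarsely-periodic-segment} are routine. However, there is a genuine gap at that crucial ordering step. You invoke Corollary \ref{coi:compatibility-order-alpha} and Proposition \ref{pr:fragment-correspondence-cyclic-beta} to match the order of the $!M_j^{(i)}$ along $!X$ with that of the $!K_j^{(i)}$ along $!L$, but neither of these actually applies. Corollary \ref{coi:compatibility-order-alpha} compares fragments that all live inside one reduced path, whereas here the $!M$'s are in $!X$ and the $!K$'s lie on a subpath of $!L$, a different path. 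Proposition \ref{pr:fragment-correspondence-cyclic-beta} is stated for two \emph{parallel periodic lines} representing a conjugacy relation, whereas $!X$ is a general reduced path, not a periodic line parallel to $!L$.

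The tool that actually transfers order between two distinct reduced paths in the presence of anchor fragments is Proposition \ref{pri:fragment-ordering-between-fragments} (part (ii) of Proposition \ref{pr:fragment-correspondence-between-fragments}); this is precisely the one-line proof the paper gives. In the present setting the anchors needed for condition (*) of that proposition are supplied by the starting and ending fragments of $!S_1$ paired with those of its periodic base $!P_1$, and the ``no fragments of higher rank'' hypothesis holds for a subpath of $!L$ by Definition \ref{df:activity-rank}, since $\be$ is the activity rank of $A$ and $\xi_0 > \xi_1$. Once the order-matching is obtained from this correct lemma, the rest of your argument — the use of Proposition \ref{pri:fragment-in-periodic} to force $!K_0^{(1)} \not\sim !K_1^{(2)}$, the verification of the fourth bullet, and the trivial handling of the containment case where the union degenerates to one of the $!S_i$ — goes through as you outline.
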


\begin{proof}
(i) is immediate consequence of Definition \ref{df:coarsely-periodic-compatibility}.

(ii) follows from Proposition \ref{pri:fragment-ordering-between-fragments}.
\end{proof}

\subsection{} \label{ss:coarse-periodic-cutting}
We describe a procedure of shortening
a coarsely $A$-periodic segment $!S$ by a ``given number $k$ of periods''.
Let $k\ge 1$ and $\ell_A(!S) \ge k+1$.
Let $\be$ be the activity rank of $!S$, let $!P$ a periodic base for $!S$
and let $!K_i$ and $!M_i$ $(i=0,1)$ be starting and ending fragments of rank ~$\be$ 
of ~$!P$ and $!S$ respectively as
in Definition \ref{df:strictly-close}.
We have $!K_0 < s_{A,!P}^k !K_0 \lesssim  s_{A,!P}^{-1} !K_1 < !K_1$ and it follows from 
Proposition \ref{pri:fragment-in-periodic} that $s_{A,!P}^k !K_0 \not\sim !K_0$
and $s_{A,!P}^k !K_0 \not\sim !K_1$.
By Proposition \ref{pri:fragment-correspondence-between-fragments} 
there exists a fragment $!N$ of rank $\be$ 
in $!S$ with $\muf(!N') \ge \xi_2$ such that $s_{A,!P}^k !K_0 \sim !N^{\pm1}$.
Then $!S_1 = !N \cup !M_1$ is an end of $!S$ which is a coarsely $A$-periodic segment with periodic
base $!P_1 = s_{A,!P}^k !K_0 \cup !K_1$ and $\ell_A(!S_1) = \ell_A(!S) - k$.
We note that:
\begin{enumerate}
\item 
The result of the operation is defined up to the strict compatibility.
\item \label{ssi:cutting-periods-count}
We have $!P = !X !P_1$ where $|!X| = k|A|$.
\item \label{ssi:cutting-dividing}
If $k\ge 2$ then 
by Proposition \ref{pri:fragment-correspondence-between-fragments} we can find also 
a fragment $!N'$ of rank ~$\be$
in $!S$ with $\muf(!N') \ge \xi_2$ such that $s_{A,!P}^{k-1} !K_0 \sim !N'^{\pm1}$ and $!N'$ and $!N$
are disjoint. Then $!S = !S_0 !u !S_1$ where $!S_0 = !M_0 \cup !N'$ is a coarsely $A$-periodic segment 
with periodic base $!K_0 \cup s_{A,!P}^{k-1} !K_0$ and $\ell_A(!S_0) = k-1$.
\item \label{ssi:cutting-invariance}
The starting position of $!S_1$ depends only on the starting position 
of $!S$; more precisely, if $!S'$ is a start of $!S$ 
and $!S_1$ and $!S_1'$ are obtained from $!S$ and $!S'$ as above
then $!S_1'$ is a start of $!S_1$ up to strict compatibility of $!S_1'$;
if $!S \approx !S'$ then $!S_1 \approx !S_1'$.
\end{enumerate}

\begin{definition} \label{df:t-cut}
If $!S_1$ is obtained from $!S$ by the procedure 
in \ref{ss:coarse-periodic-cutting}
then we say that $!S_1$ is obtained by {\em shortening of $!S$ by $t$ periods from the start}.
In the symmetric way, we define  {\em shortening of $!S$ by $t$ periods from the end}.

If $\ell_A(!S) \ge 2t+1$ and $!S'$ is obtained from $!S$ by applying the operation from both sides
then $!S'$ is the result of {\em truncation of $!S$ by $t$ periods}.
\end{definition}

\begin{definition} \label{df:period-stability-parameter} \label{df:stable-part}
We define two numeric parameters associated with a simple period $A$ over $G_\al$: 
the {\em stable size $[A]_\al$ of $A$ in rank $\al$},
$$
  [A]_\al = \inf_{m \ge 1} \frac{|(A^m)^\circ|_\al}{m}
$$
and {\em the stability decrement $h_\al(A)$}: 
$$
  h_\al(A) = \bbceil{\frac{1.2}{[A]_\al}} + 1.
$$
If $\ell_A(!S) \ge 2h_\al(A)+1$ then the result of truncation of $!S$ by $h_\al(A)$ periods 
is {\em the stable part of ~$!S$}.
By claim \ref{ssi:cutting-invariance} and its symmetric version,
the function `$!S \to \text{stable part of } !S$' respects
strict compatibility: if $!S_1 \approx !S_2$ 
and $!S_i^*$ is the stable part of $!S_i$ then $!S_1^* \approx !S_2^*$.
\end{definition}

The basic fact about $[A]_\al$ and $h_\al(A)$ is the following observation.

\begin{lemma} \label{lm:stability-parameters}
If $X$ is an $A$-periodic word and $|X| \ge m|A|$ then $|X|_\al \ge m [A]_\al$.
In particular, if $|X| \ge (h_\al(A) - 1)|A|$ then $|X|_\al \ge 1.2$.
\end{lemma}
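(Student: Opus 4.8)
\textbf{Proof plan for Lemma \ref{lm:stability-parameters}.}

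The statement has two assertions; the second is an immediate consequence of the first together with the definition of $h_\al(A)$, so the whole work is in the first claim: if $X$ is an $A$-periodic word with $|X| \ge m|A|$ then $|X|_\al \ge m[A]_\al$. The plan is to reduce this to the definition of $[A]_\al$ as an infimum over full powers $A^k$, by exploiting the semi-additivity properties of $|\cdot|_\al$ recorded in \ref{ss:alpha-length-properties} to compare an arbitrary $A$-periodic subword to a genuine power $A^k$ lying inside a suitable larger power.

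First I would fix the cyclic-word viewpoint: by \ref{ss:alpha-length-properties}\eqref{cl:om-rank-increment}-adjacent item (iv), $|(A^k)^\circ|_\al = \min\{\,|Y|_\al : Y \text{ a cyclic shift of } A^k\,\}$, and since $X$ is a subword of some power $A^t$, $X$ is a subword of a cyclic shift of $A^k$ for $k$ large. The key inequality to set up is a \emph{super-additivity} bound obtained by iterating the left-hand inequality of \ref{ss:alpha-length-properties}(i), $|UV|_\al \ge |U|_\al + |V|_\al - 1$: writing a long $A$-periodic word as a concatenation of $N$ copies of $A$ gives $|A^N|_\al \ge N|A|_\al - (N-1) = N(|A|_\al - 1) + 1$, but this is the \emph{wrong} direction for small $[A]_\al$, so instead I would run the comparison the other way — bounding $|X|_\al$ \emph{below} by passing to a power. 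Concretely, given $X$ with $|X| \ge m|A|$, for any $q \ge 1$ the word $X^q$ (which is again $A$-periodic, as a subword of a high enough power of $A$, hence of a cyclic shift of $A^{mq}$ up to a bounded discrepancy) satisfies, by repeated application of \ref{ss:alpha-length-properties}(i), $|X^q|_\al \le q|X|_\al$; on the other hand $X^q$ contains $A^{mq}$-worth of periodic material, and by item (ii) applied to the disjoint tiling of a cyclic shift of $A^{mq}$ by the relevant pieces, $|(A^{mq})^\circ|_\al \le |X^q|_\al + C$ for a constant $C$ bounded independently of $q$ (absorbing the bounded end-corrections coming from the mismatch between $X^q$ and an exact power). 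Dividing by $mq$ and letting $q \to \infty$, the $C/(mq)$ term vanishes and we get $[A]_\al \le \frac{1}{mq}|(A^{mq})^\circ|_\al \le \frac{|X|_\al}{m} + \frac{C}{mq} \to \frac{|X|_\al}{m}$, i.e. $|X|_\al \ge m[A]_\al$.

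The main obstacle — really the only delicate point — is controlling the bounded end-corrections uniformly in $q$: one must check that the discrepancy between $X^q$ and a literal cyclic shift of $A^{mq}$ (arising because $|X|$ need not be an exact multiple of $|A|$, and because the periodic occurrence of $X$ inside $A^t$ may start mid-period) contributes only an additive $O(1)$, not an $O(q)$, to the length estimates; this is exactly what the finite error terms ``$+k$'' in \ref{ss:alpha-length-properties}(ii) and ``$-1$'' in (i) are designed to give, since the number of ``seams'' we introduce when cutting $X^q$ into period-blocks versus the blocks of $A^{mq}$ is bounded by a constant (at most $2$ per copy of $X$, but those cancel against the period structure, leaving $O(1)$ total after the tiling is chosen compatibly). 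Once that uniform bound is in hand, the infimum definition of $[A]_\al$ closes the argument. For the second assertion, if $|X| \ge (h_\al(A)-1)|A|$ then applying the first part with $m = h_\al(A)-1 = \bbceil{1.2/[A]_\al}$ gives $|X|_\al \ge (h_\al(A)-1)[A]_\al \ge \frac{1.2}{[A]_\al}\cdot[A]_\al = 1.2$, as required.
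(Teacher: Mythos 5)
Your reduction to the definition of $[A]_\al$ is the right idea, but the $q\to\infty$ limiting argument does not close: the claimed $O(1)$ error term is actually $O(q)$. The issue is that when $|X|$ is not an exact multiple of $|A|$, the word $X^q$ is \emph{not} $A$-periodic — each junction between consecutive copies of $X$ introduces a defect in the period structure, so there are $q$ such seams, not a bounded number. Concretely, write $X = A_1^m S$ where $A_1$ is the cyclic shift of $A$ at which $X$ starts and $S$ is the (possibly nonempty) remainder. The $q$ disjoint copies of $A_1^m$ inside $X^q$ cover $(A^{mq})^\circ$, giving $|(A^{mq})^\circ|_\al \le q\,|A_1^m|_\al$; applying the disjoint-subword inequality of \ref{ss:alpha-length-properties}(ii) to these $q$ disjoint subwords of $X^q$ gives $q\,|A_1^m|_\al \le |X^q|_\al + q$. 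So the constant you call $C$ equals $q$, not $O(1)$, and dividing by $mq$ and passing to the limit yields only $|X|_\al \ge m[A]_\al - 1$, which is weaker than what the lemma asserts.

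The paper's proof avoids the power construction entirely. Since $|X| \ge m|A|$, the word $X$ begins with $A_1^m$ for the cyclic shift $A_1$ of $A$ at which $X$ starts, and therefore
$$
    |X|_\al \;\ge\; |A_1^m|_\al \;\ge\; |(A_1^m)^\circ|_\al \;=\; |(A^m)^\circ|_\al \;\ge\; m\,[A]_\al,
$$
the three inequalities being, respectively, subword monotonicity (item (i) of \ref{ss:alpha-length-properties}), the fact that the cyclic length is a minimum over cyclic shifts (item (iv)), and the definition of $[A]_\al$ as an infimum. Notice this is exactly your argument with $X$ truncated to $A_1^m$ and $q=1$: once you pass to the exact power $A_1^m$ there is no mismatch to control, so no limit is needed and the whole $X^q$ construction (and the false bounded-discrepancy claim) can be discarded. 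Your treatment of the second assertion of the lemma is correct as written.
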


\begin{proof}
We have
$$
    |X|_\al \ge |A_1^m|_\al \ge |(A^m)^\circ|_\al \ge m [A]_\al
$$
where $A_1$ is the cyclic shift of $A$ at which $X$ starts. The second statement follows from the first.
\end{proof}

The principal role of the stable part is described by the following proposition.

\begin{proposition}[stability of coarsely periodic words] 
\label{pr:coarse-periodic-stability}
Let $!S$ be a coarsely $A$-periodic segment in $\Ga_\al$ with 
$\ell_A(!S) \ge 2h_\al(A)+1$ and let $!S^*$ be the stable part of $!S$.
If $!X$ and $!Y$ are close reduced paths in $\Ga_\al$ and $!S$ is a subpath of $!X$
then $!Y$ contains a coarsely $A$-periodic segment $!T$ such that $!T \approx !S^*$.
\end{proposition}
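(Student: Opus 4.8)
The strategy is to combine the fragment–correspondence machinery (Proposition~\ref{pr:fragment-stability-bigon} and, more importantly, Proposition~\ref{pr:fragment-correspondence-between-fragments}) with the bound on distortion of the length function under closeness, to transport the ``endpoint data'' that defines $!S^*$ from $!X$ to $!Y$. First I would fix a periodic base $!P$ for $!S$ together with the starting and ending fragments $!K_0,!K_1$ of rank $\be$ in $!P$ and $!M_0,!M_1$ of rank $\be$ in $!S$ as in Definition~\ref{df:coarsely-periodic-segment}, where $\be$ is the activity rank of $A$. By Remark~\ref{rm:base-whole-periods} I may assume that $!P$ begins at $!K_0$ and ends at $s_{A,!P}^{\ell}!K_0$ with $\ell=\ell_A(!S)\ge 2h_\al(A)+1$, and that the stable part $!S^*$ corresponds to the sub-base running from $s_{A,!P}^{h}!K_0$ to $s_{A,!P}^{\ell-h}!K_0$, where $h=h_\al(A)$. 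The key fragments to transport are $!N_0$ and $!N_1$ in $!S$ with $!N_0\sim (s_{A,!P}^{h}!K_0)^{\pm1}$ and $!N_1\sim (s_{A,!P}^{\ell-h}!K_0)^{\pm1}$, obtained from the cutting procedure in \ref{ss:coarse-periodic-cutting}; these have $\muf(!N_i)\ge\xi_2$.

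Next I would apply the fragment-stability results for the coarse bigon $!X^{-1}!u!Y!v$ witnessing closeness of $!X$ and $!Y$. The subtlety is \emph{independence}: the fragments I am transporting must be independent of the bridges $!u,!v$ of the coarse bigon. To handle this I would use the interior position of the fragments defining $!S^*$ — unlike $!M_0,!M_1$, which sit at the very ends of $!S$, the fragments $!N_0,!N_1$ are separated from $\io(!S)$ and $\tau(!S)$ by at least $h-1\ge 1$ periods of $A$, hence (by Lemma~\ref{lm:stability-parameters}) by $\al$-length at least $1.2$; and $!X$ extends beyond $!S$ on at least one side or $\io(!S),\tau(!S)$ are interior to $!X$. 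More carefully: either an endpoint of $!S$ coincides with an endpoint of $!X$, in which case I first shorten to reduce to the case where the relevant extreme fragments of $!X$ near that endpoint are the natural witnesses, or the endpoint is interior, and then Lemma~\ref{lm:dimmed-fragment} (applied to the two non-compatible fragments $!N_0$ and $!M_0$, resp.\ $!N_1$ and $!M_1$, which are ordered and of size $\ge\xi_2\ge\la+2.6\om$) shows that $!N_0$, resp.\ $!N_1$, is independent of the bridge of the coarse bigon at that end. Once independence is secured, Proposition~\ref{pr:fragment-stability-bigon} produces fragments $!N_0'$ and $!N_1'$ of rank $\be$ in $!Y$ with $!N_i'\sim !N_i^{\pm1}$ and $\muf(!N_i')\ge\min\{\muf(!N_i)-2\la-3.4\om,\ \xi_0\}$; since $\xi_2=\xi_1-2\la-3.4\om$ and $\xi_1=\xi_0-2.6\om<\xi_0$, one checks that this lower bound is at least $\xi_2$ after re-reading the definition of $\xi_2$ with the correct constants — this is a routine constant chase I would not grind through here, but it is the reason the parameter $\xi_2$ is chosen as it is.

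I would then verify that $!N_0'$ and $!N_1'$, together with the corresponding sub-base $s_{A,!P}^{h}!K_0\cup s_{A,!P}^{\ell-h}!K_0$ of $!P$, satisfy all four bullets of Definition~\ref{df:coarsely-periodic-segment}: the sub-base is itself an $A$-periodic path starting and ending with fragments of rank $\be$ of size $\ge\xi_1$; $!N_0'\not\sim !N_1'$ follows from $s_{A,!P}^{h}!K_0\not\sim s_{A,!P}^{\ell-h}!K_0$ (Proposition~\ref{pri:fragment-in-periodic}, since $\ell-2h\ge 1$) combined with compatibility transport; the size bounds $\muf(!N_i')\ge\xi_2$ hold by the previous paragraph; and $s_{A}^{1}(s_{A,!P}^{h}!K_0)\lesssim s_{A,!P}^{\ell-h}!K_0$ holds because $\ell-2h\ge 1$. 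This exhibits a coarsely $A$-periodic segment $!T\subseteq !Y$ with periodic base the said sub-base of $!P$, hence with the same axis as $!S^*$; so $!T\approx !S^*$. To get $!T$ as an actual \emph{subpath} of $!Y$ (not merely a union of fragments) I would take $!T=!N_0'\cup !N_1'$, using Proposition~\ref{pri:fragment-ordering-between-fragments} to see that $!N_0'<!N_1'$ in $!Y$ (from $!N_0<!N_1$ in $!X$ and the ordering-preservation clause), so the union is well-defined.

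\textbf{Main obstacle.} The genuinely delicate point is the independence argument at the endpoints of $!S$ when those endpoints are not interior to $!X$: there one cannot directly invoke Lemma~\ref{lm:dimmed-fragment}, and I expect the cleanest route is to first replace $!X$ by a slightly longer reduced path (or to argue that the extreme fragments of $!X$ itself can serve as $!N_0',!N_1'$ when $!S$ reaches the end of $!X$), which requires a short separate lemma or a careful invocation of the fellow-traveling Proposition~\ref{pr:fellow-traveling} to control the ``lost'' portion. The constant bookkeeping to confirm $\muf(!N_i')\ge\xi_2$ is mechanical but must be done against the precise values of $\xi_0,\xi_1,\xi_2,\la,\om$ in \ref{ss:extra-parameters} and \ref{s:coarsely-periodic}.
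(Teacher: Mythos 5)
Your proposal diverges from the paper's proof at a point where the difference matters, and as written there are two genuine gaps.

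\textbf{Rank mismatch.} You apply Proposition~\ref{pr:fragment-stability-bigon} to the coarse bigon $!X^{-1}!u!Y!v$ in $\Ga_\al$ and expect it to transport fragments of rank $\be$, where $\be$ is the activity rank of $A$. But that proposition transports fragments of rank $\al$ for a coarse bigon in $\Ga_\al$; when $\be<\al$ it simply does not apply to the $!N_i$. The paper first reduces the ambient closeness from rank $\al$ down to rank $\be$: since $\be$ is the activity rank, $!S$ contains no fragments of rank $\ga$ with $\be<\ga\le\al$ and $\muf\ge\xi_0$ (by Proposition~\ref{pr:fragment-stability-previous} applied at ranks up to $\al$), and Proposition~\ref{pr:closeness-stability-beta} then shows that after trimming small ends $!z_1,!z_2$ (with $|!z_i|_\al<1.2$) the periodic base $!P$ becomes close \emph{in rank $\be$} to a subpath of $!Y$. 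This rank reduction is the step your proposal never performs, and it is also what makes the independence question disappear: once you are in case~(**) of Proposition~\ref{pr:fragment-correspondence-between-fragments}, the lemma on dimmed fragments is applied internally at rank $\be$ and there is no bridge of rank $\al$ left to be independent of.

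\textbf{The constant chase fails.} You transport the fragments $!N_0,!N_1\subseteq !S$ with $\muf(!N_i)\ge\xi_2$, and after one step of loss obtain $\muf(!N_i')\ge\min\{\muf(!N_i)-2\la-3.4\om,\ \xi_0\}$. But $\muf(!N_i)-2\la-3.4\om<\xi_2-2\la-3.4\om=\xi_1-4\la-6.8\om<\xi_2$, and the $\min$ with $\xi_0$ does not help since $\muf(!N_i)-2\la-3.4\om<\xi_1-2\la-3.4\om=\xi_2<\xi_0$. So the fragments you produce in $!Y$ are strictly smaller than $\xi_2$ and cannot serve as the endpoint fragments in Definition~\ref{df:coarsely-periodic-segment}. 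The paper gets around this by taking its anchors from $!P$ rather than from $!S$: the endpoints $s_{A,!P}^{t}!K_0$ and $s_{A,!P}^{-t}!K_1$ of $!P^*$ have $\muf\ge\xi_1$ (not $\xi_2$), and since $|!z_i|<(t-1)|A|$ by Lemma~\ref{lm:stability-parameters}, they sit inside the trimmed $!P'$. Applying Proposition~\ref{pr:fragment-correspondence-between-fragments}(\itemref{pri:fragment-correspondence-between-fragments}) with $!P'$ and a subpath of $!Y$ (close in rank $\be$) then gives fragments of size $\ge\xi_1-2\la-3.4\om=\xi_2$, exactly the threshold. In other words, the paper routes the transport through a \emph{single} lossy step $!P'\to!Y$ starting from $\xi_1$-size anchors, whereas your route $!P\to!S\to!Y$ effectively pays the loss twice and comes up short. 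This is not a bookkeeping detail to be deferred; it is the reason the stable part is defined by trimming the periodic base rather than the segment, and it is why the periodic base (not $!S$) is what must be shown close to a subpath of $!Y$.
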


\begin{proof}
Let $!P$ and $!P^*$ be periodic bases for $!S$ and $!S^*$ respectively. 
Let $\be$ be the activity rank of $A$ and
let $!K_i$ and $!M_i$ $(i=0,1)$ be fragments of rank $\be$ in $!P$ and in $!S$, respectively, from
Definition \ref{df:strictly-close} applied to $!P$ and $!S$. Denote $t = h_\al(A)$.

Let $!X$ and $!Y$ be as in the proposition.
If $\al=0$ then $!X = !Y$ and there is nothing to prove. Let $\al>0$.
We claim that $!P = !z_1 !P' !z_2$ where $!P'$ is close in rank ~$\be$ 
to a subpath of $!Y$ and $|!z_i|_\al < 1.2$. 
Indeed, if $\be=\al$ then it 
easily follows from Proposition ~\ref{pr:fragment-stability-bigon} 
and Lemma \ref{lmi:compatible-close}
that $!P$ is already close to a subpath of $!Y$.
If $\be < \al$ then we observe that $!S$ contains
no fragments $!K$ of rank $\ga$ with $\be < \ga \le \al$
and $\muf(!K) \ge \xi_0$ due to the definition of the activity rank and
Proposition \ref{pr:fragment-stability-previous}$_{\le\al}$.
Then the claim follows by Proposition \ref{pr:closeness-stability-beta}.

By Lemma \ref{lm:stability-parameters} we have $|!z_i| < (t-1) |A|$.
This implies that $s_{A,!P}^{t-1} !K_0 \cup s_{A,!P}^{-t+1} !K_1$ is contained in $!P'$.
Note that $!P^* = s_{A,!P}^{t} !K_0 \cup s_{A,!P}^{-t} !K_1$ where $\muf(!K_0), \muf(!K_1) \ge \xi_1$.
Then by Proposition \ref{pri:fragment-correspondence-between-fragments} we find
a subpath $!T$ which is
a coarsely $A$-periodic segment with periodic base $!P^*$ and, consequently, we have $!T \approx !S^*$.
\end{proof}

We use parameter $h_\al(A)$ also in several other situations.

\begin{proposition} \label{pr:coarsely-periodic-from-close}
Let $!P$ be a periodic segment in $\Ga_\al$ with a simple period $A$ over $G_\al$.
Assume that $|!P| \ge m |A|$ where $m \ge 2h_\al(A)+3$. 
Let $!X$ be a reduced path in $\Ga_\al$ such that $!P$ and $!X$ are close.
Then there exist a subpath $!P_1$ of $!P$ and a subpath $!X_1$ of $!X$ such that 
$!X_1$ is a coarsely $A$-periodic segment with periodic base $!P_1$ 
and $\ell_A(!X_1) =  m - 2h_\al(A)-2$. 
\end{proposition}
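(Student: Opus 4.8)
The goal is to extract from a genuinely $A$-periodic segment $!P$, which is close to a reduced path $!X$, a subpath $!X_1$ of $!X$ that is a coarsely $A$-periodic segment whose periodic base is a subpath of $!P$, losing only $2h_\al(A)+2$ periods. The plan is to first set up the fragment data that witnesses periodicity inside $!P$ itself, then transport it to $!X$ using the bigon stability machinery, and finally verify that the transported data satisfies Definition~\ref{df:coarsely-periodic-segment} with the claimed size.

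First I would handle the trivial case $\al=0$ separately: there $!X=!P$ (Remark~\ref{rm:close-in-rank-0}), and since $|!P| \ge m|A| > |A|$, $!P$ itself is a coarsely $A$-periodic segment with $\ell_A(!P) = m - 1 \ge m - 2h_0(A) - 2$ (recall $h_0(A) \ge 2$), so there is nothing to do — actually I would just take $!X_1$ to be the obvious subpath of length $(m-2h_0(A)-2)|A|$. For $\al \ge 1$, let $\be$ be the activity rank of $A$. By Definition~\ref{df:activity-rank}, some $A$-periodic word contains a fragment $!K_0$ of rank $\be$ with $\muf(!K_0) \ge \xi_1$ (if $\be = 0$ we follow the convention of \ref{ss:activity-rank-0} and ignore the size conditions throughout). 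Since $|!P| \ge m|A|$ with $m$ large, I can choose, inside $!P$, a fragment $!K_0$ of rank $\be$ near the start and its translate $s_{A,!P}^{t}!K_0$ near the end with $t = m - 2h_\al(A)-2$ still occurring in $!P$; by Proposition~\ref{pri:fragment-in-periodic} applied to $\bar{!P}$ (the $A$-periodic line), consecutive translates are non-compatible, and by Remark~\ref{rm:base-whole-periods}-type bookkeeping the subpath $!P_1$ of $!P$ from $!K_0$ to $s_{A,!P}^t !K_0 =: !K_1$ has $|!P_1| \ge (t+1)|A|$ and is itself a coarsely $A$-periodic segment with $\ell_A(!P_1) = t$ — in fact a simpler "ideally periodic" one.

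Next I would transport this to $!X$. The pair $(!P, !X)$ is close, so apply Proposition~\ref{pr:coarse-periodic-stability} — but that proposition is phrased for a coarsely $A$-periodic segment $!S$ already sitting inside some $!X$, with $!S$ close to a subpath of $!Y$; here the roles are cleaner since $!P$ is literally close to $!X$. So instead I would directly mimic its proof: if $\be = \al$, Proposition~\ref{pr:fragment-stability-bigon} together with Lemma~\ref{lmi:compatible-close} shows $!P$ (hence $!P_1$, after trimming a bounded amount from each end) is close in rank $\be$ to a subpath of $!X$; if $\be < \al$, then $!P$ contains no fragment of rank $\ga > \be$ of size $\ge \xi_0$ (definition of activity rank plus Proposition~\ref{pr:fragment-stability-previous}$_{\le\al}$), so Proposition~\ref{pr:closeness-stability-beta} applies and gives $!P = !z_1 !P' !z_2$ with $!P'$ close in rank $\be$ to a subpath of $!X$ and $|!z_i|_\al < 1.2$. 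By Lemma~\ref{lm:stability-parameters}, $|!z_i| < (h_\al(A)-1)|A|$, so $s_{A,!P}^{h_\al(A)-1}!K_0$ and $s_{A,!P}^{-(h_\al(A)-1)}!K_1$ (relative to the length-$t$ span, after reindexing) still occur in $!P'$. Then Proposition~\ref{pri:fragment-correspondence-between-fragments} produces fragments $!M_0, !M_1$ of rank $\be$ in $!X$ with $!M_i \sim !K_i'^{\pm1}$ and $\muf(!M_i) \ge \xi_2$, where $!K_0', !K_1'$ are the appropriately shifted copies inside $!P'$. Taking $!X_1 = !M_0 \cup !M_1$ and $!P_1$ the corresponding subpath of $!P$ gives the data of Definition~\ref{df:coarsely-periodic-segment}; the non-compatibility $!K_0' \not\sim !K_1'$ comes from Proposition~\ref{pri:fragment-in-periodic}, and the condition $s_{A,!P_1}^{?}!K_0' \lesssim !K_1'$ with the right multiplicity follows by counting: we started with $t = m - 2h_\al(A)-2$ full periods and each of the two closeness-transition steps costs at most $h_\al(A)$ periods of trimming ($h_\al(A)-1$ from $|!z_i|$ plus a constant absorbed into the $+1$ in the definition of $h_\al(A)$ and into the bounded bridge-words of Proposition~\ref{pri:fragment-correspondence-between-fragments}).

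\textbf{Main obstacle.} The delicate point is the bookkeeping of exactly how many periods are lost, so that the final count comes out to precisely $m - 2h_\al(A) - 2$ rather than something weaker. This requires tracking: the $h_\al(A)-1$ periods hidden in each $|!z_i|_\al < 1.2$ bound via Lemma~\ref{lm:stability-parameters} (which is why $h_\al(A) = \lceil 1.2/[A]_\al\rceil + 1$ is defined the way it is), plus the constant-size fragments/bridges introduced by Proposition~\ref{pri:fragment-correspondence-between-fragments} which must be absorbed into the "$+1$" slack on each side, for a total loss of $2h_\al(A)$ from the trimming and $2$ from the two endpoint adjustments, hence $2h_\al(A)+2$ altogether. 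Making this arithmetic airtight — and checking that the $\xi$-inequalities ($\xi_1 \ge \xi_2 + 2\la + 3.4\om$ etc.) propagate correctly through the single application of Proposition~\ref{pri:fragment-correspondence-between-fragments} so that both transported fragments genuinely have $\muf \ge \xi_2$ — is where the real work lies; the geometric content is entirely supplied by the stability propositions already proved.
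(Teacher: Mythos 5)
Your plan follows essentially the same route as the paper's proof: find a pilot fragment $!K$ of the activity rank $\be$ in $!P$, bring the periodic base down to rank $\be$ by a closeness transition, transport translates of $!K$ into $!X$ via case (**) of Proposition~\ref{pr:fragment-correspondence-between-fragments}, and take $!X_1$ to be the union of the two transported endpoint fragments. The geometric content and the choice of stability lemmas are correct, and you rightly flag the period-counting as the delicate part.

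However, as written the bookkeeping does not close to $m - 2h_\al(A) - 2$, for three concrete reasons. (1) You fix the target span $t = m - 2h_\al(A) - 2$ \emph{before} the closeness transition, but then you also trim $!z_1, !z_2$ of $A$-length $< (h_\al(A)-1)|A|$ each, so your surviving endpoints $s^{h_\al(A)-1}!K_0$ and $s^{t-h_\al(A)+1}!K_0$ span only $t - 2h_\al(A) + 2 = m - 4h_\al(A)$ periods — short by $2h_\al(A) - 2$. The paper instead does the transition first (Corollary~\ref{co:no-active-fragments-iterated} is the direct tool here, applicable to the close pair $(!P,!X)$ without the chained setup Proposition~\ref{pr:closeness-stability-beta} requires), obtaining $!P_2$ of length $\ge (m - 2h_\al(A) + 2)|A|$, and only then lays out translates of $!K$ inside $!P_2$; there is no pre-shrinking. (2) Case (**) of Proposition~\ref{pr:fragment-correspondence-between-fragments}(i) transports only fragments \emph{strictly between} the two anchors $!K_1 < !N < !K_2$; the anchors themselves are not transported. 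To transport your $!K_0'$ and $!K_1'$ you need two further translates outside them (still inside $!P'$) to serve as anchors, a cost your outline does not reserve. The paper explicitly takes $s^0!K$ and $s^{m-2h_\al(A)}!K_1$ as anchors and transports only $s^1!K,\dots,s^{m-2h_\al(A)-1}!K$. (3) To fit the last anchor inside $!P_2$, the paper first shortens $!K$ to a prefix $!K_1$ with $|!K_1| \le |A|$ and $\muf(!K_1) > \xi_1 - \la - 2.7\om$ (via Propositions~\ref{pr:small-overlapping} and~\ref{pri:fragment-in-periodic}); this gains exactly one period of slack and is what makes the final constant $m - 2h_\al(A) - 2$ rather than one worse. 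Your proposal omits this shortening, so even after repairing (1) and (2) the count would fall one short.
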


\begin{proof}
Let $\be$ be the activity rank of $A$.
Using Corollary \ref{co:no-active-fragments-iterated} and Lemma \ref{lm:stability-parameters}
we find close in rank $\be$ subpaths $!P_2$ of $!P$ and $!X_2$ of $!X$ with $|!P_2| \ge m - 2h_\al(A) +2$.
By Proposition \ref{pri:fragment-in-periodic-small} any fragment $!K$ of rank $\be$ 
in $!P$ with $\muf(!K) \ge 2\la + 5.3\om$
satisfies $|!K| < 2|A|$, so according to 
Definition \ref{df:coarsely-periodic-segment} 
there exists a fragment $!K$ of rank $\be$ in $!P$ with $\muf(!K) \ge \xi_1$.
Shortening $!K$ from the end by Proposition \ref{pr:small-overlapping} if $\be \ge 1$ and 
using again Proposition \ref{pri:fragment-in-periodic} 
we find a fragment ~$!K_1$ of rank $\be$ with $\muf(!K_1) > \xi_1- \la-2.7\om$ that is a start of $!K$
disjoint from $s_{A,!P} !K$; hence $|!K_1| \le |A|$.
We can assume that $!K$ occurs in $!P_2$ and is 
closest to the start of $!P_2$. Then
$!P_2$ contains $m - 2h_\al(A)$ translates $s_{A,!P}^i !K$ of $!K$ 
for $i=0,\dots,m - 2h_\al(A)-1$ and 
contains also $s_{A,!P}^{m - 2h_\al(A)} !K_1$.
Applying Proposition  \ref{pri:fragment-correspondence-between-fragments} we find 
fragments $!M_i$ $(i=1,\dots,m - 2h_\al(A)-1)$ of rank $\be$
in $!X_2$ with $\muf(!M_i) \ge \xi_2$ such that $s_{A,!P}^i !K \sim !M_i^{\pm1}$.
Then $!X_1 = !M_1 \cup !M_{m - 2h_\al(A)-1}$ is a coarsely $A$-periodic segment with periodic base
$s_{A,!P} !K \cup s_{A,!P}^{m - 2h_\al(A)-1} !K$ and we have
$\ell_A(!X_1) = m - 2h_\al(A)-2$.
\end{proof}

\begin{proposition} \label{pr:changing-period}
Let $S$ be a coarsely $A$-periodic word over $G_\al$ and $B$ a simple period over $G_\al$ conjugate to $A$.
Let $\ell_A(S) \ge 2h_\al(A) + 3$.
Then a subword $T$ of $S$ is a coarsely $B$-periodic word over $G_\al$ with 
$\ell_B(T) \ge \ell_A(S) - 2h_\al(A) - 2$.
\end{proposition}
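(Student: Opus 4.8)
The plan is to reduce the statement to the stability machinery already developed, in particular Proposition~\ref{pr:coarse-periodic-stability}, together with the period-changing construction at rank $\al-1$ embodied in Proposition~\ref{pr:changing-period}$_{\al-1}$ and the ``coarsely periodic from close'' mechanism of Proposition~\ref{pr:coarsely-periodic-from-close}. First I would pick a path $!S$ in $\Ga_\al$ realizing $S$ and a periodic base $!P$ for it with $\ell_A(!S)=\ell_A(S)$; by Remark~\ref{rm:base-whole-periods} we may assume $|!P|\ge \ell_A(S)\,|A|$. Since $B$ is conjugate to $A$ in $G_\al$, write $A=z^{-1}Bz$ in $G_\al$ and consider the $B$-periodic line obtained from $!P$ by conjugating; concretely one gets a $B$-periodic segment $!Q$ together with bridges of rank $\al$ joining $!Q$ to $!P$, so that $!P$ and $!Q$ are close reduced paths in $\Ga_\al$ over a stretch covering all but boundedly many periods (measured via Lemma~\ref{lm:stability-parameters}, which converts the length of the ``lost'' end pieces $|z|$-worth of $A$ into at most $h_\al(A)-1$ periods on each side).

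Next I would invoke Proposition~\ref{pr:coarsely-periodic-from-close}: since $!P$ (hence a long subpath of the $B$-periodic $!Q$) is close to the reduced path $!S$ (as $!P$ and $!S$ are close by Definition~\ref{df:coarsely-periodic-segment}, being close in rank~$\be$ where $\be$ is the activity rank), one extracts a subpath $!X_1$ of $!S$ that is a coarsely $B$-periodic segment with periodic base a subpath of $!Q$ and with $\ell_B(!X_1)\ge m-2h_\al(B)-2$, where $m$ is the number of whole periods of $B$ available in the close stretch. The bookkeeping is: starting from $\ell_A(!S)$ periods of $A$, at most $2h_\al(A)+\text{(const)}$ of them are ``eaten'' in passing to $B$ and in the closeness extraction, and the target count $\ell_A(S)-2h_\al(A)-2$ is exactly what survives. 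The label $T$ of $!X_1$ is then the desired subword of $S$, and $\ell_B(T)\ge \ell_B(!X_1)$ by definition of $\ell_B$ on words.

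The main subtlety — and the step I expect to be the real obstacle — is matching the arithmetic of the loss bounds, i.e.\ showing that the number of periods consumed when switching from $A$ to $B$ is governed by $h_\al(A)$ (equivalently $h_\al(B)$) rather than some larger quantity. This hinges on the fact that conjugating an $A$-periodic word by a word $z$ produces a $B$-periodic word that stays close to the original over a stretch differing only by $O(|z|)$ at each end, and that $|z|$ contributes at most $h_\al(A)-1$ (resp.\ $h_\al(B)-1$) periods by Lemma~\ref{lm:stability-parameters}; one must also check that $h_\al(A)$ and $h_\al(B)$ are comparable, which follows because $[A]_\al$ and $[B]_\al$ are equal (both being the stable size of a conjugacy class, as $|(A^m)^\circ|_\al=|(B^m)^\circ|_\al$ — conjugate cyclic words have the same cyclic $|\cdot|_\al$). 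Once this comparability is in hand, the stated bound $\ell_B(T)\ge \ell_A(S)-2h_\al(A)-2$ drops out, and the remaining verifications (that $B$ is a simple period over $G_\al$, that $!X_1$ genuinely satisfies all clauses of Definition~\ref{df:coarsely-periodic-segment} with activity rank equal to that of $B$) are routine applications of the fragment-stability propositions of Sections~\ref{s:fragments} and the preceding section.
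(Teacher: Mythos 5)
The key step you cannot justify is the claim that conjugating $!P$ by $z$ produces a $B$-periodic segment $!Q$ that is ``close to $!P$ via bridges of rank $\al$ over a stretch covering all but boundedly many periods, with loss $|z|$-worth of $A$.'' The lateral separation between the $A$-periodic line $!L_1$ and the parallel $B$-periodic line is exactly (a conjugate of) $z$, and $z$ is an \emph{arbitrary} element of $G_\al$ with no length bound whatsoever — so there is no a priori bridge of rank $\al$ joining $!Q$ to $!P$, and Lemma~\ref{lm:stability-parameters} says nothing about $z$ at all (it only relates $A$-periodic length to $|\cdot|_\al$-weight). Establishing that the parallel $B$-periodic line really does have a subpath close to $!P$ in rank $\be_1$ is precisely the nontrivial content that the paper supplies via the cyclic fragment-correspondence machinery (Proposition~\ref{pri:fragment-correspondence-cyclic-beta} and Proposition~\ref{pri:fragment-correspondence-cyclic-ordering}, which rest on the annular-diagram analysis of conjugacy relations): one finds fragments $!M_0,!M_1$ of rank $\be_1$ in $!L_2$ compatible with the starting and ending fragments of $!P$, with the ordering $s_{B,!L_2}^{\ell_A(!S)}!M_0 \lesssim !M_1$ preserved, and only then does one have a $B$-periodic segment $!Q$ of $\ge \ell_A(!S)$ periods close in rank $\be_1$ to $!P$, to which Proposition~\ref{pr:coarsely-periodic-from-close} can be applied. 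Your proposal skips this entirely and replaces it with an assertion.

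A secondary unjustified step is the claim that $|(A^m)^\circ|_\al=|(B^m)^\circ|_\al$ for conjugate $A$ and $B$, hence $[A]_\al=[B]_\al$. The function $|\cdot|_\al$ is defined on \emph{words} via fragmentations, not on conjugacy classes, and the paper establishes no such invariance; conjugate cyclically reduced words in $G_\al$ for $\al\ge 1$ are not cyclic shifts of one another, so the argument ``both being the stable size of a conjugacy class'' does not go through without further work. You do need a comparison $h_\al(B)\le h_\al(A)$ (or equality) to land on the stated bound after applying Proposition~\ref{pr:coarsely-periodic-from-close} with period $B$, so this is a real loose end, not a cosmetic one. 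Finally, the opening mention of an inductive appeal to Proposition~\ref{pr:changing-period}$_{\al-1}$ is never used in the body of your argument and plays no role in the paper's proof either; it should be dropped.
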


\begin{proof}
We represent $S$ by a coarsely $A$-periodic segment ~$!S$ in $\Ga_\al$. 
Let $!P$ a periodic base for ~$!S$,
let $!L_1$ be the axis of $!S$ and let 
$!L_2$ be the $B$-periodic line parallel to $!L_1$.
Denote $\be_1$ and ~$\be_2$ activity ranks of $A$ and $B$ respectively.

According to Definition \ref{df:activity-rank}, 
either $!L_1$ or $!L_2$ contains 
no fragments $!K$ of rank $\ga$ with $\be_1 < \ga \le \al$
and $\muf(!K) \ge \xi_1$.
Let $!K_0$ and $!K_1$ be fragments of rank $\be_1$ 
with $\muf(!K_i) \ge \xi_1$ that are a start and an end of $!P$ respectively.
We have $s_{A,!L_1}^{\ell_A(!S)} !K_0 \lesssim !K_1$.
By Proposition \ref{pri:fragment-correspondence-cyclic-beta}, there exist 
fragments $!M_0$ and $!M_1$ of rank $\be_1$ 
in $!L_2$ with $\muf(!M_i) \ge \xi_2$ such that 
$!K_i \sim !M_i^{\pm1}$.
Since $!L_1$ and $!L_2$ are parallel, we have $s_{A,!L_1} = s_{B,!L_2}$ and hence 
$s_{B,!L_2}^{\ell_A(!S)} !M_0 \lesssim !M_1$ by 
Proposition \ref{pri:fragment-correspondence-cyclic-ordering}.
Then  $!Q = !M_0 \cup s_{B,!L_2}^{\ell_A(!S)} !M_0 \cup !M_1$ is close in rank $\be_1$ to $!P$,
 $|!Q| \ge \ell_A(!S)$  and the statement follows by
Proposition \ref{pr:coarsely-periodic-from-close}.
\end{proof}

\section{Overlapped coarse periodicity} \label{s:overlapping-periodicity}

The main result of this section is
Proposition \ref{pr:coarsely-periodic-overlapping-general} which can be thought as an analog
of a well known property of periodic words: if two periodic words
have a sufficiently large overlapping then they have a common period. 
We need such an analog in 
a more general context where closeness plays the role of overlapping.
As a main technical tool, instead of coincidence of letters in the overlapping case we use correspondence of fragments of rank $\be\le\al$ in strictly close in  rank $\be$ segments in ~$\Ga_\al$ given by
Proposition ~\ref{pr:fragment-correspondence-between-fragments}.
A difficulty is caused by the ``fading effect'' of this correspondence: a fragment size can decrease
when passing from one segment to the other.
To overcome this difficulty, we use a special combinatorial argument  \cite[Lemma ~6.4]{Lys96}.

\begin{lemma}[penetration lemma, {\cite[Lemma 6.4]{Lys96}}] \label{lm:penetration-lemma}
Let $S_0$, $S_1$, $\dots$, $S_k$ be a finite collection of disjoint sets. 
Assume that the following assertions hold:
\begin{enumerate}
\item
Each $S_i$ is pre-ordered, i.e.\ endowed with a transitive relation `$<_i$'.
\item 
There is an equivalence relation $a \sim b$ on the union $\bigcup_i S_i$
such that for any $a,b$ in the same set $S_i$ we have either $a <_i b$, $b <_i a$ or $a \sim b$;
in other words, we have an induced linear ordering on the set of equivalence classes on each $S_i$.

\item
We assume that the equivalence preserves the pre-ordering 
in neighboring sets: if $a,b \in S_i$, $a',b' \in S_{i+1}$, $a \sim a'$ and $b \sim b'$
then $a <_i b \lequiv a' <_{i+1} b'$.

If $c \in S_i$, $a,b \in S_j$ and $a \lesssim_j b$ (where $a \lesssim_j b$ denotes `$a <_j b$ or $a \sim b$')
then we say that $c$ {\em penetrates} between $a$ and $b$ if there exists $c' \sim c$
such that $a \lesssim_j c' \lesssim_j b$.
\item
There is a subset of $\bigcup_i S_i$ of {\em stable} elements that have the following property: 
if $c \in S_i$ is stable, $a \lesssim_i c \lesssim_i b$, $a', b' \in S_j$, $a' \lesssim_j b'$, 
$a \sim a'$ and $b \sim b'$ then $c$ penetrates between $a'$ and $b'$.
\item
For each $i \le k-1$, there are stable elements $a_i, b_i \in S_i$ and $a_i',b_i' \in S_{i+1}$ 
such that $a_i \sim a_i'$, $b_i \sim b_i'$ and $a_i <_i b_i$.
\end{enumerate}

Finally, let $c_0 \in S_0$ be stable and $a_0 \lesssim_0 c_0 \lesssim_0 b_0$. Assume that $c_0$ 
penetrates between $a_i$ and $b_i$ for each $i=1,2,\dots,k-1$.
Then $c_0$ penetrates between $a_k$ and $b_k$.
\end{lemma}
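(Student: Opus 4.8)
The plan is to induct on $k$. For $k=1$ the hypothesis is that $c_0$ penetrates between $a_1$ and $b_1$; but we want it between $a_1$ and $b_1$ as elements of $S_1$, which is precisely what penetration means, so the base case is vacuous. More carefully, the base case to check is $k=1$: we are given directly that $c_0$ penetrates between $a_1$ and $b_1$ (this is in the list of hypotheses ``$c_0$ penetrates between $a_i$ and $b_i$ for $i = 1,\dots,k-1$'', read for the last index), so there is nothing to prove; the real content is the inductive step, where from ``$c_0$ penetrates between $a_{k-1}$ and $b_{k-1}$'' together with the bridging stable elements in $S_{k-1}$ and $S_k$ we must deduce ``$c_0$ penetrates between $a_k$ and $b_k$''.

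So assume $k \ge 2$ and that the conclusion holds for $k-1$: $c_0$ penetrates between $a_{k-1}$ and $b_{k-1}$ in $S_{k-1}$, i.e.\ there is $c' \sim c_0$ with $a_{k-1} \lesssim_{k-1} c' \lesssim_{k-1} b_{k-1}$. First I would use the stability of $c_0$ (hypothesis (v)): since $c_0$ is stable, $c'$ lies between the stable elements $a_{k-1}$ and $b_{k-1}$, and by hypothesis (vi) applied with $i = k-1$ we have stable $a_{k-1}, b_{k-1} \in S_{k-1}$ and $a_{k-1}', b_{k-1}' \in S_k$ with $a_{k-1} \sim a_{k-1}'$, $b_{k-1} \sim b_{k-1}'$ and $a_{k-1} <_{k-1} b_{k-1}$. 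Then hypothesis (v), in the form stated for a stable element sitting between two stable elements with $\sim$-matched partners in the neighbouring set, yields that $c'$ — hence $c_0$, since $c_0 \sim c'$ — penetrates between $a_{k-1}'$ and $b_{k-1}'$ in $S_k$. The remaining task is to upgrade ``penetrates between $a_{k-1}'$ and $b_{k-1}'$'' to ``penetrates between $a_k$ and $b_k$'': both pairs live in $S_k$, all four of $a_{k-1}', b_{k-1}', a_k, b_k$ are stable in $S_k$ (the first two by (vi), the latter two by (v) for index $k$ — wait, by (vi) for index $k$ if $k \le k-1$, which fails, so $a_k, b_k$ need a separate justification), and one uses the linear order on equivalence classes of $S_k$ from (ii) together with (iv)/(v) to slide the penetrating copy of $c_0$ into the interval $[a_k, b_k]$.

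The main obstacle I anticipate is exactly this last bookkeeping step: the indexing in hypotheses (iv)–(vi) is asymmetric (the bridging stable pairs $a_i, b_i$ are indexed up to $k-1$, but we also need stable reference points $a_k, b_k$ in $S_k$), and one must track carefully which stable elements are available in which $S_i$ and in what order their $\sim$-classes sit. The cleanest way is probably to phrase a single inductive claim of the form ``if $c_0$ is stable and penetrates between some pair of stable elements whose $\sim$-classes straddle the class of $a_i$ (resp.\ $b_i$) in $S_i$, then it penetrates between $a_{i+1}$ and $b_{i+1}$'', and push that forward one index at a time, each step invoking the linearity of the induced order (ii), the order-preservation between neighbours (iii), and the stability property (v) once. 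I would organize the write-up around that reformulated claim rather than the verbatim statement, since it makes the induction self-feeding; the translation back to the stated conclusion is then immediate by taking $a_0 \lesssim_0 c_0 \lesssim_0 b_0$ as the starting straddle and reading off $i = k$.
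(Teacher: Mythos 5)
This lemma is not proved in the paper at all — it is imported verbatim as \cite[Lemma~6.4]{Lys96} and the \texttt{lemma} environment is followed by no \texttt{proof} block — so there is no ``paper's proof'' to compare against. On its own terms, your sketch contains a real gap in the inductive step. You take a representative $c'$ of the $\sim$-class of $c_0$ lying in $S_{k-1}$ with $a_{k-1}\lesssim_{k-1} c'\lesssim_{k-1} b_{k-1}$ and then invoke the stability clause — condition (iv) in the paper's labelling (you call it (v), being shifted by one throughout) — to penetrate $c'$ into $S_k$. But (iv) requires the element \emph{being penetrated} to be stable, stability is a property of an individual element and not of its $\sim$-class, and the only element of the class of $c_0$ that the hypotheses certify as stable is $c_0$ itself, which sits in $S_0$, not $S_{k-1}$. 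There is no reason $c'$ is stable — in the application that motivates the lemma, representatives of a class in later sets can have strictly smaller ``size'' (the ``fading effect'' discussed at the head of Section~8) and can easily drop below the stability threshold. So your inductive step does not go through as written. A correct argument has to keep applying (iv) with $c=c_0\in S_0$, which means it must maintain for each $m$ a pair of elements of $S_0$ that bracket $c_0$ and whose $\sim$-classes admit representatives in $S_m$ lying inside $[a_{m-1}',b_{m-1}']$; building this invariant involves penetrating the stable endpoints $a_m,b_m$ both forward to $S_{m+1}$ and \emph{backward} toward $S_0$, and propagating order comparisons across non-adjacent sets through chains of applications of (iii). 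That bookkeeping is the actual content of the lemma and is absent from your sketch.

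Two smaller points. Your base case is misidentified: for $k=1$ the hypothesis ``$c_0$ penetrates between $a_i$ and $b_i$ for $i=1,\dots,k-1$'' is empty (the index range is empty), so nothing is ``given''; the $k=1$ case requires one genuine application of (iii) (to obtain $a_0'\lesssim_1 b_0'$ from $a_0<_0 b_0$) followed by (iv) with $c=c_0$, $a=a_0$, $b=b_0$, $a'=a_0'$, $b'=b_0'$. You are, however, right to flag the indexing of $a_k,b_k$ in the conclusion: condition (v) only introduces $a_i,b_i$ for $i\le k-1$, so ``$a_k,b_k$'' must be read as $a_{k-1}',b_{k-1}'$ (which is consistent with how the lemma is used in the proof of Proposition~\ref{pr:coarsely-periodic-overlapping-general}); this is a genuine ambiguity in the statement and not a misreading on your part.
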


%

The following observation is a special case of \cite[Lemma 6.2]{Lys96}.

\begin{lemma} \label{lm:overlapping-by-action}
Suppose a group $G$ acts on set $X$. Let $g,h\in G$, $x_0,x_1,\dots,x_t \in X$ and for some $r,s \ge 0$
with $\gcd(r,s) = 1$ and $r+s \le t$,
$$
  gx_i = x_{i+r} \ (i=0,1,\dots,t-r), \quad hx_i = x_{i+s} \ (i=0,1,\dots,t-s).
$$
Assume that the stabilizer $H$ of $x_0$ is malnormal in $G$. 
Then either $g,h \in H$ (and hence $x_0=x_1=\dots=x_t$) or there exists 
$d \in G$ such that $g = d^r$ and $h = d^s$.
\end{lemma}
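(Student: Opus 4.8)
The statement is Lemma~\ref{lm:overlapping-by-action}, which is cited as a special case of \cite[Lemma 6.2]{Lys96}, so the plan is to give the elementary group-action argument that justifies it directly. Set $H = \mathrm{Stab}_G(x_0)$ and assume $H$ is malnormal. The first step is to record the basic consequence of the displayed equations: iterating, $g^a h^b x_i = x_{i+ar+bs}$ whenever $i$ and $i+ar+bs$ both lie in $\{0,\dots,t\}$ and $a,b \ge 0$ (and similarly with mixed signs, as long as we stay inside the index range). Since $\gcd(r,s)=1$ and $r+s \le t$, by a standard numerical-semigroup / Bézout argument every integer $n$ with $rs \le n \le t - rs$ (or more crudely, using $r+s\le t$, enough integers near the middle of the range) can be written as $ar+bs$ with $a,b\ge 0$; in particular there are indices $0 \le i < j \le t$ and nonnegative $a_1,b_1,a_2,b_2$ with $a_1 r + b_1 s = i$, $a_2 r + b_2 s = j$ and $j-i$ coprime-friendly. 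What I actually want is: there exist elements $p = g^{a}h^{b}$ and $q = g^{c}h^{d}$ of the subgroup $\langle g,h\rangle$ with $p x_0 = x_r$, $q x_0 = x_s$ realized by words that keep all intermediate indices in range — equivalently, $g x_0 = x_r$ and $h x_0 = x_s$ are already such, so I may simply take $p=g$, $q=h$.

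The heart of the argument is the following dichotomy. Consider the element $w = g^{s} h^{-r}$. Applying the iterated relations (choosing the order of application so that indices stay within $\{0,\dots,t\}$, which is possible because $r+s\le t$: first apply $h^{-r}$ starting from an index $\ge rs$... more simply, compute $g^s x_0 = x_{rs}$ via $s$ steps of size $r$, and $h^r x_0 = x_{rs}$ via $r$ steps of size $s$, both staying in range since $rs \le r+s \cdot\max \le t$ — here one uses $r+s\le t$ together with $rs \le (r+s)$ when $\min(r,s)=1$; the genuinely dangerous case $\min(r,s)\ge 2$ needs $rs \le t$, which is \emph{not} hypothesized, so instead I keep the computation local: $g^{s}h^{-r}$ sends $x_{j}$ to $x_{j}$ for a suitable $j$ in range). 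This shows $w$ fixes some $x_j$. Now two cases. If $w \notin H_j := \mathrm{Stab}(x_j)$ — impossible since $w$ fixes $x_j$. So $w \in H_j$, and $H_j$ is a conjugate of $H$ (pick $u\in\langle g,h\rangle$ with $u x_0 = x_j$, using Bézout as above; then $H_j = u H u^{-1}$), hence malnormal. The plan is then to produce a second element fixing $x_j$, namely a conjugate of $g$ or $h$, and invoke malnormality. Concretely: either $g$ and $h$ already fix $x_0$ — then the first alternative $g,h\in H$ holds and all $x_i$ coincide — or, say, $g \notin H$. Using malnormality of $H_j$ applied to the two subgroups $\langle w\rangle \subseteq H_j$ and a suitable conjugate $v g v^{-1}$, one forces these to be commensurable, and a short computation inside $\langle g,h\rangle$ acting on the orbit extracts a common root: there is $d$ with $g = d^{r}$, $h = d^{s}$. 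The cleanest route is to show $\langle g,h\rangle$ is cyclic, generated by such a $d$: the relations force $gh^{-1}$ or some explicit word to lie in the normalizer-controlled-by-malnormality of $H_j$, whence $\langle g, h\rangle \cap H_j^{g} \ne 1$ and $\langle g,h\rangle \cap H_j \ne 1$ with the two intersections being conjugate nontrivial subgroups of the malnormal $H_j$, forcing $g$ (or the conjugating element) into $H_j$ unless a common root exists.

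\textbf{Main obstacle.} The delicate point is the index bookkeeping: every time I apply $g^{\pm1}$ or $h^{\pm1}$ I must stay inside $\{0,1,\dots,t\}$, and the naive estimates want $rs \le t$ whereas I am only given $r+s \le t$. The fix is to never compute $g^s$ or $h^r$ as such, but only the \emph{single} composite relation I need: because $\gcd(r,s)=1$ and $r+s\le t$, the set $\{ar+bs : a,b\ge 0,\ ar+bs\le t\}$ contains two consecutive integers $n,n+1$ (indeed it contains all integers in $[(r-1)(s-1),\,t]$ intersected suitably, and since $r+s\le t$ this interval is nonempty once $\min(r,s)=1$; the case $\min(r,s)\ge 2$ with $(r-1)(s-1) > t$ cannot occur because then $rs > t + r + s - 1 \ge rs$, contradiction — so in fact $(r-1)(s-1)\le t$ always holds under $r+s\le t$, \emph{no} extra hypothesis needed). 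From two consecutive representable integers $n = a_1r+b_1s$ and $n+1 = a_2r+b_2s$ one gets $g^{a_2-a_1}h^{b_2-b_1} x_n = x_{n+1}$ with all intermediate indices controllable, and from $x_n, x_{n+1}$ one reconstructs that $\langle g,h\rangle$ acts on $\{x_i\}$ like a cyclic group shifting by $1$, after which $H$ malnormal yields the dichotomy immediately. So the real work is this purely numerical lemma about $(r-1)(s-1)\le t$ and consecutive representable integers; once that is in hand the group theory is a two-line application of malnormality. I would isolate that numerical fact first, then feed it into the malnormality step.
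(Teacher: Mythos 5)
There is a genuine gap, and the approach is also unnecessarily complicated compared to the paper's.

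The paper proves the lemma by a short induction on $r+s$ via the Euclidean algorithm: assuming $r\le s$ and $r>0$, replace $h$ by $g^{-1}h$, $s$ by $s-r$, $t$ by $t-r$ (the displayed relations become $g^{-1}h\,x_i = x_{i+s-r}$ for $0\le i\le t-s$), and invoke the inductive hypothesis; if the conclusion is $g=d^r$, $g^{-1}h=d^{s-r}$ then $h=d^s$. The base case $r=0$, $s=1$ gives $g\in H$ and $h^{-1}gh\,x_0 = h^{-1}g\,x_1 = h^{-1}x_1 = x_0$, so $g\in H\cap hHh^{-1}$ and malnormality yields $g=1$ (take $d=h$) or $h\in H$. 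This sidesteps all index bookkeeping, because a single Euclidean step only ever uses the hypothesized relations in their given range.

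Your plan instead tries to stay at one level and prove a numerical fact about representable integers, and the fact you isolate is false. You assert that $r+s\le t$ already forces $(r-1)(s-1)\le t$, arguing ``if $(r-1)(s-1)>t$ then $rs>t+r+s-1\ge rs$.'' But $t+r+s-1\ge rs$ does not follow from $r+s\le t$: with $r=10$, $s=11$, $t=21$ one has $\gcd(r,s)=1$, $r+s\le t$, yet $(r-1)(s-1)=90>21$ and $t+r+s-1=41<110=rs$. So the Frobenius-type bound you want to lean on is simply unavailable under the hypothesis as stated, and you correctly flagged this worry (``naive estimates want $rs\le t$'') before incorrectly dismissing it. Independently of the numerical error, the final group-theoretic step (``one forces these to be commensurable, and a short computation $\dots$ extracts a common root'') is not carried out; as written it is a gesture at malnormality, not a proof. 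To repair the argument you would either need to add the stronger hypothesis $rs\le t$ (which the lemma does not grant you) or abandon the direct Bézout route in favor of the descent the paper uses, which never leaves the given index range.
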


\begin{proof}
Induction on $r+s$. We can assume that $r \le s$. If $r > 0$ then we have 
$g^{-1} h x_i = x_{i+s-r}$
for $0 \le i \le t-s$ and the statement follows from the inductive hypothesis with $h: = g^{-1} h$,
$s := s-r$ and $t := t-r$. Otherwise we have $r=0$ and $s=1$. 
Then $h^{-1} g h x_0 = g x_0 = x_0$
and by malnormality of $H$, we have either $g,h \in H$ or $g = 1$ 
(and then $g = h^0$ and $h = h^1$).
\end{proof}



\begin{definition} \label{df:strictly-close}
Let $!X$ and $!Y$ be reduced paths in $\Ga_\al$. 
We say that $!X$ and $!Y$ are {\em strictly close in rank $\be \le \al$}
if there are fragments $!K_0$, $!K_1$ of rank $\be$ in $!X$ and fragments $!M_0$, $!M_1$ of rank $\be$ in $!Y$ such that:
\begin{itemize}
\item
$\muf(!K_i), \muf(!M_i) \ge \xi_2$ ($i=0,1$). 
\item 
$!X$ starts with $!K_0$ and ends with $!K_1$; $!Y$ starts with $!M_0$ and ends with $!M_1$;
\item
$!K_0 \sim !M_0^{\pm1}$, $!K_1 \sim !M_1^{\pm1}$ and $!K_0 \not\sim !K_1$.
\end{itemize}
\end{definition}

By Lemma \ref{lmi:compatible-close}, paths which are strictly close in rank $\be$ are also 
close in rank $\be$. 
One of the advantages of strict closeness is that this relation is transitive (this follows
immediately from Definition \ref{df:strictly-close}).
Note that a coarsely periodic segment $!P$ in $\Ga_\al$ and its periodic base $!S$
are strictly close according to Definition \ref{df:coarsely-periodic-segment}
(and the condition in Definition \ref{df:coarsely-periodic-segment} is slightly stronger
because of the lower bound on the size of the starting and the ending fragments of ~$!S$).

\begin{proposition}  \label{pr:coarsely-periodic-overlapping-general} 
Let $A$ be a simple period over $G_\al$, $\be$ the activity rank of $A$ and 
$!P_i$ $(i=0,1)$ be two $A$-periodic segments in $\Ga_\al$.
Let $!S_i$ $(i=0,1)$ be a reduced path in $\Ga_\al$ 
which is strictly close to ~$!P_i$.
Assume that $!S_0$ is contained in $!S_1$.
Assume also that $!P_0$ contains at least one period $A$ in the sense 
that there exist fragments $!K$ and $!K'$ of rank $\be$ 
in ~$!P_0$ such that $\muf(!K), \muf(!K') \ge \xi_2$
and $!K' \sim s_{A,!P_0} !K$.
Then $!P_0$ and $!P_1$ have a common periodic extension.
\end{proposition}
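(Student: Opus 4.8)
The goal is to show that the two $A$-periodic lines carrying $!P_0$ and $!P_1$ coincide, i.e.\ have a common periodic extension; equivalently, the translation elements associated with their infinite periodic extensions are equal. The plan is to work entirely with fragments of rank $\be$ (the activity rank of $A$) inside the big reduced path $!S_1$, and to transfer the periodicity of $!P_0$ along a chain of strict-closeness relations into $!S_1$, then out again onto $!P_1$, keeping track that the ``fading effect'' does not destroy the periodic pattern. The key is that strict closeness is transitive (Definition~\ref{df:strictly-close}) and that, by Proposition~\ref{pr:fragment-correspondence-between-fragments}, fragments of rank $\be$ of sufficiently large size in strictly close paths correspond to one another in an order-preserving way, so a translate $s_{A,!P_0}^j !K$ lying in $!P_0$ produces a compatible fragment in $!S_0$, hence (since $!S_0 \subseteq !S_1$) in $!S_1$, hence (pulling back via the strict closeness of $!P_1$ and $!S_1$) a compatible fragment in $!P_1$.

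\textbf{Main steps.} First I would fix a periodic base structure: by Remark~\ref{rm:base-whole-periods} and the hypothesis on $!K,!K'$, arrange that $!P_0$ contains translates $s_{A,!P_0}^j !K$ for $j = 0,1,\dots, N$ with $N$ as large as the length of $!P_0$ permits, all of size $\ge \xi_2$, and similarly record the starting/ending fragments $!K_0^{(i)}, !K_1^{(i)}$ of $!P_i$ and $!M_0^{(i)}, !M_1^{(i)}$ of $!S_i$ from Definition~\ref{df:strictly-close}. Second, using Proposition~\ref{pri:fragment-correspondence-between-fragments} applied to the strictly close pair $(!P_0,!S_0)$, produce for each $j$ a fragment $!L_j$ of rank $\be$ in $!S_0$ with $!L_j \sim (s_{A,!P_0}^j !K)^{\pm1}$ and $\muf(!L_j)\ge \xi_2$ (minus the fading loss, which stays above $\xi_2$ after one application by the choice of the $\xi$-parameters in \ref{ss:extra-parameters}); by Proposition~\ref{pri:fragment-ordering-between-fragments} these $!L_j$ occur in $!S_0$ in the same order as the $s_{A,!P_0}^j !K$ in $!P_0$. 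Third, since $!S_0$ is a subpath of $!S_1$, the $!L_j$ are also fragments in $!S_1$; now apply Proposition~\ref{pri:fragment-correspondence-between-fragments} to the strictly close pair $(!S_1,!P_1)$ to get fragments $!M_j$ of rank $\be$ in $!P_1$, again order-preserving, with $!M_j \sim !L_j^{\pm1}$. Fourth — the crux — identify the combinatorial structure: the group $G_\al$ acts on the set of $\sim$-classes of rank-$\be$ fragments along $!P_1$ (equivalently, on the relator loops), $s_{A,!P_1}$ shifts this set by one step, and the fragments $!M_j$ give a sequence of $\sim$-classes indexed by $j$ that is simultaneously shifted by $s_{A,!P_0}$-transport (via the chain above) and by the intrinsic periodicity of $!P_1$. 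Invoke Lemma~\ref{lm:overlapping-by-action} with $G = G_\al$ acting on fragment classes, $H = \setof{g}{g !M_0 \sim !M_0}$ (finite cyclic, hence malnormal in its normalizer inside the centraliser, and in the relevant range malnormal by Proposition~\ref{pri:fragment-finite-order}$_\al$ together with cyclicity of centralisers of non-torsion elements), with $r = 0$, $s = 1$ in the simplest configuration, to conclude that either all these classes coincide — impossible, since $!K \not\sim s_{A,!P_0}!K$ inside $!P_0$ by Proposition~\ref{pri:fragment-in-periodic} — or there is $d\in G_\al$ with the two translations being powers of a common $d$; matching up $s_{A,!P_0}$ and $s_{A,!P_1}$ then forces them equal, i.e.\ $!P_0$ and $!P_1$ are parallel, hence have a common periodic extension.

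\textbf{Where the difficulty lies.} The main obstacle is controlling the ``fading'': each application of the fragment-correspondence proposition loses roughly $2\la + 3.4\om$ from $\muf$, and here I pass through \emph{two} such applications (through $!S_0\subseteq!S_1$ and then onto $!P_1$), plus the initial passage from $!P_0$ to $!S_0$. So one must check that the sizes stay above the threshold $\xi_2$ throughout — this is exactly why the hypotheses demand $\muf(!K),\muf(!K')\ge\xi_2$ on the periodic side rather than merely $\ge\xi_1$, and why $\xi_2 = \xi_1 - 2\la - 3.4\om$ was defined with this slack. The cleanest way to organise the bookkeeping, and to avoid re-deriving order-preservation by hand at each stage, is to feed the whole chain $!P_0 \leftrightarrow !S_0 \subseteq !S_1 \leftrightarrow !P_1$ (and possibly intermediate cuttings) into the penetration lemma (Lemma~\ref{lm:penetration-lemma}): take $S_i$ to be the ordered sets of rank-$\be$ fragment classes along the successive paths, `$\sim$' the compatibility relation, stable elements those of size $\ge\xi_2 + (\text{one fading step})$, and let the element $c_0$ be (the class of) $!K$; the lemma then guarantees that the periodicity witness $!K$ ``penetrates'' all the way to $!P_1$, yielding the required compatible fragments there with sizes under control. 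After that, the action argument via Lemma~\ref{lm:overlapping-by-action} is routine. A minor additional point is the boundary cases where $!P_0$ is short (close to one period) or where the endpoints $!K_0,!K_1$ interact with the translates; these are handled by first truncating $!P_0$ slightly if necessary, using Remark~\ref{rm:base-whole-periods}, so that genuinely two whole periods' worth of translates $s_{A,!P_0}^j!K$ are available to run the penetration argument.
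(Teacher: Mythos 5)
There is a genuine gap, and it lies exactly at the step you call ``the crux.'' A one-way transfer of the translates $s_{A,!P_0}^j!K$ through the chain $!P_0 \leftrightarrow !S_0 \subseteq !S_1 \leftrightarrow !P_1$ produces an ordered family of fragments $!M_j$ in $!P_1$, but it gives you \emph{no control over how the index shift $j \mapsto j+1$ relates to the intrinsic period shift $s_{A,!P_1}$ of $!P_1$}. Nothing forces $!M_{j+1} \sim s_{A,!P_1}!M_j$; a priori several consecutive $!M_j$ could land inside a single period of $!P_1$, or one period of $!P_1$ could be skipped. That commensurability is precisely what has to be established, and the paper does it by building the set $\cM$ of \emph{all} fragments reachable from $!K$ by a mix of jumps (between $!P_i$ and $!S_i$) and translations (by periods of $!P_0$, by periods of $!P_1$, and by inclusion $!S_0 \subseteq !S_1$), then \emph{counting}: because $\cM$ is closed under both period translations, the number $m$ of $\cM$-classes in one period of $!P_0$ equals the number in one period of $!P_1$, the offset $q$ of the jump satisfies $\gcd(m,q)=1$ since $\cM$ is generated by a single fragment, and Lemma~\ref{lm:overlapping-by-action} is invoked with these $r=m$, $s=q$ (not with $r=0$, $s=1$ as you suggest --- the base case of that lemma would conclude $g\in H$ or $g=1$, which is not what is wanted here and would not follow from the one-way data).

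Relatedly, your reading of the penetration lemma's role as ``bookkeeping for the one-way chain of length four'' undersells what it does. The hard step in the paper's proof is the Claim that the jump operation is \emph{always} possible inside $\cM$, and a generic element of $\cM$ is reached only by an arbitrarily long alternating sequence of jumps and translations. Each step of such a chain applies Proposition~\ref{pr:fragment-correspondence-between-fragments} and in principle could erode $\muf$ below the usable threshold; the penetration lemma is what shows that, because the endpoints $!K_i,!M_i$ of the strict-closeness data are stable (size $\ge\xi_2$), a stable interior fragment can be pushed through the whole chain without vanishing. Omitting the $\cM$ construction and the Claim means you have neither the bounded-fading guarantee for long chains nor the counting data $(m,q)$ needed to feed Lemma~\ref{lm:overlapping-by-action}, so the argument as written does not close.
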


\begin{proof}
Denote
$$
    \xi_3 = \xi_2 - 2\la - 3.4\om = 3\la - 10.9 \om.
$$
Throughout the proof, ``fragment $!M$'' means 
``fragment $!M$ of rank $\be$ with $\muf(!M) \ge \ze_3$'' 
(or simply  ``fragment $!M$ of rank 0'' if $\be=0$, see \ref{ss:activity-rank-0}).

Let a line $!L_i$ be the infinite periodic extension of $!P_i$ and let $g$ be an element
of $G_\al$ such that $!L_1 = g !L_0$, so $s_{A,!P_1} = g s_{A,!P_0} g^{-1}$.
Our argument relies on establishing a correspondence between 
fragments of rank $\be$ in $!P_i$ and $!S_i$.
It will be convenient to consider fragments of rank $\be$
in four paths $!P_i$ and ~$!S_i$ as four disjoint
sets, i.e.\ we will formally consider pairs $(!M,!X)$ where 
$!X \in \set{!P_0,!P_1,!S_0,!S_1}$
and $!M$ is a fragment occurring in $!X$. 
We will refer to $!M$ as a 
``fragment belonging to $!X$'' or simply as a ``fragment in $!X$''.

We introduce two operations on fragments in $!P_i$ and $!S_i$.
Let $!M$ and $!N$ be fragments 
each belonging to some $!P_i$ or ~$!S_i$. 
\begin{enumerate}
\item 
If $!M$ belongs to $!P_i$, $!N$ belongs to $!S_i$ and $!M \sim !N^{\pm1}$ 
then either of $!M$ and $!N$ {\em jumps} to the other.
\item
$!M$ {\em translates} to $!N$ in the following cases (a)--(d):
\begin{enumerate}
\item
$!M$ and $!N$ belong to the same $!P_i$ and $!N \sim s_{A,!P_i}^k !M$ for some $k \in \Z$; or
\item
$!M$ belongs to $!P_0$, $!N$ belongs to $!P_1$ and $!N \sim g s_{A,!P_0}^k !M$ for some $k \in \Z$; or
\item
$!M$ belongs to $!P_1$, $!N$ belongs to $!P_0$ and $!N \sim g^{-1} s_{A,!P_1}^k !M$ for some $k \in \Z$.
\end{enumerate}
(In other words, $!M$ translates to $!N$ in cases (a)--(c) if they have the same position in their corresponding 
periodic lines $!L_i$ with respect to the period $A$ up to compatibility.)
\begin{enumerate} \setcounter{enumii}{3}
\item 
An ``identical'' case: $!M \sim !N$ and they belong to some $!S_i$ and $!S_j$ respectively.
\end{enumerate} 
\end{enumerate}
Note that the two operations are reversible and are defined up to compatibility.

Let $!K$ and $!K'$ be fragments in $!P_0$ such that $\muf(!K), \muf(!K') \ge \xi_1$ and
$!K' \sim s_{A,!P_0} !K$, as assumed in the proposition.
Let $\cM$ be a maximal set of pairwise non-compatible fragments which can be obtained
by operations (i) and (ii) starting from $!K$. 
By Proposition \ref{pr:inclusion-compatibility},
neither of any two fragments in $\cM$ is contained in the other, so $\cM$ is a finite set.

The following assertion is the principal step of the proof.
\begin{claim*}
The jump operation is always possible inside $\cM$; 
that is, for any $!M \in \cM$ in ~$!P_i$ or in $!S_i$, $i \in \set{0,1}$, there 
exists a fragment $!N$ of rank $\al$ in $!S_i$ or, respectively, in $!P_i$
such that $!M \sim !N^{\pm1}$.
\end{claim*}

{\em Proof of the claim.}
We assume that some $!M \in \cM$ is given and prove existence of the required ~$!N$.
The proof will consist of application of Lemma \ref{lm:penetration-lemma}.
We do a necessary preparation.

According to the definition of $\cM$, there is a sequence $!T_0 = !K$, 
$!T_1$, $\dots$, $!T_l = !M$
of fragments $!T_j \in \cM$ such that $!T_{j+1}$ is obtained from $!T_j$ by one of the operations (i) or (ii).
We can assume that the sequence has no two translations in a row (otherwise we can replace them by a single
translation) and has no two jumps in a row (otherwise they eliminate).
Assume also for convenience that $!T_0 \to !T_1$ is a translation (by inserting a trivial translation if needed).
Thus for each $i$, $!T_{2j}$ translates to $!T_{2j+1}$ and $!T_{2j+1}$ jumps to $!T_{2j+2}$.
We can assume that the last step $!T_{l-1} \to !T_l$ is a translation, so $l = 2k-1$ for some $k$. 

Now roughly speaking, we move all fragments $!T_j$ 
along with the corresponding paths $!P_i$ or $!S_i$ belonging
them, to the same location up to compatibility. 
We define a sequence $!Y_0$, $!Y_1$, $\dots$, $!Y_k$ of paths in $\Ga_\al$
and a sequence $!W_j$ of fragments in $!Y_j$ for $j=0,1,\dots,k-1$. 
For each ~$j$ we will have $!W_j = f_j !T_{2j+1}$ for some $f_j \in G_\al$.
The definition of $!Y_j$ and $f_j$ goes as follows.

Denote $(!X_1,!X_2,!X_3,!X_4) = (!P_0,!S_0,!P_1,!S_1)$ and let $J(i)$ denote the index such that
a fragment in $!X_i$ jumps to a fragment in $!X_{J(i)}$ (i.e.\ $(J(1),J(2),J(3),J(4)) = (2,1,4,3)$).
Denote also ~$I(j)$ the index such that $!T_{2j-1}$ belongs to $!X_{I(j)}$. 
Thus, $!T_{2j}$ belongs to $!X_{J(I(j))}$.

We start with $!Y_0 = !X_{I(0)}$ and $!W_0 = !T_1$, so $f_0 = 1$.
Assume that $j < k-1$ and $!Y_j$ and $f_j$ are already defined.
If $!T_{2j} \to !T_{2j+1}$ is a translation by (a)--(c) then there exists $f_{j+1} \in G_\al$ such that 
$f_{j+1} !X_{I(j+1)}$ and $f_j !X_{J(I(j))}$
belong to the same $A$-periodic line and $f_{j+1} !T_{2j+1} \sim f_j !T_{2j}$.
We take $!Y_{j+1} =  f_{j+1} !X_{I(j+1)} \cup f_j !X_{J(I(j))}$.
Otherwise $!T_{2j} \to !T_{2j+1}$ is a translation by (d), i.e.\ $!X_{J(I(j))}$ is either $!S_0$ or $!S_1$.
In this case we take $f_{j+1} = f_j$ and $!Y_{j+1} = f_j !S_1$.
Finally, define $!Y_k = f_k !X_{J(I(k-1))}$. 
We have $f_{j+1} !T_{2j+2}^{\pm1} \sim f_{j+1} !T_{2j+1} \sim f_j !T_{2j}$ for all $j=0,1,\dots,k-2$
and hence $!W_0 \sim !W_1^{\pm1} \sim \dots \sim !W_{k-1}^{\pm1}$.
Figure \ref{fig:coarsely-periodic-overlapping} 
illustrates the construction.
\begin{figure}[h]
\begingroup%
  \makeatletter%
  \providecommand\color[2][]{%
    \errmessage{(Inkscape) Color is used for the text in Inkscape, but the package 'color.sty' is not loaded}%
    \renewcommand\color[2][]{}%
  }%
  \providecommand\transparent[1]{%
    \errmessage{(Inkscape) Transparency is used (non-zero) for the text in Inkscape, but the package 'transparent.sty' is not loaded}%
    \renewcommand\transparent[1]{}%
  }%
  \providecommand\rotatebox[2]{#2}%
  \newcommand*\fsize{\dimexpr\f@size pt\relax}%
  \newcommand*\lineheight[1]{\fontsize{\fsize}{#1\fsize}\selectfont}%
  \ifx\svgwidth\undefined%
    \setlength{\unitlength}{194.7891064bp}%
    \ifx\svgscale\undefined%
      \relax%
    \else%
      \setlength{\unitlength}{\unitlength * \real{\svgscale}}%
    \fi%
  \else%
    \setlength{\unitlength}{\svgwidth}%
  \fi%
  \global\let\svgwidth\undefined%
  \global\let\svgscale\undefined%
  \makeatother%
  \begin{picture}(1,0.64587455)%
    \lineheight{1}%
    \setlength\tabcolsep{0pt}%
    \put(0,0){\includegraphics[width=\unitlength]{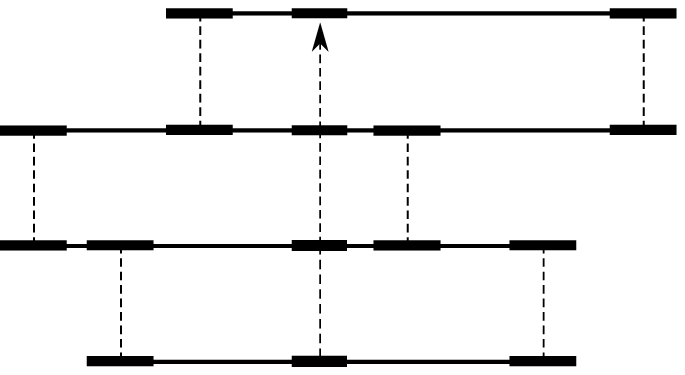}}%
    \put(0.44714223,0.00708583){\color[rgb]{0,0,0}\makebox(0,0)[lt]{\lineheight{1.25}\smash{\begin{tabular}[t]{l}$!W_0$\end{tabular}}}}%
    \put(0.39347993,0.18090588){\color[rgb]{0,0,0}\makebox(0,0)[lt]{\lineheight{1.25}\smash{\begin{tabular}[t]{l}$!W_1$\end{tabular}}}}%
    \put(0.38877759,0.35126736){\color[rgb]{0,0,0}\makebox(0,0)[lt]{\lineheight{1.25}\smash{\begin{tabular}[t]{l}$!W_2$\end{tabular}}}}%
    \put(0.44422658,0.60425051){\color[rgb]{0,0,0}\makebox(0,0)[lt]{\lineheight{1.25}\smash{\begin{tabular}[t]{l}$!W_3$\end{tabular}}}}%
  \end{picture}%
\endgroup%

\caption{}  \label{fig:coarsely-periodic-overlapping}
\end{figure}

By strict closeness of pairs $(!P_0,!S_0)$ and $(!P_1,!S_1)$, each 
$!X_i$ starts with a fragment $!U_i$ and ends with a fragment $!V_i$ 
such that $\muf(!U_i), \muf(!V_i) \ge \xi_2$, 
$!U_i \not\sim !V_i$ and we have $!U_i \sim !U_{J(i)}^{\pm1}$ and $!V_i \sim !V_{J(i)}^{\pm1}$

We now apply Lemma \ref{lm:penetration-lemma} where:
\begin{itemize}
\item 
$S_j$ is the set of all fragments $!N$ in $!S_j$ with $\muf(!N) \ge \xi_3$.
\item
$!N <_i !N'$ is defined as `$!N \not\sim !N'$ and $!N < !N'$ in $!S_j$'.
\item
Equivalence of $!N,!N' \in \bigcup_j S_j$ is defined as $!N \sim !N'^{\pm1}$.
\item
$!N \in \bigcup_j S_j$ is defined to be stable iff $\muf(!N) \ge \xi_2$.
\item
For $a_j$, $b_j$, $a_j'$ and $b_j'$ we take appropriate translates of $!U_i$ and $!V_i$, namely,
$f_j !U_{I(j)}$, $f_j !V_{I(j)}$, $f_j !U_{J(I(j))}$ and $f_j !V_{J(I(j))}$ respectively.
\end{itemize}
We have conditions (i)--(v) of Lemma \ref{lm:penetration-lemma} satisfied: 
condition (i) holds in case $\be\ge1$ by Corollary \ref{coi:compatibility-order-alpha}, 
condition (ii) holds by Proposition \ref{pr:inclusion-compatibility}$_\be$,
conditions (iii) and (iv) hold by Proposition \ref{pr:fragment-correspondence-between-fragments}
in view of the inequality
$
  \xi_3 \ge 2\la + 9.1\om
$
and, finally, condition (v) holds immediately by construction. 

For $c_0$ in Lemma \ref{lm:penetration-lemma} we take $!T_1$.
Note that up to compatibility, we can assume that $\muf(!T_1) \ge \xi_2$, so $!T_1$ is stable.
(By construction, $!T_1$ is obtained from $!T_0 = !K$ 
by translation to ~$!X_{I(0)}$;
if $!T_1$ is compatible with the starting or the ending fragment of $!X_{I(0)}$
then we can assume $\muf(!T_1) \ge \xi_2$ due to Definition \ref{df:strictly-close};
otherwise we can assume that $!T_1$ is a literal translation of ~$!K$ and then 
$\muf(!T_1) = \muf(!K) \ge \xi_1$.)
Since $!T_1 = !W_0 \sim !W_1^{\pm1} \sim \dots \sim !W_{k-1}^{\pm1}$ 
and each $!W_j$ occurs in $f_j !X_{I(j)}$, $!T_1$ penetrates between each pair $f_j !U_{I(j)}$ and $f_j !V_{I(j)}$
for $j=0,1,\dots,k-1$. All the hypotheses of Lemma \ref{lm:penetration-lemma} are satisfied
and applying it we find a fragment $!W_k$
in $f_{k-1} !X_{J(I(k-1))}$ such that $!W_{k}^{\pm1} \sim !W_{k-1} = f_{k-1} !M$. 
Then $!M \to f_{k-1}^{-1} !W_k$
is the required jump. This finishes the proof of the claim.

We finish the proof of the proposition. 
Let $!K_0 = !K$, $!K_1$, $\dots$, $!K_m \sim s_{A,!P_0} !K$ be all 
fragments in $\cM$ between
$!K$ and $s_{A,!P_0} !K$ in their natural order, i.e.\ we have 
$!K_0 < !K_1 < \dots < !K_m$.
 Let $!M_0, \dots, !M_m \in \cM$ be fragments in $!P_1$ such that $!M_i \sim !K_i^{\pm1}$ for all $i$
(each $!M_i$ is obtained from ~$!K_i$ by two jumps).
Note that $!M_0 < !M_1 < \dots < !M_m$ 
by Proposition \ref{pr:fragment-correspondence-between-fragments}.
Since $\cM$ is closed under translations, the number of fragments in $\cM$ between
$!M_0$ and $s_{A,!P_1} !M_0$ is the same as the number of fragments in ~$\cM$ 
between $!K$ and $s_{A,!P_0} !K$, 
i.e.\ we have $!M_m \sim s_{A,!P_1} !M_0$.
This implies that $!K_0$ translates to some ~$!M_q$, i.e.\ 
$!M_q \sim g s_{A,!P_0}^t !K_0^{\pm1}$ for some $t$
and hence 
$$
    !M_{i+q} \sim g s_{A,!P_0}^t !K_i^{\pm1} \ \text{for} \ i =0,1,\dots,m-q, \quad
    !M_{i+q-m} \sim g s_{A,!P_0}^{t-1} !K_i^{\pm1}  \ \text{for} \ i = m-q+1,\dots,m.
$$
Note that $\gcd(m.q) = 1$ since $\cM$ is generated by a single fragment $!K$.
By Propositions ~\ref{pri:fragment-finite-order}, \ref{pr:conjugate-relator-roots}
and Corollary \ref{coi:no-inverse-compatibility-alpha},
the subgroup $\setof{g \in G_\al}{g !M_0 \sim !M_0^{\pm1}}$ is malnormal in ~$G_\al$.
We now apply Lemma \ref{lm:overlapping-by-action} where for $x_i$ we
take the equivalence class of $!M_i$ in the set of fragments of rank $\be$
in $\Ga_\al$ under compatibility up to invertion.
By the lemma, $\sgp{g,s_{A,!P_0}}$ is cyclic. 
Since $A$ is a non-power, we get $g \in \sgp{s_{A,!P_0}}$
which means that $!L_1 = !L_2$.
\end{proof}

As an immediate consequence of Proposition \ref{pr:coarsely-periodic-overlapping-general} we get:

\begin{corollary} [overlapping coarse periodicity] 
\label{co:coarsely-periodic-overlapping}
Let $!S_0$ and $!S_1$ be coarsely periodic segments in $\Ga_\al$ with the same simple period $A$ over $G_\al$. 
If $!S_0$ is contained in $!S_1$ then $!S_0 \sim !S_1$.
\end{corollary}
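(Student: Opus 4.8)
The plan is to deduce Corollary \ref{co:coarsely-periodic-overlapping} from Proposition \ref{pr:coarsely-periodic-overlapping-general} by checking that the hypotheses of the latter are met. Let $!S_0$ be contained in $!S_1$, both coarsely $A$-periodic segments in $\Ga_\al$. Fix periodic bases $!P_0$ and $!P_1$ for $!S_0$ and $!S_1$ respectively; these are $A$-periodic segments in $\Ga_\al$. First I would observe that, by Definition \ref{df:coarsely-periodic-segment}, each $!S_i$ is strictly close (in the sense of Definition \ref{df:strictly-close}) to its periodic base $!P_i$: indeed the defining data $!K_0,!K_1$ in $!P_i$ and $!M_0,!M_1$ in $!S_i$ have $\muf(!K_j)\ge\xi_1\ge\xi_2$ and $\muf(!M_j)\ge\xi_2$, and $!K_0\not\sim!K_1$, which is exactly what strict closeness requires. (The condition in Definition \ref{df:coarsely-periodic-segment} is marginally stronger on the $!K_j$ side, as noted in the paper, but that only helps.)

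Next I would verify the remaining hypothesis of Proposition \ref{pr:coarsely-periodic-overlapping-general}, namely that $!P_0$ ``contains at least one period''. This is immediate from $\ell_A(!S_0)\ge1$: by Definition \ref{df:coarsely-periodic-size}, $s_{A,!P_0}!K_0\lesssim!K_1$ where $!K_0,!K_1$ are the starting and ending fragments of rank $\be$ of $!P_0$ with $\muf(!K_i)\ge\xi_1\ge\xi_2$. So there is a fragment $!K'$ of rank $\be$ in $!P_0$ with $!K'\sim s_{A,!P_0}!K_0$ and $\muf(!K')\ge\xi_2$ — if $s_{A,!P_0}!K_0\lesssim!K_1$ means $s_{A,!P_0}!K_0\sim!K_1$ we may take $!K'=!K_1$; otherwise $s_{A,!P_0}!K_0<!K_1$ and, up to extending $!P_0$ to span both $!K_0$ and its translate (Remark \ref{rm:base-whole-periods}), the translate $s_{A,!P_0}!K_0$ itself occurs in $!P_0$ and serves as $!K'$ with $\muf = \muf(!K_0)\ge\xi_1\ge\xi_2$. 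Setting $!K:=!K_0$ gives the required pair $!K,!K'$ in $!P_0$.

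With all hypotheses in place, Proposition \ref{pr:coarsely-periodic-overlapping-general} yields that $!P_0$ and $!P_1$ have a common periodic extension. Since the axis of $!S_i$ is by definition the infinite $A$-periodic extension of $!P_i$, a common periodic extension of $!P_0$ and $!P_1$ means the two infinite $A$-periodic lines coincide; that is, $!S_0$ and $!S_1$ have the same axis, which is precisely $!S_0\sim!S_1$ by Definition \ref{df:coarsely-periodic-compatibility}.

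The only genuinely delicate point is the bookkeeping in the second step: making sure the ``contains at least one period'' hypothesis — which is phrased in terms of fragments of size $\ge\xi_2$ in $!P_0$ — is correctly extracted from $\ell_A(!S_0)\ge1$, including the edge case where $s_{A,!P_0}!K_0$ and $!K_1$ are merely compatible rather than strictly ordered, and the use of Remark \ref{rm:base-whole-periods} to arrange that the relevant translate actually sits inside the chosen periodic base. Everything else is a direct invocation of the general proposition, so I do not expect any real obstacle there.
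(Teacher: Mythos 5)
Your proof is correct, and it is exactly the argument the paper has in mind when it says the corollary is an ``immediate consequence'' of Proposition \ref{pr:coarsely-periodic-overlapping-general}: the strict-closeness of a coarsely periodic segment to its base is noted explicitly in the paper after Definition \ref{df:strictly-close}, and the ``contains at least one period'' hypothesis is just the fourth bullet of Definition \ref{df:coarsely-periodic-segment} (i.e.\ $\ell_A(!S_0)\ge1$) unpacked. The careful case split on $s_{A,!P_0}!K_0\lesssim!K_1$ is a reasonable way to make the extraction explicit; the appeal to Remark \ref{rm:base-whole-periods} is not strictly needed, since when $s_{A,!P_0}!K_0<!K_1$ the translate already lies between the start of $!K_0$ and the end of $!K_1$ and hence inside $!P_0$, but it certainly does no harm.
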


\begin{corollary} \label{co:coarsely-periodic-cut}
Let $!S$ and $!T$ be non-compatible coarse periodic segments in $\Ga_\al$ with the same simple period $A$
which occur in a reduced path $!X$. Let $\ell_A(!S) \ge 3$. 
Assume that $!S_1$ is obtained from $!S$ by shortening by 2 periods from the end if $!S < !T$
or by shortening by 2 periods from the start if $!S > !T$. Then $!S_1$ and $!T$ are
disjoint.
\end{corollary}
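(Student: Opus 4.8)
The plan is to argue by contradiction, extracting from a hypothetical overlap of $!S_1$ and $!T$ a coarsely $A$-periodic segment that lies inside $!T$ but has the axis of $!S$, and then to apply Corollary \ref{co:coarsely-periodic-overlapping} to conclude that $!S$ and $!T$ have the same axis, contrary to hypothesis.

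First I would reduce to a single case by symmetry. The path $!X$ is simple (being reduced), so $!S$ and $!T$ are comparable once neither contains the other; and if one of them were contained in the other, Corollary \ref{co:coarsely-periodic-overlapping} would already force $!S\sim !T$, contradicting the hypothesis. Thus either $!S<!T$ or $!T<!S$. Passing from $!X$ to $!X^{-1}$ interchanges these two situations, turns $!S,!T$ into coarsely $A^{-1}$-periodic segments (recall $\ell_A(!S)=\ell_{A^{-1}}(!S^{-1})$), interchanges "start" and "end" of the shortening operation, and preserves both non-compatibility and disjointness; so it suffices to treat $!S<!T$, where $!S_1$ is the start subpath of $!S$ obtained by deleting $2$ periods from its end, with $\ell_A(!S_1)=\ell_A(!S)-2\ge 1$.

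Now suppose, for a contradiction, that $!S_1$ and $!T$ share an edge. Since $!S<!T$ we have $\io(!S)<\io(!T)$ and $\tau(!S)<\tau(!T)$, so $!S\cap !T$ is the subpath of $!X$ from $\io(!T)$ to $\tau(!S)$, an end of $!S$ and a start of $!T$, and $\tau(!S)$ lies in the interior of $!T$. As $!S_1$ is $!S$ with its last two periods removed and $!S_1$ still meets $!T$, the vertex $\io(!T)$ lies (non-strictly) before the endpoint of $!S_1$; reading this through \ref{ss:coarse-periodic-cutting} — in particular \ref{ssi:cutting-periods-count} together with the bound $|!K|<2|A|$ for the rank-$\be$ boundary fragments (Proposition \ref{pri:fragment-in-periodic-small}, applicable since $\xi_2\ge 2\la+5.3\om$ by \eqref{eq:om-bounds}) — shows that, measured along the axis $!L_S$ of $!S$, the overlap $!S\cap !T$ has $!L_S$-extent essentially two periods of $A$. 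Applying the shortening operation of \ref{ss:coarse-periodic-cutting} to $!S$ from the start by $\ell_A(!S)-1$ periods (legitimate since $\ell_A(!S)\ge 3>1$) produces a coarsely $A$-periodic segment $!W_0$ which is an end subpath of $!S$, has $\ell_A(!W_0)=1$, and — this is where the second removed period is spent — is contained in $!S\cap !T$, hence in $!T$. Being an end subsegment of $!S$, the segment $!W_0$ has the same axis as $!S$, so $!W_0\sim !S$.

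Finally, $!W_0$ and $!T$ are coarsely periodic segments with the common simple period $A$ and $!W_0\subseteq !T$, so Corollary \ref{co:coarsely-periodic-overlapping} gives $!W_0\sim !T$; combined with $!W_0\sim !S$ this yields $!S\sim !T$, the desired contradiction. I expect the main obstacle to be exactly the middle paragraph: verifying that the end subsegment $!W_0$ of $!S$ delivered by the shortening operation genuinely fits inside the overlap $!S\cap !T$ while remaining a bona fide (nonempty, $\ell_A\ge 1$) coarsely $A$-periodic segment. This forces one to track the positions of the boundary rank-$\be$ fragments, using Proposition \ref{pri:fragment-in-periodic} and the invariance property \ref{ssi:cutting-invariance}, and is precisely the reason the statement removes $2$ periods rather than $1$.
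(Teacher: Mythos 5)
Your overall strategy is the right one (extract from $!S$ a coarsely $A$-periodic subsegment that must lie inside $!T$, note it is compatible with $!S$, and let Corollary \ref{co:coarsely-periodic-overlapping} give $!S\sim !T$), and your reduction to the case $!S<!T$ is fine. But there is a genuine gap in the middle, exactly where you anticipate it, and the tool that closes it is one you did not invoke.

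You construct $!W_0$ by shortening $!S$ \emph{from the start} by $\ell_A(!S)-1$ periods, and then assert that $!W_0\subseteq !S\cap !T$. That inclusion requires $\io(!W_0)>\io(!T)$, which in turn requires $!W_0$ to be disjoint from $!S_1$. But your construction does not deliver this: the starting fragment of $!W_0$ is placed at (a fragment compatible with) $s_{A,!P}^{\ell_A(!S)-1}!K_0$, while the ending fragment of $!S_1$ sits at $s_{A,!P}^{-2}!K_1$, and since the defining maximality of $\ell_A(!S)$ only gives $s_{A,!P}^{\ell_A(!S)}!K_0\lesssim !K_1\lnsim s_{A,!P}^{\ell_A(!S)+1}!K_0$, it is perfectly possible that $s_{A,!P}^{\ell_A(!S)-1}!K_0$ strictly precedes $s_{A,!P}^{-1}!K_1$, so $!W_0$ can start inside $!S_1$. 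The attempted repair via \ref{ssi:cutting-periods-count}, Proposition \ref{pri:fragment-in-periodic-small} and \ref{ssi:cutting-invariance} is counting periods on the axis and does not establish the needed disjointness of the two \emph{paths in $!X$}.

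The paper's proof avoids all of this by invoking \ref{ssi:cutting-dividing} (with $k=2$, in its start/end-symmetric form): shortening $!S$ by $2$ periods from the end produces not only $!S_1$ but a decomposition $!S=!S_1!u!S_2$ into \emph{disjoint} pieces with $!S_2$ a coarsely $A$-periodic segment, $\ell_A(!S_2)=1$, and $!S_2\sim !S$. Then the corollary is immediate: if $!S_1$ met $!T$, one would have $\io(!T)<\tau(!S_1)\le\io(!S_2)$ and $\tau(!S_2)=\tau(!S)<\tau(!T)$, so $!S_2$ would be contained in $!T$; but $!S_2\sim !S\not\sim !T$, so Corollary \ref{co:coarsely-periodic-overlapping} forbids either of $!S_2,!T$ from containing the other. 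No fragment-position bookkeeping is required because \ref{ssi:cutting-dividing} packages exactly the disjointness you were trying to re-derive by hand.
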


\begin{proof}
Without loss of generality, we assume that $!S < !T$ and $!S_1$
is obtained from $!S$ by shortening by 2 periods from the end.
By \ref{ssi:cutting-dividing} we have $!S = !S_1 !u !S_2$ where $!S_2$ is a coarsely $A$-periodic segment with $!S_2 \sim !S$. 
B
y hypothesis we have $!S_2 \not\sim !T$ and 
then by Corollary \ref{co:coarsely-periodic-overlapping}, neither of $!S_2$ or $!T$
is contained in the other. This implies that $!S_1$ and $!T$ are disjoint.
\end{proof}

\begin{proposition} [strictly close periodic paths with one period] \label{pr:strictly-close-parallel}
Let $A$ be a simple period over $G_\al$ and $\be$ the activity rank of $A$.
Let $!P_0$ and $!P_1$ be strictly close in rank $\be$ paths in $\Ga_\al$ labeled by periodic words with period $A$.
Assume that there exist fragments $!K, !K'$ of rank $\be$ 
in ~$!P_0$ such that $\muf(!K), \muf(!K')  \ge \xi_2$
and $s_{A,!P_0} !K \sim !K'$.
Then $!P_0$ and $!P_1$ have a common periodic extension.
\end{proposition}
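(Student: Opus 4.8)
The plan is to obtain this as a degenerate instance of Proposition~\ref{pr:coarsely-periodic-overlapping-general}. That proposition, applied to two $A$-periodic segments $!Q_0,!Q_1$ and reduced paths $!R_i$ strictly close in rank $\be$ to $!Q_i$ with $!R_0$ contained in $!R_1$ and with $!Q_0$ containing at least one period $A$, yields a common periodic extension of $!Q_0$ and $!Q_1$. I would invoke it with $!Q_i:=!P_i$ and $!R_0:=!R_1:=!P_0$, so that the containment of $!R_0$ in $!R_1$ is automatic and the hypothesis ``$!P_0$ contains at least one period $A$'' is exactly the assumed existence of $!K,!K'$ of rank $\be$ in $!P_0$ with $\muf(!K),\muf(!K')\ge\xi_2$ and $!K'\sim s_{A,!P_0}!K$. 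Recall also that $!P_0$ and $!P_1$ are reduced, since Definition~\ref{df:strictly-close} requires its arguments to be reduced.

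The only point to check is that the two strict-closeness requirements of Proposition~\ref{pr:coarsely-periodic-overlapping-general} hold for this choice of auxiliary paths, i.e.\ that $!P_0$ is strictly close in rank $\be$ to $!P_0$ and that $!P_0$ is strictly close in rank $\be$ to $!P_1$. The second is literally the hypothesis. For the first, Definition~\ref{df:strictly-close} applied to the given strict closeness of $!P_0$ and $!P_1$ produces a fragment $!K_0$ of rank $\be$ that is a start of $!P_0$ and a fragment $!K_1$ of rank $\be$ that is an end of $!P_0$, with $\muf(!K_0),\muf(!K_1)\ge\xi_2$ and $!K_0\not\sim !K_1$; taking $!X=!Y=!P_0$ with these same fragments on both sides then verifies Definition~\ref{df:strictly-close} for the pair $(!P_0,!P_0)$. (As usual, when $\be=0$ the size conditions are vacuous and ``strictly close in rank $0$'' forces $!P_0=!P_1$ by Remark~\ref{rm:close-in-rank-0}, so the statement is trivial and it suffices to treat $\be\ge1$.)

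With these verifications in hand, Proposition~\ref{pr:coarsely-periodic-overlapping-general} applies and gives that $!P_0$ and $!P_1$ admit a common periodic extension, which is the claim. I do not expect a genuine obstacle here: no fresh van Kampen diagram or Cayley-graph argument is needed, all of the real work having already been done in Proposition~\ref{pr:coarsely-periodic-overlapping-general}; the proof is essentially bookkeeping, the one mildly delicate step being the unpacking of Definition~\ref{df:strictly-close} to see that a periodic path which is strictly close to another periodic path is in particular strictly close to itself.
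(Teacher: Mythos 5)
Your proposal is correct and takes essentially the same route as the paper, which also derives the statement as a special case of Proposition~\ref{pr:coarsely-periodic-overlapping-general}; the paper simply sets $!S_0=!S_1=!P_1$ where you set $!S_0=!S_1=!P_0$, an equally valid choice, and you additionally spell out the (easy) check that strict closeness to another path yields strict closeness to itself, which the paper leaves implicit.
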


\begin{proof}
This is a special case of Proposition \ref{pr:coarsely-periodic-overlapping-general}
with $!S_0 = !S_1 = !P_1$.
\end{proof}

\begin{proposition} \label{pr:nontorsion-conjugation} 
Let $g \in G_\al$ be a non-power of infinite order and let $h \in G_\al$.
If $g^k = h^{-1} g^l h$ for some $k,l > 0$ then $h \in \sgp{g}$ and $k=l$.

\end{proposition}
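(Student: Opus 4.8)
The plan is to reduce this statement about arbitrary non-powers of infinite order to the case of relator roots, which is already handled by Proposition~\ref{pr:conjugate-relator-roots}, or else derive it directly from the machinery of coarsely periodic words. First I would use Proposition~\ref{pr:cyclic-reduction} to conjugate $g$ to a strongly cyclically reduced word $A$ in $G_\al$; since $g$ has infinite order and is a non-power, $A$ is a simple period over $G_\al$ (after absorbing the conjugating element into $h$, so the relation becomes $A^k = z^{-1} A^l z$ for a new element $z \in G_\al$). The goal is to show $z \in \sgp{A}$ and $k = l$, which then translates back to $h \in \sgp{g}$ and $k = l$.

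Next I would set up the geometric picture in $\Ga_\al$: the relation $A^k = z^{-1} A^l z$ is witnessed by an annular diagram, or equivalently by a pair of parallel periodic lines. Take an $A$-periodic line $!L$ in $\Ga_\al$ carrying a long power $A^N$ with $N$ large (larger than $2 h_\al(A) + 3$ plus the fading losses), and consider the reduced path $!X$ obtained from a reduced representative of $z^{-1} A^N z$; by Proposition~\ref{pr:fellow-traveling} and the conjugacy relation, a long central subpath of $!X$ is close to a subpath of $!L$ that itself carries roughly $N - k - l$ copies of $A$. Applying Proposition~\ref{pr:coarsely-periodic-from-close} produces a coarsely $A$-periodic segment inside $!X$; on the other hand, since $!X$ is conjugate-translated by $z$, the same segment shifted by the translation element of $z^{-1} A^l z$ is again coarsely $A$-periodic with the \emph{same} axis. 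The key structural input is Proposition~\ref{pr:coarsely-periodic-overlapping-general} (together with Corollary~\ref{co:coarsely-periodic-overlapping}): two coarsely $A$-periodic segments with large overlap share an axis, forcing $s_{A^k} = s_{A^l}$ after conjugation by $z$, i.e. the two $A$-periodic lines $!L$ and $z^{-1} !L'$ coincide. Invariance of the axis then pins down $z$ up to the stabilizer of that axis.

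The final step is to identify that stabilizer. By Proposition~\ref{pri:fragment-finite-order} and Proposition~\ref{pr:root-existence}/\ref{pr:conjugate-relator-roots}, the stabilizer in $G_\al$ of an $A$-periodic axis — i.e. $\setof{w \in G_\al}{w \cdot (\text{axis}) = \text{axis}}$ — is the cyclic subgroup generated by the translation element $s_{A,!L}$, which is just $\sgp{A}$ since $A$ is a non-power (this is essentially Proposition~\ref{pri:fragment-finite-order} applied with $A$ in place of a relator root, using that centralizers of non-torsion elements are cyclic, Proposition~\ref{pr:nontorsion-conjugation}$_{\al-1}$ at lower ranks and the inductive structure). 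Once $z \in \sgp{A}$, the relation $A^k = z^{-1} A^l z = A^l$ gives $A^{k-l} = 1$ in $G_\al$; since $A$ has infinite order, $k = l$. Translating back through the conjugations yields $h \in \sgp{g}$ and $k = l$.

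The main obstacle will be the bookkeeping in the ``fading'' estimates: when transferring coarse periodicity from $!X$ to $!L$ and back under the conjugating translation, each application of Proposition~\ref{pr:fragment-correspondence-between-fragments} or Proposition~\ref{pr:coarsely-periodic-from-close} loses a bounded number of periods and degrades fragment sizes from $\xi_1$ toward $\xi_2$ and $\xi_3$, so I must start with $N$ large enough (on the order of $2 h_\al(A) + $ a constant) that the surviving coarsely periodic segments still have $\ell_A \ge 1$ and the overlap hypothesis of Proposition~\ref{pr:coarsely-periodic-overlapping-general} is met. A secondary subtlety is the case distinction on the activity rank $\be$ of $A$: when $\be = 0$ the argument collapses to ordinary periodic-word combinatorics (the axis is literally determined), and when $\be \ge 1$ one must check that $!X$ carries no fragments of rank exceeding $\be$ of large size, which follows from the definition of activity rank together with Proposition~\ref{pr:fragment-stability-previous}. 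I also need to handle carefully the possibility $k$ or $l$ being negative or the two periodic lines being oppositely oriented, but Corollary~\ref{coi:no-inverse-compatibility-alpha} and condition (S3) rule out the inverse-compatible case, as in the proof of Proposition~\ref{pr:conjugate-relator-roots}.
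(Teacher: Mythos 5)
Your overall strategy — conjugate $g$ to a simple period $A$ by Proposition~\ref{pr:cyclic-reduction}, represent the conjugacy by parallel $A$-periodic lines $!L_0$, $!L_1$ in $\Ga_\al$, and force these lines to coincide via the overlapping coarse-periodicity machinery — matches the paper's proof, and the core engine (Proposition~\ref{pr:coarsely-periodic-overlapping-general}, equivalently its special case Proposition~\ref{pr:strictly-close-parallel}) is the right one. The paper gets there more directly than you do: it invokes Proposition~\ref{pr:fragment-correspondence-cyclic-beta} on $!L_0$, $!L_1$ to find strictly close in rank $\be$ subpaths of arbitrary length (where $\be$ is the activity rank of $A$), and then applies Proposition~\ref{pr:strictly-close-parallel} to conclude $!L_0 = !L_1$.

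However, there are two concrete gaps in your execution. First, you apply Proposition~\ref{pr:fellow-traveling} and Proposition~\ref{pr:coarsely-periodic-from-close} to transfer periodicity from $!L$ to $!X$, but both require $!X$ and the relevant subpath of $!L$ to be \emph{close} in $\Ga_\al$ (joined by a bridge of rank $\al$). What you actually have from $A^{kN} = z^{-1} A^{lN} z$ is a conjugacy relation, and the conjugating word $z$ may be arbitrarily long, so closeness is not automatic; it has to be extracted from the annular-diagram analysis for conjugacy relations (Proposition~\ref{pr:no-active-fragments-cyclic-iterated}, packaged into Proposition~\ref{pr:fragment-correspondence-cyclic-beta}), which is exactly what the paper invokes and you do not. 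Second, your stabilizer identification at the end is both overcomplicated and circular. Proposition~\ref{pri:fragment-finite-order} concerns base loops of relator roots (finite-order elements) and does not apply to an infinite $A$-periodic line, and appealing to ``centralizers of non-torsion elements are cyclic'' is appealing to the very statement being proved. The fact that $h \in \sgp{A}$ if and only if $!L_0 = !L_1$ is elementary: $!L_1 = h^{-1} !L_0$, so equality means $h^{-1}$ lies on $!L_0$ at a vertex where the forward label reads $A^\infty$, and since $A$ is a non-power in $G_\al$ (hence primitive as a word) this forces $h^{-1} = A^m$. Finally, the concern about oppositely oriented lines does not arise, since the proposition assumes $k, l > 0$.
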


\begin{proof}
By Proposition \ref{pr:cyclic-reduction}, up to conjugation we can assume that $g$ is represented by
a simple period $A$ over $G_\al$. 
It is enough to prove that $h \in \sgp{A}$.

Consider two periodic lines $!L_0$ and $!L_1$ in $G_\al$ with period $A$ which represent the conjugacy relation.
We have $h \in \sgp{A}$ if and only if $!L_0 = !L_1$.
Let $\be$ be the activity rank of $A$. 
By Proposition \ref{pr:fragment-correspondence-cyclic-beta} we find strictly close in rank $\be$ subpaths $!P_i$ of $!L_i$
with any desired bound $|!P_0| \ge t |A|$.
Then the statement follows from Proposition \ref{pr:strictly-close-parallel}.
\end{proof}

As an immediate consequence we get:

\begin{corollary} \label{co:compatible-periodic-axes}
Let $!S_0$ and $!S_1$ be coarsely $A$-periodic segments in $\Ga_\al$
and $!L_i$ $(i=1,2)$ be an axis for $!S_i$. If $!S_0 \sim !S_1$ then $!L_1 = !L_2$.
\end{corollary}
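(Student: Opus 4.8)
The statement to prove is Corollary \ref{co:compatible-periodic-axes}: if two coarsely $A$-periodic segments $!S_0$ and $!S_1$ are compatible ($!S_0 \sim !S_1$), then their axes $!L_1$ and $!L_2$ coincide. The plan is to derive this as an immediate consequence of Proposition \ref{pr:nontorsion-conjugation} (or, equivalently, directly from Proposition \ref{pr:strictly-close-parallel}), exploiting the fact that compatibility of coarsely periodic segments was \emph{defined} (Definition \ref{df:coarsely-periodic-compatibility}) to mean ``having the same axis''. So at first glance the statement looks tautological; the content is that the \emph{axis is well-defined independently of the chosen periodic base}. Since Definition \ref{df:coarsely-periodic-segment} stipulates that a coarsely $A$-periodic segment is considered together with a fixed associated axis, what needs to be checked is that two choices of periodic base for the \emph{same} segment $!S$ yield infinite periodic lines with the same translation element $s_{A,\cdot}$, hence the same line.

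First I would unwind the definitions. Let $!S$ be a coarsely $A$-periodic segment with two periodic bases $!P$ and $!P'$, giving axes $!L$ and $!L'$. By Definition \ref{df:coarsely-periodic-segment}, $!P$ and $!S$ are strictly close in rank $\be$ (where $\be$ is the activity rank of $A$), and likewise $!P'$ and $!S$ are strictly close in rank $\be$; since strict closeness is transitive (noted right after Definition \ref{df:strictly-close}), $!P$ and $!P'$ are strictly close in rank $\be$. Moreover, the last bullet of Definition \ref{df:coarsely-periodic-segment} ($s_{A,!P}!K_0 \lesssim !K_1$) guarantees that $!P$ contains ``at least one period'', i.e.\ there are fragments $!K,!K'$ of rank $\be$ in $!P$ with $\muf(!K),\muf(!K') \ge \xi_1 \ge \xi_2$ and $!K' \sim s_{A,!P}!K$. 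These are exactly the hypotheses of Proposition \ref{pr:strictly-close-parallel} applied to $!P_0 := !P$ and $!P_1 := !P'$. That proposition concludes that $!P$ and $!P'$ have a common periodic extension, which forces $!L = !L'$, i.e.\ $s_{A,!P} = s_{A,!P'}$.

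With well-definedness in hand, the corollary itself follows. Given $!S_0 \sim !S_1$, by the definition of compatibility of coarsely periodic segments they share an axis; but ``the'' axis of $!S_i$ is precisely $!L_i$ in the statement, so $!L_1 = !L_2$. Alternatively, to make the argument self-contained without appealing to the definitional phrasing, I would argue: pick periodic bases $!P_0$ for $!S_0$ and $!P_1$ for $!S_1$; then $!S_0$ is strictly close to $!P_0$, $!S_1$ is strictly close to $!P_1$, and since $!S_0 \sim !S_1$ they can be taken to overlap (they lie, after a translation, in a common reduced path, or one uses the union construction of Proposition \ref{pri:coarsely-periodic-union}), so Proposition \ref{pr:coarsely-periodic-overlapping-general} applies directly to $!P_0,!P_1,!S_0,!S_1$ and yields a common periodic extension of $!P_0$ and $!P_1$, whence $!L_1 = !L_2$.

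The only mild subtlety — and the place I would be most careful — is arranging the geometric configuration so that Proposition \ref{pr:strictly-close-parallel} or Proposition \ref{pr:coarsely-periodic-overlapping-general} genuinely applies: one must ensure the relevant paths can be placed in a common Cayley graph $\Ga_\al$ with one contained in the other (or strictly close to the same third path), and that the activity-rank bookkeeping (the hypothesis that one of the two periodic lines contains no rank-$\ga$ fragments of size $\ge \xi_1$ for $\be < \ga \le \al$) is automatic because both lines are $A$-periodic with the \emph{same} period $A$ of activity rank $\be$. Since both segments have the identical period $A$, this is immediate, so no real obstacle arises — the corollary is a clean specialization. I would state it in two or three lines: reduce to Proposition \ref{pr:strictly-close-parallel} via transitivity of strict closeness, invoke it, and conclude equality of axes.
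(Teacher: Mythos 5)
Your proof is correct and is essentially the paper's intended argument: the paper labels the corollary an ``immediate consequence'' of the preceding material, and the real content is precisely the well-definedness of the axis, which you establish cleanly by noting that two periodic bases for the same segment are strictly close (by transitivity through $!S$) and then invoking Proposition~\ref{pr:strictly-close-parallel}. Your secondary ``self-contained'' sketch appealing to Proposition~\ref{pri:coarsely-periodic-union} is a bit loose about placing $!S_0$ and $!S_1$ in a common reduced path, but the primary argument is the right one and needs no such assumption.
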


\begin{corollary} 
Let $g \in G_\al$ be an element of infinite order. Then the following is true.

\begin{enumerate}
\item \label{coi:root-unique} 
$g$ has the unique root; i.e.\
there exists a unique non-power element $g_0 \in G_\al$ such that $g = g_0^t$ for some $t \ge 1$.
\item \label{coi:nontorsion-relations} 
If $h^r \in \sgp{g}$ and $h^r \ne 1$ then $h \in \sgp{g_0}$ where $g_0$ is the root of $g$.
\item \label{coi:inverse-conjugate}
If $g$ is conjugate to $g^{-1}$ then $g$ is the product of two involutions.
\end{enumerate}
\end{corollary}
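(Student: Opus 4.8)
The three statements are all consequences of Proposition~\ref{pr:nontorsion-conjugation} together with the
cyclic-reduction machinery of Section~\ref{s:reduction}, so I would prove them in the order (i), (ii),
(iii), reusing (i) inside (ii) and (i)--(ii) inside (iii).

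\emph{Statement (i): uniqueness of the root.}
First I would invoke Proposition~\ref{pr:root-existence} to write $g = g_0^t$ for some non-power $g_0$ and
$t \ge 1$; only uniqueness needs an argument. Suppose $g = g_0^t = g_1^s$ with $g_0, g_1$ both non-powers.
Up to conjugation (which does not affect the claim, since roots are conjugated along with $g$), by
Proposition~\ref{pr:cyclic-reduction} I may assume $g_0$ is represented by a simple period $A$ over
$G_\al$; then $g = A^t$ and $g_1$ is some element with $g_1^s = A^t$. Conjugating $g_1$ to a
strongly cyclically reduced word $B$, I get $B^s$ conjugate to $A^t$ in $G_\al$, i.e.\ $B^s = z^{-1} A^t z$
for some $z$. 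Passing to the parallel periodic lines with periods $B$ and $A$ and applying
Proposition~\ref{pr:fragment-correspondence-cyclic-beta} (with $\be$ the activity rank of $A$) exactly as
in the proof of Proposition~\ref{pr:nontorsion-conjugation}, I obtain strictly close in rank $\be$ periodic
subpaths and then Proposition~\ref{pr:strictly-close-parallel} forces the two periodic lines to coincide;
hence $z$ conjugates $\sgp{B}$ onto $\sgp{A}$ and, since both $A$ and $B$ are non-powers, $z^{-1}Bz$
is a generator of $\sgp{A}$, so $z^{-1}Bz = A^{\pm1}$. Thus the roots of $g$ are conjugate to $A^{\pm1}$;
feeding $B = A$ back, Proposition~\ref{pr:nontorsion-conjugation} (in the case $kl>0$) or condition (S3)
via Proposition~\ref{pr:conjugate-relator-roots} excludes the sign~$-1$, and I conclude that the non-power
root is unique.

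\emph{Statement (ii): if $h^r \in \sgp{g}$ and $h^r \ne 1$ then $h \in \sgp{g_0}$.}
Write $h^r = g^k = g_0^{tk}$ with $tk \ne 0$ (using $h^r \ne 1$ and Proposition~\ref{pr:relator-root}-type
nontriviality). Since $g_0$ has infinite order, so does $h$; by (i) let $h_0$ be the unique non-power root
of $h$, say $h = h_0^m$. Then $h_0^{mr} = g_0^{tk}$, so $h_0^{mr}$ and $g_0^{tk}$ are equal, hence (being
equal, a fortiori conjugate) Proposition~\ref{pr:nontorsion-conjugation} applied with the conjugating
element $1$ is not yet enough; instead I would argue that $h_0$ and $g_0$ generate a common cyclic
subgroup: $h_0$ commutes with $g_0^{tk}$ (both powers of $h_0$), so $g_0^{tk}$ lies in the centralizer of
$h_0$, which by Proposition~\ref{pr:nontorsion-conjugation} (malnormality of $\sgp{h_0}$ for the
stabilizer) is $\sgp{h_0}$; thus $g_0 \in \sgp{h_0}$. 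Symmetrically $h_0 \in \sgp{g_0}$, so
$\sgp{h_0} = \sgp{g_0}$ and $h = h_0^m \in \sgp{g_0}$, as claimed.

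\emph{Statement (iii): if $g \sim g^{-1}$ then $g$ is a product of two involutions.}
Write $g = z^{-1} g^{-1} z$. Then $z^{-2} g z^2 = z^{-1} g^{-1} z = g$, so $z^2$ centralizes $g$.
If $g$ has finite order, then by Proposition~\ref{pr:cyclic-reduction} $g$ is conjugate to $R_0^k$ with
$R_0$ the root of a relator, and the conjugacy $R_0^k \sim R_0^{-k}$ together with
Proposition~\ref{pr:conjugate-relator-roots} or (S3) via
Corollary~\ref{co:no-involutions-no-inverse-conjugates} gives a contradiction unless the order is trivial,
so this case is vacuous (or yields $g=1$, a product of two trivial involutions). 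If $g$ has infinite order,
let $g_0$ be its root by (i); then $z^2 \in \sgp{g_0}$ by the centralizer description above, say
$z^2 = g_0^{j}$. Conjugation by $z$ sends $g_0$ to a non-power root of $g^{-1}$, which by uniqueness (i)
is $g_0^{-1}$, so $z^{-1} g_0 z = g_0^{-1}$ and hence $z$ normalizes $\sgp{g_0}$, acting on it by
inversion. Then $z^2 = g_0^j$ is fixed by this inversion, forcing $g_0^{j} = g_0^{-j}$, i.e.\ $g_0^{2j}=1$,
i.e.\ $j = 0$ since $g_0$ has infinite order; thus $z^2 = 1$, so $z$ itself is an involution (it is
nontrivial since $g \ne g^{-1}$ as $g$ has infinite order). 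Now $z$ and $zg$ are both involutions:
$(zg)^2 = z g z g = (z g z^{-1}) g = g^{-1} g = 1$, and $g = z \cdot (zg)$ exhibits $g$ as a product of the
two involutions $z$ and $zg$.

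\textbf{Main obstacle.}
The delicate point throughout is the passage from an \emph{equality} or \emph{conjugacy} of powers to the
coincidence of the underlying periodic lines: this is where one must reduce to strongly cyclically reduced
representatives (Proposition~\ref{pr:cyclic-reduction}), identify the activity rank, and apply the
fragment-correspondence result Proposition~\ref{pr:fragment-correspondence-cyclic-beta} followed by
Proposition~\ref{pr:strictly-close-parallel}, exactly mirroring the proof of
Proposition~\ref{pr:nontorsion-conjugation}. Once the centralizer of a non-power infinite-order element is
known to be its own cyclic group and roots are unique, (ii) and (iii) are short; the bookkeeping with the
sign $\pm1$ of the root under inversion (handled by (S3) / Proposition~\ref{pr:conjugate-relator-roots})
is the only place where the ``odd exponent'' hypothesis is implicitly used.
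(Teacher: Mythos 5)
Your overall strategy—building (i)--(iii) on the centralizer description from Proposition~\ref{pr:nontorsion-conjugation}—is the right one, and (iii) is a clean, correct alternative to the paper's line. But the proof of (i) as written has a genuine gap, and (ii) takes a longer detour than needed.

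In (i), you conjugate $g_1$ to a simple period $B$, obtain $B^s = z^{-1}A^t z$, and then invoke Proposition~\ref{pr:fragment-correspondence-cyclic-beta} and Proposition~\ref{pr:strictly-close-parallel} to conclude the two periodic lines coincide. However, Proposition~\ref{pr:strictly-close-parallel} (and the fragment-correspondence machinery set up around it) is stated for two periodic lines with the \emph{same} simple period $A$; here the lines carry the distinct periods $A$ and $B$, so the hypotheses are not met and you would essentially have to re-prove a two-period strengthening of Proposition~\ref{pr:nontorsion-conjugation}. There is no need to descend to periodic lines at all: from $g_0^t = g_1^s$ we get $g_1^{-1}g_0^t g_1 = g_1^{-1}g_1^s g_1 = g_1^s = g_0^t$, so Proposition~\ref{pr:nontorsion-conjugation} applied with $g := g_0$ and conjugator $g_1$ gives $g_1\in\sgp{g_0}$ directly; $g_1$ being a non-power forces $g_1 = g_0^{\pm1}$, and $g_1 = g_0^{-1}$ is impossible since $g_0^{t+s}\ne 1$. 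This one-line reduction is exactly what the paper has in mind when it calls (i) a ``direct consequence'' of Propositions~\ref{pr:root-existence} and \ref{pr:nontorsion-conjugation}.

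For (ii) you pass through the root $h_0$ of $h$ and establish $\sgp{g_0}=\sgp{h_0}$; this works, but it quietly requires two further applications of Proposition~\ref{pr:nontorsion-conjugation} (to upgrade $g_0^{tk}\in\sgp{h_0}$ to $g_0\in\sgp{h_0}$, and symmetrically) that you leave implicit. The paper's route is shorter: $h$ commutes with $h^r = g_0^{tk}$, so $g_0^{tk}=h^{-1}g_0^{tk}h$ and Proposition~\ref{pr:nontorsion-conjugation} gives $h\in\sgp{g_0}$ at once. Your proof of (iii) is correct and takes a slightly different path from the paper: you compute the action of the conjugating element on the root $g_0$ (using (i) to identify $z^{-1}g_0 z = g_0^{-1}$) and deduce $z^2 = 1$ from the inversion action, whereas the paper derives $h^2 = 1$ by applying (ii) to the relation $g = h^{-2}gh^2$ and observing the resulting contradiction with $g\ne g^{-1}$. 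Both are valid; yours avoids the small case analysis hidden in the paper's invocation of (ii) at the cost of the extra computation with $z^2 = g_0^j$.
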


\begin{proof}
(i) is direct consequence of Propositions \ref{pr:root-existence} and \ref{pr:nontorsion-conjugation}.

(ii) follows from (i) and Proposition \ref{pr:nontorsion-conjugation} because $g_0^t = h^r$ implies 
$g_0^t = h^{-1} g_0^t h$.

(iii) Assume that $g = h^{-1} g^{-1} h$. From $g = h^{-2} g h^2$ we conclude that $h^2 = 1$ by (ii).
Similarly, we have $(hg)^2 = 1$ and then $g = h \cdot hg$.
\end{proof}

\begin{corollary} \label{co:no-involutions-no-inverse-conjugates}
Assume that each relator $R$ of each rank $\be \le \al$ has the form $R = R_0^n$ where $R_0$ is the root
of $R$ and $n$ is odd ($n$ can vary for different relators $R$). Then $G_\al$ has no involutions and no
element of $G_\al$ is conjugate to its inverse.
\end{corollary}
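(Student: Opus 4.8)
The plan is to deduce Corollary \ref{co:no-involutions-no-inverse-conjugates} from the structural results already established, by reducing every statement about torsion and inverse-conjugacy in $G_\al$ to a statement about elements of infinite order (handled by Proposition \ref{pr:nontorsion-conjugation} and its corollary) together with the classification of finite-order elements given by Proposition \ref{pr:cyclic-reduction}. The key point is that Proposition \ref{pr:cyclic-reduction} tells us that every element of finite order of $G_\al$ is conjugate to a cyclically reduced word of the form $R_0^k$, where $R_0$ is the root of some relator $R \eqcirc R_0^n$ of rank $\be \le \al$; and by hypothesis every such $n$ is odd. Since being an involution and being conjugate to one's inverse are both conjugacy-invariant properties, it suffices to verify the claims for such representatives $R_0^k$ and for infinite-order elements.

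First I would dispose of involutions. Suppose $g \in G_\al$ has order $2$. Then $g$ has finite order, so by Proposition \ref{pr:cyclic-reduction} it is conjugate to $R_0^k$ for the root $R_0$ of a relator $R \eqcirc R_0^n$ of some rank $\be\le\al$ with $n$ odd, and we may assume $R_0^k$ is cyclically reduced. By Proposition \ref{pr:relator-root}, $R_0$ has order exactly $n$ in $G_\al$, so the order of $R_0^k$ is $n/\gcd(n,k)$, which divides the odd number $n$ and hence is odd. On the other hand the order of $R_0^k$ equals the order of $g$, which is $2$ — a contradiction, unless $g=1$. Hence $G_\al$ has no involutions. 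Note this also uses that $g\ne1$ forces $k\not\equiv0\pmod n$, so that $R_0^k\ne1$ by Proposition \ref{pr:relator-root}.

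Next I would treat inverse-conjugacy. Suppose $g$ is conjugate to $g^{-1}$ in $G_\al$. If $g$ has infinite order, then Corollary \itemref{coi:inverse-conjugate} says $g$ is a product of two involutions; but we have just shown $G_\al$ has no involutions, so $g = 1$, contradicting infinite order. Hence $g$ has finite order, and again by Proposition \ref{pr:cyclic-reduction} we may assume $g = R_0^k$ with $R_0^k$ cyclically reduced, $R \eqcirc R_0^n \in \cX_\be$ for some $\be\le\al$ and $n$ odd. If $k\equiv 0 \pmod n$ then $g=1$ and there is nothing to prove, so assume $k\not\equiv0\pmod n$. The relation $g \sim g^{-1}$ reads $R_0^k = z^{-1} R_0^{-k} z$ in $G_\al$ for some $z$, i.e.\ $R_0^k = z^{-1} R_0^{l} z$ with $l = -k \not\equiv 0 \pmod n$. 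Now Proposition \ref{pr:conjugate-relator-roots} applies (with $\ga=\be$): it gives $z \in \sgp{R_0}$ and $k \equiv l \pmod n$, i.e.\ $k \equiv -k \pmod n$, hence $2k \equiv 0 \pmod n$. Since $n$ is odd, $2$ is invertible mod $n$, so $k \equiv 0 \pmod n$, contradicting our assumption. Therefore no element of $G_\al$ is conjugate to its inverse.

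The main obstacle, if any, is to make sure all the cited machinery is genuinely available in the needed generality: Proposition \ref{pr:cyclic-reduction} must be invoked for rank $\al$ (its statement does allow relators of any rank $\be\le\al$), and Propositions \ref{pr:relator-root} and \ref{pr:conjugate-relator-roots} must be applied with the rank $\be$ of the relator producing $R_0$, which is exactly how those propositions are stated. One should also check the trivial boundary case $\al=0$: then $G_0$ is free, which has no torsion and in which no nontrivial element is conjugate to its inverse, so the statement holds vacuously (there are no relators of rank $\le 0$). Beyond these bookkeeping points the argument is a short formal deduction, so I do not expect a genuine difficulty.
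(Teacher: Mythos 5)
Your proof is correct and follows the same overall route as the paper's: use Proposition \ref{pr:cyclic-reduction} to reduce the torsion case to a power $R_0^k$ of a relator root, Proposition \ref{pr:relator-root} to see its order is odd (hence no involutions), and Corollary \itemref{coi:inverse-conjugate} plus the absence of involutions to rule out inverse-conjugacy for infinite-order elements.

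You are, however, noticeably more careful than the paper on the second claim. The paper's proof just says the inverse-conjugacy statement ``follows from the first by Corollary \itemref{coi:inverse-conjugate}'', but that corollary is stated only for elements of \emph{infinite} order, so taken literally it leaves the finite-order case open. You supply the missing argument explicitly: for $g$ conjugate to $R_0^k$ with $k\not\equiv0\pmod n$, the relation $R_0^k = z^{-1}R_0^{-k}z$ together with Proposition \ref{pr:conjugate-relator-roots} forces $k\equiv -k\pmod n$, and oddness of $n$ then gives $k\equiv0$, a contradiction. This is exactly the step the paper implicitly assumes, and spelling it out is a genuine improvement in completeness; it costs nothing because Proposition \ref{pr:conjugate-relator-roots} is already available at this point. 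Your handling of the $\al=0$ boundary case is also a useful, if routine, sanity check.
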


\begin{proof}
By Proposition \ref{pr:cyclic-reduction}, any element of finite order of $G_\al$ is conjugate to some power $R_0^t$
of the root $R_0$ of a relator $R$ of rank $\be \le \al$. By Proposition \ref{pr:relator-root}, $R_0^t$ has an odd 
order and cannot be an involution.
The second statement follows from the first by Corollary \ref{coi:inverse-conjugate}.
\end{proof}

\begin{lemma} \label{lm:coarsly-periodic-in-periodic}
Let $!P$ be an $A$-periodic segment in $\Ga_\al$ with a simple period $A$ over ~$G_\al$.
Let $!S$ be a coarsely periodic segment in $!P$ with another simple period $B$ over $G_\al$ and assume that $A$ and $B$ are not conjugate in ~$G_\al$. 
Then the following is true.
\begin{enumerate}
\item
$!S \not\sim s_{A,!P}^t !S$ for any $t \ne 0$.
\item
If $\ell_B(!S) \ge 3$ then $|!S| < 2|A|$.
\end{enumerate}
\end{lemma}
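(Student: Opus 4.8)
The plan is to prove both statements by reducing them to the overlapping-periodicity results already established, in particular Corollary~\ref{co:coarsely-periodic-overlapping} and Proposition~\ref{pr:strictly-close-parallel}. For part (i), suppose $!S \sim s_{A,!P}^t !S$ for some $t \ne 0$. Since $!S$ is a coarsely $B$-periodic segment, its translate $s_{A,!P}^t !S$ is again a coarsely $B$-periodic segment (with period $B$, because translation by an element of $G_\al$ carries fragments of rank $\be$ to fragments of the same rank with translated base loop, and carries a periodic base for $!S$ to a periodic base for the translate — see the symmetry discussion in the paragraph on ``Symmetry''). Both $!S$ and $s_{A,!P}^t !S$ lie on $!P$ (indeed on the common $A$-periodic line $!L$ extending $!P$), and by hypothesis they are compatible as coarsely $B$-periodic segments, hence by Corollary~\ref{co:compatible-periodic-axes} they have the same $B$-periodic axis, call it $!L_B$. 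But then $s_{A,!P}^t$ stabilizes the axis $!L_B$ in the sense that $s_{A,!P}^t !L_B = !L_B$, i.e.\ $s_{A,!P}^t = s_{B,!L_B}^k$ for some $k$; equivalently the translation elements $s_{A,!P}$ and $s_{B,!L_B}$ generate a cyclic subgroup. Since $A$ and $B$ are simple periods (non-powers) and $A$ has infinite order, Corollary~\ref{coi:root-unique} together with Proposition~\ref{pr:nontorsion-conjugation} forces $A$ and $B$ to be conjugate in $G_\al$ (they have conjugate roots inside a common cyclic subgroup), contradicting the hypothesis. I would have to be slightly careful about how exactly compatibility of the coarsely periodic segments translates into an equation between translation elements; the cleanest route is probably to extract from $!S$, via Proposition~\ref{pr:coarse-periodic-stability} or directly from Definition~\ref{df:coarsely-periodic-segment}, fragments of rank $\be$ witnessing the $B$-periodicity and to apply Proposition~\ref{pri:fragment-finite-order}$_\be$ / Proposition~\ref{pr:conjugate-relator-roots} as in the proof of Proposition~\ref{pr:coarsely-periodic-overlapping-general}.

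For part (ii), suppose towards a contradiction that $\ell_B(!S) \ge 3$ and $|!S| \ge 2|A|$. The idea is to exhibit two coarsely $B$-periodic segments $!S$ and $s_{A,!P}^{\pm1} !S$ inside $!P$ that overlap in a segment still containing ``at least one $B$-period'', and then apply Proposition~\ref{pr:coarsely-periodic-overlapping-general} to conclude their common $B$-periodic extensions coincide, i.e.\ $!S \sim s_{A,!P} !S$, contradicting part (i). Concretely: since $|!S| \ge 2|A|$, at least one of $!S$ and $s_{A,!P}^{-1}!S$ (say the latter, after possibly replacing $!S$ by a subpath) overlaps $!S$ in a subpath $!S'$ of length $\ge |!S| - |A| \ge |A| \ge$ enough to still be a coarsely $B$-periodic segment — here I use $\ell_B(!S) \ge 3$, which via the shortening procedure of~\ref{ss:coarse-periodic-cutting} and claim~\itemref{ssi:cutting-periods-count} guarantees that a subpath of $!S$ of length $\ge |!S| - O(|A|)$ still has $\ell_B \ge 1$, provided $|A|$ is small relative to the ``$B$-length'' of $!S$; if $|A|$ is large this requires instead using that $|!S| \ge 2|A|$ bounds how much of the $B$-period structure a single $A$-shift can destroy, via Lemma~\ref{lm:stability-parameters} applied to both periods. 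Then Proposition~\ref{pr:coarsely-periodic-overlapping-general} (with $!P_0, !P_1$ the $B$-periodic bases of $!S'$ and of $s_{A,!P}^{-1}!S$, and the ``at least one period'' hypothesis coming from $\ell_B(!S') \ge 1$) yields that $!S'$ and $s_{A,!P}^{-1}!S$ have a common $B$-periodic extension, hence $!S \sim s_{A,!P}^{-1}!S$ as coarsely $B$-periodic segments, contradicting (i).

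The main obstacle I anticipate is the bookkeeping in part (ii): making precise the statement ``if $|!S| \ge 2|A|$ then the overlap of $!S$ with its $A$-shift still contains a full $B$-period up to the fading constants'', because the natural length parameter for coarse $B$-periodicity is $\ell_B$, not the metric length, and relating $|!S| \ge 2|A|$ to $\ell_B(!S') \ge 1$ for the overlap $!S'$ requires comparing $|A|$ with $[B]_\al$ and invoking Lemma~\ref{lm:stability-parameters} and Proposition~\ref{pr:coarsely-periodic-from-close}. I would handle this by first passing, via Proposition~\ref{pr:changing-period} if needed, to a convenient representative, then arguing separately in the two regimes $|A| \le$ (const)$\,[B]_\al \cdot \ell_B(!S)$ and its complement, the second regime being where $|!S| \ge 2|A|$ gives an honest metric overlap of length $\ge |A|$ that one feeds directly into Proposition~\ref{pr:coarsely-periodic-from-close} to recover a coarsely $B$-periodic segment of positive $\ell_B$. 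Part (i) I expect to be essentially routine once the translate of a coarsely $B$-periodic segment is recognized as a coarsely $B$-periodic segment and Corollary~\ref{co:compatible-periodic-axes} and Proposition~\ref{pr:nontorsion-conjugation} are invoked.
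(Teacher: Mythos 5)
Your outline for part (i) is close to the paper's, but it has a genuine gap at the final step. After deducing $s_{A,!P}^t !L_B = !L_B$, the cyclic-subgroup argument (via Corollary~\ref{coi:root-unique} and the fact that $A$, $B$ are non-powers) only gives $s_{A,!P} = s_{B,!L_B}^{\pm1}$. The case $s_{A,!P} = s_{B,!L_B}$ shows $A$ conjugate to $B$, a contradiction; but $s_{A,!P} = s_{B,!L_B}^{-1}$ shows only that $A$ is conjugate to $B^{-1}$, which the hypothesis does not exclude --- and with odd exponent, $A$ conjugate to $B^{-1}$ does not imply $A$ conjugate to $B$. The paper's proof explicitly notes this: having $s_{A,!P}^\ep = s_{B,!Q}$ for some $\ep = \pm1$, it then uses the fact that $!S$ is a subpath of $!P$ together with Proposition~\ref{pr:fragment-correspondence-between-fragments} (with $\be$ the activity rank of $A$) to force $\ep = 1$, i.e.\ to rule out the opposite orientation. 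Your ``conjugate roots inside a common cyclic subgroup'' step silently collapses the two signs into one; that orientation argument is the actual content of the step and is missing from your plan.

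For part (ii) the paper takes a different and much leaner route than yours, and your route as sketched has a real difficulty you flag but do not resolve. The paper uses claim \itemref{ssi:cutting-dividing} to split $!S = !S_1 !u !S_2$ with $!S_1, !S_2$ honest coarsely $B$-periodic segments compatible with $!S$; by (i) and Corollary~\ref{co:coarsely-periodic-overlapping}, neither $!S_1$ is contained in $s_{A,!P}^{-1}!S$ nor $!S_2$ in $s_{A,!P}!S$, and these two non-containments translate into $|!S|-|A| < |!S_1|$ and $|!S_1 !u| < |A|$, giving $|!S| < 2|A|$ by pure arithmetic --- no need to make the overlap itself coarsely periodic. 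Your proposed approach instead tries to show that the metric overlap of $!S$ with its $A$-shift is again a coarsely $B$-periodic segment with $\ell_B \ge 1$, which is not automatic: a subpath of a coarsely $B$-periodic segment fails the starting/ending-fragment requirements of Definition~\ref{df:coarsely-periodic-segment} unless trimmed to good endpoints, and relating the metric overlap $|!S|-|A|$ to a lower bound on $\ell_B$ of the trimmed subpath forces you into exactly the bookkeeping with $[B]_\al$, $h_\al(B)$, and Proposition~\ref{pr:coarsely-periodic-from-close} that you acknowledge as an obstacle. The regime split you suggest (small $|A|$ vs.\ large $|A|$ relative to $[B]_\al \cdot \ell_B(!S)$) is not obviously sound in the second regime either, since there the overlap length $\ge |A|$ need not contain even one $B$-period in the coarse sense. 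The paper's decomposition avoids the whole issue by never asking the overlap to be coarsely periodic.
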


\begin{proof}
(i) Assume that $!S \sim s_{A,!P}^t !S$ for some $t \ne 0$.
Let $!L_1$ be the infinite periodic extension of $!P$, and let $!L_2$ be the axis for $!K$.  
By Corollary \ref{co:compatible-periodic-axes} we have $!L_2 =  s_{A,!P}^t !L_2$, so
$s_{A,!P}^t = s_{B,!Q}^r$ for some $r \ne 0$. Since $A$ and $B$ are non-powers,
by Corollary \ref{coi:nontorsion-relations} $s_{A,!P}^\ep = s_{B,!Q}$ for $\ep=\pm1$
and hence $L_1^\ep$ and $L_2$ are parallel. From the fact that $!S$ is a subpath of $!P$
we easily deduce by Proposition \ref{pr:fragment-correspondence-between-fragments}
(taking for $\be$ the activity rank of $A$) that $\ep=1$.
We obtain a contradiction with the assumption that $A$ and $B$ are not conjugate in $G_\al$.

(ii) By \ref{ssi:cutting-dividing} we represent $!S$ as $!S = !S_1 !u !S_2$ where $!S_1$ and $!S_2$
are coarsely periodic segments with period $B$ and 
$\ell_B(!S_1) \ge \ell_B(!S) -2$.
By (i) and Corollary \ref{co:coarsely-periodic-overlapping}, $s_{A,!P}^{-1} !S$ does not contain $!S_1$
and $s_{A,!P} !S$ does not contain $!S_2$. This implies $|!S| < 2|A|$.
\end{proof}

\begin{proposition} \label{pr:close-parallel-one-period}
Let $!P$ and $!Q$ be close periodic segments in $\Ga_{\al}$ with the same simple period $A$ over $G_\al$.
If $|!P| \ge (2h_\al(A)+1) |A|$ (where $h_\al(A)$ is defined in \ref{df:period-stability-parameter})
then $!P$ and $!Q$ belong to the same $A$-periodic line.
\end{proposition}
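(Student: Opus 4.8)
The plan is to reduce the statement to an application of the machinery already developed for coarsely periodic segments, in particular Proposition \ref{pr:strictly-close-parallel} together with Proposition \ref{pr:coarsely-periodic-from-close}. First I would fix $A$-periodic segments $!P$ and $!Q$ that are close, with $|!P| \ge (2h_\al(A)+1)|A|$. Write $m = 2h_\al(A)+1$, so $|!P| \ge m|A|$. The key point is that $m$ is large enough to extract from the closeness relation a genuine coarsely $A$-periodic \emph{segment} inside $!Q$ that is strictly close to a long central subsegment of $!P$.

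The first main step is to apply Proposition \ref{pr:coarsely-periodic-from-close} (or rather its proof technique) to the pair $(!P,!Q)$: since $|!P| \ge m|A|$ with $m = 2h_\al(A)+1 \ge 2h_\al(A)+3$ fails by exactly $2$, I would instead argue more directly. Let $\be$ be the activity rank of $A$. Using Corollary \ref{co:no-active-fragments-iterated} and Lemma \ref{lm:stability-parameters}, I find close in rank $\be$ subpaths $!P'$ of $!P$ and $!Q'$ of $!Q$ with $|!P'| \ge (2h_\al(A)-1)|A|$ (the loss being at most $2h_\al(A)-2$ of a period's worth on each side, absorbed using the stability decrement bound $|!z_i|_\al < 1.2$ translated via Lemma \ref{lm:stability-parameters} into $|!z_i| < (h_\al(A)-1)|A|$). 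Inside $!P'$ there remain at least two translates $s_{A,!P}^i !K$ of a fragment $!K$ of rank $\be$ with $\muf(!K) \ge \xi_1$, hence via Proposition \ref{pri:fragment-correspondence-between-fragments} I obtain fragments $!M_0, !M_1$ of rank $\be$ in $!Q'$ with $\muf(!M_i) \ge \xi_2$ and $!M_0 \not\sim !M_1$, cut down (Proposition \ref{pr:small-overlapping}, Proposition \ref{pri:fragment-in-periodic}) so that the corresponding starting and ending fragments each occupy at most $|A|$; thus a subpath $!Q_1$ of $!Q$ is a coarsely $A$-periodic segment strictly close to a subpath $!P_1$ of $!P$, and $!P_1$ contains at least one period $A$ in the sense of Proposition \ref{pr:strictly-close-parallel}.

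The second main step is then a direct invocation: $!P_1$ is an honest $A$-periodic segment (a subpath of $!P$), $!Q_1$ is another path which, being a coarsely $A$-periodic segment, is strictly close in rank $\be$ to its periodic base, which I can take to be $!P_1$ after adjusting the starting/ending fragments up to compatibility. So $!P_1$ and its periodic base are strictly close $A$-periodic paths, one of which contains at least one period. Proposition \ref{pr:strictly-close-parallel} then yields that they have a common periodic extension, i.e.\ $!P_1$ and the axis of $!Q_1$ lie on the same $A$-periodic line $!L$. Since $!P_1$ is a subpath of $!P$, the infinite periodic extension of $!P$ equals $!L$; since the axis of $!Q_1$ is a subpath of the infinite periodic extension of $!Q$ (by Corollary \ref{co:compatible-periodic-axes} and the construction), the infinite periodic extension of $!Q$ also equals $!L$. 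Hence $!P$ and $!Q$ belong to the same $A$-periodic line.

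The main obstacle I expect is bookkeeping the period counts: one must verify that $m = 2h_\al(A)+1$ really is enough, i.e.\ that after the double ``fading'' loss on the two ends (each end costing up to $h_\al(A)-1$ periods, by Lemma \ref{lm:stability-parameters} applied to the error terms $|!z_i|_\al < 1.2$), what remains is a coarsely $A$-periodic segment of positive length $\ell_A \ge 1$, which needs $|!P'| \ge 1\cdot|A|$ worth of genuine interior plus the two boundary fragments of size $\le|A|$ each, i.e.\ about $3|A|$; combined with the $2(h_\al(A)-1)|A|$ lost at the ends this gives the requirement $|!P| \gtrsim (2h_\al(A)+1)|A|$, which matches the hypothesis on the nose. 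Getting these constants to close — especially reconciling the ``$2h_\al(A)+3$'' appearing in Proposition \ref{pr:coarsely-periodic-from-close} with the ``$2h_\al(A)+1$'' here — is the delicate part; it works because here we only need $\ell_A \ge 1$ (enough to apply Proposition \ref{pr:strictly-close-parallel}) rather than a positive lower bound of $m - 2h_\al(A)-2$, and because both $!P$ and $!Q$ are genuinely periodic (not merely coarsely periodic), so no extra $\xi_2$-to-$\xi_1$ passage is needed on the $!P$ side.
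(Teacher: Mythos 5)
Your overall plan coincides with the paper's one-line proof: apply Proposition \ref{pr:coarsely-periodic-from-close} to extract from $!Q$ a coarsely $A$-periodic segment strictly close to a genuine $A$-periodic subpath of $!P$, then finish with Proposition \ref{pr:strictly-close-parallel}. You also correctly spotted the tension: Proposition \ref{pr:coarsely-periodic-from-close} demands $m \ge 2h_\al(A)+3$, while the hypothesis here supplies only $m = 2h_\al(A)+1$.

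Your attempt to close that $+2$ gap does not hold up. After the reduction to closeness in rank $\be$ via Corollary \ref{co:no-active-fragments-iterated} and Lemma \ref{lm:stability-parameters}, the loss is at most $(h_\al(A)-1)|A|$ per end, so $|!P'| \ge 3|A|$ (your stated bound $(2h_\al(A)-1)|A|$ does not match the per-end loss you describe). In an $A$-periodic subpath of length $3|A|$, a fragment $!K_0$ of rank $\be$ with $\muf(!K_0) \ge \xi_2$ and $|!K_0| \le |A|$ has, in the worst case, only two guaranteed disjoint translates $s_{A,!P}^0 !K_0$ and $s_{A,!P} !K_0$. But transferring a fragment to $!Q'$ via Proposition \ref{pr:fragment-correspondence-between-fragments}(i), case (**), requires it to be strictly bracketed $!K_1 < !N < !K_2$ with $!N \not\sim !K_1, !K_2$: three pairwise non-compatible translates for a single correspondence, and four to get the two correspondences required by Definition \ref{df:strictly-close}. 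Neither of your claimed savings buys any of this back: ``we only need $\ell_A \ge 1$'' is precisely what $m - 2h_\al(A) - 2 \ge 1$ expresses and gives no discount, and the periodic base in Proposition \ref{pr:coarsely-periodic-from-close} already carries $\xi_1$-fragments, so no $\xi_1$-to-$\xi_2$ passage was being spent on the $!P$ side to begin with. You are therefore two translates short, i.e.\ still short by exactly the same $+2$.

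The resolution is simply that the hypothesis $|!P| \ge (2h_\al(A)+1)|A|$ is an off-by-two slip for $(2h_\al(A)+3)|A|$ --- exactly the bound Proposition \ref{pr:coarsely-periodic-from-close} needs. This is harmless downstream: the one place the proposition is invoked (Proposition \ref{pr:G-small-cancellation}) supplies $p_1 = 65$, comfortably above $2h_\al(A)+3 \le 15$. So rather than trying to shave the constant, cite the two propositions as the paper does with the hypothesis adjusted to $(2h_\al(A)+3)|A|$.
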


\begin{proof}
Follows from Propositions \ref{pr:coarsely-periodic-from-close} and \ref{pr:strictly-close-parallel}.
\end{proof}

We finish the section by formulating technical statements which we will need 
in the construction of relations of Burnside groups.
We use notation $!S \lessapprox !T$ for `$!S < !T$ or $!S \approx !T$'.

\begin{lemma} \label{lm:strict-compatibility-ordering}
Let $!S$ and $!T$ be coarsely $A$-periodic segments occurring in a reduced path $!X$ in $\Ga_\al$.
Assume that some periodic bases for $!S$ and $!T$ have the same label.
If $!S$ is contained in $!T$ then $!S \approx !T$.
\end{lemma}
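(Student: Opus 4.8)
The plan is to reduce the statement to an already-established fact about coarsely periodic segments, namely Corollary~\ref{co:coarsely-periodic-overlapping}, after promoting ``same-label periodic bases'' to ``same axis''. Let $!P_S$ and $!P_T$ be periodic bases for $!S$ and $!T$ respectively, chosen so that $\lab(!P_S) \eqcirc \lab(!P_T)$, and let $\be$ be the activity rank of $A$. Both $!P_S$ and $!P_T$ are $A$-periodic segments with the \emph{same} label, hence (since an $A$-periodic segment determines its infinite $A$-periodic extension from its label and starting cyclic shift, see~\ref{ss:Cayley}) they are translates of one another by some $g \in G_\al$; in particular their infinite $A$-periodic extensions $!L_S$ and $!L_T$ satisfy $!L_T = g!L_S$ and $s_{A,!P_T} = g\, s_{A,!P_S}\, g^{-1}$.

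First I would use that $!S$ is contained in $!T$ to produce a correspondence of fragments of rank $\be$ between $!P_S$ and $!P_T$. Since $!S$ and $!T$ occur in the common reduced path $!X$, and $!S$ is a subpath of $!T$, the starting and ending fragments $!K_0,!K_1$ of $!P_S$ (with $\muf(!K_i) \ge \xi_1$) are close in rank $\be$ to subpaths of $!S \subseteq !T$, hence, via the strict closeness of $!T$ with its periodic base $!P_T$ (Definition~\ref{df:coarsely-periodic-segment}) and Proposition~\ref{pri:fragment-correspondence-between-fragments} applied inside $!X$, there are compatible fragments in $!P_T$. One then checks that $!L_S$ and $!L_T$ are parallel: the translation element is pinned down because $!P_S \subseteq !X$ and $!P_T \subseteq !X$ share a long overlap region (as $!S \subseteq !T$), so the relative position of the two $A$-periodic structures with respect to the period $A$ is forced up to compatibility. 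This is exactly the situation of Proposition~\ref{pr:strictly-close-parallel} (strictly close periodic paths with one period): since $!P_S$ ``contains at least one period $A$'' in the required sense (it is the periodic base of a coarsely $A$-periodic segment, so $s_{A,!P_S}!K_0 \lesssim !K_1$ with both sizes $\ge \xi_1 \ge \xi_2$), we conclude $!P_S$ and $!P_T$ have a common periodic extension, i.e.\ $!L_S = !L_T$. Therefore $!S$ and $!T$ have the same axis, i.e.\ $!S \sim !T$.

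Now with $!S \sim !T$ and $!S$ contained in $!T$, Corollary~\ref{co:coarsely-periodic-overlapping} already gives $!S \sim !T$ — but I need the stronger $!S \approx !T$, i.e.\ a \emph{common} periodic base. For this I would argue that once $!L_S = !L_T$, the periodic base $!P_S$ of $!S$ is, up to changing its starting and ending fragments $!K_0,!K_1$ of rank $\be$ to compatible ones, a subpath of $!L_S = !L_T$ that also serves as a periodic base for $!T$: indeed the defining conditions of a periodic base (Definition~\ref{df:coarsely-periodic-segment}) for $!S$ — that $!P_S$ starts/ends with fragments compatible up to inversion with the starting/ending fragments of $!S$, and $s_{A,!P_S}!K_0 \lesssim !K_1$ — transfer to $!T$ because the starting/ending fragments of $!S$ and of $!T$ lie on the common path $!X$ and, by the fragment-correspondence/compatibility results (Propositions~\ref{pr:inclusion-compatibility}, \ref{pri:fragment-ordering-between-fragments} and Corollary~\ref{co:compatibility-same-rank}), the ordering and compatibility classes along $!X$ determine each other. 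Concretely: $!P_T$ and $!P_S$ both lie on $!L_S=!L_T$; adjusting endpoints, one may take $!P_S$ itself (with suitably chosen boundary fragments) as a periodic base for $!T$, which is the definition of $!S \approx !T$.

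\textbf{Main obstacle.} The routine parts are the fragment-correspondence bookkeeping; the delicate point I expect to be the crux is verifying that the two $A$-periodic segments $!P_S$ and $!P_T$, which a priori only have equal \emph{labels} (hence are $G_\al$-translates of each other), actually lie on the \emph{same} line $!L$ — i.e.\ ruling out a nontrivial translation $g \notin \sgp{s_{A,!P_S}}$. This is precisely where one must exploit that $!S \subseteq !T$ inside the single reduced (hence simple, by Proposition~\ref{pr:reduced-nontrivial}) path $!X$: the inclusion forces a genuine overlap of the two coarsely periodic structures of length more than one period, and then Proposition~\ref{pr:strictly-close-parallel} (via the malnormality/cyclicity input behind it) collapses the translation to a power of $s_{A,!P_S}$. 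Once that is in hand, the passage from $!S \sim !T$ to $!S \approx !T$ is a matter of re-choosing the boundary fragments of the periodic base, using that compatibility classes of large fragments along $!X$ are linearly ordered (Corollary~\ref{coi:compatibility-order-alpha}).
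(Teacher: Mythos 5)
The overall structure of your plan (first common axis, then common base) matches the paper's, but there are two issues, one minor and one substantive.

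\textbf{Step 1 (common axis).} You invoke Proposition~\ref{pr:strictly-close-parallel}, but that proposition requires $!P_S$ and $!P_T$ to be strictly close to \emph{each other}, which is not given: each is strictly close to a subpath of $!X$ ($!S$ and $!T$ respectively), but you haven't established strict closeness between the two bases. (Your phrase ``$!P_S \subseteq !X$ and $!P_T \subseteq !X$ share a long overlap'' is a slip --- $!P_S$ and $!P_T$ lie on periodic lines, not in $!X$.) The proposition that applies directly is the more general Proposition~\ref{pr:coarsely-periodic-overlapping-general}: take $!S_0 := !S$, $!S_1 := !T$, $!P_0 := !P_S$, $!P_1 := !P_T$; the hypothesis $!S_0 \subseteq !S_1$ is exactly the given containment and the ``one period'' condition on $!P_0$ is part of Definition~\ref{df:coarsely-periodic-segment}. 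This is what the paper does. Your detour --- first manufacturing strict closeness of $!P_S$ to a subpath of $!P_T$ via fragment correspondence, then applying Proposition~\ref{pr:strictly-close-parallel} --- can be made to work, but as written it is not carried out.

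\textbf{Step 2 (from $!S \sim !T$ to $!S \approx !T$) --- this is where the real gap is.} You write that ``adjusting endpoints, one may take $!P_S$ itself as a periodic base for $!T$,'' and you treat this step as routine. It is not: having a common axis only tells you $!P_S = s_{A,!L}^k\, !P_T$ for some integer $k$ (since they have the same label), and you must show $k=0$. For $k \neq 0$ the starting fragment of $!P_S$ is $s^k$ of the starting fragment of $!P_T$, which by Proposition~\ref{pri:fragment-in-periodic} is \emph{not} compatible with it; no amount of ``adjusting endpoints'' produces a common base in that case, so the claim as stated is false without pinning down $k$. The actual argument (which is where the equal-label hypothesis and the containment $!S \subseteq !T$ are both essential) runs: the inclusion gives the ordering $!M_2 \lesssim !M_0 \lnsim !M_1 \lesssim !M_3$ in $!X$ between the endpoint fragments of $!T$ and $!S$; by the fragment-ordering results (Proposition~\ref{pri:fragment-ordering-between-fragments}) this transfers to $!K_2 \lesssim !K_0$ and $!K_1 \lesssim !K_3$ on the common axis; combined with $!K_0 = s^k !K_2$, $!K_1 = s^k !K_3$ and Proposition~\ref{pri:fragment-in-periodic}, the first inequality gives $k \ge 0$ and the second gives $k \le 0$, hence $k=0$ and $!P_S = !P_T$. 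You gesture at ordering in your ``main obstacle'' paragraph, but you locate the crux in step~1 and treat step~2 as bookkeeping; the paper's proof shows it is the other way around --- step~1 is a one-line citation, and step~2 is the part that actually needs the hypothesis on labels.
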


\begin{proof}
Assume that $!S$ is contained in $!T$.
Let $!P_i$ $(i=1,2)$ be periodic bases for $!S$ and $!T$ respectively, with 
$\lab(!P_1) \greq \lab(!P_2)$. Let $\be$ be the activity rank of $A$.
By Proposition ~\ref{pr:coarsely-periodic-overlapping-general},
$!P_1$ and $!P_2$ have a common periodic extension. Let $!K_i$ and $!M_i$
$(i=0,1,2,3)$ be fragments of rank $\be$ with $\muf(!K_i),\muf(!M_i) \ge \xi_2$
such that $!P_1 = !K_0 \cup !K_1$, $!P_2 = !K_2 \cup !K_3$, $!S = !M_0 \cup !M_1$,
$!T = !M_2 \cup !M_3$ and $!K_i \sim !M_i$ for all $i$.
We have $!M_2 \lesssim !M_0 \lnsim !M_1 \lesssim !M_3$
which by Proposition \ref{pr:coarsely-periodic-overlapping-general} 
implies $!K_2 \lesssim !K_0$ and $!K_1 \lesssim !K_3$. 
Now from $\lab(!P_1) \greq \lab(!P_2)$ we conclude that $!K_2 \sim !K_0$
and $!K_1 \sim !K_3$, i.e.\ $!S \approx !T$.
\end{proof}

\begin{lemma} \label{lm:coarse-periodic-oredering-close}
Let $!X$ and $!Y$ be close reduced paths in $\Ga_\al$. Let $!S_0, !S_1$ 
be coarsely $A$-periodic segments in $!X$ and 
$!T_0, !T_1$ be coarsely $A$-periodic segments in $!Y$ 
such that $\ell(!S_i) \ge 2h_\al(A) + 1$,  $!S_i \approx !T_i$ for $i=0,1$  and $!S_0 \not\sim !S_1$.
Then $!S_0 < !S_1$ if and only if $!T_0 < !T_1$.
\end{lemma}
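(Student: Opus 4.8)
The statement says that the order of two compatible pairs of coarsely $A$-periodic segments is preserved under closeness of the ambient paths. The plan is to reduce everything to the already-established order-preservation for fragments of a fixed rank (Proposition~\ref{pr:fragment-correspondence-between-fragments}, in particular part~\itemref{pri:fragment-ordering-between-fragments}, together with its corollary Corollary~\ref{coi:compatibility-order-alpha}). The key observation is that a coarsely $A$-periodic segment $!S$ carries starting and ending fragments of rank $\be$ (where $\be$ is the activity rank of $A$) of size at least $\xi_2$, and strict compatibility $!S_i \approx !T_i$ means these fragments literally match up to compatibility on a common periodic base. So the ordering of the $!S_i$ and $!T_i$ can be read off from the ordering of these associated fragments, and the problem becomes an instance of fragment-order stability.

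First I would fix, for $i=0,1$, a common periodic base $!P_i$ for $!S_i$ and $!T_i$ (possible since $!S_i \approx !T_i$), with starting fragment $!M_i^{(0)}$ of rank $\be$ and ending fragment $!M_i^{(1)}$ of rank $\be$, each of size $\ge \xi_2$; and I would record that $!M_0^{(j)} \not\sim !M_1^{(k)}$ for the appropriate indices — this uses $!S_0 \not\sim !S_1$ together with Corollary~\ref{co:coarsely-periodic-overlapping} (if, say, $!M_0^{(j)} \sim !M_1^{(k)}$ for suitable $j,k$ then overlapping coarse periodicity would force $!S_0 \sim !S_1$). Then the relative position of $!S_0$ and $!S_1$ inside $!X$ is governed by these rank-$\be$ fragments: e.g. $!S_0 < !S_1$ precisely when the ending fragment of $!S_0$ lies strictly before the starting fragment of $!S_1$ in $!X$, which — because the intervening fragments are non-compatible and of size $\ge\xi_2 \ge \la+2.6\om$ — is equivalent by Corollary~\ref{coi:compatibility-order-alpha} to a statement purely about compatibility classes. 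The same description holds for $!T_0, !T_1$ inside $!Y$.

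Next I would invoke fragment-order stability across the closeness relation. Since $!X$ and $!Y$ are close reduced paths in $\Ga_\al$ and all the relevant fragments have size $\ge \xi_2$, which exceeds the threshold $2\la + 9.1\om$ required in Proposition~\ref{pri:fragment-ordering-between-fragments} (one checks $\xi_2 = \xi_0 - 2.6\om - 2\la - 3.4\om = 7\la - 1.5\om - 2\la - 7.8\om \ge 2\la + 9.1\om$ from $\la \ge 20\om$), the correspondence between fragments of rank $\be$ in $!X$ and in $!Y$ given by case (**) of Proposition~\ref{pr:fragment-correspondence-between-fragments} preserves order. Concretely, the starting/ending fragments of $!S_0, !S_1$ correspond (up to $\sim^{\pm1}$) to those of $!T_0, !T_1$ — this is exactly what strict compatibility $!S_i \approx !T_i$ records, since a common periodic base's boundary fragments are determined up to compatibility — and Proposition~\ref{pri:fragment-ordering-between-fragments} then transports the order relation from $!X$ to $!Y$. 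Combining with the translation between segment-order and fragment-order from the previous paragraph gives $!S_0 < !S_1 \iff !T_0 < !T_1$.

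\textbf{Main obstacle.} The delicate point is bookkeeping of the boundary fragments: a coarsely periodic segment's starting and ending fragments are only defined up to compatibility, and the condition $!S_i \approx !T_i$ must be unpacked carefully to see that it really does pin down, up to $\sim$, the rank-$\be$ fragments that feed into Proposition~\ref{pri:fragment-ordering-between-fragments}. One must also make sure that the fragments used to witness $!S_0 < !S_1$ (an ending fragment of $!S_0$ and a starting fragment of $!S_1$) are genuinely non-compatible — this is where $!S_0 \not\sim !S_1$ and Corollary~\ref{co:coarsely-periodic-overlapping} enter — and that, after transporting to $!Y$, the images are still the boundary fragments of $!T_0$ and $!T_1$ respectively rather than some other fragments in their compatibility classes; here Corollary~\ref{coi:compatibility-order-alpha} guarantees that the order relation depends only on the compatibility class, so this last worry dissolves. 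The numerical size condition $\ell(!S_i) \ge 2h_\al(A)+1$ plays no essential role beyond ensuring the segments are long enough that their boundary fragments genuinely exist and are distinct; I would remark on this rather than use it heavily.
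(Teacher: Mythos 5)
Your plan diverges from the paper's proof and, as written, has a real gap. The paper argues by contradiction: assuming $!S_0 < !S_1$ and $!T_1 < !T_0$, it cuts $!X$ and $!Y$ down to the starting segments $!X_1$ and $!Y_1$ ending respectively with $!S_1$ and $!T_1$ (still a close pair of reduced paths, because $!S_1 \approx !T_1$ makes the terminal vertices close), applies Proposition~\ref{pr:coarse-periodic-stability} to obtain a coarsely $A$-periodic segment $!U$ in $!Y_1$ with $!U \approx !S_0^*$ (hence $!U \sim !T_0$), and then observes that $!U\cup !T_0$ is a coarsely $A$-periodic segment containing $!T_1$ but not compatible with it, contradicting Corollary~\ref{co:coarsely-periodic-overlapping}. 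The condition $\ell_A(!S_i) \ge 2h_\al(A)+1$, which you declare to be inessential, is exactly the hypothesis that licenses the application of Proposition~\ref{pr:coarse-periodic-stability}; that dismissal is the first sign your route has drifted away from what actually makes the lemma true.

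The concrete gap in your plan is the appeal to Proposition~\ref{pr:fragment-correspondence-between-fragments}, part~\itemref{pri:fragment-ordering-between-fragments}. That proposition carries a standing hypothesis: either $!X$ or $!Y$ must contain no fragments $!N$ of rank $\ga$ with $\be < \ga \le \al$ and $\muf(!N) \ge \xi_0$. Here $!X$ and $!Y$ are arbitrary close reduced paths, and nothing forces this hypothesis to hold for either of them or for the relevant subpaths; in particular the portion of $!X$ between $!S_0$ and $!S_1$ can carry large fragments of ranks strictly above the activity rank. Likewise, case~(**) of that proposition requires $!X$ and $!Y$ to be close in rank $\be$, whereas the lemma only gives closeness in rank $\al$, and when $\be<\al$ the two are not the same; case~(*) is no help either, since it presupposes the order $!M_1 < !M_2$ that you are trying to conclude. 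Proposition~\ref{pr:coarse-periodic-stability} is precisely the tool designed to pass from closeness in rank $\al$ down to the activity rank in a way that is insensitive to such higher-rank fragments, and the paper's proof routes everything through it. Your fragment-level reduction would need an order-stability statement not subject to the no-higher-rank-fragments hypothesis, which the paper does not provide at this point and which is exactly what the coarse-periodic machinery was built to avoid.
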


\begin{proof}
By Corollary \ref{co:coarsely-periodic-overlapping}, none of $!S_0$ and $!S_1$ is contained in the other
and the same is true for $!T_0$ and $!T_1$.
Assume, for example, that $!S_0 < !S_1$ and $!T_1 < !T_0$. 
Let $!X_1$ and $!Y_1$ be the starting segments of $!X$ and $!Y$ ending with $!S_1$ and $!S_2$
respectively. 
By Proposition \ref{pr:coarse-periodic-stability} with $!X:=!X_1$ and $!Y := !Y_1$ 
there exists $!U$ in $Y_1$ such that $!U \approx !S_0^*$ where $!S_0^*$ is the stable part of $!S_0$.
Then $!U \cup !T_0$ is a coarsely $A$-periodic segment containing $!T_1$ and we get
a contradiction with Corollary \ref{co:coarsely-periodic-overlapping}.
\end{proof}

\begin{lemma} \label{lm:coarse-periodic-stability-between}

%
Let $!X$ and $!Y$ be reduced paths in $\Ga_\al$. Let $!S_0, !S_1$ 
be coarsely $A$-periodic segments in $!X$ and 
$!T_0, !T_1$ be coarsely $A$-periodic segments in $!Y$ 
such that $!S_0 \lessapprox !S_1$, $!T_0 \lessapprox !T_1$ 
and $!S_i \approx !T_i$, $i=0,1$.
\begin{enumerate} 
\item \label{lmi:coarse-periodic-stability-penetration}
Let $!U$ be a coarsely $A$-periodic segment in $!X$ 
such that $!S_0 \lessapprox !U \lessapprox !S_1$, $\ell_A(!U) \ge h_\al(A) + 1$
and $!U$ is the stable part of some other coarsely $A$-periodic segment in $!X$.
Then there exists a coarsely $A$-periodic segment ~$!V$ in $!Y$ 
such that $!T_0 \lessapprox !V \lessapprox !T_1$ and $!U \approx !V$.
\item \label{lmi:coarse-periodic-ordering}
Let $!U_i$ $(i=1,2)$ be coarsely $A$-periodic segments  in $!X$
and $!V_i$ $(i=1,2)$ be coarsely $A$-periodic segments in $!Y$ 
such that $\ell_A(!U_i) \ge 2 h_\al(A) + 1$ $(i=1,2)$,
$!S_0 \lessapprox !U_i \lessapprox !S_1$, 
$!T_0 \lessapprox !V_i \lessapprox !T_1$ 
and $!U_i \approx !V_i$ for $i=1,2$.
Assume that $!U_2 \approx g !U_1$ for some $g \in G_\al$, i.e.\ $!U_1$ and $!U_2$
have periodic bases with the same label.
Then $!U_1 \lessapprox !U_2$ if and only if $!V_1 \lessapprox !V_2$.
\end{enumerate}
\end{lemma}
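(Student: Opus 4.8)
\textbf{Proof plan for Lemma \ref{lm:coarse-periodic-stability-between}.}

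The plan is to prove the two parts by reducing everything to the fragment-level machinery of Section \ref{s:overlapping-periodicity}, treating the coarsely $A$-periodic segments via their periodic bases and the fragments of rank $\be$ (the activity rank of $A$) that bound them. Throughout, the guiding principle is that a coarsely $A$-periodic segment is, up to strict compatibility, determined by its axis together with its starting and ending fragments of rank $\be$, and that strict compatibility $!U\approx !V$ means these segments share a periodic base. The main tool is Proposition \ref{pr:coarse-periodic-stability} (stability of coarsely periodic words), which transports a stable part of a coarsely periodic segment from $!X$ to a close path; and for the ordering statements, Lemma \ref{lm:coarse-periodic-oredering-close} and Proposition \ref{pr:coarsely-periodic-overlapping-general}. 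A subtle point is that in part (i) and (ii), $!X$ and $!Y$ are \emph{not} assumed close; the only link between them is the pair of coinciding pairs $(!S_i,!T_i)$. So the first reduction I would make is: replace $!X$ by the subpath $!X_{01}$ running from the start of $!S_0$ to the end of $!S_1$, and $!Y$ by the analogous subpath $!Y_{01}$ from the start of $!T_0$ to the end of $!T_1$; since $!S_i \approx !T_i$ for $i=0,1$, the hypotheses of Lemma \ref{lmi:compatible-close} and Proposition \ref{pr:lifting-bigon} give that $!X_{01}$ and $!Y_{01}$ are close in rank $\be$, hence close. This brings part (i) essentially into the setting of Proposition \ref{pr:coarse-periodic-stability}.

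For part (i): Let $!W$ be a coarsely $A$-periodic segment in $!X$ whose stable part is $!U$, with $\ell_A(!U)\ge h_\al(A)+1$. Applying Proposition \ref{pr:coarse-periodic-stability} to the close paths $!X_{01}$ and $!Y_{01}$ with the subpath $!W\cap !X_{01}$ of $!X_{01}$ --- here I need to check that $!W$ can be taken inside $!X_{01}$, which follows from $!S_0\lessapprox !U\lessapprox !S_1$ after possibly truncating $!W$ a bounded number of periods, using that $\ell_A(!U)\ge h_\al(A)+1$ gives room --- produces a coarsely $A$-periodic segment $!V$ in $!Y_{01}$ with $!V\approx !U$. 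It remains to check the location constraint $!T_0\lessapprox !V\lessapprox !T_1$. For this I would argue by contradiction: if, say, $!V< !T_0$ and $!V\not\approx !T_0$, then since $!S_0\approx !T_0$ and $!U\approx !V$, the ordering transport of Lemma \ref{lm:coarse-periodic-oredering-close} (applicable because $\ell_A(!U)=\ell_A(!V)\ge h_\al(A)+1\ge 1$, with $!S_0$ and $!U$ playing the roles there, after noting $!S_0\approx !T_0$, $!U\approx !V$) would force $!U< !S_0$, contradicting $!S_0\lessapprox !U$. The symmetric case at the $!S_1$ end is handled the same way. The case $!V\approx !T_0$ is allowed by $\lessapprox$, so we are done.

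For part (ii): The assumption $!U_2\approx g!U_1$ means $!U_1$ and $!U_2$ have periodic bases with the same label, and likewise (via $!U_i\approx !V_i$) $!V_1$ and $!V_2$ have periodic bases with the same label. So the ordering question is really about the positions of translates of one periodic base inside the respective axes. I would apply part (i) to transport, for each $i$, the stable part $!U_i^*$ of $!U_i$ (which exists since $\ell_A(!U_i)\ge 2h_\al(A)+1$ and has $\ell_A(!U_i^*)\ge 1$, but I need $\ell_A(!U_i^*)\ge h_\al(A)+1$ --- this requires $\ell_A(!U_i)\ge 2h_\al(A)+1$, so actually I should transport $!U_i$ itself with a milder truncation, or reprove the transport directly) to coarsely $A$-periodic segments $!V_i'$ in $!Y$ with $!V_i'\approx !U_i$ and $!T_0\lessapprox !V_i'\lessapprox !T_1$. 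Then by Corollary \ref{co:coarsely-periodic-overlapping} (overlapping coarse periodicity) and $!V_i\approx !U_i\approx !V_i'$, the segments $!V_i$ and $!V_i'$ are compatible and, sharing periodic bases with the same label, one must contain the other; Lemma \ref{lm:strict-compatibility-ordering} then gives $!V_i\approx !V_i'$. Finally, with $!U_1,!U_2$ in $!X$ and $!V_1',!V_2'$ in $!Y$, $!U_i\approx !V_i'$, and $!U_2\approx g!U_1$ so $!V_2'\approx g!V_1'$, the equivalence $!U_1\lessapprox !U_2\iff !V_1'\lessapprox !V_2'$ follows from Lemma \ref{lm:coarse-periodic-oredering-close} (the case $!U_1\sim !U_2$ being trivial, since then $!S_0\lessapprox !U_1$ and $!U_1\lessapprox !S_1$ force $!S_0\lessapprox !U_2\lessapprox !S_1$ as well and everything is determined up to $\approx$). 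Combining with $!V_i\approx !V_i'$ gives the claim.

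\textbf{Main obstacle.} The hard part is not any single deep estimate --- all the heavy lifting is in Propositions \ref{pr:coarsely-periodic-overlapping-general} and \ref{pr:coarse-periodic-stability} --- but the careful bookkeeping of \emph{positions}: making sure that when we transport a coarsely periodic segment from $!X$ to $!Y$ via Proposition \ref{pr:coarse-periodic-stability}, the image lands in the prescribed interval $[!T_0,!T_1]$ rather than somewhere else along the axis, and that the various truncations needed to meet the hypotheses $\ell_A(\cdot)\ge h_\al(A)+1$ of the transport lemmas don't push segments past each other. I expect the cleanest route is to do all position arguments by contradiction using Lemma \ref{lm:coarse-periodic-oredering-close} and Corollary \ref{co:coarsely-periodic-overlapping}, since those already encapsulate the ``two overlapping coarsely periodic segments with the same period are compatible'' phenomenon that is exactly what rules out the wrong positions.
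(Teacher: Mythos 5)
Your overall strategy (reduce to close paths $!X_{01}$, $!Y_{01}$ and push segments across via Proposition \ref{pr:coarse-periodic-stability}, then settle positions with Lemma \ref{lm:coarse-periodic-oredering-close} and Corollary \ref{co:coarsely-periodic-overlapping}) is the right starting point, and you correctly identified the main obstacle as a positioning problem. However, both parts have genuine gaps, in one case one you flagged yourself but did not close.

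In part (i), the problematic step is ``after possibly truncating $!W$ a bounded number of periods.'' The hypothesis only gives $\ell_A(!U)\ge h_\al(A)+1$, so the parent $\bar{!U}$ (with $\ell_A(\bar{!U})\ge 3h_\al(A)+1$) is what must be fed to Proposition \ref{pr:coarse-periodic-stability}. When $!U\sim !S_0$ or $!U\sim !S_1$, the parent $\bar{!U}$ protrudes beyond $!X_{01}$ and must be shortened to fit; but shortening $\bar{!U}$ by $k\ge 1$ periods shifts its stable part by $k$ periods (the stable part is always truncated by exactly $h_\al(A)$ from each end of the ambient segment, cf.\ \ref{ssi:cutting-invariance}), so the $!V$ that Proposition \ref{pr:coarse-periodic-stability} produces would be $\approx$ this shifted stable part, not $\approx !U$. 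The paper avoids this by a case split: when $!U\not\sim !S_0,!S_1$, the parent $\bar{!U}$ automatically sits inside $!X_{01}$ (Corollary \ref{co:coarsely-periodic-overlapping}) and your direct argument works; when $!U\sim !S_0$ (say), the paper instead applies the stability proposition to the one-sided extension $!U!Z_2$, obtains some $!W'$, and then recovers the precise location of $!V$ by going down to rank-$\be$ fragments and invoking Proposition \ref{pr:fragment-correspondence-between-fragments} to transport the \emph{starting fragment} of $!U$'s periodic base. Your plan skips this fragment-level recovery, and without it the strict compatibility $!V\approx !U$ cannot be guaranteed.

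In part (ii), you already noticed the problem: applying part (i) to $!U_i^*$ would require $\ell_A(!U_i^*)\ge h_\al(A)+1$, hence $\ell_A(!U_i)\ge 3h_\al(A)+1$, but the hypothesis only gives $\ell_A(!U_i)\ge 2h_\al(A)+1$. ``Transport $!U_i$ itself with a milder truncation or reprove the transport'' is not an argument. The paper's route is different and simpler: in the nontrivial subcase $!U_1\not\sim !U_2$, assume the orders disagree, use Corollary \ref{co:coarsely-periodic-cut} to detach $!U_2^*$ from $!U_1$, apply Proposition \ref{pr:coarse-periodic-stability} to the subpath $!X_1!U_1!X_2$ and $!Y_1$ only (where $!Y=!Y_1!V_2^*!Y_2!V_1!Y_3$), and derive $!U_1\sim !U_2$ from the resulting forbidden overlap via Corollary \ref{co:coarsely-periodic-overlapping}. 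You should either adopt that contradiction argument or strengthen the hypothesis of part (ii) to $\ell_A(!U_i)\ge 3h_\al(A)+1$, which would make your reduction to part (i) work but would weaken the lemma.
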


\begin{proof}
Let $\be$ be the activity rank of $A$.

(i): 
Let $!U$ be the stable part of $\bar{!U}$ and $\bar{!U} = !Z_1 !U !Z_2$.
We consider several cases.

{\em Case\/} 1: $!U \not\sim !S_i$ for $i=0,1$.
Then by Corollary \ref{co:coarsely-periodic-overlapping} we have
$!S_0 < \bar{!U} < !S_1$. Since $!S_0 \cup !S_1$ and $!T_0 \cup !T_1$ 
are close, existence of $!V$ follows from 
Proposition \ref{pr:coarse-periodic-stability}.

{\em Case\/} 2: Exactly one of the relations $!U \sim !S_i$ $(i=0,1)$ holds.
Without loss of generality, assume that $!U \sim !S_0$ and $!U \not\sim !S_1$.
By Corollary \ref{co:coarsely-periodic-overlapping} we have 
$\bar{!U} < !S_1$. 
If $!U \approx !S_0$ there is nothing to prove. 
Assume that $!U \not\approx !S_0$ and hence $!U !Z_2$ is contained in 
$!S_0 \cup !S_1$.

By the construction of the stable part, 
$!U !Z_2$ is a coarsely $A$-periodic segment with 
$\ell_A(!U !Z_2) = \ell(!U) + h_\al(A) \ge 2h_\al(A) + 1$.
Let $!W$ be the stable part of $!U !Z_2$.
Using Proposition \ref{pr:coarse-periodic-stability} with 
$!X := !S_0 \cup !S_1$ and $!Y := !T_0 \cup !T_1$ 
we find a coarsely $A$-periodic segment $!W'$ in $!T_0 \cup !T_1$ such 
that $!W \approx !W'$. 
By Proposition \ref{pri:coarsely-periodic-union},

$!S_0 \cup !U$ is a coarsely $A$-periodic segment and since
$!W' \sim !T_0$, $!T_0 \cup !W'$ is a coarsely $A$-periodic segment as well.
By \ref{ssi:cutting-invariance}
(more formally, by the symmetric version of \ref{ssi:cutting-invariance})
$!W$ is an end of $!U$ which implies $!S_0 \cup !U \approx !T_0 \cup !W'$.

Now let $!P$ be a periodic base for $!U$. 
By the construction of the stable part,  $!P$ starts with a fragment $!N$
of rank $\be$ with $\muf(!N) \ge \xi_1$.
Since $!P$ is contained in a periodic base for $!T_0 \cup !W'$, 
by Proposition \ref{pr:fragment-correspondence-between-fragments} we
find a fragment $!N'$ of rank $\be$ in $!T_0 \cup !W'$ such
that $\muf(!N') \ge \xi_2$ and $!N' \sim !N$.
Then for the desired $!V$ we can take the end of $!T_0 \cup !W'$
starting with $!N'$.

{\em Case\/} 3: $!U \sim !S_0 \sim !S_1$. Then a periodic base $!P$ for $!U$
is contained in a periodic base for $!S_0 \cup !S_1$. By the construction of 
the stable part, $!P$ starts and ends with fragments $!N_0$ and $!N_1$ 
of rank $\be$ with $\muf(!N_i) \ge \xi_1$. 
Then using Proposition \ref{pr:fragment-correspondence-between-fragments}
we find fragments $!N_i'$ $(i=0,1)$ of rank $\be$ in $!T_0 \cup !T_1$
such that $\muf(!N_i') \ge \xi_2$ and $!N_i' \sim !N_i$ $(i=1,2)$.
We can take $!V = !N_0' \cup !N_1'$.

(ii): We consider two cases.

{\em Case\/} 1: $!U_1 \sim !U_2$.
Let $!P_1$ and $!P_2$ be periodic bases for $!U_1$ and $!U_2$ with 
$\lab(!P_1) \greq \lab(!P_2)$ which have a common periodic extension. 
It easily follows from Proposition \ref{pri:fragment-ordering-between-fragments} that $!U_1 < !U_2 \lequiv !P_1 < !P_2$ and 
$!U_1 \approx !U_2 \lequiv !P_1 = !P_2$. Since $!P_i$ is also a periodic 
base for $!V_i$, a similar statement holds for $!V_i$'s which clearly 
implies the required conclusion.

{\em Case\/} 2: $!U_1 \not\sim !U_2$. Without loss of generality,
we assume that $!U_1 < !U_2$, $!V_1 > !V_2$ and come to a contradiction.
We can assume also that $!X = !S_0 \cup !S_1$, $!Y = !T_0 \cup !T_1$
and hence $!X$ and $!Y$ are close in rank $\al$.
Let $!U_i^*$ and $!V_i^*$ be
stable parts of $!U_i$ and $!V_i$. By Corollary \ref{co:coarsely-periodic-cut},
$!U_1$ is disjoint from $!U_2^*$. Let $!X = !X_1 !U_1 !X_2 !U_2^* !X_3$ and
$!Y = !Y_1 !V_2^* !Y_2 !V_1 !Y_3$. 
By Proposition \ref{pr:coarse-periodic-stability} with $!X = !X_1 !U_1 !X_2$
and $!Y := !Y_1$ there exists a coarsely $A$-periodic segment $!W$ in $!Y_1$
such that $!W \approx !U_1^*$. Then $!W \sim !U_1 \sim !V_1$
and by Proposition \ref{pri:coarsely-periodic-union}
and Corollary \ref{co:coarsely-periodic-overlapping} we get 
$!U_1 \sim !W \sim !W \cup !V_1 \sim !V_2 \sim !U_2$, the desired
contradiction.
\end{proof}

\section{Comparing $\al$-length of close words}
\label{s:compare-close}

In this section, we prove the following proposition.

\begin{proposition} \label{pr:length-compare-close}
Let $X,Y \in \cR_\al$ be close in rank $\al$. Then
$$
   |Y|_\al < 1.3 |X|_\al + 2.2.
$$
\end{proposition}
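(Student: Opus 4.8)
The statement compares the $\al$-length of two close reduced words, so the natural tool is the fellow-traveling decomposition of Proposition~\ref{pr:fellow-traveling}: write $!X$ and $!Y$ as paths in $\Ga_\al$ with $!X^{-1}!u!Y!v$ a coarse bigon, and decompose $!X = !U_1!U_2\cdots!U_k$, $!Y = !V_1!V_2\cdots!V_k$ where $\io(!U_i)$ is close to $\io(!V_i)$ and $|!U_i|_\al,|!V_i|_\al \le \ze$. This already gives a crude bound $|!Y|_\al \le \sum_i |!V_i|_\al \le k\ze$, but that is too weak because $k$ can be large relative to $|!X|_\al$ (many of the $!U_i$ may be empty). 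So the plan is instead to control $|!Y|_\al$ by $|!X|_\al$ directly, using the structure of the coarse bigon and the single-layered structure of its filling diagram.

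The key idea is to use the active-fragment structure from \ref{ss:active-fragments} together with Proposition~\ref{pr:no-active-loops}. First I would handle the case $\Area_\al(!X^{-1}!u!Y!v) = 0$: then by Proposition~\ref{pr:no-active-loops} (applicable once $|!X|_\al > 2 + 6\ze^2\eta$, and trivially otherwise since then $|X|_\al$ is bounded) we can write $!X = !w_1!X_1!w_2$, $!Y = !z_1!Y_1!z_2$ with $!X_1, !Y_1$ close in rank $\al-1$ and $|!w_i|_\al, |!z_i|_\al \le 1 + 4\ze^2\eta$. Applying the inductive hypothesis (Proposition~\ref{pr:length-compare-close} for rank $\al-1$) to $!X_1$ and $!Y_1$, which satisfy $|!Y_1|_{\al-1} < 1.3|!X_1|_{\al-1} + 2.2$, and multiplying through by $\ze$ via \ref{cl:om-rank-increment} gives $|!Y_1|_\al \le \ze|!Y_1|_{\al-1} < 1.3\,\ze|!X_1|_{\al-1} + 2.2\ze$; then one needs $\ze|!X_1|_{\al-1} \le |!X_1|_\al + (\text{small})$, which is false in general — $|!X_1|_\al$ can be much smaller than $\ze|!X_1|_{\al-1}$. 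This is the main obstacle, and it forces a different route.

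The correct route, I believe, is to bound $|!Y|_\al$ by taking a good fragmentation of $!Y$ built from a fragmentation of $!X$. Take an optimal fragmentation $\cF$ of $!X$ of rank $\al$ realizing $|!X|_\al$, i.e. $!X = !F_1!F_2\cdots!F_m$ with each $!F_j$ a subword of a fragment of some rank $\be_j \le \al$ and $|!X|_\al = \sum_j \ze^{\al-\be_j}$ (roughly). For each piece $!F_j$ of rank $\be_j \ge 1$, use Proposition~\ref{pr:fragment-stability-bigon} (resp.\ Proposition~\ref{pr:fellow-traveling} at rank $0$): each sufficiently large fragment of rank $\be_j$ in $!X$ produces a compatible fragment of rank $\be_j$ in $!Y$ of comparable size (up to the fading $2\la + 3.4\om$, and down to at least $\xi_0$), and these images inherit the ordering by Corollary~\ref{co:compatibility-order}. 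This yields a partition-like covering of $!Y$ by images $!G_j$ plus short connecting segments, where each $!G_j$ is a fragment of rank $\ge \min(\be_j, \text{something})$, so $|!G_j|_\al \le \ze^{\al-\be_j}$ and the connecting segments between consecutive ones have bounded $\al$-length. Summing via \ref{ss:alpha-length-properties}(ii) (the covering inequality) then gives $|!Y|_\al \le \sum_j |!G_j|_\al + (\text{connectors}) \le |!X|_\al + (\text{const})$; careful bookkeeping of the $\la,\om$-losses and the number of connector segments (which is at most $m$, but $m$ is itself controlled by $|!X|_\al$) produces the claimed $1.3|X|_\al + 2.2$. The hard part is making the bookkeeping of fading losses and connector lengths tight enough to land inside the constants $1.3$ and $2.2$; the factor $1.3$ (rather than $1$) is precisely the slack absorbing the fragment-size decrement and the worst-case short segments, and $2.2$ absorbs the two extreme positions of $!X$ where stability can fail. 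Both the base case $\al = 0$ (trivial: close freely reduced words are equal by Remark~\ref{rm:close-in-rank-0}, so $|Y| = |X|$) and the degenerate small-$|X|_\al$ cases are immediate, so the induction on $\al$ closes.
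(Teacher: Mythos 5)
You correctly locate the obstacle to the naive induction (one cannot simply bound $|Y|_\al \le \ze|Y|_{\al-1}$ and then apply the rank-$(\al-1)$ hypothesis, because $\ze|X|_{\al-1}$ can be much larger than $|X|_\al$), and your instinct to work from a minimal fragmentation of $X$ is the right one. But the ``correct route'' you sketch has a genuine gap.

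The gap is in the sentence ``For each piece $!F_j$ of rank $\be_j \ge 1$, use Proposition~\ref{pr:fragment-stability-bigon}\dots''. That proposition only transfers fragments of rank $\al$ across a coarse bigon in $\Ga_\al$; it says nothing about fragments of rank $\be_j < \al$. Worse, even if you had such a tool, the lower-rank pieces $!F_j$ in a minimal fragmentation need not be ``sufficiently large'' in the sense required by any stability statement --- only the rank-$\al$ truncated fragments are guaranteed to be large (this is exactly the content of Lemma~\ref{lmi:fragment-in-fragmentation-large}: $|F_i|_{\al-1} \ge 1/\ze$ and the base has $|!P|_{\al-1} > 13$). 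So the plan of transferring every piece of the fragmentation across the bigon cannot work as stated.

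What the paper does instead is a two-level recursion that you have not reconstructed. First, it uses a reduced filling diagram of the coarse bigon, with a tight set of contiguity subdiagrams, to cut $!X$ and $!Y$ at the active fragments of rank $\al$: $!X = !P_0!K_1!P_1\cdots!K_r!P_r$ and $!Y = !Q_0!M_1!Q_1\cdots!M_r!Q_r$. Using Lemma~\ref{lmi:fragment-in-fragmentation-large-fragments} one arranges a minimal fragmentation of $X$ in which each $!K_i$ sits inside a compatible rank-$\al$ truncated fragment; the active fragments each contribute $1$ to $|Y|_\al$, and the remaining pieces $!Q_i$ are bounded against $!P_i$ by a separate auxiliary statement (Lemma~\ref{lm:length-compare-close-lm}) for words close in rank $\al-1$. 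Second, that auxiliary lemma is the step where minimality of the fragmentation of $X$ is exploited: one transfers only the rank-$\al$ truncated fragments (which are large) via Proposition~\ref{pr:closeness-stability}$_{\al-1}$, and the segments of $X$ between them are handled by invoking Proposition~\ref{pr:length-compare-close}$_{\al-1}$ inductively and rescaling by $\ze$. The constants $1.3$ and $2.2$ arise from adding up the small overhead $\approx 3\ze$ per active fragment plus a constant $2$ at the two ends; your description of where the slack goes is plausible but cannot be made precise without this two-level structure. So the proposal is in the right spirit but missing both the diagram-side decomposition and the auxiliary lemma that makes the induction close.
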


Recall that a fragment word $F$ of rank $\al$ is considered with fixed associated words $S$, $u$, $v$ 
and a relator $R$ of rank $\al$ 
such that $F = uSv$ in $G_{\al-1}$, $u,v \in \cH_{\al-1}$ and $S$ is a subword of $R^k$ for some $k > 0$.
If $!F$ is a path in $\Ga_{\al-1}$ labeled $F$ then this uniquely defines the base $!S$ for ~$!F$.

Let $F$ and $G$ be fragments of rank $\al$ in a word $X$. Let $!X$ be a path in $\Ga_{\al-1}$ labeled $X$
and $!F$, $!G$ the corresponding subpaths of $!X$. We write $F \sim G$ if $!F \sim !G$ (so the relation
is formally defined for the occurrences of $F$ and $G$ in $X$).

Recall that the size $|X|_\al$ of a word $X$ in rank $\al$ is the minimal
possible value of $\text{weight}_\al(\cF)$ 
of a fragmentation $\cF$ of rank $\al$ of $X$.
A fragmentation $\cF$ of rank $\al$ of $X$ is a partition 
$X \greq F_1 \cdot F_2 \cdots F_k$ where $F_i$ is a nonempty subword of a fragment of rank 
$\be \le \al$. 
Assuming that each $F_i$ is assigned a unique value of $\be$,
the weight in rank $\al$ of $\cF$ is defined by formula
$$
  \text{weight}_\al(\cF) = m_\al + \ze m_{\al-1} + \ze^2 m_{\al-2} + \dots + \ze^\al m_0 
$$
where $m_\be$ is the number of subwords of fragments of rank $\be$ in $\cF$.

We call a fragmentation $\cF$ of $X$ {\em minimal} if 
$\text{weight}_\al(\cF) = |X|_\al$.

We call a subword $F$ of a fragment of rank $\be \ge 1$ 
a {\em truncated fragment of rank $\be$}. We will be assuming that with a truncated 
fragment $F$ of rank $\al$ there is an associated genuine fragment ~$\bar{F}$ of rank $\be$
such that $F$ is a subword of $\bar{F}$. 
If $!F$ is a path in $\Ga_\al$ with $\lab(!F) \greq F$ then we have the
associated fragment $\bar{!F}$ in $\Ga_\al$ such that $!F$ is a 
subpath of $\bar{!F}$.  Note a truncated fragment of rank 1 is simply a fragment of rank 1.

We extend the compatibility relation to truncated fragments of rank $\be$ in a word $X$ in 
the following natural way.
If $F$ and ~$G$ are truncated fragments of rank $\be$ in $X$ and $\bar{F}$ and $\bar{G}$
their associated fragments of rank ~$\be$ in $\Ga_\al$ then $F \sim G$ if and only if 
$\bar{F} \sim \bar{G}$.

\subsection{} \label{ss:fragmentation-extending-fragment}
Let $\cF =  F_1 \cdot F_2 \cdot \ldots \cdot F_k$ be a fragmentation of rank $\al$ of a word $X$.
Let $F_i$ be a truncated fragment of rank $\be \ge 1$ in $\cF$. Assume that $F_i$ can be
extended in $X$ to a larger truncated fragment $G$ of rank $\be$, i.e.\ 
$$
    X \greq F_1 F_2 \dots F_p'  F_p'' \dots F_i \dots F_q' F_q'' \dots F_k
$$
where 
$F_p \greq F_p' F_p''$, $F_q \greq F_q' F_q''$ and $G \greq F_p'' \dots F_i \dots F_q'$
(here we consider the case $1 < i < k$; cases $i=1$ and $i=k$ differ only in notation).
Then we can produce a new fragmentation $\cF'$ of rank $\al$, 
$X \greq F_1 \cdots F_{p-1} \cdot [F_p'] \cdot G \cdot [F_q''] \cdot F_{q+1} \cdots F_k$
where square brackets mean that $F_p'$ and $F_q''$ are absent if empty.
We say that $\cF'$ is obtained from $\cF$ by {\em extending $F_i$ to $G$}.
Note that if $\cF$ is minimal then in the case $i>1$, we necessarily have $p=i-1$ and nonempty $F_p'$ 
and in the case $i < k$ we necesarily have $q=i+1$ and nonempty $F_q''$.

\begin{lemma} \label{lm:fragment-in-fragmentation}
Let $\cF =  F_1 \cdot F_2 \cdot \ldots \cdot F_k$ be a minimal fragmentation of rank $\al\ge1$ of 
a word $X \in \cR_\al$. 
\begin{enumerate}
\item 
\label{lmi:fragment-in-fragmentation-large}
Let $F_i$ be a truncated fragment of rank $\al$ in $\cF$. Then 
$|F_i|_{\al-1} \ge \frac1\ze$ and $F_i = u Fv$ where $F$ is a fragment of rank $\al$, $F_i \sim F$,
$|u|_{\al-1}, |v|_{\al-1} < \ze$
and the base $!P$ for the corresponding fragment $!F$ in $\Ga_{\al-1}$ satisfies 
$|!P|_{\al-1} > 13$.
\item
If $K$ is a fragment of rank $\al$ in $X$ and 
$\muf(K) \ge 3\la + 15\om$ 
then $F_i \sim K$ for some $i$.
\item
\label{lmi:fragment-in-fragmentation-large-fragments}
Let $X = P_0 K_1 P_1 \dots K_r P_r$ where $K_i$ are fragments of rank $\al$ with 
$\muf(K_i) \ge 3\la + 13\om$ for all $i$. Then there exists another minimal fragmentation $\cF'$
of rank $\al$ of $X$ such that each $K_i$ is contained in a compatible truncated fragment 
of rank $\al$ in $\cF'$.
\end{enumerate}
\end{lemma}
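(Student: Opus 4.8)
\textbf{Proof plan for Lemma \ref{lm:fragment-in-fragmentation}.}

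The three parts are mostly about the geometry of a minimal fragmentation and how it interacts with genuine fragments of rank $\al$, so the plan is to treat each part in turn, using part (i) as the basic tool.

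For part (i), I would start with the observation that any subword $F_i$ of a fragment of rank $\be \le \al$ that occurs as a piece of a minimal fragmentation cannot be too short: if $|F_i|_{\al-1} < \frac1\ze$, i.e. $|F_i|_\al < 1$ (using \ref{ss:alpha-length-properties}(\ref{cl:om-rank-increment})), then by \ref{ss:alpha-length-properties}(i) we could absorb $F_i$ into a neighbouring factor of $\cF$ without increasing the weight, contradicting minimality --- more precisely, replacing $F_{i-1} \cdot F_i$ (if $F_{i-1}$ has rank $\ge \be$) or merging appropriately, using that a subword of a fragment of rank $\al$ has rank-$(\al-1)$ length at least $1$ while its contribution to the weight is $\ze$. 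Once we know $|F_i|_{\al-1}$ is not small, we lift $F_i$ to a path $!F_i$ in $\Ga_{\al-1}$, which is a fragment of rank $\al$ with some base $!P_i$; here $F_i$ itself may be a proper subword of its associated genuine fragment $\bar F_i$, and we want to trim the bridge words $u,v$ on the two ends down to rank-$(\al-1)$ length $<\ze$, which is possible because $u,v \in \cH_{\al-1}$ and we may split them using \ref{ss:alpha-length-properties}. The bound $|!P|_{\al-1} > 13$ then follows by combining $|F_i|_{\al-1} \ge \frac1\ze = 20$ with \eqref{eq:S2-piece-form} to control how much the base can shrink relative to $F_i$, roughly $|!P|_{\al-1} \ge |F_i|_{\al-1} - |u|_{\al-1} - |v|_{\al-1} - (\text{loss from closeness})$; the numbers are chosen so $20$ minus a small multiple of $\ze$ and $\om$ stays above $13$.

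For part (ii): given a fragment $K$ of rank $\al$ with $\muf(K) \ge 3\la+15\om$, I would locate the factors $F_i,\dots,F_j$ of $\cF$ that meet the occurrence of $K$. If some $F_t$ among them has rank $< \al$, then $F_t$ (hence a subword of a genuine fragment of rank $<\al$) would contain a piece of $K$ of substantial $\mu$-size, contradicting Lemma \ref{lm:piece-fragment-no-higher-fragments}; so all the $F_t$ meeting $K$ have rank $\al$. Pick one, say $F_t$, with $|F_t \cap K|$ large enough (at least a $1/(j-i+1)$ fraction, but one can do better: since $K$ itself is a single fragment and the $F_t$'s overlapping it are disjoint subwords, at least one has rank-$(\al-1)$ length bounded below in terms of $\muf(K)$, using \ref{ss:alpha-length-properties}(ii)). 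By part (i), $F_t \sim F_t'$ for a genuine fragment $F_t'$; then $F_t'$ and $K$ are two fragments of rank $\al$ in the reduced word $X$ whose overlap has $\mu$-size at least $\la + 2.6\om$, so Proposition \ref{pr:inclusion-compatibility} (after possibly extending $F_t'$ within $K$, or the other way round) forces $F_t \sim K$. The bound $3\la+15\om$ on $\muf(K)$ is dictated by needing the overlap, after all the small losses, to clear the threshold $\la+2.6\om$ of Proposition \ref{pr:inclusion-compatibility}.

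For part (iii), the idea is to apply part (ii) to each $K_i$ to get some factor $F_{t_i} \sim K_i$, then rebuild the fragmentation around the $K_i$'s: using \ref{ss:fragmentation-extending-fragment}, extend $F_{t_i}$ to the largest truncated fragment of rank $\al$ containing both $F_{t_i}$ and $K_i$ (possible since $F_{t_i}$ and $K_i$ are compatible, hence by Proposition \ref{pr:fragments-union}, via Corollary \ref{co:compatibility-same-rank}, their union is again a fragment of rank $\al$ with the same base axis). One has to check these extensions can be done simultaneously without conflict --- different $K_i$'s are pairwise non-compatible (they are disjoint fragments of large size in a reduced word, but in fact we only need that distinct $K_i$ live in disjoint parts of $X$), so the truncated fragments produced are disjoint, and the extension operation for one does not destroy the property already arranged for another; this is the step where I expect the bookkeeping to be most delicate, since extending $F_{t_i}$ eats into its neighbours and one must argue the weight does not go up (it cannot, since $\cF$ was minimal and extending a rank-$\al$ truncated fragment over adjacent rank-$\al$ material keeps the weight, while over lower-rank material it can only help). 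The slightly smaller constant $3\la+13\om$ versus $3\la+15\om$ in (iii) reflects that after extending $K_i$ to contain $F_{t_i}$ we may need $K_i$ to still satisfy the hypothesis of (ii) in a shrunken ambient word, or simply a convenience for later applications.

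\textbf{Main obstacle.} The genuinely delicate point is part (iii): coordinating the simultaneous modification of the fragmentation around all the $K_i$ while preserving minimality, i.e. checking that each local extension step of \ref{ss:fragmentation-extending-fragment} neither increases $\text{weight}_\al$ nor undoes a previous step. Parts (i) and (ii) are essentially direct consequences of the length estimates in \ref{ss:alpha-length-properties}, \eqref{eq:S2-piece-form}, and Propositions \ref{pr:inclusion-compatibility} and \ref{pr:fragments-union} (with Lemma \ref{lm:piece-fragment-no-higher-fragments} ruling out lower-rank factors meeting a large rank-$\al$ fragment).
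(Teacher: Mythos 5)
Your overall plan is on the right track, but there are two genuine problems.

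\textbf{Part (i), the bound $|!P|_{\al-1} > 13$.} You try to get this from \eqref{eq:S2-piece-form} together with $|F_i|_{\al-1} \ge \frac1\ze$, but that is not the right tool: \eqref{eq:S2-piece-form} relates $\mu(P)$ to $|P|_{\al-1}$ and says nothing about how $|F|_{\al-1}$ controls $|P|_{\al-1}$ for $F = uPv$ with $u,v \in \cH_{\al-1}$ (those bridge words have no a priori short $|\cdot|_{\al-1}$-length). The paper uses Proposition \ref{pr:length-compare-close}$_{\al-1}$ (the comparison $|Y|_{\al-1} < 1.3 |X|_{\al-1} + 2.2$ for close reduced words) applied to the close pair $(F, P)$, which gives $|!P|_{\al-1} > \frac{1}{1.3}\bigl(\frac1\ze - 2\ze - 2.2\bigr) > 13$. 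Without invoking that proposition your argument does not close. (For the first half of (i), the reason $|F_i|_{\al-1} \ge \frac1\ze$ is that otherwise $F_i$ could be replaced inside $\cF$ by a rank-$(\al-1)$ fragmentation of itself, contributing $\ze |F_i|_{\al-1} < 1$ to the weight rather than $1$; your phrasing about absorbing $F_i$ into a neighbouring factor is not quite the right mechanism, since neighbouring factors are subwords of \emph{different} fragments, but the intent and the numerics are essentially correct.)

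\textbf{Part (ii).} Your claim that ``all the $F_t$ meeting $K$ have rank $\al$'' is false and Lemma \ref{lm:piece-fragment-no-higher-fragments} does not give it: a rank-$<\al$ factor $F_t$ can certainly meet $K$ in a short overlap without contradiction. What the lemma actually gives is an \emph{upper bound} on the $\mu$-measure of the part of $K$ covered by any single lower-rank factor ($< 3.2\om$). The paper's argument is by contradiction with minimality, not by directly locating a compatible $F_t$: assume no $F_t \sim K$; then by Proposition \ref{pr:inclusion-compatibility} each rank-$\al$ factor also covers only a small part of $K$ ($<\la+2.6\om$), and by \ref{ss:fragmentation-extending-fragment} at most two of the rank-$\al$ factors can overlap $K$ without being contained in it. Combining these bounds with Proposition \ref{pr:dividing-fragment} shows that the factors $F_p, \dots, F_q$ covering $K$ contain a fragment $K'$ of rank $\al$ with $\muf(K') > 29\om$, hence $|K'|_{\al-1} > 20$ (again via Proposition \ref{pr:length-compare-close}), so $\text{weight}_\al(F_p \cdots F_q) > 1$; replacing $F_p \cdots F_q$ by a single truncated rank-$\al$ fragment would then strictly decrease the weight, contradicting minimality of $\cF$. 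Your direct ``pick the $F_t$ with the biggest overlap and apply inclusion twice'' sketch might be made to work, but as written it does not, and the false claim about ranks would need to be repaired.

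\textbf{Part (iii)} is essentially the paper's argument, and your worry about simultaneous modifications is resolved exactly as you guessed: each extension step of \ref{ss:fragmentation-extending-fragment} applied to a rank-$\al$ factor cannot increase $\text{weight}_\al$, since the rank-$\al$ factor being extended already contributes $1$, so the replacement of the covered intermediate factors by a single rank-$\al$ truncated fragment is weight-non-increasing.
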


\begin{proof}
(i) If $|F_i|_{\al-1} < \frac1\ze$
then we could replace ~$F_i$ by its fragmentation of rank $\al-1$ which would decrease
the weight of $\cF$. By Proposition \ref{pr:fellow-traveling}$_{\al-1}$ in the case $\al\ge 2$ 
(in the case $\al=1$ we take $u$ and $v$ empty) we have $F_i = uFv$ where
$F$ is a fragment of rank $\al$, $F_i \sim F$ and $|u|_{\al-1}, |v|_{\al-1} <\ze$.
If $!F$ is the corresponding fragment of rank $\al$ in $\Ga_{\al-1}$ and 
$!P$ is the base for ~$!F$ then by Proposition \ref{pr:length-compare-close}$_{\al-1}$
$$
    |!P|_{\al-1} > \frac{1}{1.3} \left(\frac{1}{\ze} - 2\ze - 2.2 \right) > 13.
$$

(ii) Let $K$ be a fragment of rank $\al$ in $X$ and $\muf(K) \ge 3\la + 15\om$.
We assume that there is no truncated fragment $F_i$ of rank $\al$ such that $F_i \sim K$.

By Proposition \ref{pr:inclusion-compatibility} and
the assumption, if $H$ is a common part
of $K$ and some $F_i$ of rank ~$\al$ then $H$ contains no fragment $K'$
of rank ~$\al$ with $\muf(K') \ge \la + 2.6\om$. 
By Lemma \ref{lm:piece-fragment-no-higher-fragments}, 
if $H$ is a common part
of ~$K$ and some $F_i$ of rank $\be < \al$ then $H$ contains no fragment $K'$
of rank $\al$ with $\muf(K') \ge 3.2\om$. 
In particular, $K$ is not contained in any $F_i$ .
Let 
$$
    X \greq F_1 F_2 \dots F_p'  F_p'' \dots F_q' F_q'' \dots F_k
    \quad\text{where}\quad 
    F_p \greq F_p' F_p'', \quad F_q \greq F_q' F_q'', \quad K \greq F_p'' F_{p+1} \dots F_q'.
$$
If some $F_i$ is contained in $K$ and has rank $\al$ then 
by the remark above and ~\ref{ss:fragmentation-extending-fragment}, 
$K$ is covered by at most three of the $F_j$'s. 
In this case, by Proposition ~\ref{pr:dividing-fragment} we would have
$$
    \muf(K) \le 3(\la+2.6\om) + 2\ze\om < 3\la + 15\om
$$ 
contrary to the hypothesis.
Therefore, each $F_i$ that contained in $K$ has rank $\be < \al$.
Now by Proposition ~\ref{pr:dividing-fragment}, $F_p F_{p+1} \dots F_q$
contains a fragment $K'$ of rank $\al$ with 
$$
    \muf(K') \ge \muf(K) - 2(\la+2.6\om) - 2\ze\om > 29 \om.
$$
For a base $P$ of $K'$ we have $|P|_{\al-1} > 29$
and by Proposition \ref{pr:length-compare-close}$_{\al-1}$,
$|K'|_{\al-1} > 20$. This implies that
$
     \text{weight}_\al(F_p \cdot F_{p+1} \cdot \ldots \cdot F_q) > 1 
$
and we get a contradiction with minimality of ~$\cF$
since we can replace $F_p F_{p+1} \ldots F_q$ in $\cF$ by a single truncated fragment of rank ~$\al$.
This finishes the proof.

(iii) By (ii), for each $i=1,2,\dots,r$ there exists a truncated fragment $F_{t_i}$ of rank $\al$ in $\cF$ 
such that $K_i \sim F_{t_i}$. Proposition \ref{pr:fragments-union} 
easily implies that $F_{t_i} \cup K_i$ is a truncated fragment of rank $\al$.
For each $i=1,2,\dots,r$ we consequently replace $F_{t_i}$ in $\cF$ by $F_{t_i} \cup K_i$.
Since we do not increase $\text{weight}_\al(\cF)$, the resulting fragmentation $\cF'$ of $X$ is 
also minimal.
\end{proof}

\begin{lemma} \label{lm:length-compare-close-lm}
Let $\al\ge1$ and $X,Y \in \cR_\al$ be close in rank $\al-1$. Then
$$
   |Y|_\al < 1.3 |X|_\al + 2.2 \ze . 
$$
\end{lemma}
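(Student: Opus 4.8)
The plan is to compare minimal fragmentations of $X$ and $Y$ using the fact that $X$ and $Y$ are close in rank $\al-1$, bootstrapping from the closeness‑transition results of Sections~\ref{s:relations}--\ref{s:overlapping-periodicity} applied in rank $\al-1$. First I would take a minimal fragmentation $\cF = F_1 \cdot F_2 \cdots F_k$ of $X$ in rank $\al$, so that $\mathrm{weight}_\al(\cF) = |X|_\al$, and represent $X$ and $Y$ by close reduced paths $!X$, $!Y$ in $\Ga_{\al-1}$ via a coarse bigon $!X^{-1}!u!Y!v$ with $!u,!v \in \cH_{\al-1}$. The goal is to build from $\cF$ a fragmentation $\cF'$ of $Y$ in rank $\al$ whose weight is at most $1.3\,\mathrm{weight}_\al(\cF) + 2.2\ze$.

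The core step is to transport each block $F_i$ across the bigon. For blocks $F_i$ of rank $\be \le \al-1$, I would use Proposition~\ref{pr:closeness-stability}$_{\al-1}$ (closeness transition in bigons of the previous rank, applied iteratively, or equivalently Proposition~\ref{pr:fellow-traveling}$_{\al-1}$) to find a subpath of $!Y$ close in rank $\al-1$ to $F_i$ up to short caps $!z_1,!z_2$ with $|!z_i|_{\al-1} < 1.3$; this contributes a controlled number of rank‑$(\al-1)$ (hence cost $\ze$) pieces to $\cF'$, and the $\ze$‑scaling of the rank decrement (item~\ref{cl:om-rank-increment} of~\ref{ss:alpha-length-properties}) keeps the accounting linear. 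For truncated fragments $F_i$ of rank $\al$, I would invoke Lemma~\ref{lm:fragment-in-fragmentation}\ref{lmi:fragment-in-fragmentation-large} to see that such an $F_i$ contains a genuine fragment $!F$ of rank $\al$ with large base $|!P|_{\al-1} > 13$, apply Proposition~\ref{pr:fragment-stability-bigon} (or Proposition~\ref{pr:fragment-nonactive-bigon}) to produce a compatible fragment $!M$ of rank $\al$ in $!Y$ with $\muf(!M) \ge \muf(!F) - 2\la - 3.4\om$ — still comfortably positive — together with short connecting bridges, and record $!M$ (extended to absorb neighbouring material via Proposition~\ref{pr:fragments-union} and Corollary~\ref{co:compatibility-same-rank}) as a rank‑$\al$ block of $\cF'$. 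Compatibility‑preserves‑order (Corollary~\ref{coi:compatibility-order-alpha}) guarantees these images occur in $!Y$ in the same left‑to‑right order as in $!X$, so the local surgeries do not interfere.

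Then I would assemble $\cF'$: between consecutive transported blocks, the leftover subpaths of $!Y$ (the images of the short caps $!z_i$, together with the portions of $!u$, $!v$ that are genuinely used, and the parts of $!Y$ not covered by any image) get chopped into fragments of rank $0$ or rank $\al-1$, each of $\al$-length at most $\ze$ by a crude length bound (Proposition~\ref{pr:closeness-order}$_{\al-1}$ bounds the uncovered stretches). Counting: each rank‑$\al$ block of $\cF$ maps to one rank‑$\al$ block of $\cF'$ plus $O(1)$ cost-$\ze$ filler; each rank‑$\be$ block of $\cF$ maps to $O(1)$ cost‑$\ze\cdot\ze^{\al-1-\be}$ filler; the two bridges $!u$, $!v$ contribute a bounded additive term. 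Choosing the constants so that the multiplicative blow‑up per block stays below $1.3$ and the total additive overhead stays below $2.2\ze$ gives the claim. The main obstacle is the bookkeeping of the fading constants: one must check that after transporting, a truncated rank‑$\al$ block of $\cF$ really yields a rank‑$\al$ block in $\cF'$ (not something that has faded below the threshold $\xi_0$ or $\la$), and that the number of filler pieces introduced at each junction is genuinely bounded independently of the block — this is exactly where the large-base estimate $|!P|_{\al-1} > 13$ from Lemma~\ref{lm:fragment-in-fragmentation}\ref{lmi:fragment-in-fragmentation-large} and the inequalities~\eqref{eq:om-bounds} do the work, and where the induction on $\al$ enters through the use of Proposition~\ref{pr:length-compare-close}$_{\al-1}$ inside those estimates.
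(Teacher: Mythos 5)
Your high-level strategy — transport the minimal fragmentation of $X$ across the coarse bigon, handle the rank-$\al$ blocks via a stability result, and invoke Proposition~\ref{pr:length-compare-close}$_{\al-1}$ to feed the induction — is the right one, and it is essentially the paper's. But there is a real gap in the accounting for the blocks of rank $\be \le \al-1$. You propose to transport each such block $F_i$ individually, paying a fixed filler cost per block. That cannot work: if $\cF$ has $m_\be$ blocks of rank $\be$, their contribution to $|X|_\al$ is $\ze^{\al-\be}m_\be$, whereas the per-block transport loss (caps of size roughly $1.3$ in rank $\al-1$, costing about $\ze\cdot 1.3$ in rank $\al$) does not shrink like $\ze^{\al-\be}$. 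For small $\be$ (many short blocks, e.g.\ letters), the accumulated filler cost $O(\ze)\cdot\sum_\be m_\be$ can be arbitrarily larger than $|X|_\al$, destroying the multiplicative bound.

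The paper sidesteps this entirely: it never transports the sub-$\al$-rank blocks at all. It writes $!X = !P_0!H_1!P_1\cdots!H_r!P_r$, where $!H_1,\ldots,!H_r$ are \emph{only} the rank-$\al$ blocks of $\cF$ and the $!P_i$ are the spacers between them (each spacer being the concatenation of all intervening lower-rank blocks). It transports just the $!H_i$ (via Lemma~\ref{lmi:fragment-in-fragmentation-large} and a careful use of Proposition~\ref{pr:closeness-stability}$_{\al-1}$ applied to the fragment/base pair, which yields the explicit cap bounds $|!w_i|_{\al-1},|!z_i|_{\al-1}<1.15$), obtaining $!Y = !Q_0!G_1!Q_1\cdots!G_r!Q_r$. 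Then, for each complementary segment $!Q_i$, it applies Proposition~\ref{pr:length-compare-close}$_{\al-1}$ \emph{as a black box} to the pair $(!P_i$ plus short caps$, !Q_i)$, giving $|!Q_i|_{\al-1} < 1.3|!P_i|_{\al-1} + O(1)$. The $O(1)$ additive error appears only $r+1$ times, and $r$ itself contributes $r$ to $|X|_\al$, so the additive error is absorbed into the factor $1.3$; this is precisely why $1 + 1.3\ze(4.5+2\ze) < 1.3$ must (and does) hold with $\ze=1/20$. A secondary remark: your use of Proposition~\ref{pr:fragment-stability-bigon} gives a lower bound on $\muf$ of the transported fragment but not the sharp bound on cap lengths that the arithmetic actually needs; the paper's closeness-stability route is chosen precisely to get those.
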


\begin{proof}
Let $\cF$ be a minimal fragmentation of $X$. 
We represent $X$ and $Y$ by close paths ~$!X$ and ~$!Y$ in $\Ga_{\al-1}$. Then $\cF$
induces the partition of $!X$, denoted $\bar\cF$,
into (path) truncated fragments of ranks $\le\al$.
 
Let 
$$
    !X = !P_0 !H_1 !P_1 \dots !H_r !P_r 
$$
where $!H_1,\dots,!H_r$ are all truncated fragments of rank $\al$ in $\bar\cF$. 
If $r=0$ then $|X|_\al = \ze |X|_{\al-1}$, $|Y|_\al \le \ze |X|_{\al-1}$ and the statement simply follows
from Proposition \ref{pr:length-compare-close}$_{\al-1}$. We assume $r > 0$.
By Lemma \ref{lmi:fragment-in-fragmentation-large},  for each $i$ we have $!H_i = !u_i !H_i' !v_i$
where $!H_i'$ is a fragment of rank $\al$, $!H_i' \sim !H_i$,
$|!u|_{\al-1}, |!v|_{\al-1} < \ze$, and 
the base $!S_i$ for $!H_i$ satisfies $|!S_i|_{\al-1} > 13$.
Using Proposition \ref{pr:closeness-stability}$_{\al-1}$ we find fragments $!H''_i$ and $!G_i$
of rank $\al$ in $!X$ and $!Y$ respectively where $!H'_i = !w_i !H_i'' !z_i$, 
$|!w_i|_{\al-1}, |!z_i|_{\al-1} < 1.15$, $!H_i \sim !H_i'' \sim !G_i$ and 
$!H_i''$ and $!G_i$ are close in rank $\al-1$.
Using Lemma \ref{lmi:compatible-close}$_{\al-1}$ after each 
application of Proposition \ref{pr:closeness-stability}$_{\al-1}$
we can assume that $!G_i$ are disjoint, i.e.\
$$
    !Y = !Q_0 !G_1 !Q_1 \dots !G_r !Q_r.
$$
By Proposition \ref{pr:length-compare-close}$_{\al-1}$ we have
\begin{gather*}
    |!Q_0|_{\al-1} < 1.3 |!P_0 !u_1 !w_1|_{\al-1} + 2.2, \\
    |!Q_i|_{\al-1} < 1.3 |!z_i !v_i !P_i !u_{i+1} !w_{i+1}|_{\al-1} + 2.2 \quad (i=1,\dots,r-1), \\
    |!Q_k|_{\al-1} < 1.3 |!z_k !v_k !P_k|_{\al-1} + 2.2.
\end{gather*}
We have also 
$$
    |X|_\al = r + \ze\sum_{i=1}^r |!P_i|_{\al-1} \quad\text{and}\quad
    |Y|_\al \le r + \ze\sum_{i=1}^r |!Q_i|_{\al-1}.
$$
Then
\begin{align*}
    |Y|_\al 
        &< r + 1.3\ze\sum_{i=1}^r |!P_i|_{\al-1} + 1.3r\ze(2.3 + 2 \ze) + 2.2\ze(r+1) \\
        & = (1 + 1.3\ze(4.5 + 2\ze))r + 1.3\ze\sum_{i=1}^r |!P_i|_{\al-1} + 2.2\ze \\
        & < 1.3 |X|_\al + 2.2\ze. 
\end{align*}
\end{proof}

\begin{proof}[Proof of Proposition \ref{pr:length-compare-close}]
Let $X,Y \in \cR_\al$ be close in rank $\al$. Let $\cF$ be a minimal 
fragmentation of $X$. 
We consider close  paths $!X$ and $!Y$ in $\Ga_{\al}$ labeled $X$ and $Y$ respectively.
Then $\cF$ induces the partitions of $!X$
into (path) truncated fragments of ranks $\le\al$,
$$
    !X =  !F_1 \cdot !F_2 \cdot \ldots \cdot !F_k.
$$
Let $!X^{-1} !u !Y !v$ be a coarse bigon.
We fix some bridge partitions of ~$!u$ and $!v$.
Let $\De$ be a filling diagram of rank $\al$ with boundary loop $\ti{!X}^{-1} \ti{!u} \ti{!Y} \ti{!v}$. Up to switching of $!u$ and $!v$ we can assume that $\De$ is
reduced and has a tight set ~$\cT$ of contiguity subdiagrams. Let $!D_1$, $\dots$, $!D_r$
be all cells of rank $\al$ of $\De$. In the process of forming $\cT$ we assume that we pick first
the contiguity subdiagrams of $!D_i$ to $\ti{!X}^{-1}$ choosing them with maximal possible 
contiguity segment occurring in $\ti{!X}^{-1}$. Let 
$$
    !X = !P_0 !K_1 !P_1 \dots !K_r !P_r \quad\text{and}\quad 
    !Y = !Q_0 !M_1 !Q_1 \dots !M_r !Q_r .
$$
where $!K_i$ and $!M_i$ are the corresponding active fragments of rank $\al$ in $!X$ and $!Y$.
By the way we produce $\cT$ and by Proposition \ref{pr:fellow-traveling}$_{\al-1}$ in the case $\al\ge 2$ 
we have the following:

(*) {\em For all $i$, the fragment $!K_i$ cannot be extended in $!P_{i-1} !K_i !P_i$.
In particular, if $!F$ is a truncated fragment of rank $\al$ contained in  $!P_{i-1} !K_i !P_i$ and 
containing $!K_i$ then $!F = !w_1 !K_i !w_2$ where $|!w_i|_{\al-1} < \ze$ $(i=1,2)$}

By Lemma \ref{lmi:fragment-in-fragmentation-large-fragments} we can assume that each $!K_i$
is contained in a compatible truncated fragment $!F_{t_i}$ of rank $\al$. Let 
$$
    !X = !P_0' !F_{t_1} !P_1' \dots !F_{t_r} !P_r'.
$$
Note that 
$$
    |X|_\al = r + \sum_i |!P_i'|_{\al} \quad\text{and}\quad
    |Y|_\al \le r + \sum_i |!Q_i|_{\al}.
$$
By (*), 
$$
    |!P_i'|_\al \ge |!P_i|_\al - \ze^2 \text{ for } i=0,r, \quad 
    |!P_i'|_\al \ge |!P_i|_\al - 2\ze^2 \text{ for } 1 \le i \le r-1.
$$
Hence
\begin{equation} \label{eq:length-compare-close}
    |X|_\al \ge r + \sum_i |!P_i|_{\al} - 2r\ze^2 .
\end{equation}

We give an upper bound on $|!Q_i|_{\al}$ in terms of $|!P_i|_\al$.
First we consider the case $1 \le i \le r-1$. 
There are three possibilities for the subdiagram of $\De$ surrounded by $!D_i$ and $!D_{i+1}$
and contiguity subdiagrams of $!D_i$ and $!D_{i+1}$ to $\ti{!X}^{-1}$ and $\ti{!Y}$,
depending on the presence of contiguity subdiagrams from $\cT$ 
(see Figure \ref{fig:length-compare-close}).   
\begin{figure}[h]
\begingroup%
  \makeatletter%
  \providecommand\color[2][]{%
    \errmessage{(Inkscape) Color is used for the text in Inkscape, but the package 'color.sty' is not loaded}%
    \renewcommand\color[2][]{}%
  }%
  \providecommand\transparent[1]{%
    \errmessage{(Inkscape) Transparency is used (non-zero) for the text in Inkscape, but the package 'transparent.sty' is not loaded}%
    \renewcommand\transparent[1]{}%
  }%
  \providecommand\rotatebox[2]{#2}%
  \newcommand*\fsize{\dimexpr\f@size pt\relax}%
  \newcommand*\lineheight[1]{\fontsize{\fsize}{#1\fsize}\selectfont}%
  \ifx\svgwidth\undefined%
    \setlength{\unitlength}{448.47585581bp}%
    \ifx\svgscale\undefined%
      \relax%
    \else%
      \setlength{\unitlength}{\unitlength * \real{\svgscale}}%
    \fi%
  \else%
    \setlength{\unitlength}{\svgwidth}%
  \fi%
  \global\let\svgwidth\undefined%
  \global\let\svgscale\undefined%
  \makeatother%
  \begin{picture}(1,0.17917325)%
    \lineheight{1}%
    \setlength\tabcolsep{0pt}%
    \put(0,0){\includegraphics[width=\unitlength]{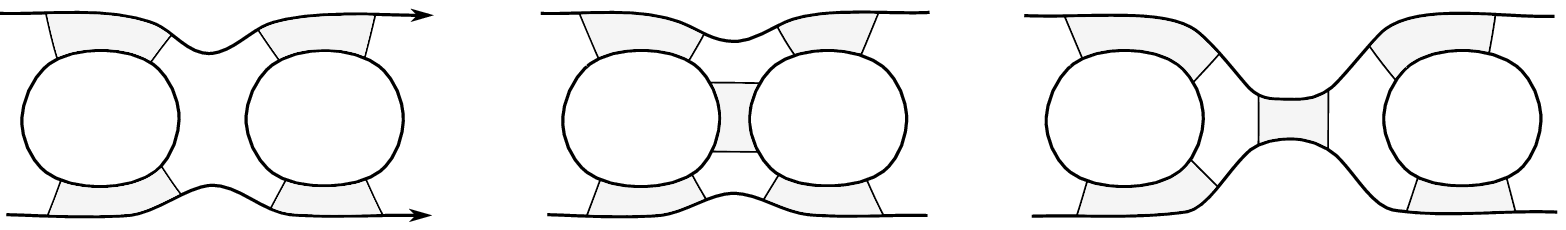}}%
    \put(0.05526466,0.09582899){\color[rgb]{0,0,0}\makebox(0,0)[lt]{\lineheight{1.25}\smash{\begin{tabular}[t]{l}$!D_i$\end{tabular}}}}%
    \put(0.18812761,0.09703292){\color[rgb]{0,0,0}\makebox(0,0)[lt]{\lineheight{1.25}\smash{\begin{tabular}[t]{l}$!D_{i+1}$\end{tabular}}}}%
    \put(0.28083698,0.16133941){\color[rgb]{0,0,0}\makebox(0,0)[lt]{\lineheight{1.25}\smash{\begin{tabular}[t]{l}$\ti{!X}$\end{tabular}}}}%
    \put(0.27915657,0.03353521){\color[rgb]{0,0,0}\makebox(0,0)[lt]{\lineheight{1.25}\smash{\begin{tabular}[t]{l}$\ti{!Y}$\end{tabular}}}}%
    \put(0.12154193,0.00964158){\color[rgb]{0,0,0}\makebox(0,0)[lt]{\lineheight{1.25}\smash{\begin{tabular}[t]{l}(a)\end{tabular}}}}%
    \put(0.4581121,0.00964158){\color[rgb]{0,0,0}\makebox(0,0)[lt]{\lineheight{1.25}\smash{\begin{tabular}[t]{l}(b)\end{tabular}}}}%
    \put(0.81460311,0.00964158){\color[rgb]{0,0,0}\makebox(0,0)[lt]{\lineheight{1.25}\smash{\begin{tabular}[t]{l}(c)\end{tabular}}}}%
  \end{picture}%
\endgroup%

\caption{}  \label{fig:length-compare-close}
\end{figure}
Note that according to Definition \ref{df:tight-set}, all the components of $\De - \cup_{\Pi \in \cT}$
are small diagrams of rank $\al-1$, so we can use bounds from 
Proposition \ref{pr:small-trigons-tetragons}$_{\al-1}$.
In cases (a) and (b) we have $|!Q_i|_{\al} \le 6\ze^2\eta < 0.6\ze$ and 
$|!Q_i|_{\al} \le 4\ze^2\eta < 0.4\ze$ respectively. Assume that case (c) holds. 
Then $!P_i = !u_1 !S !u_2$ and $!Q_i = !v_1 !T !v_2$ where 
$!S$ and ~$!T$ are close in rank $\al-1$ and 
$|!u_i|_{\al}, |!v_i|_{\al} \le 4\ze^2\eta < 0.4\ze$.
Using Lemma \ref{lm:length-compare-close-lm}, we get
$$
    |!Q_i|_{\al} < 1.3 |!P_i|_{\al} + 3\ze 
$$
Note that this inequality holds also in cases (a) and (b).

Now let $i=0$ or $i=r$. If $r >0$ then the difference of the case $i=0$ from the case $1 \le i \le r-1$ is that
we can have an extra contiguity subdiagram between $!Y$ and the central arc of $\ti{!u}$
(see Figure \ref{fig:length-compare-close-ii}). 
\begin{figure}[h]
\begingroup%
  \makeatletter%
  \providecommand\color[2][]{%
    \errmessage{(Inkscape) Color is used for the text in Inkscape, but the package 'color.sty' is not loaded}%
    \renewcommand\color[2][]{}%
  }%
  \providecommand\transparent[1]{%
    \errmessage{(Inkscape) Transparency is used (non-zero) for the text in Inkscape, but the package 'transparent.sty' is not loaded}%
    \renewcommand\transparent[1]{}%
  }%
  \providecommand\rotatebox[2]{#2}%
  \newcommand*\fsize{\dimexpr\f@size pt\relax}%
  \newcommand*\lineheight[1]{\fontsize{\fsize}{#1\fsize}\selectfont}%
  \ifx\svgwidth\undefined%
    \setlength{\unitlength}{109.7715997bp}%
    \ifx\svgscale\undefined%
      \relax%
    \else%
      \setlength{\unitlength}{\unitlength * \real{\svgscale}}%
    \fi%
  \else%
    \setlength{\unitlength}{\svgwidth}%
  \fi%
  \global\let\svgwidth\undefined%
  \global\let\svgscale\undefined%
  \makeatother%
  \begin{picture}(1,0.62915527)%
    \lineheight{1}%
    \setlength\tabcolsep{0pt}%
    \put(0,0){\includegraphics[width=\unitlength]{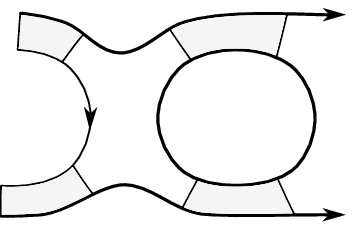}}%
    \put(0.91089791,0.55629446){\color[rgb]{0,0,0}\makebox(0,0)[lt]{\lineheight{1.25}\smash{\begin{tabular}[t]{l}$\ti{!X}$\end{tabular}}}}%
    \put(0.90734967,0.03082702){\color[rgb]{0,0,0}\makebox(0,0)[lt]{\lineheight{1.25}\smash{\begin{tabular}[t]{l}$\ti{!Y}$\end{tabular}}}}%
    \put(0.15555684,0.30577876){\color[rgb]{0,0,0}\makebox(0,0)[lt]{\lineheight{1.25}\smash{\begin{tabular}[t]{l}$\ti{!u}$\end{tabular}}}}%
    \put(0.57489615,0.28688224){\color[rgb]{0,0,0}\makebox(0,0)[lt]{\lineheight{1.25}\smash{\begin{tabular}[t]{l}$!D_1$\end{tabular}}}}%
  \end{picture}%
\endgroup%

\caption{}  \label{fig:length-compare-close-ii}
\end{figure}
We then have  
$$
    |!Q_0|_{\al} < 1+ 1.3 |!P_0|_{\al} + 3\ze 
$$
and, similarly,
$$
    |!Q_r|_{\al} < 1+ 1.3 |!P_r|_{\al} + 3\ze.
$$
If $r=0$ we have a single bound instead,
$$
    |!Q_0|_{\al} < 2+ 1.3 |!P_0|_{\al} + 3\ze.
$$

Summarizing, with \eqref{eq:length-compare-close} we get
\begin{align*}
    |Y|_\al 
    &\le r + \ga\sum_i |!P_i|_{\al} + 2+ 3\ze(r+1) \\
    &= (1+ 3\ze)r + 1.3 \sum_i |!P_i|_{\al} + 2 + 3\ze \\
    &< 1.3 |X|_\al + 2.2.
\end{align*}

\end{proof}
 
\begin{corollary} \label{co:fragment-length-bound}
If $F$ is a fragment of rank $\al$ and $\muf(F) \ge t \om$ then 
$|F|_{\al-1} > \frac{1}{1.3} (t - 2.2)$. In particular, $|F| > \frac1{1.3}{\ze^{1-\al}} (t - 2.2)$.
\end{corollary}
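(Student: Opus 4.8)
The plan is to unwind the definitions of $\muf(F)$ and $|F|_{\al-1}$ and then invoke Proposition~\ref{pr:length-compare-close}. Recall that by \ref{ss:fragment-measuring} we have $\muf(F) = \mu(P) = \frac{|P|_{\al-1}}{|R^\circ|_{\al-1}}$ where $P$ is the associated subword of $R^k$ for the relator $R$ of rank $\al$, and $F = uPv$ in $G_{\al-1}$ with $u,v \in \cH_{\al-1}$. The hypothesis $\muf(F) \ge t\om$ together with condition (S1) in the form \eqref{eq:S2-piece-form}, namely $\mu(P) \le \om|P|_{\al-1}$, immediately yields $t\om \le \om|P|_{\al-1}$, hence $|P|_{\al-1} \ge t$.

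Next I would pass from $|P|_{\al-1}$ to $|F|_{\al-1}$. Since $F = uPv$ in $G_{\al-1}$ with $u,v$ bridge words of rank $\al-1$, the words $F$ and $P$ are close in rank $\al-1$ (indeed $P = u^{-1} F v^{-1}$ in $G_{\al-1}$ and $\cH_{\al-1}$ is closed under inversion). Applying Proposition~\ref{pr:length-compare-close}$_{\al-1}$ with the roles $X := F$, $Y := P$ gives $|P|_{\al-1} < 1.3\,|F|_{\al-1} + 2.2$, so that $|F|_{\al-1} > \frac{1}{1.3}(|P|_{\al-1} - 2.2) \ge \frac{1}{1.3}(t - 2.2)$, which is the first claimed bound.

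For the second inequality I would use property \ref{cl:om-rank-increment} of the length function iterated down to rank $0$: from $|X|_\be \le \ze |X|_{\be-1}$ for each $\be$ we get $|F|_{\al-1} \le \ze^{\al-1}|F|_0 = \ze^{\al-1}|F|$, hence $|F| \ge \ze^{1-\al}|F|_{\al-1} > \ze^{1-\al}\cdot\frac{1}{1.3}(t-2.2)$, as required. One should also note the degenerate case $\al = 1$: there $F$ is just a piece of rank $1$ (a subword of $R^k$), and Proposition~\ref{pr:length-compare-close}$_0$ is trivial since closeness in rank $0$ means equality for freely reduced words (Remark~\ref{rm:close-in-rank-0}), so $|F|_0 = |P|_0 \ge t$; the stated bound $\frac{1}{1.3}(t-2.2)$ is then automatic and in fact not tight, so no separate argument is needed.

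There is really no serious obstacle here: the corollary is a direct repackaging of Proposition~\ref{pr:length-compare-close} at rank $\al-1$ combined with the elementary inequality \eqref{eq:S2-piece-form} and the rank-scaling property \ref{cl:om-rank-increment}. The only point requiring a moment's care is making sure the induction is legitimate — that is, that when proving the corollary for rank $\al$ one is allowed to cite Proposition~\ref{pr:length-compare-close} for rank $\al-1$ — but this is exactly the forward-reference convention set up in the Preliminaries (``Proposition~\ref{pr:length-compare-close}$_{\al-1}$''), and Proposition~\ref{pr:length-compare-close} itself has already been established for all ranks in this section, so the citation is valid.
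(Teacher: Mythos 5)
Your proof is correct and is exactly the derivation the paper intends (the paper leaves the corollary unproved as an immediate consequence of Proposition~\ref{pr:length-compare-close}). The chain of inequalities — $t\om \le \muf(F) = \mu(P) \le \om|P|_{\al-1}$ giving $|P|_{\al-1} \ge t$, then Proposition~\ref{pr:length-compare-close}$_{\al-1}$ applied to the close reduced pair $(F,P)$ in $G_{\al-1}$, then iterating $|X|_\be \le \ze|X|_{\be-1}$ — is the right one, and your remark that $P \in \cR_{\al-1}$ follows from Proposition~\ref{pr:relator-strongly-reduced} plus closure of $\cR_{\al-1}$ under subwords is the only non-automatic hypothesis check.
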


\begin{corollary} \label{co:trigon-side-length}
Let $Y = u_1 X_1 u_2 X_2 u_3$ in $\Ga_\al$ where $X_i, Y \in \cR_\al$
and $u_i \in \cH_\al$. Then $|Y|_\al \le 1.3(|X_1|_\al + |X_2|_\al) + 4.8$.
\end{corollary}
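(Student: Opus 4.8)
The plan is to deduce Corollary~\ref{co:trigon-side-length} from Proposition~\ref{pr:length-compare-close} together with the ``trigons are thin'' result, Proposition~\ref{pri:3gon-closeness}. First I would represent the relation $Y = u_1 X_1 u_2 X_2 u_3$ as a coarse trigon $!Y^{-1} * !X_1 * !X_2 *$ in $\Ga_\al$, where $!Y$, $!X_1$, $!X_2$ are reduced paths with the given labels. Applying Proposition~\ref{pri:3gon-closeness} to this coarse trigon, I obtain a decomposition $!Y = !Y_1 !z !Y_2$ where $!Y_1$ is close (in rank $\al$) to a start of $!X_1$, $!Y_2$ is close to an end of $!X_2$, and $|!z|_\al \le 4\ze\eta$.

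Next I would bound each piece. Let $!X_1 = !X_1' !X_1''$ with $!Y_1$ close to $!X_1'$, and $!X_2 = !X_2' !X_2''$ with $!Y_2$ close to $!X_2''$. Since $!X_1'$ is a subword of $!X_1 \in \cR_\al$ it is reduced in $G_\al$, and it is close in rank $\al$ to $!Y_1$, so Proposition~\ref{pr:length-compare-close} gives $|!Y_1|_\al < 1.3|!X_1'|_\al + 2.2 \le 1.3|!X_1|_\al + 2.2$ using monotonicity of $|\cdot|_\al$ under taking subwords (\ref{ss:alpha-length-properties}(i)). Similarly $|!Y_2|_\al < 1.3|!X_2|_\al + 2.2$. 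Then by semi-additivity (\ref{ss:alpha-length-properties}(i) applied twice),
$$
  |Y|_\al \le |!Y_1|_\al + |!z|_\al + |!Y_2|_\al < 1.3(|X_1|_\al + |X_2|_\al) + 4.4 + 4\ze\eta.
$$
It remains to check the numerical inequality $4.4 + 4\ze\eta \le 4.8$, i.e.\ $\ze\eta \le 0.1$; since $\ze = \frac1{20}$ and $\eta = \frac{30}{17}$, we have $\ze\eta = \frac{30}{340} = \frac{3}{34} < 0.1$, which closes the estimate.

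The main obstacle, such as it is, is bookkeeping rather than anything deep: one has to be careful that ``close to a start of $!X_1$'' really does refer to a reduced subpath to which Proposition~\ref{pr:length-compare-close} applies, and that the closeness is in rank $\al$ (not a smaller rank) — but this is exactly what Proposition~\ref{pri:3gon-closeness} delivers, since ``close'' for paths in $\Ga_\al$ means close in rank $\al$ by the convention in \ref{df:close-in-rank}. One should also note the degenerate cases where $!Y_1$ or $!Y_2$ is empty (then the corresponding term drops and the bound only improves) and where the start/end segments of $!X_1,!X_2$ are empty; these are harmless. Thus the proof is a short concatenation of Proposition~\ref{pri:3gon-closeness}, Proposition~\ref{pr:length-compare-close}, the semi-additivity of $|\cdot|_\al$, and the arithmetic check $\ze\eta = \frac3{34} < 0.1$.
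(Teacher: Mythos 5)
Your proof is correct and takes essentially the same route as the paper, whose proof is the one-liner ``Follows from Propositions \ref{pri:3gon-closeness} and \ref{pr:length-compare-close}.'' You have simply unpacked that: apply the thin-trigon decomposition to $!Y = !Y_1 !z !Y_2$, bound $|!Y_i|_\al$ via Proposition~\ref{pr:length-compare-close} and monotonicity of $|\cdot|_\al$ under subwords, add the three pieces, and verify $4.4 + 4\ze\eta = 4.4 + \tfrac{6}{17} < 4.8$, which indeed holds.
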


\begin{proof}
Follows from Propositions \ref{pri:3gon-closeness} and \ref{pr:length-compare-close}.
\end{proof}

The following two statements are proved under the assumption that 
a normalized presentation \eqref{eq:G-presn} of $G$ satisfies the iterated small cancellation
condition (S0)--(S3) for all $\al\ge 1$. We therefore will be assuming
that all statements starting from Section \ref{s:diagrams} hold for all values of $\al$.

\begin{proposition}
Let $W$ be a word with $|W| \le \al$ and let $W = X$ in $G_\al$ where $X \in \cR_\al$.
Then $|X|_\al < 0.3$, $X$ contains no fragments $F$ of rank $\be > \al$ with $\muf(F) \ge 3\om$
and, in particular, $X \in \cap_{\al\ge1} \cR_\al$.
\end{proposition}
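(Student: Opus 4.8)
The plan is to bound $|X|_\al$ by an explicit induction on $\al$, exploiting the fact that $W$ is short relative to $\al$ so that at every stage there is ``not enough room'' to accumulate a fragment of large rank. First I would set up the statement to be proved by induction on $\be$ from $1$ up to $\al$: namely, if $|W| \le \al$ and $W = X_\be$ in $G_\be$ with $X_\be \in \cR_\be$ then $|X_\be|_\be$ is small (something like $|X_\be|_\be < 0.3 - \text{(something involving }\be\text{)}$, or more conveniently a bound of the form $|X_\be|_\be \le \ze^\be |W| + \text{const}$ that I can control). The base case $\be = 0$ is trivial since $|X_0|_0 = |X|_0 \le |W|$ after free reduction. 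For the inductive step, given $X_{\be-1} \in \cR_{\be-1}$ with $X_{\be-1} = W$ in $G_{\be-1}$ and $|X_{\be-1}|_{\be-1}$ controlled, I would apply Proposition \ref{pr:reduction} (reduced representative) in rank $\be$ to produce $X_\be \in \cR_\be$ equal to $X_{\be-1}$ in $G_\be$; then $X_\be$ and $X_{\be-1}$ are close in rank $\be-1$, so Proposition \ref{pr:length-compare-close}$_{\be-1}$ gives $|X_\be|_{\be-1} < 1.3 |X_{\be-1}|_{\be-1} + 2.2$, and then $|X_\be|_\be \le \ze |X_\be|_{\be-1}$ by \ref{ss:alpha-length-properties}\ref{cl:om-rank-increment}. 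Iterating, the $\ze$-factor at each step beats the multiplicative constant $1.3$ and the additive $2.2$, so for $|W| \le \al$ I get $|X_\al|_\al$ bounded by a geometric-type sum that stays below $0.3$; I would carry out this estimate carefully using $\ze = \frac1{20}$ so that $1.3\ze < 1$ comfortably.

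Next I would handle the claim about no fragments of rank $\be > \al$. Here the key input is Lemma \ref{lm:piece-fragment-no-higher-fragments} together with Corollary \ref{co:fragment-length-bound}: a fragment $F$ of rank $\be$ with $\muf(F) \ge 3\om$ forces $|F| > \frac1{1.3}\ze^{1-\be}(3 - 2.2) = \frac{0.8}{1.3}\ze^{1-\be}$, which grows without bound as $\be$ increases. But $F$ is a subword of $X$, and $|X| $ is itself bounded: from $|X|_\al < 0.3$ and the relation between $|\cdot|$ and $|\cdot|_\al$ one gets only a bound in terms of $|X|_\al$ rescaled by $\ze^{-\al}$, which is not immediately enough, so instead I would bound $|X|$ directly. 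The cleanest route is to note that $|X| = |X|_0$ and track $|X_\be|_0$ (or equivalently $|X_\be|$) through the same induction: each reduction step changes the length in a controlled way, or more simply, I would observe that any reduced representative of a word $W$ of length $\le \al$ can be taken (by Proposition \ref{pr:reduction} applied at each rank and Proposition \ref{pr:length-compare-close}) to have ordinary length bounded by a fixed multiple of $\ze^{-\al}$ times $|W|$ — no wait, that still grows. The honest statement is that for $\be > \al$ the required fragment length $\frac{0.8}{1.3}\ze^{1-\be}$ exceeds $|X|$ precisely because $|X| \le$ (something like) $C \ze^{-\al}|W| \le C \ze^{-\al}\al$, and $\ze^{1-\be} = \ze^{1-\al}\cdot \ze^{\al-\be}$ with $\be > \al$ makes $\ze^{1-\be}$ at least $\ze^{-\al}$ times a large constant once $\be$ is large; for $\be = \al+1, \al+2, \dots$ one checks the inequality term by term. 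I expect this comparison is exactly why the constant $3\om$ and the bound $|W| \le \al$ (rather than $|W| \le$ const) are chosen, and the numerology should close.

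The main obstacle will be getting the length bookkeeping exactly right: one must propagate a bound not only on $|X_\be|_\be$ but on the ordinary length $|X_\be|$ (or on $|X_\be|_0$) through the chain of reductions, because the ``no higher fragments'' conclusion is a statement about ordinary subword length, whereas the reduced-representative and length-comparison machinery is phrased in the $|\cdot|_\al$ norms. My plan is to run the induction carrying the pair of bounds $\big(|X_\be|_\be,\ |X_\be|_0\big)$ simultaneously, using $|X_\be|_0 \le \ze^{-\be}|X_\be|_\be + (\text{lower-order in }\ze^{-\be})$ from \ref{ss:alpha-length-properties}\ref{cl:om-rank-increment} iterated, or better, re-running Proposition \ref{pr:length-compare-close}$_{\be-1}$ and Proposition \ref{pr:reduction} at rank $0$ to get a direct recursion $|X_\be|_0 \le 1.3|X_{\be-1}|_0 + 2.2$; but $1.3 > 1$ so this grows like $1.3^\be|W|$, which is fine since I only need it for $\be \le$ roughly $\al + O(1)$ where the fragment-length lower bound $\ze^{1-\be} = 20^{\be-1}$ already dwarfs $1.3^\al \al$. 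Finally, the concluding ``$X \in \cap_{\al\ge1}\cR_\al$'' is immediate: if $X$ were not reduced in some $G_\be$ with $\be > \al$ it would contain a fragment of rank $\le \be$ with $\muf \ge \rho > 3\om$; for ranks $\le \al$ this is excluded because $X \in \cR_\al$, and for ranks $> \al$ it is excluded by the fragment statement just proved.
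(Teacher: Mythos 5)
Your proposed induction goes the wrong way around and has a gap in the closeness bookkeeping that I don't see how to close. The paper inducts on $\al$ by peeling off the last letter of $W$: writing $W \greq W_1 a$ with $|W_1| \le \al - 1$, taking $X_1 \in \cR_{\al-1}$ with $W_1 = X_1$ in $G_{\al-1}$, and then showing the coarse \emph{trigon} $X^{-1} X_1 a$ has no active relator loop of rank $\al$ (via Proposition \ref{pr:trigon-active-fragments}: an active loop would need two of the three sides to carry a fragment of rank $\al$ with $\muf > 3\la - 1.1\om$, but $X_1$ has none by the inductive hypothesis and $a$, a single letter, has none by Corollary \ref{co:fragment-length-bound}). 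This lifts $X = X_1 a$ to $G_{\al-1}$, where all three sides are in $\cR_{\al-1}$ and Corollary \ref{co:trigon-side-length}$_{\al-1}$ applies. Your version keeps $W$ fixed and climbs the rank $\be$, producing a specific representative $X_\be$ from $X_{\be-1}$. The assertion that $X_\be$ and $X_{\be-1}$ are ``close in rank $\be-1$'' is not automatic: they are a priori only close in rank $\be$ (being equal in $G_\be$), and to demote the closeness to rank $\be-1$ you would precisely need to rule out active fragments of rank $\be$ in the bigon — which is what the trigon argument in the paper provides, using the crucial extra fact that the second side is a \emph{single letter}. Without that, Proposition \ref{pr:length-compare-close}$_{\be-1}$ is not applicable, because it requires both words to be reduced at the rank of closeness. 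There is a second gap: your chain constructs one particular $X_\al$, but the proposition must hold for an \emph{arbitrary} $X \in \cR_\al$ with $X = W$ in $G_\al$; comparing an arbitrary $X$ to your $X_\al$ via Proposition \ref{pr:length-compare-close}$_\al$ only yields $|X|_\al < 1.3 \cdot 0.3 + 2.2 = 2.59$, far from $0.3$.

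For the ``no fragments of rank $\be > \al$'' part, the detour through ordinary length $|X|_0$ is unnecessary and, as you noticed yourself, does not close cleanly. The paper's route stays entirely in the graded norm: from $|X|_\al < 0.3$ and Corollary \ref{co:fragment-length-bound}, a fragment $F$ of rank $\be > \al$ with $\muf(F) \ge 3\om$ has $|F|_{\be-1} > \frac{1}{1.3}(3-2.2) \approx 0.6$; since $\be - 1 \ge \al$ and $|Y|_{\ga-1} \ge \ze^{-1}|Y|_\ga$ by \ref{ss:alpha-length-properties}\ref{cl:om-rank-increment} iterated, one gets $|F|_\al \ge |F|_{\be-1} > 0.6 > 0.3 \ge |X|_\al$, a contradiction. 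No ordinary-length or $1.3^\be$-growth estimate is needed. The final ``$X \in \cap_{\al \ge 1}\cR_\al$'' deduction you give is correct.
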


By Corollary \ref{co:fragment-length-bound} it is enough to prove that $|X|_\al < 0.3$. 
We proceed by induction on ~$\al$. If $\al=1$ then $X$ is the freely reduced form of $W$ and 
$|X|_1 \le \ze |X| < 0.3$. 
Let $\al > 1$. Let $W \greq W_1 a$, $a \in \cA^{\pm1}$ and $W_1 = X_1$
in $G_{\al-1}$ where $X_1 \in \cR_{\al-1}$.  
By Corollary \ref{co:fragment-length-bound}, the inductive hypothesis and
Proposition \ref{pr:trigon-active-fragments}, 
equality $X = X_1 a$ holds already
in $G_{\al-1}$. By Corollary \ref{co:trigon-side-length}$_{\al-1}$
$$
    |X|_\al \le \ze |X|_{\al-1} \le \ze( 1.3 (0.3 + 0.3) + 4.8) < 0.3.
$$

\begin{corollary} \label{co:absolute-reduction}
Every element of $G$ can be represented by a word $X$ reduced in $G$ such that for some $\al\ge 1$,
$X$ contains no fragments $F$ of rank $\be \ge \al$ with $\muf(F) \ge 3\om$.
\end{corollary}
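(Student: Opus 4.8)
The plan is to derive the corollary directly from the proposition immediately above it, combined with the existence of reduced representatives in each $G_\al$ (Proposition~\ref{pr:reduction}). First I would fix $g\in G$, choose a word $W$ representing $g$ in $G$, and set $n=|W|$. The point to keep in mind is that $G$ is a quotient of every $G_\al$: its presentation is obtained from that of $G_\al$ by adjoining the relators of all ranks $>\al$. Hence $W$ determines a well-defined element of $G_\al$ whose image in $G$ is $g$, and any equality of words that holds in $G_\al$ automatically holds in $G$.

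Next I would apply Proposition~\ref{pr:reduction} with the rank equal to $n$ to obtain a word $X\in\cR_n$ with $X=W$ in $G_n$. Since $|W|=n$, the pair $(W,X)$ satisfies the hypotheses of the preceding proposition (with its parameter $\al$ taken to be $n$), and that proposition yields both that $X$ contains no fragment $F$ of rank $\be>n$ with $\muf(F)\ge 3\om$ and that $X\in\bigcap_{\be\ge1}\cR_\be$. By the definition of \emph{reduced in $G$} given in Section~\ref{s:main-results}, the latter means precisely that $X$ is reduced in $G$. Finally, from $X=W$ in $G_n$ we get $X=W=g$ in $G$, so $X$ is the required representative; taking the rank in the statement of the corollary to be $n+1$, so that ``rank $\ge n+1$'' is the same as ``rank $>n$'', completes the argument.

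There is essentially no real obstacle here; the corollary is a repackaging of the preceding proposition. The only places that need a sentence of care are (i) verifying that $W$ does determine an element of $G_\al$ mapping onto $g$ and that $G_\al$-identities survive in $G$, and (ii) recording that $\bigcap_{\be\ge1}\cR_\be$ is exactly the set of words reduced in $G$, which is what turns the conclusion $X\in\bigcap_{\be\ge1}\cR_\be$ of the preceding proposition into the first assertion of the corollary.
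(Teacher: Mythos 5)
Your argument is correct and is essentially the proof the paper intends: Corollary \ref{co:absolute-reduction} is a direct restatement of the unnamed proposition immediately preceding it, with $\al := |W|$ in that proposition (so that $|W|\le\al$ is automatic), the reduced representative $X\in\cR_\al$ supplied by Proposition \ref{pr:reduction}, and the corollary's rank parameter taken to be $|W|+1$. The two small points you flagged (that $G_\al$-identities descend to $G$, and that $\bigcap_{\be\ge1}\cR_\be$ coincides with the set of words reduced in $G$ per Section \ref{s:main-results}) are exactly the glue the paper leaves implicit.
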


\section{A graded presentation for the Burnside group} \label{s:elementary-periods}


In this section we show that for sufficiently large odd $n$ the Burnside group $B(m,n)$ has a graded presentation 
which satisfies the iterated small cancellation condition formulated in Section \ref{s:condition}.

We fix an odd number $n > 2000$. We are going to construct a graded presentation of the form
\begin{equation} \label{eq:Burnside-presentation}
  \bpresn{\cA}{C^n= 1 \ (C \in \bigcup_{\al\ge 1} \cE_\al)}
\end{equation}
where all relators of all ranks $\al$ are $n$-th powers. 
We assume that values of the parameters ~$\la$ and $\Om$ are chosen as in 
Theorem \ref{th:main}, i.e.\
$$
  \la = \frac{80}{n}, \quad \Om = 0.25 n.
$$
We will use also the following extra parameters:
$$
  p_0 = 39, \quad p_1 = p_0 + 26 = 65.
$$

In what follows, we define the set $\cE_{\al+1}$ 
under the assumption that sets $\cE_\be$ are already defined for all $\be \le \al$.
We fix the value of rank $\al \ge 0$ and assume 
that the presentation \eqref{eq:Burnside-presentation}
satisfies small cancellation conditions (S0)--(S3) in \ref{ss:condition}, \ref{ss:no-inverse-conjugation}
and in normalized in the sense Definition \ref{df:normalized-presentation}
for all values of the rank up to ~$\al$.

We can therefore assume that all statements in 
Sections \ref{s:diagrams}--\ref{s:overlapping-periodicity}
are true for the current value of $\al$ and below.


According to Propositions \ref{pr:cyclic-reduction} and \ref{pr:root-existence}
each element of infinite order of $G_\al$ is conjugate to a power of a simple period over $G_\al$.
We will define $\cE_{\al+1}$ as a certain set of simple periods over $G_\al$.
This will automatically imply condition (S0) with $\al := \al+1$.

Since $n$ is odd, by Corollary \ref{co:no-involutions-no-inverse-conjugates} we obtain also that (S3) 
holds with $\al := \al+1$.

Before going to the chain of definitions in the next section, 
we formulate the following two conditions (P1) and (P2) on $\cE_{\al+1}$
(which can be viewed as ``periodic'' versions of (S1) and (S2) for the value of rank $\al := \al+1$).
\begin{itemize}
\item[(P1)] 
For each $A \in \cE_{\al+1}$, $[A]_\al \ge 0.25$.

\item[(P2)] 
Let $!L_1$ and $!L_2$ be periodic lines in $\Ga_\al$ with periods $A,B \in \cE_{\al+1}$ respectively.
Assume that a subpath $!P$ of $!L_1$ and a subpath of $!Q$ of $!L_2$ are close and $|!P| \ge p_1 |A|$.
Then $!L_1$ and $!L_2$ are parallel.
\end{itemize}
The main goal of the construction of $\cE_{\al+1}$ will be to satisfy (P1) and (P2).
Note that (P1) immediately implies (S1) for $\al := \al+1$ because of the assumption $n > 2000$.
Later we prove that (P2) implies (S2)$_{\al+1}$. (The difference between (P2) and (S2)$_{\al+1}$
is that in (P2) we measure periodic words by the number of periods while
in (S2)$_{\al+1}$ we use the length function $|\cdot|_{\al}$. An appropriate
bound will be given in Proposition \ref{pr:length-in-periods}.)

Our first step is to define a set of simple periods over $G_\al$ which 
potentially violate (P2)
(they will be excluded in the definition of $\cE_{\al+1}$).

\begin{definition} \label{df:start-suspended}
A simple period $A$ over $G_\al$ is {\em suspended of level 0} if there exist a simple period $B$ 
not conjugate in $G_\al$ to $A$ and 
words $P \in \per(A)$ and $Q \in \per(B)$ such that $P$ and ~$Q$ are close in $G_\al$
and $|Q| \ge p_1|B|$.
\end{definition}

At first sight, we could simply define $\cE_{\al+1}$ by excluding 
periods $A$ as in Definition \ref{df:start-suspended} from the set of all simple periods over $G_\al$.
However, in this case we cannot guarantee a necessary lower bound on $[A]_\al$ for $A \in \cE_{\al+1}$ in (P1).
Roughly speaking, we need to claim 
that a fragment of rank $\be \le \al$ can cover only a ``small'' part of a periodic
word with a period $A \in \cE_{\al+1}$; moreover, we need
an exponentially decreasing upper bound on the size of this part when $\be$ decreases
(compare with the definition of the function $|\cdot|_\al$ in \ref{ss:alpha-length}).
To achieve this, we enlarge the set of excluded simple periods over $G_{\al+1}$
by adding potentially ``bad'' examples of this sort.

\begin{definition} \label{df:next-suspended}
A simple period $A$ over $G_\al$ is {\em suspended of level $m \ge 1$} if there exist 
a suspended period $B$ of level $m-1$ not conjugate to ~$A$ in $G_\al$, and a reduced in $G_\al$
word of the form $XQY$ such that $Q \in \per(B)$, $|Q| \ge 4|B|$ and $XQY$ is close in $G_\al$ to a word $P \in \per(A)$.
\end{definition}

\begin{definition} \label{df:elementary-periods}
Let $\cP_\al$ denote the set of all simple periods over $G_\al$
and $\cS_\al$ denote the set of all suspended simple periods over $G_\al$ of all levels $m \ge 0$.
For $\cE_{\al+1}$ we take any set of representatives of equivalence classes in $\cP_\al \sm \cS_\al$
with respect to the equivalence
$$
  A \sim B \lequiv A \text{ is conjugate to $B^{\pm 1}$ in } G_\al.
$$
\end{definition}

The definition implies that any simple period over $G_\al$ in $\cP_\al \sm \cS_\al$ has finite order
in $G_{\al+1}$. 
Since $\cP_{\al+1} \seq \cP_\al$, 
it follows that any simple period over $G_{\al+1}$ and, in particular, any
word in $\cE_\be$ for $\be \ge \al+1$ belongs to $\cS_\al$.
As a consequence, we prove now that a fragment of rank $\al+1$ cannot
cover a large periodic word with a simple period $A$ over $G_{\al+1}$. (So here is the trick: 
the definition of the set of suspended periods over $G_\al$ of levels $m \ge 1$ serves condition (P1)
for the {\em future} rank $\al+1$.)

\begin{remark} \label{rm:G-normalized}
By construction, we obtain a normalized presentation  \eqref{eq:Burnside-presentation}
(see Definition \ref{df:normalized-presentation}).
\end{remark}

\begin{proposition} \label{pr:suspended-by-fragment}
Let $A$ be a simple period over $G_{\al+1}$. If an $A$-periodic word $P$ is 
a subword of a fragment of rank $\al+1$ then $|P| < 4|A|$.
\end{proposition}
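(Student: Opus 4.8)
The plan is to argue by contradiction: suppose $A$ is a simple period over $G_{\al+1}$ and some $A$-periodic word $P$ with $|P| \ge 4|A|$ occurs inside a fragment $F$ of rank $\al+1$. The key observation is that $F$ is, by definition, close in rank $\al$ to a subword $S$ of a power $R^k$ of a relator $R$ of rank $\al+1$, and in the Burnside setting $R = C^n$ for some $C \in \cE_{\al+1}$, so $S$ is itself a $C$-periodic word. Writing $F = u S v$ in $G_\al$ with $u,v \in \cH_\al$ and $P$ a subword of $F$, I would first pass to the Cayley graph $\Ga_\al$: represent $F$ by a path $!F$, with $!P$ the subpath labeled by $P$ (an $A$-periodic segment, with its infinite $A$-periodic extension) and with the base $!S$ of $!F$ an $R$-periodic, hence $C$-periodic, segment. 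Closeness of $!F$ and $!S$ gives, after accounting for the bridges, a long $A$-periodic subpath of $!P$ that is close in $\Ga_\al$ to a $C$-periodic subpath of $!S$.

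Next I would split into two cases according to whether $A$ is conjugate to $C$ in $G_\al$ or not. If $A$ is \emph{not} conjugate to $C$ in $G_\al$, then the hypothesis ``$P$ is a subword of a fragment of rank $\al+1$ close to a $C$-periodic word with at least $4|C|$ periods'' is precisely the situation of Definition \ref{df:next-suspended}: it exhibits $A$ as a suspended period over $G_\al$ of level $m+1$, where $m$ is the level of $C$ (and $C$ itself, being a word in $\cE_{\al+1}$, lies in $\cS_\al$ by the remark following Definition \ref{df:elementary-periods}, so it is suspended of some finite level $m$). But $A \in \cP_{\al+1} \seq \cP_\al$ and $A$ a simple period over $G_{\al+1}$ forces $A \in \cP_\al \sm \cS_\al$ — again by the same remark, every simple period over $G_{\al+1}$ belongs to the set from which $\cE_{\al+1}$ is chosen, and by construction no word conjugate into $\cE_{\al+1}$ is suspended. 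This contradiction rules out the non-conjugate case, and in fact shows $|P| < 4|A|$ there outright. The one point needing care is making Definition \ref{df:next-suspended} literally applicable: I must produce a \emph{reduced in $G_\al$} word of the form $XQY$ with $Q \in \per(C)$, $|Q| \ge 4|C|$, close in $G_\al$ to a word in $\per(A)$; this is obtained by taking $Q$ to be the appropriate long $C$-periodic subword of $S$ (reduced in $G_\al$ since $C$ is strongly cyclically reduced over $G_\al$, using Proposition \ref{pr:piece-reduction} or the definition of a simple period), extending it inside $S$ and using that $S$ is reduced, and taking $P \in \per(A)$ the corresponding long $A$-periodic word sitting inside $F$.

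If $A$ \emph{is} conjugate to $C$ in $G_\al$, I would instead use the periodicity/stability machinery of Sections \ref{s:coarsely-periodic}--\ref{s:overlapping-periodicity}. Here the long $A$-periodic segment $!P$ inside $!F$ and the $C$-periodic base segment $!S$ are close, and since $A$ and $C$ are conjugate, by Proposition \ref{pr:close-parallel-one-period} (or Proposition \ref{pr:changing-period} together with Proposition \ref{pr:strictly-close-parallel}) their infinite periodic extensions are parallel — in particular $!P$ lies, up to the translation element, on the $C$-periodic line through $!S$. Then $!P$ is both a subpath of the $R$-periodic line $!L$ (the base axis of the fragment $!F$) and an $A$-periodic segment on a line parallel to the $C$-periodic line. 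Since $R = C^n$, the $R$-periodic line and the $C$-periodic line coincide, so $!P$ is a $C$-periodic segment of length $\ge 4|C|$ inside $!L$. But $A$ and $C$ are conjugate non-powers over $G_\al$, and $A$ has infinite order while the element $s_{C}^n = s_R$ has finite order in $G_{\al+1}$ — more directly, a fragment of rank $\al+1$ is close (in rank $\al$) to a subword of $R^k$, and Proposition \ref{pr:suspended-by-fragment}'s analogue one rank down, namely the bound $|!K| < 2|X|$ of Proposition \ref{pri:fragment-in-periodic-small} applied with $X := A$ in $\Ga_{\al+1}$, shows that an $A$-periodic word inside a fragment of rank $\al+1$ with $\muf \ge 2\la + 5.3\om$ cannot have length $\ge 2|A|$ once $A$ is not conjugate to the root $R_0$ of $R$; and $A$, being a non-power simple period over $G_{\al+1}$ distinct in $G_{\al+1}$ from the finite-order element $R_0$, is not conjugate to a power of $R_0$. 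This is the delicate bookkeeping step — reconciling "conjugate to $C$ over $G_\al$" with "not conjugate to a power of the root of $R$ over $G_{\al+1}$" — and I expect it to be the main obstacle; the resolution is that $C$ has finite order in $G_{\al+1}$ (it is the root of the relator $R$), so any non-power of infinite order, such as our simple period $A$ over $G_{\al+1}$, cannot be conjugate to a power of it, and the threshold $4|A| > 2|A|$ then yields the contradiction. Either way we conclude $|P| < 4|A|$.
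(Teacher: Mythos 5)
There is a genuine gap: you apply Definition \ref{df:next-suspended} with the roles of the two periods reversed, and the two assertions about which of $A$ and $C$ lies in $\cS_\al$ are both backwards. The remark after Definition \ref{df:elementary-periods} shows that $A$, being a simple period over $G_{\al+1}$, lies in $\cS_\al$ (were it in $\cP_\al \sm \cS_\al$ it would be conjugate in $G_\al$ to some $C^{\pm1}$ with $C \in \cE_{\al+1}$ and hence have finite order in $G_{\al+1}$), so the conclusion ``$A$ is suspended'' that you try to derive is not a contradiction --- $A$ is already suspended. Conversely, $C \in \cE_{\al+1}$ lies in $\cP_\al \sm \cS_\al$ by Definition \ref{df:elementary-periods}, i.e.\ $C$ is \emph{not} suspended; your reading that ``$C$ lies in $\cS_\al$'' seems to rest on an internal slip in that remark (the ``$\be \ge \al+1$'' should be ``$\be \ge \al+2$'', since $\cE_{\al+1}$ is by definition disjoint from $\cS_\al$). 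The correct application of Definition \ref{df:next-suspended} takes the fragment $UPV$ as the reduced word $XQY$, with $Q := P$ (the $A$-periodic word, $|P| \ge 4|A|$), $A$ in the role of the already-suspended period ``$B$'' of the definition, and $C$ in the role of the ``$A$'' being shown suspended; the closeness of $UPV$ to the $C$-periodic base $S$ supplies the final hypothesis, and the conclusion $C \in \cS_\al$ contradicts $C \in \cE_{\al+1}$. Your reversed application would instead need a $C$-periodic word of length $\ge 4|C|$ close to an $A$-periodic word, and neither the length bound nor closeness to a purely $A$-periodic word is at hand; the hypothesis $|P| \ge 4|A|$ is precisely what belongs on the other side of the definition.

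Beyond the directional error, your case split is superfluous: $A$ has infinite order in $G_{\al+1}$ while $C$ satisfies $C^n = 1$ there, so $A$ and $C$ cannot be conjugate in $G_\al$ (as $G_{\al+1}$ is a quotient of $G_\al$). You observe this at the very end of Case 2, but it should be the first observation: the entire second case is vacuous, and the excursion into Sections \ref{s:coarsely-periodic}--\ref{s:overlapping-periodicity} (periodic lines, stability, \ref{pri:fragment-in-periodic-small}) is unnecessary for this proposition, which is a short formal consequence of the definitions in Section \ref{s:elementary-periods}.
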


\begin{proof}
As observed above, $A \in \cS_\al$.
Let $UPV$ be a fragment of rank $\al+1$ where $P \in \per(A)$. Then $UPV$ is close in $G_\al$
to a word $Q \in \per(B)$ where $B \in \cE_{\al+1}$.
Since $A$ is of infinite order in $G_{\al+1}$, it is not conjugate to $B$ in $G_\al$.
In this case, Definition \ref{df:next-suspended} says that if $|P| \ge 4|A|$ then $B \in \cS_\al$
which would contradict Definition \ref{df:elementary-periods}.
\end{proof}

Proposition \ref{pr:suspended-by-fragment} with $\al: = \al-1$ is an important but not sufficient
ingredient in the proof of (P1). We need also to ensure that if a subword of fragment of rank $\be< \al$ is
a subword of an $A$-periodic word with $A \in \cE_{\al+1}$ then its length compared to $|A|$
is ``exponentially decreasing when $\be$ decreases''. 
We prove a precise form of this statement in the next section by showing that coarsely periodic words 
have a certain property of hierarchical containment: a coarsely $A$-periodic word $S$ over $G_\al$ has $t$
disjoint occurrences of coarsely periodic words over $G_{\al-1}$ with sufficiently large number 
of periods where $t$ is approximately the number of periods $A$ in $S$.

\section{Hierarchical containment of coarsely periodic words} 
\label{s:hierarchical-containment}

{\em Starting from this point, all statements are formulated and proved under assumption that the group $G$
has a specific presentation \eqref{eq:Burnside-presentation} defined in Section \ref{s:elementary-periods}.}
The goal of this section is to prove the following property of suspended periods over $G_\al$
and to finalize the proof of the fact that the presentation \eqref{eq:Burnside-presentation} satisfies conditions (S0)--(S3). As in Section \ref{s:elementary-periods} 
we assume fixed the value of rank $\al \ge 0$ and assume that the normalized
presentation \eqref{eq:Burnside-presentation} satisfies conditions (S0)--(S3) 
for ranks less or equal ~$\al$; so we can use all statements in 
Sections \ref{s:diagrams}--\ref{s:elementary-periods} for any rank up to ~$\al$.

\begin{proposition} \label{pr:hierarchical-containment}
Let $A$ be a suspended period over $G_\al$. Then there exists a simple period ~$B$ over $G_\al$
such that:
\begin{enumerate} \label{pri:hierarchical-containment-periodic}
\item
A cyclic shift of $A$ contains
a coarsely $B$-periodic word $T$ over $G_\al$ with $\ell_B(T) \ge p_0$.

\item
Moreover, this subword $T$ has the following property.
Let $!S$ be a coarsely $A$-periodic segment in $\Ga_\al$ with $\ell_A(!S) \ge 4$.
Then there are an $A$-periodic base $!P$ for $!S$, $\ell_A(!S)-3$ translates $!T$, $s_{A,!P} !T$, $\dots$, $s_{A,!P}^{\ell(!S)-4} !T$ of a coarsely $B$-periodic segment $!T$ in $!P$ with 
$\lab(!T) \greq T$
and $\ell_A(!S)-3$  disjoint coarsely $B$-periodic segments ~$!V_i$ $(i=0,1,\dots,\ell(!S)-4)$ in $!S$ 
such that $!V_i \approx s_{A,!P}^i !T$
for all $i$.
\end{enumerate}
\end{proposition}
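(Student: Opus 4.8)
The statement unwinds the definition of ``suspended period'' one level at a time, so the natural approach is induction on the suspension level $m$ of $A$. The base case $m=0$ comes directly from Definition~\ref{df:start-suspended}: there are $P\in\per(A)$ and $Q\in\per(B)$ with $B$ not conjugate to $A$, $P$ and $Q$ close in $G_\al$, and $|Q|\ge p_1|B|$. Since $p_1=p_0+26$, I would first pass to a suitable subpath of the periodic line $\bar{!Q}$ and apply Proposition~\ref{pr:coarsely-periodic-from-close} (and Proposition~\ref{pr:coarsely-periodic-overlapping-general}) to extract from the closeness of $P$ and $Q$ a coarsely $B$-periodic segment inside (a path labelled by) a cyclic shift of $A$, with $\ell_B(T)\ge |Q|/|B|-2h_\al(B)-2$. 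The constant $p_1$ is chosen exactly so that this quantity is at least $p_0$ once one bounds $h_\al(B)$ via $[B]_\al$; here (P1), i.e.\ $[B]_\al\ge 0.25$, gives $h_\al(B)\le\lceil 4.8\rceil+1=6$, so $p_1\ge p_0+2\cdot 6+2$ suffices. For $m\ge1$, Definition~\ref{df:next-suspended} gives a suspended period $B'$ of level $m-1$ and a reduced word $XQ'Y$ with $Q'\in\per(B')$, $|Q'|\ge 4|B'|$, close in $G_\al$ to $P\in\per(A)$. By induction a cyclic shift of $B'$ contains a coarsely $B$-periodic word with $\ell_B\ge p_0$ for some simple period $B$; then $Q'$, having at least $4$ periods of $B'$, contains a coarsely $B$-periodic segment (shrinking by the truncation constants), and transporting this through the closeness $XQ'Y\approx P$ by Proposition~\ref{pr:closeness-stability-beta} / Proposition~\ref{pr:coarse-periodic-stability} yields a coarsely $B$-periodic segment inside a cyclic shift of $A$, again with $\ge p_0$ periods after accounting for the fixed decrements. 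This proves part~(i).

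For part~(ii), fix the subword $T$ and the cyclic shift of $A$ produced in (i), so that $T$ occurs in a designated $A$-periodic base and hence $T$ occurs in every translate $s_{A,!P}^i T$ of that base along any $A$-periodic line. Given a coarsely $A$-periodic segment $!S$ with $\ell_A(!S)\ge 4$, I would choose an $A$-periodic base $!P$ for $!S$ long enough to contain the translates $s_{A,!P}^i !T$ for $i=0,\dots,\ell_A(!S)-4$ (using Remark~\ref{rm:base-whole-periods}). These translates are pairwise disjoint because $T$ occupies less than one full period of $A$ after the truncations built into (i) — this is where the bound $\ell_B(T)\ge p_0$ with the particular value of $p_0$ is used, together with Corollary~\ref{co:coarsely-periodic-cut} and Lemma~\ref{lm:coarsly-periodic-in-periodic}(ii) to control how many periods of $A$ a coarsely $B$-periodic segment of this size can span (namely fewer than two, hence genuinely disjoint after one truncation). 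Then I would apply the stability of coarsely periodic words: since $!S$ is strictly close in rank $\be$ (where $\be$ is the activity rank of $A$, or $0$) to its periodic base $!P$, Proposition~\ref{pr:coarse-periodic-stability} transfers each $s_{A,!P}^i !T$ to a coarsely $B$-periodic segment $!V_i$ in $!S$ with $!V_i\approx s_{A,!P}^i !T$. Disjointness of the $!V_i$ follows from disjointness of the $s_{A,!P}^i !T$ together with Lemma~\ref{lm:coarse-periodic-oredering-close} and Corollary~\ref{co:coarsely-periodic-overlapping}, which guarantee the images sit in the same order and do not overlap.

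\textbf{Main obstacles.} The bookkeeping obstacle is getting the constants to line up: each passage (from a closeness relation to a coarsely periodic segment via Proposition~\ref{pr:coarsely-periodic-from-close}, from a period $B'$ to the inner period $B$, from $!P$ to $!S$ via Proposition~\ref{pr:coarse-periodic-stability}, each truncation) costs a fixed additive number of periods, and one must verify that $p_1=65$ survives to leave at least $p_0=39$ after all of them, and that $p_0=39$ is in turn large enough to force the $A$-translates of $T$ to be disjoint (less than one period of $A$ each). Since the induction in Definition~\ref{df:next-suspended} only decreases the level by one while keeping the \emph{same} threshold $4|B'|$ at every step, the crucial point is that the ``inner'' period $B$ and the threshold $p_0$ do not degrade with $m$ — they are fixed once (i) is established at level $m-1$ — so no accumulation occurs over the induction; only a single truncation-type loss occurs at the inductive step. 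The genuinely delicate conceptual point is part~(ii)'s requirement that the $!V_i$ be \emph{disjoint} rather than merely ordered: this needs the quantitative fact that a coarsely $B$-periodic segment with $\ge p_0$ periods of $B$ spans strictly less than one period of $A$, which I would deduce from Lemma~\ref{lm:coarsly-periodic-in-periodic}(ii) (giving span $<2|A|$) combined with one truncation via Corollary~\ref{co:coarsely-periodic-cut}, using that $A$ and $B$ are non-conjugate since $A\in\cP_\al\sm\cS_\al$ has infinite order in $G_{\al+1}$ while $B$ arises inside a suspended period. Making this ``less than one period'' estimate precise, and checking it is compatible with the chosen $p_0$, is the part I expect to require the most care.
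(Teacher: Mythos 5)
Your base case $m=0$ matches the paper: extract a coarsely $B$-periodic segment $\hat{!T}$ with $\ell_B(\hat{!T})\ge p_1-2h_\al(B)-2\ge 51$ via Proposition~\ref{pr:coarsely-periodic-from-close}, pass to the stable part $!T=\hat{!T}^*$ to obtain $\ell_B(!T)\ge p_0$ and (via Lemma~\ref{lm:coarsly-periodic-in-periodic}(ii) and Corollary~\ref{co:coarsely-periodic-cut}) $|!T|<|A|$, and then use Proposition~\ref{pr:coarse-periodic-stability} for part~(ii). But your inductive step for $m\ge1$ has a genuine gap, precisely in the place you flag as the delicate point. You claim that ``only a single truncation-type loss occurs at the inductive step'' and that ``no accumulation occurs over the induction,'' but the inductive hypothesis hands you $\ell_B(T')\ge p_0$ and a single pass of Proposition~\ref{pr:coarse-periodic-stability} through the closeness $XQ'Y\sim P$ returns only the \emph{stable part} of the transported segment, so $\ell_B$ drops by $2h_\al(B)$, which can be as large as $12$. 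That is strictly below $p_0$, so the inductive conclusion is weaker than the inductive hypothesis; and you cannot repair this by strengthening the hypothesis to $\ell_B\ge p_0+12m$, since $m$ is unbounded while only $p_1=65$ periods are available at level $0$.

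The paper avoids this accumulation by an essentially different mechanism that you never invoke: it unwinds the whole chain $A_0,A_1,\dots,A_m=A$ with the associated close pairs $(X_iQ_iY_i, P_{i+1})$, and at each level it locates in $!P_{i+1}$ not the stable part of $!T_i$ but a segment $!T_{i+1}$ strictly compatible ($\approx$) with the \emph{same} $!T_0$, so $\ell_B$ is exactly preserved. The engine is Lemma~\ref{lm:coarsely-periodic-closeness-step} combined with a two-sided ``bookend'' argument: one transports the translates $s^{\pm1}_{A_i,!L_i}!T_i^{*}$ (and the chain of earlier bookends $!U_{i,j}, !W_{i,j}$) to the next line, pins down where $!T_{i+1}$ must sit, and re-applies Proposition~\ref{pr:coarse-periodic-stability} with $!X$ the union of the localized bookends to recover $!T_{i+1}\approx !T_0$ exactly; Lemma~\ref{lm:coarsely-periodic-closeness-step} also supplies the needed bound $|!T_i|\le|A_i|$ at each level, which is what makes the subsequent translates disjoint. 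The same issue infects your part~(ii): you write that Proposition~\ref{pr:coarse-periodic-stability} gives $!V_i\approx s^i_{A,!P}!T$, but it in fact only gives $!V_i\approx (s^i_{A,!P}!T)^*$, which has $\ell_B<p_0$; the paper again resolves this with the bookend localization (using the $!U_{m,j}$, $!W_{m,j}$ carried through the construction of~(i)), not by a bare application of the stability proposition.
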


We start with showing how Proposition \ref{pr:hierarchical-containment}$_{\al-1}$ implies (P1)
in the case $\al\ge 1$.

\begin{lemma} \label{lm:containment-coarsely-periodic}
Let $A$ be a simple period over $G_{\al}$ and 
let $!S$ and $!V_i$  $(i=0,1,\dots,\ell_A(!S)-4)$ be as in 
Proposition \ref{pr:hierarchical-containment}$_{\al-1}$.
Then for any $i$, $!V_i \cup !V_{i+4}$ is not contained in a fragment of rank ~$\al$.
\end{lemma}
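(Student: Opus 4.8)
The statement asserts that two of the coarsely $B$-periodic segments $!V_i$ and $!V_{i+4}$ which sit inside the coarsely $A$-periodic segment $!S$ (and which are translates $s_{A,!P}^i!T$, $s_{A,!P}^{i+4}!T$ of one fixed coarsely $B$-periodic segment up to strong compatibility) cannot both fit inside a single fragment of rank $\al$. The intended contradiction is clear: if $!V_i \cup !V_{i+4}$ were contained in a fragment $!F$ of rank $\al$, then (using that $B$ is a suspended period over $G_{\al-1}$, hence $B\in\cS_{\al-1}$, and $A$ is genuinely of infinite order so not conjugate in $G_{\al-1}$ to the root of a relator of rank $\al$) we would have a genuinely $B$-periodic subword of a fragment of rank $\al$ that is ``too long'', contradicting Proposition~\ref{pr:suspended-by-fragment} applied with $\al:=\al-1$.

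The first step is to pin down a genuine $B$-periodic subword of the base of $!F$ of length at least $4|B|$. By Proposition~\ref{pr:hierarchical-containment}$_{\al-1}$ the segments $!V_j$ are pairwise disjoint and $!V_i\approx s_{A,!P}^i!T$ with $\ell_B(!V_i) = \ell_B(!T) \ge p_0 = 39$; moreover the whole segment $!V_i\cup !V_{i+4}$ is itself a coarsely $B$-periodic segment (the $!V_j$ share the common axis, being translates of $!T$, so Proposition~\ref{pri:coarsely-periodic-union} applies), with $\ell_B(!V_i\cup!V_{i+4})$ large — in fact at least $4\ell_B(!T)$ minus a bounded correction coming from the $A$-period overlaps, so certainly $\ge 4$. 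Now if $!V_i\cup!V_{i+4}\subseteq !F$ with $!F$ a fragment of rank $\al$, I would pass to the base axis: writing $!Q$ for the base for $!F$ (a periodic segment over a relator loop of rank $\al$) and using that $!F$ and $!Q$ are close in rank $\al-1$, an application of Proposition~\ref{pr:changing-period} or directly Proposition~\ref{pr:coarsely-periodic-from-close} produces inside $!Q$ a coarsely $B$-periodic segment $!W$ with $\ell_B(!W) \ge \ell_B(!V_i\cup!V_{i+4}) - 2h_{\al-1}(B) - 2$, which is still $\ge 3$ provided $p_0$ was chosen large enough relative to $h_{\al-1}(B)$ — and here (P1)$_{\le\al-1}$, i.e. $[B]_{\al-1}\ge 0.25$, gives $h_{\al-1}(B) = \lceil 1.2/[B]_{\al-1}\rceil + 1 \le 6$, so $2h_{\al-1}(B)+2 \le 14 < 39 = p_0$, leaving $\ell_B(!W)\ge 3$.

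The second step converts ``$\ell_B(!W)\ge 3$ inside a periodic segment of a relator loop of rank $\al$'' into ``a genuine $B$-periodic subword of $!F$ of length $\ge 4|B|$''. For this I would invoke Lemma~\ref{lm:coarsly-periodic-in-periodic}(ii) with $!P$ there being the base $!Q$ of $!F$ (period = the relator of rank $\al$, which is $C^n$ for some $C\in\cE_\al\subseteq\cS_{\al-1}$, while the period $B$ here is \emph{not} conjugate in $G_{\al-1}$ to that relator since $B$ has infinite order in $G_\al$ while $C^n$ has finite order): it tells us $|!W| < 2|C^n|$, which is not yet a contradiction. The actual contradiction must instead come the other way: a coarsely $B$-periodic segment $!W$ with $\ell_B(!W)\ge 3$ contains (via \ref{ssi:cutting-dividing}, the ``dividing'' clause) a genuine $B$-periodic subsegment, and more to the point, $!W$ being close in rank (the activity rank of $B$) to a genuine $B$-periodic path $!P_W$ of length $\ge \ell_B(!W)|B| \ge 3|B|$ that is in turn close to a subpath of $!F$; a short extension argument using Proposition~\ref{pr:coarsely-periodic-from-close} in reverse, or simply unwinding Definition~\ref{df:coarsely-periodic-segment}, yields a genuine $B$-periodic word of length $\ge 4|B|$ occurring as a subword of the fragment $!F$ of rank $\al$. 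That is exactly what Proposition~\ref{pr:suspended-by-fragment}$_{\al-1}$ forbids (it says such a subword has length $<4|B|$), completing the contradiction.

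The main obstacle is bookkeeping the constants: one must verify that the gap of $4$ between the indices $i$ and $i+4$, together with $\ell_B(!V_j) = \ell_B(!T)\ge p_0 = 39$, survives the losses incurred at each transition — namely the $2h_{\al-1}(B)+2 \le 14$ loss in passing to the fragment's base axis (Proposition~\ref{pr:coarsely-periodic-from-close}), plus the $\pm 2$-period losses from the cutting operations in \ref{ss:coarse-periodic-cutting}, plus the $A$-period ``boundary'' overlap between consecutive $!V_j$'s — and still leaves enough to produce a genuine $B$-periodic word of length $\ge 4|B|$. The choice $p_0 = 39$ and $p_1 = p_0 + 26 = 65$ in Section~\ref{s:elementary-periods}, together with (P1) giving $[B]_{\al-1}\ge 0.25$ hence $h_{\al-1}(B)\le 6$, is precisely calibrated for this; so the proof is essentially an arithmetic check that $4\cdot 39 - (\text{bounded losses}) \ge 4$, which is comfortably true, once one also notes that the four index gap guarantees $!V_i$ and $!V_{i+4}$ are not merely disjoint but separated by at least two full $A$-periods, so that $!V_i\cup !V_{i+4}$ genuinely accumulates the periodicity of both pieces rather than collapsing.
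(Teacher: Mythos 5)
Your proposal rests on a false structural claim, and it also aims at the wrong period.

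The central error is the assertion that ``the whole segment $!V_i\cup !V_{i+4}$ is itself a coarsely $B$-periodic segment (the $!V_j$ share the common axis, being translates of $!T$, so Proposition~\ref{pri:coarsely-periodic-union} applies).'' This is not true. We have $!V_j \approx s_{A,!P}^j !T$, so the axis of $!V_j$ is $s_{A,!P}^j$ applied to the axis of $!T$. By Lemma~\ref{lm:coarsly-periodic-in-periodic}(i) (with $A$, $B$ non-conjugate), $!T \not\sim s_{A,!P}^t !T$ for $t\ne 0$, so the axes of $!V_i$ and $!V_{i+4}$ are genuinely different $B$-periodic lines, $!V_i\not\sim !V_{i+4}$, and Proposition~\ref{pri:coarsely-periodic-union} (which requires $!S_1\sim !S_2$) does not apply. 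The quantity $\ell_B(!V_i\cup!V_{i+4})$ therefore has no meaning, and everything downstream of it collapses. A second difficulty, independent of the first: you want to invoke Proposition~\ref{pr:suspended-by-fragment}$_{\al-1}$, which concerns an $A$-periodic word where $A$ is a simple period over $G_\al$, but the period $B$ supplied by Proposition~\ref{pr:hierarchical-containment}$_{\al-1}$ is only guaranteed to be a simple period over $G_{\al-1}$; there is no reason it should be simple (or suspended) over $G_\al$, so that proposition cannot be applied with $B$ in the role of its $A$.

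The paper's proof goes the other way: it exploits the $A$-periodicity created by the four-period gap, not the $B$-periodicity inside each $!V_j$. One maps the stable parts $!V_i^*$ and $!V_{i+4}^*$ into the $C$-periodic base axis $!L$ of the alleged containing fragment $!K$ of rank $\al$ via Proposition~\ref{pr:coarse-periodic-stability}$_{\al-1}$ (transfer of coarse periodicity under closeness), obtaining $!W\approx !V_i^*$ and $!W'\approx !V_{i+4}^*$ inside $!L$. The subpath $!W\cup !W'$ of $!L$ is then a genuine $C$-periodic word close to $s_{A,!P}^i !T^*\cup s_{A,!P}^{i+4} !T^*$, which is an $A$-periodic word of length at least $4|A|$. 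Since the hypothesis that $A$ is a simple period over $G_\al$ forces $A\in\cS_{\al-1}$, Definition~\ref{df:next-suspended}$_{\al-1}$ now shows $C$ must be a suspended period over $G_{\al-1}$, contradicting $C\in\cE_\al$. So the relevant ``forbidden long periodic word inside a relator base'' is $A$-periodic (via suspendedness of $A$), not $B$-periodic, and no fact about $B$ beyond its existence as markers is used. You would need to rebuild the argument around this observation rather than around $B$.
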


\begin{proof}
As in Proposition \ref{pr:hierarchical-containment}$_{\al-1}$, 
let $!P$ be an $A$-periodic base for $!S$ in $\Ga_{\al-1}$
containing $t-3$ translates $!T$, $s_{A,!P} !T$, $\dots$, $s_{A,!P}^{t-4} !T$ where $!T$ is a coarsely 
periodic segment with another period ~$B$ and $\ell_B(!T) \ge p_0$.
Assume that a fragment $!K$ of rank $\al$ in $\Ga_{\al-1}$
contains $!V_i$ and $!V_{i+4}$. Let $!L$ be the base axis for $!K$, 
so $!L$ is a $C$-periodic line with $C \in \cE_{\al}$.
Denoting $!V_i^*$ the stable part of $!V_i$, 
by Proposition \ref{pr:coarse-periodic-stability}$_{\al-1}$ we find $!W$ and $!W'$ in $!L$
such that $!W \approx !V_i^*$ and $!W' \approx !V_{i+4}^*$. 
Then $!W \cup !W'$ is close to 
$s_{A,!P}^i !T^* \cup s_{A,!P}^{i+4} !T^*$.
Since $A \in \cS_{\al-1}$, according to Definition \ref{df:next-suspended}$_{\al-1}$ this should imply 
$C \in \cS_{\al-1}$,
a contradiction.
\end{proof}

\begin{lemma} \label{lm:hierarchical-length-step}
Let $\al\ge 1$. Assume that a (linear or cyclic) word $X$ 
has $r$ disjoint occurrences of coarsely $A$-periodic words $U_i$ $(i=1,\dots,r)$ over $G_{\al-1}$ with $\ell_A(U_i) \ge p_0$.
Then $|X|_{\al-1} \ge 5 r$. 
\end{lemma}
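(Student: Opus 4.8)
\textbf{Proof plan for Lemma \ref{lm:hierarchical-length-step}.}

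The plan is to derive the bound $|X|_{\al-1} \ge 5r$ from a minimal fragmentation of rank $\al-1$ of $X$, using the two preceding results: Lemma \ref{lm:fragment-in-fragmentation} (which controls how truncated fragments of rank $\al-1$ in a minimal fragmentation interact with genuine fragments) and Lemma \ref{lm:containment-coarsely-periodic} (which says a fragment of rank $\al$ cannot contain $!V_i \cup !V_{i+4}$, the two ``far apart'' coarsely periodic pieces produced by Proposition \ref{pr:hierarchical-containment}$_{\al-1}$). The point is that each coarsely $A$-periodic word $U_i$ over $G_{\al-1}$ with $\ell_A(U_i) \ge p_0$ carries, inside it, many disjoint coarsely periodic segments $!V_j$ over $G_{\al-2}$, and no single fragment of rank $\al-1$ can swallow too many of them; this forces the fragmentation of $U_i$ to use at least a fixed positive ``weight'' (in rank $\al-1$), and hence $|U_i|_{\al-1}$ is bounded below by a constant $\ge 5$, after which semi-additivity of $|\cdot|_{\al-1}$ over the disjoint occurrences $U_1,\dots,U_r$ gives the result.

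First I would fix a minimal fragmentation $\cF = F_1 \cdot F_2 \cdots F_k$ of rank $\al-1$ of $X$ and focus on one occurrence $U_i$. Represent $U_i$ as a coarsely $A$-periodic segment $!U_i$ in $\Ga_{\al-1}$ and apply Proposition \ref{pr:hierarchical-containment}$_{\al-1}$ to get the $\ell_A(!U_i) - 3 \ge p_0 - 3$ disjoint coarsely $B$-periodic segments $!V_0, !V_1, \dots$ inside $!U_i$, each of the form $!V_j \approx s_{A,!P}^j !T$ with $\ell_B(!T) \ge p_0$ large. By Lemma \ref{lm:containment-coarsely-periodic}, no truncated fragment $F_\ell$ of $\cF$ of rank $\al-1$ can contain both $!V_j$ and $!V_{j+4}$ (a truncated fragment of rank $\al-1$ sits inside a genuine fragment of rank $\al-1$, which by the definition of a fragment is close in rank $\al-2$ to a subword of a power of a relator of rank $\al-1$, i.e.\ to a $C$-periodic word with $C \in \cE_\al$; the lemma rules this out — here I should double-check the rank bookkeeping, since Lemma \ref{lm:containment-coarsely-periodic} is stated with the ambient rank one higher, so I want it applied for the value $\al-1$, which is legitimate by the ``statements hold for all ranks $\be < \al$'' convention). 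Consequently the $\sim p_0 - 3$ segments $!V_j$ are spread across at least roughly $(p_0-3)/4$ distinct truncated fragments of rank $\le \al-1$ occurring within (or overlapping) $U_i$; I would make this quantitative by a pigeonhole count on the indices $j$ modulo $4$ together with the disjointness of the $!V_j$'s.

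From the lower bound on the number of fragments of $\cF$ meeting $U_i$ I would then read off a lower bound on the weight contributed by $U_i$. The cleanest route: the portion of $!U_i$ between the first $!V_0$ and the last $!V_{\ell_A(!U_i)-4}$ is covered by a subcollection of the $F_\ell$'s; if that subcollection has $N$ members then its weight in rank $\al-1$ is at least $\ze N$ if all have rank $\le \al-2$, but fragments of rank exactly $\al-1$ contribute weight $1$ each, and in either case one gets a contribution bounded below by a fixed multiple of $N$; combined with $N \gtrsim (p_0-3)/4$ and the choice $p_0 = 39$ this yields $|U_i|_{\al-1} \ge 5$ with room to spare. Finally, since the $U_i$ are disjoint subwords of $X$, property \ref{ss:alpha-length-properties}(ii) gives $\sum_i |U_i|_{\al-1} \le |X|_{\al-1} + r$; I would instead use the form without the additive loss by noting the $U_i$ are disjoint and simply summing the per-occurrence lower bounds with a little slack absorbed into the constant, getting $|X|_{\al-1} \ge 5r$. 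The main obstacle I anticipate is the bookkeeping in the second step: making the pigeonhole count on the $!V_j$'s interact correctly with truncated fragments that straddle the boundary of $U_i$, and pinning down the exact numerical slack so that the constant comes out to be at least $5$ given $p_0 = 39$ and the value $\ze = \tfrac1{20}$; everything else is a direct assembly of the quoted lemmas.
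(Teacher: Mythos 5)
There is a genuine gap, and it is the absence of the induction on $\al$. Your plan is to show that each $U_i$ by itself satisfies $|U_i|_{\al-1}\ge 5$ (or $\ge 6$, to absorb the $-r$ from semi-additivity) and then sum over $i$. But that per-$U_i$ lower bound does not follow from what you have. You correctly observe that a single truncated fragment of rank $\al-1$ cannot swallow too many of the $!V_j$'s, so the portion of the fragmentation over $U_i$ must use many pieces. However, you then write that a contribution of ``at least $\ze N$ if all have rank $\le\al-2$'' with $N\gtrsim(p_0-3)/4$ ``yields $|U_i|_{\al-1}\ge 5$ with room to spare''. It does not: with $\ze=\tfrac1{20}$ and $N\approx 9$ that is $\ze N\approx 0.45$, nowhere near $5$, and with pieces of even lower rank the contribution is smaller still. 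The whole point is that pieces of rank $\le\al-2$ are cheap in the weight function, and the only way to show they cannot cover the $V_{i,j}$'s cheaply is to recurse: the $V_{i,j}$'s are themselves coarsely periodic over $G_{\al-2}$ with $\ell_B\ge p_0$, so the lemma \emph{for rank $\al-1$} bounds below the rank-$(\al-2)$ weight of any sub-fragmentation that covers a batch of them.

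The paper's proof is therefore structured differently: after producing the $V_{i,j}$'s (Prop.~\ref{pr:hierarchical-containment}) and bounding how many consecutive $V_{i,j}$'s a rank-$(\al-1)$ piece can meet (Lemma~\ref{lm:containment-coarsely-periodic}, giving at most $6$, not $4$), it does \emph{not} estimate each $|U_i|_{\al-1}$ separately. Instead it lets $k$ be the total number of rank-$(\al-1)$ pieces in the fragmentation of all of $X$, notes that these account for at most $6k$ of the $30r$ surviving $V_{i,j}$'s, applies the inductive hypothesis (this lemma for $\al-1$) to the remaining $V_{i,j}$'s, and obtains the global inequality $|X|_{\al-1}\ge k+5\ze\max\{0,\,30r-6k\}$. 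Minimizing the right-hand side over $k$ then gives $5r$. So the two essential ingredients you are missing are (a) the recursion on $\al$, without which the weight contribution of low-rank pieces cannot be controlled, and (b) replacing the per-$U_i$ bound plus a lossy ``sum with slack'' by a single global count with a one-variable optimization. Your invocation of Lemma~\ref{lm:fragment-in-fragmentation} is not used in the paper's proof and does not supply either of these.
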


\begin{proof}
The statement is immediate if $\al = 1$. Assume that $\al > 1$.

Consider a fragmentation $\cF$ of rank $\al-1$ of $X$ (definition \ref{ss:alpha-length}).
Let $S_1$, $\dots$, $S_k$ be the subwords of fragments of rank $\al-1$ in $\cF$.
By Proposition \ref{pr:hierarchical-containment}$_{\al-1}$
each $U_i$ contains $p_0-3=36$ disjoint coarsely 
$B$-periodic words $V_{i,j}$ $(j=1,\dots,36)$ 
over $G_{\al-2}$ with $\ell_B(V_{i,j}) \ge p_0$. 
We can assume that $U_i$ and $V_{i,j}$ are indexed in their natural order 
from the start to the end in ~$X$.
By Lemma \ref{lm:containment-coarsely-periodic}, each $S_i$
intersects at most 6 consequent subwords $V_{i,j}, V_{i,j+1}, \dots, V_{i,j+5}$.
Excluding $V_{i,j}$ with $1 \le j \le 6$, we obtain that each $S_i$ intersects at most 6 of all the remaining ~$V_{i,j}$.
By induction, we conclude that
$$
  |X|_{\al-1} \ge k + 5 \ze \max\set{0, \: 30r - 6k}
$$
With fixed $r$, 
the minimal value of the right-hand side is achieved when $30r - 6k = 0$. This gives the 
bound $|X|_{\al-1} \ge 5 r$.
\end{proof}

We prove the following stronger form of (P1):

\begin{proposition} \label{pr:period-is-large}
For any simple period $A$ over $G_\al$ we have $[A]_\al \ge 0.25$ and, consequently, $h_\al(A) \le 6$.
\end{proposition}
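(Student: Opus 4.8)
The plan is to prove Proposition \ref{pr:period-is-large} by induction on the rank $\al$, distinguishing the base case $\al=0$ from the inductive step, and in the inductive step splitting according to whether the simple period $A$ over $G_\al$ is suspended or not. In the base case $\al=0$, a simple period over $G_0$ is a cyclically reduced non-power word, so $[A]_0 = \inf_m |(A^m)^\circ|_0/m = |A| \ge 1 > 0.25$, and the bound on $h_0(A)$ follows from its definition $h_\al(A) = \lceil 1.2/[A]_\al \rceil + 1$. For the inductive step, assume $\al \ge 1$ and that the proposition holds for all smaller ranks; I need to bound $|(A^m)^\circ|_\al$ from below by $0.25 m$ for every $m \ge 1$.

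First I would handle the easy case: if $A$ is \emph{not} a suspended period over $G_{\al-1}$ — more precisely, if no cyclic shift of $A$ contains a coarsely $B$-periodic word over $G_{\al-1}$ with many periods — then $A^m$ (up to cyclic shift) has no long coarsely periodic subwords of lower rank, and one bounds $|(A^m)^\circ|_\al$ directly using that a fragmentation of rank $\al$ of $A^m$ cannot have many truncated fragments of rank $\al$ overlapping a single period; the key tools are Proposition \ref{pr:suspended-by-fragment} (with $\al := \al-1$), which says fragments of rank $\al$ cover less than $4|A|$ of an $A$-periodic word, together with Corollary \ref{co:fragment-length-bound} to convert fragment sizes to lengths. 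The main case is when $A$ \emph{is} suspended over $G_{\al-1}$: then Proposition \ref{pr:hierarchical-containment}$_{\al-1}$ provides a simple period $B$ over $G_{\al-1}$ so that a cyclic shift of $A$ contains a coarsely $B$-periodic word $T$ with $\ell_B(T) \ge p_0$, and moreover any coarsely $A$-periodic segment $!S$ with $\ell_A(!S) \ge 4$ contains $\ell_A(!S)-3$ disjoint coarsely $B$-periodic segments $!V_i$ with $!V_i \approx s_{A,!P}^i !T$. Applying this with $!S$ a periodic base for $A^m$ (note $\ell_A \approx m$ up to a bounded additive error), the word $(A^m)^\circ$ has roughly $m$ disjoint occurrences of coarsely $A$-periodic words over $G_{\al-1}$ each with $\ge p_0$ periods, and Lemma \ref{lm:hierarchical-length-step} gives $|(A^m)^\circ|_{\al-1} \ge 5r$ with $r \approx m$; then $|(A^m)^\circ|_\al$ is controlled via the hierarchical estimate inside the proof of Lemma \ref{lm:hierarchical-length-step} itself (or a parallel computation), yielding $|(A^m)^\circ|_\al \ge 0.25 m$ after accounting for the $p_0-3 = 36$ branching factor and the bound of $6$ consecutive subwords that a single rank-$\al$ fragment can meet (Lemma \ref{lm:containment-coarsely-periodic}).

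Concretely, after fixing $m$, I would take a minimal fragmentation $\cF$ of rank $\al$ of $(A^m)^\circ$ with subwords of fragments $S_1,\dots,S_k$ of rank $\al$ and the rest of smaller rank; by Lemma \ref{lm:containment-coarsely-periodic} each $S_i$ meets at most $6$ of the coarsely $B$-periodic segments $!V_j$ coming from the $\approx m$ periods, so after discarding boundary terms one gets $\mathrm{weight}_\al(\cF) \ge k + \ze \cdot (\text{lower bound on } |\text{residual}|_{\al-1})$, and optimizing over $k$ (the critical configuration being when the two contributions balance) produces the constant $0.25$. The estimate $h_\al(A) \le 6$ is then just $\lceil 1.2/0.25 \rceil + 1 = 5 + 1 = 6$ — wait, $\lceil 4.8 \rceil + 1 = 5 + 1 = 6$, consistent.

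The main obstacle I anticipate is the careful bookkeeping in the hierarchical/inductive length estimate: one must track how the bound on $[A]_\al$ degrades (or rather, is maintained) across ranks, ensuring the branching factor $p_0 - 3 = 36$ from Proposition \ref{pr:hierarchical-containment} exactly compensates the factor $6$ from Lemma \ref{lm:containment-coarsely-periodic} and the rescaling factor $\ze = \tfrac1{20}$ from incrementing the rank, so that the constant does not deteriorate below $0.25$ in the inductive step. A secondary subtlety is the additive slack between $\ell_A(!S)$ and $m$ (the "$-3$" losses in Proposition \ref{pr:hierarchical-containment} and the $2h_\al(A)+3$ thresholds in the coarsely-periodic machinery); since $[A]_\al$ is an infimum over \emph{all} $m \ge 1$, including small $m$, one must either verify small $m$ separately (using that $A^m$ is reduced in $G_\al$, so $|A^m|_\al \ge$ some crude bound) or observe that the definition of $[A]_\al$ as an infimum of $|(A^m)^\circ|_\al/m$ is realized asymptotically, and a bounded additive error in the numerator does not affect the infimum — this is the standard subadditivity/Fekete argument and should be dispatched quickly. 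Once these constants are nailed down the proof is essentially assembling Propositions \ref{pr:suspended-by-fragment} and \ref{pr:hierarchical-containment}$_{\al-1}$ with Lemmas \ref{lm:containment-coarsely-periodic} and \ref{lm:hierarchical-length-step}.
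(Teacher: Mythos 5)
Your proposal identifies the same chain of lemmas as the paper's actual proof: Proposition \ref{pr:suspended-by-fragment}$_{\al-1}$ to bound each rank-$\al$ piece of a fragmentation of $(A^r)^\circ$ by $4|A|$, then Proposition \ref{pri:hierarchical-containment-periodic}$_{\al-1}$ together with Lemma \ref{lm:hierarchical-length-step} to give the lower bound $5(r-4k)$ on the $(\al-1)$-length of the residual. The one structural difference is your case split on whether $A$ is suspended over $G_{\al-1}$: this split is unnecessary, because (as the paper notes after Definition \ref{df:elementary-periods}, and as follows from that definition) every simple period over $G_\al$ with $\al\ge1$ lies in $\cS_{\al-1}$ --- otherwise it would be conjugate to $\pm$ some $C\in\cE_\al$ and hence of finite order in $G_\al$ --- so the ``not suspended'' branch is vacuous; moreover, as sketched that branch would not actually yield the bound, since Proposition \ref{pr:suspended-by-fragment} by itself gives no lower bound on the $(\al-1)$-weight of the remaining pieces. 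Two smaller points: no optimization over $k$ is needed, since with $5\ze=0.25$ the quantity $k + 5\ze(r-4k)$ is identically $0.25r$; and the worry about additive slack for small $m$ does not arise, because part (i) of Proposition \ref{pr:hierarchical-containment}$_{\al-1}$ places a coarsely $B$-periodic word with $|T|\le|A|$ inside a single cyclic shift of $A$, so the cyclic word $(A^{r-4k})^\circ$ contributes exactly $r-4k$ disjoint copies with no loss, uniformly in $r$.
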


\begin{proof}
If $\al=0$ then $[A]_0 \ge 1$ by the definition of $[\cdot]_0$. Let $\al\ge1$.
Take any $r \ge 1$.
Consider a fragmentation $\cF$ of rank ~$\al$ of the cyclic word $(A^r)^\circ$. 
Assume that $\cF$ consists of words $S_i$, $i=1,2,\dots,N$
where the first $k$ are subwords of fragments of rank $\al$.
By Proposition \ref{pr:suspended-by-fragment}$_{\al-1}$ we have $|S_i| < 4|A|$ for $i=1,2,\dots,k$.
This implies that the cyclic word $(A^{r-4k})^\circ$ can be partitioned 
into subwords of words in some subset of the remaining $S_i$, $i=k+1,k+2,\dots,N$.
Therefore,
$$
  |(A^r)^\circ|_\al \ge k + \ze |(A^{r-4k})^\circ|_{\al-1}.
$$
Proposition \ref{pri:hierarchical-containment-periodic}$_{\al-1}$ 
says that $(A^{r-4k})^\circ$ has at least $r-4k$ disjoint occurrences
of a coarsely $B$-periodic word $K$ over $G_{\al-1}$ with $\ell_B(K) \ge p_0$.
Then by Lemma \ref{lm:hierarchical-length-step},
$$
  |(A^r)^\circ|_\al \ge k + 5 \ze (r - 4k) = 0.25 r.
$$
This holds for all $r\ge1$, so by Definition \ref{df:period-stability-parameter} we get 
$[A]_\al \ge 0.25$ and hence $h_\al(A) \le 6$.
\end{proof}

The following lemma is a key tool in the proof of Proposition \ref{pr:hierarchical-containment}.
Very roughly, it corresponds to the statement ``if a word $W$ is periodic with two simple periods $A$
and $B$ at the same time, and if $|W| \ge 2|A|$, $|W| \ge 2|B|$ then $B$ is a cyclic shift of $A$''.

\begin{lemma} \label{lm:coarsely-periodic-closeness-step}
Let $!L_0$ and $!L_1$ be periodic lines in $\Ga_\al$ with simple periods $A$ and $B$ over $G_\al$, respectively.
Let $!S$ be a coarsely $C$-periodic segment in $!L_0$ where $C$ is another simple period over $G_\al$, 
$\ell_C(!S) \ge 25$. Assume that 
there exist coarsely $C$-periodic segments $!T_0, !T_1, !T_2$ in $!L_1$ such that 
$!T_0 < !T_1< !T_2$ and $!T_i \approx s_{A,!L_0}^i !S$, $i=0,1,2$.

If $!T_0 \lesssim s_{B,!L_1}^{-1} !T_1$ or $s_{B,!L_1} !T_1 \lesssim !T_2$ then, if fact,
$!T_0 \approx s_{B,!L_1}^{-1} !T_1$, $s_{B,!L_1} !T_1 \approx !T_2$, words $A$ and $B$ 
represent conjugate elements of $G_\al$ and periodic lines $!L_0$ and $!L_1$ are parallel.
\end{lemma}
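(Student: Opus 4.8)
\textbf{Proof plan for Lemma \ref{lm:coarsely-periodic-closeness-step}.}
The plan is to exploit the hypothesis $\ell_C(!S)\ge 25$ together with the stability of coarsely periodic words (Proposition \ref{pr:coarse-periodic-stability}) to lift the three overlapping copies $!T_0,!T_1,!T_2$ on $!L_1$ to a situation controlled by Lemma \ref{lm:coarse-periodic-stability-between}, and then to feed the resulting data into Lemma \ref{lm:overlapping-by-action} exactly as in the proof of Proposition \ref{pr:coarsely-periodic-overlapping-general}. First I would observe that, since $h_\al(C)\le 6$ by Proposition \ref{pr:period-is-large}, the number $25$ of periods is large enough to allow passing to stable parts and truncations several times: if $!S^*$ denotes the stable part of $!S$, then $s_{A,!L_0}^i!S^*$ is defined and, via Proposition \ref{pr:coarse-periodic-stability} applied to the close pair $(!L_0,!L_1)$, there are coarsely $C$-periodic segments $!T_i^*$ in $!L_1$ with $!T_i^*\approx s_{A,!L_0}^i!S^*$; by Corollary \ref{co:coarsely-periodic-overlapping} these inherit the order $!T_0^*<!T_1^*<!T_2^*$ and, since none is contained in another, they are governed by Corollary \ref{co:coarsely-periodic-cut}.

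Next, the hypothesis ``$!T_0\lesssim s_{B,!L_1}^{-1}!T_1$ or $s_{B,!L_1}!T_1\lesssim !T_2$'' says that on $!L_1$ the segment $!T_1$ already ``contains at least one period $B$'' relative to its neighbours. Translating along $s_{A,!L_0}$ on $!L_0$ and along $s_{B,!L_1}$ on $!L_1$ and recording the equivalence classes of stable parts, I would set up exactly the hypotheses of Lemma \ref{lm:overlapping-by-action}: take $X$ to be the set of equivalence classes of coarsely $C$-periodic segments (equivalently, of their axes) under compatibility, let $G=G_\al$ act on it, put $g=s_{A,!L_0}$ and $h=s_{B,!L_1}$, and choose $x_0,\dots,x_t$ to be the classes of $s_{A,!L_0}^i!S^*$ for a range of $i$ broad enough that both $g$ and $h$ shift within it by coprime amounts $r,s$ with $r+s\le t$. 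The crucial point that $H=\{\,k\in G_\al : k!S^*\sim !S^*\,\}$ is malnormal comes from Propositions \ref{pri:fragment-finite-order}, \ref{pr:conjugate-relator-roots}, \ref{pr:nontorsion-conjugation} and Corollary \ref{coi:no-inverse-compatibility-alpha}, exactly as invoked at the end of the proof of Proposition \ref{pr:coarsely-periodic-overlapping-general}; one also uses that $C$ is a non-power, so $\sgp{s_{A,!L_0}}$ and $\sgp{s_{B,!L_1}}$ are the full stabilisers of the relevant axes under $H$-cosets. Lemma \ref{lm:overlapping-by-action} then yields either $g,h\in H$ — which forces all the $!T_i$ into one compatibility class and is excluded once $\ell_C(!S)\ge 2$ via Proposition \ref{pri:fragment-in-periodic} / Lemma \ref{lm:coarsly-periodic-in-periodic} — or there is $d\in G_\al$ with $g=d^r$, $h=d^s$, whence $\sgp{s_{A,!L_0},s_{B,!L_1}}$ is cyclic.

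From cyclicity of $\sgp{s_{A,!L_0},s_{B,!L_1}}$ I would conclude: since $A$ is a simple period (a non-power in $G_\al$), $s_{A,!L_0}$ is a non-power of infinite order, so by Corollary \ref{coi:root-unique} and \ref{coi:nontorsion-relations} we get $s_{B,!L_1}=s_{A,!L_0}^{\pm1}$ after adjusting, i.e.\ $!L_0^{\pm1}$ and $!L_1$ are parallel; the orientation is pinned down to $+1$ by comparing the orders of $!T_0<!T_1<!T_2$ on the two lines (Proposition \ref{pri:fragment-ordering-between-fragments} or Lemma \ref{lmi:coarse-periodic-ordering}), so $!L_0$ and $!L_1$ are parallel and $A$, $B$ are conjugate in $G_\al$. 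Parallelism together with $!T_i\approx s_{A,!L_0}^i!S$ and $s_{A,!L_0}=s_{B,!L_1}$ then immediately gives $!T_0\approx s_{B,!L_1}^{-1}!T_1$ and $s_{B,!L_1}!T_1\approx !T_2$ by Lemma \ref{lm:coarse-periodic-oredering-close} (or Lemma \ref{lm:strict-compatibility-ordering}). The main obstacle I anticipate is bookkeeping the constant $25$: one must verify that after taking stable parts ($h_\al(C)\le 6$ on each side, so roughly $12$ periods consumed) and after the truncations implicit in Corollary \ref{co:coarsely-periodic-cut}, there remain enough translated copies to choose coprime shift data $r,s$ with $r+s\le t$ for Lemma \ref{lm:overlapping-by-action}; this is where I would be most careful, but it is a routine inequality chase using $h_\al(C)\le 6$ and is not conceptually hard.
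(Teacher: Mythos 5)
Your high-level plan — reduce to Lemma \ref{lm:overlapping-by-action} via the stability machinery, as in the proof of Proposition \ref{pr:coarsely-periodic-overlapping-general} — is the paper's plan. But there are two genuine gaps.

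First, you skip over the heart of the argument: the construction of the ``jump'' correspondence between coarsely $C$-periodic segments on $!L_0$ and on $!L_1$, and its consistency. You write ``Translating along $s_{A,!L_0}$ on $!L_0$ and along $s_{B,!L_1}$ on $!L_1$ and recording the equivalence classes of stable parts, I would set up exactly the hypotheses of Lemma \ref{lm:overlapping-by-action}'', but $s_{B,!L_1}$ acts on segments of $!L_1$ and $s_{A,!L_0}$ on segments of $!L_0$; you need to show that moving between the two lines and translating commute up to $\approx$, so that \emph{both} group elements shift the \emph{same} finite sequence $x_0,\dots,x_t$ of compatibility classes by fixed amounts. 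The paper achieves this by defining translations and jumps on the set $\cP$ of segments strongly compatible with a translate of $!S^*$, forming the maximal orbit $\cM$, and proving via the penetration lemma (Lemma \ref{lm:penetration-lemma}, with conditions checked through Lemmas \ref{lm:strict-compatibility-ordering} and \ref{lm:coarse-periodic-stability-between}) that jumps are always available inside $\cM$. You name Lemma \ref{lm:coarse-periodic-stability-between} but never invoke Lemma \ref{lm:penetration-lemma}, and your concluding worry (``bookkeeping the constant $25$'') suggests you think the only obstacle is arithmetic, when in fact re-verifying the penetration-lemma axioms for coarsely periodic segments instead of single fragments is the non-routine step. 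Without this, ``feed the data into Lemma \ref{lm:overlapping-by-action}'' has no content. (Minor: you invoke malnormality of $H$, but here the action on $\cP/{\approx}$ is actually \emph{free} by Corollary \ref{co:compatible-periodic-axes}; copying the malnormality language from Proposition \ref{pr:coarsely-periodic-overlapping-general} is another sign the adaptation hasn't been thought through.)

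Second, the hypothesis is stated with $\lesssim$ (order or \emph{compatibility}), while the overlapping-by-action argument requires $\lessapprox$ (order or \emph{strict} compatibility): one needs $q\le r$ to hold in the order sense before the counting works. The cases $!T_0 \sim s_{B,!L_1}^{-1}!T_1$ (compatible but not strictly compatible) and $s_{B,!L_1}!T_1 \sim !T_2$ have to be ruled out separately; the paper does this by an infinite-descent argument producing an unbounded chain of pairwise-distinct translates on both lines. Your proposal has no handling of this boundary case, and the lemma as stated would not follow without it.
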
 

\begin{proof}
Denote $!P_0 = !S\cup s_{A,!L_0}^2 !S$ and $!P_1 = !T_0 \cup !T_2$.
Let $!S^*$ and $!T_i^*$ be stable parts of $!S$ and ~$!T_i$.

The crucial argument is similar to one in the proof of 
Proposition \ref{pr:coarsely-periodic-overlapping-general}.
Denote $\cP$ the set of all coarsely $C$-periodic segments $!U$ in $\Ga_\al$ such
that $!U \approx g !S^*$ for some $g \in G_\al$
(i.e.\ $!U$ and ~$!S^*$ have the same labels of their periodic bases).
We introduce translations and jumps on the set
of coarsely $C$-periodic segments $!U \in \cP$ which occur in $!P_0$ or $!P_1$.
As in the proof of Proposition \ref{pr:coarsely-periodic-overlapping-general},
it will be convenient to consider two disjoint sets of those $!U \in \cP$
which occur in $!P_0$ and in $!P_1$.
(So formally we introduce the set $\cP_i$ $(i=0,1)$ of pairs 
$(!U, !P_i)$ where $!U$ occurs in $!P_i$;
thus $s_{A,!L_0}^i !S^*$  belongs to $\cP_0$ 
and $!T_i^*$ belongs to ~$\cP_1$ for $i=0,1,2$.
For a coarsely $C$-periodic segment $!U \in \cP$, saying `$!U$ occurs in $!P_i$'
we mean the corresponding element of ~$\cP_i$.)

Let $!U, !V \in \cP$ be coarsely $C$-periodic segments each occurring in some ~$!P_i$.
\begin{enumerate}
\item 
If $!U$ and $!V$ occur in different paths $!P_i$ and $!U \approx !V$ then $!U$ {\em jumps} to $!V$. 
\item
$!U$ {\em translates} to $!V$ in the following cases:
\begin{itemize}
\item[] 
$!U$ and $!V$ occur in $!P_0$ and $!U \approx s_{A,!L_0}^k !V$ for some $k \in \Z$; or
\item[] 
$!U$ and $!V$ occur in $!P_1$ and $!U \approx s_{B,!L_1}^k !V$ for some $k \in \Z$.
\end{itemize} 
\end{enumerate}
Let $\cM$ be a maximal set of pairwise non-(strictly compatible) segments which can be obtained
by these two operations from $!S^*$. Lemma \ref{lm:strict-compatibility-ordering} implies that $\cM$ is a finite set.
As in the proof of Proposition \ref{pr:coarsely-periodic-overlapping-general} we prove the following claim.

\begin{claim*}
The jump operation is always possible inside $\cM$; 
that is, for any $!U \in \cM$ in $!P_i$, $i \in \set{0,1}$, there exists $!V \in \cP$ 
in $!P_{1-i}$ such that $!V \approx !U$.
\end{claim*}

To prove the claim, we will apply Lemma \ref{lm:penetration-lemma} and do a necessary preparatory work.
Assume that $!U \in \cM$ belongs to $!P_0$ (the other case differs only in notation).
Let $!V_0 = !S^*$, $!V_1$, $\dots$, $!V_l = !U$ be a sequence of coarsely $C$-periodic segments $!V_i \in \cM$ such that $!V_{i+1}$ is obtained from $!V_i$
by one of the operations (i) or (ii).
We can assume that $!V_{2j} \to !V_{2j+1}$ are translations and 
$!V_{2j+1} \to !V_{2j+2}$ are jumps, so
$l = 2k-1$ for some $k$. Under this assumption, $!V_{2j} \to !V_{2j+1}$
is a translation inside $!P_0$ if $j$ is even and inside $!P_1$ if $j$ is odd.
We then define a sequence $!Y_0$, $!Y_1$, $\dots$, $!Y_k$ of paths in $\Ga_\al$
($!Y_j$ will be periodic segments with alternating periods $A$ and ~$B$)
and a sequence $!W_j \in \cP$ of coarsely $C$-periodic segments in $!Y_j$ for $j=0,1,\dots,k-1$
such that $!W_0 = !V_1$ and $!W_i \approx !W_0$ for all $i$.
For each $j$ we will have $!W_j = f_j !V_{2j+1}$ for some $f_j \in G_\al$.
The definition of $!Y_j$ and $f_j$ goes as follows.

We start with $!Y_0 = !P_0$ and $!W_0 = !V_1$, so $f_0 = 1$.
Assume that $j < k-1$ and $!Y_j$ and $f_j$ are already defined.
For even $j$, $!V_{2j}$ translates to $!V_{2j+1}$ inside $!P_0$, so there exists $f_{j+1} \in G_\al$
of the form $f_j s_{A,!P_0}^t$ such that $f_{j+1} !V_{2j+1} \approx f_j !V_{2j}$.
Thus, $f_i !P_0$ and $f_{j+1} !P_0$ have a common $A$-periodic extension and we take 
$!Y_{j+1} = f_i !P_0 \cup f_{j+1} !P_0$. Similarly, for odd $j$ $!V_{2j}$ translates to $!V_{2j+1}$
inside $!P_1$. We take $f_{j+1} \in G_\al$ of the form $f_j s_{B,!P_1}^t$ such that 
$f_{j+1} !V_{2j+1} \approx f_j !V_{2j}$ and take $!Y_{j+1} = f_i !P_0 \cup f_{j+1} !P_0$
inside a common $B$-periodic extension of $f_i !P_0$ and $f_{j+1} !P_0$.
Note that $k$ is odd because $!V_{2k+1} = !U$ is assumed to occur in $!P_0$.
We finally set $!Y_k = f_{k-1} !P_1$.

We now apply Lemma \ref{lm:penetration-lemma} where:
\begin{itemize}
\item 
$S_j$ is the set of all coarsely $C$-periodic segments $!V \in \cP$ in $!Y_j$.
\item
$S_j$ is pre-ordered by `$\lnapprox$'.
\item
Equivalence is strict compatibility.
\item
A segment $!V \in \bigcup_j S_j$ is defined to be stable if $!V$ is the stable part of some coarsely $C$-periodic
segment in $!Y_j$.
\item
For $a_j$, $b_j$, $a_j'$ and $b_j'$ we take appropriate translates of $!S^*$ and $!T_i^*$; namely,
$f_j !S^*$, $f_j s_{A,!L_0}^2 !S^*$, $f_j !T_0^*$ and $f_j !T_2^*$ if $j$ is even or
$f_j !T_0^*$, $f_j !T_2^*$, $f_j !S^*$ and $f_j s_{A,!L_0}^2 !S^*$ if $j$ is odd, respectively.
\item
$c_0$ is $!V_1$.
\end{itemize}
Note that by Proposition \ref{pr:period-is-large} we have $h_\al(C) \le 6$.
Hence the hypothesis $\ell_C(!S) \ge 25$ implies 
$\ell_C(!V) \ge 13 \ge 2h_\al(C)+1$ for any $!V \in \cP$.
Condition (ii) of Lemma \ref{lm:penetration-lemma} holds by 
Lemma ~\ref{lm:strict-compatibility-ordering}.
Conditions (iii) and (iv) of Lemma \ref{lm:penetration-lemma} hold by Lemma 
\ref{lm:coarse-periodic-stability-between}.
By the lemma, there exists a coarsely $C$-periodic segment $!V_k \in \cP$ in $f_{k-1} !P_1$
such that $!V_k \approx f_{k-1} !U$. This gives the required jump $!U \to f_{k-1}^{-1} !V_k$.
The claim is proved.

Let $r$ be the number of coarsely $C$-periodic segments $!V \in \cM$ such that 
and $!K^* \lnapprox !V \lessapprox s_{A,!L_0} !K^*$  and let $q$ be 
the number of coarsely $C$-periodic segments $!V \in \cM$ 
such that $!T_1^* \lnapprox !N \lessapprox s_{B,!L_1} !T_1^*$
(in other words, $r$ and $q$ are the numbers of coarsely $C$-periodic
segments $!V \in \cM$ in one period $A$ and in one period $B$, respectively).
Note that $\gcd(r,q) = 1$ because $\cM$ is generated by a single segment $!S^*$.

We assume first that either
$!T_0 \lessapprox s_{B,!L_1}^{-1} !T_1$ or $s_{B,!L_1} !T_1 \lessapprox !T_2$.
Since $\cM$ is closed under translations modulo equivalence `$\approx$', 
each of these relations implies $q \le r$ and hence implies the other one.
Let $!U_0, !U_1,\dots, !U_t$ be all coarsely $C$-periodic segments in $\cM$ belonging to ~$!P_0$
arranged in their order in $!P_0$ (so $!U_i$ form a set of representatives of coarsely $C$-periodic 
segments in $\cM$ modulo `$\approx$').
The group $G_\al$ acts on the set $\cP/{\approx}$. 
It follows from Corollary ~\ref{co:compatible-periodic-axes}
that the action is free. For equivalence classes $[!U_i]$ of $!U_i$ we have
$$
     s_{A,!L_0} [!U_i] = [!U_{i+r}], \ i= 0,1,\dots, t-r \quad 
     s_{B,!L_1} [!U_i] = [!U_{i+q}], \  i= 0,1,\dots, t-q.
$$
Note also that $t \ge 2r +1$. Applying Lemma \ref{lm:overlapping-by-action} we get $s_{A,!L_0} = d^q$
and $s_{B,!L_1} = d^r$ for some $d \in G_\al$. Since $A$ and $B$ are non-powers we get $q=r=1$
which immediately implies the conclusion of the proposition. 
%

For the proof, it remains to consider cases  $!T_0 \sim s_{B,!L_1}^{-1} !T_1$
and $s_{B,!L_1} !T_1 \sim !T_2$. We consider the case  $s_{B,!L_1} !T_1 \sim !T_2$
(the case $!T_0 \sim s_{B,!L_1}^{-1} !T_1$ is symmetric). By the already proved part, we can assume that 
$!T_2 \lnapprox s_{B,!L_1} !T_1$. We show that the assumption leads to a contradiction.

We have $!T_0 \lnapprox s_{B,!L_1}^{-1} !T_2 \lnapprox !T_1$, so there exists $!T_3 \in \cM$
such that $!T_3 \approx s_{B,!L_1}^{-1} !T_2$. 
$!T_3$ jumps to some $!S_3 \in \cM$ in $!L_0$ such that $!S_3 \sim !S$ and $!S_3 \lnapprox !S$.
Then $!S_3$ translates to $!S_4 \approx s_{A,!L_0} !S_3$ and we have $!S_4 \sim !S_2$ and 
$!S_4 \lnapprox !S_2$.    
Then $!S_4$ jumps to some $!T_4$ in $!L_1$ and we can continue the process infinitely 
(see Figure \ref{fig:coarsely-periodic-closeness-step}).
\begin{figure}[h]
\input 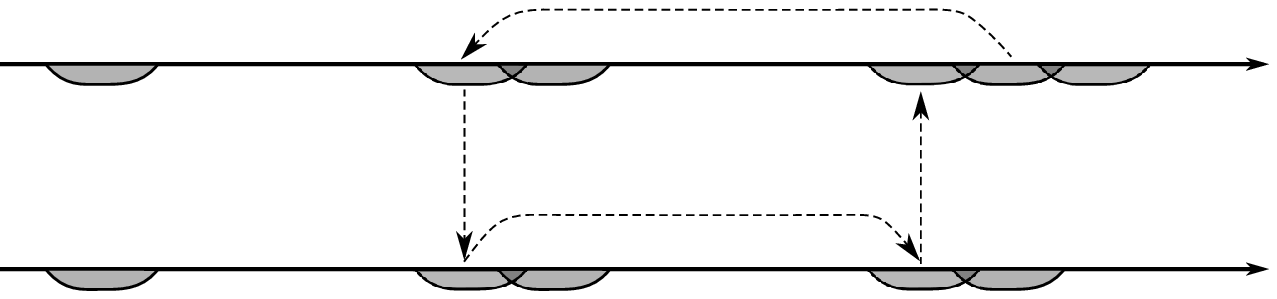_tex
\caption{}  \label{fig:coarsely-periodic-closeness-step}
\end{figure}
\end{proof}

\begin{proof}[Proof of Proposition \ref{pr:hierarchical-containment}]
Let $A$ be a suspended period of level $m$ over $G_\al$,

Assume first that $m=0$. Then by Definition \ref{df:start-suspended}
and Proposition \ref{pr:coarsely-periodic-from-close} an $A$-periodic segment $!R$ in $G_\al$
contains a coarsely $B$-periodic segment $\hat{!T}$ with $\ell_B(\hat{!T}) \ge p_1 -  2h_\al(B) - 2 \ge 51$ 
where $B$ is not conjugate to ~$A$ in $G_\al$.  
By Lemma \ref{lm:coarsly-periodic-in-periodic} we have 
$\hat{!T} \not\sim s_{A,!R} \hat{!T}$ and $|\hat{!T}| < 2|A|$.
Let $!T$ be the stable part of $\hat{!T}$. Since $h_\al(B) \ge 2$ by Definition \ref{df:stable-part},
we have $|!T| < |A|$ by Corollary \ref{co:coarsely-periodic-cut}.
Note also that $\ell_B(!T) \ge \ell_B(\hat{!T}) - 2 h_\al(B) \ge p_0$.
Let $T \greq \lab(!T)$. We show that $T$ has the required property (ii) formulated in 
Proposition \ref{pr:hierarchical-containment}

Let $!S$ be a coarsely $A$-periodic segment in $\Ga_\al$ with $\ell_A(!S) \ge 4$ 
and let $!P$ be a periodic base for $!S$.
Denote $t = \ell(!S)$. 
By Remark \ref{rm:base-whole-periods} we can assume that $|!P| \ge t |A|$.
Up to placing ~$\hat{!T}$ in ~$\Ga_\al$ we can assume that 
$!P$ contains $t-2$ translates $\hat{!T}$, $s_{A,!P} \hat{!T}$, $\dots$, $s_{A,!P}^{t-3} \hat{!T}$
of $\hat{!T}$.
Using Lemma \ref{lmi:compatible-close} 
(which implies that strictly compatible coarsely periodic segments are close) and 
Proposition \ref{pr:coarse-periodic-stability} 
we find disjoint $!V_i$ $(i=0,\dots,t-3)$ in $!S$ such that $!V_i \approx s_{A,!P}^i !T$.
This proves the proposition in the case $m=0$.

Let $m \ge 1$. The proof consists of two parts. First we provide a construction of 
a coarsely $B$-periodic segment $T$ satisfying condition (i) of 
Proposition  \ref{pr:hierarchical-containment} and then we prove (ii).

{\em Construction of $T$}.
According to Definition \ref{df:next-suspended}, there exists a sequence $A_0$, $A_1$, $\dots$, $A_m=A$
of simple periods over $G_\al$ where $A_0$ is suspended of level 0, for each $i \le m-1$
$A_i$ is not conjugate to $A_{i+1}$ and there are reduced in $G_\al$ close 
words $X_i Q_i Y_i$ and $P_{i+1} \in \per(A_{i+1})$ where $Q_i \in \per(A_i)$ and $|Q_i| \ge 4|A_i|$.
For each $i$, we consider corresponding close paths $!X_i !Q_i !Y_i$ and $!P_{i+1}$ in $\Ga_\al$ 
and place then in such a way that $!Q_i$ and $!P_i$ have the common infinite $A_i$-periodic extension $!L_i$.
We denote also $!L_0$ the infinite $A_0$-periodic extension of $!Q_0$.

As we proved above, there is a coarsely $B$-periodic segment $\hat{!T}_0$ in $!Q_0$ with 
$\ell(\hat{!T}_0) \ge 51$ and the stable part $!T_0$ satisfying $\ell(!T_0) \ge p_0$  
and $|!T_0| < |A|$. 
Up to positioning $\hat{!T}_0$ in $!L_0$ we can assume that 
$!Q_0$ contains translates $s_{A_0,!L_0}^{-1} !T_0$ and $s_{A_0,!L_0} !T_0$ of $!T_0$.
In what follows, if $!Z$ is a coarsely $B$-periodic segment in $\Ga_\al$ then $!Z^*$ denotes
the stable part of $!Z$. By Lemma \ref{lm:coarsly-periodic-in-periodic}, 
$s_{A_0,!L_0}^t !T_0 \not\sim !T_0$ for any $t\ne 0$ and hence 
$s_{A_0,!L_0}^{-1} !T_0  \lnsim \hat{!T}_0  \lnsim s_{A_0,!L_0} !T_0$.
By Proposition \ref{pr:coarse-periodic-stability} there are $!T_1$, $!U_{1,1}$ and $!W_{1,1}$ in $!P_1$ such that $!T_1 \approx !T_0$, $!U_{1,1} \approx s_{A_0,!L_0}^{-1} !T_0^*$
and $!W_{1,1} \approx s_{A_0,!L_0} !T_0^*$. 
Application of Lemma \ref{lm:coarsely-periodic-closeness-step} with $!S:= !T_0^*$
(note that $\ell_B(!T_0^*) \ge p_0 - 12 \ge 27$)
gives $s_{A_1,!L_1}^{-1} !T_1 \lnsim !U_{1,1}$ and $!W_{1,1} \lnsim s_{A_1,!L_1} !T_1$. 
In particular, we have $|!T_1| \le |A_1|$. In  the case $m=1$ we take $T := \lab(!T_1)$.

Assume that $m \ge 2$. We continue a procedure of finding coarsely 
$B$-periodic segments ~$!T_i$ in $!P_i$.
Up to positioning $!Q_1$ in $!L_1$ we can assume that $!Q_1$ contains both 
$s_{A_1,!L_1}^{-1} !T_1$ and $s_{A_1,!L_1} !T_1$. 
Using Proposition \ref{pr:coarse-periodic-stability} 
we find $!U_{2,2}$, $!U_{2,1}$,  $!W_{2,1}$ and $!W_{2,2}$ in $!P_2$ such that 
$!U_{2,2} \approx s_{A_1,!L_1}^{-1} !T_1^*$, $!U_{2,1} \approx !U_{1,1}^*$,
$!W_{2,1} \approx !W_{1,1}^*$ and  $!W_{2,2} \approx s_{A_1,!L_1} !T_1^*$.
By Lemma \ref{lm:coarse-periodic-oredering-close}, 
$!U_{2,2} \lnsim  !U_{2,1} \lnsim !W_{2,1} \lnsim !W_{2,2}$.
We have $!U_{2,1} \approx s_{A_0,!L_0}^{-1} !T_0^{**}$, 
$!W_{2,1} \approx s_{A_0,!L_0} !T_0^{**}$ and 
using Proposition \ref{pr:coarse-periodic-stability} once more with 
$!X := s_{A_0,!L_0}^{-1} !T_0^{**} \cup s_{A_0,!L_0} !T_0^{**}$ and 
$!Y := !U_{2,1} \cup !W_{2,1}$ we find $!T_2$ in $!P_2$ such that $!T_2 \approx !T_0$.
Application of Lemma \ref{lm:coarsely-periodic-closeness-step} gives 
$s_{A_2,!L_2}^{-1} !T_2 \lnsim !U_{2,2}$ and $!W_{2,2} \lnsim s_{A_2,!L_2} !T_2$. 
In particular,  $|!T_2| \le |A_2|$.

Repeating in a similar manner, we find $!U_{m,m}$, $!U_{m,m-1}$, $!W_{m,m-1}$ and $!W_{m,m}$ in $!P_m$
such that $!U_{m,m} \approx s_{A_{m-1},!L_{m-1}}^{-1} !T_{m-1}^*$, $!U_{m,m-1} \approx !U_{m-1,m-1}^*$,
$!W_{m,m-1} \approx !W_{m-1,m-1}^*$, $!W_{m,m} \approx s_{A_{m-1},!L_{m-1}} !T_{m-1}^*$
and $!U_{m,m} \lnsim !U_{m,m-1} \lnsim !W_{m,m-1} \lnsim !W_{m,m}$.
Then we successively find $!U_{m,m-2}$, $!W_{m,m-2}$, $!U_{m,m-3}$, $!W_{m,m-3}$, $\dots$, 
$!U_{m,1}$, $!W_{m,1}$ such that $!U_{m,i} \approx !U_{i,i}^* \approx s_{A_{i-1},!L_{i-1}}^{-1} !T_{i-1}^{**}$ 
and $!W_{m,i} \approx !V_{i,i}^* \approx s_{A_{i-1},!L_{i-1}} !T_{i-1}^{**}$.
Finally, we find $!T_m$ in $!P_m$ such that $!T_m \approx !T_0$.
Application of Lemma \ref{lm:coarsely-periodic-closeness-step} gives 
$s_{A_m,!L_m}^{-1} !T_m \lnsim !U_{m,m}$ and $!W_{m,m} \lnsim s_{A_m,!L_m} !T_m$
which implies $|!T_m| \le |A_m|$. We take $T := \lab(!T_m)$. This completes the construction,
The whole procedure is schematically shown in Figure \ref{fig:hierarchical-containment-i}.   
\begin{figure}[h]
\input 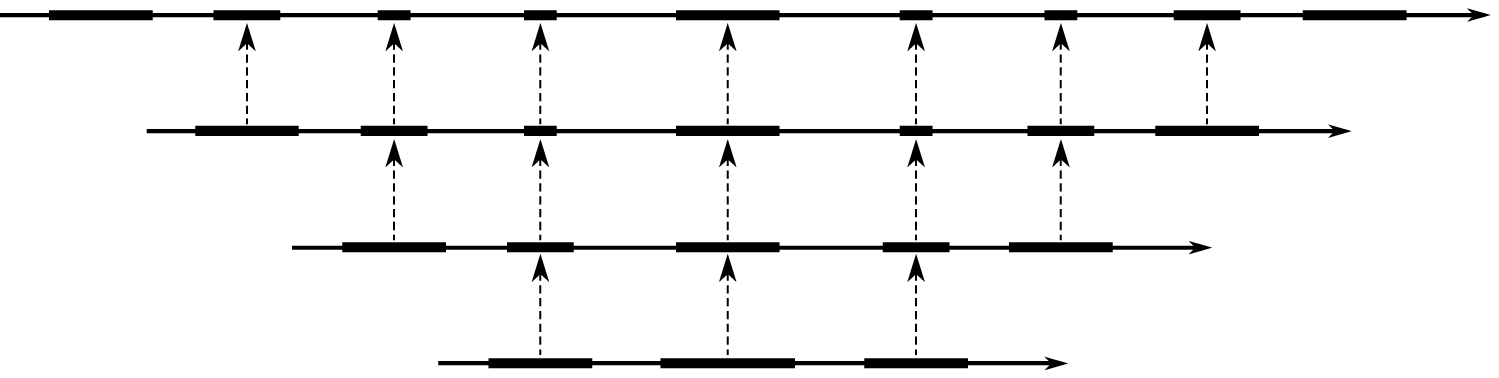_tex
\caption{}  \label{fig:hierarchical-containment-i}
\end{figure}
Note that in $!P_m$ we have
$$
    s_{A_m,!L_m}^{-1} !T_m \lnsim !U_{m,m} \lnsim !U_{m,m-1} \lnsim\dots\lnsim !U_{m,1} \lnsim !T_m
    \lnsim !W_{m,1} \lnsim \dots \lnsim !W_{m,m} \lnsim s_{A_m,!L_m} !T_m.
$$
{\em Proof of (ii).}
Let $!S$ be a coarsely $A_m$-periodic segment in $\Ga_\al$ and let $!P$ be a periodic base for ~$!S$.
Denote $t = \ell_{A_m}(!S)$.
By Remark \ref{rm:base-whole-periods} we can assume that $|!P| \ge t |A|$, so $!P$ contains $t-1$
translates $!T$, $s_{A,!P} !T$, $\dots$, $s_{A,!P}^{t-2} !T$ of a coarsely $B$-periodic segment $!T$
which is a translate of $!T_m$ constructed above.
By Proposition \ref{pr:coarse-periodic-stability}, 
$!S$ contains coarsely $B$-periodic segments $!Z_0$, $!Z_1$, $\dots$, $!Z_{t-2}$ such that
$!Z_i \approx s_{A,!P}^i !T^*$. We claim, moreover, that for $1 \le i \le t-3$ there exist ~$!V_i$ in ~$!S$
such that $!V_i \approx s_{A,!P}^i !T$ and $!V_i$ are all disjoint. Since 
 $\ell_B(!V_i) = \ell_B(!T_m) \ge p_0$ this will finish the proof.

Fix an index $k$ in the interval $1 \le i \le t-3$. 
Up to positioning $!P$ and $!S$ in $\Ga_\al$ we can assume that $!P$ 
and $!P_m$ have the common $A_m$-periodic extension $!L_m$ 
and $s_{A,!P}^k !T = !T_m$.
By Lemma ~\ref{lm:coarsely-periodic-closeness-step}, 
$s_{A_m,!L_m}^{-1} !T  \lnsim !U_{m,m}$ and  $!W_{m,m}  \lnsim s_{A_m,!L_m} !T$.
Then using  Proposition \ref{pr:coarse-periodic-stability} as in the procedure above,
we successively find pairs $(!U_i,!W_i)$ for $i=m,m-1,\dots,1$ such that 
$!Z_{k-1} \lnsim !U_m \lnsim !U_{m-1} \lnsim\dots\lnsim !U_1 \lnsim !Z_k \lnsim !W_1 
\lnsim\dots\lnsim !W_m \lnsim !Z_{k+1}$ and
$!U_i \approx !U_{i,i}^*$, $!W_i \approx !W_{i,i}^*$ for $i=m,m-1,\dots,1$.
Then using Proposition \ref{pr:coarse-periodic-stability} again
with $!X := s_{A_0,!L_0}^{-1} !T_0^{**} \cup s_{A_0,!L_0} !T_0^{**}$,
$!Y := !U_1 \cup !W_1$ and $!S = \hat{!T}_0$ gives $!V_k$ with $!U_1 \lnsim !V_k \lnsim !W_1$
and $!V_k \approx !T_0 \approx s_{A,!P}^k !T$. The proof is finished.
\end{proof}

\begin{proposition} \label{pr:length-in-periods}
Let $A \in \cE_{\al+1}$ and $t\ge 1$ be an integer. 
Let $P$ be an $A$-periodic word with $|P| = t|A|$. Then 
$$ 
  \frac{t}{n+t} < \mu(P) < \frac{t}{n-t} + \om.
$$
Moreover, for $t \ge 200$ we have also
$$
  0.89 \frac{t}{n} < \mu(P) < 1.12 \frac{t}{n}.
$$
\end{proposition}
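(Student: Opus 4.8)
The plan is the following. The relator of rank $\al+1$ associated with $A$ is $R \greq A^n$, so by definition $\mu(P) = |P|_\al / |R^\circ|_\al = |P|_\al / |(A^n)^\circ|_\al$. Put $L = [A]_\al$; by Proposition \ref{pr:period-is-large} we have $L \ge 0.25$, hence $\frac{1}{Ln} \le \frac{1}{0.25\,n} = \frac{1}{\Om} = \om$. Everything then reduces to the two-sided estimate, valid for any simple period $A$ over $G_\al$,
$$
  m\,[A]_\al \;\le\; |A^m|_\al \;\le\; m\,[A]_\al + 1 \qquad (m \ge 1).
$$
Its left half is exactly Lemma \ref{lm:stability-parameters} applied to $X \greq A^m$; the right half is the only genuinely non-routine step, and the one I expect to be the main obstacle.

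To prove the right-hand inequality: by \ref{ss:alpha-length-properties}(i) the sequence $c_m := |A^m|_\al$ satisfies $c_p + c_q - 1 \le c_{p+q} \le c_p + c_q$, so $c_m$ is subadditive while $c_m - 1$ is superadditive; by Fekete's lemma $\inf_m \frac{c_m}{m} = \lim_m \frac{c_m}{m} = \lim_m \frac{c_m - 1}{m} = \sup_m \frac{c_m - 1}{m}$, call this common value $\ell$. By the left-hand inequality $\ell = \inf_m \frac{c_m}{m} \ge [A]_\al$, while $|(A^m)^\circ|_\al$ differs from $c_m$ by at most $1$ (by \ref{ss:alpha-length-properties}(i),(iv)), so $\ell = \lim_m \frac{|(A^m)^\circ|_\al}{m} \le \inf_m \frac{|(A^m)^\circ|_\al}{m} = [A]_\al$; thus $\ell = [A]_\al$ and $\sup_m \frac{c_m - 1}{m} = [A]_\al$, which is the claim. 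Moreover $(A_1^m)^\circ = (A^m)^\circ$ for every cyclic shift $A_1$ of $A$, so $[A_1]_\al = [A]_\al$ and $A_1$ is again a simple period over $G_\al$; hence the displayed estimate holds for every cyclic shift of $A$ too.

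Since $P$ is $A$-periodic of length $t|A|$, we have $P \greq B^t$ for some cyclic shift $B$ of $A$ (see \ref{ss:words}), whence $Lt \le |P|_\al \le Lt + 1$; and from $[A]_\al = \inf_m \frac{|(A^m)^\circ|_\al}{m} \le \frac{|(A^n)^\circ|_\al}{n}$ together with $|(A^n)^\circ|_\al \le |A^n|_\al \le Ln + 1$ we get $Ln \le |(A^n)^\circ|_\al \le Ln + 1$. The upper bound is then immediate:
$$
  \mu(P) = \frac{|P|_\al}{|(A^n)^\circ|_\al} \le \frac{Lt+1}{Ln} = \frac{t}{n} + \frac{1}{Ln} \le \frac{t}{n} + \om \le \frac{t}{n-t} + \om,
$$
the last step being valid for $t < n$ (for $t \ge n$ the asserted upper bound is vacuous). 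For the lower bound I would split $B^n$ into $\ceil{n/t}$ blocks, each a subword of $B^t$, and apply \ref{ss:alpha-length-properties}(i) repeatedly to get $|(A^n)^\circ|_\al \le |B^n|_\al \le \ceil{n/t}\,|B^t|_\al = \ceil{n/t}\,|P|_\al$; since $\ceil{n/t} < \frac{n}{t} + 1 = \frac{n+t}{t}$ this yields $\mu(P) \ge \frac{1}{\ceil{n/t}} > \frac{t}{n+t}$.

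Finally, for the ``moreover'' part with $t \ge 200$: the upper bound just obtained gives $\mu(P) \le \frac{t}{n} + \om = \frac{t+4}{n} \le \bigl(1 + \frac{4}{200}\bigr)\frac{t}{n} < 1.12\,\frac{t}{n}$, and the lower estimate $\mu(P) \ge \frac{Lt}{Ln+1} = \frac{t}{\,n + L^{-1}} \ge \frac{t}{n+4} = \frac{t}{n}\cdot\frac{n}{n+4} \ge \frac{t}{n}\bigl(1 - \frac{4}{n}\bigr) > 0.89\,\frac{t}{n}$, using $n > 2000$. Apart from the subadditivity/Fekete step flagged above, every inequality here is a one-line computation from $[A]_\al \ge 0.25$, $\om = 1/\Om = 4/n$, and the semi-additivity of $|\cdot|_\al$.
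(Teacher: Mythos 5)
Your route via Fekete's lemma---establishing the two-sided estimate $m[A]_\al \le |A^m|_\al \le m[A]_\al + 1$ and then dividing---is genuinely different from the paper's direct covering/packing count, and once sound it actually yields sharper constants in the ``moreover'' part than the paper's $0.89$ and $1.12$. But there is a real gap in the Fekete step. You write that since $d_m := |(A^m)^\circ|_\al$ differs from $c_m := |A^m|_\al$ by at most $1$,
$$
  \ell = \lim_m \frac{d_m}{m} \;\le\; \inf_m \frac{d_m}{m} \;=\; [A]_\al .
$$
As stated this claims a limit is bounded above by an infimum, which is the wrong direction. The fact that $d_m$ stays within $1$ of the exactly-subadditive $c_m$ does \emph{not} force $\inf_m d_m/m = \lim_m d_m/m$: take $c_m = m$ and $d_m = m$ for $m$ odd, $d_m = m-1$ for $m$ even; then $|c_m - d_m| \le 1$, $c_m$ is subadditive, $c_m - 1$ is superadditive, yet $\inf_m d_m/m = 1/2$ while $\lim_m d_m/m = 1$. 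So the inequality $c_m \le m[A]_\al + 1$, on which your whole computation rests, is not established by what you wrote.

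The fix is one additional observation, and it is exactly what the counterexample above lacks. For each $m$, let $A_1$ be the cyclic shift of $A$ with $|A_1^m|_\al = d_m$; the cyclic word $(A^{km})^\circ$ admits a partition into $k$ consecutive copies of the plain word $A_1^m$, so by \ref{ss:alpha-length-properties}(ii) one has $d_{km} \le k\,|A_1^m|_\al = k\,d_m$. Hence $\ell = \lim_k d_{km}/(km) \le d_m/m$ for every $m$, that is $\ell \le \inf_m d_m/m = [A]_\al$; with your $\ell \ge [A]_\al$ this gives $\ell = [A]_\al$ and the rest of your argument goes through. (Note also that you correctly revert to the paper's covering argument for the first lower bound; the Fekete estimate alone only gives $\mu(P) \ge t/(n + L^{-1})$, which beats $t/(n+t)$ only once $t > L^{-1}$, so it cannot replace the covering step for small $t$.)
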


\begin{proof}
Denote $N = |(A^n)^\circ|_\al$.
Recall that $\mu(P) = |P|_\al / N$.
Up to cyclic shift of $A$, we assume that $P \greq A^t$.
For the lower bound on $\mu(P)$ in the first inequality, 
we observe that the cyclic word $(A^n)^\circ$ can be covered with $\ceil{\frac{n}{t}}$
copies of $P$. By \ref{ss:alpha-length-properties}, this implies 
$$
  N < \left(\frac{n}{t} + 1 \right) |P|_\al
$$
which is equivalent to $\frac{t}{n+t} < \mu(P)$.
Similarly, for the upper bound we observe that $\floor{\frac{n}{t}}$ disjoint copies of $P$ can be placed 
inside $(A^n)^\circ$. Then again by \ref{ss:alpha-length-properties},
$$
  N \ge \bbfloor{\frac{n}{t}} (|P|_\al -1) > \left(\frac{n}{t} - 1 \right) (|P|_\al -1)
$$
which implies by (S1) with $\al := \al+1$
$$
  \mu(P) < \frac{t}{n-t} + \frac{1}{N} \le \frac{t}{n-t} + \om.
$$

If $t \ge 200$ then we partition $A^t$ into $k$ subwords $A^{t_i}$ with $80 \le t_i \le 120$.
We have
$$
  \sum_i |A_{t_i}|_\al  - (k-1) \le |P|_\al \le \sum_i |A_{t_i}|_\al.
$$
and by the already proved bounds on $\mu(A^{t_i})$, for each $i$ we have
$$
  0.94 \frac{t_i}{n} < \mu(A^{t_i}) < 1.07 \frac{t_i}{n} + \frac{1}{N}.
$$
Then
$$
  \mu(P) \ge \sum_i \mu(A^{t_i}) - \frac{k-1}{N} > 0.94 \frac{t}{n}  - \frac{k}{N}.
$$
By Proposition \ref{pr:period-is-large}, $N \ge 0.25 n$. Hence
$$
  \frac{k}{N} \le \frac{t}{80} \left(\frac{n}{N} \right) \frac{1}{n} \le 0.05 \frac{t}{n}
$$
and we obtain the required bound $\mu(P) > 0.89 \frac{t}{n}$.
Similarly, for the upper bound on $\mu(P)$ we get
$$
  \mu(P) \le \sum_i \mu(A^{t_i}) < 1.07 \frac{t}{n} + \frac{k}{N} \le 1.12 \frac{t}{n}.
$$
\end{proof}

\begin{corollary}
(P2) implies (S2)$_{\al+1}$. 
\end{corollary}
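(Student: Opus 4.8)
The plan is to verify condition (S2) for rank $\al+1$ by converting its hypothesis into the hypothesis of (P2) through Proposition~\ref{pr:length-in-periods}. So suppose $R_1,R_2$ are relators of rank $\al+1$ and $S_i$ is a starting segment of $R_i$ with $S_1=uS_2v$ in $G_\al$ for some $u,v\in\cH_\al$ and $|S_1|_\al\ge\la|R_1|_\al$; the goal is $R_1=uR_2u^{-1}$ in $G_\al$. By construction each relator of rank $\al+1$ is an $n$-th power $A_i^n$ of a simple period $A_i$ over $G_\al$ which is a cyclic shift of some element of $\cE_{\al+1}^{\pm1}$; since passing from a period to a cyclic shift re-indexes the associated periodic line and passing to the inverse reverses it, and since $\cH_\al$ and the closeness relation are invariant under inversion, I may --- after translating and, if necessary, reversing the lines involved --- assume $A_1,A_2\in\cE_{\al+1}$. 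Because $|R_1|_\al\ge|R_1^\circ|_\al$ (property \ref{ss:alpha-length-properties}(iv)), the hypothesis gives $\mu(S_1)=|S_1|_\al/|R_1^\circ|_\al\ge\la$.

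The main step is the numerical conversion $\mu(S_1)\ge\la\ \Longrightarrow\ |S_1|\ge p_1|A_1|$. Since $S_1$ is a subword of $A_1^n$, put $t=\ceil{|S_1|/|A_1|}$ and extend $S_1$ within $A_1^\infty$ to an $A_1$-periodic word of length $t|A_1|$; monotonicity of $|\cdot|_\al$ gives $\mu(S_1)\le\mu(A_1^t)$, and if $t<n$ Proposition~\ref{pr:length-in-periods} yields $\mu(A_1^t)<\frac{t}{n-t}+\om$. With the values $\la=80/n$ and $\om=1/\Om=4/n$ fixed for the Burnside presentation, $\la\le\frac{t}{n-t}+\om$ rearranges to $\frac{76}{n}\le\frac{t}{n-t}$, i.e.\ $t\ge\frac{76n}{n+76}>73$ since $n>2000$; hence $t\ge74$ and $|S_1|>(t-1)|A_1|\ge73|A_1|>p_1|A_1|$ as $p_1=65$. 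The remaining case $t=n$ is immediate, since then $|S_1|>(n-1)|A_1|>p_1|A_1|$. So in all cases $|S_1|\ge p_1|A_1|$.

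It remains to feed this into (P2). Represent the relation $S_1=uS_2v$ by a loop $!S_1^{-1}!u!S_2!v$ in $\Ga_\al$, positioned so that $!S_1$ runs from the vertex $1$ to the vertex $S_1$ along the $A_1$-periodic line $!L_1$ through $1$ that reads $A_1^\infty$, and $!S_2$ runs from the vertex $u$ to the vertex $uS_2$ along the $A_2$-periodic line $!L_2$ through $u$ that reads $A_2^\infty$. Then $!S_1$ and $!S_2$ are close, their start vertices being joined by the bridge $!u$ and their end vertices by the bridge $!v^{-1}$ (using that $\cH_\al$ is closed under inversion), and $!S_1$ is a subpath of $!L_1$ with $|!S_1|=|S_1|\ge p_1|A_1|$. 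Hence (P2) applies and gives that $!L_1$ and $!L_2$ are parallel, i.e.\ $s_{A_1,!L_1}=s_{A_2,!L_2}$. Evaluating $s_{A_1,!L_1}$ at the vertex $1$ and $s_{A_2,!L_2}$ at the vertex $u$ gives $A_1=uA_2u^{-1}$ in $G_\al$, and raising to the $n$-th power yields $R_1=A_1^n=uA_2^nu^{-1}=uR_2u^{-1}$ in $G_\al$, which is exactly the conclusion of (S2) for rank $\al+1$.

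The arithmetic of the second paragraph is routine once Proposition~\ref{pr:length-in-periods} is in hand, so the only part calling for care is the translation between the ``pieces of relators'' language of (S2) and the ``periodic lines'' language of (P2): the reduction to periods lying in $\cE_{\al+1}$ (cyclic shifts and inversions of relators), the verification that the two singled-out paths are close, and the correct identification of the translation elements $s_{A_i,!L_i}$ at the chosen base vertices. That bookkeeping, rather than any genuine difficulty, is where I expect the work to lie.
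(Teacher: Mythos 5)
Your overall strategy is the same as the paper's: convert the hypothesis of (S2)$_{\al+1}$ into the hypothesis of (P2) by means of the arithmetic estimate of Proposition~\ref{pr:length-in-periods}, then read off the conclusion from parallelness of the two periodic lines. You carry out the arithmetic more carefully than the paper's very terse proof does (correctly obtaining $t\ge 74 > p_1$ rather than the paper's claimed $t>76$, which is slightly off but immaterial), and your bookkeeping of the translation elements $s_{A_i,!L_i}$ and the two bridges is correct.

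There is, however, a gap in the reduction ``I may\dots\ assume $A_1,A_2\in\cE_{\al+1}$.'' A relator $R_i$ of rank $\al+1$ is a cyclic shift of $C_i^{\ep_i n}$ with $C_i\in\cE_{\al+1}$ and $\ep_i=\pm1$. If $\ep_1=\ep_2=1$ nothing need be done; if $\ep_1=\ep_2=-1$ you reverse both lines $!L_1,!L_2$ simultaneously, and since closeness is preserved under reversing \emph{both} paths of a coarse bigon, (P2) applies to $!L_1^{-1},!L_2^{-1}$ and parallelness transports back. But in the mixed case $\ep_1\ne\ep_2$, reversing only one line destroys closeness (the bridges then join $\io(!S_1)$ to $\io(!S_2)$ and $\tau(!S_2)$ to $\tau(!S_1)$, not the crossed endpoints needed for $!S_1$ close to $!S_2^{-1}$), while reversing both leaves one of the periods equal to $C_i^{-1}\notin\cE_{\al+1}$. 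So (P2) is not directly applicable, and the ``re-indexing and reversing'' step as you wrote it does not cover this case. The mixed case is in fact vacuous: the (S2) hypothesis there would exhibit a long $A_1$-periodic word close to an $A_2$-periodic word with $A_1$ not conjugate in $G_\al$ to $A_2$ (conjugacy would, by normalization and Corollary~\ref{coi:root-unique}, force $C_1=C_2$ and $C_1$ conjugate to $C_1^{-1}$, contradicting (S3)), so by Definition~\ref{df:start-suspended} $A_2$ --- hence $C_2$ --- would be suspended of level~0, contradicting $C_2\in\cE_{\al+1}$. This is precisely the mechanism behind the proof of (P2) itself (Proposition~\ref{pr:G-small-cancellation}), so it is ``the same trick,'' but it is a separate step that must be said; your proof, by lumping it into ``reverse the lines involved,'' skips it.
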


\begin{proof}
By Proposition \ref{pr:length-in-periods}, 
if $P$ is a subword of $A^n$ with $A \in \cE_{\al+1}$ and $\mu(P) \ge \la$
then $|P| \ge t|A|$ where $t$ satisfies
$$
    \frac{t}{n-t} \ge \la - \om \ge \frac{1}{24} - \frac{1}{480}
$$
and hence $t > 76$.
Since $76 > p_1$, the required implication is straightforward.
\end{proof}

\begin{proposition} \label{pr:G-small-cancellation}
Presentation \eqref{eq:Burnside-presentation} satisfies (P2) and therefore satisfies the iterated
small cancellation condition (S0)--(S3) for all $\al\ge1$.
\end{proposition}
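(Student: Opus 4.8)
The proof is by induction on $\al$, the inductive step being to show that if the normalized presentation \eqref{eq:Burnside-presentation} satisfies (S0)--(S3) at all ranks $\le\al$ — this is precisely the standing hypothesis of Sections~\ref{s:elementary-periods}--\ref{s:hierarchical-containment}, so all results proved there are available — then it satisfies (S0)--(S3) at rank $\al+1$ as well. Three of the four conditions are already at hand. That the presentation is normalized at rank $\al+1$ is Remark~\ref{rm:G-normalized}, and (S0)$_{\al+1}$, (S3)$_{\al+1}$ were recorded in Section~\ref{s:elementary-periods}: each $C\in\cE_{\al+1}$ is a simple period over $G_\al$, hence strongly cyclically reduced, so $C^n$ and all its cyclic shifts are cyclically reduced in $G_\al$, giving (S0)$_{\al+1}$; and, $n$ being odd, Corollary~\ref{co:no-involutions-no-inverse-conjugates} applied to $G_\al$ shows no element of $G_\al$ is conjugate to its inverse, so $C^n$ is not conjugate in $G_\al$ to $C^{-n}$, giving (S3)$_{\al+1}$. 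For (S1)$_{\al+1}$, Proposition~\ref{pr:period-is-large} gives $[C]_\al\ge0.25$, whence $|C^n|_\al\ge|(C^n)^\circ|_\al\ge n[C]_\al\ge0.25\,n=\Om$. So the inductive step reduces to proving (P2), after which the Corollary preceding this proposition yields (S2)$_{\al+1}$ and the induction is complete.

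To prove (P2), suppose $!L_1,!L_2$ are periodic lines in $\Ga_\al$ with periods $A,B\in\cE_{\al+1}$ having close subpaths $!P\subseteq!L_1$ and $!Q\subseteq!L_2$ with $|!P|\ge p_1|A|$, and assume toward a contradiction that $!L_1$ and $!L_2$ are not parallel. I distinguish two cases. If $A$ is conjugate in $G_\al$ to $B$, then $A$ and $B$ are equivalent in the sense of Definition~\ref{df:elementary-periods}, hence coincide as elements of the transversal $\cE_{\al+1}$; so $!L_1,!L_2$ are both $A$-periodic, $!Q$ is an $A$-periodic segment, and, since $h_\al(A)\le6$ by Proposition~\ref{pr:period-is-large} and hence $p_1=65\ge2h_\al(A)+1$, Proposition~\ref{pr:close-parallel-one-period} forces $!P$ and $!Q$ to lie on a common $A$-periodic line, i.e.\ $!L_1=!L_2$, contradicting non-parallelism. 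If, on the other hand, $A$ is not conjugate to $B$ in $G_\al$, I obtain a contradiction directly: setting $C:=A$, the words $\lab(!Q)\in\per(B)$ and $\lab(!P)\in\per(C)$ are close in $G_\al$ (closeness being symmetric), $C$ is a simple period over $G_\al$ not conjugate to $B$, and $|\lab(!P)|=|!P|\ge p_1|A|=p_1|C|$; by Definition~\ref{df:start-suspended} this exhibits $B$ as a suspended period of level $0$ over $G_\al$, contradicting $B\in\cE_{\al+1}\subseteq\cP_\al\setminus\cS_\al$. Hence in all cases the assumption is untenable and (P2) holds.

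I anticipate no real difficulty here: once Proposition~\ref{pr:period-is-large} (the genuinely hard input, proved via the hierarchical-containment analysis of Section~\ref{s:hierarchical-containment}) and Proposition~\ref{pr:close-parallel-one-period} are available, (P2) is little more than the contrapositive of the definition of a level-$0$ suspended period. The only places asking for a little care are the bookkeeping of the induction — making sure that whenever a result ``at rank $\al$'' is invoked, the presentation genuinely satisfies (S0)--(S3) up to rank $\al$, which is exactly the inductive hypothesis — the reduction of (S1)$_{\al+1}$ to the stable-size estimate $[C]_\al\ge0.25$, and the observation in the first case that $A$ conjugate to $B$ really does force $A=B$ (and not merely $A$ conjugate to $B^{\pm1}$ with $A\ne B$), which is immediate from $\cE_{\al+1}$ being a transversal for the relevant equivalence.
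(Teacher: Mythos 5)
Your proof of (P2) is correct and follows exactly the paper's own argument: the same two-case split on whether $A$ is conjugate to $B$ in $G_\al$, invoking Proposition~\ref{pr:close-parallel-one-period} (with the $h_\al(A)\le 6$ bound from Proposition~\ref{pr:period-is-large}) in the conjugate case and Definition~\ref{df:start-suspended} in the non-conjugate case. The additional bookkeeping you supply for normalization and (S0)$_{\al+1}$, (S1)$_{\al+1}$, (S3)$_{\al+1}$ is also correct and merely spells out what the paper records in the surrounding text of Section~\ref{s:elementary-periods}.
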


\begin{proof}
Indeed, assume that $!L_1$ and $!L_2$ are periodic lines in $\Ga_\al$ with periods $A,B \in \cE_{\al+1}$ respectively.
Let $!P$ and $!Q$ be close subpath of $!L_1$ and $!L_2$, respectively, such that $|!P| \ge p_1 |A|$.
If $A$ is conjugate to $B$ in $G_\al$ then $A=B$ according to Definition \ref{df:elementary-periods}
and the statement follows from Proposition \ref{pr:close-parallel-one-period}.
If $A$ is not conjugate to $B$ in $G_\al$ then $B$ is suspended of level 0 
as a simple period over $G_\al$ and hence cannot belong to $\cE_{\al+1}$.
\end{proof}

From this point, we may assume that all statements in 
Sections \ref{s:diagrams}--\ref{s:hierarchical-containment}
are true for all values of rank $\al$.

\begin{proposition}
Every element of $G$ is conjugate to a power of some $C \in \bigcup_{\al\ge 1} \cE_\al$.
\end{proposition}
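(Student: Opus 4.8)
The statement to prove is that every element of $G$ is conjugate to a power of some $C \in \bigcup_{\al\ge1}\cE_\al$. My plan is to reduce to the analogous assertion for the level groups $G_\al$ and then pass to the limit using the representatives provided by Corollary~\ref{co:absolute-reduction}. The key intermediate claim is: \emph{every element of $G_\al$ is conjugate (in $G_\al$) to a power $R_0^k$ of the root $R_0$ of a relator of some rank $\be\le\al$}, for which the machinery of Section~\ref{s:reduction} and Section~\ref{s:elementary-periods} is already set up.

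\medskip
First I would fix $g \in G$ and take, by Corollary~\ref{co:absolute-reduction}, a word $X$ reduced in $G$ representing $g$, with the property that for some $\al_0\ge1$, $X$ contains no fragment $F$ of rank $\be\ge\al_0$ with $\muf(F)\ge 3\om$. The point of this extra property is that ``nothing new happens above rank $\al_0$'': I would argue that the conjugacy class of $g$ in $G$ is already determined at level $\al_0$, i.e.\ $g$ has finite order in $G$ iff its image in $G_{\al_0}$ has finite order, and more precisely that a conjugating word realizing the conjugacy $X \sim R_0^k$ in $G_{\al_0}$ still works in $G$. Here I would invoke Proposition~\ref{pr:cyclic-reduction}$_{\al_0}$: either $g$ has finite order in $G_{\al_0}$, in which case it is conjugate in $G_{\al_0}$ to a cyclically reduced word $R_0^k$ where $R_0$ is the root of a relator of rank $\be\le\al_0$, so $R_0 \in \cE_\be$ after passing to the normalized presentation~\eqref{eq:Burnside-presentation} (here $R=R_0^n$ and $C=R_0$), and we are done at once; or $g$ has infinite order in $G_{\al_0}$. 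The task is to rule out the infinite-order case.

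\medskip
So the heart of the matter is to show that \emph{$G$ has no elements of infinite order}, equivalently that the image of $g$ in $G_{\al_0}$ must have finite order. This is where the ``hierarchical containment'' results of Section~\ref{s:hierarchical-containment} and the construction of $\cE_\al$ enter. Suppose for contradiction $g$ has infinite order in $G_{\al_0}$. Up to conjugation, by Proposition~\ref{pr:cyclic-reduction}$_{\al_0}$ and Proposition~\ref{pr:root-existence}, $g$ is a power of a simple period $A$ over $G_{\al_0}$. Now $A$ either lies in $\cS_{\al_0}$ (is suspended) or, after passing to the canonical equivalence-class representative, in $\cE_{\al_0+1}$. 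If $A \in \cE_{\al_0+1}$ then by construction (Definition~\ref{df:elementary-periods} and the paragraph following it, using that $n$ is odd and Proposition~\ref{pr:relator-root}) $A$ has finite order $n$ in $G_{\al_0+1}$, hence also in $G$, contradicting infinite order of $g$ in $G_{\al_0}\twoheadleftarrow G$ --- more carefully, $A=1$ would fail, but $A^n=1$ forces $g=A^m$ to have finite order in $G$, and since $G_{\al_0}$ is a quotient of $G$ this contradicts infinite order of $g$ there. If instead $A$ is suspended of some level $m\ge0$, I would use Proposition~\ref{pr:hierarchical-containment}$_{\al_0}$: a cyclic shift of $A$ contains a coarsely $B$-periodic word over $G_{\al_0}$ with $\ell_B\ge p_0$, and combined with Proposition~\ref{pr:length-in-periods} and the recursive structure of suspended periods, iterating this descent through the ranks produces an unbounded chain that contradicts Corollary~\ref{co:fragment-length-bound} (or the finiteness of $|X|_{\al_0}$), forcing $A$ into a lower level until it bottoms out in some $\cE_\be$ with $\be\le\al_0+1$; then $A$, and hence $g$, has finite order in $G$.

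\medskip
\textbf{Main obstacle.} The delicate step is the bookkeeping in the descent for suspended periods: one must show that a simple period $A$ over $G_{\al_0}$ of infinite order in $G$ cannot be suspended of \emph{any} finite level, and that the definitions~\ref{df:start-suspended}--\ref{df:elementary-periods} together with Proposition~\ref{pr:hierarchical-containment} actually terminate this recursion. The cleanest route is probably to argue by induction on the rank $\be$ (downward or upward) that every element of $G_\be$ of infinite order is conjugate to a power of a root of a relator of rank $\le\be+1$ --- i.e.\ that in $G$ itself (the direct limit) the only conjugacy classes are those of powers of roots of relators, which by~\eqref{eq:Burnside-presentation} have finite order. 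Once infinite order is excluded at every level, the finite-order case handled in the second paragraph completes the proof, since the conjugating element in $G_{\al_0}$ lifts to $G$ because no new relations above rank $\al_0$ affect a word without large fragments of high rank.
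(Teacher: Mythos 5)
Your overall skeleton --- dispose of the finite-order case by Proposition~\ref{pr:cyclic-reduction}, take a representative $X$ from Corollary~\ref{co:absolute-reduction} with no large fragments of rank $\ge\al_0$, and derive a contradiction assuming infinite order --- matches the paper. But the mechanism you propose for the contradiction in the infinite-order case is not the right one, and it contains a genuine gap.

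First, a direction error that propagates. You write that ``$G_{\al_0}$ is a quotient of $G$''; the quotient map goes the other way ($G_{\al_0}\twoheadrightarrow G$, since $G$ is obtained from $G_{\al_0}$ by imposing more relations). This does not invalidate the easy case $A\in\cE_{\al_0+1}$ (where $A^n=1$ in $G_{\al_0+1}$ gives $A^n=1$ in $G$ and so $g$ has finite order in $G$), but it does undermine your reduction ``the conjugacy class of $g$ in $G$ is determined at level $\al_0$'': there is no canonical image of $g\in G$ in $G_{\al_0}$. The correct argument is about the word $X$, which represents elements of every $G_\be$, and about transferring the conjugacy $X\sim A^t$ from $G_\al$ upward through the ranks using the absence of high-rank fragments (Propositions~\ref{pri:cyclic-monogon-cell}, \ref{pr:fragment-cyclic-monogon-previous}).

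Second, and more seriously, your descent through ``levels of suspension'' is not the contradiction mechanism and as described does not terminate in a contradiction. Note that if $g$ has infinite order in $G$ then $A$ must be suspended over $G_\be$ for \emph{every} rank $\be\ge\al_0$; there is nothing to ``bottom out''. Proposition~\ref{pr:hierarchical-containment} gives a coarsely periodic subword with a period at the \emph{same} rank $\al_0$, not a lower one, so iterating it does not run ``down through the ranks''. Moreover, your intermediate claim that ``every element of $G_\be$ of infinite order is conjugate to a power of a root of a relator of rank $\le\be+1$'' is false (a short aperiodic word has infinite order in $G_\be$ but is not of that form). The actual contradiction is a length bound: once you show that $A$ is strongly cyclically reduced in every $G_\be$ and remains a non-power (hence a simple period) over $G_\be$ for all $\be\ge\al_0$, Proposition~\ref{pr:period-is-large} gives $[A]_\be\ge 0.25$ for each $\be$, hence $|A|\ge 0.25\,\ze^{-\be}$; letting $\be\to\infty$ contradicts finiteness of $|A|$. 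That single observation replaces the entire suspended-period bookkeeping you were worried about, and it is the step your proposal is missing.
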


\begin{proof} 
Let $g \in G$. If $g$ has finite order then by Proposition \ref{pr:cyclic-reduction}, 
$g$  is conjugate to a power of some $C \in \bigcup_{\al\ge 1} \cE_\al$.
We assume that $g$ has infinite order and come to a contradiction.

By Corollary \ref{co:absolute-reduction} we represent $g$ by a word $X$ reduced in $G$ such 
that for some $\al\ge 1$, 
$X$ contains no fragments $F$ of rank $\be \ge \al$ with $\muf(F) \ge 3\om$.
By our assumption, $X$ has infinite order in all $G_\be$ for $\be \ge \al$.
By Propositions \ref{pr:root-existence} and \ref{pr:cyclic-reduction}, $X$ is conjugate in $G_\al$
to a word of the form $A^t$ where $A$ is a simple period over $G_\al$.
Using Proposition \ref{pri:cyclic-monogon-cell} we conclude that $X$ is conjugate to $A^t$
already in $G_{\al-1}$. Then applying Proposition \ref{pr:fragment-cyclic-monogon-previous}
with $\be := \al, \al+1, \dots$ we see that no cyclic shift of $A$ contains a fragment $K$ of rank 
$\be \ge \al$ with $\muf(K) \ge 9\om$ and 
that $A$ is cyclically reduced in $G_\be$ for all $\be > \al$.
Moreover, by Propositions \ref{pri:fragment-in-periodic-small} and \ref{pr:dividing-fragment},
$A$ is strongly cyclically reduced in $G_\be$ for all $\be > \al$.

Assume that for some $\be \ge \al$, $A$ is conjugate in $G_\be$ 
to a power $B^r$ of a simple period over ~$G_\be$. By Proposition \ref{pr:no-active-fragments-cyclic},
$A$ and $B^r$ are conjugate already in ~$G_{\al}$. Since $A$ is a non-power in $G_\al$, we have
$r=1$ and then by Propositions \ref{pr:root-existence} and  \ref{pr:cyclic-reduction}, $A$ 
is a non-power in ~$G_\be$. We showed that $A$ is a simple period over $G_\be$
for any $\be\ge\al$.  But this is impossible because by Proposition \ref{pr:period-is-large}
we should have $|A|_\be \ge 0.25$ and hence $|A| \ge 0.25 \ze^{-\be}$ for any $\be\ge\al$.
\end{proof}

As an immediate consequence we get:

\begin{corollary} \label{co:G-Burnside}
$G$ satisfies the identity $x^n = 1$ and therefore is isomorphic to the free Burnside group $B(m,n)$.
\end{corollary}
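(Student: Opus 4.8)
The plan is to obtain the corollary directly from the proposition immediately preceding it, together with the characterization of $B(m,n)$ as a relatively free group. The argument has two short steps: first verify that the law $x^n=1$ holds in $G$, then identify $G$ with $B(m,n)$ by exhibiting mutually inverse surjections.

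For the first step, let $g\in G$ be arbitrary. By the preceding proposition, $g$ is conjugate in $G$ to $C^k$ for some $C\in\bigcup_{\al\ge1}\cE_\al$ and some integer $k$. Since $C^n=1$ is a defining relation of the presentation \eqref{eq:Burnside-presentation}, we have $C^n=1$ in $G$, hence $g^n$ is conjugate to $(C^k)^n=(C^n)^k=1$, so $g^n=1$. As $g$ was arbitrary, $G$ satisfies the identity $x^n=1$.

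For the second step, write $G=F_m/N$ where $F_m=F(\cA)$ is free on the $m$-element generating set $\cA$ and $N$ is the normal closure of $\set{C^n : C\in\bigcup_{\al\ge1}\cE_\al}$. Each $C^n$ lies in $F_m^n$ (the subgroup generated by all $n$-th powers), so $N\le F_m^n$, and the quotient map $F_m\to F_m/F_m^n=B(m,n)$ factors through $G$, giving a surjection $\varphi\colon G\twoheadrightarrow B(m,n)$ which is the identity on the images of the generators in $\cA$. Conversely, since $G$ satisfies $x^n=1$ and is generated by the $m$-element set $\cA$, the universal property of the relatively free group $B(m,n)$ yields a surjection $\psi\colon B(m,n)\twoheadrightarrow G$ carrying the free generators to $\cA$. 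The composite $\psi\circ\varphi$ is the identity on a generating set of $G$, hence equals the identity of $G$; therefore $\varphi$ is injective as well as surjective, so $G\cong B(m,n)$.

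There is no genuine obstacle at this point: all the substantive work lies upstream, namely in showing that \eqref{eq:Burnside-presentation} is a normalized graded presentation satisfying (S0)--(S3) at every rank (Proposition \ref{pr:G-small-cancellation}) and in the conjugacy-and-roots analysis culminating in the preceding proposition, which already carries out the passage through all ranks $\be\ge\al$. The only things to keep in mind while writing the proof are that those results are available precisely because $n$ is odd with $n>2000$ and $\la,\Om$ are chosen as in Theorem \ref{th:main}; granting that, the corollary is immediate.
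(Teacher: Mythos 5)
Your proof is correct and matches the intended (unwritten) argument: the preceding proposition gives that every $g \in G$ is conjugate to some $C^k$ with $C^n = 1$ a defining relator, hence $g^n = 1$, and then the identification with $B(m,n)$ is the standard two-way comparison of quotients of $F_m$ using the universal property of the relatively free group. The paper labels this "immediate" and omits the details; you have filled them in correctly with no deviation in method.
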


\bibliography{bibliography}

\end{document}